\newtheorem*{rep@theorem}{\rep@title}
\newcommand{\newreptheorem}[2]{%
\newenvironment{rep#1}[1]{%
 \def\rep@title{#2 \ref{##1}}%
 \begin{rep@theorem}}%
 {\end{rep@theorem}}}
\newtheorem{theorem}{Theorem}[section]
\newtheorem{lemma}[theorem]{Lemma}
\newtheorem{proposition}[theorem]{Proposition}
\newtheorem*{proposition*}{Proposition}
\newtheorem{corollary}[theorem]{Corollary}
\newtheorem{conjecture}[theorem]{Conjecture}
\theoremstyle{definition}
\newtheorem{definition}[theorem]{Definition}
\newtheorem{example}[theorem]{Example}
\newtheorem*{example*}{Example}
\theoremstyle{remark}
\newtheorem*{remark}{Remark}
\newtheorem*{remarks}{Remarks}
\theoremstyle{algorithm}
\newtheorem*{algorithm}{Algorithm}
\newcommand{\bA}{\mathbb{A}}
\newcommand{\bC}{\mathbb{C}}
\newcommand{\bG}{\mathbb{G}}
\newcommand{\bN}{\mathbb{N}}
\newcommand{\bP}{\mathbb{P}}
\newcommand{\bQ}{\mathbb{Q}}
\newcommand{\bZ}{\mathbb{Z}}
\DeclareMathOperator{\Tr}{Tr}
\DeclareMathOperator{\Gal}{Gal}
\DeclareMathOperator{\Spec}{Spec}
\newcommand{\Hilb}{\textrm{Hilb}}
\newcommand{\Hilbn}{\textrm{Hilb}^n(\mathbb{A}_k^2)}
\newcommand{\cellsize}{5}
\newlength{\cellsz} \setlength{\cellsz}{\cellsize\unitlength}
\newsavebox{\cell}
\sbox{\cell}{\begin{picture}(\cellsize,\cellsize)
\put(0,0){\line(1,0){\cellsize}}
\put(0,0){\line(0,1){\cellsize}}
\put(\cellsize,0){\line(0,1){\cellsize}}
\put(0,\cellsize){\line(1,0){\cellsize}}
\end{picture}}
\newcommand\cellify[1]{\def\thearg{#1}\def\nothing{}%
\ifx\thearg\nothing
\vrule width0pt height\cellsz depth0pt\else
\hbox to 0pt{\usebox{\cell} \hss}\fi%
\vbox to \cellsz{
\vss
\hbox to \cellsz{\hss$#1$\hss}
\vss}}
\newcommand\tableau[1]{\vtop{\let\\\cr
\baselineskip -16000pt \lineskiplimit 16000pt \lineskip 0pt
\ialign{&\cellify{##}\cr#1\crcr}}}
\newcommand{\kellsize}{17}
\newlength{\kellsz} \setlength{\kellsz}{\kellsize\unitlength}
\newsavebox{\kell}
\sbox{\kell}{\begin{picture}(\kellsize,\kellsize)
\put(0,0){\line(1,0){\kellsize}}
\put(0,0){\line(0,1){\kellsize}}
\put(\kellsize,0){\line(0,1){\kellsize}}
\put(0,\kellsize){\line(1,0){\kellsize}}
\end{picture}}
\newcommand\kellify[1]{\def\thearg{#1}\def\nothing{}%
\ifx\thearg\nothing
\vrule width0pt height\kellsz depth0pt\else
\hbox to 0pt{\usebox{\kell} \hss}\fi%
\vbox to \kellsz{
\vss
\hbox to \kellsz{\hss$#1$\hss}
\vss}}
\newcommand\ktableau[1]{\vtop{\let\\\cr
\baselineskip -16000pt \lineskiplimit 16000pt \lineskip 0pt
\ialign{&\kellify{##}\cr#1\crcr}}}
\newcommand{\sellsize}{36}
\newlength{\sellsz} \setlength{\sellsz}{\sellsize\unitlength}
\newsavebox{\sell}
\sbox{\sell}{\begin{picture}(\sellsize,20)
\put(0,0){\line(1,0){\sellsize}}
\put(0,0){\line(0,1){\sellsize}}
\put(\sellsize,0){\line(0,1){\sellsize}}
\put(0,\sellsize){\line(1,0){\sellsize}}
\end{picture}}
\newcommand\sellify[1]{\def\thearg{#1}\def\nothing{}%
\ifx\thearg\nothing
\vrule width0pt height\sellsz depth0pt\else
\hbox to 0pt{\usebox{\sell} \hss}\fi%
\vbox to \sellsz{
\vss
\hbox to \sellsz{\hss$#1$\hss}
\vss}}
\newcommand\stableau[1]{\vtop{\let\\\cr
\baselineskip -16000pt \lineskiplimit 16000pt \lineskip 0pt
\ialign{&\sellify{##}\cr#1\crcr}}}
\newcommand{\smellsize}{7}
\newlength{\smellsz} \setlength{\smellsz}{\smellsize\unitlength}
\newsavebox{\smell}
\sbox{\smell}{\begin{picture}(\smellsize,\smellsize)
\put(0,0){\line(1,0){\smellsize}}
\put(0,0){\line(0,1){\smellsize}}
\put(\smellsize,0){\line(0,1){\smellsize}}
\put(0,\smellsize){\line(1,0){\smellsize}}
\end{picture}}
\newcommand\smellify[1]{\def\thearg{#1}\def\nothing{}%
\ifx\thearg\nothing
\vrule width0pt height\smellsz depth0pt\else
\hbox to 0pt{\usebox{\smell} \hss}\fi%
\vbox to \smellsz{
\vss
\hbox to \smellsz{\hss$#1$\hss}
\vss}}
\newcommand\smtableau[1]{\vtop{\let\\\cr
\baselineskip -16000pt \lineskiplimit 16000pt \lineskip 0pt
\ialign{&\smellify{##}\cr#1\crcr}}}
\numberwithin{equation}{chapter}
\begin{document}
\frontmatter
\begin{titlepage}

\begin{center}
\vspace{1cm}
{\huge COMPATIBLY SPLIT SUBVARIETIES OF THE HILBERT SCHEME OF POINTS IN THE PLANE}
\vspace{5cm}

A Dissertation\\
Presented to the Faculty of the Graduate School\\
of Cornell University\\
in Partial Fulfillment of the Requirements for the Degree of \\
Doctor of Philosophy\\
\vspace{5cm}

by\\
Jenna Beth Rajchgot\\
January 2013
\end{center}
\end{titlepage}









\newpage
\thispagestyle{empty}
\vspace*{\fill}
\vspace{-2cm}
\begin{center}
{\large $\copyright$ 2013 Jenna Beth Rajchgot\\ ALL RIGHTS RESERVED}
\end{center}
\vspace*{\fill}
\newpage


\newpage
\thispagestyle{empty}
\addtocounter{page}{-1}
\vspace{0.1in}
\begin{center}
COMPATIBLY SPLIT SUBVARIETIES OF THE HILBERT SCHEME OF POINTS IN THE PLANE\\
Jenna Beth Rajchgot, Ph.D.\\
Cornell University 2013
\end{center}
Let $k$ be an algebraically closed field of characteristic $p>2$. By a result of Kumar and Thomsen (see \cite{KT}), the standard Frobenius splitting of $\bA^2_k$ induces a Frobenius splitting of $\Hilbn$. In this thesis, we investigate the question, ``what is the stratification of $\Hilbn$ by all compatibly Frobenius split subvarieties?'' 

We provide the answer to this question when $n\leq 4$ and give a conjectural answer when $n=5$. We prove that this conjectural answer is correct up to the possible inclusion of one particular one-dimensional subvariety of $\Hilb^5(\bA^2_k)$, and we show that this particular one-dimensional subvariety is not compatibly split for at least those primes $p$ satisfying $2 <p \leq 23$. 

Next, we restrict the splitting of $\Hilbn$ (now for arbitrary $n$) to the affine open patch $U_{\langle x,y^n\rangle}$ and describe all compatibly split subvarieties of this patch and their defining ideals. We find degenerations of these subvarieties to Stanley-Reisner schemes, explicitly describe the associated simplicial complexes, and use these complexes to prove that certain compatibly split subvarieties of $U_{\langle x,y^n\rangle}$ are Cohen-Macaulay.



\newpage
\begin{center}
BIOGRAPHICAL SKETCH
\end{center}
Jenna Rajchgot was raised in Thornhill, Ontario, Canada, a suburb of Toronto. She graduated from Thornhill Secondary School in 2003 and went on to study mathematics at Queen's University in Kingston, Ontario. She graduated from Queen's with a B.Sc.H. degree in the spring of 2007. The following August, she entered the math Ph.D. program at Cornell University. She completed her dissertation in August 2012 under the supervision of Professor Allen Knutson.

\addcontentsline{toc}{section}{{\bf Biographical sketch}}

\newpage
\begin{center}
ACKNOWLEDGEMENTS
\end{center}
This thesis could not have been written without the constant guidance of my advisor, Allen Knutson.  Allen helped me every step of the way; he provided the motivating problem, patiently explained relevant background material, and offered numeous helpful suggestions at our frequent meetings. I learned more mathematics from Allen than from any other source and I have a greater appreciation of the subject thanks to him. I feel very fortunate to have worked under Allen's supervision and I'm pleased to thank him here.

I would also like to thank my other committee members, Tara Holm and Mike Stillman, for helpful discussions, for interesting courses and seminars, and for commenting on previous drafts of this thesis. 

I am grateful to David Speyer for graciously allowing an unpublished theorem and proof of his to be included in this thesis. I also wish to thank Allen Knutson, Thomas Lam, and David Speyer for permitting me to include an unpublished version of their algorithm for finding compatibly split subvarieties.

Over my five years at Cornell, I had the pleasure of working with a number of students and postdocs in the mathematics department. I would especially like to acknowledge Mathias Lederer as I had many enjoyable and enlightening conversations with him. In addition, Mathias kindly taught me to use a program that he wrote for computing on open patches of Hilbert schemes of points. This program was very useful when performing computations relevant to this thesis.

On a personal note, I would like to thank my friends, especially Amy, Joeun, Kathryn, Michelle, and Voula, for providing me a great deal of encouragement while I was writing this thesis, and for making my five years in Ithaca very enjoyable. 

Finally, I would like to thank my parents Percy and Rochelle, my brother Jason, and my partner Cameron Franc for all of their love and support.  
\\

I was supported by the Cornell mathematics department and by the Natural Sciences and Engineering Research Council of Canada (NSERC).

\addcontentsline{toc}{section}{{\bf Acknowledgements}}

\setcounter{tocdepth}{2}
\renewcommand{\cfttoctitlefont}{\hfill\Large\bfseries}
\renewcommand{\cftaftertoctitle}{\hfill}
\renewcommand{\contentsname}{TABLE OF CONTENTS}
\tableofcontents


\newpage
\begin{center}
INTRODUCTION
\end{center}
Let $k$ be an algebraically closed field of characteristic $p>2$, and let $X$ be a quasiprojective smooth surface defined over $k$. The Hilbert scheme of $n$ points on $X$, denoted $\Hilb^n(X)$, is the $2n$-dimensional smooth scheme (see \cite{Fog}) which parametrizes all dimension-$0$, degree-$n$ subschemes of $X$. If $X$ is Frobenius split, then $\Hilb^n(X)$ is Frobenius split (see \cite{KT}). More specifically, if $X$ is Frobenius split compatibly with the anticanonical divisor $D$, then $\Hilb^n(X)$ is Frobenius split compatibly with the anticanonical divisor described set-theoretically by \[\{q\in\Hilb^n(X)~|~\textrm{Supp}(q)\cap D\neq\emptyset\}.\] 

In this thesis, we are interested in the case $X = \bA_k^2$ and $D = \{xy=0\}$. Our motivating problem is to understand the stratification of $\Hilbn$ by all of its finitely many (see \cite{KM}, \cite{Sch}) compatibly split subvarieties (with respect to this particular splitting).

\begin{figure}[h]
\begin{center}
\includegraphics[scale = 0.32]{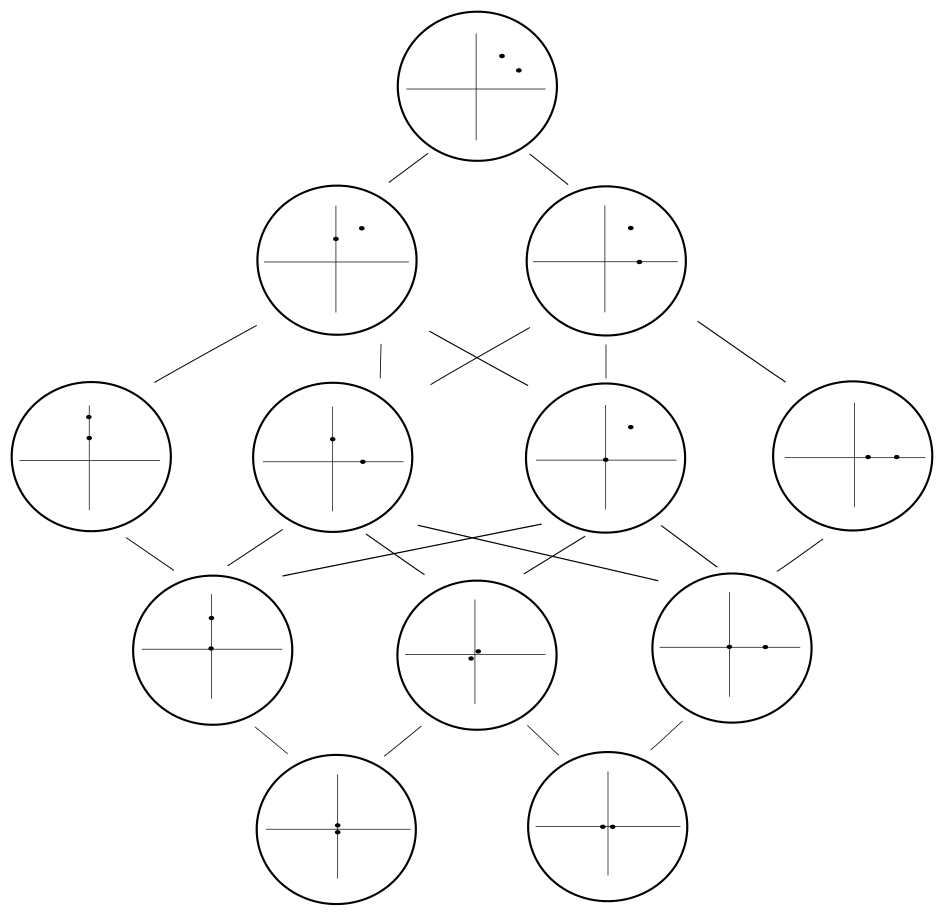}
\label{fig;2ptsposetintro}
\caption{The compatibly split subvarieties of $\Hilb^2(\bA^2_k)$}
\end{center}
\end{figure}

To gain some intuition, we consider the $n=2$ case. The stratification of $\Hilb^2(\bA^2_k)$ by all compatibly split subvarieties is shown in Figure 1. We have drawn a ``stratum representative'' (see Subsection \ref{s;stratrep}) to correspond to an entire compatibly split subvariety. For example, the picture where the two points are at the origin and on a diagonal line represents the punctual Hilbert scheme, denoted $\Hilb^2_0(\bA^2_k)$, which parametrizes dimension-$0$, length-$2$ subschemes of $\bA^2_k$ that are supported at the origin. Note that $\Hilb^2_0(\bA^2_k)\cong \bP^1_k$.

To gain further intuition into our problem, we remark (see Proposition \ref{p;intuition}) that finding all compatibly split subvarieties of $\Hilbn$ is equivalent to finding all compatibly split subvarieties $Z\subseteq \Hilb^m(\bA^2_k)$ for $m\leq n$, where $Z$ is a subvariety the punctual Hilbert scheme $\Hilb^m_0(\bA^2_k)$. More precisely, for sufficiently large primes $p$, $Y\subseteq \Hilbn$ is a compatibly split subvariety if and only if $Y$ is the closure in $\Hilbn$ of the (set-theoretic) image of a map
\[i:\Hilb^r(\textrm{``punctured y-axis''})\times \Hilb^s(\textrm{``punctured x-axis''})\times \Hilb^t(\mathbb{A}^2_k\setminus \{xy=0\})\times Z\]
\[\rightarrow \Hilbn\]
\[(I_1,I_2,I_3,I_4)\mapsto I_1\cap I_2\cap I_3\cap I_4\] for some $r,s,t\geq 0$ with $r+s+t\leq n$ and for some compatibly split $Z\subseteq \Hilb^{n-r-s-t}(\mathbb{A}^2_k)$ with the property that $Z\subseteq \Hilb^{n-r-s-t}_0(\mathbb{A}^2_k)$. Of course, in order to understand all compatibly split subvarieties of $\Hilbn$, we must still compute all of these $Z\subseteq \Hilb_0^m(\bA^2_k)$, $m\leq n$. 

One method for finding all compatibly split subvarieties for small $n$ is to utilize an algorithm of Allen Knutson, Thomas Lam, and David Speyer. We explain this algorithm in the Frobenius splitting background section and use it in Subsection \ref{s;algorithm2pts} to produce the stratification of $\Hilb^2(\bA^2_k)$ pictured above. In Subsection \ref{s;nleq5}, we use other tools (primarily the moment polyhedron of $\Hilbn$) to produce all compatibly split subvarieties of $\Hilbn$ for $n\leq 4$, as well as provide a conjectural list of all compatibly split subvarieties of $\Hilb^5(\bA^2_k)$. We prove that this conjectural list is correct up to the possible inclusion of one particular one-dimensional subvariety of $\Hilb^5(\bA^2_k)$, and we show that this particular one-dimensional subvariety is not compatibly split for at least those primes $p$ satisfying $2 <p \leq 23$.

After the discussion of the $n=5$ case, we restict our attention to the open affine patch \[U_{\langle x,y^n\rangle} = \{I\in \Hilbn~|~\{1,y,\dots,y^{n-1}\} \textrm{ is a }k \textrm{-vector space basis of }k[x,y]/I\}.\] This patch has an induced Frobenius splitting and so we may ask the question, ``what are all of the compatibly split subvarieties of $U_{\langle x,y^n\rangle}$ with the induced splitting?'' In Section \ref{s;goodPatch}, we answer this question for general $n$. We describe the defining ideals of these subvarieties and we find degenerations to Stanley-Reisner schemes. We provide an explicit description of the simplicial complexes associated to these Stanley-Reisner schemes and use these complexes to prove that certain compatibly split subvarieties of $U_{\langle x,y^n\rangle}$ are Cohen-Macaulay.


This thesis has two chapters. Chapter 1 discusses relevant background material, an unpublished result of David E Speyer, and the unpublished Knutson-Lam-Speyer algorithm for finding compatibly split subvarieties. Chapter 2 addresses the material described above. 

Macaulay 2 \cite{M2} was used to perform numerous computations leading to the results appearing in Chapter 2 of this thesis. Sage \cite{sage} (with the FLINT library) was used to perform some of the computations appearing in the discussion of the compatibly split subvarieties of $\Hilb^5(\bA^2_k)$.

\addcontentsline{toc}{section}{{\bf Introduction}}

\mainmatter

\pagestyle{fancy}
\chapter{Background}

\section{The Hilbert scheme of points in the plane}

\subsection{Basic notions}
Material in this subsection is taken primarily from \cite[Chapter 1]{Nak}.

Let $k$ be an algebraically closed field. The \textbf{Hilbert scheme of} $\mathbf{n}$ \textbf{points in the affine plane}, $\Hilbn$, is the scheme parametrizing all zero-dimensional, length-$n$ subschemes of $\bA^2_k$. More precisely, $\Hilbn$ represents the contravariant functor
\begin{center}
$\mathcal{H}ilb^n_{\bA_k^2}:\textrm{Schemes}\rightarrow \textrm{Sets}$
\[\mathcal{H}ilb^n_{\bA_k^2}(U) =\left\{ Z\subseteq U\times \bA_k^2~\begin{array}{|c}$Z$~\textrm{is a closed subscheme and}\\ \pi:Z\rightarrow U~\textrm{is flat with dimension-}0\textrm{,}~\textrm{length-}n~\textrm{fibers}\\\end{array}\right\}.\]
\end{center}

\begin{example}
\begin{enumerate}
\item The Hilbert scheme of $1$ point in the plane is isomorphic to $\bA^2_k$. 
\item A zero-dimensional, length-$2$ subscheme of $\bA^2_k$ is determined by either (i) two distinct \emph{unordered} locations $(x_1,y_1), (x_2,y_2)\in \bA^2_k$, or (ii) one location $(x,y)\in \bA^2_k$ along with a projective vector $v\in \bP^1_k$. Indeed, $\Hilb^2(\bA^2_k)\cong Bl_\Delta({\bA^2_k\times \bA^2_k})/S_2$ where $Bl_\Delta({\bA^2_k\times \bA^2_k})$ denotes the blow-up of $\bA^2_k\times \bA^2_k$ along the diagonal locus $\Delta := \{((x,y),(x,y))~|~(x,y)\in \bA^2_k\}$.
\end{enumerate}
\end{example}

Consider the action of the symmetric group $S_n$ on $(\bA^2_k)^n$ by permuting the various copies of $\bA^2$. Taking the quotient of $(\bA^2_k)^n$ by $S_n$ yields the \textbf{Chow variety}, the scheme of $n$ unlabelled locations in the plane. This scheme is often denoted by $S^n(\bA^2_k)$. An element of $S^n(\bA^2_k)$ is written as a sum \[\sum_{(x,y)\in\bA^2_k}a_{(x,y)}[(x,y)]\] where the coefficients $\{a_{(x,y)}~|~(x,y)\in\bA^2_k\}$ are non-negative integers that sum to $n$.

\begin{definition}\label{d;hilbchow}
The \textbf{Hilbert-Chow morphism} \[\Psi:\Hilbn\rightarrow S^n(\bA_k^2)\] is the map which takes each zero-dimensional, length-$n$ subscheme of $\bA^2_k$ to its support with multiplicities. That is, \[\Psi: Z\mapsto \sum_{x\in \bA^2_k}\textrm{length}(Z_x)[x].\]
\end{definition}

Notice that $S_n$ acts freely on the open set $V\subseteq (\bA^2_k)^n$ where all of the $n$ locations are distinct. Thus, the Chow variety is smooth along $V/S_n$. $S^n(\bA^2_k)$ is singular along the closed subvariety where at least two points collide. 

\begin{theorem}\cite{Fog}
\begin{enumerate}
\item $\Psi:\Hilbn\rightarrow (\bA_k^2)^n/S_n$ defined in Definition \ref{d;hilbchow} is a birational, projective morphism of schemes. It is an isomorphism over the open set where all points are distinct. 
\item $\Hilbn$ is connected and smooth. Thus, the Hilbert-Chow morphism is a resolution of singularities.
\end{enumerate}
\end{theorem}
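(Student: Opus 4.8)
The plan is to prove, in order, that $\Psi$ is a morphism of schemes and is projective, that it restricts to an isomorphism over the dense open locus where the $n$ points are distinct (hence is birational), that $\Hilbn$ is smooth of dimension $2n$, and that it is connected; the last two points then give the resolution-of-singularities statement at once. For the first part, $\Hilbn$ is quasi-projective because it is the open subscheme of $\Hilb^n(\bP^2_k)$ (projective, by Grothendieck's representability theorem) parametrizing subschemes disjoint from the line at infinity. Since $\bA^2_k$ is affine, $\Psi$ has a concrete description: a point of $\Hilbn$ over a $k$-algebra $R$ is an ideal $\mathcal I \subseteq R[x,y]$ with $R[x,y]/\mathcal I$ locally free of rank $n$ over $R$, and for each $f \in R[x,y]$ the characteristic polynomial of multiplication by $f$ on $R[x,y]/\mathcal I$ has coefficients in $R$; assembling these symmetric functions defines an $R$-algebra map $(k[x,y]^{\otimes n})^{S_n} \to R$, i.e.\ an $R$-point of $S^n(\bA^2_k)$, and one checks this functorial map induces the set map of Definition~\ref{d;hilbchow}. (This is the affine instance of the Grothendieck--Deligne norm construction.) Projectivity of $\Psi$ follows because $\Hilb^n(\bP^2_k) \to S^n(\bP^2_k)$ is a morphism of projective $k$-schemes, hence projective, and $\Psi$ is its base change along the open immersion $S^n(\bA^2_k) \hookrightarrow S^n(\bP^2_k)$. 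Finally, over the open set $V/S_n \subseteq S^n(\bA^2_k)$ where the $n$ locations are distinct, every length-$n$ subscheme lying above it is their reduced union, and the Hilbert functor restricted to this locus is represented by $V/S_n$ itself; so $\Psi$ restricts to an isomorphism there, and since $V/S_n$ is dense, $\Psi$ is birational.

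The heart of the theorem is the smoothness of $\Hilbn$. By the functor of points, the Zariski tangent space at a point $[Z]$, where $Z = V(I)$ for an ideal $I \subseteq k[x,y]$ of colength $n$, is the space of flat deformations of $Z$ over $k[\epsilon]/(\epsilon^2)$, namely $\Hom_{k[x,y]}(I, k[x,y]/I)$. Writing $k[x,y]/I \cong \prod_{x \in \textrm{Supp}(Z)} \cO_{Z,x}$ as a product of Artinian local rings, this decomposes as $\prod_x \Hom_{R_x}(I_x, \cO_{Z,x})$ with $R_x := \cO_{\bA^2_k,x}$ a regular local ring of dimension $2$. So everything reduces to the local statement: if $R$ is a $2$-dimensional regular local ring and $I \subseteq R$ is $\mathfrak{m}$-primary with $A := R/I$ of length $\ell$, then $\dim_k \Hom_R(I, A) = 2\ell$.

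To prove this, note $A$ has depth $0$, hence (by Auslander--Buchsbaum) projective dimension $2$ over $R$; being a perfect module of grade $2$, it has, by the Hilbert--Burch theorem, a finite free resolution $0 \to R^m \to R^{m+1} \to R \to A \to 0$, which truncates to a free resolution $0 \to R^m \to R^{m+1} \to I \to 0$ of $I$. Applying $\Hom_R(-, A)$ gives a four-term exact sequence $0 \to \Hom_R(I, A) \to A^{m+1} \to A^m \to \operatorname{Ext}^1_R(I, A) \to 0$ (it terminates because $R^m$ is free), so, by counting $k$-lengths, the claim reduces to $\dim_k \operatorname{Ext}^1_R(I, A) = \ell$. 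Now $\operatorname{Ext}^1_R(I, A) \cong \operatorname{Ext}^2_R(A, A)$ from the sequence $0 \to I \to R \to A \to 0$; and since $A$ is perfect of grade $2$, the functor $\operatorname{Ext}^2_R(A, -)$ is right exact and commutes with direct sums, hence is naturally isomorphic to $\operatorname{Ext}^2_R(A, R) \otimes_R (-)$, so $\operatorname{Ext}^2_R(A, A) \cong \operatorname{Ext}^2_R(A, R) \otimes_R A = \omega_A$, the canonical module of the Artinian ring $A$, which is its Matlis dual and therefore has length $\ell$. This proves the local claim, and summing over $\textrm{Supp}(Z)$ yields $\dim_k T_{[Z]} \Hilbn = 2n$ for every closed point $[Z]$. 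Consequently no irreducible component of $\Hilbn$ has dimension exceeding $2n$; granting that the closure of $V/S_n$, which has dimension $2n$, is all of $\Hilbn$, we conclude that $(\Hilbn)_{\textrm{red}}$ is irreducible of dimension exactly $2n$, so at every point $\dim_{[Z]} \Hilbn = 2n = \dim_k T_{[Z]} \Hilbn$; hence $\cO_{\Hilbn, [Z]}$ is regular, and (as $k$ is algebraically closed) $\Hilbn$ is smooth. Being smooth and irreducible, it is connected, and $\Psi$ is therefore a proper birational morphism from a smooth variety — a resolution of singularities of $S^n(\bA^2_k)$.

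The step I expect to be the genuine obstacle is the one just taken for granted: that every finite subscheme of $\bA^2_k$ is a flat limit of reduced configurations, equivalently that $\Hilbn$ is irreducible. This is a feature special to surfaces — for $\bA^3_k$ the Hilbert scheme of $n$ points is reducible once $n$ is large enough — and the clean way to handle it is to reduce, by smoothing one support point at a time, to the case of a subscheme concentrated at a single point, and then to show that any $\mathfrak{m}$-primary ideal of $k[[x,y]]$ degenerates, in a flat one-parameter family, to a \emph{curvilinear} ideal — one whose quotient is $k[[t]]/(t^\ell)$ — which visibly smooths by separating the $\ell$ points along the curve. This last assertion is Briançon's theorem that the punctual Hilbert scheme $\Hilb^n_0(\bA^2_k)$ is irreducible; I would either cite it or establish it directly via explicit one-parameter degenerations linking an arbitrary monomial ideal to a curvilinear one.
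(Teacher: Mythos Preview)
The paper does not prove this theorem at all---it simply cites Fogarty. So there is no proof in the paper to match against directly. That said, the paper \emph{does} give, in the subsequent ``torus action'' subsection, an independent argument for the smoothness of $\Hilbn$, and it is worth noting how your approach differs from that one.

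Your argument is essentially Fogarty's: compute $\dim_k T_{[Z]}\Hilbn = \dim_k \Hom_R(I,R/I)$ at \emph{every} point via Hilbert--Burch and the identification $\operatorname{Ext}^2_R(A,A)\cong\omega_A$, obtaining $2n$ uniformly; then invoke irreducibility (Brian\c{c}on) to conclude that the local dimension is also $2n$ everywhere, hence regularity. This is correct, and your length count is clean. The paper instead uses the $T^2$-action: since the singular locus is closed and $T^2$-invariant, it suffices to check smoothness at the finitely many monomial ideals, where Haiman's arrow combinatorics bounds the cotangent space by $2n$; the reverse inequality comes from the easy fact that each monomial ideal is a limit of reduced configurations.

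The two routes trade off as follows. Your homological argument works for any smooth surface---no torus required---but needs irreducibility of $\Hilbn$ (equivalently, of the punctual Hilbert scheme) as a separate input, which you rightly flag as the nontrivial step. The paper's torus argument, by contrast, gets smoothness directly without assuming irreducibility, and in fact yields irreducibility as a corollary: once $\Hilbn$ is smooth its components are disjoint, yet every $T^2$-orbit closure contains a monomial ideal, and every monomial ideal visibly lies in the main component. The torus approach also produces the explicit $T^2$-weights on the tangent spaces at fixed points, which is precisely the data the thesis needs later for its moment-polytope computations---so for the paper's purposes it is the more economical route.
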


As a corollary, $\Hilbn$ is irreducible. In addition, because the Chow variety is $2n$-dimensional and the Hilbert-Chow morphism is birational, $\Hilbn$ is also $2n$-dimensional.

\subsection{An open cover}\label{s;opencover}

In this subsection, we describe an open cover of $\Hilbn$ indexed by the colength-$n$ monomial ideals. Our main references for this material are \cite{H2}, \cite[Chapter 18]{MS}, and \cite{Led}.

\begin{definition}
Let $\lambda$ be a colength-$n$ monomial ideal. Define $U_\lambda\subseteq \Hilbn$ to be the set of ideals $I$ such that the monomials outside of $\lambda$ form a $k$-vector space basis for $k[x,y]/I$.
\end{definition}

Notice that the set of all $U_{\lambda}$ cover $\Hilbn$. Indeed, if $I\in \Hilbn$ and $>$ is a term order, the initial ideal $\textrm{init}_{>}(I)$ (see Subsection \ref{s;gb} for relevant definitions) is a colength-$n$ monomial ideal and the $n$ monomials outside of $\textrm{init}_{>}(I)$ form a $k$-vector space basis for $k[x,y]/I$. Thus, $I\in U_{\textrm{init}_{>}(I)}$.

Any ideal $I\in U_{\lambda}$ has a unique set of generators of the form \[\left\{x^ry^s-\sum_{x^hy^l\notin\lambda}c^{r,s}_{h,l}x^hy^l~|~x^ry^s\in \lambda, c^{r,s}_{h,l}\in k\right\}.\] Thus, the $c^{r,s}_{h,l}$ are coordinates on $U_\lambda$. Furthermore, if $g\in I$, then $x\cdot g\in I$ and $y\cdot g\in I$. These conditions yield relations in the various $c^{r,s}_{h,l}$. To be precise, suppose that $g = x^ry^s-\sum_{x^hy^l\notin\lambda}c^{r,s}_{h,l}x^hy^l$ is an element of $I$. Then, \[x\cdot g = x^{r+1}y^s - \sum_{x^hy^l\notin\lambda}c^{r,s}_{h,l}x^{h+1}y^l~~~(*)\] is also in $I$. Notice that some of the $x^{h+1}y^l$ appearing in $(*)$ are in $\lambda$. We can therefore replace these $x^{h+1}y^l\in\lambda$ using $x^{h+1}y^l-\sum_{x^{h'}y^{l'}}c^{h+1,l}_{h',l'}x^{h'}y^{l'}\in I$ to see that \[x^{r+1}y^s-\left(\sum_{x^{h+1}y^l\notin \lambda}c^{r,s}_{h,l}x^{h+1}y^l+\sum_{x^{h+1}y^l\in \lambda}c^{r,s}_{h,l}\sum_{x^{h'}y^{l'}}c^{h+1,l}_{h',l'}x^{h'}y^{l'}\right)~~~(**)\] is an element of $I$. On the other hand, \[x^{r+1}y^s-\sum_{x^hy^l\notin\lambda}c^{r+1,s}_{h,l}x^hy^l~~~(***)\] is also in $I$. Equating coefficients in $(**)$ with those in $(***)$ yield relations amongst various $c^{r,s}_{h,l}$. Let $J_{\lambda}$ denote the ideal generated by these relations, along with the relations obtained by interchanging the roles of $x$ and $y$ in the above computation.

\begin{proposition}\label{p;hilbbasic}
The sets $U_{\lambda}$ are open affine subvarieties that cover $\Hilbn$. The affine coordinate ring is given by $k[\{c^{r,s}_{h,l}~|~x^ry^s\in \lambda,~x^hy^l\notin \lambda\}]/J_\lambda$.
\end{proposition}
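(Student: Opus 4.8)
The plan is to argue functorially, using that $\Hilbn$ represents $\mathcal{H}ilb^n_{\bA^2_k}$. For a $k$-algebra $R$ let $h_\lambda(R)\subseteq\mathcal{H}ilb^n_{\bA^2_k}(\Spec R)$ be the set of flat families $Z\subseteq\Spec R\times\bA^2_k$ of length $n$ such that the images of $\{x^hy^l~|~x^hy^l\notin\lambda\}$ form an $R$-module basis of $\Gamma(Z,\mathcal{O}_Z)$. First I would check that $h_\lambda\hookrightarrow\mathcal{H}ilb^n_{\bA^2_k}$ is an open subfunctor. For any $Z\in\mathcal{H}ilb^n_{\bA^2_k}(\Spec R)$ the pushforward $\pi_*\mathcal{O}_Z$ is locally free of rank $n$ over $R$ (flat, finite, constant fiber length), and the $n$ standard monomials give an $R$-linear map $R^n\to\pi_*\mathcal{O}_Z$; the condition defining $h_\lambda$ is exactly that this map be an isomorphism, which holds over the open complement of the vanishing locus of its determinant (computed in any local trivialization). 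Hence the inclusion is representable by open immersions, so $h_\lambda$ is represented by an open subscheme $U_\lambda\subseteq\Hilbn$, which (being open in the irreducible smooth scheme $\Hilbn$) is a subvariety.

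Next I would identify $U_\lambda$ with $\Spec A_\lambda$, where $A_\lambda:=k[\{c^{r,s}_{h,l}\}]/J_\lambda$, by matching the functors $h_\lambda$ and $\Hom(A_\lambda,-)$. Given $Z\in h_\lambda(R)$, write $\Gamma(Z,\mathcal{O}_Z)=R[x,y]/I$; for each $x^ry^s\in\lambda$ there is a unique expansion $x^ry^s\equiv\sum_{x^hy^l\notin\lambda}c^{r,s}_{h,l}\,x^hy^l$ in $\Gamma(Z,\mathcal{O}_Z)$, producing elements $c^{r,s}_{h,l}\in R$. Reducing $x\cdot(x^ry^s-\sum c^{r,s}_{h,l}x^hy^l)$ and $y\cdot(x^ry^s-\sum c^{r,s}_{h,l}x^hy^l)$ to combinations of standard monomials in two ways — precisely the computation $(*)$–$(***)$ above — shows these elements satisfy all generators of $J_\lambda$, giving a natural transformation $h_\lambda\to\Hom(A_\lambda,-)$. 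Conversely, a homomorphism $A_\lambda\to R$ is a choice of $c^{r,s}_{h,l}\in R$ satisfying $J_\lambda$; I would form the free $R$-module $M$ on the standard monomials and define $R$-linear endomorphisms $X,Y$ of $M$ by "multiply by $x$ (resp.\ $y$), then rewrite any resulting monomial lying in $\lambda$ using the $c$'s." The relations $J_\lambda$ are exactly what is needed to check that $X$ and $Y$ are well defined and commute, so $M$ becomes a cyclic $R[x,y]$-module; its annihilator $I$ is generated by $\{x^ry^s-\sum c^{r,s}_{h,l}x^hy^l~|~x^ry^s\in\lambda\}$, the quotient $R[x,y]/I\cong M$ is free of rank $n$, and $\Spec(R[x,y]/I)\subseteq\Spec R\times\bA^2_k$ is a flat length-$n$ family in $h_\lambda(R)$. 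These constructions are mutually inverse, so $U_\lambda\cong\Spec A_\lambda$; in particular $U_\lambda$ is affine with coordinate ring $A_\lambda$.

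To see the $U_\lambda$ cover $\Hilbn$ it suffices, $\Hilbn$ being of finite type over $k$, to check that every closed point lies in some $U_\lambda$. A closed point is an ideal $I\subseteq k[x,y]$ with $\dim_k k[x,y]/I=n$; choosing any term order $>$, the monomials outside $\textrm{init}_{>}(I)$ form a $k$-basis of $k[x,y]/I$ and $\textrm{init}_{>}(I)$ has colength $n$, so $I\in U_{\textrm{init}_{>}(I)}$, as already observed before the statement.

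The step I expect to be the main obstacle is the claim, in the second paragraph, that the relations $J_\lambda$ are not only necessary but \emph{sufficient}: one must show that imposing consistency of multiplication by $x$ and by $y$ forces the operators $X,Y$ on $M$ to commute and forces $R[x,y]/I$ to be free of rank $n$ with the prescribed basis. This is a confluence statement for the rewriting rules "reduce a monomial of $\lambda$ via the $c$'s," and verifying it requires bookkeeping over the possible reduction paths of a monomial down to the span of the standard monomials — this is where the staircase combinatorics of $\lambda$ genuinely enters.
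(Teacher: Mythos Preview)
The paper does not actually supply a proof of this proposition: it is a background result, stated after the preceding discussion and attributed to the references \cite{H2}, \cite[Chapter 18]{MS}, and \cite{Led}. The only part argued explicitly in the text is the covering claim, which is done exactly as you do it (pick a term order and observe $I\in U_{\textrm{init}_{>}(I)}$). So your proposal is not so much a different route as a genuine proof where the paper offers only a sketch plus citation.

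Your functorial argument is correct. One remark on the ``main obstacle'' you flag at the end: with the paper's definition of $J_\lambda$ the confluence/commutativity check is easier than you suggest. The paper introduces a variable $c^{r,s}_{h,l}$ for \emph{every} monomial $x^ry^s\in\lambda$, not just for border monomials, and the relations in $J_\lambda$ say precisely that applying $X$ (resp.\ $Y$) to the standard expansion of $x^ry^s$ yields the standard expansion of $x^{r+1}y^s$ (resp.\ $x^ry^{s+1}$). Hence for any standard monomial $x^hy^l$, both $XY(x^hy^l)$ and $YX(x^hy^l)$ compute the standard expansion of $x^{h+1}y^{l+1}$ in one step, and commutativity follows without any nontrivial rewriting analysis. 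The delicate reduction to \emph{border} variables is the content of the later Theorem~\ref{p;justInsidejustOutside}, not of this proposition.
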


\begin{example}
Let $\lambda = \langle x,y^n\rangle$. Then $U_{\langle x,y^n\rangle}$ consists of all $I\in \Hilbn$ such that $\{1,y,\dots,y^{n-1}\}$ is a $k$-vector space basis for $k[x,y]/I$. In addition, each $I\in U_{\langle x,y^n\rangle}$ is generated by the set of polynomials \[\{x^ry^s- c^{r,s}_{0,n-1}y^{n-1}-\cdots - c^{r,s}_{0,1}y-c^{r,s}_{0,0}~|~x^ry^s\in \langle x,y^n\rangle\}.\] Notice that not all of these generators are necessary. In fact, each $I\in U_{\langle x,y^n\rangle}$ is minimally generated by a set of the form \[G = \{y^n-c^{0,n}_{0,n-1}y^{n-1}-\cdots - c^{0,n}_{0,1}y-c^{0,n}_{0,0},~ x-c^{1,0}_{0,n-1}y^{n-1}-\cdots- c^{1,0}_{0,1}y-c^{1,0}_{0,0}\}.\] Indeed, with respect to the Lex term order with $x\gg y$, $G$ is a Gr\"{o}bner basis with initial ideal $\langle x,y^n\rangle$. Thus, $I\in U_{\langle x,y^n\rangle}$ if and only if $\textrm{init}(I) = \langle x,y^n\rangle$.
\end{example}

The number of generators of the coordinate ring of $U_{\lambda}$ can be reduced significantly from that in Proposition \ref{p;hilbbasic}.

\begin{definition}
Let $\lambda$ be a colength-$n$ monomial ideal. The \textbf{standard set of} $\mathbf{\lambda}$, which we denote by $S_{\lambda}$, is the set of all $(i,j)\in \bN^2$ such that $x^iy^j$ is a monomial outside of $\lambda$. The \textbf{border of} $\mathbf{\lambda}$, which we denote by $B_{\lambda}$, is the set of all $(i,j)\in \bN^2\setminus S_{\lambda}$ such that $(i-1,j)\in S_{\lambda}$ or $(i,j-1)\in S_\lambda$.
\end{definition}

\begin{example}
Let $\lambda = \langle x^2, xy^2, y^3\rangle\in \Hilb^5(\bA^2_k)$. The monomials outside of $\lambda$ are $\{1,x,y,xy,y^2\}$. Therefore, $S_\lambda = \{(0,0), (1,0), (0,1), (1,1), (0,2)\}$ and $B_{\lambda} = \{(2,0), (2,1), (1,2), (0,3)\}$.
\end{example}

Often, we draw the Young diagram (using French notation) associated to the monomial ideal $\lambda$ instead of writing down the standard set. Notice that the Young diagram and the standard set give equivalent information. 

\begin{theorem}\label{p;justInsidejustOutside}
Let $\lambda$ be a colength-$n$ monomial ideal. Let $S_{\lambda}$ denote its standard set and let $B_{\lambda}$ denote its border. The coordinate ring of $U_{\lambda}$ is generated by those $c^{rs}_{hl}$ such that $(r,s)\in B_{\lambda}$ and $(h,l)\in S_{\lambda}$.
\end{theorem}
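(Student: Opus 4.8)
The plan is to start from the presentation of the coordinate ring of $U_\lambda$ supplied by Proposition~\ref{p;hilbbasic}, namely $k[\{c^{r,s}_{h,l}:(r,s)\in\bN^2\setminus S_\lambda,\ (h,l)\in S_\lambda\}]/J_\lambda$, and to show that each generator $c^{r,s}_{h,l}$ already lies in the subalgebra generated by those with $(r,s)\in B_\lambda$. Everything hinges on one elementary combinatorial observation: if $(h,l)\in S_\lambda$ and $(h+1,l)\notin S_\lambda$, then $(h+1,l)\in B_\lambda$, and symmetrically for $(h,l+1)$ --- this is immediate from the definition of the border. Equivalently, multiplying a standard monomial by $x$ or $y$ yields a monomial that is either standard again or on the border, and never one lying strictly deeper inside $\lambda$.

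I would then induct on the total degree $r+s$ of the ``inside'' monomial $x^ry^s$. If $(r,s)\in B_\lambda$ there is nothing to prove; note in passing that every monomial of $\lambda$ of minimal total degree lies on the border, since lowering a positive exponent decreases the degree and hence exits $\lambda$, so the base of the induction is covered. Now let $x^ry^s\in\lambda$ with $(r,s)\notin B_\lambda$, and assume the claim for all inside monomials of smaller degree. As $x^ry^s\neq 1$, some exponent is positive; say $r\geq 1$, the case $r=0$, $s\geq 1$ being identical after interchanging $x$ and $y$, just as in the derivation of $J_\lambda$. Since $(r,s)\notin B_\lambda$ we have $(r-1,s)\notin S_\lambda$, so $g=x^{r-1}y^s-\sum_{(h,l)\in S_\lambda}c^{r-1,s}_{h,l}x^hy^l$ is one of the defining generators used to build $J_\lambda$, and it has total degree $r+s-1$. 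Running the computation $(*)\rightarrow(**)$ from the discussion preceding Proposition~\ref{p;hilbbasic} on this $g$ --- multiply by $x$, then reduce each $x^{h+1}y^l$ with $(h+1,l)\notin S_\lambda$ using its own defining generator, which by the observation above is a \emph{border} generator, so this single reduction already lands among the standard monomials --- and comparing with the defining generator $x^ry^s-\sum_{(h,l)\in S_\lambda}c^{r,s}_{h,l}x^hy^l$ produces exactly one of the relations generating $J_\lambda$, namely
\[c^{r,s}_{h,l}=\bigl(\text{a polynomial in the }c^{r-1,s}_{*,*}\text{ and the }c^{h+1,l}_{*,*}\text{ with }(h+1,l)\in B_\lambda\bigr).\]

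To conclude: the coordinates $c^{h+1,l}_{*,*}$ with $(h+1,l)\in B_\lambda$ are allowed generators by definition, and the coordinates $c^{r-1,s}_{*,*}$ lie in the subalgebra generated by the border coordinates by the inductive hypothesis (or trivially, if $(r-1,s)\in B_\lambda$); hence so does $c^{r,s}_{h,l}$. This finishes the induction, and with it the proof. The one substantive point is the combinatorial observation of the first paragraph, since it is what forces the reduction to terminate after a single step and to stay within the border coordinates; granting that, the computation is the one already performed in the excerpt, and the only remaining care is bookkeeping with indices and the degenerate cases $r=0$ or $s=0$.
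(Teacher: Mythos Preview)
The paper does not supply its own proof of this theorem; it is stated as background material in the subsection on the open cover, with references to Haiman, Miller--Sturmfels, and Lederer. So there is no in-paper argument to compare against.

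That said, your proof is correct and is essentially the standard argument for why border generators suffice. The induction on $r+s$ is the right framework, and the two structural facts you isolate are exactly what make it work: (i) if $(r,s)\in\lambda\setminus B_\lambda$ and $r\geq 1$ then $(r-1,s)\in\lambda$, so the generator $g$ of lower degree exists; and (ii) if $(h,l)\in S_\lambda$ and $(h+1,l)\notin S_\lambda$ then $(h+1,l)\in B_\lambda$, so the single reduction step in passing from $(*)$ to $(**)$ introduces only border coordinates. Point (ii) is the key observation and is immediate from the definition of the border. Your handling of the base case (minimal-degree monomials of $\lambda$ lie on the border) and the degenerate cases $r=0$ or $s=0$ is also fine. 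The argument you give is precisely the computation the paper sketches before Proposition~\ref{p;hilbbasic}, read in reverse as an elimination procedure rather than as a source of relations.
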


Before concluding this subsection, we note that for many colength-$n$ monomial ideals $\lambda$, $U_{\lambda}$ is isomorphic to $\bA^{2n}_k$. For example, when $n\leq 4$, $U_{\lambda}$ is always isomorphic to $\bA^{2n}_k$. 

\subsection{A torus action}

The torus $T^2 = \bG_m^2$ acts algebraically on $\bA^2_k$ by $(t_1,t_2)\cdot(x,y) = (t_1x,t_2y)$. This induces an action on $\Hilbn$. That is, if $I\in \Hilbn$, then \[(t_1,t_2)\cdot I = \langle f(t_1^{-1}x,t_2^{-1}y)~|~f(x,y)\in I \rangle.\] Notice that the $T^2$-fixed points of $\Hilbn$ are the colength-$n$ monomial ideals. 

\begin{lemma}
Each $U_{\lambda}$ is invariant under the $T^2$-action. As an element of the coordinate ring, $c^{r,s}_{h,l}$ has $T^2$-weight $(h-r,l-s)$. 
\end{lemma}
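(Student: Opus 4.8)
The plan is to verify the two assertions separately, both by direct unwinding of the definitions. For the invariance of $U_\lambda$ under $T^2$, recall that $I \in U_\lambda$ means precisely that the monomials outside $\lambda$ descend to a $k$-vector space basis of $k[x,y]/I$. Fix $(t_1,t_2) \in T^2$ and set $I' = (t_1,t_2)\cdot I$. Since the automorphism $f(x,y) \mapsto f(t_1^{-1}x, t_2^{-1}y)$ of $k[x,y]$ sends each monomial $x^a y^b$ to the scalar multiple $t_1^{-a} t_2^{-b} x^a y^b$, it permutes the set of monomials up to nonzero scalars and hence induces an isomorphism $k[x,y]/I \xrightarrow{\sim} k[x,y]/I'$ under which the image of $x^a y^b$ is a unit times $x^a y^b$. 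Therefore a set of monomials is a basis of $k[x,y]/I$ if and only if the same set is a basis of $k[x,y]/I'$; applying this to the monomials outside $\lambda$ gives $I \in U_\lambda \iff I' \in U_\lambda$, which is the claimed invariance.

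For the weight computation, I would work on the level of the structure-constant coordinates. By Proposition~\ref{p;hilbbasic}, an ideal $I \in U_\lambda$ has its unique standard generators
\[
x^r y^s - \sum_{x^h y^l \notin \lambda} c^{r,s}_{h,l}(I)\, x^h y^l, \qquad x^r y^s \in \lambda,
\]
and the $c^{r,s}_{h,l}$ are the affine coordinates on $U_\lambda$. Apply $(t_1,t_2)$: the generator above is sent (after multiplying through by the unit $t_1^r t_2^s$, which does not change the ideal it generates) to
\[
x^r y^s - \sum_{x^h y^l \notin \lambda} t_1^{r-h} t_2^{s-l}\, c^{r,s}_{h,l}(I)\, x^h y^l,
\]
so by uniqueness of the standard generators we read off $c^{r,s}_{h,l}\big((t_1,t_2)\cdot I\big) = t_1^{r-h} t_2^{s-l}\, c^{r,s}_{h,l}(I)$. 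Thus, viewing $c^{r,s}_{h,l}$ as a regular function on $U_\lambda$, we have $\big((t_1,t_2)^*\, c^{r,s}_{h,l}\big)(I) = c^{r,s}_{h,l}\big((t_1,t_2)\cdot I\big) = t_1^{r-h} t_2^{s-l}\, c^{r,s}_{h,l}(I)$, i.e.\ $c^{r,s}_{h,l}$ is a $T^2$-eigenvector of weight $(r-h, s-l)$. This is $(h-r, l-s)$ up to the sign convention governing whether the weight of a function or of the dual coordinate is recorded; I would simply fix the convention used elsewhere in the paper so that the stated weight $(h-r, l-s)$ comes out, the point being that the exponent vector of the scaling factor is $\pm(r-h, s-l)$.

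The only genuinely delicate point is the bookkeeping of which action ($I \mapsto (t_1,t_2)\cdot I$ versus its inverse) and which sign convention for weights are in force, since the two assertions of the lemma must be compatible with the definitions fixed just above in the text; once that is pinned down, both statements are immediate from the substitution $x^h y^l \mapsto t_1^{-h} t_2^{-l} x^h y^l$ and the uniqueness of the standard generating set. No serious obstacle is expected beyond this consistency check.
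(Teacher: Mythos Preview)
Your proof is correct and, for the weight computation, essentially identical to the paper's: both apply $(t_1,t_2)$ to a standard generator, factor out $t_1^{-r}t_2^{-s}$, and read off the new coefficient $t_1^{r-h}t_2^{s-l}c^{r,s}_{h,l}$. Your hedging about the sign convention is unnecessary once you adopt the standard convention that a function $f$ has weight $\chi$ when $(t\cdot f)(p)=f(t^{-1}\cdot p)=\chi(t)f(p)$; then $(h-r,l-s)$ drops out directly, with no ambiguity to resolve.

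For the $T^2$-invariance of $U_\lambda$ you take a slightly different route than the paper. You argue directly from the definition of $U_\lambda$: since $x^ay^b\mapsto t_1^{-a}t_2^{-b}x^ay^b$ rescales monomials by units, the monomials outside $\lambda$ form a basis of $k[x,y]/I$ iff they form a basis of $k[x,y]/((t_1,t_2)\cdot I)$. The paper instead argues on the coordinate side: since each $c^{r,s}_{h,l}$ is a $T^2$-weight vector, the relation ideal $J_\lambda$ (built from equating coefficients) is $T^2$-homogeneous, hence $U_\lambda=\Spec(k[\{c^{r,s}_{h,l}\}]/J_\lambda)$ is $T^2$-stable. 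Your argument is more self-contained and avoids appealing to the structure of $J_\lambda$; the paper's argument has the minor advantage of reusing the weight computation already in hand. Either way the content is the same.
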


\begin{proof}
Suppose $I\in U_{\lambda}$ and $x^ry^s-\sum_{x^hy^l\notin\lambda}c^{r,s}_{h,l}x^hy^l$ is a generator of $I$. Then, \[\begin{array}{ccc}(t_1,t_2)\cdot (x^ry^s-\sum_{x^hy^l\notin\lambda}c^{r,s}_{h,l}x^hy^l)& = &t_1^{-r}t_2^{-s}x^ry^s - \sum_{x^hy^l\notin\lambda}c^{r,s}_{h,l}t_1^{-h}t_2^{-l}x^hy^l\\ & =& t_1^{-r}t_2^{-s}(x^ry^s - \sum_{x^hy^l\notin\lambda} t_1^{r-h}t_2^{s-l}c^{r,s}_{h,l}x^hy^l)\end{array}\] So, as an element of the coordinate ring of $U_{\lambda}$, $c^{r,s}_{h,l}$ has weight $(h-r,l-s)$.

$U_{\lambda} = \Spec (k[\{c^{r,s}_{h,l}\}]/J_{\lambda})$ is $T^2$-invariant because $J_{\lambda}$ is $T^2$-homogeneous by construction. 
\end{proof}

We can use the torus action to prove that $\Hilbn$ is non-singular. To begin, notice that every ideal $I\in\Hilbn$ has a $T^2$-fixed point in the closure of its orbit. (Indeed, with respect to any monomial order, $\textrm{init}(I)\in \overline{T^2\cdot I}$. As $\textrm{init}(I)$ is a colength-$n$ monomial ideal, it is a $T^2$-fixed point.) Therefore, if the Hilbert scheme is singular anywhere, it must also be singular at one of the $T^2$-fixed points. We will now show that $\Hilbn$ is non-singular at each $T^2$-fixed point.

\begin{definition}
Let $\lambda$ be a colength-$n$ monomial ideal and let $S_{\lambda}$ denote its standard set. For each $x\in S_{\lambda}$, we define the \textbf{arm}, $a(x)$, and \textbf{leg}, $l(x)$, as follows: For $x = (i,j)\in S_{\lambda}$, \[a(x):=\{\#(i,r)\in S_{\lambda}~|~r>j\},~~~~~l(x):=\{\#(r,j)\in S_{\lambda}~|~r>i\}.\]  
\end{definition}

\begin{proposition}
Let $\lambda$ be a $T^2$-fixed point of $\Hilbn$. The tangent space $T_{\lambda}\Hilbn$ is a $2n$-dimensional $T^2$-representation with $T^2$-weights \[\{(-l(x),a(x)+1)~|~x\in S_{\lambda}\}\cup \{(l(x)+1,-a(x))~|~x\in S_{\lambda}\}.\]
\end{proposition}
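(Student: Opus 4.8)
The plan is to compute the tangent space $T_\lambda\Hilbn$ as a $T^2$-representation by identifying it with $\Hom_{k[x,y]}(I_\lambda, k[x,y]/I_\lambda)$, where $I_\lambda$ is the monomial ideal, and then to bookkeep the weights. Recall that for any point $[I]$ of a Hilbert scheme, the Zariski tangent space is canonically $\Hom_{k[x,y]}(I, k[x,y]/I)$ (first-order deformations of the ideal). When $I = I_\lambda$ is a monomial ideal, both $I_\lambda$ and $R/I_\lambda$ (with $R = k[x,y]$) are $T^2$-graded, so this Hom-space is $T^2$-graded, and I just need to compute its graded pieces. Equivalently — and this is the cleanest route for $R = k[x,y]$, a polynomial ring in two variables — I would work on the affine chart $U_\lambda$ using Theorem~\ref{p;justInsidejustOutside}: the coordinate ring of $U_\lambda$ is a quotient of the polynomial ring on the variables $c^{r,s}_{h,l}$ with $(r,s)\in B_\lambda$, $(h,l)\in S_\lambda$, and the point $\lambda$ itself is the origin. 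So $T_\lambda U_\lambda$ is the quotient of $\mathrm{span}\{c^{r,s}_{h,l} : (r,s)\in B_\lambda,\ (h,l)\in S_\lambda\}$ by the linear parts of the relations $J_\lambda$.

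The main computational step is therefore to extract the linear part of $J_\lambda$. Going back to the derivation of the relations in Subsection~\ref{s;opencover}: the relation equating coefficients in $(**)$ and $(***)$ is, modulo higher-order terms, a linear relation among the $c$'s (the quadratic term $\sum c^{r,s}_{h,l} c^{h+1,l}_{h',l'}$ drops out). So the linearized relations say: for $(r,s)\in B_\lambda$ with $x^{r+1}y^s$ also in $\lambda$, and for each $(h,l)\in S_\lambda$ with $x^{h+1}y^l\in\lambda$ on the border, one has $c^{r+1,s}_{h,l} \equiv c^{r,s}_{h-1,l}$ (indices interpreted appropriately, with terms dropped when the relevant monomial lies in $S_\lambda$), and symmetrically with $x\leftrightarrow y$. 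I would organize this by noting that a first-order deformation is determined by where it sends the minimal generators of $I_\lambda$ (the inner corners of $\lambda$), and the linearized syzygy relations cut the naive parameter space down. Since every weight space of $\Hom(I_\lambda, R/I_\lambda)$ is at most one-dimensional for $R = k[x,y]$ (this is the two-variable miracle — Ext-groups are controlled by the Koszul/Taylor resolution, which is very small), the count reduces to: for which weights $\mathbf{w}$ is $\Hom(I_\lambda, R/I_\lambda)_{\mathbf w} \neq 0$? A clean way to answer this is via the two-term free resolution $0\to \bigoplus R(-\mathbf b_k) \to \bigoplus R(-\mathbf a_j) \to I_\lambda \to 0$ of a monomial ideal in two variables (inner corners $\mathbf a_j$, outer corners $\mathbf b_k$), apply $\Hom(-, R/I_\lambda)$, and compute the graded Euler characteristic of the resulting complex; using $\sum_j t^{\mathbf a_j} - \sum_k t^{\mathbf b_k} = (1-t_1)(1-t_2)\,H_{R/I_\lambda}(t)$ and the known Hilbert series of $R/I_\lambda$, one arrives after simplification at exactly the weight multiset $\{(-l(x),a(x)+1)\} \cup \{(l(x)+1,-a(x))\}$ indexed by boxes $x\in S_\lambda$. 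That the individual Hom and Ext terms contribute with the predicted signs — i.e. that there is no cancellation masking a singularity — follows because there are exactly $2n$ terms counted and $\dim_k \Hom(I_\lambda, R/I_\lambda) \geq 2n = \dim\Hilbn$ always (lower bound from dimension of the smooth locus / irreducibility), forcing equality and forcing the Euler-characteristic count to be the honest dimension count.

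Alternatively, and perhaps more in the spirit of the rest of this subsection, I would do the bookkeeping combinatorially and directly: parametrize tangent vectors by assigning to each inner corner a polynomial combination of the standard monomials (the perturbation of that generator), impose the linearized syzygies, and check by an explicit hook-walk argument that the space of solutions has a basis indexed by pairs (box $x$, one of two ``directions''), with the stated weights. The weight $(-l(x), a(x)+1)$ arises from the deformation that pushes the box $x$ ``upward'' past its leg, and $(l(x)+1,-a(x))$ from pushing it ``rightward'' past its arm — the exponents in the weight are read off directly from the arm and leg lengths, which is why the answer takes this shape. I expect the main obstacle to be the careful matching of indices in the linearized syzygies: one must be meticulous about which border monomials $x^{h+1}y^l$ (resp. $x^hy^{l+1}$) land back inside $\lambda$ versus in $S_\lambda$, since only the former produce relations, and getting the combinatorics of ``arm'' and ``leg'' to come out with the correct $+1$ shifts requires attention. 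Everything else — graded-ness of the Hom-space, the at-most-one-dimensionality of weight spaces in two variables, the $2n$ dimension count — is either already in the excerpt or standard, so the heart of the proof is this linearized-syzygy computation, after which non-singularity of $\Hilbn$ follows since $\dim T_\lambda\Hilbn = 2n = \dim\Hilbn$ at every $T^2$-fixed point.
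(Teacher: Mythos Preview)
Your proposal is correct, and your ``alternatively'' paragraph is essentially what the paper does: the paper follows Haiman's argument by identifying each coordinate $c^{r,s}_{h,l}$ with an arrow from $(r,s)\in\bN^2\setminus S_\lambda$ to $(h,l)\in S_\lambda$, and then shows that the linearized relations in $J_\lambda$ say precisely that two arrows agree modulo $\mathfrak m_\lambda^2$ whenever one can be translated to the other (with tail staying outside $S_\lambda$ and head staying in $S_\lambda$ or crossing an axis, the latter forcing the class to vanish). Every arrow then translates to one of the $2n$ canonical arrows indexed by boxes $x\in S_\lambda$ with the stated weights, giving $\dim T_\lambda \leq 2n$; the reverse inequality comes, as you say, from the fixed point lying in the closure of the smooth $2n$-dimensional locus.

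Your primary route via the two-term free resolution of $I_\lambda$ and a graded Euler-characteristic computation is a genuinely different and equally valid argument (this is closer in spirit to the approach in Ellingsrud--Str\o mme or Nakajima). It has the advantage of packaging the combinatorics into a generating-function identity, at the cost of needing the separate observation that each weight space of $\Hom(I_\lambda,R/I_\lambda)$ is at most one-dimensional to rule out cancellation. Haiman's arrow-translation argument, by contrast, gives the spanning set directly and makes the weight-to-box correspondence visually explicit, which is why the paper (which leans on this picture later, e.g.\ in Figure~\ref{fig;goodarrows}) prefers it.
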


We sketch Haiman's proof (see \cite{H2}) of this result.

\begin{proof}[Proof (sketch)]
We show that the Zariski cotangent space $\frak{m}_{\lambda}/\frak{m}_{\lambda}^2$ has dimension at most $2n$ using the following procedure.
\begin{enumerate}
\item Work in the open patch $U_{\lambda}$ and note that $\frak{m}_{\lambda}$ is generated by all of the $c^{r,s}_{h,l}$ in the coordinate ring of $U_{\lambda}$.
\item Let $S_{\lambda}$ denote the standard set of $\lambda$. Identify each $c^{r,s}_{h,l}$ with the vector from $(r,s)$ to $(h,l)$. Notice that the position of the tail of this vector is an element of $\bN^2\setminus S_{\lambda}$ and that the position of the head of this vector is an element of $S_{\lambda}$. 
\item Say that one arrow can be \emph{translated} to another if there exists a sequence of horizontal and vertical shifts moving the first arrow to the second such that the position of the tail always remains an element of $\bN^2\setminus S_{\lambda}$ and the position of the head is always of the form $(a-c,b-d)$ where $(a,b)\in S_{\lambda}$ and $(c,d)\in \bZ_{\geq 0}^2$. (That is, it is permissible for the head of the arrow to lie below the $x$-axis or to the left of the $y$-axis.) Show that $c^{r_1,s_1}_{h_1,l_1}\equiv c^{r_2,s_2}_{h_2,l_2}~(\textrm{mod }\frak{m}_{\lambda}^2)$ if the arrow associated to $c^{r_1,s_1}_{h_1,l_1}$ can be translated to the arrow associated to $c^{r_2,s_2}_{h_2,l_2}$. Show that $c^{r,s}_{h,k}\equiv 0~(\textrm{mod }\frak{m}_{\lambda}^2)$ if $c^{r,s}_{h,k}$ can be translated such that the head of the arrow lies below the $x$-axis or to the left of the $y$-axis.
\item Notice that every arrow can be translated such that one of the following occurs: (i) the head crosses an axis, or (ii) there is an $x = (i,j)\in S_{\lambda}$ such that the head of the arrow is at $(i+l(x),j)$ and the tail is at $(i,j+a(x)+1)$, or (iii) there is an $x = (i,j)\in S_{\lambda}$ such that the head of the arrow is at $(i,j+a(x))$ and the tail is at $(i+l(x)+1,j)$. 
\end{enumerate}
This procedure yields $2n$ (i.e. $2$ for each $x\in S_{\lambda}$) elements which span $\frak{m}_{\lambda}/\frak{m}_{\lambda}^2$. Thus, the cotangent space is at most $2n$-dimensional. Because each $T^2$-fixed point lies in the closure of the smooth, $2n$-dimensional locus where all points are distinct, the tangent space at each $T^2$-fixed point is also at least $2n$-dimensional. Furthermore, we can see from the generators constructed above that the Zariski cotangent space has $T^2$-weights  \[\{-(-l(x),a(x)+1)~|~x\in S_{\lambda}\}\cup \{-(l(x)+1,-a(x))~|~x\in S_{\lambda}\}.\]
\end{proof}

Therefore, $\Hilbn$ is non-singular at each of the $T^2$-fixed points. It follows that $\Hilbn$ is non-singular everywhere.


\section{Frobenius splitting}

In this section, our main source is the textbook \cite{BK}. As in the text, we let $k$ be an algebraically closed field of characteristic $p>0$ and let $(X,\mathcal{O}_X)$ be a separated scheme of finite type over $k$. 

\subsection{Basic notions}

Recall that the \textbf{absolute Frobenius morphism}, $F:X\rightarrow X$, is defined to be the identity map on points and the $p^{th}$ power map on functions. Then, $F^{\sharp}:\mathcal{O}_X\rightarrow F_*\mathcal{O}_X$ is an $\mathcal{O}_X$-linear map where the module structure on $F_* \mathcal{O}_X$ is given by $a\cdot b =a^pb$ for any local sections a,b of $\mathcal{O}_X$.

\begin{definition}
A scheme $(X,\mathcal{O}_X)$ is \textbf{Frobenius split} by the $\mathcal{O}_X$-linear map $\phi: F_* \mathcal{O}_X\rightarrow \mathcal{O}_X$ if $\phi\circ F^{\sharp} = 1_{\mathcal{O}_X}$. The map $\phi$ is called a \textbf{splitting}.
\end{definition}

This definition of ``Frobenius split'' is a little bit different than \cite[Definition 1.1.3]{BK}. In the book, a scheme $X$ is called Frobenius split if \emph{there exists} a map $\phi: F_*\mathcal{O}_X\rightarrow \mathcal{O}_X$ such that $\phi\circ F^{\sharp} = 1_{\mathcal{O}_X}$. We call such schemes \textbf{Frobenius splittable}. We reserve the phrase ``$X$ is Frobenius split'' for when $X$ comes with a fixed splitting.

The next proposition follows immediately.

\begin{proposition}\cite[Proposition 1.2.1]{BK}
If $X$ is Frobenius split then $X$ is reduced.
\end{proposition}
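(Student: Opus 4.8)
The plan is to reduce to the affine case and then run the standard ``a splitting kills nilpotents'' computation. Reducedness is a local property, so it suffices to prove that $\mathcal{O}_X(U)$ is reduced for every affine open $U=\Spec A$ of $X$. Since the absolute Frobenius $F$ is the identity on the underlying topological space, it is an affine morphism and $F_*\mathcal{O}_X|_U\cong F_*\mathcal{O}_U$; hence the given splitting $\phi$ restricts to an $\mathcal{O}_U$-linear map $\phi_U\colon F_*\mathcal{O}_U\rightarrow\mathcal{O}_U$ with $\phi_U\circ F^{\sharp}=1_{\mathcal{O}_U}$. Translating to rings: we obtain an additive map $\phi\colon A\rightarrow A$ which is $A$-linear for the twisted module structure $a\cdot b=a^p b$ on the source, i.e.\ $\phi(a^p b)=a\,\phi(b)$, and which satisfies $\phi(b^p)=b$ for all $b\in A$ (this last identity being exactly $\phi\circ F^{\sharp}=1$).

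Next I would take a nilpotent element $a\in A$, say $a^n=0$, and choose $m$ large enough that $p^m\geq n$, so that $a^{p^m}=0$. Applying $\phi$ and using $\phi(c^p)=c$ with $c=a^{p^{m-1}}$ gives $0=\phi(a^{p^m})=\phi\bigl((a^{p^{m-1}})^p\bigr)=a^{p^{m-1}}$. Iterating this step $m$ times yields $a=0$. Thus $A$ has no nonzero nilpotents, so each affine open of $X$ is reduced, and therefore $X$ is reduced.

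The only point requiring any care is the bookkeeping around $F_*\mathcal{O}_X$ and its twisted $\mathcal{O}_X$-module structure: one must use that structure to see that $F^{\sharp}$ is $\mathcal{O}_X$-linear (so that the equation $\phi\circ F^{\sharp}=1_{\mathcal{O}_X}$ is an identity of $\mathcal{O}_X$-linear maps) and to justify restricting $\phi$ to an affine open. Once the definitions are unwound, the argument is a two-line calculation, so I do not expect any substantive obstacle here.
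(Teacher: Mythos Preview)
Your proof is correct and follows essentially the same approach as the paper: reduce to an affine open and use the identity $\phi(b^p)=b$ to kill nilpotents. The only cosmetic difference is that the paper picks the minimal $n$ with $a^n=0$ and shows $a^{n-1}=\phi((a^{n-1})^p)=\phi(0)=0$ in a single step, whereas you choose $m$ with $p^m\geq n$ and iterate $\phi$; both arguments are equivalent.
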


\begin{proof}
We proceed by contradiction. Suppose $X$ is Frobenius split by $\phi: \mathcal{O}_X\rightarrow F_*\mathcal{O}_X$. Let $U\subseteq X$ be an affine open subscheme and suppose that $\mathcal{O}_X(U)$ has a nilpotent element $a$. Let $n>1$ be the integer such that $a^n=0$ but $a^{n-1}\neq 0$. Then $a^{n-1} = \phi((a^{n-1})^p) = \phi(0) = 0$, a contradiction.
\end{proof}

\begin{definition}
Let $X$ by Frobenius split by $\phi: F_*\mathcal{O}_X\rightarrow \mathcal{O}_X$. A closed subscheme $Y\subseteq X$ is \textbf{compatibly split} if $\phi(F_*\mathcal{I}_Y)\subseteq \mathcal{I}_Y$.
\end{definition}

Again, this definition differs slightly from \cite[Definition 1.1.3]{BK} because we specify that $X$ comes with a fixed splitting.

\begin{proposition}\cite[Proposition 1.2.1]{BK}
Let $X$ be Frobenius split by $\phi: F_*\mathcal{O}_X\rightarrow \mathcal{O}_X$. Then intersections, unions, and components of compatibly split subschemes are compatibly split.
\end{proposition}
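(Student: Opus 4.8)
The plan is to reduce the whole statement to elementary algebra with the $p^{-1}$-linear endomorphism attached to the splitting. Cover $X$ by affine opens $\Spec R$; on such a patch $\phi$ becomes an additive map $\phi\colon R\to R$ satisfying $\phi(a^{p}b)=a\,\phi(b)$ for all $a,b\in R$ (this is $\mathcal{O}_X$-linearity for the twisted module structure on $F_*\mathcal{O}_X$) and $\phi(a^{p})=a$ for all $a$ (this is $\phi\circ F^{\sharp}=1$; in particular $\phi(1)=1$). A closed subscheme $Y=V(I)$ is compatibly split precisely when $\phi(I)\subseteq I$. Since being compatibly split is a local condition, and the irreducible components of $Y$ meet each affine open $\Spec R$ in the irreducible components of $Y\cap\Spec R$ — equivalently, in the minimal primes of the ideal cutting out $Y\cap\Spec R$ — it suffices to prove the three claims for ideals of $R$ and then glue (using that $X$ is Noetherian, so $Y$ has finitely many components).

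The cases of intersections and unions are immediate. The ideal of $Y_1\cap Y_2$ is $I_1+I_2$, and additivity of $\phi$ gives $\phi(I_1+I_2)\subseteq\phi(I_1)+\phi(I_2)\subseteq I_1+I_2$; the same argument handles arbitrary intersections. The ideal of $Y_1\cup Y_2$ is $I_1\cap I_2$, and if $a\in I_1\cap I_2$ then $\phi(a)\in\phi(I_1)\subseteq I_1$ and $\phi(a)\in\phi(I_2)\subseteq I_2$, so $\phi(a)\in I_1\cap I_2$.

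The component statement is the main point and needs a little more. First I would observe that a compatibly split ideal $I$ is radical: if $a^{n}\in I$, pick $e$ with $p^{e}\ge n$ so that $a^{p^{e}}\in I$, and then apply $\phi$ repeatedly using $\phi(c^{p})=c$ — one application to $a^{p^{e}}=(a^{p^{e-1}})^{p}$ shows $a^{p^{e-1}}=\phi(a^{p^{e}})\in\phi(I)\subseteq I$, and iterating down gives $a\in I$. (Equivalently, $\phi$ descends to a splitting of $\mathcal{O}_Y$, so $Y$ is reduced by the proposition above.) Hence $I=\mathfrak{p}_{1}\cap\cdots\cap\mathfrak{p}_{r}$, the irredundant intersection of the distinct minimal primes of $I$, and the components of $V(I)$ are the $V(\mathfrak{p}_{i})$. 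Fix $i$; if $r=1$ there is nothing to prove, and otherwise, since $\mathfrak{p}_{i}$ is prime and cannot contain any other minimal prime, $\bigcap_{j\ne i}\mathfrak{p}_{j}\not\subseteq\mathfrak{p}_{i}$, so I may choose $b\in\bigcap_{j\ne i}\mathfrak{p}_{j}$ with $b\notin\mathfrak{p}_{i}$. Now let $a\in\mathfrak{p}_{i}$. Then $ab^{p}\in\mathfrak{p}_{i}$ (because $a\in\mathfrak{p}_{i}$) and $ab^{p}\in\mathfrak{p}_{j}$ for every $j\ne i$ (because $b\in\mathfrak{p}_{j}$), so $ab^{p}\in I$; applying $\phi$ and the identity $\phi(ab^{p})=b\,\phi(a)$ yields $b\,\phi(a)\in\phi(I)\subseteq I\subseteq\mathfrak{p}_{i}$, and primality of $\mathfrak{p}_{i}$ together with $b\notin\mathfrak{p}_{i}$ forces $\phi(a)\in\mathfrak{p}_{i}$. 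Thus $\phi(\mathfrak{p}_{i})\subseteq\mathfrak{p}_{i}$.

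The one genuinely delicate point is this last step, and inside it the choice to multiply the test element $a$ by $b^{p}$ rather than by $b$: that is exactly what makes the projection-formula identity $\phi(ab^{p})=b\,\phi(a)$ applicable, after which primality cancels the spurious factor $b$. Everything else is formal, and the passage from affine patches back to $X$ is the usual gluing of a local condition.
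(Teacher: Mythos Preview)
Your proof is correct and follows essentially the same route as the paper: the treatment of intersections and unions is identical, and for components both arguments hinge on the identity $b\,\phi(a)=\phi(b^{p}a)$ to push $\phi(a)$ into the desired ideal. The only cosmetic difference is that the paper packages the component step as showing the colon ideal $I_{Y}:I_{E}$ is $\phi$-stable (with $E$ the union of the other components), whereas you first invoke radicality to write $I=\bigcap\mathfrak{p}_i$ and then argue directly with a chosen $b\in\bigcap_{j\neq i}\mathfrak{p}_j\setminus\mathfrak{p}_i$; these are equivalent formulations of the same idea.
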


\begin{proof}
Let $Y_1$ and $Y_2$ be compatibly split subschemes of $X$ with ideal sheaves $\mathcal{I}_{Y_1}$ and $\mathcal{I}_{Y_2}$. Then, \[\phi(F_*\mathcal{I}_{Y_1\cap Y_2}) = \phi(F_*\mathcal{I}_{Y_1})+\phi(F_*\mathcal{I}_{Y_2}) \subseteq \mathcal{I}_{Y_1}+\mathcal{I}_{Y_2},~\textrm{and}\] \[\phi(F_*\mathcal{I}_{Y_1\cup Y_2}) = \phi(F_*\mathcal{I}_{Y_1})\cap\phi(F_*\mathcal{I}_{Y_2})\subseteq \mathcal{I}_{Y_1}\cap\mathcal{I}_{Y_2}. \] Thus, intersections and unions of compatibly split subschemes are compatibly split.

Next, suppose that $Y_1 = D\cup E$ where $D$ is an irreducible component of $Y_1$. To show that $D$ is compatibly split, we show that $\phi(\mathcal{I}_{Y_1}(U):\mathcal{I}_{E}(U))\subseteq \mathcal{I}_{Y_1}(U):\mathcal{I}_{E}(U)$ for any affine open subscheme $U$. Let $a\in \mathcal{I}_{Y_1}(U):\mathcal{I}_{E}(U)$ and let $b\in \mathcal{I}_{E}(U)$. Then \[b\phi(a) = \phi(b^pa) \in \phi(\mathcal{I}_{Y_1}(U))\subseteq \mathcal{I}_{Y_1}(U)\] since $Y_1$ is compatibly split.
\end{proof}

Thus, given a Frobenius split scheme $X$ and a compatibly split subscheme $D$, we can intersect the components of $D$, decompose the intersection, intersect the new components, and so on to create a list of many compatibly split subvarieties of $X$. This process is the core of the Knutson-Lam-Speyer algorithm for finding compatibly split subvarieties of a Frobenius split scheme $(X,\phi)$ (see Subsection \ref{s;kls}). The algorithm also makes use of the following proposition.

\begin{proposition}\label{p;normalizationSplit}\cite[similar to Exercise 1.2.E (4)]{BK}
Let $X$ be irreducible and Frobenius split by $\phi: F_*\mathcal{O}_X\rightarrow \mathcal{O}_X$. 
\begin{enumerate}
\item Let $X_{\textrm{non-R1}}$ denote the codimension-$1$ component of the singular locus of $X$, with its reduced subscheme structure. Then, $X_{\textrm{non-R1}}$ is compatibly split.
\item Let $\nu:\widetilde{X}\rightarrow X$ be the normalization of $X$. There is an induced Frobenius splitting of $\widetilde{X}$ that compatibly splits $\nu^{-1}(X_{\textrm{non-R1}})$.
\end{enumerate}
\end{proposition}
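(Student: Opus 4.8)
The plan is to work locally on an affine open $U = \Spec A$ of $X$, where $A$ is an integral domain (as $X$ is irreducible and reduced) with fraction field $K$, and where the splitting $\phi$ corresponds to an additive map $\phi: A \to A$ satisfying $\phi(a^p b) = a\,\phi(b)$ and $\phi(1) = 1$. For part (1), I would first reduce to showing that the ideal $\frakp$ of $X_{\textrm{non-R1}}$ in $A$ satisfies $\phi(\frakp) \subseteq \frakp$. The key point is a characterization of $X_{\textrm{non-R1}}$ that is intrinsic enough to interact with $\phi$: the non-$R_1$ locus is the union of the height-one primes $\frakp$ of $A$ at which $A_\frakp$ fails to be a DVR, equivalently (for an excellent domain) the primes appearing in the conductor, or the support of $\widetilde A / A$ where $\widetilde A$ is the normalization. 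I would use that $\phi$ restricts to a splitting on any localization, and that $\phi$ sends the conductor ideal $\frakc = \{a \in A : a\widetilde A \subseteq A\}$ into itself — this last fact follows because $\phi$ extends to a splitting of $\widetilde A$ (part (2), proved first) and $\frakc$ is an ideal of both $A$ and $\widetilde A$; for $a \in \frakc$ and $x \in \widetilde A$, $x\,\phi(a) = \phi(x^p a)$, and $x^p a \in \frakc \subseteq A$, so $\phi(x^p a) \in A$, whence $\phi(a) \in \frakc$. Since $X_{\textrm{non-R1}}$ is the codimension-one part of $V(\frakc)$ and an intersection/component of a compatibly split subscheme, it is compatibly split by the preceding proposition.

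For part (2), I would construct the induced splitting on the normalization directly. The normalization map $\nu: \widetilde X \to X$ is finite and birational, so on the affine patch $\widetilde A$ is a subring of $K$ that is finite over $A$ and has the same fraction field. The Frobenius $F$ on $\widetilde X$ is compatible with that on $X$. To produce $\widetilde\phi: F_*\mathcal{O}_{\widetilde X} \to \mathcal{O}_{\widetilde X}$, I would extend $\phi$ from $A$ to $\widetilde A$ as follows: an element $z \in \widetilde A \subseteq K$ can be written $z = a/s$ with $a, s \in A$; one shows $\phi$ extends canonically to $K$ by $\phi(a/s) := \phi(a s^{p-1})/s$ (well-defined and additive, the standard extension of a splitting to the fraction field), and then one must check $\widetilde\phi(\widetilde A) \subseteq \widetilde A$. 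For the latter, since $\widetilde A$ is normal it suffices to check that for $z \in \widetilde A$, $\phi(z)$ is integral over $A$; using that $z$ satisfies a monic equation over $A$ and the Frobenius-linearity $\phi(w^p v) = w\,\phi(v)$, one bounds $\phi(z)$ by elements of $\widetilde A$ — alternatively, invoke \cite[Exercise 1.2.E]{BK} directly, as the statement notes this is essentially that exercise. Finally $\widetilde\phi$ splits $\nu^{-1}(X_{\textrm{non-R1}})$: its ideal is the extension of $\frakc$ (or its radical) to $\widetilde A$, and since $\phi(\frakc) \subseteq \frakc$ and $\widetilde\phi$ extends $\phi$, compatibility is inherited, then pass to components/radical using the earlier proposition.

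I expect the main obstacle to be verifying cleanly that the extended map $\widetilde\phi$ actually lands in $\widetilde A$ — i.e., that extending the splitting along the birational finite map $\nu$ does not push functions out of the normalization. This is where one genuinely uses normality (the $S_2 + R_1$ criterion, or the integral-closure characterization) together with Frobenius-linearity; the cited exercise in \cite{BK} does exactly this, so I would either quote it or reproduce its short argument. Everything else — that $\phi$ survives localization, that the conductor and the non-$R_1$ locus are $\phi$-stable, and that components and intersections of compatibly split subschemes are compatibly split — is either immediate or already established in the excerpt, so the proof is mostly a matter of assembling these pieces in the right order (part (2)'s extension first, then part (1)'s $\phi$-stability of the conductor, then the component/codimension bookkeeping).
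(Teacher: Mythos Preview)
Your overall strategy --- work affine-locally, use the conductor ideal $\frakc$, extend $\phi$ to the fraction field, and check the extension lands in the normalization --- is exactly the paper's. Two points deserve comment.

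First, you make part (1) depend on part (2) unnecessarily. The identity $x\,\phi(a) = \phi(x^p a)$ for $a\in\frakc$, $x\in\widetilde A$ only requires extending $\phi$ to the fraction field $K$ (which is automatic: set $\widetilde\phi(a/s)=\phi(as^{p-1})/s$), not to $\widetilde A$. Since $x^p a\in A$, the right-hand side is $\phi(x^p a)\in A$, so $\phi(a)\in\frakc$. The paper proves part (1) this way, before and independently of part (2).

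Second, you correctly flag the integrality check $\widetilde\phi(\widetilde A)\subseteq\widetilde A$ as the crux, but your sketched argument --- ``$z$ satisfies a monic equation, apply Frobenius-linearity'' --- does not work: $\phi$ is only $p$-linear, so $\phi(z^n)\neq\phi(z)^n$, and the monic relation for $z$ gives no direct control on $\widetilde\phi(z)$. The paper's argument is this: for $b\in\widetilde A$ one shows $\widetilde\phi(b)\cdot\frakc\subseteq\frakc$ by the same computation as above (for $a\in\frakc$, $b'\in\widetilde A$: $(\widetilde\phi(b)a)b' = \widetilde\phi(b(ab')^p) = \widetilde\phi(a^p\cdot b(b')^p)\in\phi(A)\subseteq A$). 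Iterating gives $\widetilde\phi(b)^n\frakc\subseteq\frakc$ for all $n$; picking a nonzero $a\in\frakc$, the elements $a,\widetilde\phi(b)a,\widetilde\phi(b)^2a,\ldots$ all lie in $A$, and Noetherianity forces a linear relation $\widetilde\phi(b)^{N+1}a=\sum c_i\widetilde\phi(b)^i a$. Cancelling $a$ (as $A$ is a domain) yields integrality of $\widetilde\phi(b)$. So the missing idea is that the conductor, already shown $\phi$-stable, is also stable under multiplication by $\widetilde\phi(b)$, and this is what forces integrality.
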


\begin{proof}
We follow the hint provided in the textbook.

It suffices to show that the proposition holds for any affine open subscheme $U\subseteq X$. So, suppose that $U = \Spec A$ and that $\nu: \Spec B\rightarrow \Spec A$ is the normalization of $\Spec A$. Let $E\subseteq \Spec A$ denote the set over which $\nu$ fails to be an isomorphism and let $I\subseteq A$ be the conductor ideal, \[I:=\{a\in A~|~aB\subseteq A\}.\] Then $E = \Spec (A/I)$. Furthermore, $I$ is compatibly split. Indeed, let $a\in I$ and $b\in B$. Then \[\phi(a)b = \phi(ab^p) \in \phi(A)\subseteq A.\] 

Since $X_{\textrm{non-R1}}\cap U$ is a union of some of the components of $E$, $X_{\textrm{non-R1}}\cap U$ is compatibly split. This proves 1.

Now $\phi$ extends to a splitting $\tilde{\phi}$ of $\textrm{Frac}(A)$ by setting $\tilde{\phi}(a_1/a_2) := \frac{\phi(a_2^{p-1}a_1)}{a_2}$. Thus, $\phi$ extends to a splitting of $B$ so long as each $\tilde{\phi}(b)$, $b\in B$, is integral over $A$. 

To do this, we first show that $\tilde{\phi}(b)I\subseteq I$. So, suppose $a\in I$ and $b, b'\in B$. Then, \[(\tilde{\phi}(b)a)b' = \tilde{\phi}(b(ab')^p) = \tilde{\phi}(a^p(b(b')^p))\in \tilde{\phi}(A) = \phi(A)\subseteq A\]
since $a^p\in I$ and $b(b')^p\in B$. Thus, $\tilde{\phi}(b)I\subseteq I$. Then for any $n\geq 0$, $\tilde{\phi}(b)^nI\subseteq I$ (by induction). It follows that $\tilde{\phi}(b)$ is integral over $A$. Indeed, if $a\in I$ then $\tilde{\phi}(b)a, \tilde{\phi}(b)^2a,\dots$ are all in $I$. Because $A$ is Noetherian, there is some $N$ such that $\tilde{\phi}(b)^{N+1}a = c_1\tilde{\phi}(b)a+\cdots+c_N\tilde{\phi}(b)^Na$, $c_i\in A$. Since $A$ is a domain, $\tilde{\phi}(b)^{N+1} - (c_1\tilde{\phi}(b)+\cdots+c_N\tilde{\phi}(b)^N)=0$. Thus, $\tilde{\phi}(b)\in B$ and $\phi$ extends to a splitting of $B$.

Finally, $\nu^{-1}(E)$ is compatibly split since $I$ is also the ideal of $\nu^{-1}(E)$. As $\nu^{-1}(X_{\textrm{non-R1}}\cap U)$ is a union of some of the components of $\nu^{-1}(E)$, $\nu^{-1}(X_{\textrm{non-R1}}\cap U)$ is compatibly split in $\Spec B$.
\end{proof}

We end this subsection with a proposition regarding open subschemes of a Frobenius split scheme.

\begin{proposition}\label{p;openSplit} \cite[Lemma 1.1.7]{BK}
Suppose $X$ is Frobenius split by $\phi: F_*\mathcal{O}_X\rightarrow \mathcal{O}_X$, and $V\subseteq X$ is an open subscheme. Then $V$ has an induced Frobenius splitting. Furthermore, if $Y$ is a compatibly split subvariety of $X$, then $Z=Y\cap V$ is a compatibly split subvariety of $V$ with the induced splitting. 

If $Y$ is a reduced closed subscheme of $X$ such that $Y\cap V$ is dense in $Y$ and compatibly split by $\phi|_{V}$, then $Y$ is compatibly split by $\phi$.
\end{proposition}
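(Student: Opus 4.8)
The plan is to reduce the assertion $\phi(F_*\mathcal{I}_Y)\subseteq\mathcal{I}_Y$ to an affine-local computation and then invoke the elementary fact that a regular function vanishing on a dense subset of a reduced subscheme vanishes on all of it. Since being compatibly split is local on $X$, it suffices to fix an affine open $U=\Spec A\subseteq X$, put $I:=\mathcal{I}_Y(U)\subseteq A$, and show $\phi_U(I)\subseteq I$, where $\phi_U\colon A\to A$ denotes $\phi$ over $U$ (an additive map with $\phi_U(a^pb)=a\,\phi_U(b)$). If $Y\cap U=\emptyset$ then $I=A$ and there is nothing to check, so assume $Y\cap U\neq\emptyset$. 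Because $Y$ is reduced, $I$ is a radical ideal, $V(I)=Y\cap U$ as topological spaces, and $I$ consists exactly of those $f\in A$ vanishing at every point of $V(I)$.

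Next I would transport the hypothesis to the open set $W:=V\cap U$, which is open in $X$ and in $V$. Restricting the inclusion $\phi|_V(F_*\mathcal{I}_{Y\cap V})\subseteq\mathcal{I}_{Y\cap V}$ to $W$, and using that $\phi$, being a morphism of sheaves, commutes with restriction of sections, we obtain: for every $f\in I$, writing $g:=\phi_U(f)$, the restriction $g|_W=\phi_W(f|_W)$ lies in $\mathcal{I}_{Y\cap V}(W)$. Since $Y$ is reduced so is $Y\cap V$, hence $\mathcal{I}_{Y\cap V}(W)$ is precisely the set of regular functions on $W$ vanishing at every point of $Y\cap W=Y\cap V\cap U$. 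Thus $g$ vanishes on $Y\cap V\cap U$.

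Finally I would use density. As $Y\cap V$ is dense in $Y$ and $Y\cap U$ is open in $Y$, the set $Y\cap V\cap U$ is dense in $Y\cap U$: any nonempty open of $Y\cap U$ is open in $Y$, hence meets $Y\cap V$, hence (being contained in $U$) meets $Y\cap V\cap U$. The vanishing locus $V(g)\subseteq U$ of the single function $g$ is closed and contains the dense subset $Y\cap V\cap U$ of $Y\cap U=V(I)$, so $V(g)\supseteq V(I)$; equivalently $g$ lies in every prime of $A$ containing $I$, whence $g\in\sqrt{I}=I$. This shows $\phi_U(I)\subseteq I$ for every affine open $U$, i.e.\ $\phi(F_*\mathcal{I}_Y)\subseteq\mathcal{I}_Y$, so $Y$ is compatibly split by $\phi$. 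The argument is short; the only delicate point is that $\phi$ is merely additive and $p^{-1}$-linear rather than a ring homomorphism, so one cannot conclude purely ideal-theoretically (for instance ``$\phi$ of a radical ideal lands in the radical ideal'')—the substantive input is the geometric vanishing statement, which is exactly where reducedness of $Y$ is used essentially.
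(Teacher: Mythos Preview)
Your proof is correct and follows essentially the same idea as the paper's: use compatibility on $V$ to see that sections of $\phi(F_*\mathcal{I}_Y)$ vanish on $Y\cap V$, then invoke density and reducedness to conclude they vanish on all of $Y$. The only cosmetic difference is that the paper packages the argument globally by observing that $\mathcal{I}_Y\subseteq\phi(F_*\mathcal{I}_Y)=:\mathcal{I}_{Y'}$ defines a closed subscheme $Y'\subseteq Y$ with $Y'\cap V=Y\cap V$, whence $Y'=Y$ by density and reducedness, whereas you unwind this element-by-element on an affine patch; the substantive input is identical.
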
 

\begin{proof}
That $\phi|_{V}:F_*\mathcal{O}_V\rightarrow \mathcal{O}_V$ is a splitting is clear since $\phi: F_*\mathcal{O}_X\rightarrow \mathcal{O}_X$ is a morphism of sheaves. Furthermore, if $Y\subseteq X$ is compatibly split then so is $Z = Y\cap V$ since $\phi|_{V}(F_*\mathcal{I}_Y|_{V})\subseteq \mathcal{I}_Y|_{V}$.

Next suppose that $Z \subseteq V$ is compatibly split by $\phi|_{V}$. Then $Z = Y\cap V$ for some subscheme $Y\subseteq X$. Assume that $Y$ is reduced and that $Z$ is dense in $Y$. We now show that $\phi(F_*\mathcal{I}_Y)\subseteq\mathcal{I}_Y$. 

To begin, notice that $\mathcal{I}_Y\subseteq \phi(F_*\mathcal{I}_Y)$ and that $\mathcal{I}_{Y'} = \phi(F_*\mathcal{I}_Y)$ for some subscheme $Y'\subseteq X$. Then $Y\cap V = Y'\cap V$ since $\phi|_{V}$ compatibly splits $Y\cap V$. Thus, $\mathcal{I}_{Y'}$ vanishes on $Y\cap V$. As $Y$ is reduced and $Y\cap V$ is dense it $Y$, it follows that $\mathcal{I}_{Y'}$ vanishes on $Y$. So, $\mathcal{I}_Y = \mathcal{I}_{Y'}$ and $Y$ is compatibly split. 
\end{proof}

\subsection{Splittings of affine space}

In this subsection, we compile some useful results concerning splittings of affine space. 

\begin{remark}\cite[Remarks 1.1.4]{BK}
When $X = \Spec R$ is an affine scheme, a Frobenius splitting of $X$ is a map $\phi:R\rightarrow R$ satisfying the following three properties
\begin{center}
(1)~ $\phi(a+b) = \phi(a)+\phi(b)$,~~~  (2)~ $\phi(a^pb)=a\phi(b)$,~~~  (3)~ $\phi(1) = 1$,
\end{center}
for any $a,b\in R$. Indeed, (1) and (2) taken together is equivalent to the linearity of $\phi$ (noting that the module structure on the first copy of $R$ is given by $a\cdot b =a^pb$). If (2) holds, then $\phi(a^p) = a\phi(1)$, for any $a\in R$. Therefore $\phi$ splits the Frobenius endomorphism if and only if $\phi(1) = 1$.
\end{remark}

\begin{definition}\cite[Example 1.3.1]{BK}
Let $\textrm{Tr}:k[x_1,\dots,x_n]\rightarrow k[x_1,\dots,x_n]$ be the additive function defined on monomials as follows: 
 \begin{displaymath}
   \textrm{Tr}(m) = \left\{
     \begin{array}{ll}
       \frac{(x_1\cdots x_n m)^{1/p}}{x_1\cdots x_n}, & \textrm{if } (x_1\cdots x_n m) \textrm{ is a }p^{th}\textrm{ power}\\
       0, &  \textrm{else}
     \end{array}
   \right.
\end{displaymath} 
\end{definition}

Notice that $\textrm{Tr}(g\cdot)$, $g\in k[x_1,\dots,x_n]$ is a splitting of $\bA^n_k$ if $\textrm{Tr}(g\cdot 1) = 1$. In fact, all splittings of affine space have the form $\textrm{Tr}(g\cdot)$ for some $g\in k[x_1,\dots,x_n]$. 

Let $f\in k[x_1,\dots,x_n]$. The following theorem provides one sufficient condition for $\textrm{Tr}(f^{p-1}\cdot)$ to be a splitting of $\bA^n$. We make use of this result later.

\begin{theorem}\label{t;LMPinit} \cite{LMP}
Let $f\in k[x_1,\dots,x_n]$ be a degree $n$ polynomial such that, under some weighting of the variables, $\textrm{init}(f) = \prod_i{x_i}$. Then $\textrm{Tr}(f^{p-1}\cdot)$ is a Frobenius splitting of $k[x_1,\dots,x_n]$ that compatibly splits the hypersurface $\{f=0\}$. 
\end{theorem}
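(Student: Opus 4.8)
Let $f \in k[x_1,\dots,x_n]$ be a degree-$n$ polynomial such that, under some weighting of the variables, $\mathrm{init}(f) = \prod_i x_i = x_1 \cdots x_n$. Then $\mathrm{Tr}(f^{p-1}\cdot)$ is a Frobenius splitting of $k[x_1,\dots,x_n]$ that compatibly splits $\{f = 0\}$.

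**Approach.** The plan is to verify directly that $\mathrm{Tr}(f^{p-1}\cdot)$ satisfies the splitting condition $\mathrm{Tr}(f^{p-1}\cdot 1) = 1$ (recall from the preceding discussion that any map of the form $\mathrm{Tr}(g\cdot)$ automatically satisfies additivity and the projection-formula property $\mathrm{Tr}(g\cdot a^p b) = a\,\mathrm{Tr}(g b)$, so only normalization needs checking), and then to check the compatibility condition $\mathrm{Tr}(f^{p-1}\cdot f \cdot h) \in (f)$ for all $h$, i.e. $\mathrm{Tr}(f^p h) \in (f)$. The second condition is in fact immediate once we have the first: by the projection formula $\mathrm{Tr}(f^p h) = f \cdot \mathrm{Tr}(h)$, which lies in $(f)$. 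So the entire content is the claim that $\mathrm{Tr}(f^{p-1}) = 1$.

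**Key steps.** First I would introduce the grading on $k[x_1,\dots,x_n]$ given by the chosen weights $w = (w_1,\dots,w_n)$, so that $f = x_1\cdots x_n + (\text{terms of strictly higher }w\text{-degree})$; here "degree $n$" refers to ordinary total degree, which is what makes $x_1\cdots x_n$ the unique lowest-degree possibility and forces every monomial of $f$ to have total degree exactly $n$. Expanding $f^{p-1}$ by the multinomial theorem, the unique term of minimal $w$-degree is $(x_1\cdots x_n)^{p-1}$, and every other monomial appearing in $f^{p-1}$ has strictly larger $w$-degree but the same total degree $n(p-1)$. Now apply $\mathrm{Tr}$: on the monomial $(x_1\cdots x_n)^{p-1}$ we get $\mathrm{Tr}((x_1\cdots x_n)^{p-1}) = \frac{((x_1\cdots x_n)^p)^{1/p}}{x_1\cdots x_n} = 1$. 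The crux is then to show that $\mathrm{Tr}$ kills every other monomial of $f^{p-1}$; I would argue that any monomial $m$ in $f^{p-1}$ with $x_1\cdots x_n \cdot m$ a perfect $p$-th power, say $m = x_1^{a_1-1}\cdots x_n^{a_n-1}$ with all $a_i \equiv 0 \pmod p$ and $\sum a_i = n(p-1) + n = np$, would have to satisfy $a_i \geq p$ for each $i$ with $\sum a_i = np$, forcing $a_i = p$ for all $i$, i.e. $m = (x_1\cdots x_n)^{p-1}$. Hence $\mathrm{Tr}(f^{p-1}) = \mathrm{Tr}((x_1\cdots x_n)^{p-1}) = 1$.

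**Main obstacle.** The only subtle point — and the step I'd expect to take the most care — is the monomial-counting argument showing that $(x_1\cdots x_n)^{p-1}$ is the \emph{only} monomial $m$ occurring in $f^{p-1}$ for which $x_1\cdots x_n\cdot m$ is a $p$-th power. One must use both the total-degree constraint ($\sum a_i = np$, coming from $\deg f = n$) and the divisibility constraint ($p \mid a_i$ for all $i$, coming from the $p$-th power condition) simultaneously: each $a_i$ is a positive multiple of $p$ hence $\geq p$, and $n$ terms each $\geq p$ summing to $np$ must all equal $p$. One should double-check that the exponent bookkeeping between "$m$" and "$x_1\cdots x_n\cdot m$" (the shift by $(1,\dots,1)$) is handled correctly and that the positivity $a_i \geq 1$ really does come from $m$ being a monomial of a power of $f$ — but since $x_1\cdots x_n \mid f^{p-1}$ is false in general, one instead observes that for the $p$-th power condition we need $p \mid (e_i + 1)$ where $e_i$ is the exponent of $x_i$ in $m$, so $e_i + 1 \in \{p, 2p, \dots\}$, and summing $e_i = n(p-1)$ over all $i$ pins everything down. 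This combinatorial lemma, together with the two-line projection-formula verification of compatibility, completes the proof.
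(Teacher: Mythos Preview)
Your argument is correct. The paper does not supply its own proof of this theorem; it is quoted from \cite{LMP} and used as a black box, so there is nothing in the paper to compare against. Your direct verification that $\mathrm{Tr}(f^{p-1})=1$ via the degree-and-divisibility count, together with the one-line compatibility check $\mathrm{Tr}(f^{p-1}\cdot f h)=\mathrm{Tr}(f^p h)=f\,\mathrm{Tr}(h)\in(f)$, is exactly the standard argument.

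One small inaccuracy worth fixing: you assert that every monomial of $f^{p-1}$ has total degree \emph{equal} to $n(p-1)$. That is only true if $f$ is homogeneous, which is not assumed. What you actually have is $\sum_i e_i \le n(p-1)$. Fortunately your divisibility argument gives the reverse inequality: $p\mid(e_i+1)$ and $e_i\ge 0$ force $e_i+1\ge p$, hence $\sum_i e_i \ge n(p-1)$. Equality then pins down $e_i=p-1$ for all $i$, exactly as you want. So the argument survives once you replace ``same total degree'' by ``total degree at most $n(p-1)$'' and let the two inequalities squeeze.

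The role of the weighting hypothesis, which you use correctly but somewhat implicitly, is precisely to guarantee that the coefficient of $(x_1\cdots x_n)^{p-1}$ in $f^{p-1}$ is $1$: writing $f=x_1\cdots x_n+g$ with every monomial of $g$ of strictly different weight, no cross-term $(x_1\cdots x_n)^j\cdot(\text{monomial of }g^{p-1-j})$ can reproduce $(x_1\cdots x_n)^{p-1}$. Without the weighting one would have to compute that coefficient by some other means.
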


\begin{remark}
If $f\in k[x_1,\dots,x_n]$ is a degree $n$ polynomial such that, under some weighting of the variables, $\textrm{init}(f) = \prod_i{x_i}$, then the divisor $\{f=0\}$ is said to have \textbf{residual normal crossings} (see \cite{LMP}).
\end{remark}

We conclude this subsection by stating two theorems of Knutson concerning residual normal crossings divisors. Note that we make use of both theorems when discussing the compatibly split subvarieties of the affine open patch $U_{\langle x,y^n\rangle}\subseteq \Hilbn$.

\begin{theorem}\label{t;knutson}\cite[Theorem 2]{K}
Let $f\in k[x_1,\dots,x_n]$ be a degree $n$ polynomial and suppose that there is a weighting of the variables such that $\prod_{i=1}^n{x_i}$ is a term of the initial form $\textrm{init}(f)$. Then,
\begin{enumerate}
\item $\textrm{Tr}(f^{p-1}) = \textrm{Tr}(\textrm{init}(f)^{p-1})$, so $\textrm{Tr}(f^{p-1}\cdot)$ defines a Frobenius splitting iff $\textrm{Tr}(\textrm{init}(f)^{p-1}\cdot)$ does.
\item Assume hereafter that $\textrm{Tr}(f^{p-1}\cdot)$ and $\textrm{Tr}(\textrm{init}(f)^{p-1}\cdot)$ define Frobenius splittings. If $I$ is a compatibly split ideal with respect to the first splitting, then $\textrm{init}(I)$ is compatibly split with respect to the second splitting.
\item Let $\mathcal{Y}_f$ and $\mathcal{Y}_{\textrm{init}(f)}$ denote the poset of (irreducible) varieties compatibly split by $\textrm{Tr}(f^{p-1}\cdot)$ and $\textrm{Tr}((\textrm{init}(f))^{p-1}\cdot)$, partially ordered by inclusion. Then the map \[\pi_{f,\textrm{init}}:\mathcal{Y}_{\textrm{init}(f)}\rightarrow \mathcal{Y}_f, ~~~~~~~~Y'\mapsto \textrm{unique min. }Y\textrm{ such that }\textrm{init}(Y)\supseteq Y'\] is well-defined, order-preserving, and surjective.
\end{enumerate}
\end{theorem}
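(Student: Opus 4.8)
The plan is to work throughout in the affine coordinate ring $R=k[x_1,\dots,x_n]$ and to lean on two elementary facts about $\textrm{Tr}$ and the initial form $\textrm{init}=\textrm{init}_w$ for the given weighting (normalized so that $\textrm{init}(g)$ is the sum of the terms of $g$ of lowest $w$-weight; then $\textrm{init}(gh)=\textrm{init}(g)\,\textrm{init}(h)$ since $R$ is a domain). Fact (a): if $g$ has total degree at most $n(p-1)$, then $\textrm{Tr}(g\cdot 1)$ is the coefficient of $(x_1\cdots x_n)^{p-1}$ in $g$ — a monomial $m$ of $g$ contributes only when $x_1\cdots x_n\,m$ is a $p$th power, which forces every exponent of $m$ to be $\equiv -1\pmod p$, hence $\ge p-1$, hence $\deg m\ge n(p-1)$, so $m=(x_1\cdots x_n)^{p-1}$, on which $\textrm{Tr}$ returns $1$. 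For part 1 I would apply this to $g=f^{p-1}$ and to $g=\textrm{init}(f)^{p-1}$, both $(p-1)$st powers of polynomials of degree $\le n$: each of $\textrm{Tr}(f^{p-1}\cdot1)$ and $\textrm{Tr}(\textrm{init}(f)^{p-1}\cdot1)$ equals the coefficient of $(x_1\cdots x_n)^{p-1}$ in $f^{p-1}$, resp.\ in $\textrm{init}(f)^{p-1}$; but $(x_1\cdots x_n)^{p-1}$ has $w$-weight $(p-1)d$, where $d$ is the weight of $\textrm{init}(f)$, and the weight-$(p-1)d$ component of $f^{p-1}$ is exactly $\textrm{init}(f)^{p-1}$, so these coefficients agree. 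Since every splitting of $\bA^n$ is of the form $\textrm{Tr}(g\,\cdot)$ and is a splitting exactly when $\textrm{Tr}(g\cdot1)=1$, part 1 follows.

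For part 2 the key is Fact (b): $\textrm{Tr}$ is $w$-homogeneous up to an affine rescaling — a surviving monomial of weight $v$ goes to a monomial of weight $(v-(p-1)W)/p$, where $W=w(x_1\cdots x_n)$, an increasing function of $v$, and $\textrm{Tr}$ is injective on surviving monomials; consequently $\textrm{init}(\textrm{Tr}(P))=\textrm{Tr}(\textrm{init}(P))$ for every $P\in R$ whenever the right side is nonzero (and the issue disappears when it is $0$). Given that $I$ is compatibly split by $\textrm{Tr}(f^{p-1}\,\cdot)$, I would invoke the standard reduction that a splitting $\psi$ compatibly splits an ideal $J=(b_1,\dots,b_r)$ as soon as $\psi(x^{\alpha}b_i)\in J$ for all $i$ and all $\alpha\in\{0,\dots,p-1\}^n$ (write a general element of $J$ as $\sum_i h_i b_i$, expand each $h_i=\sum_\alpha c_{i,\alpha}^{\,p}x^{\alpha}$, and use $\psi(c^{\,p}\cdot-)=c\,\psi(-)$). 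Taking the $b_i$ to be initial forms $\textrm{init}(g_i)$ with $g_i\in I$ generating $\textrm{init}(I)$, I get $x^{\alpha}b_i=\textrm{init}(x^{\alpha}g_i)$ and $\textrm{Tr}\!\big(\textrm{init}(f)^{p-1}x^{\alpha}b_i\big)=\textrm{Tr}\!\big(\textrm{init}(f^{p-1}x^{\alpha}g_i)\big)=\textrm{init}\!\big(\textrm{Tr}(f^{p-1}x^{\alpha}g_i)\big)\in\textrm{init}(I)$ by Fact (b) and the hypothesis $\textrm{Tr}(f^{p-1}x^{\alpha}g_i)\in I$; hence $\textrm{init}(I)$ is compatibly split by $\textrm{Tr}(\textrm{init}(f)^{p-1}\,\cdot)$, which is part 2.

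For part 3, I would deduce order-preservation and surjectivity formally from well-definedness together with the fact that Gröbner degeneration preserves dimension. Order-preservation: if $Y_1'\subseteq Y_2'$, every $f$-split $Y$ with $\textrm{init}(Y)\supseteq Y_2'$ also has $\textrm{init}(Y)\supseteq Y_1'$, so the minimal such $Y$ for $Y_1'$ lies inside that for $Y_2'$. Surjectivity: for $Y\in\mathcal{Y}_f$, part 2 makes $\textrm{init}(Y)$ compatibly $\textrm{init}(f)$-split, so each component of $\textrm{init}(Y)$ lies in $\mathcal{Y}_{\textrm{init}(f)}$; choosing a top-dimensional component $Y'$ gives $\dim Y'=\dim\textrm{init}(Y)=\dim Y$, and then $Y$ is a candidate for $Y'$ while $\textrm{init}(\pi_{f,\textrm{init}}(Y'))\supseteq Y'$ forces $\dim\pi_{f,\textrm{init}}(Y')\ge\dim Y$; with $\pi_{f,\textrm{init}}(Y')\subseteq Y$ and $Y$ irreducible this gives $\pi_{f,\textrm{init}}(Y')=Y$.

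The hard part, which I expect to be the main obstacle, is well-definedness: the existence of a \emph{unique} minimal $f$-split $Y$ with $\textrm{init}(Y)\supseteq Y'$. Simply intersecting all such $Y$ does not obviously work, because $\textrm{init}$ does not commute with intersections — one only gets $\textrm{init}(Y_1\cap Y_2)\subseteq\textrm{init}(Y_1)\cap\textrm{init}(Y_2)$, the unhelpful inclusion. The route I would take is to build the Gröbner degeneration family over $\bA^1$: form a degree-$(n+1)$ polynomial $\widetilde f\in k[x_1,\dots,x_n,t]$ with $\widetilde f|_{t=1}=f$ and $\widetilde f|_{t=0}=t\cdot\textrm{init}(f)$ whose initial form still contains the term $x_1\cdots x_n t$, so that the mechanism of Theorem~\ref{t;LMPinit} puts a Frobenius splitting on $\bA^{n+1}$ restricting to $\textrm{Tr}(f^{p-1}\,\cdot)$ on $\{t=1\}$ and to $\textrm{Tr}(\textrm{init}(f)^{p-1}\,\cdot)$ on $\{t=0\}$; the compatibly split subvarieties of $\bA^{n+1}$ are then the degeneration families $\widetilde Y$ of the $Y\in\mathcal{Y}_f$ together with subvarieties of the special fiber, and intersecting \emph{families} controls the special fiber well enough to extract, from any two minimal candidates for $Y'$, a common smaller candidate — forcing uniqueness. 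Verifying this total-space splitting and identifying its compatibly split subvarieties is where the real work of part 3 lies.
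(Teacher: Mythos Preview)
The paper does not prove this theorem: it is quoted as \cite[Theorem 2]{K} and stated without proof, serving as background input to the thesis. So there is no ``paper's own proof'' to compare against.

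On its own merits your approach to parts 1 and 2 is correct and is essentially the standard one. Fact (a) is exactly right (degree $\le n(p-1)$ forces the only surviving monomial under $\textrm{Tr}(\,\cdot\,1)$ to be $(x_1\cdots x_n)^{p-1}$), and the weight argument pinning that coefficient inside $\textrm{init}(f)^{p-1}$ is clean. Fact (b)---that $\textrm{Tr}$ takes weight $v$ to weight $(v-(p-1)W)/p$ monotonically and injectively on surviving monomials, hence commutes with $\textrm{init}$ when the latter does not kill everything---is the right mechanism for part 2, and your reduction to checking $\textrm{Tr}(\textrm{init}(f)^{p-1}x^{\alpha}b_i)\in\textrm{init}(I)$ on generators is legitimate.

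For part 3 you have correctly located the crux: uniqueness of the minimal $Y$ with $\textrm{init}(Y)\supseteq Y'$. Your proposed route---build a splitting on the total space $\bA^{n+1}$ of the Gr\"obner family so that compatibly split subvarieties over $t\neq 0$ are exactly the families $\widetilde Y$, then intersect families rather than fibers---is in fact the approach in \cite{K}. The point you would still need to nail down is that every compatibly split subvariety of $\bA^{n+1}$ not contained in $\{t=0\}$ is $\bG_m$-invariant (for the action making $t\mapsto 0$ the degeneration), which is what forces it to be a family $\widetilde Y$; once you have that, the intersection $\widetilde{Y_1}\cap\widetilde{Y_2}$ of two candidate families is compatibly split, its components are families, and any component whose special fiber contains $Y'$ gives a smaller candidate. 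Your sketch is on the right track but stops just short of this $\bG_m$-invariance step.
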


\begin{theorem}\label{t;yay}\cite{K}
Let $f\in k[x_1,\dots,x_n]$ and suppose there is weighting of the variables such that $\textrm{init}(f) = \prod_{i=1}^n{x_i}$. By the previous theorem, $\textrm{Tr}(f^{p-1}\cdot)$ Frobenius splits $\bA^n_k$. Let $Y_1$ and $Y_2$ be compatibly split subvarieties (with respect to this splitting). Then,
\begin{enumerate}
\item $\textrm{init}(Y_1\cap Y_2) = (\textrm{init}~Y_1)\cap (\textrm{init}~Y_2)$, and
\item if $Y_1\nsupseteq W$ and $\textrm{init}~Y_1\supseteq \textrm{init}~W$, then $W$ is not compatibly split.
\end{enumerate}
\end{theorem}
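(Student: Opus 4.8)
The plan is to derive statement~(2) from statement~(1) by a dimension count, and to prove statement~(1) from Theorem~\ref{t;knutson}, above all from the surjection $\pi_{f,\textrm{init}}\colon\mathcal{Y}_{\textrm{init}(f)}\to\mathcal{Y}_f$ of its part~(3). Write $I_j=I(Y_j)$; these are radical since $Y_1,Y_2$ are compatibly split. I will use freely: a compatibly split subscheme is reduced; intersections, unions and components of compatibly split subschemes are compatibly split; the Gr\"obner degeneration preserves Hilbert functions, hence dimension; by Theorem~\ref{t;knutson}(2) each $\textrm{init}(Y)=V(\textrm{init}\,I(Y))$ is again reduced (compatibly split for $\textrm{init}(f)=\prod_i x_i$); and $A\subseteq B\Rightarrow\textrm{init}(A)\subseteq\textrm{init}(B)$ for closed subschemes, obtained by taking initial ideals of $\mathcal{I}_B\subseteq\mathcal{I}_A$. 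Since $Y_1\cap Y_2$ is compatibly split we have $I(Y_1\cap Y_2)=I_1+I_2$ and $\textrm{init}(Y_1\cap Y_2)=V(\textrm{init}(I_1+I_2))$, where always $\textrm{init}(I_1+I_2)\supseteq\textrm{init}\,I_1+\textrm{init}\,I_2$; statement~(1) is the assertion that this containment is an equality.

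For statement~(1), the inclusion $\textrm{init}(Y_1\cap Y_2)\subseteq\textrm{init}(Y_1)\cap\textrm{init}(Y_2)$ is formal, from $Y_1\cap Y_2\subseteq Y_j$ and monotonicity of $\textrm{init}$. For the reverse inclusion, observe that $\textrm{init}(Y_1)\cap\textrm{init}(Y_2)$ is compatibly split for $\textrm{init}(f)$, hence reduced, hence the union of finitely many irreducible components $Z_1,\dots,Z_m\in\mathcal{Y}_{\textrm{init}(f)}$. Fix such a $Z$. Since $Z\subseteq\textrm{init}(Y_1)$ and $Z\subseteq\textrm{init}(Y_2)$, and $\pi_{f,\textrm{init}}(Z)$ is the least member of $\mathcal{Y}_f$ whose $\textrm{init}$ contains $Z$, we get $\pi_{f,\textrm{init}}(Z)\subseteq Y_1$ and $\pi_{f,\textrm{init}}(Z)\subseteq Y_2$, so $\pi_{f,\textrm{init}}(Z)\subseteq Y_1\cap Y_2$. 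Applying $\textrm{init}$ and using the defining property $\textrm{init}(\pi_{f,\textrm{init}}(Z))\supseteq Z$ gives $Z\subseteq\textrm{init}(Y_1\cap Y_2)$. Taking the union over components yields $\textrm{init}(Y_1)\cap\textrm{init}(Y_2)\subseteq\textrm{init}(Y_1\cap Y_2)$; as both sides are reduced this is the desired equality.

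For statement~(2), assume for contradiction that $W$ is compatibly split. Then $\textrm{init}(W)$ is reduced, and $Y_1\cap W$ is compatibly split, hence reduced with ideal $I(Y_1)+I(W)$; so statement~(1) gives $\textrm{init}(Y_1\cap W)=\textrm{init}(Y_1)\cap\textrm{init}(W)$, which equals $\textrm{init}(W)$ by the hypothesis $\textrm{init}(Y_1)\supseteq\textrm{init}(W)$. Hence $\dim(Y_1\cap W)=\dim\textrm{init}(Y_1\cap W)=\dim\textrm{init}(W)=\dim W$. But $Y_1\nsupseteq W$ and $W$ irreducible force $Y_1\cap W\subsetneq W$, so $\dim(Y_1\cap W)<\dim W$, a contradiction. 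Therefore $W$ is not compatibly split.

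The reverse inclusion in statement~(1) is where the real work sits: it fails for general ideals and uses the splitting essentially, through the existence and \emph{minimality} of $\pi_{f,\textrm{init}}$. The points to pin down there are that $\pi_{f,\textrm{init}}(Z)$ is a genuine minimum (not merely a minimal element) of $\{Y\in\mathcal{Y}_f:\textrm{init}(Y)\supseteq Z\}$ — precisely what allows $\textrm{init}(Y_j)\supseteq Z$ to force $\pi_{f,\textrm{init}}(Z)\subseteq Y_j$ — and that ``$\textrm{init}$'' may be taken with respect to a term order refining the given weighting, so that $\textrm{init}(f)=\prod_i x_i$ remains a single monomial, all of Theorem~\ref{t;knutson} applies, and every $\textrm{init}(Y)$ is honestly a Stanley--Reisner scheme (a reduced union of coordinate subspaces). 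If one preferred not to invoke $\pi_{f,\textrm{init}}$, the alternative would be to prove directly that the Rees-algebra degeneration of $Y_1\cap Y_2$ over $\bA^1_k$ is flat — equivalently that $\textrm{init}\,I_1+\textrm{init}\,I_2$ and $I_1+I_2$ have the same Hilbert function — but verifying that flatness appears to need essentially the same input from the splitting.
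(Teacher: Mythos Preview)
Your argument is correct. For part~(1), the paper does not give a proof at all --- it simply cites \cite[Corollary~2]{K} --- so your derivation via the minimum property of $\pi_{f,\textrm{init}}$ from Theorem~\ref{t;knutson}(3) is genuinely additional content, and the key step (that each component $Z$ of $\textrm{init}(Y_1)\cap\textrm{init}(Y_2)$ satisfies $\pi_{f,\textrm{init}}(Z)\subseteq Y_1\cap Y_2$, whence $Z\subseteq\textrm{init}(Y_1\cap Y_2)$) is clean and correct. For part~(2), your argument and the paper's coincide up to the final step: both assume $W$ compatibly split, apply part~(1) to get $\textrm{init}(W\cap Y_1)=\textrm{init}(W)$, and derive a contradiction. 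You finish with a dimension count (degeneration preserves dimension, and a proper closed subset of an irreducible variety has strictly smaller dimension), whereas the paper invokes Lemma~\ref{l;init} directly: from $I(W)\subseteq I(W\cap Y_1)$ and equal initial ideals it concludes $W\cap Y_1=W$, hence $W\subseteq Y_1$. The paper's route is marginally sharper in that it does not need $W$ irreducible, but in every application in the thesis $W$ is a subvariety, so your version loses nothing in practice.
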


\begin{proof}
1. is \cite[Corollary 2]{K}.

To prove 2., we proceed by contradiction and suppose that $W$ is compatibly split. Then so is $W\cap Y_1$, and 
\[\begin{array}{cccc}\textrm{init}(W\cap Y_1) &= & (\textrm{init}~W)\cap (\textrm{init}~Y_1) & ~~\textrm{(by 1.)}\\
& = & \textrm{init}~W & ~~\textrm{(by assumption)} \end{array}\] 
Thus, $W\cap Y_1 = W$ (by Lemma \ref{l;init}) and so $W\subseteq Y_1$, a contradiction. 
\end{proof}

\subsection{Frobenius splittings and anticanonical sections}

Let $X$ be a regular variety. In this subsection, we discuss the relationship between Frobenius splittings of $X$ and sections of the $(p-1)^{\textrm{st}}$ power of $X$'s anticanonical sheaf. 

\begin{theorem}\label{t;divisors1}\cite[Section 1.3]{BK}
Let $X$ be a regular variety.
\begin{enumerate}
\item The absolute Frobenius $F:X\rightarrow X$ is finite and flat.
\item There is an isomorphism $\textrm{Hom}_{\mathcal{O}_X}(F_{*}\mathcal{O}_X,\mathcal{O}_X)\cong H^0(X,F_{*}(\omega_X^{1-p}))$, where $\omega_X$ is $X$'s canonical sheaf.
\end{enumerate}
\end{theorem}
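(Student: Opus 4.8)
The plan is to handle the two parts separately: the first by a local finiteness computation together with ``miracle flatness'', the second by duality for finite flat morphisms.

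For part (1), finiteness is local, so I would pass to an affine open $U = \Spec A$ with $A$ a finitely generated $k$-algebra; pick algebra generators $x_1,\dots,x_n$ of $A$. Since $k$ is algebraically closed, hence perfect, $k^p = k$, so the subring $A^p$ of $p$-th powers contains $k[x_1^p,\dots,x_n^p]$. Now $A$ is generated as a module over $k[x_1^p,\dots,x_n^p]$ by the finitely many monomials $x_1^{a_1}\cdots x_n^{a_n}$ with $0\le a_i<p$, so $A$ is a finite $A^p$-module; this says precisely that $F_*\mathcal{O}_X$ is a coherent $\mathcal{O}_X$-module, i.e.\ $F$ is finite. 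For flatness I would either invoke ``miracle flatness'' — a finite morphism from a Cohen--Macaulay scheme to a regular scheme all of whose fibers have dimension $0$ is flat, and $F$ is a homeomorphism between the regular (hence Cohen--Macaulay) scheme $X$ and itself — or argue directly that étale-locally $X$ looks like $\mathbb{A}^n$, where the monomials above form an $\mathcal{O}_X$-basis of $F_*\mathcal{O}_X$, so $F_*\mathcal{O}_X$ is locally free of rank $p^{\dim X}$.

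For part (2), the key input is finite duality. For any finite flat $f\colon Y\to X$, the sheaf $\mathcal{H}om_{\mathcal{O}_X}(f_*\mathcal{O}_Y,\mathcal{O}_X)$ carries a natural $f_*\mathcal{O}_Y$-module structure, hence has the form $f_*(\omega_{Y/X})$ for a unique coherent sheaf $\omega_{Y/X}$ on $Y$, the relative dualizing sheaf; when $X$ and $Y$ are regular (more generally Gorenstein) one has $\omega_{Y/X}\cong \omega_Y\otimes_{\mathcal{O}_Y} f^*(\omega_X^{-1})$. Applying this with $Y=X$ and $f=F$, and using that the absolute Frobenius acts on a line bundle by raising its transition functions to the $p$-th power, so $F^*\mathcal{L}\cong \mathcal{L}^{\otimes p}$, one gets $\omega_{X/X}\cong \omega_X\otimes(\omega_X^{-1})^{\otimes p}=\omega_X^{1-p}$. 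Therefore $\mathcal{H}om_{\mathcal{O}_X}(F_*\mathcal{O}_X,\mathcal{O}_X)\cong F_*(\omega_X^{1-p})$ as $\mathcal{O}_X$-modules, and taking global sections — global $\Hom$ being global sections of sheaf-$\mathcal{H}om$ — yields the asserted isomorphism $\Hom_{\mathcal{O}_X}(F_*\mathcal{O}_X,\mathcal{O}_X)\cong H^0(X,F_*(\omega_X^{1-p}))$.

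The main obstacle is getting the twist by $\omega_X^{1-p}$ right in part (2): justifying $\omega_{Y/X}\cong\omega_Y\otimes f^*(\omega_X^{-1})$ and the computation of $F^*\omega_X$. If one prefers to avoid citing the full duality machinery, the alternative is hands-on: in étale-local coordinates $x_1,\dots,x_n$, the module $F_*\mathcal{O}_X$ is $\mathcal{O}_X$-free on the monomials $x^a$ with $0\le a_i<p$, so $\mathcal{H}om_{\mathcal{O}_X}(F_*\mathcal{O}_X,\mathcal{O}_X)$ is free of rank one over $F_*\mathcal{O}_X$ on the functional $\varphi_0$ dual to $x^{(p-1,\dots,p-1)}$ — essentially the operator $\Tr$ of the previous subsection — and one then checks how $\varphi_0$ transforms under a change of coordinates and matches the resulting cocycle with that of $\omega_X^{1-p}$, the $p$-th powers of Jacobians appearing because $d(x^p)=0$. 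Either way, this canonical-sheaf twist is the only genuinely delicate point; the finiteness and flatness in part (1) are essentially formal.
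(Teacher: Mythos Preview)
The paper does not give its own proof of this theorem; it is stated with a bare citation to \cite[Section 1.3]{BK} and used as a black box. So there is no ``paper's proof'' to compare against, only the treatment in Brion--Kumar.

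Your proposal is mathematically correct. For what it is worth, Brion--Kumar's own argument is closer to your ``hands-on'' alternative than to the duality-machinery version: they work \'etale-locally in coordinates, identify the free $F_*\mathcal{O}_X$-generator of $\mathcal{H}om_{\mathcal{O}_X}(F_*\mathcal{O}_X,\mathcal{O}_X)$ with the trace map $\Tr$ (exactly the operator introduced just before in the thesis), and check the cocycle directly. This is also what the thesis is implicitly relying on in the very next result, Theorem~\ref{localSectionSplit}, where the local description $\hat\tau(f(dt_1\wedge\cdots\wedge dt_n)^{-1})=\Tr(f)$ is made explicit. Your duality-for-finite-flat-morphisms route is cleaner conceptually and gets the twist $\omega_X^{1-p}$ without a coordinate computation, at the cost of invoking a bigger hammer; the explicit approach has the advantage that it simultaneously proves Theorem~\ref{localSectionSplit} and makes the evaluation map $\phi\mapsto\phi(1)$ transparent.
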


Thus, certain sections of $F_*(\omega_X^{1-p})$ determine Frobenius splittings of $X$. We call such sections \textbf{splitting sections}. (Note: As only some elements of $\textrm{Hom}_{\mathcal{O}_X}(F_{*}\mathcal{O}_X,\mathcal{O}_X)$ are splittings, only some sections of $F_*(\omega_X^{1-p})$ are splitting sections.) 

\begin{theorem}\label{localSectionSplit}\cite[Theorem 1.3.8]{BK}
Let $X$ be a nonsingular variety. Let $\epsilon$ be the evaluation map \[\epsilon: \mathcal{H}om_{\mathcal{O}_X}(F_*\mathcal{O}_X,\mathcal{O}_X)\rightarrow \mathcal{O}_X,~~\phi\mapsto \phi(1).\] Then, $\epsilon$ can be identified with the map \[\hat{\tau}:F_*(\omega_X^{1-p})\rightarrow \mathcal{O}_X,\] given at any closed point $x\in X$ by \[\hat{\tau}(f(dt_1\wedge\cdots\wedge dt_n)^{-1}) = \textrm{Tr}(f), \textrm{  for all }f\in\mathcal{O}_{X,x}.\] Here $t_1,\dots,t_n$ is a system of local coordinates at $x$. Thus, an element $\sigma\in H^0(X,\omega_X^{1-p})$ determines a splitting of $X$ if and only if $\hat{\tau}(\sigma) =1$. 
\end{theorem}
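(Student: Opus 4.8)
The plan is to reduce everything to an explicit local computation. Both the source $\mathcal{H}om_{\mathcal{O}_X}(F_*\mathcal{O}_X,\mathcal{O}_X)$ and the target $F_*(\omega_X^{1-p})$ are quasicoherent sheaves and the assertion is local, so I would work on an affine open $U=\Spec A$ equipped with local coordinates $t_1,\dots,t_n$ — regular functions whose differentials freely generate $\Omega^1_U$; such $U$ cover $X$ because $X$ is smooth. Over $U$ the canonical sheaf is free with basis $\eta:=dt_1\wedge\cdots\wedge dt_n$, so a section of $F_*(\omega_X^{1-p})|_U$ is an expression $f\cdot\eta^{1-p}$ with $f\in A$. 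The key structural input I would recall is that, since $X$ is smooth over the perfect field $k$, $F_*\mathcal{O}_X|_U$ is a \emph{free} $\mathcal{O}_U$-module on the monomials $\{t^{a}:=t_1^{a_1}\cdots t_n^{a_n}\mid 0\le a_i\le p-1\}$; equivalently, every $g\in A$ has a unique expansion $g=\sum_a c_a^{\,p}\,t^{a}$. Consequently $\mathcal{H}om_{\mathcal{O}_X}(F_*\mathcal{O}_X,\mathcal{O}_X)|_U$ is free of rank $p^{\,n}$ with dual basis $\{\psi_a\}$, where $\psi_a(g)=c_a$.

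Next I would isolate $\tau_0:=\psi_{(p-1,\dots,p-1)}$, the functional dual to the top monomial $(t_1\cdots t_n)^{p-1}$. Two things need checking. First, comparing $\tau_0(g)=c_{(p-1,\dots,p-1)}$ with the definition of $\textrm{Tr}$, one sees that $\tau_0=\textrm{Tr}$ exactly: keeping the monomials whose exponent vector is $\equiv(p-1,\dots,p-1)\bmod p$ and extracting the $p$-th root is precisely the formula for $\textrm{Tr}$ written in the coordinates $t_i$. Second, $\tau_0$ \emph{generates} $\mathcal{H}om_{\mathcal{O}_X}(F_*\mathcal{O}_X,\mathcal{O}_X)|_U$ as a free rank-one module over $F_*\mathcal{O}_U$ via $g\mapsto\bigl(h\mapsto\tau_0(gh)\bigr)$: a short exponent bookkeeping gives $\psi_a=t^{(p-1,\dots,p-1)-a}\cdot\tau_0$, and freeness follows by the same coefficient-extraction trick. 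Hence locally every $\phi$ is uniquely $\phi=f\cdot\tau_0$ with $f\in A$, and the evaluation map of the theorem sends it to $\epsilon(\phi)=\phi(1)=\tau_0(f)=\textrm{Tr}(f)$. So the only remaining freedom is the identification of $\tau_0$ with a generator of $F_*(\omega_X^{1-p})$.

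That identification is the genuinely computational step, and the one I expect to be the main obstacle. I would check that the \emph{local} assignment $\phi=f\cdot\tau_0\;\longleftrightarrow\;f\cdot\eta^{1-p}$ does not depend on the chosen coordinates, hence glues to a global isomorphism that must coincide with the one in Theorem \ref{t;divisors1}(2). Concretely, for a change of coordinates $s_i=s_i(t)$ with Jacobian $J=\det(\partial s_i/\partial t_j)$, one has $ds_1\wedge\cdots\wedge ds_n=J\,\eta$, so $\eta^{1-p}$ rescales by the factor $J^{1-p}$; the point is to prove the matching transformation law $\tau_0^{(s)}(\,\cdot\,)=\tau_0^{(t)}\!\bigl(J^{1-p}\,\cdot\,\bigr)$ for the trace functional. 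This is exactly the content of the Cartier-operator computation behind Theorem \ref{t;divisors1}(2); everything else is unwinding definitions. Granting it, the sheaf map $\hat\tau\colon F_*(\omega_X^{1-p})\to\mathcal{O}_X$ defined locally by $f\cdot\eta^{1-p}\mapsto\textrm{Tr}(f)$ is well defined and is carried to $\epsilon$ under the isomorphism of Theorem \ref{t;divisors1}(2).

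Finally, I would take global sections: a section $\sigma\in H^0(X,\omega_X^{1-p})$ corresponds to some $\phi\in\Hom_{\mathcal{O}_X}(F_*\mathcal{O}_X,\mathcal{O}_X)$ with $\phi(1)=\epsilon(\phi)=\hat\tau(\sigma)$. Since $\phi$ is $\mathcal{O}_X$-linear we have $\phi(a^p)=a\,\phi(1)$, so $\phi\circ F^{\sharp}=1_{\mathcal{O}_X}$ — that is, $\phi$ is a Frobenius splitting — if and only if $\phi(1)=1$. Therefore $\sigma$ determines a splitting of $X$ precisely when $\hat\tau(\sigma)=1$, as claimed.
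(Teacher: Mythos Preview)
The paper does not actually prove this theorem; it is simply quoted from \cite[Theorem 1.3.8]{BK} as background, so there is no ``paper's own proof'' to compare against. Your argument is a correct sketch of the standard proof found in Brion--Kumar: reduce to a local model using smoothness, exhibit $F_*\mathcal{O}_X$ as free on the monomials $t^a$ with $0\le a_i\le p-1$, identify the trace $\tau_0$ dual to $(t_1\cdots t_n)^{p-1}$ as a free generator of the Hom sheaf, and then verify via the Jacobian transformation law (the Cartier computation) that the assignment $f\cdot\tau_0\leftrightarrow f\cdot\eta^{1-p}$ is coordinate-independent. The only place you defer rather than compute is the Jacobian/Cartier step, which is indeed the substantive part; everything else you wrote is accurate bookkeeping.
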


\begin{definition}\cite[Exercise 1.3.E (2)]{BK}
If $\phi\in \textrm{Hom}_{\mathcal{O}_X}(F_{*}\mathcal{O}_X,\mathcal{O}_X)$ is splitting of a regular variety $X$, and $\phi$ corresponds (via the isomorphism in Theorem \ref{t;divisors1}) to $\sigma^{p-1}$, $\sigma\in H^0(X,F_*(\omega_X^{-1}))$, then we call the splitting $\phi$ a \textbf{$\mathbf{(p-1)^{st}}$ power}.
\end{definition}

\begin{remarks}
\begin{enumerate}
\item The particular splitting of $\Hilbn$ that we are concerned with is a $(p-1)^{st}$ power.   
\item Warning: Just because a splitting of $X$ is a $(p-1)^{st}$ power, it \emph{does not} necessarily imply that the induced splitting of a compatibly split subvariety is also a $(p-1)^{st}$ power! See the remark at the end of Subsection \ref{s;algorithm2pts} for more detail in the case of $\Hilb^2(\bA^2_k)$. 
\end{enumerate}
\end{remarks}

\begin{proposition}\label{p;zerosSplit}\cite[Proposition 1.3.11]{BK}
Let $X$ be a regular variety. If $\sigma\in H^0(X,\omega_X^{-1})$ is such that $\sigma^{p-1}$ determines a splitting of $X$, then the subscheme of zeros of $\sigma$ is compatibly split. In particular, this scheme is reduced.
\end{proposition}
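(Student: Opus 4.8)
The plan is to reduce the statement to a short computation with the operator $\textrm{Tr}$ in local coordinates. Since being compatibly split is a local condition, it suffices to check it on an affine open neighbourhood $U=\Spec R$ of an arbitrary closed point $x\in X$ chosen small enough that $\omega_X^{-1}|_U$ is free, say generated by $\theta:=(dt_1\wedge\cdots\wedge dt_n)^{-1}$ for a regular system of parameters $t_1,\dots,t_n$ of $\mathcal{O}_{X,x}$. Writing $\sigma|_U=f\theta$ with $f\in R$, the subscheme $Z$ of zeros of $\sigma$ has ideal $\mathcal{I}_Z|_U=(f)$ on $U$ (the image of $\omega_X\to\mathcal{O}_X$, $g\eta\mapsto fg$, where $\eta=dt_1\wedge\cdots\wedge dt_n$).

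Next I would make the splitting $\phi$ corresponding to $\sigma^{p-1}$ explicit on $U$. By Theorem~\ref{localSectionSplit}, under the identification $\Hom_{\mathcal{O}_X}(F_*\mathcal{O}_X,\mathcal{O}_X)\cong H^0(X,F_*(\omega_X^{1-p}))$ the local generator $\theta^{p-1}=(dt_1\wedge\cdots\wedge dt_n)^{-(p-1)}$ of $\omega_X^{1-p}$ corresponds to the homomorphism $h\mapsto\textrm{Tr}(h)$; since this identification is $F_*\mathcal{O}_X$-linear and $\sigma^{p-1}=f^{p-1}\theta^{p-1}$, the splitting $\phi$ is given on $U$ by $\phi(h)=\textrm{Tr}(f^{p-1}h)$. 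I would then use the twisted linearity $\textrm{Tr}(a^pb)=a\,\textrm{Tr}(b)$, which is immediate from the definition of $\textrm{Tr}$. Compatibility is then one line: for every $h\in R$,
\[
\phi(fh)=\textrm{Tr}\!\big(f^{p-1}\cdot fh\big)=\textrm{Tr}(f^{p}h)=f\,\textrm{Tr}(h)\in(f)=\mathcal{I}_Z|_U ,
\]
so $\phi(F_*\mathcal{I}_Z)\subseteq\mathcal{I}_Z$ and $Z$ is compatibly split. For the last assertion, $\phi(\mathcal{I}_Z)\subseteq\mathcal{I}_Z$ means $\phi$ descends to $\bar{\phi}\colon F_*\mathcal{O}_Z\to\mathcal{O}_Z$ with $\bar{\phi}\circ F^{\sharp}_Z=1_{\mathcal{O}_Z}$ (the reduction of $\phi\circ F^{\sharp}_X=1_{\mathcal{O}_X}$), so $Z$ is Frobenius split and hence reduced by \cite[Proposition~1.2.1]{BK}.

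I do not expect a genuine obstacle: all of the content is in matching the intrinsically defined splitting attached to $\sigma^{p-1}$ with the concrete operator $\textrm{Tr}(f^{p-1}\cdot\,)$ in a local coordinate system, which is precisely the job of Theorem~\ref{localSectionSplit}. The only point demanding a little care is that this local description is independent of the chosen parameters $t_i$, so that the local verifications patch to the asserted global statement; but that independence is built into the isomorphism of Theorem~\ref{localSectionSplit}, and I would invoke it rather than reprove it.
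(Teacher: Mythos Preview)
The paper does not include a proof of this proposition; it is stated with a citation to \cite[Proposition~1.3.11]{BK} and used as a black box. Your argument is correct and is precisely the standard proof given in that reference: pass to local coordinates, identify the splitting as $\textrm{Tr}(f^{p-1}\cdot)$ via Theorem~\ref{localSectionSplit}, and observe that $\textrm{Tr}(f^{p-1}\cdot fh)=f\,\textrm{Tr}(h)\in(f)$.
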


We make use of the next lemma in Chapter 2.

\begin{lemma}\label{l;nonconstnonvan}
Let $X$ be a regular variety defined over a field $k$ of characteristic $p>0$ and let $\sigma_1, \sigma_2\in H^0(X,\omega_X^{-1})$. If (i) both of $\sigma_1^{p-1}$ and $\sigma_2^{p-1}$ are splitting sections of $X$, (ii) $\sigma_1$ and $\sigma_2$ vanish in the same locations, and (iii) there are no non-constant, non-vanishing functions on $X$, then $\sigma_1^{p-1} = \sigma_2^{p-1}$. 
\end{lemma}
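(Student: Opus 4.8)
The plan is to compare $\sigma_1$ and $\sigma_2$ directly as sections of the line bundle $\omega_X^{-1}$. First I would observe that since $\sigma_1$ and $\sigma_2$ vanish on the same divisor $E$ (as schemes, by Proposition \ref{p;zerosSplit} both zero-schemes are reduced, hence equal as divisors since they have the same support), the ratio $\sigma_1/\sigma_2$ is a well-defined rational section of the trivial bundle $\omega_X^{-1}\otimes\omega_X = \mathcal{O}_X$ with neither zeros nor poles: the divisor of $\sigma_1/\sigma_2$ is $\divv(\sigma_1) - \divv(\sigma_2) = E - E = 0$. On a regular (hence normal) variety, a rational function with trivial divisor is a regular, nowhere-vanishing global function. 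By hypothesis (iii), $\sigma_1/\sigma_2 = c$ for some nonzero constant $c\in k^\times$, i.e. $\sigma_1 = c\,\sigma_2$.

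Next I would pin down the constant $c$ using the splitting condition. By Theorem \ref{localSectionSplit}, working in local coordinates $t_1,\dots,t_n$ at a closed point and writing $\sigma_i = f_i\,(dt_1\wedge\cdots\wedge dt_n)^{-1}$, the condition that $\sigma_i^{p-1}$ is a splitting section is $\hat\tau(\sigma_i^{p-1}) = \textrm{Tr}(f_i^{p-1}) = 1$. From $\sigma_1 = c\,\sigma_2$ we get $f_1 = c\,f_2$, so $f_1^{p-1} = c^{p-1} f_2^{p-1}$, and since $\textrm{Tr}$ is $k^p$-linear in the appropriate sense — more precisely $\textrm{Tr}(c^{p-1} g) = $ (the coefficient $c^{p-1}$ pulls out as $c^{(p-1)/p}$?) — I need to be slightly careful here. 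The cleanest route: $c^{p-1}$ is a scalar, and $\textrm{Tr}$ as defined is additive and satisfies $\textrm{Tr}(a^p m) = a\,\textrm{Tr}(m)$; writing $c^{p-1} = (c^{(p-1)/p})^p$ does not quite work over a general field, so instead I use that $c \in k = \bar{k}$ has a $p$-th root $c^{1/p}$, and then $\textrm{Tr}(c^{p-1} g) = \textrm{Tr}((c^{1/p})^{p(p-1)} g)$... this is getting awkward.

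A better approach for fixing $c$: apply $\textrm{Tr}$ directly. We have $1 = \textrm{Tr}(f_1^{p-1})$ and $1 = \textrm{Tr}(f_2^{p-1})$, while $f_1^{p-1} = c^{p-1}f_2^{p-1}$. Since $c^{p-1}\in k$ and $\textrm{Tr}$ is $p^{-1}$-linear — concretely, for a scalar $\lambda = \mu^p$ one has $\textrm{Tr}(\lambda g) = \mu\,\textrm{Tr}(g)$ — and every scalar in the algebraically closed field $k$ is a $p$-th power, writing $c^{p-1} = (d)^p$ with $d = (c^{p-1})^{1/p} = c^{(p-1)/p} \in k$, we obtain $1 = \textrm{Tr}(f_1^{p-1}) = \textrm{Tr}(d^p f_2^{p-1}) = d\,\textrm{Tr}(f_2^{p-1}) = d$. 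Hence $d = 1$, so $c^{p-1} = d^p = 1$. Therefore $\sigma_1^{p-1} = c^{p-1}\sigma_2^{p-1} = \sigma_2^{p-1}$, which is the claim.

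The main obstacle I anticipate is the bookkeeping in the second step: making the scaling behavior of $\textrm{Tr}$ under multiplication by a constant fully rigorous (it is genuinely $p^{-1}$-semilinear, not linear), and making sure the identity $f_1 = c f_2$ of local functions globalizes correctly — i.e. that the single constant $c$ is the same at every point, which follows from $\sigma_1/\sigma_2$ being a global function. One should also double-check that hypotheses (i) and (ii) together with regularity genuinely force equality of the zero \emph{divisors} (not just the supports): this is exactly where Proposition \ref{p;zerosSplit} enters, guaranteeing reducedness so that $\divv(\sigma_1)$ and $\divv(\sigma_2)$ are both the reduced divisor supported on the common vanishing locus. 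Everything else is formal.
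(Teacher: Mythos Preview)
Your proposal is correct and follows essentially the same route as the paper: both first use Proposition~\ref{p;zerosSplit} and hypothesis (iii) to get $\sigma_1 = c\,\sigma_2$ for some $c\in k^\times$, and then use the splitting condition to force $c^{p-1}=1$. The only difference is that the paper asserts this last step in one line (``both $\sigma_2^{p-1}$ and $c^{p-1}\sigma_2^{p-1}$ are splitting sections, which can only occur if $c^{p-1}=1$''), whereas you unwind it explicitly via the $p^{-1}$-semilinearity of $\textrm{Tr}$; your final version of that computation (writing $c^{p-1}=d^p$ with $d\in k$ and using $\textrm{Tr}(d^p g)=d\,\textrm{Tr}(g)$) is fine and could replace the false starts.
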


\begin{proof}
By Proposition \ref{p;zerosSplit}, the subschemes of zeros of $\sigma_1$ and of $\sigma_2$ are reduced. Therefore, since $\sigma_1$ and $\sigma_2$ vanish in the same location and there are no non-constant, non-vanishing functions on $X$, we get that $\sigma_1 = c\sigma_2$ for some nonzero $c\in k$. Thus, $\sigma_1^{p-1} = c^{p-1}\sigma_2^{p-1}$, and we see that both $\sigma_2^{p-1}$ and $c^{p-1}\sigma_2^{p-1}$ are splitting sections. This can only occur if $c^{p-1}=1$. Therefore, $\sigma_1^{p-1} = \sigma_2^{p-1}$ as desired.
\end{proof}

Many of the above ideas can be extended to normal varieties.

\begin{definition}
Let $X$ be a normal variety with regular locus $X_{\textrm{reg}}$. 
\begin{enumerate}
\item \cite[Remark 1.3.12]{BK} Let $i:X_{\textrm{reg}}\rightarrow X$ be the inclusion of the regular locus. Define $\omega_{X}:=i_*\omega_{X_{\textrm{reg}}}$. This is the \textbf{canonical sheaf} of $X$.
\item A divisor $D\subseteq X$ is called \textbf{anticanonical} if $D\cap X_{\textrm{reg}}$ is anticanonical in $X_{\textrm{reg}}$.
\end{enumerate}
\end{definition}

\begin{theorem}\label{t;divisors}
\cite{KM} (Kumar-Mehta Lemma) Let $X$ be an irreducible normal variety which is Frobenius split by $\sigma\in H^{0}(X,F_*(\omega_X^{1-p}))$. If $Y$ is compatibly split then $Y$ is contained in the singular locus of $X$, or $Y\subseteq V(\sigma)$, where $V(\sigma)$ denotes the subscheme of zeros of $\sigma$. 
\end{theorem}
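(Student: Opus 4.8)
The plan is to peel off an easy geometric reduction to the regular locus and then settle the case $X$ regular by a short computation with the trace map. Write $\phi\colon F_*\mathcal O_X\to\mathcal O_X$ for the splitting corresponding to $\sigma$ (Theorem~\ref{t;divisors1}), and let $Y\subsetneq X$ be a proper (irreducible) compatibly split subvariety with $Y\not\subseteq X_{\mathrm{sing}}$; we must show $Y\subseteq V(\sigma)$. Since $X_{\mathrm{sing}}$ is closed and $Y$ is irreducible, the generic point of $Y$ lies in $X_{\mathrm{reg}}$, so $V:=Y\cap X_{\mathrm{reg}}$ is dense open in $Y$ and, by Proposition~\ref{p;openSplit}, compatibly split in $X_{\mathrm{reg}}$ for the restricted splitting $\phi|_{X_{\mathrm{reg}}}$, which corresponds to $\sigma|_{X_{\mathrm{reg}}}$ with zero scheme $V(\sigma)\cap X_{\mathrm{reg}}$. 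As $V(\sigma)$ is closed in $X$, it suffices to prove $V\subseteq V(\sigma)\cap X_{\mathrm{reg}}$. Hence I may assume from the outset that $X$ is regular.

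Next I would pass to charts. Since $X$ is smooth over the perfect field $k$, cover it by affine opens $U=\Spec A$ admitting an étale morphism to $\mathbb A^n_k$, with coordinates $t_1,\dots,t_n$. Then $\omega_U$ is free on $dt_1\wedge\cdots\wedge dt_n$, and $A$ is a free $A^p$-module with basis $\{t^e:0\le e_i\le p-1\}$, so every $x\in A$ has a unique expression $x=\sum_e a_e^p\,t^e$ with $a_e\in A$. In these terms the trace map is $\Tr(\sum_e a_e^p t^e)=a_{(p-1,\dots,p-1)}$, and by Theorems~\ref{t;divisors1} and~\ref{localSectionSplit} the splitting takes the normal form $\phi=\Tr(g\cdot)$, where $g\in A$ is the function to which $\sigma$ restricts on $U$. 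Fixing a chart $U$ meeting $Y$ and putting $I:=I(Y\cap U)$, which is prime, the inclusion $Y\cap U\subseteq V(\sigma)$ is equivalent to $g\in I$, so it suffices to show $g\in I$.

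The crux is the following. Suppose $g\notin I$. For every $h\in I$ and every monomial $t^\alpha$, the element $g^{p-1}t^\alpha h$ again lies in $I$, so compatibility together with the twisted $A$-linearity of $\Tr$ gives
\[
\phi(g^{p-1}t^\alpha h)=\Tr(g^{p}t^\alpha h)=g\cdot\Tr(t^\alpha h)\in I,
\]
and since $I$ is prime and $g\notin I$ we get $\Tr(t^\alpha h)\in I$ for all $\alpha$. Writing $h=\sum_e a_e^p t^e$ and taking, for a fixed $e$, the exponent $\alpha$ with $\alpha_i=p-1-e_i$, a direct reduction of exponents modulo $p$ shows $\Tr(t^\alpha h)=a_e$; hence every $a_e\in I$, so $h\in I^{[p]}$. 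As $h\in I$ was arbitrary, $I\subseteq I^{[p]}$, and since the reverse inclusion is automatic, $I=I^{[p]}$, hence $I=I^{[p^k]}$ for all $k\ge 1$. But $I$ is a nonzero proper ideal of the domain $A$ ($I\ne 0$ since $Y\subsetneq X$); choosing $0\ne f\in I$ and a maximal ideal $\mathfrak n\supseteq I$, we obtain $f\in I^{[p^k]}\subseteq\mathfrak n^{p^k}$ for all $k$, so $f$ maps into $\bigcap_m(\mathfrak n A_\mathfrak n)^m=0$, contradicting the injectivity of $A\hookrightarrow A_\mathfrak n$. Therefore $g\in I$, which completes the proof.

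The main obstacle is organisational rather than conceptual: arranging the chart $U$ so that one simultaneously has a trivialisation of $\omega_X$, étale coordinates realising the normal form $\phi=\Tr(g\cdot)$, and the $A^p$-basis $\{t^e\}$, and checking that $\Tr$ and the splitting restrict compatibly to $X_{\mathrm{reg}}$ and to $U$. The genuinely load-bearing steps — the identity $\Tr(t^\alpha h)=a_e$ and the implication ``$I=I^{[p]}$ with $I$ proper forces $I=0$'' — are elementary, and I do not anticipate difficulty there; if one wants $Y$ merely closed rather than irreducible, one first reduces to components using that components of compatibly split subschemes are compatibly split.
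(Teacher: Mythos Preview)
The paper does not supply its own proof of this statement: Theorem~\ref{t;divisors} is simply quoted from \cite{KM} as background, so there is no in-paper argument to compare against.

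That said, your proof is correct and is essentially the standard Kumar--Mehta argument. The reduction to $X_{\mathrm{reg}}$ is sound, the local normal form $\phi=\Tr(g\cdot)$ is exactly what Theorems~\ref{t;divisors1} and~\ref{localSectionSplit} provide, and the core computation---using $\Tr(g^p t^\alpha h)=g\,\Tr(t^\alpha h)$, primality of $I$, and the explicit extraction $\Tr(t^{\alpha}h)=a_e$ to force $I\subseteq I^{[p]}$---is clean and accurate. The contradiction via Krull's intersection theorem from $I=I^{[p^k]}$ for all $k$ is the right way to finish. The only place one might want an extra word is the assertion that $A$ is free over $A^p$ with basis $\{t^e:0\le e_i\le p-1\}$ once you have an \'etale map to $\mathbb{A}^n_k$; this is standard (it follows from the compatibility of \'etale morphisms with Frobenius together with the corresponding fact for polynomial rings), but since you flagged it as the ``organisational'' step, you might cite it rather than leave it implicit.
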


Using the ideas presented so far, we can find all compatibly split subvarieties of $\bA^n_k$ with the \textbf{standard splitting}, $\textrm{Tr}((x_1\cdots x_n)^{p-1}\cdot)$. 

\begin{example}\label{e;stdsplitting}
The anticanonical section $(x_1x_2\cdots x_n)\frac{d}{dx_1}\wedge\frac{d}{dx_2}\wedge\cdots\wedge \frac{d}{dx_n}$ determines the standard Frobenius splitting of $\bA^n_k$. By intersecting the components of the divisor $\{x_1x_2\cdots x_n=0\}$, decomposing the intersections, intersecting the new components, etc., we obtain the collection of coordinate subspaces. This is precisely the set of compatibly split subvarieties of $\bA^n_k$. (Proof: We proceed by induction on $n$. By Theorem \ref{t;divisors}, all compatibly split subvarieties of $\bA^n_k$ are contained inside of some $\{x_i=0\}$. Therefore, it suffices to show that the compatibly split subvarieties of each $\{x_i=0\}$ (with the induced splitting) are the coordinate subspaces. As the induced splitting of each $\{x_i=0\} = \Spec(k[x_1,\dots,x_{i-1},x_{i+1},\dots,x_n])$ is the standard splitting, we may apply induction to get the desired result.) 
\end{example}

Using this example, we get the following corollary of \cite[Theorem 2]{K}.

\begin{corollary}\cite{K}
Let $f\in k[x_1,\dots,x_n]$ be a degree $n$ polynomial and suppose that there is a weighting of the variables such that $\textrm{init}(f) = \prod_{i=1}^n{x_i}$. Then $\textrm{Tr}(f^{p-1}\cdot)$ defines a Frobenius splitting, and if $I$ is compatibly split with respect to this splitting, then $\textrm{init}(I)$ is a squarefree monomial ideal.
\end{corollary}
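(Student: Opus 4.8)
The statement to prove is a corollary of Theorem~\ref{t;knutson}: if $f\in k[x_1,\dots,x_n]$ has degree $n$ and some weighting of the variables makes $\prod_{i=1}^n x_i$ a term of $\textrm{init}(f)$, then $\textrm{Tr}(f^{p-1}\cdot)$ is a Frobenius splitting, and any compatibly split ideal $I$ has $\textrm{init}(I)$ squarefree monomial.

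The plan is to string together the pieces already set up in the excerpt. First I would invoke Theorem~\ref{t;knutson}(1): under the stated hypothesis, $\textrm{Tr}(f^{p-1}\cdot) = \textrm{Tr}(\textrm{init}(f)^{p-1}\cdot)$, so it suffices to know that $\textrm{Tr}(\textrm{init}(f)^{p-1}\cdot)$ is a Frobenius splitting. But $\textrm{init}(f)$ has $\prod_i x_i$ as a term, and one checks directly from the definition of $\textrm{Tr}$ that $\textrm{Tr}(\textrm{init}(f)^{p-1}\cdot 1) = \textrm{Tr}((x_1\cdots x_n)^{p-1}) = 1$ — the other terms of $\textrm{init}(f)^{p-1}$ are monomials $m$ with $x_1\cdots x_n\cdot m$ not a $p$th power in at least one variable (since $\textrm{init}(f)$ is homogeneous of weighted degree equal to that of $\prod x_i$, every other monomial of $\textrm{init}(f)^{p-1}$ differs from $(x_1\cdots x_n)^{p-1}$ and, being of the same weighted degree, cannot also produce a $p$th power after multiplying by $x_1\cdots x_n$), so $\textrm{Tr}$ kills them. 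Hence $\textrm{Tr}(f^{p-1}\cdot)$ splits $\bA^n_k$; alternatively this is exactly the content of Theorem~\ref{t;LMPinit}/Theorem~\ref{t;knutson} and I would just cite it.

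Next, given a compatibly split ideal $I$ with respect to $\textrm{Tr}(f^{p-1}\cdot)$, Theorem~\ref{t;knutson}(2) tells us that $\textrm{init}(I)$ is compatibly split with respect to $\textrm{Tr}(\textrm{init}(f)^{p-1}\cdot)$. Now I apply Theorem~\ref{t;knutson}(1) once more — but this time to $g := \textrm{init}(f)$, whose own initial form (with respect to a generic refinement of the weighting, or any term order refining it) is a single squarefree monomial $x_1\cdots x_n$: so $\textrm{Tr}(g^{p-1}\cdot) = \textrm{Tr}((x_1\cdots x_n)^{p-1}\cdot)$, which is the standard splitting. Actually the cleanest route avoids this second step: since $\textrm{init}(f)$ already \emph{contains} $x_1\cdots x_n$ as a term, one can instead observe that after passing to initial ideals the relevant splitting is already the standard one once we take a further initial form, or simply note $\textrm{init}(\textrm{init}(f)) = x_1\cdots x_n$ under a suitable term order and reapply parts (1) and (2). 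Either way we reduce to: $\textrm{init}(I)$ (possibly after a further $\textrm{init}$, which does not change squarefreeness of the final monomial ideal) is compatibly split with respect to the standard splitting $\textrm{Tr}((x_1\cdots x_n)^{p-1}\cdot)$ of $\bA^n_k$.

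Finally, by Example~\ref{e;stdsplitting}, the compatibly split subvarieties of $\bA^n_k$ with the standard splitting are exactly the coordinate subspaces, whose ideals are generated by subsets of $\{x_1,\dots,x_n\}$ — in particular squarefree monomial ideals. Tracing back, $\textrm{init}(I)$ is a squarefree monomial ideal (taking two successive initial ideals of a polynomial ideal still yields a monomial ideal, and here the final one is squarefree, forcing the intermediate $\textrm{init}(I)$ to be squarefree as well, since $\textrm{init}$ of a monomial ideal with respect to any term order is itself — so in fact $\textrm{init}(I)$ equals this squarefree monomial ideal). The main obstacle, if any, is bookkeeping: making sure the weighting/term-order refinements are chosen consistently so that each application of Theorem~\ref{t;knutson} is legitimate, and checking that ``$\prod x_i$ is a term of $\textrm{init}(f)$'' (rather than $\textrm{init}(f) = \prod x_i$ exactly) still feeds correctly into the hypotheses — but the theorems as stated are designed precisely to handle this, so the proof is essentially a two-line citation chain.
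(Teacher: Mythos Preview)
Your approach is essentially the paper's: combine Theorem~\ref{t;knutson} with Example~\ref{e;stdsplitting}. The paper does not write out a proof at all; it simply states the corollary immediately after the example with the remark that it follows from \cite[Theorem 2]{K}.

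However, you have misread the hypothesis. The corollary assumes $\textrm{init}(f) = \prod_{i=1}^n x_i$ \emph{exactly}, not merely that $\prod_i x_i$ is a term of $\textrm{init}(f)$ (the latter is the hypothesis of Theorem~\ref{t;knutson}). With the correct hypothesis, $\textrm{Tr}(\textrm{init}(f)^{p-1}\cdot)$ \emph{is} the standard splitting, so your second application of Theorem~\ref{t;knutson}, the discussion of refining the weighting, and the final paragraph about iterated initial ideals are all unnecessary. The argument is just: Theorem~\ref{t;knutson}(1) gives that $\textrm{Tr}(f^{p-1}\cdot)$ is a splitting (since the standard splitting is one); Theorem~\ref{t;knutson}(2) gives that $\textrm{init}(I)$ is compatibly split for the standard splitting; Example~\ref{e;stdsplitting} says such ideals are squarefree monomial. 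Your detours are not wrong, but they obscure what is a genuinely two-line deduction.
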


\subsection{Split morphisms and a result of David E Speyer}

\begin{definition}
Let $X$ and $Y$ be Frobenius split by $\phi_X$ and $\phi_Y$ respectively. Let $f:X\rightarrow Y$ be a morphism of schemes. Then $f$ is a \textbf{split morphism} if $f^{\sharp}\circ\phi_Y = \phi_X \circ f^{\sharp}$. In this case, we say that the splittings on $X$ and $Y$ are \textbf{compatible}.
\end{definition}

\begin{proposition}\label{p;splitmaps}
Let $f:X\rightarrow Y$ be a surjective morphism of integral schemes and suppose that $X$ is Frobenius split by $\phi_X$. 
\begin{enumerate}
\item There is at most one splitting of $Y$ making $f$ a split morphism.
\item Suppose $\phi_Y$ is a Frobenius splitting of $Y$ that makes $f$ a split morphism. If $Z$ is a compatibly split subscheme of $X$ then the scheme-theoretic image $f(Z)$ is a compatibly split subscheme of $Y$, and $f|_Z:Z\rightarrow f(Z)$ is a split morphism.
\item Suppose $Y$ and $W$ are Frobenius split by $\phi_Y$ and $\phi_W$. If $f:X\rightarrow Y$ and $g:Y\rightarrow W$ are split morphisms, then so is their composition $g\circ f$. 
\end{enumerate}
\end{proposition}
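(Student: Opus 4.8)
The plan is to work locally on affine opens and translate everything into the elementary language of the three axioms for a splitting from the Remark in Subsection on splittings of affine space. Since $X$, $Y$, $W$ are integral, we may replace $Y$ by an affine open $\Spec B$, pull back to an affine open $\Spec A \subseteq X$ dominating it, and identify $f^\sharp$ with the inclusion $B \hookrightarrow A$ (injective because $f$ is dominant, as $X,Y$ are integral and $f$ surjective). The condition that $f$ is a split morphism becomes: $\phi_X(b) = \phi_Y(b)$ for all $b \in B$, i.e. $\phi_Y$ is just the restriction of $\phi_X$ to $B$ — provided this restriction lands in $B$.

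For part 1, uniqueness is immediate from this reformulation: if $\phi_Y$ and $\phi_Y'$ both make $f$ split, then $\phi_Y(b) = \phi_X(b) = \phi_Y'(b)$ for all $b \in B$, so $\phi_Y = \phi_Y'$ (this uses injectivity of $f^\sharp$ so that $\phi_X$ determines the value on the image). For part 3, composition: if $\phi_Y = \phi_X|_B$ and $\phi_W = \phi_Y|_C$ where $C \hookrightarrow B \hookrightarrow A$ are the coordinate rings, then $\phi_W = \phi_X|_C$, which is exactly the statement that $g \circ f$ is split; one just checks $(g\circ f)^\sharp = f^\sharp \circ g^\sharp$ and chases the defining identity.

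For part 2, let $Z \subseteq X$ be compatibly split with ideal sheaf $\mathcal{I}_Z$; locally $Z = \Spec(A/\mathfrak{a})$ with $\phi_X(\mathfrak{a}) \subseteq \mathfrak{a}$. The scheme-theoretic image $f(Z)$ is cut out by $\mathfrak{b} := \mathfrak{a} \cap B = \ker(B \to A/\mathfrak{a})$. Then $\phi_Y(\mathfrak{b}) = \phi_X(\mathfrak{b}) \subseteq \phi_X(\mathfrak{a}) \cap \phi_X(B) \subseteq \mathfrak{a} \cap B = \mathfrak{b}$, using that $\phi_X(\mathfrak{a})\subseteq \mathfrak{a}$, that $\phi_Y = \phi_X|_B$ takes $B$ into $B$, and that $\mathfrak{b} \subseteq \mathfrak{a}$. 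So $f(Z)$ is compatibly split. Finally, $f|_Z \colon Z \to f(Z)$ corresponds to the injection $B/\mathfrak{b} \hookrightarrow A/\mathfrak{a}$, and the induced splittings are the ones descended from $\phi_X$ and $\phi_Y$ on the quotients; compatibility of $f|_Z$ is inherited from compatibility of $f$ by passing to quotients, since all the relevant identities are preserved modulo $\mathfrak{a}$ and $\mathfrak{b}$.

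The main obstacle — really the only subtlety — is the bookkeeping in part 2 to be sure that the induced splitting on the scheme-theoretic image $f(Z)$ genuinely agrees with the one making $f|_Z$ split, i.e. that "scheme-theoretic image" (via $\mathfrak{a}\cap B$) is the right notion here rather than set-theoretic image or closure; this works precisely because $\phi_Y(\mathfrak b)\subseteq \mathfrak b$ forces $B/\mathfrak b$ to be reduced, so $f(Z)$ is automatically a reduced subscheme and there is no ambiguity. A secondary point to handle carefully is gluing: the local affine arguments must be checked to be compatible on overlaps, which is routine since $\phi_X$ is a genuine morphism of sheaves and all constructions ($\mathfrak{a}\cap B$, restriction of $\phi$) are local and patch.
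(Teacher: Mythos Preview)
Your proposal is correct and follows essentially the same approach as the paper: both use injectivity of $f^\sharp$ (from integrality plus surjectivity) for part 1, a direct chase through the defining identity for part 3, and for part 2 identify the ideal of the scheme-theoretic image as $\ker(B\to A/\mathfrak a)=\mathfrak a\cap B$ and verify it is $\phi_Y$-stable using $\phi_X(\mathfrak a)\subseteq\mathfrak a$ together with $\phi_X|_B=\phi_Y$. The only cosmetic difference is that you identify $f^\sharp$ with an inclusion $B\hookrightarrow A$ and phrase everything as restriction, whereas the paper keeps $f^\sharp$ abstract and writes the same computation as $(\pi\circ f^\sharp\circ\phi_Y)(\ker(\pi\circ f^\sharp))=0$.
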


\begin{proof}
We begin by remarking that $f^{\#}:\mathcal{O}_Y\rightarrow f_*\mathcal{O}_X$ is injective because each of $X$ and $Y$ are integral, and $f$ is surjective. 
\begin{enumerate}
\item Suppose that $\phi_1$ and $\phi_2$ are two splittings of $Y$ which make $f$ a split morphism. Then, $\phi_X\circ f^{\#} = f^{\#}\circ \phi_1$ and $\phi_X\circ f^{\#} = f^{\#}\circ \phi_2$. Therefore, $f^{\#}\circ(\phi_1-\phi_2) = 0$ and since $f^{\#}$ is injective, $\phi_1 = \phi_2$. 
\item We consider the case $X = \Spec S$ and $Y = \Spec R$. Let $Z = \Spec S/I$ be a compatibly split subvariety of $X$ and let $\pi: S\rightarrow S/I$ be the quotient map. To show that $f(Z)$ is a compatibly split subvariety of $Y$, we show that $\textrm{ker}(\pi\circ f^{\#})$ is a compatibly split ideal of $R$. Equivalently, we show that $(\pi\circ f^{\#})(\phi_Y(\textrm{ker}(\pi\circ f^{\#}))) = 0\in S/I$. Now, \[(\pi\circ f^{\#}\circ \phi_Y)(\textrm{ker}(\pi\circ f^{\#})) = (\pi\circ\phi_X\circ f^{\#})(\textrm{ker}(\pi\circ f^{\#})) \subseteq \pi(\phi_X(\textrm{ker}(\pi)))\subseteq \pi(\textrm{ker}(\pi)) = 0\] where the equality holds because $f$ is a split morphism, and the second inclusion holds because $\ker(\pi) = I$ is a compatibly split ideal of $S$. Thus, $\textrm{ker}(\pi\circ f^{\#})$ is a compatibly split ideal of $R$. 

That $f|_Z:Z\rightarrow f(Z)$ is a split morphism then follows immediately. 
\item $(g\circ f)^{\#}\circ \phi_W = f^{\#}\circ g^{\#}\circ \phi_W = f^{\#}\circ \phi_Y\circ g^{\#} = \phi_X\circ f^{\#}\circ g^{\#}$.
\end{enumerate}
\end{proof}

Sometimes it's easier to prove that a morphism is split by working on an open subscheme. 

\begin{lemma}\label{l;openSplitMorph}
Let $f: X\rightarrow Y$ be a surjective morphism of (irreducible) Frobenius split varieties. Let $U\subseteq X$ and $V\subseteq Y$ be open subschemes such that $f$ restricts to a surjective morphism $f|_U:U\rightarrow V$. Suppose further that $U\subseteq X$ and $V\subseteq Y$ are given the induced Frobenius splittings. Then $f$ is a split morphism if and only if $f|_U$ is a split morphism.
\end{lemma}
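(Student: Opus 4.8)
The plan is to reduce the statement about sheaf maps on $X$ and $Y$ to an equality of maps on the generic stalks, where everything is governed by a single function field computation. Since $X$ and $Y$ are irreducible, the splittings $\phi_X$ and $\phi_Y$ extend uniquely to splittings $\tilde\phi_X$ of $\Frac(\mathcal{O}_X)$ and $\tilde\phi_Y$ of $\Frac(\mathcal{O}_Y)$ (by the formula $\tilde\phi(a_1/a_2) = \phi(a_2^{p-1}a_1)/a_2$ used already in the proof of Proposition \ref{p;normalizationSplit}), and these extensions agree with the splittings induced on any nonempty open subscheme. Likewise $f^{\sharp}$ and $(f|_U)^{\sharp}$ induce the same inclusion of function fields $K(Y)\hookrightarrow K(X)$, because $U$ and $V$ are dense in $X$ and $Y$ respectively and $f|_U$ is dominant. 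So the condition ``$f$ is a split morphism'' and the condition ``$f|_U$ is a split morphism'' will both be shown to be equivalent to the single identity $\tilde\phi_X \circ (f^{\sharp}\otimes K) = (f^{\sharp}\otimes K)\circ \tilde\phi_Y$ of additive maps $K(Y)\to K(X)$.

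First I would prove the forward direction, which is essentially a restriction statement: if $f$ is split, i.e.\ $\phi_X\circ f^{\sharp} = f^{\sharp}\circ\phi_Y$ as maps of sheaves on $Y$, then restricting this identity of sheaf morphisms to the open set $V\subseteq Y$ (and using that the splittings on $U$ and $V$ are by definition the restrictions of those on $X$ and $Y$, that $f^{-1}(V)\supseteq U$, and functoriality of restriction) immediately gives $\phi_U\circ (f|_U)^{\sharp} = (f|_U)^{\sharp}\circ\phi_V$. This direction needs no irreducibility and is the easy half.

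For the converse, suppose $f|_U$ is split. I want to upgrade this to an identity of maps on all of $Y$. Work affine-locally: it suffices to check $\phi_X\circ f^{\sharp} = f^{\sharp}\circ\phi_Y$ on each affine open $\Spec R\subseteq Y$ with preimage meeting an affine open $\Spec S\subseteq X$. Both $\phi_X\circ f^{\sharp}$ and $f^{\sharp}\circ\phi_Y$ are additive maps $R\to S$ (using that $f^{\sharp}:R\to S$ lands in $S$ because $f$ is a morphism, and $\phi_X$ restricts to a map $S\to S$), hence their difference is an additive map $\delta:R\to S$. Intersecting $\Spec R$ with $V$ and $\Spec S$ with $U$, and using that the splittings on $U,V$ are the restrictions, the hypothesis that $f|_U$ is split says that $\delta$ becomes zero after composing with the localization $S\to S'$ where $\Spec S'$ is a dense open of $\Spec S$. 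Since $X$ is irreducible, $S$ is a domain, so $S\to S'$ is injective, forcing $\delta = 0$. Gluing over an affine cover gives $\phi_X\circ f^{\sharp} = f^{\sharp}\circ\phi_Y$ globally, i.e.\ $f$ is split.

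The only real subtlety — the ``main obstacle'' — is bookkeeping: making sure that the induced splitting on the open subscheme $U$ really is the literal restriction of $\phi_X$ (this is exactly the content of Proposition \ref{p;openSplit}, since $\phi_X$ is a morphism of sheaves, so $\phi_X|_U = \phi_U$), and that the various localization maps are compatible with $f^{\sharp}$, so that ``$f|_U$ split'' unwinds to ``$\delta$ dies in the localization.'' Once one is careful that $f^{-1}(V)$ contains $U$ (not necessarily equals it) and that $U$ is dense in $X$ — which follows since $U$ is a nonempty open in the irreducible $X$ — the injectivity argument via integrality of $S$ closes the proof with no further computation.
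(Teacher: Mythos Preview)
Your proof is correct and follows essentially the same approach as the paper's: both reduce the converse direction to the observation that the two maps $\phi_X\circ f^{\sharp}$ and $f^{\sharp}\circ\phi_Y$ agree after restriction to a dense open, and then use integrality (injectivity of restriction to a dense open) to conclude equality globally. The paper carries this out directly at the sheaf level on an arbitrary open $W\subseteq Y$, while you pass through affine opens and phrase injectivity as ``$S$ is a domain,'' but the underlying idea is identical.
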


\begin{proof}
The forward direction is clear. So, assume that $f|_U:U\rightarrow V$ is a surjective, split morphism. Suppose $W\subseteq Y$ is an open set. If $s\in \mathcal{O}_Y(W)$, we must show that $(f^{\#}\circ \phi_Y)(s) = (\phi_X\circ f^{\#})(s)$. We have \[((f|_U)^{\#}\circ\phi_V)(s|_{V\cap W}) = (f^{\#}\circ \phi_Y)(s|_{V\cap W}) = [(f^{\#}\circ\phi_Y)(s)]|_{U\cap f^{-1}(W)}\] and \[(\phi_U\circ (f|_U)^{\#})(s|_{V\cap W}) = (\phi_X\circ f^{\#})(s|_{V\cap W}) = [(\phi_X\circ f^{\#})(s)]|_{U\cap f^{-1}(W)}.\] Because $f|_U:U\rightarrow V$ is split, $(\phi_U\circ (f|_U)^{\#})(s|_{V\cap W}) = ((f|_U)^{\#}\circ \phi_V)(s|_{V\cap W})$. Therefore, \[[(f^{\#}\circ\phi_Y)(s)]|_{f^{-1}(W)\cap U} = [(\phi_X\circ f^{\#})(s)]|_{f^{-1}(W)\cap U}.\] As $f^{-1}(W)\cap U$ is open and dense in $f^{-1}(W)$, it follows that $(f^{\#}\circ\phi_Y)(s) = (\phi_X\circ f^{\#})(s)$. Therefore $f:X\rightarrow Y$ is a split morphism.
\end{proof}

We now provide some examples of split morphisms.

\begin{example}\label{p;finitemaps}\cite[Examples 1.1.10]{BK}
\begin{enumerate}
\item Let $X$ be a non-singular affine variety and let $G$ be a finite group which acts on $X$. Let $\pi:X\rightarrow X/G$ denote the quotient map. If $X$ is Frobenius split and $p$ does not divide $|G|$, then there is a unique induced splitting of $X/G$ making $\pi$ a split map.
\item Suppose that $f:X\rightarrow Y$ is a finite surjective map of varieties, that $Y$ is normal, and that $X$ is split by $\phi$. If $p$ does not divide $\textrm{deg}(f)$, then there is a unique splitting $\phi_Y$ of $Y$ making $f$ a split map.
\end{enumerate}
\end{example}

\begin{lemma}\label{l;etaleSplit}
Let $X$ and $Y$ be non-singular varieties. Suppose that $f:X\rightarrow Y$ is surjective and \'{e}tale (so we can identify anticanonical sheaves), and that all maps on residue fields $\kappa(y)\rightarrow \kappa(x)$ (where $y\in Y$ and $x\in X$ is one of $y$'s preimages) are isomorphisms. Suppose that $\sigma_Y\in H^0(Y,F_*(\omega_Y^{1-p}))$ is a splitting section of $Y$. Then,
\begin{enumerate}
\item $f^{*}(\sigma_Y)$ is a splitting section of $X$.
\item If $X$ is Frobenius split by the splitting section $f^{*}(\sigma_Y)$, then $f:X\rightarrow Y$ is a split morphism. 
\end{enumerate} 
\end{lemma}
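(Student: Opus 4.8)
The plan is to reduce everything to the local description of the evaluation map $\hat{\tau}$ from Theorem \ref{localSectionSplit}, together with one standard commutative-algebra input: since $f$ is étale and every residue field map $\kappa(y)\to\kappa(x)$ is an isomorphism, $f$ induces an isomorphism $\widehat{\mathcal{O}}_{Y,y}\xrightarrow{\sim}\widehat{\mathcal{O}}_{X,x}$ on completed local rings at every closed point $x$ (with $y=f(x)$). Étaleness also identifies the anticanonical sheaves, $f^*\omega_Y\cong\omega_X$, so that $f^*\sigma_Y$ makes sense as a section of $F_*(\omega_X^{1-p})$, and, locally, it is obtained from $\sigma_Y$ simply by applying $f^{\sharp}$ to its coefficient function after pulling back a local coordinate frame.

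For part 1, by Theorem \ref{localSectionSplit} it suffices to check that $\hat{\tau}_X(f^*\sigma_Y)=1$, and this may be verified at closed points. Fix a closed point $x\in X$, set $y=f(x)$, and choose a system of local coordinates $t_1,\dots,t_n$ at $y$; then $f^{\sharp}t_1,\dots,f^{\sharp}t_n$ is a system of local coordinates at $x$ because $f$ is étale. Writing $\sigma_Y=g\,(dt_1\wedge\cdots\wedge dt_n)^{-1}$ near $y$, we get $f^*\sigma_Y=f^{\sharp}(g)\,(d(f^{\sharp}t_1)\wedge\cdots\wedge d(f^{\sharp}t_n))^{-1}$ near $x$. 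Under the isomorphism $\widehat{\mathcal{O}}_{Y,y}\xrightarrow{\sim}\widehat{\mathcal{O}}_{X,x}$ these data correspond exactly, and the defining recipe for $\textrm{Tr}$, being monomial-by-monomial in the chosen coordinates, is manifestly preserved. Hence the germ at $x$ of $\hat{\tau}_X(f^*\sigma_Y)$ is $f^{\sharp}$ applied to the germ at $y$ of $\hat{\tau}_Y(\sigma_Y)$, which equals $f^{\sharp}(1)=1$ since $\sigma_Y$ is a splitting section. As this holds at every closed point of the variety $X$, we conclude $\hat{\tau}_X(f^*\sigma_Y)=1$, i.e.\ $f^*\sigma_Y$ is a splitting section of $X$.

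For part 2, write $\phi_Y$ for the splitting of $Y$ determined by $\sigma_Y$ and $\phi_X$ for the splitting of $X$ determined by $f^*\sigma_Y$; we must show $f^{\sharp}\circ\phi_Y=\phi_X\circ f^{\sharp}$ as maps $F_*\mathcal{O}_Y\to f_*\mathcal{O}_X$. Since this is a statement about sheaves, it may be checked locally. With notation as above, $\phi_Y=\textrm{Tr}_Y(g\,\cdot\,)$ and $\phi_X=\textrm{Tr}_X(f^{\sharp}(g)\,\cdot\,)$, and since $f^{\sharp}$ is a ring homomorphism, for a local section $s$ of $\mathcal{O}_Y$ the desired identity becomes
\[
f^{\sharp}\bigl(\textrm{Tr}_Y(gs)\bigr)=\textrm{Tr}_X\bigl(f^{\sharp}(g)\,f^{\sharp}(s)\bigr)=\textrm{Tr}_X\bigl(f^{\sharp}(gs)\bigr),
\]
so the whole claim collapses to the compatibility $f^{\sharp}\circ\textrm{Tr}_Y=\textrm{Tr}_X\circ f^{\sharp}$ with respect to the frames $(t_i)$ on $Y$ and $(f^{\sharp}t_i)$ on $X$. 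That is precisely what the isomorphism of completed local rings from part 1 gives: $\textrm{Tr}$ is computed coordinate-wise on the completions, and completions, coordinates, and coefficient functions all match under $f^{\sharp}$. Hence $f$ is a split morphism.

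The step I expect to be the only nonformal one is justifying that $\textrm{Tr}$ — equivalently the evaluation map $\hat{\tau}$, equivalently the Cartier operator on top forms — is compatible with the pullback $f^{\sharp}$; since $\textrm{Tr}$ is merely additive (and $p^{-1}$-linear), not a ring homomorphism, this compatibility is not purely formal. It rests on the standard fact that an étale morphism with trivial residue field extensions is an isomorphism on completed local rings, combined with the coordinate-independence of $\hat{\tau}$ furnished by Theorem \ref{localSectionSplit}. Once that input is granted, both parts are short bookkeeping. (One should also record the harmless point that, $X$ being a variety over an algebraically closed field, a regular function equal to $1$ at every closed point is the constant function $1$, which is what lets the pointwise computation in part 1 globalize.)
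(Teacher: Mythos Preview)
Your proof is correct and follows essentially the same approach as the paper: both arguments use that \'etaleness together with trivial residue field extensions gives an isomorphism on completed local rings, then invoke Theorem~\ref{localSectionSplit} to compute $\hat\tau$ (equivalently $\textrm{Tr}$) in matching local coordinates on $X$ and $Y$. Your write-up is somewhat more explicit about the compatibility $f^{\sharp}\circ\textrm{Tr}_Y=\textrm{Tr}_X\circ f^{\sharp}$ and about globalizing the pointwise check, but the underlying idea is identical.
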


\begin{proof}
\begin{enumerate}
\item Let $y\in Y$ and let $x\in X$ be any of $y$'s preimages. Then the induced map on completed local rings $\hat{\mathcal{O}}_{Y,y}\rightarrow \hat{\mathcal{O}}_{X,x}$ is an isomorphism. Identify $\hat{\mathcal{O}}_{Y,y}$ with $k[[s_1,\dots,s_n]]$ and $\hat{\mathcal{O}}_{X,x}$ with $k[[t_1,\dots,t_n]]$ such that the induced map on completed local rings sends $s_i$ to $t_i$. Suppose that $g(s_1,\dots,s_n)\in k[[s_1,\dots,s_n]]$ is the local expansion of $\sigma_Y$ at $y$. Then, $g(t_1,\dots,t_n)\in k[[t_1,\dots,t_n]]$ is the local expansion of $f^*(\sigma_Y)$ at $x$. Because $\sigma_X$ is a splitting section of $Y$, we can apply \cite[Theorem 1.8]{BK} (see Theorem \ref{localSectionSplit}) to see that $\textrm{Tr}(g(s_1,\dots,s_n)) = 1$. Then, $\textrm{Tr}(g(t_1,\dots,t_n))$ is also $1$ and $f^*(\sigma_Y)$ is a splitting section of $X$.
\item This follows by identifying $\hat{\mathcal{O}}_{Y,y}$ with $k[[s_1,\dots,s_n]]$ and $\hat{\mathcal{O}}_{X,x}$ with $k[[t_1,\dots,t_n]]$ as done in the proof of item 1.
\end{enumerate}
\end{proof}

\begin{example}\label{l;splitprojection}\cite[see Exercise 1.3 (8)]{BK}
Let $X$ and $Y$ be Frobenius split with splittings $\phi_X$ and $\phi_Y$ respectively. Then, the tensor product \[\phi:F_*\mathcal{O}_{X\times Y}\rightarrow \mathcal{O}_{X\times Y},~~~f_1\otimes f_2\mapsto \phi_X(f_1)\otimes\phi_Y(f_2)\] is a splitting of $X\times Y$. If $X$ and $Y$ are non-singular and $\phi_X$ (respectively $\phi_Y$) corresponds to the section $\sigma_X\in H^0(X,\omega_X^{1-p})$ (respectively $\sigma_Y\in H^0(Y,\omega_Y^{1-p})$), then $\phi$ corresponds to $\sigma_X\boxtimes\sigma_Y\in H^0(X\times Y, \omega_{X\times Y}^{1-p})$. 

The projection morphisms $\pi_1:X\times Y\rightarrow X$ and $\pi_2:X\times Y\rightarrow Y$ are split morphisms.
\end{example}

\begin{remark}\label{r;prodSplit}\cite[see Exercise 1.3 (8)]{BK}
Let all schemes and Frobenius splittings be as in the above example. It is useful to note that if $Z_1$ is a compatibly split subscheme of $X$ and $Z_2$ is a compatibly split subscheme of $Y$, then $Z_1\times Z_2$ is compatibly split in $X\times Y$. 
\end{remark}

\begin{example}\label{e;aff}
Let $S$ be a graded ring and let $X = \textrm{Proj}(S)$ be Frobenius split by $\phi:F_*\mathcal{O}_X\rightarrow \mathcal{O}_X$. Define $X_{\textrm{aff}} := \Spec \mathcal{O}_X(X)$. Because $\mathcal{O}_X$ is a split sheaf, $\mathcal{O}_X(X)$ is a split ring. Furthermore, the structure map $\pi:X\rightarrow X_{\textrm{aff}}$ is a split map. To see this, consider any $\frac{a}{b}\in\mathcal{O}_{X_{\textrm{aff}}}(U)$ for $a,b\in \mathcal{O}_X(X)$. Then, if $\phi_{X_{\textrm{aff}}}$ denotes the induced splitting of $X_{\textrm{aff}}$, we have:
\[(\pi^{\sharp}\circ\phi_{X_{\textrm{aff}}})\left(\frac{a}{b}\right) = \pi^{\sharp}\left(\frac{\phi_{X_{\textrm{aff}}}(ab^{p-1})}{b}\right) = (\phi\circ\pi^{\sharp})\left(\frac{a}{b}\right)\]
The last equality follows because $\phi$ agrees with $\phi_{X_{\textrm{aff}}}$ on $\mathcal{O}_X(X)$.
\end{example}

The remainder of this subsection concerns an unpublished theorem of David E Speyer. He graciously allowed his theorem and proof to be included in this thesis. Both the theorem and proof were communicated to Allen Knutson via email, and most of what appears below is taken directly from the emailed document. (However, this document contains slightly different proofs of Speyer's first two lemmas (Lemma \ref{l;lemma1} and Lemma \ref{Key}), Lemma \ref{l;normModp} did not appear as a separate lemma in the emailed document, and there is a little bit of additional explanation at the end of Proposition \ref{MainCase}.)

\begin{theorem} \label{t;des} (Speyer)
Let $f:Y \to X$ be a finite map of varieties over $\Spec \bZ$.
Then there is an integer $N$ such that, for any $p >N$, any choice of compatible Frobenius splittings on $X/p$ and $Y/p$, and any compatibly split subvariety $V$ of $X/p$, the reduction of $f^{-1}(V)$ is compatibly split.
\end{theorem}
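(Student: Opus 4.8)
The plan is to show that, for all $p$ exceeding a bound depending only on $f$, every irreducible component $W$ of $f^{-1}(V)$ is compatibly split in $Y/p$; since finite unions and irreducible components of compatibly split subschemes are again compatibly split, this gives that $f^{-1}(V)_{\mathrm{red}}=\bigcup_W W$ is compatibly split, which is what we want. Since the statement is unaffected by inverting finitely many primes, the first move is to ``spread out'' over $\Spec\bZ$: there is an integer $N_0$ depending only on $f$ such that, over $\Spec\bZ[1/N_0]$, the irreducible decompositions of $X$ and of $Y$, the normalizations $\widetilde X\to X$ and $\widetilde Y\to\widetilde X\times_X Y$, the flat locus of $f$, and the branch locus of $f$ are all defined and commute with reduction modulo any $p\nmid N_0$. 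Carrying the two splittings to the normalizations by Proposition \ref{p;normalizationSplit}, and using Example \ref{p;finitemaps} and Proposition \ref{p;splitmaps} to see that compatibility is preserved, I may assume (enlarging $N_0$, and arguing component by component) that $X$ and $Y$ are normal.

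Second, a dévissage reduces to the case that $f$ is finite and flat. By generic flatness there is a dense open $X_0\subseteq X$ over which $f$ is finite flat, with complement of codimension $\ge 2$ as $X$ is normal. Over $Y_0=f^{-1}(X_0)$ one applies the main case below to get $f^{-1}(V\cap X_0)_{\mathrm{red}}$ compatibly split in $Y_0$; then any component $W$ of $f^{-1}(V)$ meeting $Y_0$ has $W\cap Y_0$ compatibly split in $Y_0$ (a component of a compatibly split subscheme) and dense in $W$, hence $W$ is compatibly split in $Y$ by Proposition \ref{p;openSplit}. The components of $f^{-1}(V)$ lying entirely over $X\setminus X_0$ are handled by Noetherian induction on $\dim X$, restricting $f$ over a compatibly split closed subscheme of $X$ containing $X\setminus X_0$ and keeping the induced splittings compatible via Propositions \ref{p;openSplit}, \ref{p;normalizationSplit} and Example \ref{p;finitemaps}.

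The heart is the main case: $f$ finite flat, $X$ normal, $p>\deg f$ and outside a further finite set of primes. The tool is the norm $N=N_{Y/X}\colon f_*\mathcal O_Y\to\mathcal O_X$, which on an affine chart is the determinant of multiplication on the locally free $\mathcal O_X$-module $f_*\mathcal O_Y$ and is thus defined over $\bZ[1/N_0]$; one lemma records that its formation commutes with reduction modulo $p$ for almost all $p$. The key lemma is that $N$ intertwines the splittings once $p$ is large: $N\circ\phi_Y=\phi_X\circ N$ on sections. Granting it, take $s\in\mathcal I_V\mathcal O_Y=\mathcal I_{f^{-1}(V)}$. Every point of $V$ has all of its $f$-preimages in $f^{-1}(V)$, where $s$ vanishes, so a local factor of $N(s)$ vanishes there and $N(s)\in\mathcal I_V$; as $V$ is compatibly split, $\phi_X(N(s))\in\mathcal I_V$; by the key lemma $N(\phi_Y(s))\in\mathcal I_V$, forcing $\phi_Y(s)$ to vanish on at least one preimage of each point of $V$. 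Passing to a Galois closure of $f$ (valid once $p$ does not divide the order of the associated group), the components of $f^{-1}(V)$ dominating $V$ form a single Galois orbit and the splittings are equivariant, so this partial vanishing upgrades to $\phi_Y(s)\in\sqrt{\mathcal I_V\mathcal O_Y}$; that is, $f^{-1}(V)_{\mathrm{red}}$ is compatibly split.

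I expect the key lemma $N\circ\phi_Y=\phi_X\circ N$ to be the main obstacle. Writing $\phi_X$ through an anticanonical section $\sigma_X$ of $\omega_X^{1-p}$ (Theorem \ref{t;divisors1} and the surrounding discussion), one compares $N$ of the corresponding section on $Y$ with the expected section on $X$; the discrepancy is a product of powers of $\deg f$ and of the ramification indices of $f$, and one must kill it by raising to the $(p-1)$st power and invoking Fermat (the same mechanism by which $d^{\,p-1}=1$ when $p\nmid d=\deg f$). This is exactly what forces $p$ past an explicit bound $N$ assembled from $\deg f$, the discriminant of $f$, the order of the Galois closure's group, and the primes at which $N_{Y/X}$ or the normalizations fail to reduce well. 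A secondary difficulty, already visible in the previous step, is that the norm detects only that \emph{some} preimage is hit; concluding vanishing on \emph{every} component of $f^{-1}(V)$ dominating $V$ is precisely what makes the passage to a Galois closure — and hence the dependence of $N$ on group orders — unavoidable.
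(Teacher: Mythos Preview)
Your key lemma $N\circ\phi_Y=\phi_X\circ N$ is not merely difficult to prove --- it is false. The norm is multiplicative while $\phi$ is only additive, and there is no reason for these to commute. A clean counterexample: take $K=k[t]$, $L=k[s]$ with $s^2=t$, $p$ odd, and the standard splittings $\phi_K(t^a)=t^{a/p}$ (else $0$), $\phi_L(s^b)=s^{b/p}$ (else $0$); these are compatible. For any $f\in K\subset L$ one has $N_{L/K}(f)=f^2$, so the claim becomes $\phi_K(f)^2=\phi_K(f^2)$, which fails already for $f=t+t^{p-1}$: here $\phi_K(f)=0$ but $\phi_K(f^2)=\phi_K(t^2+2t^p+t^{2p-2})=2t\neq 0$. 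So the norm cannot play the role you assign it.

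The paper's argument replaces the norm by the \emph{trace}. Because $\Tr_{L/K}$ is $K$-linear, one can write $x=\sum c_i\ell_i^p$ in the $p$-th-power basis and move $\phi$ across term by term; together with $\Tr(x^p)=\Tr(x)^p$ this gives the crucial commutation $\Tr\circ\phi=\phi\circ\Tr$ (Lemma~\ref{Key}). The main case (Proposition~\ref{MainCase}) is then a direct contradiction at the completed local ring of the generic point of $V$: if $J$ is the maximal ideal upstairs and $\phi(J)\not\subseteq J$ then $\phi(x)=1$ for some $x\in J$, whence $\phi(\Tr x)=\Tr(\phi x)=[L:K]\neq 0$; but $\Tr(x)\in I$ by Galois theory, contradicting compatibility of $I$. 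No ``upgrading via Galois orbits'' is needed, and the dependence on $p$ is simply $p>\deg f$. Your overall reduction strategy (pass to normalizations, induct on $\dim X$) is close to the paper's; the paper organizes the induction through the conductor locus rather than generic flatness, but the real divergence --- and the fatal one --- is norm versus trace.
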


This theorem applies, in particular, when $Y$ is the integral closure of $X$. 

We begin with some lemmas about field extensions in characteristic $p$.

\begin{lemma} \label{l;lemma1}
Let $L/K$ be a finite dimensional extension of characteristic $p>0$ fields. Then $\Tr_{L/K}(x^p) = \Tr_{L/K}(x)^p$.
\end{lemma}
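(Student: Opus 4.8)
The plan is to reduce to the case where the extension $L/K$ is separable, since the trace form behaves predictably there and vanishes identically on purely inseparable extensions. First I would factor $L/K$ through the separable closure $K^{\mathrm{sep}}$ of $K$ in $L$, writing $L/K^{\mathrm{sep}}$ as purely inseparable. If $L/K$ has an inseparable part of positive degree, then by transitivity of trace, $\Tr_{L/K} = \Tr_{K^{\mathrm{sep}}/K} \circ \Tr_{L/K^{\mathrm{sep}}}$, and $\Tr_{L/K^{\mathrm{sep}}}$ is identically zero (its degree is a positive power of $p$, so the trace of any element is that element times a multiple of $p$, hence $0$ in characteristic $p$). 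In that degenerate case both sides of the asserted identity are $0$, and we are done. So it suffices to treat the separable case.

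Next I would handle the separable case by passing to a splitting field. Let $M$ be a finite normal (hence Galois, since we are now separable) extension of $K$ containing $L$, with $G = \Gal(M/K)$ and $H = \Gal(M/L)$. Then for $x \in L$, $\Tr_{L/K}(x) = \sum_{gH \in G/H} g(x)$, a sum over coset representatives. Now I compute
\[
\Tr_{L/K}(x)^p = \Bigl(\sum_{gH \in G/H} g(x)\Bigr)^p = \sum_{gH \in G/H} g(x)^p,
\]
where the second equality is the Frobenius (freshman's dream) identity in the characteristic-$p$ field $M$. Since each $g$ is a field automorphism fixing $K$ (hence commuting with the $p$-th power map), $g(x)^p = g(x^p)$, and $x^p \in L$ as well, so the right-hand side is exactly $\sum_{gH \in G/H} g(x^p) = \Tr_{L/K}(x^p)$. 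This gives $\Tr_{L/K}(x^p) = \Tr_{L/K}(x)^p$ for all $x \in L$ in the separable case.

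The main obstacle, such as it is, is simply being careful about the inseparable case: one must not try to apply the Galois-theoretic sum formula when $L/K$ is not separable, and one should instead observe that both sides vanish. Everything else is the freshman's dream together with the fact that field automorphisms over $K$ commute with Frobenius. An alternative to the explicit separable/inseparable split would be to define $\Tr_{L/K}$ as the trace of the $K$-linear multiplication map $m_x$ on $L$ and note $m_{x^p}$ differs from $m_x^p$ — but these are not equal as operators, so the cleanest route really is the Galois-sum argument after reducing away the inseparable part; that is the approach I would write up.
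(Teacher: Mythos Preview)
Your proof is correct. Both you and the paper begin by disposing of the inseparable case (where both sides vanish), but in the separable case you take a different route: you pass to a Galois closure and use the formula $\Tr_{L/K}(x) = \sum_{gH \in G/H} g(x)$, then apply the freshman's dream and the fact that automorphisms commute with Frobenius. The paper instead reduces via transitivity to the subextension $K(x)/K$, observes that multiplication by $x$ is diagonalizable (after extending scalars, since separability gives distinct eigenvalues), and reads off the identity from the eigenvalues, finishing with Fermat's little theorem to handle the factor $[L:K(x)]$. Your argument is arguably cleaner and more direct, avoiding the intermediate field and the implicit scalar extension needed to diagonalize; the paper's approach is more linear-algebraic in flavor. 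Both are standard, and at bottom both exploit the same fact: the trace is a sum of conjugates, and $p$-th powers distribute over sums in characteristic $p$.
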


\begin{proof}
If $L/K$ is not separable, then both sides are zero, so assume that $L/K$ is separable.
Let $x\in L/K$ and consider the intermediate extension $K(x)/K$. Then, the minimal polynomial of $x$ in $K(x)/K$ agrees with the characteristic polynomial of multiplication by $x$. (Indeed, suppose that $1,x,\dots,x^d$ is a basis of $K(x)/K$. Then, the minimal polynomial of $x$ is $\lambda^{d+1}-c_1\lambda^d-c_2\lambda^{d-1}-\cdots-c_d\lambda-c_{d+1}$ for some $c_1,\dots,c_{d+1}\in K$, and multiplication by $x$ is given by the $(d+1)\times(d+1)$-matrix that has $1$s along the subdiagonal, $[c_1,\dots,c_{d+1}]^t$ as the last column, and $0$s elsewhere. The characteristic polynomial of this matrix is $\lambda^{d+1}-c_1\lambda^d-c_2\lambda^{d-1}-\cdots-c_d\lambda-c_{d+1}$.) This polynomial has all distinct roots by the separability assumption and so multiplication by $x$ has all distinct eigenvalues. Writing the matrix of multiplication by $x$ in an eigenbasis, we see that $\Tr_{K(x)/K}(x^p) = \Tr_{K(x)/K}(x)^p$. Then,
\[\Tr_{L/K}(x^p) = \Tr_{K(x)/K}\circ\Tr_{L/K(x)}(x^p)= \Tr_{K(x)/K}([L:K(x)]x^p)= [L:K(x)]\Tr_{K(x)/K}(x^p)\] and
\[\Tr_{L/K}(x)^p = \Tr_{K(x)/K}([L:K(x)]x)^p = [L:K(x)]^p\Tr_{K(x)/K}(x)^p  = [L:K(x)]\Tr_{K(x)/K}(x^p).\]
\end{proof}

The next lemma is crucial.

\begin{lemma} \label{Key}
Let $L/K$ be a finite dimensional extension of characteristic $p$ fields and let $\phi$ be a splitting of $L$ which restricts to a splitting of $K$.
Then $\Tr_{L/K} \circ \phi = \phi \circ \Tr_{L/K}$.
\end{lemma}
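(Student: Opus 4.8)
The plan is to reduce the statement to a computation with explicit bases, using the separability of the extension to get a trace-dual basis, and then to play the trace-form off against the defining property $\phi(a^pb)=a\phi(b)$ of a splitting.

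First I would dispose of the inseparable case: if $L/K$ is not separable then $\Tr_{L/K}\equiv 0$, so both composites are the zero map and there is nothing to prove. Hence assume $L/K$ is separable, so the trace form $(x,y)\mapsto \Tr_{L/K}(xy)$ is nondegenerate. Pick a $K$-basis $e_1,\dots,e_n$ of $L$ and let $e_1^\vee,\dots,e_n^\vee$ be the dual basis with respect to the trace form, i.e. $\Tr_{L/K}(e_ie_j^\vee)=\delta_{ij}$. The key identity I would establish is that for any $x\in L$, $\Tr_{L/K}(x)=\sum_i \Tr_{L/K}(xe_i)\,e_i^\vee$? — no, rather I would use that multiplication by $x$ in the basis $\{e_i\}$ has matrix entries expressible via the dual basis, giving $\Tr_{L/K}(x)=\sum_i \Tr_{L/K}^{\text{tr-form}}$ coefficients; concretely, writing $xe_i=\sum_j a_{ij}e_j$ one has $a_{ij}=\Tr_{L/K}(xe_ie_j^\vee)$ and $\Tr_{L/K}(x)=\sum_i a_{ii}=\sum_i \Tr_{L/K}(xe_ie_i^\vee)$.

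Next I would use Lemma \ref{l;lemma1} in the guise: the trace-dual basis can be taken to behave well under $p$-th powers. The clean way is this. Since $\phi$ restricts to a splitting of $K$ and is a splitting of $L$, property (2) of a splitting gives, for $x\in L$ and $c\in K$, $\phi(c^p x)=c\,\phi(x)$, so $\phi$ is $\Phi(K)$-linear where $\Phi$ is Frobenius; equivalently $\phi\colon L\to L$ is semilinear in the appropriate sense. Now write, for $x\in L$,
\[
\Tr_{L/K}(\phi(x))=\sum_i \Tr_{L/K}\bigl(\phi(x)\,e_i\,e_i^\vee\bigr),
\]
and I would try to move the $e_ie_i^\vee$ inside $\phi$ by writing $e_ie_i^\vee=(g_i)^p$ for suitable $g_i$ — this is exactly where one needs room, and it is the main obstacle (see below). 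Granting a device of this type, $\phi(x)e_ie_i^\vee=\phi(x(e_ie_i^\vee)^{1/p\cdot p})$ collapses using property (2), and after applying $\Tr_{L/K}$ and the dual-basis identity one recognizes the right-hand side as $\phi(\Tr_{L/K}(x))$, because $\phi$ on $K$ is computed by the same formula with the same (now $K$-rational) dual basis and Lemma \ref{l;lemma1} controls the interaction of $\Tr$ with $p$-th powers.

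The hard part will be precisely this manipulation: a splitting $\phi$ is only additive and satisfies $\phi(a^pb)=a\phi(b)$, so to commute $\phi$ past $\Tr_{L/K}$ one must express the trace as a sum of terms of the shape $a^p(\,\cdot\,)$, i.e. one needs the $p$-th-power structure of $L$ over $K$ to interact correctly with a trace-dual basis. The right formulation is likely: choose the basis $\{e_i\}$ so that $\Tr_{L/K}(x)=\sum_i e_i^{\,p}\cdot\bigl(\text{something depending linearly on }x\text{ over }K^{1/p}\bigr)$, or work over $L^{1/p}$ and use that $\phi$ extends/descends compatibly; alternatively, reduce to the case $L=K(x)$ as in Lemma \ref{l;lemma1} (using transitivity of trace and the fact that $\phi$ restricts compatibly through the intermediate field, which requires a small argument), where an explicit companion-matrix computation makes the commuting identity transparent. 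I would pursue the $L=K(x)$ reduction first, since there the eigenbasis diagonalization already used in Lemma \ref{l;lemma1} should let me check $\Tr\circ\phi=\phi\circ\Tr$ on the $K$-basis $1,x,\dots,x^d$ directly, and then lift to general $L$ by transitivity of the trace together with the fact that $\Tr_{L/K(x)}$ is multiplication by $[L:K(x)]\in\bF_p^\times$ on that subfield, which commutes with $\phi$ trivially.
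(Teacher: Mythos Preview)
Your instincts are right---the inseparable case is trivial, and in the separable case the whole point is to exploit the interaction between $p$-th powers and the splitting identity $\phi(a^pb)=a\phi(b)$. But you have missed the one-line observation that makes everything collapse, and the workarounds you propose each have real problems.

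The paper's argument is this: since $L/K$ is separable, if $\ell_1,\dots,\ell_d$ is a $K$-basis of $L$ then so is $\ell_1^p,\dots,\ell_d^p$. (Separability gives $KL^p=L$; equivalently, Frobenius on an algebraic closure carries the basis $\{\ell_i\}$ of $LK^{1/p}/K^{1/p}$ to a basis $\{\ell_i^p\}$ of $L/K$.) Now write $x=\sum c_i\ell_i^p$ with $c_i\in K$. Then $\phi(x)=\sum \ell_i\phi(c_i)$, and since $\phi(c_i)\in K$ one gets $\Tr_{L/K}(\phi(x))=\sum\phi(c_i)\Tr_{L/K}(\ell_i)$. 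On the other side, $\Tr_{L/K}(x)=\sum c_i\Tr_{L/K}(\ell_i^p)=\sum c_i\Tr_{L/K}(\ell_i)^p$ by Lemma~\ref{l;lemma1}, and applying $\phi$ gives the same expression. That is the entire proof.

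Your trace-dual-basis approach stalls exactly where you say it does: you need $e_ie_i^\vee$ to be a $p$-th power, and there is no reason it should be. Your reduction to $L=K(x)$ via transitivity requires $\phi$ to restrict to a splitting of the intermediate field $K(x)$, which is \emph{not} part of the hypothesis and need not hold. Finally, even in the primitive case, checking the identity on the $K$-basis $1,x,\dots,x^{d-1}$ is insufficient: neither side is $K$-linear in $x$ (only $K^p$-linear via $\phi(a^pb)=a\phi(b)$), so verifying on a $K$-basis proves nothing. The fix for all three issues is the same---work with the $p$-th-power basis from the start.
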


\begin{proof}
If $L/K$ is not separable, then both sides are zero, so assume that $L/K$ is separable.
Suppose that $\ell_1,\dots,\ell_d$ is a basis for $L$ over $K$. Then, $\ell_1^p,\dots,\ell_d^p$ is too.   
Any $x\in L/K$ can be written as $\sum_{i=1}^dc_i\ell_i^p$, for some $c_i\in K$. Then,\[(\Tr_{L/K}\circ \phi)(x) = \Tr_{L/K}(\sum_{i=1}^d \phi(c_i)\ell_i) = \sum_{i=1}^d\phi(c_i)\Tr_{L/K}(\ell_i)\] where the second equality holds because $\phi$ preserves $K$ by assumption. On the other hand, \[(\phi\circ \Tr_{L/K})(x) = \phi(\sum_{i=1}^d c_i\Tr_{L/K}(\ell_i^p)) = \phi(\sum_{i=1}^d c_i\Tr_{L/K}(\ell_i)^p) = \sum_{i=1}^d \phi(c_i)\Tr_{L/K}(\ell_i)\] where the second equality follows by Lemma \ref{l;lemma1}. 
\end{proof}


We now prove the central case of Theorem~\ref{t;des}.

\begin{proposition} \label{MainCase}
Let $f: Y \to X$ be a finite surjective map of varieties of characteristic $p$, with $X$ and $Y$ both normal. Suppose that $p$ is greater than $\deg f$.
Then, for any compatible splittings on $X/p$ and $Y/p$, and $V$ any compatibly split subvariety of $X/p$, the reduction of $f^{-1}(V)$ is also split.
\end{proposition}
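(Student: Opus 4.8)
The plan is to prove directly, working affine‑locally, that the radical of the extension ideal is compatibly split for $\phi_B$, with the trace map of $Y\to X$ doing the work; the hypothesis $p>\deg f$ will enter only to force every field extension in sight to be \emph{separable} (an inseparable factor contributes a power of $p$ to a degree).

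\textbf{Setup.} Being compatibly split is local on $X$, so I work on an affine open: $X=\Spec A$, $Y=\Spec B$, with $A\hookrightarrow B$ a finite extension of normal domains, $K=\Frac A\subseteq L=\Frac B$, $[L:K]=\deg f<p$, and $\phi_A,\phi_B$ the given splittings. Since $[L:K]<p$, $L/K$ is separable. The split‑morphism condition $f^{\sharp}\circ\phi_X=\phi_Y\circ f^{\sharp}$ says precisely $\phi_B|_A=\phi_A$; extending both splittings to the fraction fields by the formula $\phi(a_1/a_2)=\phi(a_2^{p-1}a_1)/a_2$ (as in the proof of Proposition~\ref{p;normalizationSplit}) yields a splitting $\widetilde\phi_L$ of $L$ whose restriction to $K$ is a splitting $\widetilde\phi_K$ of $K$. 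Thus Lemma~\ref{Key} applies and gives the identity $\Tr_{L/K}\circ\widetilde\phi_L=\widetilde\phi_K\circ\Tr_{L/K}$, which is the engine of the proof. On this patch $V$ has prime ideal $\mathfrak{p}=\mathcal I_V$, which is $\phi_A$-compatible (Proposition~\ref{p;openSplit}), and $(f^{-1}(V))_{\mathrm{red}}$ has ideal $J=\sqrt{\mathfrak{p}B}$; I must show $\phi_B(J)\subseteq J$.

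\textbf{Two separability inputs.} First, $\Tr_{L/K}(J)\subseteq\mathfrak{p}$: for $b\in J$ each Galois conjugate $\tau(b)$ lies in $\sqrt{\mathfrak{p}C}$, where $C$ is the integral closure of $A$ in a Galois closure of $L/K$, so $\Tr_{L/K}(b)\in\sqrt{\mathfrak{p}C}\cap A=\mathfrak{p}$ by lying‑over. Second, for every prime $\mathfrak{q}$ of $B$ over $\mathfrak{p}$ the residue extension $k(\mathfrak{q})/k(\mathfrak{p})$ is separable: $B$ is generated over $A$ by finitely many elements $b_i$, each of degree $[K(b_i):K]\mid[L:K]<p$ over $K$, whose minimal polynomials lie in $A[T]$ by normality, so $\overline{b_i}$ satisfies a polynomial of degree $<p$ over $k(\mathfrak{p})$ and hence generates a separable (degree‑$<p$) extension, whence $k(\mathfrak{q})=k(\mathfrak{p})(\overline{b_1},\dots,\overline{b_n})$ is separable over $k(\mathfrak{p})$. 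Consequently the reduction of the Artinian $k(\mathfrak{p})$-algebra $B\otimes_Ak(\mathfrak{p})$ is a product of the separable fields $k(\mathfrak{q})$, so its trace form is nondegenerate; therefore the radical of the trace form of $B\otimes_Ak(\mathfrak{p})$ equals its nilradical, and that nilradical is the image of $J=\sqrt{\mathfrak{p}B}$, because every minimal prime of $\mathfrak{p}B$ lies over $\mathfrak{p}$ (going‑down, $A$ normal).

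\textbf{Conclusion and the hard point.} Now take $c\in J$ and $x\in B$. From $\phi_B(a^pb)=a\phi_B(b)$ we get $\phi_B(c)\,x=\phi_B(c\,x^p)$, so the Lemma~\ref{Key} identity gives $\Tr_{L/K}\!\big(\phi_B(c)\,x\big)=\widetilde\phi_K\!\big(\Tr_{L/K}(c\,x^p)\big)$; since $c\,x^p\in J$ the inner trace lies in $\mathfrak{p}$ by the first input, and $\widetilde\phi_K(\mathfrak{p})=\phi_A(\mathfrak{p})\subseteq\mathfrak{p}$ because $\mathfrak{p}$ is $\phi_A$-compatible. Hence $\Tr_{L/K}\!\big(\phi_B(c)\,x\big)\in\mathfrak{p}$ for all $x\in B$, so the image of $\phi_B(c)$ in $B\otimes_Ak(\mathfrak{p})$ lies in the radical of the trace form, hence in the nilradical, hence $\phi_B(c)\in J$. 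This gives $\phi_B(J)\subseteq J$. The step I expect to be the genuine content is the second separability input together with the identification of the radical of the trace form of $B\otimes_Ak(\mathfrak{p})$ with its nilradical: this is precisely tameness of $f$, and precisely where $\deg f<p$ is indispensable — without it the trace form can be degenerate modulo $\mathfrak{p}$ and the argument breaks, which is why Theorem~\ref{t;des} only asserts the existence of a bound $N$. Everything else is bookkeeping with Lemma~\ref{Key} and standard localization arguments for splittings.
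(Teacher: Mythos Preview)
Your engine is the same as the paper's --- Lemma~\ref{Key} together with the Galois-conjugate argument that $\Tr_{L/K}(J)\subseteq\mathfrak p$ --- but there is a genuine gap in the ``hard point''. You pass from ``$\Tr_{L/K}(\phi_B(c)\,x)\in\mathfrak p$ for all $x\in B$'' to ``$\overline{\phi_B(c)}$ lies in the radical of the trace form of $B\otimes_A k(\mathfrak p)$'', silently identifying the reduction of the $A$-bilinear pairing $(b,x)\mapsto\Tr_{L/K}(bx)$ with the \emph{intrinsic} trace form of the $k(\mathfrak p)$-algebra $B\otimes_A k(\mathfrak p)$. These agree only when $B$ is flat over $A$ at $\mathfrak p$; in general they do not even agree on $1$, since $\Tr_{L/K}(1)=[L:K]$ while the intrinsic trace of $1$ is $\dim_{k(\mathfrak p)}(B\otimes_A k(\mathfrak p))$, which is strictly larger at any non-flat point. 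Finite maps of normal varieties need not be flat, so this is not a harmless slip. Your separability analysis of the residue fields establishes that the \emph{intrinsic} form has radical equal to the nilradical, but that is not the form your trace computation lands in.

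The implication you actually need --- if $\Tr_{L/K}(bx)\in\mathfrak p$ for all $x$ then $b\in J$ --- is true, but proving it amounts to the paper's argument. The paper passes to the completion of the local ring at the generic point of $V$ and at a \emph{single} prime $\mathfrak q$ above it; there $J$ is the maximal ideal, so $\phi_B(J)\not\subseteq J$ forces $1=\phi_B(x)$ for some $x\in J$, and Lemma~\ref{Key} gives $\phi_A(\Tr_{L/K}(x))=\Tr_{L/K}(1)=[L:K]\not\equiv 0\pmod p$, contradicting $\Tr_{L/K}(x)\in I$ and $\phi_A(I)\subseteq I$. This local reduction is both shorter and exactly what is needed to repair your nondegeneracy claim (after completion one has $B\otimes_A\hat A_{\mathfrak p}=\prod_i\widehat{B_{\mathfrak q_i}}$, and on each factor the reduced $\Tr_{L/K}$-functional sends the idempotent to the local degree $[L_i:\hat K]\neq 0$). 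So your global route is salvageable, but only by importing the paper's localization step.
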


We note that the scheme $f^{-1}(V)$ may not be reduced and, therefore, it is not necessarily true that $f^{-1}(V)$ is split when equipped with the inverse image scheme structure.
An example is to take $p$ an odd prime, $\Spec k[t] \to \Spec k[t^2]$ with $\phi$ induced by $t^{p-1} /dt^{p-1}$, and $V$ to be the origin.

\begin{proof}
We may pass to the neighborhood of a generic point of $V$.
Let $A$ be the completion of the local ring of $X$ at $V$, with $I$ the maximal ideal of $A$.
Let $B$ be the completion of the local ring of $Y$ at one of the primes above the generic point of $V$, and $J$ the maximal ideal of $B$.
So we know that $I$ is compatibly split, and we must show that $J$ is.

Since $X$ and $Y$ are normal, we know that $A$ and $B$ are integral domains.
Let their fraction fields be $K$ and $L$.
The degree of $L/K$ is bounded by the degree of the map $f$.

Suppose for the sake of contradiction that $\phi(J) \not \subseteq J$. 
Since $B$ is local, $1 \in \phi(J)$; say $\phi(x) = 1$.
Then, by Lemma~\ref{Key}, $\phi(\Tr_{L/K}(x))= \Tr_{L/K}(1) = [L:K] $. By the hypothesis on $p$, this is a nonzero scalar.

We now claim that $\Tr_{L/K}(x) \in I$. To prove this, let $E$ be the Galois closure of $L/K$. Then, \[\Tr_{E/K}(x) = \Tr_{L/K}\circ\Tr_{E/L}(x) = [E:L]\Tr_{L/K}(x).\] $\Gal(E/K)$ acts on the primes lying over $I$ in the integral closure of $A$ in $E$. So, $\Tr_{E/K}(x)\in I$. By the above computation, $\Tr_{L/K}(x)\in I$ as well.

This violates the hypothesis that $I$ is compatibly split.
\end{proof}

We consider one more lemma (Lemma \ref{l;normModp}) before including the remainder of Speyer's proof of Theorem \ref{t;des}. We thank user QiL on math.stackexchange.com for providing a proof of this lemma. (See http://math.stackexchange.com/questions/163929/.) 

\begin{lemma}\label{l;normModp}
Let $X$ be a variety over $\Spec\bZ$ and let $\nu:\tilde{X}\rightarrow X$ be its normalization. For $p$ sufficiently large, $\tilde{X}/p$ is the normalization of $X/p$.
\end{lemma}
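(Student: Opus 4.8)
The plan is to work affine-locally on $X$ and feed standard spreading-out machinery (generic flatness, openness of fiberwise geometric properties over a Dedekind base). We may assume $X$ is affine and flat over $\Spec\bZ$: if instead $X$ lies over a single prime the statement is vacuous, since then $X/p$ is empty for all but one $p$ and the normalization of the empty scheme is empty. So write $X=\Spec A$ with $A$ an integral domain, finite type and flat (equivalently torsion-free) over $\bZ$. Because $\bZ$ is a Nagata (indeed excellent) ring, the integral closure $\tilde A$ of $A$ in $\Frac(A)$ is a module-finite $A$-algebra; it is a normal domain, finite type and flat over $\bZ$, and $\tilde X=\Spec\tilde A$, $\nu$ corresponding to $A\hookrightarrow\tilde A$.

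First I would control the cokernel $M:=\tilde A/A$, a finitely generated $\bZ$-module. Its torsion subgroup is killed by some integer $N_0$. For every $p\nmid N_0$, multiplication by $p$ is injective on $M$, so $\mathrm{Tor}_1^{\bZ}(M,\FF_p)=M[p]=0$ and the exact sequence $0\to A\to\tilde A\to M\to 0$ stays exact after $\otimes_{\bZ}\FF_p$; hence $A/p\hookrightarrow\tilde A/p$, and $\tilde A/p$ is module-finite over $A/p$ (base change of a module-finite extension), so every element of $\tilde A/p$ is integral over $A/p$.

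Next I would arrange that the fibers are reduced and normal for large $p$. The ring $\tilde A\otimes\bQ$ is a localization of the normal ring $\tilde A$, hence normal; since $\bQ$ is perfect, it is even geometrically normal over $\bQ$. By openness of the geometrically-normal locus in a flat finite-type family (EGA IV 12.1.6, or the Stacks Project), the set of $p$ for which $\tilde A\otimes\FF_p$ is geometrically normal is open in $\Spec\bZ$ and contains the generic point, hence omits only finitely many primes; for such $p$, $\tilde A/p$ is a normal ring, i.e. a finite product of normal domains, and in particular reduced. Applying the same spreading-out argument to $A$ (integral, so $A\otimes\bQ$ is geometrically reduced over the perfect field $\bQ$) shows $A/p$ is reduced for $p\gg0$, so its total quotient ring $Q:=Q(A/p)$ is a finite product of fields.

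Finally I would match total quotient rings and conclude. Since $\nu$ is an isomorphism over the dense open normal locus of $X$, there is $0\neq g\in A$ with $A[1/g]=\tilde A[1/g]$; enlarging the excluded finite set of primes (a Chevalley/constructibility argument, using that $V(g)$ is nowhere dense in both $X$ and $\tilde X$), we may assume $\bar g$ is a nonzerodivisor in $A/p$ and in $\tilde A/p$, and then $(A/p)[1/\bar g]=(\tilde A/p)[1/\bar g]$. As both $A/p$ and $\tilde A/p$ inject into this common localization, they share the same total quotient ring $Q$. Thus for $p\gg0$ the ring $\tilde A/p$ is contained in $Q$, is module-finite (hence integral) over $A/p$, and is normal; therefore it is exactly the integral closure of $A/p$ in $Q$, i.e. $\tilde X/p\to X/p$ is the normalization. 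Gluing over a finite affine cover of $X$ completes the proof. The main obstacle is the third paragraph: correctly invoking openness of the geometrically-normal and geometrically-reduced loci in a flat finite-type family over $\Spec\bZ$, together with the passage from normality/reducedness over the perfect field $\bQ$ to \emph{geometric} normality/reducedness; once those inputs are granted, the rest is routine bookkeeping with finitely generated $\bZ$-modules and localizations.
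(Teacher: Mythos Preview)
Your proof is correct and shares the crucial ingredient with the paper's argument: the openness of the geometrically-normal locus in a flat finite-type family (EGA IV, 12.1.6), applied to $\tilde X\to\Spec\bZ$, which guarantees that $\tilde X/p$ is normal for $p\gg 0$.

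Where the two arguments diverge is in how they establish that $\tilde X/p\to X/p$ is birational (equivalently, an isomorphism over a dense open). The paper argues geometrically: if $Z\subseteq X$ is the locus where $\nu$ fails to be an isomorphism, then $\dim Z_{\bQ}<\dim X_{\bQ}$, and by constancy of fibre dimension on an open set of the base (EGA IV, 13.1.1) one has $\dim Z_p<\dim X_p$ for $p\gg 0$, so $Z_p$ is nowhere dense. You instead work ring-theoretically: you control the cokernel $\tilde A/A$ via its $\bZ$-torsion to get injectivity of $A/p\hookrightarrow\tilde A/p$, and then match total quotient rings by picking $g\in A$ with $A[1/g]=\tilde A[1/g]$ and arranging that $\bar g$ stays a nonzerodivisor modulo $p$. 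Your route is more hands-on and in one respect more complete: you explicitly verify that $A/p$ is reduced for large $p$ (via openness of the geometrically-reduced locus), which is needed to make sense of ``the normalization of $X/p$'' and which the paper's proof leaves implicit. The paper's version is shorter and avoids the affine reduction and the Tor computation, at the cost of invoking a second EGA result on fibre dimensions.
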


\begin{proof}
Let $s\in \Spec \bZ$ and let $X_s$ (respectively $\tilde{X}_s$) denote the fiber of $X\rightarrow \Spec \bZ$ (respectively $\tilde{X}\rightarrow \Spec \bZ$) over the point $s$. We will show that there is some open set $S\subseteq \Spec \bZ$ such that for every $s\in S$, (i) $\tilde{X}_s\rightarrow X_s$ is birational, and (ii) $\tilde{X}_s$ is normal.

To show that (i) holds, we apply \cite[IV.13.1.1]{ega}. Let $Z\subseteq X$ be the closed subscheme over which $\nu:\tilde{X}\rightarrow X$ fails to be an isomorphism. Then, $\textrm{dim}(Z_s) = \textrm{dim}(Z_{\bQ})<\textrm{dim}(X_{\bQ})$ for all but at most finitely many $s\in \Spec \bZ$.  Each irreducible component of $X_s$ has dimension at least $\textrm{dim}(X_{\bQ})$. Thus, $Z_s$ is nowhere dense in $X_s$ and so $\tilde{X}_s\rightarrow X_s$ is birational.

To show that (ii) holds, let $f:\tilde{X}\rightarrow \Spec \bZ$ denote the structure morphism and apply \cite[IV.12.1.6 (iv)]{ega}. This says that the set \[U := \{x\in \tilde{X}~|~\tilde{X}_{f(x)}\textrm{ is geometrically normal}\}\] is open. Then, $f(U)$ is a constructible set of $\Spec \bZ$. Because $\tilde{X}_{\bQ}$ is geometrically normal, and $U\subseteq \tilde{X}$ is open, $f(U)$ must be an open subset of $\Spec \bZ$. 
\end{proof}

We now prove Theorem \ref{t;des}. We proceed by induction on $\dim X$.

\begin{proof}[Proof of Theorem \ref{t;des}]
We can immediately pass to components, and thus reduce to the case that $X$ and $Y$ are integral.
Also, if $f$ is not surjective, then we can factor $f$ as $Y \to f(Y) \to X$. 
Any compatible splittings of $X$ and $Y$ will induce a splitting of $f(Y)$ and split map $Y\to f(Y)$, whose image has smaller dimension. We therefore obtain the result by induction.

Thus, we are reduced to the case where $X$ and $Y$ are integral, and $f$ is surjective.
Let $\tilde{X}$ and $\tilde{Y}$ denote the normalizations of $X$ and $Y$.
For $p$ sufficiently large, Lemma \ref{l;normModp} implies that $\tilde{X}/p$ is the normalization of $X/p$ and that $\tilde{Y}/p$ is the normalization of $Y/p$.
Any compatible splitting on $X/p$ and $Y/p$ will give splittings of the normalizations of $X/p$ and $Y/p$, and all of these splittings will be compatible.
Write $\tilde{f}$ for the map $\tilde{Y} \to \tilde{X}$, and write $m$ and $n$ for the maps $\tilde{X} \to X$ and $\tilde{Y} \to Y$.

Let $\Spec A$ be an affine chart on $X$, and let $\tilde{A}$ be the normalization of $A$.
Consider the conductor ideal $D = \{ d \in A~|~d\tilde{A}\subseteq A \}$ and recall that for any Frobenius splitting of $A/p$, $D/p$ is compatibly split (see the proof of Proposition \ref{p;normalizationSplit}).

The construction of $D$ sheafifies; let $\Delta$  and $\widetilde{\Delta}$ be the corresponding subvarieties of $X$ and $\tilde{X}$.
Then $\tilde{X} \setminus \Delta \to X \setminus \Delta$ is an isomorphism, and $\widetilde{\Delta} \to \Delta$ is a finite map.
By induction, there is an $N'$ such that for $p>N'$, split subvarieties of $\Delta/p$ lift to split subvarieties of $\widetilde{\Delta}/p$.
Let $N = \max(\deg f, N')$.

Take $p>N$, a compatible splitting $\phi$, and a split subvariety $V$ of $X/p$.
If the generic point of $V$ is not in $\Delta$, then $m^{-1}(V)$ is split because $m$ is an isomorphism away from $\Delta$.
If the generic point is in $\Delta$, then $m^{-1}(V)$ is split because $p > N'$.
So, either way, $m^{-1}(V)$ is split.

By Proposition~\ref{MainCase}, $\tilde{f}^{-1}(m^{-1}(V))$ is split. 
The image of a split variety is split, so \[n (\tilde{f}^{-1}(m^{-1}(V))) = f^{-1}(V)\] is split. 
This is the desired result.
\end{proof}

\subsection{The Knutson-Lam-Speyer algorithm for finding compatibly split subvarieties}\label{s;kls}

In this subsection we discuss an unpublished algorithm due to Allen Knutson, Thomas Lam, and David Speyer which, in certain cases, finds all compatibly split subvarieties of $X$. Note that many of the ideas appearing below are very similar to those found in \cite[Section 5]{KLS}. 

\begin{algorithm}\label{a} (Knutson-Lam-Speyer)

Let $X$ be a normal variety that is Frobenius split by $\sigma^{p-1}$, $\sigma\in H^0(X,F_*(\omega_X^{-1}))$. Let $D = V(\sigma)$ be the vanishing set of the anticanonical section $\sigma$. 

The input of the algorithm is the pair $(X,D)$. The output of the algorithm is a list $L$ of compatibly split subvarieties of $X$. The steps of the algorithm are as follows. We start with an empty list $L'$.

\noindent\textbf{Step 1: } Add $X$ to the list $L'$. 

\noindent\textbf{Step 2: } Let $S$ be the closure (in $X$) of the singular locus of $X\setminus D$. Add all compatibly split subvarieties of $S$ to $L'$.

\noindent\textbf{Step 3: } If $D$ is empty, terminate the algorithm for the pair $(X,D)$. If $D$ is non-empty, decompose $D$ into its irreducible components, $D = D_1\cup\cdots\cup D_r$. For each $i\in 1,\dots,r$, let $E_i = D_1\cup\cdots\cup \hat{D_i}\cup\cdots\cup D_r$. Replace $(X,D)$ by the collection of new pairs $\{(D_i,D_i\cap E_i)~|~1\leq i\leq r\}$.

\noindent\textbf{Step 4: } Replace each $(D_i, D_i\cap E_i)$, $1\leq i\leq r$, with the pair $P_i$ using the following procedure: If $D_i$ is normal, then $P_i := (D_i,D_i\cap E_i)$. If $D_i$ is not normal, then $P_i := (\tilde{D_i},\nu^{-1}(D_i\cap E_i)\cup \nu^{-1}((D_i)_{\textrm{non-R1}}))$ where $\nu:\tilde{D_i}\rightarrow D_i$ is the normalization of $D_i$ and $(D_i)_{\textrm{non-R1}}$ is the (possibly empty) union of codimension-$1$ components of the singular locus of $D_i$. For each $P_i$, $1\leq i\leq r$, return to step 1 and rename $P_i$ by $(X,D)$ to match the notation used above.

At the very end, map all varieties in the list $L'$ forward to $X$ by the composition of the relevant finite morphisms (i.e. the composition of restricted normalization morphisms). The elements of the list $L$ are the images, under these maps, of the elements of the list $L'$.
\end{algorithm}

\begin{lemma}
Let $X$ be a variety defined over $\Spec \bZ$. Suppose $X/p$ is normal and Frobenius split by $\sigma^{p-1}$, $\sigma\in H^0(X/p,F_*(\omega_{X/p}^{-1}))$. Let $D = V(\sigma)$ be the vanishing set of the anticanonical section $\sigma$. 

Run the algorithm starting with the pair $(X/p,D)$ to obtain a list $L$ of subvarieties of $X/p$. There exists an integer $N>0$ such that, for all $p>N$, every subvariety appearing in the list $L$ is compatibly split.
\end{lemma}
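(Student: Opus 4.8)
The plan is to prove \emph{soundness} of the algorithm (not completeness): everything it outputs is compatibly split, provided $p$ is large. I would argue by induction on $\dim X$, the point being that every operation the algorithm performs --- irreducible decomposition of an anticanonical divisor, intersection, restriction of a splitting to a compatibly split divisor, normalization, extraction of the conductor and of the non-$R_1$ locus, and the concluding pushforward along a composite of finite normalization maps --- either preserves compatible splitness outright or does so for all but finitely many $p$, and there are only finitely many such operations because the recursion terminates.

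First I would settle termination and spreading-out. Each recursive call in Step 3 is on a divisor in the current variety, hence of strictly smaller dimension, and in Step 2 the closure $S$ of $\textrm{Sing}(X\setminus D)$ has codimension $\ge 2$ since the current variety is normal, hence $R_1$; so dimension strictly decreases in every recursive call, the algorithm terminates, and over $\bZ$ it performs only finitely many decompositions, intersections, singular-locus and conductor computations, and normalizations $\nu_1,\dots,\nu_m$. Inverting the finitely many primes needed to clear the denominators of the fixed anticanonical section $\sigma$, all these operations are defined over some $\bZ[1/M]$, and each commutes with reduction modulo $p$ for $p\nmid M'$, with $M'$ a suitable multiple of $M$: this is standard spreading-out for decompositions, singular loci and conductors, and for the normalizations it is exactly Lemma~\ref{l;normModp} applied to each $\nu_j$. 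Finally, applying Theorem~\ref{t;des} to each finite map $\nu_j$ yields integers $N_j$ such that for $p>N_j$ the reduction mod $p$ of the preimage under $\nu_j$ of a compatibly split subvariety is again compatibly split. Set $N$ to exceed $M'$ and every $N_j$. Then for $p>N$ the list $L$ is precisely the reduction mod $p$ of the list produced by running the same sequence of operations over $\bZ[1/M]$.

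Now fix $p>N$ and trace compatible splitness through one pass of Steps 1--4 of a recursive call on a variety $W$ split by the $(p-1)$st power of an anticanonical section with vanishing locus $D$. Step 1 records $W$ itself, compatibly split in itself. In Step 3 each component $D_i$ of $D$ is compatibly split (components of compatibly split subschemes are compatibly split, \cite[Proposition 1.2.1]{BK}) and carries the restricted splitting; $D_i\cap E_i$ is an intersection of compatibly split subschemes of $D_i$, hence compatibly split there, and by adjunction it is the vanishing locus of the induced anticanonical section of $D_i$, so the new pair $(D_i,D_i\cap E_i)$ is of the required form and of smaller dimension, to which induction applies. In Step 4, if $D_i$ is non-normal we pass to $\nu_i\colon\widetilde{D_i}\to D_i$: Proposition~\ref{p;normalizationSplit}(2) equips $\widetilde{D_i}$ with an induced splitting making $\nu_i$ a split morphism and compatibly splitting $\nu_i^{-1}((D_i)_{\textrm{non-R1}})$, while $\nu_i^{-1}(D_i\cap E_i)$ is compatibly split because $p>N_i$ (Theorem~\ref{t;des}); their union is the vanishing locus of the pulled-back anticanonical section, so the recursion continues on valid data. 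Step 2 concerns $S$, of dimension $<\dim W$, which by the Kumar--Mehta Lemma (Theorem~\ref{t;divisors}) contains every compatibly split subvariety of $W$ not inside $D$; the algorithm finds these by a lower-dimensional invocation, so induction gives that they are compatibly split, and, since the maps back to $W$ are finite and split, Proposition~\ref{p;splitmaps}(2) keeps them compatibly split in $W$. At the very end each member of $L'$ is pushed forward to $X$ along a composite of restricted normalization maps, which are finite split morphisms, so by Proposition~\ref{p;splitmaps}(2) the images are compatibly split; thus every member of $L$ is compatibly split in $X/p$.

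The main obstacle is the uniformity in the second paragraph: one must check that the entire branching recursion descends to a single $\bZ[1/M]$ and that, away from finitely many primes, running it mod $p$ coincides with reducing the $\bZ[1/M]$-computation --- above all that Lemma~\ref{l;normModp} can be applied in turn to every one of the nested normalizations, and that the treatment of $S$ in Step 2 is genuinely a lower-dimensional instance of the same procedure, so the induction closes and the pushforward to $W$ stays compatibly split. The individual compatible-splitness assertions, by contrast, are immediate from Propositions~\ref{p;normalizationSplit}, \ref{p;splitmaps} and \ref{p;zerosSplit}, Theorems~\ref{t;des} and \ref{t;divisors}, and the stability of compatible splitness under taking components and intersections.
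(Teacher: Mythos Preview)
Your overall strategy is right and uses the same toolkit as the paper (Speyer's Theorem~\ref{t;des}, Proposition~\ref{p;normalizationSplit}, Proposition~\ref{p;splitmaps}), but the invariant you carry through the induction is too strong, and this creates a real gap. You induct on $\dim X$ and require each recursive pair $(W,D)$ to satisfy the full hypotheses of the lemma: $W$ normal, split by the $(p-1)$st power of an anticanonical section $\sigma$, with $D=V(\sigma)$. To propagate this to $(D_i,D_i\cap E_i)$ (or to its normalization) you invoke ``adjunction'' to assert that the induced splitting of $D_i$ is again a $(p-1)$st power whose vanishing locus is $D_i\cap E_i$. But that is exactly Lemma~\ref{l;adj}, which carries the extra hypothesis that $\textrm{sing}(W)\cap D_i$ has codimension~$\geq 2$ in $D_i$; this is not automatic, and indeed the paper imposes it explicitly as an additional assumption in the later \emph{completeness} Proposition. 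Without it, the induced splitting need not be a $(p-1)$st power at all (cf.\ Remark~\ref{halfSplit}), so your induction hypothesis can fail at the very first recursive call.

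The paper's proof sidesteps this by carrying a weaker invariant through an induction on the number of iterations of Steps~1,~3,~4: for each pair $(Y,D_Y)$ that appears, it only records that (i) $D_Y$ is compatibly split in $Y$, and (ii) there is a finite split morphism $f\colon Y\to f(Y)$ with $f(Y)$ compatibly split in $X/p$. Both (i) and (ii) are preserved by Steps~3 and~4 using only closure under components and intersections, Proposition~\ref{p;normalizationSplit}, Theorem~\ref{t;des}, and Proposition~\ref{p;splitmaps}; no adjunction or $(p-1)$st-power form is needed. Step~2 is then handled in one line at the end. Your argument is easily repaired by weakening your induction hypothesis in exactly this way; the spreading-out discussion you give is more explicit than the paper's, but not actually required for soundness.
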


\begin{proof}
To begin, for every pair $(Y,D_Y)$ appearing after some number of iterations of steps 1, 3, and 4, we show that (i) $D_Y$ is compatibly split in $Y$, and (ii) there is a finite split morphism $f:Y\rightarrow f(Y)$ such that $f(Y)$ is a compatibly split subvariety of $X/p$.

We proceed by induction on the number of iterations of steps 1, 3, and 4. When $n=0$, the result is automatic. So suppose that $(Y,D_Y)$ is a pair that shows up after $n$ iterations of steps 1, 3, and 4. By induction, $D_Y$ is a compatibly split subvariety of $Y$ and there is a finite split morphism $f:Y\rightarrow f(Y)$ such that $f(Y)$ is a compatibly split subvariety of $X/p$. Let $D_Y = D_1\cup\cdots\cup D_r$ where each $D_i$ is irreducible, and let $E_i = D_1\cup\cdots \cup \hat{D_i}\cup\cdots D_r$, $1\leq i\leq r$. If $D_i$ is normal, then $(D_i,D_i\cap E_i)$ appears after $n+1$ iterations of steps 1, 3, and 4, $D_i\cap E_i$ is compatibly split in $D_i$, and $f|_{D_i}$ is a finite split morphism such that $f|_{D_i}(D_i)$ is a compatibly split subvariety of $X/p$ (by Proposition \ref{p;splitmaps}). If $D_i$ is not normal, then $(\tilde{D_i},\nu^{-1}(D_i\cap E_i)\cup \nu^{-1}((D_i)_{\textrm{non-R1}}))$ (where $\nu:\tilde{D_i}\rightarrow D_i$ is the normalization map) appears after $n+1$ iterations of steps 1, 3, and 4. By Theorem \ref{t;des}, there is an $N'$ such that, for all $p>N'$, $\nu^{-1}(D_i\cap E_i)\cup \nu^{-1}((D_i)_{\textrm{non-R1}})$ is compatibly split in $\tilde{D_i}$. Furthermore, $f|_{D_i}\circ \nu$ is a finite split morphism such that $(f|_{D_i}\circ\nu)(\tilde{D_i})$ is a compatibly split subvariety of $X$ (by Propositions \ref{p;normalizationSplit} and \ref{p;splitmaps}).

Any subvariety in the list $L$ which is not found using only steps 1, 3, and 4 of the algorithm must be of the form $f(Z)$ where $f:Y\rightarrow f(Y)$ is a finite split morphism, $f(Y)$ is a compatibly split subvariety of $X/p$, and $Z$ is a compatibly split subvariety contained in the singular locus of $Y$. It follows by Proposition \ref{p;splitmaps} that $f(Z)$ is compatibly split. 
\end{proof}

\begin{remark}
Step 2 of the algorithm is difficult to perform in general. Luckily, in the setting that we care about in this thesis, steps 1, 3, and 4 are enough to find all compatibly split subvarieties (at least in small examples). However, as the next example illustrates, step 2 is necessary.
\end{remark}

\begin{example} Let $p\equiv 1 ~(\textrm{mod}~3)$ and consider the splitting of $\bA^3$ given by $\textrm{Tr}(f^{p-1}\cdot)$, $f=x^3+y^3+z^3$. In this case, $(X,D) = (\mathbb{A}_k^3,\{x^3+y^3+z^3=0\})$. Without checking for compatibly split subvarieties of the singular locus of $D$, we would miss finding the origin which is compatibly split.
\end{example} 

We now show that, in certain cases, the algorithm is guaranteed to find \emph{all} compatibly split subvarieties of a normal Frobenius split variety $X$. In order to prove this, we make use of the following lemmas.


\begin{lemma}\label{l;norm}\cite[Lemma 1.1.7 (iii)]{BK}
If $X$ is a normal variety and $U$ is an open subset with complement of codimension at least $2$, then $X$ has a Frobenius splitting if and only if $U$ does. In fact, any splitting of $U$ is the restriction of a unique splitting of $X$. In particular, $X$ has a splitting if and only if its regular locus does.
\end{lemma}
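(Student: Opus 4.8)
The plan is to reduce the statement to a Hartogs-type extension property for a single coherent sheaf. Write $\mathcal{N} := \mathcal{H}om_{\mathcal{O}_X}(F_*\mathcal{O}_X,\mathcal{O}_X)$, so that sections of $\mathcal{N}$ over any open $W$ are exactly the $\mathcal{O}_X$-linear maps $F_*\mathcal{O}_W\to\mathcal{O}_W$ (here one uses that the absolute Frobenius is a homeomorphism, so its pushforward commutes with restriction to opens). By the Remark on affine splittings, a section $\phi$ of $\mathcal{N}$ over $W$ is a Frobenius splitting of $W$ if and only if $\epsilon(\phi):=\phi(1)=1$, where $\epsilon:\mathcal{N}\to\mathcal{O}_X$ is the evaluation-at-$1$ map; this is a local condition and $\epsilon$ commutes with restriction. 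Thus the lemma amounts to showing that restriction $\mathcal{N}(X)\to\mathcal{N}(U)$ is an isomorphism that is compatible with the equation $\epsilon(\phi)=1$.

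First I would note that $F:X\to X$ is finite, since $X$ is of finite type over the algebraically closed (hence perfect) field $k$; therefore $F_*\mathcal{O}_X$ is coherent and so is its dual $\mathcal{N}$. The key input is then the standard fact that the $\mathcal{O}_X$-dual of a coherent sheaf on a normal variety is reflexive, and that a reflexive (indeed any $S_2$) coherent sheaf $\mathcal{F}$ on a normal variety satisfies $j_*(\mathcal{F}|_U)=\mathcal{F}$ whenever $j:U\hookrightarrow X$ is an open immersion with $\mathrm{codim}(X\setminus U)\geq 2$. Applying this with $\mathcal{F}=\mathcal{N}$ and taking global sections shows that the restriction map $\mathcal{N}(X)\to\mathcal{N}(U)$ is a bijection; moreover reflexive sheaves on the integral scheme $X$ are torsion-free, which records both that $\mathcal{N}(X)\to\mathcal{N}(U)$ is injective and, applied to $\mathcal{O}_X$ itself, that $\mathcal{O}_X(X)\to\mathcal{O}_X(U)$ is injective.

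Next I would transport the splitting condition across this bijection. If $\phi\in\mathcal{N}(X)$ is a splitting of $X$, then $\epsilon_U(\phi|_U)=\epsilon_X(\phi)|_U=1$, so $\phi|_U$ is a splitting of $U$ (this direction is also Proposition \ref{p;openSplit}). Conversely, given a splitting $\psi$ of $U$, let $\Phi\in\mathcal{N}(X)$ be its unique preimage under the bijection; then $\epsilon_X(\Phi)\in\mathcal{O}_X(X)$ restricts to $\epsilon_U(\psi)=1\in\mathcal{O}_X(U)$, and since $\mathcal{O}_X(X)\to\mathcal{O}_X(U)$ is injective and sends $1\mapsto 1$ we get $\epsilon_X(\Phi)=1$, i.e. $\Phi$ is a splitting of $X$ restricting to $\psi$. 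Uniqueness of $\Phi$ is precisely the injectivity of $\mathcal{N}(X)\to\mathcal{N}(U)$. This gives the ``if and only if'' together with the extension-and-uniqueness assertion. For the final sentence, take $U=X_{\mathrm{reg}}$: normality gives $R_1$, so $X\setminus X_{\mathrm{reg}}$ has codimension at least $2$ and the general statement applies.

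The main obstacle is genuinely the one nontrivial ingredient: establishing that $\mathcal{N}$ is of the form to which the $S_2$/reflexivity extension theorem applies, i.e. that duals of coherent sheaves on normal varieties are reflexive and that reflexive sheaves extend uniquely across codimension-$\geq 2$ loci. Once that is in hand, everything else is bookkeeping with the evaluation-at-$1$ map; one need only be slightly careful to check that the isomorphism $\mathcal{N}(X)\cong\mathcal{N}(U)$ is induced by honest restriction of sheaf homomorphisms and that $\epsilon$ really does commute with it, neither of which presents any difficulty.
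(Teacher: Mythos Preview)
The paper does not supply its own proof of this lemma; it is simply quoted from \cite[Lemma 1.1.7 (iii)]{BK}. Your argument is correct and is essentially the standard proof one finds in that reference: identify potential splittings with global sections of the sheaf $\mathcal{N}=\mathcal{H}om_{\mathcal{O}_X}(F_*\mathcal{O}_X,\mathcal{O}_X)$, observe that $\mathcal{N}$ is reflexive (as the dual of the coherent sheaf $F_*\mathcal{O}_X$), invoke the Hartogs-type extension property for reflexive sheaves on a normal variety to get the bijection $\mathcal{N}(X)\to\mathcal{N}(U)$, and then check that the condition $\phi(1)=1$ transports across this bijection because $\mathcal{O}_X(X)\hookrightarrow\mathcal{O}_X(U)$ is injective. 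Nothing is missing.
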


\begin{lemma}\label{l;adj2}(similar to \cite[Exercise 1.3.E (4*)]{BK})
Let $X$ be a non-singular Frobenius split variety and let $D_1$ be a non-singular, compatibly split prime divisor. Suppose that the splitting of $X$ is given by $\sigma_1^{p-1}\tilde{\tau}^{p-1}$ where $V(\sigma_1) = D_1$ and $\sigma_1\tilde{\tau}\in H^0(X,\omega_X^{-1})$. Then the induced splitting of $D_1$ is given by $\tau^{p-1}$ where $\tau$ is the residue of $\tilde{\tau}$ in $\omega_{D_1}^{-1}\cong (\omega_X(D_1)\otimes_{\mathcal{O}_X}\mathcal{O}_{D_1})^{-1}$.
\end{lemma}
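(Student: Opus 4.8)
The statement is local on $D_1$ and is essentially \cite[Exercise 1.3.E (4*)]{BK}; the plan is to make the local computation with the trace operator explicit. First I would fix a closed point $x\in D_1$ and a regular system of coordinates $t_1,\dots,t_n$ for $X$ at $x$ with $D_1=V(t_1)$, so that $\bar t_2,\dots,\bar t_n$ are local coordinates for $D_1$ at $x$ and $\mathcal O_{D_1,x}=\mathcal O_{X,x}/(t_1)$. I would then trivialize the three relevant line bundles near $x$: write $\eta:=(dt_1\wedge\cdots\wedge dt_n)^{-1}=\partial_{t_1}\wedge\cdots\wedge\partial_{t_n}$ for the local generator of $\omega_X^{-1}$; trivialize $\mathcal O_X(D_1)$ so that the tautological section $\sigma_1$ with $V(\sigma_1)=D_1$ has local coefficient $t_1$; and trivialize $\omega_X(D_1)^{-1}=\omega_X^{-1}(-D_1)$ by $t_1\eta$. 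Writing $\tilde\tau=g\cdot(t_1\eta)$ with $g\in\mathcal O_{X,x}$, the anticanonical section $\sigma:=\sigma_1\tilde\tau\in H^0(X,\omega_X^{-1})$ has local expansion $\sigma=t_1g\cdot\eta$, i.e. local coefficient $h:=t_1g$.

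Next I would pass to the operator $\Tr$ of the excerpt. By Theorem~\ref{localSectionSplit} (in the form used throughout, cf. Example~\ref{e;stdsplitting}), the splitting $\phi_X$ of $X$ determined by $\sigma^{p-1}$ is $\phi_X(f)=\Tr_X\!\big(h^{p-1}f\big)=\Tr_X\!\big(t_1^{p-1}g^{p-1}f\big)$ for $f\in\mathcal O_{X,x}$, where $\Tr_X$ denotes the trace operator in the variables $t_1,\dots,t_n$. The key elementary fact is that $\Tr_X$ factors as the one-variable trace in $t_1$ composed with the $(n-1)$-variable trace $\Tr_{D_1}$ in $t_2,\dots,t_n$ (it is multiplicative over disjoint variable sets), and that $\Tr^{(1)}(t_1^a)\ne 0$ only when $a\equiv-1\pmod p$. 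Hence, expanding any $F=\sum_{j\ge 0}t_1^jF_j(t_2,\dots,t_n)$, only the terms with $j\equiv 0\pmod p$ survive in $\Tr_X(t_1^{p-1}F)$, giving $\Tr_X(t_1^{p-1}F)=\sum_{k\ge 0}t_1^{k}\,\Tr_{D_1}(F_{pk})$; in particular $\Tr_X(t_1^{p-1}F)\bmod t_1=\Tr_{D_1}(F\bmod t_1)$. Applying this to $F=g^{p-1}f$ shows that $\phi_X$ preserves $\mathcal I_{D_1}=(t_1)$ (reflecting that $D_1$ is compatibly split) and that the induced splitting $\phi_{D_1}$ on $D_1$ — obtained by passing $\phi_X$ to the quotient by $(t_1)$ — is given by $\phi_{D_1}(\bar f)=\Tr_{D_1}\!\big(\bar g^{\,p-1}\bar f\big)$, where $\bar g=g\bmod t_1$.

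Finally I would identify $\bar g$ with the residue $\tau$ of $\tilde\tau$. Under the adjunction isomorphism $\omega_{D_1}^{-1}\cong\omega_X(D_1)^{-1}\otimes_{\mathcal O_X}\mathcal O_{D_1}$, the local generator $t_1\eta=t_1\,(\partial_{t_1}\wedge\cdots\wedge\partial_{t_n})$ of $\omega_X(D_1)^{-1}$ is carried to $\partial_{t_2}\wedge\cdots\wedge\partial_{t_n}=(d\bar t_2\wedge\cdots\wedge d\bar t_n)^{-1}$, the standard generator of $\omega_{D_1}^{-1}$ in the coordinates $\bar t_i$. Therefore $\tau=\tilde\tau|_{D_1}$ has local coefficient $\bar g$, and by Theorem~\ref{localSectionSplit} applied on $D_1$ the identity $\phi_{D_1}(\bar f)=\Tr_{D_1}(\bar g^{\,p-1}\bar f)$ says exactly that $\phi_{D_1}$ is the splitting determined by $\tau^{p-1}$. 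Since a splitting section is determined by its local expansions at closed points, this proves the lemma. The only real obstacle I anticipate is bookkeeping: normalizing the trivializations of $\mathcal O_X(D_1)$, $\omega_X^{-1}$, and $\omega_X(D_1)^{-1}$ consistently so that ``$\sigma_1\leftrightarrow t_1$'' and ``$t_1\eta$ generates $\omega_X(D_1)^{-1}$'', and then verifying that the residue/adjunction isomorphism sends $t_1\eta$ to the coordinate generator of $\omega_{D_1}^{-1}$; once those are pinned down, the rest reduces to the one-line factorization property of $\Tr$.
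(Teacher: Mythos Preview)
Your proposal is correct and follows essentially the same approach as the paper's proof: both localize at a closed point of $D_1$, choose coordinates with $t_1$ cutting out $D_1$, write the splitting as $\Tr(t_1^{p-1}g^{p-1}\,\cdot\,)$, and then read off the induced splitting on the quotient as $\Tr(\bar g^{\,p-1}\,\cdot\,)$ with $\bar g$ identified as the local coefficient of the residue $\tau$. The paper's version is terser --- it simply asserts that the induced splitting on $k[[t_1,\dots,t_n]]/\langle t_1\rangle$ is $\Tr(h^{p-1}\,\cdot\,)$ with $h=\tilde h(0,t_2,\dots,t_n)$ --- whereas you spell out the factorization of $\Tr$ through the one-variable trace in $t_1$ and the adjunction bookkeeping explicitly; but there is no substantive difference in strategy.
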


\begin{proof}
Let $x$ be any closed point of $D_1$. We show that the statement holds in the completion of the local ring $\mathcal{O}_{X,x}$. 

Let $t_1,...,t_n$ be local parameters so that the completion of $\mathcal{O}_{X,x}$ is identified with the power series ring $k[[t_1,...,t_n]]$. We can choose these parameters such that $t_1$ vanishes along $D_1$. Then, $\sigma\tilde{\tau}$ can be expanded locally as $t_1\tilde{h}(dt_1\wedge\cdots\wedge dt_n)^{-1}$ where the power series $\tilde{h}\in k[[t_1,\dots,t_n]]$ is not divisible by $t_1$. Therefore, the splitting of $k[[t_1,\dots,t_n]]$ is given by $\textrm{Tr}(t_1^{p-1}\tilde{h}^{p-1}\cdot)$.

The induced splitting of the quotient $k[[t_1,\dots,t_n]]/\langle t_1\rangle$ is $\textrm{Tr}(h^{p-1}\cdot)$, where $h := \tilde{h}(0,t_2,\dots,t_n)$. As $h(dt_2\wedge \cdots\wedge dt_n)^{-1}$ is the local expansion $\tau$ in the power series ring $k[[t_2,\dots,t_n]]\cong \hat{\mathcal{O}}_{D_1,x}$, we obtain the desired result.
\end{proof}

\begin{lemma}\label{l;adj}
Let $X$ be a normal variety that is Frobenius split by $\sigma^{p-1}$ where $\sigma\in H^0(X,\omega_X^{-1})$. Let $D = V(\sigma)$ and suppose that $D = D_1+\cdots + D_r = D_1+E$ where the support of each $D_i$ is irreducible and non-empty. Let $\textrm{sing}(X)$ denote the singular locus of $X$ and suppose that $\textrm{sing}(X)\cap D_1$ has codimension $\geq 2$ in $D_1$. 
\begin{enumerate}
\item If $D_1$ is normal, then the induced splitting of $D_1$ is given by $\tau^{p-1}$, where $\tau\in H^{0}(D_1,\omega_{D_1}^{-1})$ and $V(\tau) = D_1\cap E$. Furthermore, all compatibly split subvarieties of $D_1$ are contained in $D_1\cap E$ or in $\textrm{sing}(D_1)$.
\item Suppose $D_1$ is not normal. Let $(D_1)_{\textrm{non-R1}}$ denote the codimension-$1$ component of $\textrm{sing}(D_1)$. If $\nu:\tilde{D_1}\rightarrow D_1$ is the normalization of $D_1$, then, for large $p$, the splitting of $\tilde{D_1}$ is given by $\tau^{p-1}$ where $V(\tau) = \nu^{-1}(D_1\cap E)\cup\nu^{-1}((D_1)_{\textrm{non-R1}})$. Furthermore, all compatibly split subvarieties of $\tilde{D_1}$ are contained in $\nu^{-1}(D_1\cap E)\cup\nu^{-1}((D_1)_{\textrm{non-R1}})$ or $\textrm{sing}(\tilde{D_1})$.
\end{enumerate}
\end{lemma}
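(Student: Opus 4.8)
The statement splits into a "normal" case (part 1) and a "non-normal" case (part 2), and in each case there are two things to prove: first, that the induced splitting of $D_1$ (resp. of $\tilde{D_1}$) is itself a $(p-1)^{\textrm{st}}$ power with an explicitly identified vanishing locus, and second, that every compatibly split subvariety of $D_1$ (resp. $\tilde{D_1}$) lies inside that vanishing locus or in the singular locus. The second part will follow essentially for free from the first by invoking the Kumar--Mehta Lemma (Theorem~\ref{t;divisors}), so the real content is the identification of the induced splitting section. The key simplification is the codimension hypothesis $\operatorname{codim}_{D_1}(\textrm{sing}(X)\cap D_1)\geq 2$, which lets us do all the work on the regular locus and then extend uniquely by Lemma~\ref{l;norm}.

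\textbf{Proof of part 1.} First I would restrict everything to $U := X_{\textrm{reg}}\setminus(\textrm{sing}(X)\cap D_1$-neighborhood issues$)$; more precisely, work on the open set $X_{\textrm{reg}}$, where $D_1\cap X_{\textrm{reg}}$ is a non-singular prime divisor (using that $D_1$ is normal, so its singular locus has codimension $\geq 2$ in $D_1$, and that $\textrm{sing}(X)\cap D_1$ also has codimension $\geq 2$). Write the anticanonical section as $\sigma = \sigma_1\tilde{\tau}$ where $\sigma_1$ cuts out $D_1$ and $\tilde\tau$ cuts out $E$ (away from the bad locus these are genuine sections with the stated zero divisors). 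Then Lemma~\ref{l;adj2} applies on $X_{\textrm{reg}}$: the induced splitting of the non-singular divisor $D_1\cap X_{\textrm{reg}}$ is $\tau^{p-1}$ where $\tau$ is the residue of $\tilde\tau$, an anticanonical section of $D_1\cap X_{\textrm{reg}}$ whose vanishing locus is $(D_1\cap E)\cap X_{\textrm{reg}}$. Now $D_1$ is normal and the open set $D_1\cap X_{\textrm{reg}}$ has complement of codimension $\geq 2$ in $D_1$ (combining the two codimension hypotheses), so by Lemma~\ref{l;norm} the section $\tau$ extends uniquely to $\tau\in H^0(D_1,\omega_{D_1}^{-1})$ and the splitting $\tau^{p-1}$ extends to the claimed splitting of $D_1$; since $V(\tau)$ is closed and agrees with $D_1\cap E$ on a dense open set, and $D_1\cap E$ is reduced, $V(\tau)=D_1\cap E$. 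Finally, apply Theorem~\ref{t;divisors} to $D_1$ split by $\tau^{p-1}$: any compatibly split subvariety of $D_1$ is contained in $\textrm{sing}(D_1)$ or in $V(\tau)=D_1\cap E$.

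\textbf{Proof of part 2.} When $D_1$ is not normal, the first move is to pass to the normalization $\nu:\tilde{D_1}\to D_1$, which is a finite surjective map of varieties. By Proposition~\ref{p;normalizationSplit}(2) (or, for the mod-$p$ reduction statement, Theorem~\ref{t;des}, which is why we need $p$ large), the induced splitting of $D_1$ — which by part 1's argument is $\tau_0^{p-1}$ with $V(\tau_0)=D_1\cap E$ on the regular locus — lifts to a splitting of $\tilde{D_1}$ that compatibly splits $\nu^{-1}((D_1)_{\textrm{non-R1}})$. I would then argue that this lifted splitting is again a $(p-1)^{\textrm{st}}$ power: on the open locus where $\nu$ is an isomorphism and $\tilde{D_1}$ is regular, it agrees with $\nu^*(\tau_0)^{p-1}$, and the difference between $\nu^*(V(\tau_0))$ and the true vanishing divisor of the lifted section is precisely supported on the conductor locus, i.e.\ on $(D_1)_{\textrm{non-R1}}$ — this is the standard adjunction/ramification bookkeeping behind the conductor formula $\omega_{\tilde{D_1}} = \nu^*\omega_{D_1}(-\mathfrak{c})$ type identity. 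Extending uniquely across the codimension-$\geq 2$ bad locus by Lemma~\ref{l;norm} as before, one gets $\tilde\tau\in H^0(\tilde{D_1},\omega_{\tilde{D_1}}^{-1})$ with $V(\tilde\tau)=\nu^{-1}(D_1\cap E)\cup\nu^{-1}((D_1)_{\textrm{non-R1}})$. Then Theorem~\ref{t;divisors} finishes it exactly as in part 1.

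\textbf{Main obstacle.} The delicate point is part 2: verifying that the lifted splitting of $\tilde{D_1}$ is genuinely a $(p-1)^{\textrm{st}}$ power and that its vanishing locus picks up \emph{exactly} $\nu^{-1}((D_1)_{\textrm{non-R1}})$ and nothing more (and nothing less). This requires care with the local structure of the normalization along the non-$R_1$ locus and the relationship between the conductor ideal (known to be compatibly split) and the ramification of $\nu$; one must also be sure that passing to mod-$p$ reductions — the reason $p$ must be large — does not disturb the normalization (Lemma~\ref{l;normModp}) or the conductor computation. The "large $p$" in the statement is absorbing all of this, so the proof should foreground where each largeness hypothesis is used.
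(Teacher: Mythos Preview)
Your part~1 is essentially the paper's argument: restrict to the locus where both $X$ and $D_1$ are nonsingular, apply Lemma~\ref{l;adj2}, extend by normality and codimension~$\geq 2$ (Lemma~\ref{l;norm}), then invoke Kumar--Mehta.

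In part~2 your outline diverges from the paper in two related ways. First, you misplace the ``large~$p$'' hypothesis. Proposition~\ref{p;normalizationSplit} lifts the splitting to $\tilde{D_1}$ and compatibly splits $\nu^{-1}((D_1)_{\textrm{non-R1}})$ in \emph{every} characteristic; the paper invokes Speyer's Theorem~\ref{t;des} for a different purpose, namely to show that $\nu^{-1}(D_1\cap E)$ is compatibly split in $\tilde{D_1}$. That is the step that genuinely needs $p$ large, and you never perform it. Second, the paper does not use a conductor formula of the type $\omega_{\tilde{D_1}}=\nu^*\omega_{D_1}(-\mathfrak{c})$ at all. Instead, once both $\nu^{-1}(D_1\cap E)$ and $\nu^{-1}((D_1)_{\textrm{non-R1}})$ are known to be compatibly split, the paper lets $\tau$ denote the splitting section of $(\tilde{D_1})_{\textrm{reg}}$ in $\omega^{1-p}$ and bounds its divisor from both sides: from below by $(p-1)\nu^{-1}(D_1\cap E)+(p-1)\nu^{-1}((D_1)_{\textrm{non-R1}})$ (using that both are compatibly split), and from above by noting that on the dense open set $\nu^{-1}((D_1)_{\textrm{reg}})$, where $\nu$ is an isomorphism, part~1 already identifies $D_\tau$ as $(p-1)\nu^{-1}(D_1\cap E)$. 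The leftover effective divisor $F$ is then squeezed to zero.

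Your conductor approach is not obviously wrong, but it has a gap at exactly the point you flag as the main obstacle: the conductor scheme need not be reduced (think of a cusp), so the identity you quote does not by itself force the vanishing divisor on $\tilde{D_1}$ to be $(p-1)$ times the \emph{reduced} preimage $\nu^{-1}((D_1)_{\textrm{non-R1}})$. The paper sidesteps this entirely by working with compatible splitness of the two preimages rather than with the conductor.
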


\begin{proof}
\begin{enumerate}

\item Let $X' = X_{\textrm{reg}}\setminus \textrm{sing}(D_1)$, $D_1' = (D_1)_{\textrm{reg}}\cap X'$, and $E' = E\cap X'$. Then, $X'$ and $D_1'$ are non-singular. By Lemma \ref{l;adj2}, the induced splitting of $D_1'$ is given by an anticanonical section $\tau'\in H^0(D_1',\omega_{D_1'}^{-1})$ such that $V(\tau') = D_1'\cap E'$. Because $(D_1)_{\textrm{reg}}\cap\textrm{sing}(X)$ has codimension $\geq 2$ in $(D_1)_{\textrm{reg}}$ and $D_1$ is normal, $\tau'$ extends uniquely to an anticanonical section $\tau\in H^0(D_1,\omega_{D_1}^{-1})$ such that $\tau^{p-1}$ determines a splitting of $D_1$ and $V(\tau) = D_1\cap E$. By the Kumar-Mehta Lemma (see Theorem \ref{t;divisors}), all compatibly split subvarieties of $D_1$ are contained inside of $D_1\cap E$ or $\textrm{sing}(D_1)$.

\item 
Now suppose that $D_1$ is not normal and let $\nu:\tilde{D_1}\rightarrow D_1$ be the normalization of $D_1$. By Proposition \ref{p;normalizationSplit}, $\tilde{D_1}$ has a Frobenius splitting induced from $D_1$ that compatibly splits $\nu^{-1}((D_1)_{\textrm{non-R1}})$. By Speyer's theorem, $\nu^{-1}(D_1\cap E)$ is also compatibly split for large primes $p$. 

Let $\tau\in H^0((\tilde{D_1})_{\textrm{reg}}, F_{*}(\omega_{(\tilde{D_1})_{\textrm{reg}}}^{1-p}))$ determine the splitting of $(\tilde{D_1})_{\textrm{reg}}$. Let $D_\tau$ denote the associated divisor. Then, \[D_{\tau} = (p-1)\nu^{-1}(D_1\cap E) + (p-1)\nu^{-1}((D_1)_{\textrm{non-R1}})+F\] for some effective divisor $F$. 

By the same argument as the one given to prove 1., the splitting of $(D_1)_{\textrm{reg}}$ is a $(p-1)^{\textrm{st}}$ power and $(D_1)_{\textrm{reg}}\cap E$ is the corresponding compatibly split anticanonical divisor in $(D_1)_{\textrm{reg}}$. As $\nu$ is an isomorphism over the open set $(D_1)_{\textrm{reg}}$, we see that $\tau|_{\nu^{-1}((D_1)_{\textrm{reg}})}$ is a $(p-1)^{st}$ power, and that \[D_{\tau}|_{\nu^{-1}((D_1)_{\textrm{reg}})} =  (p-1)[\nu^{-1}(D_1\cap E)|_{\nu^{-1}((D_1)_{\textrm{reg}})}].\] As $F$ is effective and $\nu^{-1}((D_1)_{\textrm{non-R1}})$ is compatibly split, $F$ must be empty. Thus, \[D_{\tau} = (p-1)\nu^{-1}(D_1\cap E) + (p-1)\nu^{-1}((D_1)_{\textrm{non-R1}}).\]

By the Kumar-Mehta Lemma, all compatibly split subvarieties of $\tilde{D_1}$ are contained in $\nu^{-1}(D_1\cap E)\cup\nu^{-1}((D_1)_{\textrm{non-R1}})$ or $\textrm{sing}(\tilde{D_1})$.
\end{enumerate}
\end{proof}

\begin{proposition}
Suppose $X$ is normal and Frobenius split by $\sigma^{p-1}$ where $\sigma\in H^0(X,\omega_X^{-1})$ and $D = V(\sigma)$. Suppose that $D = D_1+\cdots + D_r$ where the support of each $D_i$ is irreducible and non-empty. Let \[P = \{(X',D')~|~(X',D')\textrm{ appears at some stage of the algorithm and }X'\textrm{ is normal}\}.\]  If, for all $(X',D')\in P$, $\textrm{sing}(X')\cap D'$ has codimension $\geq 2$ in each component of $D'$, then, for large $p$, the Knutson-Lam-Speyer algorithm with input $(X,D)$ finds all compatibly split subvarieties of $X$.
\end{proposition}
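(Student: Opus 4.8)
The plan is to argue by induction on $\dim X$, using the Kumar-Mehta Lemma (Theorem~\ref{t;divisors}) to pin down where an arbitrary compatibly split subvariety can live, and Lemma~\ref{l;adj} to recognize the induced splitting at each stage of the recursion. By the earlier lemma asserting that every subvariety produced by the algorithm is compatibly split once $p$ is large, what remains is the converse: for $p$ large, every compatibly split subvariety $Y\subseteq X$ lies in the output list $L$. When $\dim X=0$ this is immediate, since $X$ is then a point and is added to $L$ in Step~1.

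For the inductive step, let $Y\subseteq X$ be compatibly split. If $Y=X$ it is added in Step~1, so assume $Y\subsetneq X$. Since $X$ is normal and split by $\sigma^{p-1}$ with $D=V(\sigma)$, Theorem~\ref{t;divisors} forces either $Y\subseteq D$, or $Y\subseteq\textrm{sing}(X)$ with $Y\not\subseteq D$. In the second case the generic point of $Y$ lies in $\textrm{sing}(X)\cap(X\setminus D)=\textrm{sing}(X\setminus D)$, so $Y$ is contained in the variety $S=\overline{\textrm{sing}(X\setminus D)}$ formed in Step~2; being a compatibly split subvariety of $X$ contained in $S$, it is returned by the sub-procedure of Step~2. (This is the step the thesis already flags as difficult to carry out in general; all I need here is that it does produce every such $Y$.)

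Now suppose $Y\subseteq D=D_1+\cdots+D_r$. As $Y$ is irreducible, $Y\subseteq D_i$ for some $i$, and Step~3 produces the pair $(D_i,D_i\cap E_i)$. The ambient variety at the current stage is normal, so this stage lies in $P$; hence the codimension hypothesis gives that $\textrm{sing}(X)\cap D_i$ has codimension $\geq 2$ in $D_i$, which is exactly what Lemma~\ref{l;adj} requires for the component $D_i$. If $D_i$ is normal, Step~4 keeps the pair $(D_i,D_i\cap E_i)$, and Lemma~\ref{l;adj}(1) identifies the induced splitting of $D_i$ as $\tau^{p-1}$ with $\tau\in H^0(D_i,\omega_{D_i}^{-1})$ and $V(\tau)=D_i\cap E_i$; since $\dim D_i<\dim X$, and since every normal stage of the sub-run started at $(D_i,D_i\cap E_i)$ is also a stage of the original run (hence lies in the original $P$ and satisfies the codimension hypothesis), the inductive hypothesis applies and that sub-run finds all compatibly split subvarieties of $D_i$, in particular $Y$ (which is compatibly split in $D_i$ because $Y$ and $D_i$ are compatibly split in $X$); mapping forward, $Y\in L$. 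If $D_i$ is not normal, Step~4 passes to $(\tilde{D_i},\nu^{-1}(D_i\cap E_i)\cup\nu^{-1}((D_i)_{\textrm{non-R1}}))$, where $\nu:\tilde{D_i}\to D_i$ is the normalization; Lemma~\ref{l;adj}(2) gives, for $p$ large, that the splitting of $\tilde{D_i}$ is $\tau^{p-1}$ with $V(\tau)=\nu^{-1}(D_i\cap E_i)\cup\nu^{-1}((D_i)_{\textrm{non-R1}})$, and Speyer's Theorem~\ref{t;des} shows that, for $p$ large, the reduction of $\nu^{-1}(Y)$ is compatibly split in $\tilde{D_i}$. Picking an irreducible component $\tilde Y$ of that reduction which dominates $Y$ (one exists since $\nu$ is finite and surjective), $\tilde Y$ is a compatibly split subvariety of the normal variety $\tilde{D_i}$ with $\dim\tilde{D_i}<\dim X$, so by induction the sub-run on $\tilde{D_i}$ finds $\tilde Y$, which then maps forward under $\nu$ to $\nu(\tilde Y)=Y$. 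In all cases $Y\in L$, completing the induction.

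Since each application of Step~3 strictly lowers the dimension of the ambient variety and Step~4 preserves it, the recursion terminates and only finitely many pairs $(X',D')$ occur; taking $N$ to be the maximum of the finitely many lower bounds on $p$ demanded by the earlier correctness lemma, by Lemma~\ref{l;adj}(2), and by Theorem~\ref{t;des} across all these pairs makes every invocation above simultaneously valid for $p>N$. The step I expect to be the main obstacle is the non-normal case: showing that a compatibly split subvariety of $D_i$ genuinely pulls back — after reduction, and after passing to a suitable dominating component — to a compatibly split subvariety of the normalization $\tilde{D_i}$. This is precisely the content of Speyer's theorem, and it is also the origin of the ``for $p$ sufficiently large'' in the statement, since the threshold $N$ grows with the depth of the recursion. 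A secondary difficulty is giving Step~2 a precise inductive meaning when $\textrm{sing}(X\setminus D)$ is positive-dimensional, which the thesis itself acknowledges is hard in general.
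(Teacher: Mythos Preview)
Your argument is correct and uses the same key ingredients as the paper's proof (the Kumar--Mehta Lemma, Lemma~\ref{l;adj}, and Speyer's Theorem~\ref{t;des}), but the organization differs. The paper does not induct on $\dim X$; instead it argues by contradiction, choosing a compatibly split $Y$ missed by the algorithm and a \emph{minimal-codimension} $Z$ found by the algorithm with $Y\subseteq Z$. Tracking the finite composite of normalization maps $f:\tilde Z\to Z$ that the algorithm produces, it observes (via repeated application of Lemma~\ref{l;adj}) that $D_{\tilde Z}$ is anticanonical in $\tilde Z$, pulls $Y$ back through $f$ using Speyer's theorem, and then invokes Kumar--Mehta on $\tilde Z$ to force $f^{-1}(Y)$ into either the singular locus away from $D_{\tilde Z}$ (handled by Step~2) or into a component of $D_{\tilde Z}$, whose image under $f$ gives a strictly smaller container than $Z$---contradicting minimality.

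Your direct induction and the paper's minimal-counterexample argument are standard reformulations of one another; the paper's version avoids explicitly re-verifying the hypotheses at each recursive stage (it handles all the normalization bookkeeping in one pass through the composite $f$), while your version makes the descent through the algorithm's recursion tree more transparent and makes it clearer exactly where the codimension hypothesis on $P$ is consumed (namely, to license Lemma~\ref{l;adj} at each normal stage). Both routes require the same ``large $p$'' proviso for the same reason: Speyer's theorem is invoked once per normalization step.
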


\begin{proof}
We follow the ideas found in \cite[Theorem 5.3]{KLS}.

Suppose all of the necessary conditions hold, yet there exists some compatibly split $Y\subseteq X$ that is not found by the algorithm. Then there exists $Z$ found by the algorithm with $Y\subseteq Z$ and $\textrm{codim}_YZ$ minimized. $Z$ could have been found by the algorithm in one of two ways. 

The first possiblity is that there exists some $\tilde{Z}$ and some finite surjective morphism $f:\tilde{Z}\rightarrow Z$ such that $(\tilde{Z},D_{\tilde{Z}})$ is a pair arising from step 4. of the algorithm. Note that we may assume that $\tilde{Z}$ is normal. By repeatedly applying Lemma \ref{l;adj}, we see that $D_{\tilde{Z}}$ is anticanonical in $\tilde{Z}$. By Speyer's theorem, $f^{-1}(Y)$ is compatibly split in $\tilde{Z}$ for large primes $p$. 

If $f^{-1}(Y)\subseteq \overline{\textrm{sing}(\tilde{Z})\setminus (D_{\tilde{Z}}\cap \textrm{sing}(\tilde{Z}))}$ then the algorithm finds $Y$, a contradiction. So suppose otherwise. By Lemma \ref{l;adj}, $f^{-1}(Y)\subseteq D_{\tilde{Z}}$. Thus, $Y\subseteq f(D_1)$ for some component $D_1$ of $D_{\tilde{Z}}$. This contradicts the minimality assumption.  

Next suppose that there is no such $\tilde{Z}$ such that $f:\tilde{Z}\rightarrow Z$ is a finite surjective map and $(\tilde{Z},D_Z)$ is a pair appearing at some stage of the algorithm. Then, there must exist some $Z'$ found by the algorithm with $Z\subseteq Z'$ such that (i) there exists a finite map $f:\tilde{Z'}\rightarrow Z'$, (ii) $(\tilde{Z'},D')$ is a pair in appearing at some stage of the algorithm, and (iii) $f^{-1}(Z)$ is a compatibly split subvariety contained inside of $\overline{\textrm{sing}(\tilde{Z'})\setminus D'}$. In this case, $f^{-1}(Y)$ is also a subvariety contained in $\overline{\textrm{sing}(\tilde{Z'})\setminus D'}$. This contradicts the fact that $Y$ was not found by the algorithm.
\end{proof}

\begin{remark}
Katzman and Schwede also have an algorithm for finding all compatibly split subvarieties of a Frobenius split scheme $X$ (see \cite{KS}). 
\end{remark}

\subsection{Multigraded/torus-invariant splittings}

Our reference for this subsection is \cite{K4}.

\begin{definition}
Let $R$ be multigraded by a lattice $\Lambda$ of rank $r$. (I.e. $R = \bigoplus_{\Lambda}R_{\lambda}$.) Identify $\Lambda$ with $\bZ^r$ and $\lambda$ with $(\lambda_1,\dots,\lambda_r)$, $\lambda_i\in \bZ$. Suppose $R$ is Frobenius split by $\phi$. We call $\phi$ a \textbf{multigraded splitting} if $\phi(R_{\lambda}) \subseteq R_{\lambda/p}$, which is $0$ unless $p|\lambda_i$ for each $1\leq i\leq r$. 
\end{definition}

\begin{example}
Suppose that $k[x_1,\dots,x_n]$ comes with a multigrading (weighting) and that, with respect to that multigrading, $f\in k[x_1,\dots,x_n]$ is homogeneous. If $\textrm{Tr}(f^{p-1}\cdot)$ is a splitting, then $\textrm{Tr}(f^{p-1}\cdot)$ is a multigraded splitting. (Proof: Because $\textrm{Tr}(f^{p-1}\cdot)$ is a splitting, there is a term of $f^{p-1}$ of the form $x_1^{p-1}\cdots x_n^{p-1}$ (else $\textrm{Tr}(f^{p-1}1)\neq 1$). Because $f$ is homogeneous, so is $f^{p-1}$. Thus, every term of $f^{p-1}$ has the same weight as $x_1^{p-1}\cdots x_n^{p-1}$. Letting $\lambda = (\lambda_1,\dots,\lambda_r)$ denote the weight of $x_1\cdots x_n$, we see that every term of $f^{p-1}$ has weight $(p-1)\lambda$.

Now suppose that $m$ is a monomial such that $\textrm{Tr}(f^{p-1}m)\neq 0$. Then for at least one monomial $m'$ in $f^{p-1}$, $x_1\cdots x_nm'm$ is a $p^{th}$ power. Let $\mu$ denote the weight of $m$. Notice that $x_1\cdots x_nm'$ has weight $p\lambda$. Thus, $p|\mu$.) 
\end{example}

\begin{remark}
Since a torus action on $R$ induces a multigrading and vice versa, the above definition could have alternatively been formulated in terms of a torus action. Thus, a multigraded splitting is also called a \textbf{torus-invariant splitting}. In the above example, we see that a $(p-1)^{st}$ power splitting of affine space is torus-invariant whenever the compatibly split anticanonical divisor is torus-invariant.
\end{remark}

\begin{proposition}\cite[See proof of Corollary 2]{K4}
Suppose a torus $T$ acts on $X$ and suppose that $X$ is Frobenius split by the $T$-invariant splitting $\phi$. Then, all compatibly split subvarieties of $X$ are stable under the action of $T$.
\end{proposition}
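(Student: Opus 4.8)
The plan is to show that every torus translate of a compatibly split subvariety is again compatibly split, and then to combine the finiteness of the collection of compatibly split subvarieties with the connectedness of $T$ to rule out nontrivial translates. So the statement reduces to proving that for a compatibly split subvariety $Y\subseteq X$ the stabilizer $\Sigma := \{t\in T(k) : \tau_t(Y)=Y\}$ is all of $T$, where $\tau_t\colon X\to X$ denotes the automorphism given by a $k$-point $t\in T$.

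First I would unwind what $T$-invariance of $\phi$ buys us. Work on an affine $T$-chart $\Spec R$, with $R$ multigraded by the character lattice $\Lambda$, so that the comorphism $\tau_t^{\sharp}$ acts on $R_\lambda$ by the scalar $t^\lambda$. I claim $\tau_t^{\sharp}\circ\phi=\phi\circ\tau_t^{\sharp}$ as maps $F_*R\to R$; it suffices to check this on a homogeneous $a\in R_\lambda$. Since $\phi$ is a multigraded splitting, $\phi(a)\in R_{\lambda/p}$, and both sides vanish unless $p\mid\lambda$; hence $\tau_t^{\sharp}(\phi(a))=t^{\lambda/p}\phi(a)$. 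On the other hand, because $k$ is algebraically closed the scalar $t^\lambda$ has a unique $p$-th root $t^{\lambda/p}$, and the $F_*$-linearity of $\phi$ gives $\phi(\tau_t^{\sharp}(a))=\phi(t^\lambda a)=\phi\big((t^{\lambda/p})^{p}a\big)=t^{\lambda/p}\phi(a)$. So the two agree, and $\tau_t$ is a split self-map of $X$. Consequently, if $I$ is a compatibly split ideal then $\phi(\tau_t^{\sharp}(I))=\tau_t^{\sharp}(\phi(I))\subseteq\tau_t^{\sharp}(I)$, so $\tau_t^{\sharp}(I)$ — the ideal of the translate $\tau_{t^{-1}}(Y)$ — is compatibly split. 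Thus every translate $\tau_t(Y)$, $t\in T(k)$, is compatibly split.

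Next I would invoke finiteness: a Frobenius split scheme has only finitely many compatibly split subvarieties (\cite{KM}, \cite{Sch}), so $\{\tau_t(Y):t\in T(k)\}$ is a finite set. The stabilizer $\Sigma$ is a subgroup of $T(k)$, and it is closed: for each $y\in Y$ the set $\{t:\tau_t(y)\in Y\}$ is the preimage of $Y$ under the morphism $t\mapsto\tau_t(y)$, hence closed, so $\{t:\tau_t(Y)\subseteq Y\}=\bigcap_{y\in Y}\{t:\tau_t(y)\in Y\}$ is closed, and $\Sigma$ is the intersection of this set with its analogue for $t^{-1}$. Since $Y$ has only finitely many distinct translates, $\Sigma$ has finite index in $T$; as $T$ is connected, $\Sigma=T$, i.e.\ $Y$ is $T$-stable. (If ``subvariety'' is meant to allow reducible reduced subschemes, apply this to each irreducible component, noting the connected group $T$ fixes each of the finitely many components.)

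I expect the only real obstacle to be the equivariance computation in the second paragraph: correctly matching the degree shift $\lambda\mapsto\lambda/p$ built into the definition of a multigraded splitting with the $p$-th root of the torus character that appears when a scalar is pushed through the $F_*$-linear map $\phi$, and being careful to use that $k$ is perfect (here, algebraically closed) to extract that $p$-th root. Everything after that is formal, resting on the already-cited finiteness of compatibly split subvarieties and the connectedness of $T$.
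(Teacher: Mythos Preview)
Your proof is correct. The paper itself does not supply a proof of this proposition, deferring instead to the cited reference \cite{K4}, so there is nothing to compare against directly; your argument --- show each torus translate of a compatibly split subvariety is again compatibly split via the equivariance $\tau_t^{\sharp}\circ\phi=\phi\circ\tau_t^{\sharp}$, then invoke finiteness of compatibly split subvarieties and connectedness of $T$ --- is the standard one and is almost certainly what appears there.

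One small remark: the appeal to $k$ being algebraically closed to extract a $p$-th root of $t^\lambda$ is unnecessary and slightly misleading. When $p\mid\lambda$, the element $\lambda/p$ is an honest character in $\Lambda$, and $t^{\lambda/p}$ is simply its value at $t$; the identity $(t^{\lambda/p})^p=t^\lambda$ is just additivity in $\Lambda$, not a statement about roots in $k$. The rest of the computation and the closedness/finite-index argument for the stabilizer are fine.
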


\begin{remark}
This result also follows from the Knutson-Lam-Speyer algorithm in the setting where steps 1, 3, and 4, are enough to find all compatibly split subvarieties.
\end{remark}

\subsection{A $T^2$-invariant Frobenius splitting of $\Hilbn$}\label{splitHilb}
The material in this subsection comes from \cite{KT} and \cite[Section 1.3 and Chapter 7]{BK}.

\begin{definition}
\begin{enumerate}
\item A normal variety $Y$ is \textbf{Gorenstein} if its canonical sheaf $\omega_Y$ is invertible. 
\item Given a Gorenstein variety $Y$ and a normal variety $X$, a proper, birational morphism $f:X\rightarrow Y$ is called \textbf{crepant} if $f^*\omega_Y = \omega_X$.
\end{enumerate}
\end{definition}

\begin{lemma}\label{l;crep1}\cite[Lemma 1.3.13]{BK}
Let $f:X\rightarrow Y$ be a crepant morphism. If $Y$ is Frobenius split then so is $X$.
\end{lemma}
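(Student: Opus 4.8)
The plan is to move the given splitting from $Y$ to $X$ at the level of anticanonical sections, exploiting crepancy to identify the two line bundles in play.

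First I would record that $\omega_Y$, and hence $\omega_Y^{1-p}$, is invertible (since $Y$ is Gorenstein), and that crepancy gives an isomorphism $\omega_X^{1-p} \cong f^{*}(\omega_Y^{1-p})$ of line bundles on $X$. Next, restricting the given splitting $\phi_Y$ to the smooth locus $Y_{\mathrm{reg}}$ and invoking Theorem \ref{localSectionSplit}, I would produce a section $\sigma_Y \in H^0(Y_{\mathrm{reg}}, \omega_{Y_{\mathrm{reg}}}^{1-p})$ with $\hat{\tau}_Y(\sigma_Y) = 1$; because $Y$ is normal, $Y \setminus Y_{\mathrm{reg}}$ has codimension $\geq 2$, so $\sigma_Y$ extends uniquely to a section $\sigma_Y \in H^0(Y, \omega_Y^{1-p})$ (algebraic Hartogs for a line bundle on a normal variety). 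Pulling back along $f$ then gives a global section $\sigma_X := f^{*}\sigma_Y \in H^0(X, f^{*}\omega_Y^{1-p}) = H^0(X, \omega_X^{1-p})$, and the claim to prove is that $\sigma_X$ is a splitting section of $X$.

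To verify this I would reduce to the smooth locus: by Lemma \ref{l;norm} (the complement of $X_{\mathrm{reg}}$ has codimension $\geq 2$ since $X$ is normal) it suffices to show $\sigma_X|_{X_{\mathrm{reg}}}$ is a splitting section, and by Theorem \ref{localSectionSplit} this amounts to $\hat{\tau}_X(\sigma_X) = 1$ in $H^0(X_{\mathrm{reg}}, \mathcal{O}_X)$. Since $f$ is birational there are dense opens $U \subseteq X_{\mathrm{reg}}$ and $V \subseteq Y_{\mathrm{reg}}$ with $f|_U : U \xrightarrow{\sim} V$ (such a $U$ is automatically smooth, being isomorphic to an open subvariety of $Y_{\mathrm{reg}}$); under this isomorphism the triple $(\hat{\tau}_X, \omega_X^{1-p}, \sigma_X|_U)$ corresponds to $(\hat{\tau}_Y, \omega_Y^{1-p}, \sigma_Y|_V)$, since the trace description in Theorem \ref{localSectionSplit} is intrinsic and functorial under isomorphisms. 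Hence $\hat{\tau}_X(\sigma_X)|_U$ is the pullback of $\hat{\tau}_Y(\sigma_Y)|_V = 1$, so it equals $1$; as $X$ is integral and $U$ is dense, $\hat{\tau}_X(\sigma_X) = 1$ on all of $X_{\mathrm{reg}}$. Then Lemma \ref{l;norm} upgrades the resulting splitting of $X_{\mathrm{reg}}$ to a splitting of $X$.

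The part needing the most care is the bookkeeping with the open loci and the extensions across codimension-$\geq 2$ sets, rather than anything deep. One alternative, which trades this local argument for a global one, is to use $f_{*}\mathcal{O}_X = \mathcal{O}_Y$ (valid since $f$ is proper and birational with $Y$ normal) together with the projection formula to obtain a canonical isomorphism $H^0(X, \omega_X^{1-p}) \cong H^0(Y, \omega_Y^{1-p})$; but checking that this isomorphism intertwines the evaluation maps $\hat{\tau}_X$ and $\hat{\tau}_Y$ again reduces to the same computation on the locus where $f$ is an isomorphism, so there is no genuine shortcut.
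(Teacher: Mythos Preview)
The paper does not supply its own proof of this lemma: it is stated with the citation \cite[Lemma 1.3.13]{BK} and no proof environment follows. So there is nothing in the paper to compare against directly.

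That said, your argument is correct and is essentially the standard one (and matches the proof in \cite{BK}): pull back the splitting section along $f$ using the crepancy identification $\omega_X^{1-p}\cong f^*\omega_Y^{1-p}$, then verify $\hat\tau_X(\sigma_X)=1$ by passing to a dense open where $f$ is an isomorphism. Your handling of the regular loci and the Hartogs-type extensions is exactly the bookkeeping needed, since in this paper's definitions $Y$ is only Gorenstein (hence normal) and $X$ is only normal. One small stylistic point: once you know $\hat\tau_X(\sigma_X)=1$ on a dense open $U$ of $X_{\mathrm{reg}}$, you can conclude it equals $1$ on all of $X_{\mathrm{reg}}$ because $\hat\tau_X(\sigma_X)$ is a regular function on an integral scheme; you already say this, but it is worth noting that the appeal to Lemma~\ref{l;norm} at the very end is then what produces the splitting of $X$ itself from the splitting of $X_{\mathrm{reg}}$.
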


\begin{theorem}\label{t;crep2}\cite[Special case of Theorem 1]{KT}
Let $k$ be an algebraically closed field of characteristic $p>2$. Let $S^n(\bA^2_k)_*$ denote the open locus of the Chow variety with at least $n-1$ distinct points. Let $\Hilbn_*$ denote the preimage of $S^n(\bA^2_k)_*$ under the Hilbert-Chow morphism $\Psi$. Then, $\Psi: \Hilbn_*\rightarrow S^n(\bA^2_k)_*$ is a crepant resolution.
\end{theorem}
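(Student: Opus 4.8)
The strategy is to reduce the assertion to a local statement about the minimal resolution of an $A_1$ surface singularity. By the theorem of Fogarty quoted above, $\Psi$ is a projective birational morphism which is an isomorphism over the locus of $n$ distinct points; restricting it to the open subset $\Hilbn_*$ of the smooth variety $\Hilbn$ therefore exhibits $\Hilbn_*$ as a resolution of singularities of $S^n(\bA^2_k)_*$, and $S^n(\bA^2_k)_*$ is normal since $S^n(\bA^2_k)=(\bA^2_k)^n/S_n$ is a quotient of a smooth variety by a finite group. So the only thing left to prove is that this resolution is crepant, i.e. that $\Psi^{*}\omega_{S^n(\bA^2_k)_*}\cong\omega_{\Hilbn_*}$; in particular one must check along the way that $S^n(\bA^2_k)_*$ is Gorenstein. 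Both the Gorenstein property and crepancy (which amounts to vanishing of the discrepancy divisor) can be checked étale-locally on the base, so I would first pass to a local model near the boundary stratum.

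The complement of the distinct-points locus inside $S^n(\bA^2_k)_*$ is the codimension-$2$ stratum $\Delta_2$ of cycles $2[q_0]+[q_1]+\cdots+[q_{n-2}]$ with $q_0,\dots,q_{n-2}$ pairwise distinct. The first substantial step is the (standard) observation that near such a cycle both the Chow variety and the Hilbert scheme split off smooth factors corresponding to the $n-2$ simple points, compatibly with $\Psi$: étale-locally (or formally) one has $S^n(\bA^2_k)\cong S^2(\bA^2_k)\times(\bA^2_k)^{n-2}$ and $\Psi$ becomes $\Psi_2\times\mathrm{id}$, where $\Psi_2\colon\Hilb^2(\bA^2_k)\to S^2(\bA^2_k)$ is the Hilbert--Chow morphism for $n=2$. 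This reduces the theorem to the statement that $\Psi_2$ is crepant in a neighbourhood of the diagonal locus of $S^2(\bA^2_k)$.

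For the $n=2$ model, recall from the excerpt that $\Hilb^2(\bA^2_k)\cong Bl_\Delta(\bA^2_k\times\bA^2_k)/S_2$. Introducing ``centre of mass'' and ``difference'' coordinates on $\bA^2_k\times\bA^2_k$ — legitimate precisely because $p\neq2$ — makes $S_2$ act trivially on the first $\bA^2_k$ factor and by $v\mapsto-v$ on the second, with the diagonal becoming $\bA^2_k\times\{0\}$. Hence $S^2(\bA^2_k)\cong\bA^2_k\times\bigl(\bA^2_k/\{\pm1\}\bigr)$, the quotient $\bA^2_k/\{\pm1\}$ is the $A_1$ rational double point $\{ac=b^2\}\subseteq\bA^3_k$ (so it, and therefore $S^2(\bA^2_k)$ and $S^n(\bA^2_k)_*$, is Gorenstein), and $\Psi_2$ becomes the identity on $\bA^2_k$ times the blow-up of the vertex $\bigl(Bl_0\bA^2_k\bigr)/\{\pm1\}\to\bA^2_k/\{\pm1\}$, which is the minimal resolution of the $A_1$ singularity. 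Crepancy is then a one-chart computation: the $\{\pm1\}$-invariant form $du\wedge dv$ on $\bA^2_k$ descends to a nowhere-vanishing section of the canonical sheaf on the smooth locus of $\bA^2_k/\{\pm1\}$, and in the chart $v=uw$ of $Bl_0\bA^2_k$ — on which $\{\pm1\}$ acts by $(u,w)\mapsto(-u,w)$ with invariant coordinates $(u^2,w)$ — one gets $du\wedge dv=u\,du\wedge dw=\tfrac12\,d(u^2)\wedge dw$, which is again nowhere vanishing, so the pullback of the canonical form has neither a zero nor a pole along the exceptional $\bP^1$.

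The part demanding the most care is the passage between the global and local pictures: precisely formulating and justifying the étale-local product decomposition of $(S^n(\bA^2_k),\Psi)$ near $\Delta_2$, and — equivalently, via the $A_1$ computation — checking that the canonical form already at hand on the dense open locus of distinct points (the descent of $\bigwedge_i dx_i\wedge dy_i$) extends over the exceptional divisor of $\Psi|_{*}$ without acquiring a zero or a pole. Once the local model is identified this is the elementary computation above; the conceptual content is recognizing the $A_1$ (Du Val) singularity and the fact that its minimal resolution is crepant, and it is exactly here that the hypothesis $p>2$ enters.
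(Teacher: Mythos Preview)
The paper does not actually prove this theorem: it is stated with the citation \cite[Special case of Theorem 1]{KT} and used as a black box in the subsequent argument that $\Hilbn$ is Frobenius split, so there is no proof in the paper to compare against. Your sketch is the standard argument (and essentially the one Kumar--Thomsen give): reduce \'etale-locally to the $n=2$ case via the product decomposition near the stratum where exactly two points collide, then use centre-of-mass/difference coordinates (this is where $p\neq 2$ is used) to identify $\Psi_2$ with the minimal resolution of the $A_1$ singularity $\bA^2_k/\{\pm 1\}$, which is crepant by the explicit one-chart computation you wrote down. The outline is correct; the only place requiring care, as you note, is making the \'etale-local product decomposition precise.
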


Kumar and Thomsen also prove the following:

\begin{theorem}\label{t;crep3}\cite[Corollary 1]{KT}
Let $p>n$. Then $S^n(\bA^2_k)$ is Gorenstein and $\Psi: \Hilbn\rightarrow S^n(\bA^2_k)$ is a crepant resolution.
\end{theorem}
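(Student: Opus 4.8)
The plan is to bootstrap Theorem~\ref{t;crep2} from the open locus $S^n(\bA^2_k)_*$ to all of $S^n(\bA^2_k)$. Two things are needed beyond \ref{t;crep2}: first, that $S^n(\bA^2_k)$ is Gorenstein (so that ``crepant'' even makes sense globally), and second, a codimension estimate showing that the locus we are ignoring is too small for a line bundle or divisor to see it.

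\textbf{Step 1 (Gorenstein-ness).} Write $\pi:(\bA^2_k)^n\to S^n(\bA^2_k)$ for the quotient map by $G=S_n$. Since $(\bA^2_k)^n$ is smooth, its coordinate ring is integrally closed, hence so is the invariant subring, so $S^n(\bA^2_k)$ is normal. For the canonical sheaf I would note that the trivializing $2n$-form $\eta=dx_1\wedge dy_1\wedge\cdots\wedge dx_n\wedge dy_n$ of $\omega_{(\bA^2_k)^n}$ is $G$-invariant, because a transposition permutes the even-degree blocks $dx_i\wedge dy_i$ and interchanging two such blocks introduces no sign. Moreover each nontrivial $\sigma\in S_n$ fixes only a codimension-$\geq 2$ locus of $(\bA^2_k)^n$, so $\pi$ is \'{e}tale in codimension one; as $p>n$ makes $|G|=n!$ invertible, one gets $\omega_{S^n(\bA^2_k)}\cong(\pi_*\omega_{(\bA^2_k)^n})^{G}\cong(\pi_*\mathcal{O}_{(\bA^2_k)^n})^{G}=\mathcal{O}_{S^n(\bA^2_k)}$, an invertible (indeed trivial) sheaf. (Alternatively one can cite Watanabe's criterion, noting $S_n\subseteq\SL_{2n}$ and that $S_n$ acts without pseudo-reflections.) Hence $S^n(\bA^2_k)$ is Gorenstein. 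This is the step I expect to require the most care, since it is where $p>n$ is used and where one must be precise about how the canonical sheaf descends along $\pi$.

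\textbf{Step 2 (codimension bookkeeping).} Stratifying $S^n(\bA^2_k)$ by the number $j$ of distinct points of a configuration, the stratum with exactly $j$ distinct points has dimension $2j$, so $S^n(\bA^2_k)\setminus S^n(\bA^2_k)_*$, where $j\leq n-2$, has codimension $\geq 4$. The fibre of $\Psi$ over a configuration with distinct-point multiplicities $\mu_1,\dots,\mu_j$ is $\prod_i\Hilb^{\mu_i}_0(\bA^2_k)$, of dimension $n-j$, so $\Hilbn\setminus\Hilbn_*$ has dimension at most $\max_{j\leq n-2}(2j+(n-j))=2n-2$, i.e.\ codimension $\geq 2$ in $\Hilbn$. (For $n\leq 2$ both complements are empty and \ref{t;crep2} already gives everything.)

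\textbf{Step 3 (extending crepancy).} By Fogarty's theorem $\Hilbn$ is smooth and $\Psi$ is proper and birational, so it remains only to show $\Psi^*\omega_{S^n(\bA^2_k)}\cong\omega_{\Hilbn}$. Since both are line bundles on the smooth variety $\Hilbn$ that agree on the dense open over which $\Psi$ is an isomorphism, one may write $\omega_{\Hilbn}\cong\Psi^*\omega_{S^n(\bA^2_k)}\otimes\mathcal{O}_{\Hilbn}(E)$ for a unique divisor $E$ supported on the exceptional locus of $\Psi$, and crepancy is exactly $E=0$. By Theorem~\ref{t;crep2}, $E$ restricts to the zero divisor on $\Hilbn_*=\Psi^{-1}(S^n(\bA^2_k)_*)$, so $\supp(E)\subseteq\Hilbn\setminus\Hilbn_*$, which by Step~2 has codimension $\geq 2$; a divisor supported in codimension $\geq 2$ is zero. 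Hence $E=0$, $\Psi$ is crepant, and being a proper birational morphism from the smooth $\Hilbn$ it is a crepant resolution.
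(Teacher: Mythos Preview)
The paper does not give a proof of this statement: Theorem~\ref{t;crep3} is merely quoted from \cite[Corollary~1]{KT}, with the sentence ``Kumar and Thomsen also prove the following'' and nothing further. So there is no proof in the paper to compare against.

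Your argument is a correct reconstruction of how such a result is established, and is almost certainly what Kumar--Thomsen do. A couple of minor remarks. In Step~1, note that the paper's working definition of Gorenstein (Definition preceding Lemma~\ref{l;crep1}) is simply ``normal with invertible canonical sheaf,'' so you do not need to argue Cohen--Macaulayness separately; your computation that the $S_n$-invariant $2n$-form $\eta$ trivializes $\omega_{S^n(\bA^2_k)}$ (using that $S_n$ has no pseudo-reflections on $(\bA^2_k)^n$, so $\pi$ is \'etale in codimension one) suffices. In Step~2 you are implicitly using Brian\c{c}on's theorem that $\dim\Hilb^m_0(\bA^2_k)=m-1$; this is standard but worth naming. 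The paper itself asserts the codimension-$2$ fact for $\Hilbn\setminus\Hilbn_*$ without proof in the paragraph following Theorem~\ref{t;crep3}. Step~3 is clean; since $\omega_{S^n(\bA^2_k)}$ is actually trivial, you could also phrase it as: $\omega_{\Hilbn}$ is a line bundle on a smooth variety, trivial on $\Hilbn_*$ by Theorem~\ref{t;crep2}, and the restriction map $\Pic(\Hilbn)\to\Pic(\Hilbn_*)$ is injective since the complement has codimension $\geq 2$.
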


Theorem \ref{t;crep2} can be used to prove that $\Hilbn$ is Frobenius split. 

\begin{theorem}\cite[Special case of Theorem 2]{KT}
Let $k$ be an algebraically closed field of characteristic $p>2$. The standard splitting of $\bA^2_k$ induces a Frobenius splitting of $\Hilbn$. 
\end{theorem}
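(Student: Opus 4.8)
The plan is to move the standard splitting of $\bA^2_k$ down to the Chow variety $S^n(\bA^2_k)$, pull it back along the Hilbert--Chow morphism $\Psi$ over a suitable open locus (using that $\Psi$ is there a \emph{crepant} resolution, Theorem~\ref{t;crep2}), and then extend the resulting splitting across a codimension-$\geq 2$ set to obtain a splitting of all of $\Hilbn$.

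First I would produce a splitting of the Chow variety. By the product construction of Example~\ref{l;splitprojection}, the standard splitting $\Tr((xy)^{p-1}\cdot)$ of $\bA^2_k$ induces the splitting $\phi=\Tr(g^{p-1}\cdot)$ of $(\bA^2_k)^n=\Spec k[x_1,y_1,\dots,x_n,y_n]$, where $g=\prod_{i=1}^n x_iy_i$ and $\Tr$ is formed with respect to all $2n$ variables. The symmetric group $S_n$ acts on $(\bA^2_k)^n$ by permuting the coordinate pairs $(x_i,y_i)$, and since both $g^{p-1}$ and the operator $\Tr$ are invariant under permuting the index set $\{1,\dots,n\}$, one checks directly that $\sigma\circ\phi=\phi\circ\sigma$ for all $\sigma\in S_n$. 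Hence $\phi$ maps the invariant subring into itself, so its restriction to $\cO_{(\bA^2_k)^n}^{S_n}=\cO_{S^n(\bA^2_k)}$ is a Frobenius splitting of $S^n(\bA^2_k)=(\bA^2_k)^n/S_n$ (making the quotient map a split morphism). This step needs no hypothesis on $p$; when $p>n$ one could instead invoke Example~\ref{p;finitemaps}(1) directly. Restricting to the open locus $S^n(\bA^2_k)_*$ of configurations with at least $n-1$ distinct points, Proposition~\ref{p;openSplit} yields an induced splitting there.

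Next I would pull back along $\Psi$. By Theorem~\ref{t;crep2}, $\Psi\colon\Hilbn_*\to S^n(\bA^2_k)_*$ is a crepant resolution (in particular $S^n(\bA^2_k)_*$ is Gorenstein --- this is where $p>2$ enters, since near a configuration with one doubled point $S^n(\bA^2_k)_*$ is \'{e}tale-locally a product of an affine space with $(\bA^2_k)^2/(\ZZ/2)$ --- and $\Hilbn_*$ is normal, being open in the smooth variety $\Hilbn$), so Lemma~\ref{l;crep1} applied to this morphism and the splitting from the previous paragraph produces a Frobenius splitting of $\Hilbn_*$. Finally I would extend it. One has $\Hilbn\setminus\Hilbn_*=\Psi^{-1}(S^n(\bA^2_k)\setminus S^n(\bA^2_k)_*)$; over a configuration with exactly $r$ distinct support points of multiplicities $m_1,\dots,m_r$ the fibre of $\Psi$ is $\prod_i\Hilb^{m_i}_0(\bA^2_k)$, of dimension $\sum_i(m_i-1)=n-r$ by Brian\c{c}on's theorem $\dim\Hilb^m_0(\bA^2_k)=m-1$, so the preimage of the stratum of such configurations (which has dimension $2r$) has dimension $n+r$. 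Since the complement of $S^n(\bA^2_k)_*$ consists of configurations with $r\le n-2$, its preimage has dimension $\le 2n-2$, i.e.\ codimension $\ge 2$ in the $2n$-dimensional smooth variety $\Hilbn$; hence by Lemma~\ref{l;norm} the splitting of $\Hilbn_*$ extends uniquely to a Frobenius splitting of $\Hilbn$.

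The substantive input is Theorem~\ref{t;crep2} (Kumar--Thomsen's crepantness of the Hilbert--Chow morphism over the locus with at least $n-1$ distinct points, for which the hypothesis $p>2$ is essential); granting that, the only bookkeeping that needs care is the fibre-dimension count in the last step, which rests on the Brian\c{c}on bound for punctual Hilbert schemes, while the $S_n$-invariance verification and the descent to $S^n(\bA^2_k)$ are entirely routine.
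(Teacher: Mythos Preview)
Your proposal is correct and follows essentially the same approach as the paper's sketch: push the product splitting down to $S^n(\bA^2_k)$ by $S_n$-invariance, use crepancy of $\Psi$ over $S^n(\bA^2_k)_*$ (Theorem~\ref{t;crep2}) to obtain a splitting of $\Hilbn_*$, and extend across the codimension-$2$ complement. The only differences are presentational: the paper works explicitly with sections of $\omega^{1-p}$, passing through the smooth locus $V$ where all points are distinct before extending to $S^n(\bA^2_k)_*$, whereas you invoke Lemma~\ref{l;crep1} directly on the restricted splitting; and you supply a Brian\c{c}on-based fibre-dimension count for the codimension-$2$ claim, which the paper simply asserts.
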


We sketch the proof found in \cite[Section 5]{KT}.

\begin{proof}
Consider $\bA^2_k$ with the standard splitting $\phi = \textrm{Tr}((xy)^{p-1}\cdot)$. There is then a natural choice of splitting on $(\bA^2)^n$ given by $\phi^{\boxtimes n} = \textrm{Tr}((x_1y_1\cdots x_ny_n)^{p-1}\cdot)$. Notice that $\phi^{\boxtimes n}$ takes $S_n$-invariant functions to $S_n$-invariant functions. Thus, $\phi^{\boxtimes n}$ induces a Frobenius splitting of $S^n(\bA^2_k)$.

Let $V\subseteq S^n(\bA^2_k)$ denote the open set where none of the $n$ points collide. Because $S^n(\bA^2_k)$ is Frobenius split, and $V$ is open, $V$ has an induced splitting. Let $\sigma'$ be the associated splitting section of $\omega^{1-p}_V$. Since $V$ is codimension-$2$ in $S^n(\bA^2)_*$ and $S^n(\bA^2_k)_*$ is normal, $\sigma'$ can be extended to a section $\sigma$ of $\omega^{1-p}_{S^n(\bA^2_k)_*}$. Let $\Psi$ denote the Hilbert-Chow morphism. Because $\Psi$ is crepant, $\Psi^*(\sigma) = \tilde{\sigma}$ is a section of $\omega^{1-p}_{\Hilbn_*}$. Since the Hilbert scheme is smooth and $\Hilbn_{*}$ has codimension-$2$ in $\Hilbn$, $\tilde{\sigma}$ can be extended to a section of $\omega^{1-p}_{\Hilbn}$. This is a splitting section because its restriction to $\Psi^{-1}(V)\cong V$ is $\sigma'$ and $\sigma'$ is a splitting section.
\end{proof}


\section{Moment polyhedra}

\subsection{Definitions and a few results}

Our main references for this subsection are \cite{BrionFrench}, \cite{Brion}, \cite{K3}, and \cite{Sja}. 

We use the following notation throughout this subsection: Let $k$ be a field, let $R$ be a $k$-algebra, and let $\bG_m$ be the multiplicative group $\Spec k[x,x^{-1}]$. Suppose that $R$ is an $\bN$-graded Noetherian domain and that $T := \bG_m^r$ acts algebraically on $R$ with weights $\lambda_1,\dots,\lambda_r\in \Lambda :=\textrm{Hom}(T,\bG_m)$. Recall that $\Lambda\cong \bZ^r$. Since $R$ is bigraded by weight and degree, we may write \[R = \bigoplus_{(\lambda,s)\in\Lambda\times \bN}R_{\lambda,s}.\]

\begin{definition}
The \textbf{moment polyhedron} $P(R)$ of $R$ is defined to be the following subset of $\Lambda\otimes_{\bZ}\bQ$: \[P(R) := \left\{\frac{\lambda}{s}~\bigg|~\lambda\in\Lambda,~s\in \bN, \textrm{ and } R_{(\lambda, s)}\neq 0\right\}.\]
\end{definition}

\begin{theorem}
The moment polyhedron $P(R)$ is a convex polyhedron. If $\textrm{Proj}(R)$ (with the $\bN$-grading) is projective over $k$, then $P$ is a convex polytope.
\end{theorem}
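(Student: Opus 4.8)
The plan is to translate the ring-theoretic data into a finitely generated combinatorial object --- a rational polyhedral cone --- and to recognize $P(R)$ as an affine slice of that cone. Set $\Gamma := \{(\lambda,s)\in\Lambda\times\bN : R_{(\lambda,s)}\neq 0\}$, the support of the bigrading. Since $R$ is a domain, the product of two nonzero bihomogeneous elements is nonzero and bihomogeneous of the sum of the two bidegrees; hence $\Gamma$ is a submonoid of $\Lambda\times\bN$, and it contains $(0,0)$ because $1\in R_{(0,0)}$. By definition $P(R)$ is the set of $\lambda/s$ with $(\lambda,s)\in\Gamma$ and $s\geq 1$, i.e. the ``degree one slice'' of the rays spanned by $\Gamma$.

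First I would prove that $\Gamma$ is finitely generated as a monoid. Because $R$ is Noetherian and $\bN$-graded, $R$ is a finitely generated algebra over $R_0=\bigoplus_\lambda R_{(\lambda,0)}$; since the torus action preserves the $\bN$-grading, the algebra generators can be taken bihomogeneous, say $f_1,\dots,f_m$ of bidegrees $(\mu_i,d_i)$ with each $d_i\geq 1$. Likewise $R_0$ is a Noetherian $\Lambda$-graded domain, hence finitely generated over its weight-zero part by $\Lambda$-homogeneous elements $g_1,\dots,g_\ell$ of weights $\nu_j$. As $R$ is a domain, every nonzero bihomogeneous element has the bidegree of a monomial in the $g_j$ and $f_i$ with nonzero scalar coefficient, and each such monomial is itself nonzero; therefore $\Gamma=\sum_j\bN\,(\nu_j,0)+\sum_i\bN\,(\mu_i,d_i)$, a finitely generated submonoid of $\Lambda\times\bN$.

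Next, let $C\subseteq(\Lambda\otimes_\bZ\bR)\times\bR$ be the convex cone generated by $\Gamma$. Because $\Gamma$ is generated by finitely many lattice points, $C$ is a rational polyhedral cone: it is closed, cut out by finitely many rational linear inequalities, and every rational point of $C$ is a nonnegative rational combination of the generators. I claim $P(R)=\{v\in\Lambda\otimes\bQ : (v,1)\in C\}$. The inclusion ``$\subseteq$'' is clear, since $(\lambda,s)\in\Gamma$ with $s\geq 1$ gives $(\lambda/s,1)=\tfrac{1}{s}(\lambda,s)\in C$. Conversely, if $v\in\Lambda\otimes\bQ$ and $(v,1)\in C$, then $(v,1)$ is a nonnegative rational combination of elements of $\Gamma$; clearing denominators yields an integer $N\geq 1$ with $(Nv,N)\in\Gamma$, so $R_{(Nv,N)}\neq 0$ and $v=(Nv)/N\in P(R)$. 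Hence $P(R)$ is exactly the set of rational points of the affine slice $C\cap\big((\Lambda\otimes\bR)\times\{1\}\big)$, which is a rational convex polyhedron; this proves the first assertion. I expect the finite-generation step to be the main obstacle --- specifically, arranging that the $\bN$-grading and the $\Lambda$-grading interact well enough to produce bihomogeneous algebra generators and to control $\supp(R_0)$.

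Finally, suppose $\textrm{Proj}(R)$ is projective over $k$. This forces $R_0$ to be a finite $k$-algebra; being a domain, it is then a field, finite over $k$, so $\supp(R_0)=\{0\}$ and in the description of $\Gamma$ above the only generator with zero second coordinate is $(0,0)$. Thus $\Gamma$ is generated by the $(\mu_i,d_i)$ with $d_i\geq 1$, and if $(v,1)=\sum_i c_i(\mu_i,d_i)$ with $c_i\geq 0$ then $\sum_i c_id_i=1$ forces $0\leq c_i\leq 1$ for all $i$; hence $v=\sum_i c_i\mu_i$ ranges over a bounded set. Therefore the slice $C\cap\big((\Lambda\otimes\bR)\times\{1\}\big)$ is bounded, and $P(R)$ is a convex polytope.
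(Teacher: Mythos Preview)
The paper does not actually prove this theorem: it is stated as a background result, with the references \cite{BrionFrench}, \cite{Brion}, \cite{K3}, \cite{Sja} given at the top of the subsection, and the text moves directly to an example. So there is no argument in the paper to compare against.

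Your proof is correct and is essentially the standard argument one finds in those references. The key idea --- that the support $\Gamma\subseteq\Lambda\times\bN$ is a finitely generated submonoid because $R$ is a domain and a finitely generated $k$-algebra, so that $P(R)$ is the height-one slice of a rational polyhedral cone --- is exactly the mechanism behind the result. You are right to flag the finite-generation step as the delicate point: the claim that a Noetherian $\bZ^r$-graded domain is finitely generated over its degree-zero piece is true but not entirely trivial (it is a standard fact about gradings by finitely generated abelian groups). For the projective case, one small addition worth making explicit is the passage from ``$\textrm{Proj}(R)$ projective over $k$'' to ``$R_0$ finite over $k$'': since $R$ is a domain, $R_0$ injects into $H^0(\textrm{Proj}(R),\mathcal{O})$, and the latter is finite-dimensional over $k$ by properness (the case $\textrm{Proj}(R)=\emptyset$ being vacuous since then $P(R)=\emptyset$ as well). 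With that said, your argument is sound.
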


\begin{example}\label{e;momentpoly}
Let $R = k[x,w_0,w_1]$ where $x$ has degree $0$ and both $w_0$ and $w_1$ have degree $1$. Then $\textrm{Proj}(R) = \bA^1_k\times \bP^1_k$. Let $T^2$ act on $R$ with weights $(1,0)$, $(0,0)$, and $(0,1)$. Suppose that $x^{\alpha}w_0^{\beta}w_1^{\gamma}\in R_{(\lambda,s)}$. This element has $T^2$-weight $\lambda = (\alpha,\gamma)$ and $\bN$-weight $s = \beta+\gamma$. Thus, $\frac{\lambda}{s} = (\frac{\alpha}{\beta+\gamma},\frac{\gamma}{\beta+\gamma})$. Notice that $\frac{\alpha}{\beta+\gamma}$ can be any non-negative rational number and that $\frac{\gamma}{\beta+\gamma}\in [0,1]\cap\bQ$. Thus, $P(R)$ is the following polyhedron:

\begin{figure}[!h]
\begin{center}
\includegraphics[scale = 0.34]{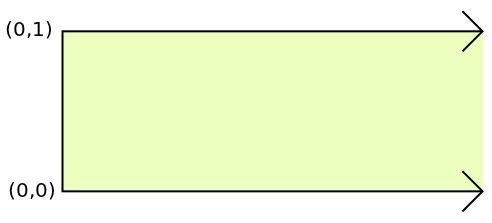}
\end{center}
\end{figure}
\end{example}

\begin{remarks}
\begin{enumerate}
\item The sources mentioned at the beginning of this subsection work over $\bC$. However, this assumption is not necessary to define $P(R)$; we need only require that $\Lambda$, the weight lattice of $T$, is isomorphic to $\bZ^r$ (where $r$ is the rank of the torus). Taking $T = \bG_m^r$ satisfies this requirement, even when working over a field of characteristic $p>0$.
\item We don't really discuss \textbf{moment maps} in this thesis. However, it is worthwhile to note that when it makes sense to define a moment map $\psi:\textrm{Proj}(R)\rightarrow \frak{t}^*$, i.e. over $\bC$, the polyhedron $P(R)$ consists of the rational points in the image of the moment map. 
\end{enumerate}
\end{remarks}

If $J$ is a $T$-homogeneous and $\bN$-homogeneous prime ideal of $R$, then $R/J$ also has a moment polyhedron, $P(R/J)$. Notice that $P(R/J)$ is supported inside of $P(R)$. Drawing the moment polyhedra of all $T$-invariant subvarieties of $\textrm{Proj}(R)$ in the same picture yields the \textbf{x-ray polyhedron} of $R$. (See \cite[Section 2]{Tol} for the original definition.) From here on, we usually draw the whole x-ray polyhedron rather than just the moment polyhedron. In particular, we draw the x-rays of $\Hilbn$, $n\leq 4$, in Subsection \ref{s;xraysHilb}.

\begin{definition}
Let the torus $T$ act algebraically on $R$ and let $P(R)$ denote the corresponding x-ray. Let $Q$ be a subpolytope of $P(R)$ (i.e. $Q$ is the moment polytope of some $T$-invariant subvariety of $\textrm{Proj}(R)$). Define $Q^{\perp}\subseteq T$ as follows: \[Q^{\perp}:=\bigcap_{q_1, q_2\in Q, m\in \bZ,\atop  m(q_1-q_2)\in \Lambda}\textrm{ker}~m(q_1-q_2).\]
\end{definition}

\begin{example}
Let $Q$ be the vertical edge in the moment polyhedron in Example \ref{e;momentpoly}. Suppose $q_1, q_2\in Q$, $m\in \bZ$, and $m(q_1-q_2)\in \Lambda$. Then, $m(q_1-q_2)$ is a group homomorphism $T^2\rightarrow \bG_m$ of the form $(t_1,t_2)\mapsto t_1^0t_2^n$ for some $n\in\bZ$. (I.e. $m(q_1-q_2)\in \Lambda$ is identified with $(0,n)\in \bZ^2$.) Then, \[Q^{\perp} = \{(t,1)~|~t\in\bG_m\}.\]
\end{example}

\begin{proposition}
Any subvariety $Y\subseteq \textrm{Proj}(R)$ whose moment polytope is a subpolytope of $Q$ is pointwise fixed by $Q^{\perp}$.
\end{proposition}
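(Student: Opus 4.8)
The plan is to translate the statement into the bigraded homogeneous coordinate ring and then use the $\bN$-grading to manufacture the characters that cut out $Q^\perp$.

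Write $S = R/J$, where $J$ is the $T$-homogeneous, $\bN$-homogeneous prime with $Y = \textrm{Proj}(S)$, so that the moment polytope of $Y$ is $P(S)\subseteq Q$. For each degree $d$ the graded piece $S_d = \bigoplus_{\lambda\in\Lambda}S_{(\lambda,d)}$ is a $T$-representation whose set of weights is exactly $\{\lambda : S_{(\lambda,d)}\neq 0\}$. The first step is an elementary reduction: it suffices to show that for each $d$ the subtorus $Q^\perp$ acts on $S_d$ through a single character $\chi_d\colon Q^\perp\to\bG_m$. Indeed, given this, for a homogeneous prime $\frakp\in\textrm{Proj}(S)$ and a homogeneous $f\in\frakp$ of degree $d$ one has $t'\cdot f = \chi_d(t')f\in\frakp$ for every $t'\in Q^\perp$, so $t'\cdot\frakp=\frakp$; and on a standard affine $D_+(f)$ with $f\in S_d$, any section $g/f^k$ (so $\deg g = kd$) satisfies $t'\cdot(g/f^k) = \chi_{kd}(t')\,g/(\chi_d(t')^k f^k) = g/f^k$, where $\chi_{kd}=\chi_d^{\,k}$ is read off by applying $t'$ to the element $f^k\in S_{kd}$ (which is nonzero since $S$ is a domain). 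Hence $Q^\perp$ acts trivially on $\textrm{Proj}(S)=Y$, which is the assertion.

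The main step is the claim about $S_d$, and this is where the hypothesis $P(S)\subseteq Q$ enters. Let $\lambda,\lambda'$ be two weights of $S_d$, i.e. $S_{(\lambda,d)}\neq 0$ and $S_{(\lambda',d)}\neq 0$. Then $q_1 := \lambda/d$ and $q_2 := \lambda'/d$ both lie in $P(S)$, hence in $Q$. Taking the integer $m := d$ gives $m(q_1-q_2) = \lambda-\lambda'\in\Lambda$, so $\lambda-\lambda'$ is one of the characters in the intersection defining $Q^\perp$; therefore $Q^\perp\subseteq\ker(\lambda-\lambda')$, i.e. $\lambda$ and $\lambda'$ have the same restriction to $Q^\perp$. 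Since this holds for every pair of weights, all weights of $S_d$ restrict to one common character $\chi_d$ of $Q^\perp$, and $Q^\perp$ acts on $S_d=\bigoplus_\lambda S_{(\lambda,d)}$ by $\chi_d$ on each summand, as required.

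I do not anticipate a genuine obstacle: the content of the argument is the single observation that multiplying a difference of moment-polytope points by their common denominator $d$ lands back in $\Lambda$, producing exactly the characters that define $Q^\perp$. The only points needing a word of care are that $J$ prime makes $S$ a domain (used for $f^k\neq 0$ and for $\chi_{kd}=\chi_d^{\,k}$), and --- if $k$ is not algebraically closed --- that ``pointwise fixed by $Q^\perp$'' should be understood as ``$Q^\perp$ acts trivially on $Y$'', which is precisely what the computation above establishes.
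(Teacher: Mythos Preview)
The paper states this proposition without proof, so there is no argument to compare against. Your proof is correct: the key observation---that two weights $\lambda,\lambda'$ occurring in the same $\bN$-degree $d$ give points $\lambda/d,\lambda'/d\in P(S)\subseteq Q$ with $d(\lambda/d-\lambda'/d)=\lambda-\lambda'\in\Lambda$, hence $Q^\perp\subseteq\ker(\lambda-\lambda')$---is exactly the right one, and the passage from ``$Q^\perp$ acts on each $S_d$ by a single character'' to ``$Q^\perp$ acts trivially on $\textrm{Proj}(S)$'' is carried out carefully (including the use of $S$ being a domain to get $\chi_{kd}=\chi_d^{\,k}$ via $f^k\neq 0$).
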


\begin{definition}\label{d;bone}
Let $X = \textrm{Proj}(R)$. Define the \textbf{bones over }$\mathbf{Q}$ to be the maximal components of $X^{Q^{\perp}}$ whose moment polytopes lie inside $Q$. 
\end{definition}

Knutson suggested that the term ``bone'' be used in place of having to refer to ``preimages under the moment map''. This substitution of language is necessary when we have an x-ray (hence the term ``bone'') but no moment map (eg. when working over a field of characteristic $p>0$). Note that we use the term ``bone'' throughout Subsection \ref{s;nleq5}, which concerns the compatibly split subvarieties of $\Hilbn$, $n\leq 5$. 

Before stating the final proposition of this subsection, we recall some definitions/facts about polytopes.

\begin{definition}(See, for example, the notes \cite{Goe}.)
A \textbf{halfspace} is a set of the form \[\{\mu\in \Lambda\otimes_{\bZ}\bQ~|~\langle \mu,\alpha\rangle\leq \beta, \textrm{ for some } \alpha\in \Lambda^* \textrm{ and some }\beta\in \bQ\}.\] A \textbf{polyhedron} is the intersection of finitely many half spaces. The inequality $\langle \alpha, \mu\rangle \leq \beta$, $\alpha\in \Lambda^*$ and $\beta\in \bQ$, is called a \textbf{valid inequality} for the polyhedron $P$ if $\langle \alpha, \mu\rangle \leq \beta$ for every $\mu\in P$. A \textbf{face} of a polyhedron $P$ is $\{\mu\in P~|~\langle \alpha, \mu\rangle = \beta\}$ where $\langle \alpha, \mu\rangle \leq \beta$ is a valid inequality of $P$.
\end{definition}

\begin{proposition}\label{p;extedge}(Knutson)
Let $F$ be a face of the moment polytope $P(R)$ and suppose that $F = \{\mu\in P(R)~|~\langle \alpha, \mu\rangle = \beta\}$ where $\langle \alpha, \mu\rangle \leq \beta$ is a valid inequality of $P(R)$.
\begin{enumerate}
\item The direct sum \[I = \bigoplus_{n\in \bN}\bigoplus_{\mu\in P(R)\atop \langle \alpha, \mu\rangle <\beta}R_{n\mu,n}\] is an $\bN$- and $T$-homogeneous prime ideal of $R$. $\textrm{Proj}(R/I)$ is the bone of $F$.
\item Suppose that $R$ is a $k$-algebra, where $k$ has characteristic $p>0$. Suppose further that $R$ has an $\bN$- and $T$-invariant Frobenius splitting. Then, the bone of $F$ is compatibly split. 
\end{enumerate}
\end{proposition}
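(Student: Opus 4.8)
The plan is to prove part~1 by direct bookkeeping with the bigrading $R = \bigoplus_{(\lambda,s)}R_{\lambda,s}$ and to deduce part~2 from the single observation that a torus-invariant splitting preserves the slope $\lambda/s$. First I would rewrite the defining condition of $I$: a nonzero bihomogeneous element of bidegree $(\lambda,s)$ lies in $I$ exactly when $\langle\alpha,\lambda\rangle < s\beta$. Next I would record the two consequences of the validity of $\langle\alpha,\cdot\rangle\le\beta$ on $P(R)$: if $R_{\lambda,s}\ne 0$ with $s\ge 1$ then $\langle\alpha,\lambda\rangle\le s\beta$, and if $R_{\mu,0}\ne 0$ then $\langle\alpha,\mu\rangle\le 0$ (for the latter, pick any $c\in R_{\nu,t}$, $c\ne 0$, $t\ge 1$; since $R$ is a domain every $b^{n}c$ is nonzero, so $n\langle\alpha,\mu\rangle+\langle\alpha,\nu\rangle\le t\beta$ for all $n\ge 0$). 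Then $I$ is a proper, $T$- and $\bN$-homogeneous ideal: it is bihomogeneous by construction, it does not contain $1\in R_{0,0}$ since $\langle\alpha,0\rangle = 0$ does not satisfy the strict inequality, and multiplying a bihomogeneous $a\in R_{\lambda,s}\cap I$ by a bihomogeneous $b\in R_{\mu,t}$ lands in $R_{\lambda+\mu,s+t}$ with $\langle\alpha,\lambda+\mu\rangle < s\beta+t\beta$. Primality is the usual homogeneous-prime argument: a monomial total order on $\Lambda\times\bN$ reduces to bihomogeneous $a\in R_{\lambda,s}$, $b\in R_{\mu,t}$; if neither is in $I$ then $\langle\alpha,\lambda\rangle\ge s\beta$ and $\langle\alpha,\mu\rangle\ge t\beta$, which with validity forces equality in both, so $\langle\alpha,\lambda+\mu\rangle = (s+t)\beta$, and since $ab\ne 0$ we get $ab\notin I$.

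To identify $\textrm{Proj}(R/I)$ with the bone of $F$, I would compute $(R/I)_{\lambda,s}$: it equals $R_{\lambda,s}$ when $\langle\alpha,\lambda\rangle = s\beta$ and vanishes otherwise (using validity to discard the case $\langle\alpha,\lambda\rangle>s\beta$), so $P(R/I)$ is exactly the face $F$. By the proposition stated just before Definition~\ref{d;bone}, $\textrm{Proj}(R/I)$ is therefore pointwise fixed by $F^{\perp}$. For maximality, any $T$- and $\bN$-homogeneous prime $J\subseteq I$ with $P(R/J)\subseteq F$ must have $J_{\lambda,s} = R_{\lambda,s}$ whenever $\langle\alpha,\lambda\rangle < s\beta$, i.e.\ $J\supseteq I$, so $J=I$; since $I$ is prime, $\textrm{Proj}(R/I)$ is irreducible and hence is the unique bone of $F$.

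For part~2, let $\phi$ be the given $\bN$- and $T$-invariant splitting, so $\phi(R_{\lambda,s})\subseteq R_{\lambda/p,\,s/p}$, which is zero unless $p\mid s$ and $p$ divides $\lambda$ componentwise. If $a\in R_{\lambda,s}\cap I$, i.e.\ $\langle\alpha,\lambda\rangle<s\beta$, then $\langle\alpha,\lambda/p\rangle = \tfrac1p\langle\alpha,\lambda\rangle < \tfrac1p s\beta = (s/p)\beta$, so $\phi(a)\in R_{\lambda/p,s/p}\subseteq I$; by additivity $\phi(I)\subseteq I$. (Conceptually: $\phi$ preserves the slope $\lambda/s$, and $I$ is cut out by a slope condition.) To pass from the compatibly split homogeneous ideal $I\subseteq R$ to the subvariety $\textrm{Proj}(R/I)\subseteq\textrm{Proj}(R)$, I would work on the standard charts $D_{+}(g)$ with $g$ bihomogeneous of positive degree, on which $\phi$ induces the localized splitting compatibly splitting the localization of $I$ (cf.\ Example~\ref{e;aff} and Proposition~\ref{p;openSplit}).

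The only genuinely Frobenius-theoretic input is this slope computation, which works precisely because the strict inequality $\langle\alpha,\lambda\rangle < s\beta$ defining $I$ is invariant under the scaling $(\lambda,s)\mapsto(\lambda/p,s/p)$ imposed by torus-invariance of $\phi$. I expect the main obstacle to lie in part~1: carefully reconciling the ``set of realized slopes'' description of $P(R)$ with its structure as a convex polyhedron, so that $P(R/I)$ is honestly the closed face $F$ (recession directions included), and pinning down the maximality clause in the definition of a bone. These points should be routine but need care around the degree-$0$ subring $R_{0}$.
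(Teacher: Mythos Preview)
Your proof is correct and follows essentially the same approach as the paper: both argue ideal-hood and primality by tracking how the linear functional $\langle\alpha,\cdot\rangle$ behaves on sums of bidegrees (the paper phrases this via the convex combination $\frac{n_1}{n_1+n_2}\langle\alpha,\lambda_1/n_1\rangle + \frac{n_2}{n_1+n_2}\langle\alpha,\lambda_2/n_2\rangle$, you via the cleared-denominator inequality $\langle\alpha,\lambda\rangle < s\beta$), and both dispatch part~2 by the one-line slope-preservation observation $\phi(R_{n\mu,n})\subseteq R_{n\mu/p,\,n/p}\subseteq I$. Your version is somewhat more careful than the paper's---you treat the degree-$0$ piece explicitly and spell out the bone identification and the passage to $\mathrm{Proj}$, whereas the paper compresses these into ``by construction of $I$''---but the underlying argument is the same.
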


\begin{proof}
\begin{enumerate}
\item That $I$ is an ideal follows because $\langle \alpha, \mu\rangle \leq \beta$ is a valid inequality of $P(R)$. Indeed, suppose $r\in R$ and $i\in I$. Assume that $r\in R_{\lambda_1,n_1}$ and $i\in R_{\lambda_2,n_2}$. Then, \[\left\langle \alpha, \frac{\lambda_1+\lambda_2}{n_1+n_2}\right\rangle = \frac{n_1}{n_1+n_2}\left\langle \alpha, \frac{\lambda_1}{n_1}\right\rangle + \frac{n_2}{n_1+n_2}\left\langle \alpha, \frac{\lambda_2}{n_2}\right\rangle.\] The first summand on the right side of the equation is $\leq \frac{n_1}{n_1+n_2}\beta$ and the second summand is $<\frac{n_2}{n_1+n_2}\beta$. Thus, the sum is $<\beta$ and we see that $ri\in I$. 

A nearly identical argument proves that $I$ is prime: Suppose that $ab\in I$. Assume that $a\in R_{\lambda_1,n_1}$ and $b\in R_{\lambda_2,n_2}$. Then $ab\in R_{\lambda_1+\lambda_2,n_1+n_2}$. Thus, $\langle \alpha, \frac{\lambda_1+\lambda_2}{n_1+n_2}\rangle<\beta$, and so at least one of $\langle \alpha, \frac{\lambda_1}{n_1}\rangle$ or $\langle \alpha, \frac{\lambda_2}{n_2}\rangle$ must also be $<\beta$ (see the above equation). It follows that at least one of $a$ or $b$ is in $I$.

That $I$ is $\bN$- and $T$-homogeneous, and that $\textrm{Proj}(R/I)$ is the bone of $F$ follows by the construction of $I$. 

\item Let $\phi:R\rightarrow R$ be an $\bN$- and $T$-invariant Frobenius splitting of $R$. Then, \[\phi(I) = \bigoplus_{n\in \bN}\bigoplus_{\mu\in P(R)\atop \langle \alpha, \mu\rangle <\beta}\phi(R_{n\mu,n})\subseteq \bigoplus_{\frac{n}{p}\in \bN}\bigoplus_{\mu\in P(R)\atop \langle \alpha, \mu\rangle <\beta}R_{\frac{n\mu}{p},\frac{n}{p}}\subseteq I.\]
\end{enumerate}
\end{proof}

\subsection{Constructing the moment polyhedron of $\Hilbn$}\label{s;xraysHilb}

In this subsection, we discuss the moment polyhedron (really the x-ray) of $\Hilbn$. 

\begin{proposition}
Let $\lambda$ be a $T^2$-fixed point of $\Hilbn$. Let $S_{\lambda}$ denote its standard set. Then, $\lambda$'s moment polytope is the point \[\sum_{(i,j)\in S_\lambda}(i,j).\]
\end{proposition}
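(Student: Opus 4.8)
The plan is to unwind the definition of the moment polyhedron at the torus-fixed point $\lambda$, observe that it is forced to be a single point, and then identify that point as the $T^2$-weight of the fibre of the polarizing line bundle. Recall that the moment polyhedron (x-ray) of $\Hilbn$ is computed from the $(\Lambda,\bN)$-bigraded ring $R=\bigoplus_{s\ge 0}H^{0}(\Hilbn,\cL^{\otimes s})$, where $\cL=\bigwedge^{n}\cB$ is the determinant of the tautological rank-$n$ bundle $\cB$ (with $\cB|_{I}=k[x,y]/I$), equipped with its natural $T^2$-equivariant structure; since $\cL$ is relatively ample for the Hilbert--Chow morphism and its target is affine, $\Hilbn=\textrm{Proj}(R)$, and the $T^2$-action supplies the weight grading used to define $P(R)$. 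The reduced point $\lambda\in\Hilbn$ is cut out by the $\bN$- and $T^2$-homogeneous prime $\frakp_{\lambda}\subseteq R$ of sections vanishing at $\lambda$. First I would show that $P(R/\frakp_{\lambda})$ is a single point: after replacing $\cL$ by a positive power if necessary — which changes neither $\textrm{Proj}(R)$ nor, up to rescaling in $\Lambda\otimes\bQ$, the polyhedron $P(R)$ — the bundle $\cL$ is relatively globally generated, so for every $s\ge 1$ restriction to the fibre over $\lambda$ gives a surjection $R_{s}\twoheadrightarrow(\cL|_{\lambda})^{\otimes s}$ with kernel $(\frakp_{\lambda})_{s}$. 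Hence $(R/\frakp_{\lambda})_{s}\cong(\cL|_{\lambda})^{\otimes s}$ as $T^2$-representations, a one-dimensional space concentrated in weight $sw$, where $w\in\Lambda$ is the $T^2$-weight of the line $\cL|_{\lambda}$; therefore $(R/\frakp_{\lambda})_{(\mu,s)}\neq 0$ exactly when $\mu=sw$, and $P(R/\frakp_{\lambda})=\{w\}$ (a single point, as it must be since $\textrm{Proj}(R/\frakp_{\lambda})$ is a point).

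It then remains to compute $w$. The fibre $\cB|_{\lambda}=k[x,y]/\lambda$ has $k$-basis the monomials $x^{i}y^{j}$ with $(i,j)\in S_{\lambda}$, and the induced $T^2$-weight of $x^{i}y^{j}$ is $(i,j)$; since passing to the top exterior power adds weights, the line $\cL|_{\lambda}=\bigwedge^{n}(\cB|_{\lambda})$ has $T^2$-weight
\[
w=\sum_{(i,j)\in S_{\lambda}}(i,j)\ \in\ \Lambda .
\]
Combined with the previous paragraph, the moment polytope of $\lambda$ is the point $\sum_{(i,j)\in S_{\lambda}}(i,j)$, as claimed.

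The main obstacle is the bookkeeping behind the reduction in the first paragraph: checking that $R/\frakp_{\lambda}$ is, in every positive degree $s$, \emph{exactly} the $s$-th tensor power of the line $\cL|_{\lambda}$, with no extra $T^2$-weights appearing in small degrees — this is precisely where relative global generation of $\cL$ (after passing to a power) is used. The only other point requiring care is fixing the normalization and equivariant structure of $\cL$ so that the induced $T^2$-weight of $x^{i}y^{j}$ on $\cB|_{\lambda}$ is genuinely $(i,j)$ rather than its negative; this is dictated by the conventions under which the bigrading on $R$ was set up. Once these are in place, the fixed-point property makes the moment polytope automatically a singleton and the weight count is immediate.
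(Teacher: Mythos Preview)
The paper does not give a proof of this proposition; it is stated and immediately used. The only justification offered is the remark at the end of the subsection giving the moment map formula $I\mapsto\sum d_{ij}(i,j)$, ``derived from the inclusion $\Hilbn\hookrightarrow\textrm{Gr}^n(\bC[x,y])\hookrightarrow\bP(\wedge^n\bC[x,y]^*)$.'' Your argument is exactly the algebraic unpacking of that hint: the polarization is the pullback of $\cO(1)$ under the Pl\"ucker embedding, i.e.\ (up to the dual/sign convention you flag) the determinant of the tautological rank-$n$ bundle $\cB$, and the weight of its fibre at a monomial ideal is the sum of the exponent vectors of the standard monomials. So you have supplied what the paper leaves implicit, by essentially the same route it gestures at.

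Your caveats are the right ones. The identification $(R/\frakp_\lambda)_s\cong(\cL|_\lambda)^{\otimes s}$ does need relative global generation of $\cL^{\otimes s}$ over the affine base $S^n(\bA^2_k)$, which holds for $s\gg 0$ since $\cL$ is relatively ample; and rescaling $s$ does not move the point in $\Lambda\otimes\bQ$. The sign issue is genuine but purely conventional: with the paper's action $(t_1,t_2)\cdot I=\langle f(t_1^{-1}x,t_2^{-1}y)\rangle$, the monomial $x^iy^j$ in $k[x,y]/\lambda$ carries weight $(i,j)$ under the induced action, matching your computation and the paper's moment-map formula.
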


Using this proposition, along with the $T^2$-weights of the tangent space $T_{\lambda}\Hilbn$ (see the Hilbert scheme background section), we can construct the x-ray of $\Hilbn$. 

\begin{figure}[!h]
\begin{center}
\includegraphics[scale = 0.43]{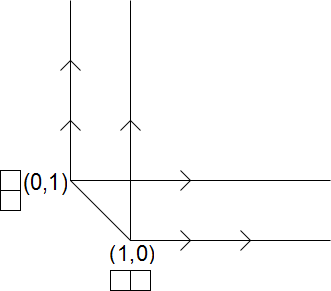}
\caption{The x-ray polyhedron of $\textrm{Hilb}^2(\bA^2_k)$. The arrows are meant to denote $T^2$-weights.}
\end{center}
\end{figure} 


\begin{figure}[!h]
\begin{center}
\includegraphics[scale = 0.70]{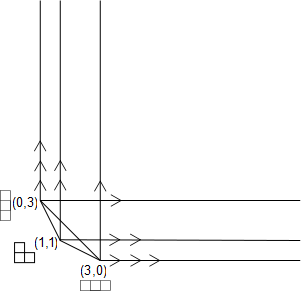}
\caption{The x-ray polyhedron of $\textrm{Hilb}^3(\bA^2_k)$}
\label{fig;hilb3polytope}
\end{center}
\end{figure} 

\begin{figure}[!h]
\begin{center}
\includegraphics[scale = 0.60]{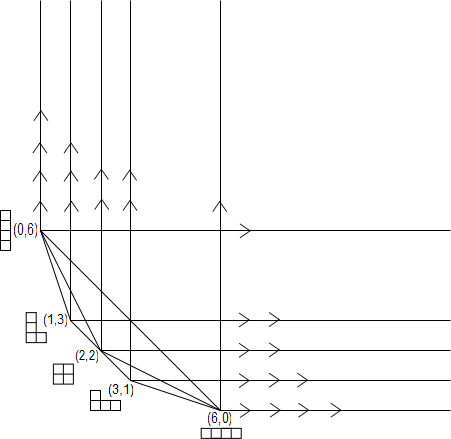}
\caption{The x-ray polyhedron of $\textrm{Hilb}^4(\bA^2_k)$}
\label{fig;hilb4polytope}
\end{center}
\end{figure}

\newpage
Though we never use the moment map, we provide it here anyway: Let $\frak{t} = \textrm{Lie}((S^1)^2)$. There is a moment map $\Phi: \textrm{Hilb}^n(\mathbb{C}^2)\rightarrow \frak{t}^{*}$. It is given by \[I\rightarrow \sum_{i,j\in\mathbb{N}\cup\{0\}} d_{ij}(i,j)\] where (under the standard Hermitian inner product), the rank $n$ orthogonal projection of $\mathbb{C}[x,y]$ to $I^{\bot}$ is $x^iy^j\mapsto d_{ij}x^iy^j+\cdots$. This is derived from the inclusion \[\Hilbn\hookrightarrow \textrm{Gr}^n(\bC[x,y])\hookrightarrow \bP(\wedge^n\bC[x,y]^*).\]


\section{Gr\"{o}bner bases, degenerations, and some combinatorics}
\subsection{Gr\"{o}bner bases and Gr\"{o}bner degenerations}\label{s;gb}

Our main source for this section is \cite[Chapter 15]{Eis}.

\begin{definition}
\begin{enumerate}
\item A \textbf{monomial order} (or \textbf{term order}) $>$ of the polynomial ring $k[x_1,\dots,x_n]$ is a total ordering of monomials such that, if $n$ is a (non-constant) monomial and $m_1>m_2$, then $nm_1>nm_2>m_2$. 
\item Let $f\in k[x_1,\dots,x_n]$. The \textbf{initial term} of $f$, denoted $\textrm{init}_{>}(f)$, is the largest term appearing in $f$ with respect to the given monomial order. 
\item Let $I\subseteq k[x_1,\dots,x_n]$ be an ideal. The \textbf{initial ideal} is the ideal $\textrm{init}_>(I) := \langle \textrm{init}_>(f)~|~f\in I\rangle$.
\end{enumerate}
\end{definition}

\begin{theorem}
Let $>$ be a monomial order on $k[x_1,\dots,x_n]$ and let $I$ be an ideal of $k[x_1,\dots,x_n]$. The monomials which are not in $\textrm{init}(I)$ form a $k$-vector space basis for the quotient $k[x_1,\dots,x_n]/I$.
\end{theorem}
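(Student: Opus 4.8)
The plan is to prove the standard "Macaulay basis" theorem: the monomials outside $\textrm{init}_>(I)$ form a $k$-vector space basis for $k[x_1,\dots,x_n]/I$. I would separate the claim into two parts, spanning and linear independence, and handle them in that order.

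For \textbf{spanning}, I would show that every $f \in k[x_1,\dots,x_n]$ is congruent modulo $I$ to a $k$-linear combination of monomials not lying in $\textrm{init}_>(I)$. The key tool is a division/reduction argument: if $f \notin I$ has an initial term $\textrm{init}_>(f)$ that happens to lie in $\textrm{init}_>(I)$, then there is some $g \in I$ with $\textrm{init}_>(g)$ dividing $\textrm{init}_>(f)$, so we may subtract an appropriate monomial multiple of $g$ to cancel that term and strictly decrease the leading monomial. More carefully, one repeatedly replaces each monomial $m$ appearing in $f$ that lies in $\textrm{init}_>(I)$ by something strictly smaller (modulo $I$): pick $g\in I$ with $\textrm{init}_>(g)=m'$ dividing $m$, say $m = n\cdot m'$, and replace $m$ by $m - n\cdot(\text{lead coeff})^{-1} g$, all of whose monomials are $< m$. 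This process terminates because a monomial order is a well-order on monomials (it has no infinite strictly descending chain — this is the crucial property of term orders), so the "bookkeeping" of which monomials remain to be cleared eventually halts. The output is a representative of $f + I$ supported entirely on monomials outside $\textrm{init}_>(I)$.

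For \textbf{linear independence}, suppose $\sum_i c_i m_i \in I$ is a nonzero $k$-linear combination of distinct monomials $m_i \notin \textrm{init}_>(I)$ with some $c_i \neq 0$. Then the element $h = \sum_i c_i m_i$ is a nonzero element of $I$, so $\textrm{init}_>(h) \in \textrm{init}_>(I)$ by definition of the initial ideal. But $\textrm{init}_>(h)$ is one of the $m_i$ (the largest one with nonzero coefficient), which by hypothesis is \emph{not} in $\textrm{init}_>(I)$ — a contradiction. Hence the residues of the monomials outside $\textrm{init}_>(I)$ are linearly independent in $k[x_1,\dots,x_n]/I$.

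The main obstacle is making the termination argument in the spanning step fully rigorous: one must phrase the reduction so that it provably halts. The cleanest way is to invoke that any monomial order is a well-ordering of the set of monomials (equivalently, Dickson's lemma guarantees no infinite antichains and the order condition forbids infinite descent), and then argue by Noetherian induction on the leading monomial, or by a transfinite/well-founded induction on the multiset of monomials of $f$ not yet reduced. I would state the well-ordering property explicitly (or cite it from \cite[Chapter 15]{Eis}) and then the induction is routine. Everything else — the subtraction identities, the fact that $\textrm{init}_>(h)$ for $h=\sum c_i m_i$ is the largest $m_i$ — is immediate from the definitions.
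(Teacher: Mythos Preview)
Your argument is correct and is the standard proof of this result. Note, however, that the paper does not actually supply a proof of this theorem: it is stated as background material in the Gr\"{o}bner basis section, with \cite[Chapter 15]{Eis} given as the reference, and is immediately followed by the remark that the result was already implicitly used in the discussion of the open cover of the Hilbert scheme. So there is nothing to compare against; your proof is precisely the textbook argument the paper is citing.
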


Notice that we have already made use of this theorem when discussing an open cover of the Hilbert scheme.

\begin{example}
One common monomial order is the \textbf{Lex order}: Consider the polynomial ring $k[x_1,\dots,x_n]$ with variables ordered by $x_1>x_2>\cdots>x_n$. Then, \[x_1^{a_1}\cdots x_n^{a_n}>_{\textrm{Lex}} x_1^{b_1}\cdots x_n^{b_n}\] if $a_i>b_i$ for the \emph{first} index $i$ where $a_i\neq b_i$.
\end{example}

Often, we'll be concerned with total orderings of the monomials of $k[x_1,\dots,x_n]$ which only satisfy the condition, ``if $n$ is a (non-constant) monomial and $m_1>m_2$, then $nm_1>nm_2$''. In particular, unlike in the definition of monomial order given above, we don't require $1$ to be smaller than any non-constant monomial. For example, in Section 2.3 we will be concerned with the \textbf{Revlex} ordering (defined below).

\begin{example}
Consider the polynomial ring $k[x_1,\dots,x_n]$ with variables ordered by $x_1>x_2>\cdots>x_n$. Then, \[x_1^{a_1}\cdots x_n^{a_n}>_{\textrm{Revlex}} x_1^{b_1}\cdots x_n^{b_n}\] if $a_i<b_i$ for the \emph{last} index $i$ where $a_i\neq b_i$.
\end{example}

Often, weights are used to order monomials. 

\begin{definition}
Assign integers (i.e. \textbf{weights}) $\lambda_1,\dots,\lambda_n$ to the variables $x_1,\dots,x_n$. Then the weight of $x_1^{a_1}\cdots x_n^{a_n}$ is $\lambda_1a_1+\cdots \lambda_na_n$, and \[x_1^{a_1}\cdots x_n^{a_n}>x_1^{b_1}\cdots x_n^{b_n} ~~\textrm{if}~~ \lambda_1a_1+\cdots \lambda_na_n>\lambda_1b_1+\cdots +\lambda_nb_n.\]
\end{definition}

\begin{remark}
Two different monomials in $k[x_1,\dots,x_n]$ may have the same weight, and so weights alone are not enough to distinguish between all monomials. Therefore, if $>$ denotes a weight order on $k[x_1,\dots,x_n]$, $\textrm{init}_{>}f$ may consist of multiple terms for some polynomials $f\in k[x_1,\dots x_n]$. In this setting, we call $\textrm{init}(f)$ the \textbf{initial form} of $f$.
\end{remark}

\begin{example}
For a fixed ideal $I$, we can define weight orders $>_1$ and $>_2$ so that \[\textrm{init}_{>_1}(I) = \textrm{init}_{\textrm{Lex}}(I)~~ \textrm{and}~~ \textrm{init}_{>_2}(I) = \textrm{init}_{\textrm{Revlex}}(I).\] The weight order $>_1$ is obtained by weighting $x_1,x_2, \dots,x_n$ by $N_1,N_2,\dots,N_n$ where $N_1\gg N_2\gg\cdots \gg N_n>0$. The weight order $>_2$ is obtained by weighting $x_1,x_2,\dots, x_n$ by $N_1,N_2,\dots,N_n$ where $-N_n\gg \cdots \gg -N_2\gg-N_1>0$.
\end{example}

\begin{definition}
Fix a monomial order $>$ on $k[x_1,\dots,x_n]$. Let $I\subseteq k[x_1,\dots,x_n]$ be an ideal. 
\begin{enumerate}
\item A set of polynomials $G = \{g_1,\dots,g_r\}\subseteq I$ is called a \textbf{Gr\"{o}bner basis} of $I$ with respect to the monomial order $>$ if $\textrm{init}_>I = \langle \textrm{init}_>(g)~|~g\in G\rangle$.
\item A Gr\"{o}bner basis $G$ is \textbf{reduced} if (i) for each $g_i\in G$, $\textrm{init}(g_i)$ does not divide any term of $g_j$ for $i\neq j$, and (ii) the coefficient of each $\textrm{init}(g_i)$ is $1$.
\item Let $G$ be a set of polynomials $\{g_1,\dots,g_r\}$ (not necessarily a Gr\"{o}bner basis). Let \[m_{i,j} = \frac{\textrm{init}(g_i)}{\textrm{GCD}(\textrm{init}(g_i),\textrm{init}(g_j))}.\] The \textbf{S-polynomial} (or \textbf{S-pair}) of $g_i$ and $g_j$ is $S(g_i,g_j):=m_{j,i}g_i-m_{i,j}g_j$.
\end{enumerate}
\end{definition}

\begin{proposition}(Buchberger's criterion)
Consider a set of polynomials $G =\{g_1,\dots,g_r\}$. Apply the division algorithm with respect to $G$ to each $S$-pair $S(g_i,g_j)$ to obtain the remainder $h_{i,j}$. $G$ is a Gr\"{o}bner basis if and only if each $h_{i,j}$ is $0$.
\end{proposition}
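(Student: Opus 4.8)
The plan is to prove both implications; the forward direction is routine and the converse is the substantive ``straightening'' argument due to Buchberger.

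For the forward direction, set $I := \langle g_1,\dots,g_r\rangle$ and suppose $G$ is a Gr\"obner basis of $I$. I would first record the standard fact that division by a Gr\"obner basis produces remainder $0$ on every element of the ideal: if $h \in I$ had nonzero remainder $h'$ after the division algorithm, then $h' \in I$ while no $\textrm{init}(g_k)$ divides $\textrm{init}(h')$, contradicting $\textrm{init}(I) = \langle \textrm{init}(g_1),\dots,\textrm{init}(g_r)\rangle$. Since each S-pair $S(g_i,g_j) = m_{j,i}g_i - m_{i,j}g_j$ lies in $I$, its remainder $h_{i,j}$ must be $0$.

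For the converse, assume every $h_{i,j}=0$ and put $I := \langle G\rangle$; the goal is $\textrm{init}(I) = \langle \textrm{init}(g_1),\dots,\textrm{init}(g_r)\rangle$, of which only ``$\subseteq$'' needs proof. Fix $0 \neq f \in I$ and, among all expressions $f = \sum_i q_i g_i$, choose one minimizing $\delta := \max_i \textrm{init}(q_i g_i)$ in the given monomial order. If $\textrm{init}(f) = \delta$, then $\textrm{init}(f)$ is divisible by $\textrm{init}(g_i)$ for any index $i$ achieving the maximum, and we are done. So suppose for contradiction that $\textrm{init}(f) < \delta$; then the top-degree contributions must cancel, i.e., with $T = \{i : \textrm{init}(q_i g_i) = \delta\}$ and $\textrm{lt}$ denoting leading term, $\sum_{i\in T}\textrm{lt}(q_i)\,\textrm{lt}(g_i) = 0$. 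The engine here is the cancellation (telescoping) lemma, which I would isolate and prove as a sublemma along the lines of \cite[Chapter 15]{Eis}: a sum of terms all of a common monomial degree $\delta$ which sums to something of strictly lower degree can be rewritten as a $k$-linear combination $\sum c_{jk}\,n_{jk}\,S(g_j,g_k)$, where the monomials $n_{jk}$ are chosen so that $n_{jk}\cdot\textrm{lcm}(\textrm{init}(g_j),\textrm{init}(g_k)) = \delta$ and each $n_{jk}\,S(g_j,g_k)$ has initial monomial strictly less than $\delta$ (because the leading terms of $S(g_j,g_k)$ cancel by construction). Applying this to $\sum_{i\in T}\textrm{lt}(q_i)\,\textrm{lt}(g_i)$ and then substituting the reductions $S(g_j,g_k) = \sum_i a_i^{(jk)}g_i$ guaranteed by $h_{j,k}=0$ (these satisfy $\textrm{init}(a_i^{(jk)}g_i) \le \textrm{init}(S(g_j,g_k))$, hence $n_{jk}\,\textrm{init}(a_i^{(jk)}g_i) < \delta$), one collects terms to obtain a new representation $f = \sum_i q_i' g_i$ with $\max_i \textrm{init}(q_i' g_i) < \delta$, contradicting minimality. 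Therefore $\textrm{init}(f) = \delta \in \langle \textrm{init}(g_1),\dots,\textrm{init}(g_r)\rangle$, and since $f \in I$ was arbitrary, $G$ is a Gr\"obner basis.

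The main obstacle is precisely the cancellation lemma together with the termination bookkeeping: after peeling off the top-degree cancellation and substituting the S-polynomial reductions, one must check that every monomial that reappears is genuinely strictly below $\delta$, so that the well-foundedness of the monomial order rules out an infinite descent on $\delta$. I expect proving the telescoping identity cleanly, and tracking which monomials stay below $\delta$ under the substitutions, to be the only delicate part; everything else is formal.
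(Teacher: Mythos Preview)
The paper does not actually prove this proposition: it is stated as background material with \cite[Chapter 15]{Eis} as the reference, and the text moves immediately to Buchberger's algorithm without giving any argument. So there is nothing in the paper to compare your proposal against.

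That said, your proposal is the standard proof (essentially the one in Eisenbud that the paper cites) and is correct. The forward direction is exactly as you say, and for the converse the minimization-of-$\delta$ argument together with the syzygy/cancellation lemma and the substitution of the S-pair reductions is the canonical route. Your identification of the delicate points---the telescoping identity and checking that every term reintroduced by the substitution stays strictly below $\delta$---is accurate; once those are verified the well-ordering finishes it.
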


Buchberger's criterion can be turned into an algorithm (\textbf{Buchberger's algorithm}) for producing a Gr\"{o}bner basis from a given generating set $G = \{g_1,\dots,g_r\}$: Apply the division algorithm with respect to $G$ to each $S$-pair $S(g_i,g_j)$ to obtain the remainder $h_{i,j}$. Suppose that some of these $h_{i,j}$ are non-zero. Consider a new set \[G' = \{g_1,\dots,g_r\}\cup\{h_{i,j}~|~h_{i,j}\neq 0\}.\] Apply Buchberger's criterion to $G'$. If $G'$ is not a Gr\"{o}bner basis, add in the new non-zero remainders. Repeat. This process must terminate after finitely many steps.

We make use of the next lemma when discussing degenerations of the compatibly split subvarieties of $U_{\langle x,y^n\rangle}$.

\begin{lemma}\label{l;init}

Consider two ideals $I, J\subset k[x_1,\dots,x_n]$. Suppose that $k[x_1,\dots,x_n]$ comes with a given monomial order. If $I\subset J$ and $\textrm{init}(I) = \textrm{init}(J)$ then $I = J$.
\end{lemma}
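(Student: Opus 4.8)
The plan is to argue by contradiction using the fact that a monomial order well-orders the monomials of $k[x_1,\dots,x_n]$, so that any nonempty set of monomials has a least element. Suppose $I\subsetneq J$. Then $J\setminus I$ is nonempty, so we may choose $f\in J\setminus I$ whose initial term $\textrm{init}(f)$ is least among $\{\textrm{init}(h)\mid h\in J\setminus I\}$.

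The key observation is that every monomial lying in the monomial ideal $\textrm{init}(I)$ is actually the initial term of some element of $I$: if $m$ is such a monomial, then $m=x^\alpha\,\textrm{init}(g)$ for some $g\in I$ and some monomial $x^\alpha$, whence $m=\textrm{init}(x^\alpha g)$ with $x^\alpha g\in I$. Now $\textrm{init}(f)\in\textrm{init}(J)=\textrm{init}(I)$, so there is $g\in I$ with $\textrm{init}(g)=\textrm{init}(f)$; after rescaling $g$ we may assume the leading coefficients agree. Set $f'=f-g$. Since $g\in I\subseteq J$, we have $f'\in J$; since $f\notin I$ and $g\in I$, we have $f'\notin I$. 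If $f'=0$ then $f=g\in I$, a contradiction; otherwise $\textrm{init}(f')<\textrm{init}(f)$ because the leading terms of $f$ and $g$ cancel, contradicting the minimality of $\textrm{init}(f)$. Hence $J\setminus I=\emptyset$, i.e. $I=J$.

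I do not expect any serious obstacle here; the only points requiring care are the two flagged above, namely that monomials of $\textrm{init}(I)$ are genuinely realized as initial terms of elements of $I$, and that the well-ordering property of the monomial order guarantees the minimal-initial-term element exists. (Alternatively, one could phrase this via Gröbner bases: a Gröbner basis $G$ of $I$ lies in $J$ and satisfies $\langle\textrm{init}(g)\mid g\in G\rangle=\textrm{init}(I)=\textrm{init}(J)$, so $G$ is also a Gröbner basis of $J$; since a Gröbner basis generates its ideal, $I=\langle G\rangle=J$. I would keep the self-contained division-algorithm argument above as the main proof.)
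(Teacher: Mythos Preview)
Your proposal is correct, and both your main argument and your parenthetical alternative are essentially the paper's approach: the paper takes a Gr\"obner basis $\{g_1,\dots,g_r\}$ of $I$ and reduces any $f\in J$ to $0$ using it, which is exactly your alternative, while your main well-ordering argument simply unpacks this division-algorithm reduction without naming Gr\"obner bases explicitly.
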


\begin{proof}
Let $\{g_1,\dots,g_r\}$ be a Gr\"{o}bner basis for $I$. Let $f\in J$ be given. Because both $I\subset J$ and $\textrm{init}(I) = \textrm{init}(J)$ we may reduce $f$ to $0$ using the given Gr\"{o}bner basis of $I$. Thus, $f\in I$.  
\end{proof}

We end this subsection by recalling the relationship between Gr\"{o}bner bases and flat families.

\begin{proposition}
Let $\{\lambda_1,\dots,\lambda_n\}$ be weights of $\{x_1,\dots,x_n\}$ and let $\lambda(m)$ denote the weight of the monomial $m$. Let $I\subseteq k[x_1,\dots,x_n]$ be an ideal and let $G = \{g_1,\dots,g_r\}$ be a Gr\"{o}bner basis with respect to the given weight order. 

For each $g_i\in G$, write $g_i = \sum c_im_i$ where $m_i$ is a monomial and $c_i\in k^*$. Let $b = \textrm{max}~\lambda(m_i)$ Replace each $g_i\in G$ by \[\tilde{g_i} = t^b g(t^{-\lambda_1}x_1,\dots, t^{-\lambda_r}x_r).\] Let $\tilde{I} = \langle \tilde{g_i}~|~g_i\in G\rangle$. 
\begin{enumerate}
\item $k[x_1,\dots,x_n][t]/\tilde{I}$ is free and thus flat as a $k[t]$-module.
\item $k[x_1,\dots,x_n][t]/((t)+\tilde{I})\cong S/\textrm{init}(I)$. 
\end{enumerate}
Thus, $k[x_1,\dots,x_n][t]/\tilde{I}$ is a flat family over $k[t]$ of quotients of $k[x_1,\dots,x_n]$. The fiber over $0$ is $k[x_1,\dots,x_n]/\textrm{init}(I)$.
\end{proposition}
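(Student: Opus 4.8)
The plan is to recognize $A := S[t]/\tilde I$, with $S = k[x_1,\dots,x_n]$, as (essentially) the extended Rees algebra attached to the weight filtration on $S/I$, and to extract flatness from the ``torsion-free over a PID'' criterion. First I would put on $S[t]$ the $\bZ$-grading with $\deg t = 1$ and $\deg x_i = \lambda_i$; a one-line computation shows $\tilde g_i = \sum_j c_j t^{\,b-\lambda(m_j)} m_j$ is homogeneous (of degree $b_i = \max_j \lambda(m_j)$, with only nonnegative powers of $t$), so $\tilde I$ is a homogeneous ideal and $A$ is a graded $k[t]$-algebra. I would also record the dehomogenization map $\varphi\colon S[t]\to S$, $t\mapsto 1$, which sends $\tilde g_i\mapsto g_i$ and hence carries $\tilde I$ into $I$. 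Item $2$ is then immediate: reducing $\tilde g_i$ modulo $t$ kills exactly the monomials of positive $t$-degree, leaving the initial form $\textrm{init}_{>}(g_i)$, and since $G$ is a Gr\"obner basis these generate $\textrm{init}_{>}(I)$; hence $A/tA = S[t]/\bigl((t)+\tilde I\bigr)\cong S/\textrm{init}_{>}(I)$, which also identifies the fiber over $0\in\bA^1_k$ once flatness is known.

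For item $1$, since $k[t]$ is a PID, $A$ is flat iff it is torsion-free. As $A$ is a graded module over the graded ring $k[t]$, its associated primes are graded, hence lie in $\{(0),(t)\}$; so $A$ is torsion-free iff $(t)\notin\textrm{Ass}(A)$ iff $t$ is a nonzerodivisor on $A$, i.e.\ iff $\tilde I$ is saturated with respect to $t$. Granting this, freeness follows by lifting a homogeneous $k$-basis of $A/tA$ to $A$ degree by degree (legitimate because in the cases of interest the weights $\lambda_i$ are nonnegative, so $A$ is $\bN$-graded and bounded below). Thus everything reduces to the claim $\tilde I : t^{\infty} = \tilde I$.

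To prove this claim I would introduce the homogenization ideal $I^{h}\subseteq S[t]$ generated by the homogenizations $\tilde f$ of all $f\in I$, and use the standard facts that $\tilde I\subseteq I^{h}$, that $I^{h}$ is automatically $t$-saturated (if $tg\in I^{h}$ with $g$ homogeneous, write $g=t^{e}g'$ with $g'$ coprime to $t$; then $g'$ is the homogenization of $\varphi(g)\in I$, so $g\in I^{h}$), and that $I^{h}\bmod (t) = \textrm{init}_{>}(I)$. Combined with item $2$ this yields $\tilde I + (t) = I^{h} + (t)$, and a minimal-degree argument then upgrades this to $\tilde I = I^{h}$: if $h\in I^{h}\setminus\tilde I$ were homogeneous of least degree, write $h = a + tb$ with $a\in\tilde I$; then $tb = h-a\in I^{h}$, so $b\in I^{h}$ by $t$-saturation of $I^{h}$, so $b\in\tilde I$ by minimality, whence $h\in\tilde I$ — a contradiction. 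Hence $\tilde I = I^{h}$ is $t$-saturated, giving item $1$; the concluding ``thus'' of the proposition is just the geometric restatement of items $1$ and $2$.

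The step I expect to be the main obstacle — and really the whole content of the statement — is the equality $\tilde I + (t) = I^{h} + (t)$, equivalently $\langle \textrm{init}_{>}(g_1),\dots,\textrm{init}_{>}(g_r)\rangle = \textrm{init}_{>}(I)$: this is exactly where the Gr\"obner-basis hypothesis on $G$ is indispensable, since a mere generating set would give only $\tilde I + (t)\subseteq I^{h}+(t)$, possibly strictly, hence $\tilde I\subsetneq I^{h}$, and then $A$ need not be $t$-saturated and the family can fail to be flat. A secondary point requiring a little care is that a weight order need not be a term order, so $\textrm{init}_{>}(I)$ is an initial-\emph{form} (not necessarily monomial) ideal and $S/\textrm{init}_{>}(I)$ a possibly non-monomial quotient — harmless here — and that one uses nonnegativity of the weights (as holds throughout this thesis) to guarantee $A$ is bounded below when passing from flatness to freeness.
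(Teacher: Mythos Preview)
The paper does not prove this proposition: it is stated as background, with \cite[Chapter 15]{Eis} given as the source at the top of the subsection. So there is no ``paper's proof'' to compare against; your write-up is essentially the standard Rees/homogenization argument found in Eisenbud (Theorem~15.17), and the overall architecture --- grade $S[t]$ by $\deg t=1$, $\deg x_i=\lambda_i$; identify $\tilde I\bmod(t)$ with $\textrm{init}(I)$ using the Gr\"obner hypothesis; reduce flatness to $t$-saturation; prove $\tilde I=I^{h}$ --- is correct.

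Two points deserve tightening. First, your parenthetical ``nonnegativity of the weights (as holds throughout this thesis)'' is false: the thesis explicitly uses the weighting $\textrm{Revlex}_{b_n},\textrm{Lex}_{a_n},\dots$, which (see the remark after Proposition~\ref{p;eqnRNC}) assigns large \emph{negative} weights to the $b_i$. So you cannot appeal to bounded-below grading, and your ``least degree'' induction for $\tilde I=I^{h}$ is not well-founded as written. The fix is easy: run the induction on the $t$-degree of $h\in I^{h}$ (a nonnegative integer regardless of the sign of the $\lambda_i$), or argue directly that a standard representation $f=\sum q_ig_i$ with $\lambda(\textrm{init}(q_ig_i))\le\lambda(\textrm{init}(f))$ homogenizes to an expression $\tilde f=\sum t^{e_i}\tilde q_i\tilde g_i$ with $e_i\ge 0$, giving $I^{h}\subseteq\tilde I$ in one stroke. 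Second, for the same reason, your route from flatness to \emph{freeness} via ``lift a graded basis, bounded below'' does not apply; instead show directly that $\{t^am:a\ge 0,\ m\notin\textrm{init}(I)\}$ is a $k$-basis of $S[t]/\tilde I$, which is the argument Eisenbud gives and which is insensitive to the signs of the weights.
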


This type of flat family is called a \textbf{Gr\"{o}bner degeneration}.

\subsection{Lex and Revlex degenerations}\label{s;lexRev}

In this subsection we review the geometry of Gr\"{o}bner degenerations with respect to the Lex and Revlex weightings. We begin with Lex. For this, our references are \cite{KMY} and \cite{Kfpsac}.

\begin{definition}
Let $f\in k[x_1,\dots,x_n]$. Weight variable $x_i$ by $N> 0$ and weight each of the rest of the variables by $0$. Define $\textrm{Lex}_{x_i}(f)$ to be the leading form of $f$ with respect to the given weighting.

If $X$ is a subscheme of $\bA^n_k$ with defining ideal $I$, define \[\textrm{Lex}_{x_i}X := \Spec\left(\frac{k[x_1,\dots,x_n]}{\textrm{Lex}_{x_i}I}\right).\]
\end{definition}

Treating $\bA^n_k = \Spec(k[x_1,\dots,x_n])$ as a vector space $V$, we can decompose it as $V = H\oplus L$, where $H$ is the hyperplane $\{x_i=0\}$ and $L = \bA^1_{x_i}$ is the $x_i$-axis. There is a $k^*$ action on $V$ given by $t\cdot(\vec{h},l) = (\vec{h},tl), t\in k^*$. We say that this action ``scales L and fixes H''.

Let $X$ be a closed subscheme of $V$. Let $X'$ denote the flat limit $\textrm{lim}_{t\rightarrow 0}(t\cdot X)$. Notice that this limit is just an alternate definition of the Gr\"{o}bner degeneration $\textrm{Lex}_{x_i}X$ defined above. The following theorem explains the geometry of $\textrm{Lex}_{x_i}X$.

\begin{theorem}\cite[Theorem 2.2]{KMY}
Let $X$ be a closed subscheme of $V$ and let $L$ be a 1-dimensional subspace of V. Denote by $\Pi$ the scheme-theoretic closure of the image of $X$ in $V/L$, and by $\overline{X}$ the closure of $X$ in $\textrm{Bl}_{\bP L}\bP(V\oplus \bA^1_k)$ (the blow-up of $\bP(V\oplus \bA^1_k)$ at the point $\bP{L}$). Set $\Gamma = \overline{X} \cap (V/L)$, where the intersection of schemes takes place in $\textrm{Bl}_{\bP L}\bP(V\oplus \bA^1_k)$.

If $H$ is a hyperplane complementary to $L$ in $V$, and we identify $H$ with $V/L$, then the flat limit $X' := \textrm{lim}_{t\rightarrow 0} t \cdot X$ under scaling $L$ and fixing $H$ satisfies \[X' \supseteq (\Pi \times \{0\}) \cup (\Gamma\times L),\] with equality as sets. 
\end{theorem}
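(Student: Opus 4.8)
The plan is to pick linear coordinates $x_1,\dots,x_n$ on $V$ with $L$ the $x_n$-axis and $H=\{x_n=0\}$, so that $V/L=\Spec k[x_1,\dots,x_{n-1}]$, the action scaling $L$ and fixing $H$ is the $\bG_m$-action weighting $x_n$ by a positive integer and the remaining variables by $0$, and, as noted just before the statement, $X'=\Spec\bigl(k[x_1,\dots,x_n]/\textrm{Lex}_{x_n}(I)\bigr)$, where $I=I(X)$ and $\textrm{Lex}_{x_n}(I)=\langle\textrm{Lex}_{x_n}(f):f\in I\rangle$ is generated by the top-$x_n$-degree parts. Writing $f=c_f(x_1,\dots,x_{n-1})\,x_n^{d_f}+(\text{lower order in }x_n)$ for $f\in I$, so that $\textrm{Lex}_{x_n}(f)=c_f\,x_n^{d_f}$, I would identify the two pieces on the right-hand side in these coordinates: $\Pi$ has ideal the elimination ideal $I\cap k[x_1,\dots,x_{n-1}]$, and --- working in the affine chart of $\textrm{Bl}_{\bP L}\bP(V\oplus\bA^1_k)$ with coordinates $x_1,\dots,x_{n-1},w$ in which $w=x_n^{-1}$ and $\{w=0\}$ is $V/L$, where the proper transform of $\{f=0\}$ is cut out by $\tilde f=c_f+wc_{f,d_f-1}+\cdots+w^{d_f}c_{f,0}$ --- the subscheme $\Gamma=\overline X\cap(V/L)$ is, as a set, the common zero locus of the leading coefficients $\{c_f:f\in I\}$ inside $V/L$.

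Given these descriptions, the scheme-theoretic containment $X'\supseteq(\Pi\times\{0\})\cup(\Gamma\times L)$ drops out of the formula $\textrm{Lex}_{x_n}(f)=c_f\,x_n^{d_f}$: such a generator lies in $(x_n)$ if $d_f\geq 1$ and equals $f\in I\cap k[x_1,\dots,x_{n-1}]$ if $d_f=0$, so $\textrm{Lex}_{x_n}(I)$ is contained in $(I\cap k[x_1,\dots,x_{n-1}])+(x_n)$, the ideal of $\Pi\times\{0\}$; and since $c_f=\tilde f\bmod (w)$ lies in the ideal of $\Gamma$, each generator $c_f\,x_n^{d_f}$ also lies in the ideal of $\Gamma\times L$.

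For the reverse containment as sets I would exploit $\bG_m$-equivariance. The flat limit $X'$ is $\bG_m$-stable, hence so is every irreducible component of $(X')_{\mathrm{red}}$ (the connected group permutes finitely many components), and a $\bG_m$-stable irreducible closed subset of $(V/L)\times\bA^1_{x_n}$ is either $Z\times\{0\}$ or $Z\times\bA^1_{x_n}$ for some closed $Z\subseteq V/L$. A component of the first type satisfies $Z\subseteq\Pi$, because $I\cap k[x_1,\dots,x_{n-1}]\subseteq\textrm{Lex}_{x_n}(I)\cap k[x_1,\dots,x_{n-1}]$ forces $\overline{\pi(X')}\subseteq\Pi$ (here $\pi\colon V\to V/L$); so it sits in $\Pi\times\{0\}$. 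A component of the second type meets $\{x_n\neq 0\}$, and there $\textrm{Lex}_{x_n}(I)$ localizes to $\langle c_f:f\in I\rangle$, so $X'\cap\{x_n\neq 0\}=\Gamma\times(L\setminus\{0\})$ as sets; taking closures puts the component in $\Gamma\times L$. Combined with the inclusion already proved, this yields equality of sets.

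The real obstacle is the identification of $\Gamma=\overline X\cap(V/L)$ with the zero locus of the leading coefficients $\{c_f\}$ --- precisely, that $p\in V/L$ lies in $\overline X$ iff every $c_f$ vanishes at $p$. One direction is the blow-up chart computation above; the other needs a properness/finiteness argument: if some $c_f(p)\neq 0$ then $f$ is monic (up to a unit) in $x_n$ over a neighbourhood of $p$, so $X$ is finite over that neighbourhood and cannot limit to $x_n=\infty$ above $p$; conversely, if $X$ is finite over no neighbourhood of $p$, integral dependence of $x_n$ fails there and one extracts a branch of $X$ running off to $x_n=\infty$ with base point $p$. Making this precise, together with choosing the right chart of the blow-up and checking that $V(\{c_f\})$ acquires no extra component over $\{w=0\}$, is the only subtle point; the remaining manipulations of $\textrm{Lex}_{x_n}(I)$ are routine.
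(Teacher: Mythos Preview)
The paper does not contain a proof of this statement: it is quoted verbatim as background from \cite[Theorem 2.2]{KMY} and is immediately followed by remarks, not a proof environment. So there is no proof in the paper to compare your attempt against.

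For what it is worth, your outline is the standard coordinate approach to the result and is broadly sound. The identification of $\Pi$ with the elimination ideal and of $X'$ with $\mathrm{Lex}_{x_n}(I)$ is exactly as the paper sets it up in the surrounding discussion, and your $\bG_m$-equivariance argument for the set-theoretic reverse inclusion is clean. The one genuinely delicate step, as you yourself flag, is the set-theoretic identification of $\Gamma$ with the vanishing locus of all leading coefficients $\{c_f:f\in I\}$; the direction ``some $c_f(p)\neq 0$ implies $p\notin\Gamma$'' is fine via finiteness over a neighbourhood of $p$, but the converse (if all $c_f$ vanish at $p$ then $p\in\overline{X}$ in the blow-up) needs an actual valuative-criterion or curve-selection argument rather than just the informal ``extract a branch running off to infinity''. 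Since the paper gives no proof, I cannot tell you whether this matches the original argument in \cite{KMY}.
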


\begin{remarks}
\begin{enumerate}
\item The authors of \cite{KMY} call the decomposition $(\Pi \times \{0\}) \cup (\Gamma\times L)$ a ``geometric vertex decomposition'' as it is reminiscent of a vertex decomposition of a simplicial complex. (See the next subsection for relevant definitions.)
\item In the case that we deal with in this thesis, we get scheme-theoretic equality in the above theorem because all subschemes in question are Frobenius split and thus reduced.
\end{enumerate}
\end{remarks}

The Revlex case is simpler. 

\begin{definition}
Let $f\in k[x_1,\dots,x_n]$. Weight variable $x_i$ by $-N$ where $N> 0$. Weight each of the rest of the variables by $0$. Define $\textrm{Revlex}_{x_i}(f)$ to be the leading form of $f$ with respect to the given weighting.

If $X$ is a subscheme of $\bA^n_k$ with scheme theoretic defining ideal $I$, define \[\textrm{Revlex}_{x_i}X := \Spec\left(\frac{k[x_1,\dots,x_n]}{\textrm{Revlex}_{x_i}I}\right).\]
\end{definition}

\begin{lemma}\label{l;revlex}
Let $Y$ be an integral subscheme of $\Spec k[x_1,...,x_n]$. Let $H$ be the hyperplane $\{x_1 = 0\}$ and let $L = \bA^1_{x_1}$ be the $x_1$-axis. 
\begin{itemize}
\item If $Y\subseteq H$ then $\textrm{Revlex}_{x_1}(Y) = Y\times O_{x_1}~~\subseteq~~ H\times L$.
\item If $Y\nsubseteq H$ then $\textrm{Revlex}_{x_1}(Y) = (Y\cap H)\times L~~\subseteq~~ H\times L$.
\end{itemize}
\end{lemma}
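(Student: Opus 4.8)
The plan is to compute the initial ideal $\textrm{Revlex}_{x_1}(I)$ directly, where $I=I(Y)$ is the prime defining ideal of the integral scheme $Y$, by sorting each polynomial according to its order of vanishing along $H=\{x_1=0\}$. Write every $f\in k[x_1,\dots,x_n]$ uniquely as $f=\sum_{j\geq 0}f_j(x_2,\dots,x_n)\,x_1^j$, and let $d(f)=\min\{j:f_j\neq 0\}$ be its $x_1$-order. Under the Revlex-$x_1$ weighting (only $x_1$ carries a negative weight), the weight of the term $f_j x_1^j$ is proportional to $-j$, so the leading form is exactly $\textrm{Revlex}_{x_1}(f)=f_{d(f)}\,x_1^{d(f)}$. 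Thus $\textrm{Revlex}_{x_1}(I)$ is the ideal generated by all $f_{d(f)}x_1^{d(f)}$ with $f\in I$, and I will identify this ideal with the defining ideal of the asserted product scheme in each of the two cases. Note $Y\subseteq H$ is equivalent to $x_1\in I$ (since $I$ is prime, hence radical, and $\langle x_1\rangle$ is prime), so the two bullets are exactly the cases $x_1\in I$ and $x_1\notin I$.

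For the first bullet, assume $x_1\in I$. Then $I=\langle x_1\rangle + \overline{I}\cdot k[x_1,\dots,x_n]$, where $\overline{I}\subseteq k[x_2,\dots,x_n]$ is the image of $I$ under $x_1\mapsto 0$. Choosing generators of $\overline{I}$ (polynomials not involving $x_1$) together with $x_1$, their leading forms are themselves, so $\textrm{Revlex}_{x_1}(I)\supseteq \langle x_1\rangle+\overline{I}\cdot k[x_1,\dots,x_n]$; conversely, any $f\in I$ of $x_1$-order $0$ satisfies $\textrm{Revlex}_{x_1}(f)=f|_{x_1=0}\in\overline{I}$, and any $f\in I$ of higher order has $\textrm{Revlex}_{x_1}(f)$ divisible by $x_1$. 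This gives equality, and $\langle x_1\rangle+\overline{I}\cdot k[x_1,\dots,x_n]$ is precisely the ideal of $Y\times O_{x_1}$.

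For the second bullet, assume $x_1\notin I$, and set $\overline{I}_0:=\{h|_{x_1=0}:h\in I\}$, the ideal of the scheme-theoretic intersection $Y\cap H$ regarded inside $\bA^{n-1}$. The point is that if $f\in I$ has $d(f)=d$, then $f=x_1^d g$ with $g=\sum_j f_{d+j}x_1^j$, and since $I$ is prime and $x_1\notin I$ we conclude $g\in I$; hence $\textrm{Revlex}_{x_1}(f)=f_d x_1^d=g|_{x_1=0}\cdot x_1^d$ with $g|_{x_1=0}\in\overline{I}_0$. So every generator of $\textrm{Revlex}_{x_1}(I)$ lies in $\overline{I}_0\cdot k[x_1,\dots,x_n]$, and conversely each nonzero $h\in\overline{I}_0$ is the ($x_1$-order-$0$) leading form of some element of $I$, giving the reverse containment. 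Therefore $\textrm{Revlex}_{x_1}(I)=\overline{I}_0\cdot k[x_1,\dots,x_n]$, which is exactly the ideal of $(Y\cap H)\times L$.

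The only genuine obstacle is the second bullet, specifically the claim that the coefficient $f_{d(f)}$ lands in $\overline{I}_0$ and not merely in some larger ideal such as $(\overline{I}_0:x_1^\infty)$; this is precisely where integrality of $Y$ is used, since a factorization $x_1^d g\in I$ with $x_1\notin I$ forces $g\in I$ only because $I$ is prime (radical would not suffice, as $Y\cap H$ itself may be non-reduced). Everything else is bookkeeping about the weight order, and no Buchberger-style computation is required because in each case the leading forms produced are already visibly a generating set of the target ideal.
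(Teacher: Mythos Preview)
Your proof is correct and follows essentially the same route as the paper's: both arguments hinge on the observation that when $x_1\notin I(Y)$, primality lets you cancel $x_1$-factors so that the leading form of any $f\in I(Y)$ lies in the extension of $\{h(0,x_2,\dots,x_n):h\in I(Y)\}$. Your write-up is more detailed (explicitly tracking the $x_1$-order decomposition and verifying both containments), but the underlying argument is the same.
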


\begin{proof}
If $Y\subseteq H$ then $x_1\in I(Y)$ and $\textrm{Revlex}_{x_1}I(Y) = I(Y)$.

Now suppose that $Y\nsubseteq H$. Then $x_1\notin I(Y)$. As $I(Y)$ is prime, we have that if $x_1$ divides any $g\in I(Y)$ then $\frac{g}{x_1}\in I(Y)$. So, $\textrm{Revlex}_{x_1}I(Y) = \langle h(0,x_2,x_3,\dots,x_n) | h\in I(Y)\rangle$. Thus, \[\frac{k[x_1,\dots,x_r]}{\textrm{Revlex}_{x_1}I(Y)}\cong \frac{k[x_1,\dots,x_r]}{I(Y)+\langle x_1\rangle}\otimes k[x_1]\] as desired.
\end{proof}

\subsection{Stanley-Reisner schemes and simplicial complexes}

Our references for this subsection are the textbooks \cite{MS}, \cite{St} unless otherwise indicated.

\begin{definition}
A \textbf{simplicial complex} $\Delta$ on the vertex set $\{v_1,\dots,v_n\}$ is a collection of subsets, called \textbf{faces} (or \textbf{simplices}), with the following property: if $\sigma\in \Delta$, and $\tau \subseteq \sigma$, then $\tau\in \Delta$. We call a maximal face a \textbf{facet}. If $\sigma$ is an $d+1$ element subset of $\{v_1,\dots,v_n\}$ then we say that $\sigma$ has dimension $d$. If all facets have the same dimension $d$, the we say that $\Delta$ is \textbf{pure} of dimension $d$.
\end{definition}

We take particular interest in \textbf{vertex-decomposable} simplicial complexes.

\begin{definition}\cite{BP}
Let $\Delta$ be a simplicial complex on the vertex set $V$. Let $v\in V$. Define the following subcomplexes:
\begin{enumerate}
\item $\textrm{del}(v):=\{F\in \Delta~|~F\cup\{v\}\notin \Delta\}$.
\item $\textrm{link}(v):=\{F\in \Delta~|~v\notin F,~F\cup\{v\}\in \Delta\}$.
\item $\textrm{star}(v):=\{F\in \Delta~|~F\cup\{v\}\in \Delta\}$.
\end{enumerate}
\end{definition}

Notice that $\textrm{star}(v)$ is a cone on $\textrm{link}(v)$ and that \[\Delta = \textrm{del}(v)\cup_{\textrm{link}(v)}\textrm{star}(v).\]

\begin{definition}\cite{BP}
Let $\Delta$ be a pure $d$-dimensional simplicial complex on the vertex set $V$. We say $\Delta$ is \textbf{vertex-decomposable} if either $\Delta$ is empty or there exists a vertex $v\in V$ such that
\begin{enumerate}
\item $\textrm{del}(v)$ is pure, $d$-dimensional, and vertex-decomposable, and
\item $\textrm{link}(v)$ is pure, $(d-1)$-dimensional, and vertex-decomposable.
\end{enumerate}
\end{definition}

As mentioned in \cite{Kfpsac}, a particularly nice case is when (i) $\textrm{del}(v)$ is homeomorphic to a $d$-dimensional ball and (ii) $\textrm{link}(v)$ is homeomorphic to a $(d-1)$-dimensional ball (or sphere) on the spherical surface of $\textrm{del}(v)$. Then, $\Delta$ too is homeomorphic to a $d$-dimensional ball (or sphere). 

\begin{definition}
Let $\Delta$ be a pure $d$-dimensional simplicial complex. We say that $\Delta$ is \textbf{shellable} if there exists an ordering of the facets $\sigma_1,\dots,\sigma_s$ such that for $i=2,\dots,s$, \[\overline{\sigma_i}\cap (\bigcup_{j=1}^{i-1}\overline{\sigma_j})\] is a pure $(d-1)$-dimensional complex. Note that $\overline{\sigma_i}$ denotes the smallest simplicial complex with facet $\sigma_i$.
\end{definition}

\begin{theorem}\cite{BP}
Vertex-decomposable simplicial complexes are shellable.
\end{theorem}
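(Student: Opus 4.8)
The plan is to induct on the number of vertices of $\Delta$, constructing a shelling order of $\Delta$ by concatenating a shelling of $\textrm{del}(v)$ with a (coned-off) shelling of $\textrm{link}(v)$, where $v$ is the vertex provided by vertex-decomposability; this mirrors the decomposition $\Delta=\textrm{del}(v)\cup_{\textrm{link}(v)}\textrm{star}(v)$ recorded just before the statement. The base cases are when $\Delta$ is empty or a single simplex, where the (at most one-element) list of facets is vacuously a shelling.

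For the inductive step, suppose $\Delta$ is pure of dimension $d$, is not a simplex, and is vertex-decomposable via $v$, so $\textrm{del}(v)$ is pure $d$-dimensional and vertex-decomposable and $\textrm{link}(v)$ is pure $(d-1)$-dimensional and vertex-decomposable. Their vertex sets are contained in $V\setminus\{v\}$, so each has strictly fewer vertices than $\Delta$ and the inductive hypothesis applies: fix shellings $\sigma_1,\dots,\sigma_r$ of $\textrm{del}(v)$ and $\tau_1,\dots,\tau_s$ of $\textrm{link}(v)$. Writing $v*\tau:=\{v\}\cup\tau$, I would first record a ``coning lemma'': since $\textrm{star}(v)=v*\textrm{link}(v)$ and $\overline{v*\tau_i}\cap\overline{v*\tau_j}=v*(\overline{\tau_i}\cap\overline{\tau_j})$, the list $v*\tau_1,\dots,v*\tau_s$ is a shelling of $\textrm{star}(v)$, because for each $i$ the intersection $\overline{v*\tau_i}\cap\bigcup_{j<i}\overline{v*\tau_j}$ equals $v*\bigl(\overline{\tau_i}\cap\bigcup_{j<i}\overline{\tau_j}\bigr)$, a cone on a pure $(d-2)$-dimensional complex, hence pure of dimension $d-1$.

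The main claim is that
\[\sigma_1,\dots,\sigma_r,\;v*\tau_1,\dots,v*\tau_s\]
is a shelling of $\Delta$. First, this list is exactly the set of facets of $\Delta$, without repetition: a facet of $\Delta$ not containing $v$ lies in $\textrm{del}(v)$ and is a facet there by purity, and a facet containing $v$ has the form $v*\tau$ for a facet $\tau$ of $\textrm{link}(v)$, again by purity. The shelling condition among the $\sigma_i$ is inherited from $\textrm{del}(v)$. For the facet $v*\tau_i$ I would use that $\bigcup_{j=1}^r\overline{\sigma_j}=\textrm{del}(v)$, the identity $\overline{v*\tau_i}\cap\textrm{del}(v)=\overline{\tau_i}$ (the faces of $v*\tau_i$ avoiding $v$ are precisely the faces of $\tau_i$), and the computation from the coning lemma, to conclude that the intersection of $\overline{v*\tau_i}$ with the union of all preceding closed facets equals $\overline{\tau_i}\cup\bigl(v*\Gamma_i\bigr)$, where $\Gamma_i:=\overline{\tau_i}\cap\bigcup_{j<i}\overline{\tau_j}$ is pure $(d-2)$-dimensional. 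Both $\overline{\tau_i}$ and $v*\Gamma_i$ are pure of dimension $d-1$, and a union of two pure $(d-1)$-dimensional complexes is pure of dimension $d-1$, which is exactly the required condition, completing the induction.

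The main obstacle I anticipate is not conceptual but bookkeeping: carefully verifying the face-set identities $\overline{v*\tau_i}\cap\textrm{del}(v)=\overline{\tau_i}$ and $\overline{v*\sigma}\cap\overline{v*\tau}=v*(\overline{\sigma}\cap\overline{\tau})$, checking that the union of two pure complexes of equal dimension is again pure (an easy but necessary lemma), confirming that $v*\Gamma_i$ is genuinely a subcomplex of $\Delta$, and pinning down the degenerate base cases ($d=0$, or $\textrm{link}(v)=\{\emptyset\}$) so that the induction on vertex number is airtight.
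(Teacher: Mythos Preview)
The paper does not prove this theorem; it is quoted without proof from \cite{BP} and invoked later as a black box. Your inductive argument---shell $\textrm{del}(v)$ first and then append the coned shelling $v*\tau_1,\dots,v*\tau_s$ of $\textrm{star}(v)$---is precisely the classical Provan--Billera proof, and the verification you outline is correct.

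One small caution about interfacing with the paper's conventions: the paper's stated definition $\textrm{del}(v)=\{F\in\Delta:F\cup\{v\}\notin\Delta\}$ is, as written, not downward-closed and is disjoint from $\textrm{star}(v)$, so neither the identity $\Delta=\textrm{del}(v)\cup_{\textrm{link}(v)}\textrm{star}(v)$ recorded just above the theorem nor your key computation $\overline{v*\tau_i}\cap\textrm{del}(v)=\overline{\tau_i}$ would literally hold. Both require the standard deletion $\textrm{del}(v)=\{F\in\Delta:v\notin F\}$, which is clearly what is intended (and what \cite{BP} uses). With that reading your proof goes through without change.
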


To every simplicial complex on the vertex set $\{v_1,\dots,v_n\}$, we can associate a squarefree monomial ideal $I\in k[x_1,\dots,x_n]$. 

\begin{definition}
Let $\Delta$ be a simplicial complex on vertex set $V = \{v_1,\dots,v_n\}$. Let $\sigma\subseteq V$ and define $\mathbf{x}^{\sigma} = \prod_{v_i\in\sigma}x_i$. The \textbf{Stanley-Reisner ideal} $I_{\Delta}\subseteq k[x_1,\dots,x_n]$ is the ideal \[I_{\Delta} := \langle \mathbf{x}^{\sigma}~|~\sigma\notin \Delta\rangle.\] The \textbf{Stanley-Reisner ring} associated to $\Delta$ (also called the \textbf{face ring}) is the ring $R_{\Delta}:=k[x_1,\dots,x_n]/I_{\Delta}$. The \textbf{Stanley-Reisner scheme} associated to $\Delta$ is $\Spec~R_{\Delta}$.
\end{definition}

Notice that $I_{\Delta}$ is actually generated by all $\mathbf{x}^{\sigma}$ such that $\sigma$ is a \emph{minimal} non-face of $\Delta$.

\begin{theorem}
The correspondence $\Delta\mapsto I_{\Delta}$ is a bijection from simplicial complexes on the vertices $\{v_1,\dots,v_n\}$ to squarefree monomial ideals in $k[x_1,\dots,x_n]$. Furthermore, if $\overline{\sigma} = \{v_1,\dots,v_n\}\setminus \sigma$, and $\frak{m}^{\overline{\sigma}} = \langle x_i~|~v_i\in \overline{\sigma}\rangle$, then, \[I_{\Delta} = \bigcap_{\sigma\in\Delta}\frak{m}^{\overline{\sigma}}.\]
\end{theorem}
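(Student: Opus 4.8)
The plan is to deduce both assertions from a single elementary principle: a monomial ideal is determined by the set of monomials it contains, and a monomial $m$ belongs to $\langle m_1,\dots,m_r\rangle$ if and only if some $m_i$ divides $m$. The first step is to set up the dictionary between faces and monomials: for any subset $\tau\subseteq\{v_1,\dots,v_n\}$ and any simplicial complex $\Delta$, I claim $\mathbf{x}^{\tau}\in I_{\Delta}$ exactly when $\tau\notin\Delta$. Indeed $\mathbf{x}^{\tau}\in I_{\Delta}=\langle\mathbf{x}^{\sigma}:\sigma\notin\Delta\rangle$ iff $\mathbf{x}^{\sigma}$ divides $\mathbf{x}^{\tau}$ for some non-face $\sigma$, i.e. iff $\tau$ contains a non-face; since faces are closed under passing to subsets, $\tau$ contains a non-face precisely when $\tau$ itself is a non-face (take $\sigma=\tau$ for one implication, downward closure of $\Delta$ for the other). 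In particular $\Delta$ is recovered from $I_{\Delta}$ as $\{\tau:\mathbf{x}^{\tau}\notin I_{\Delta}\}$, which gives injectivity of $\Delta\mapsto I_{\Delta}$.

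For surjectivity, take an arbitrary squarefree monomial ideal $I$, with squarefree monomial generators $\mathbf{x}^{\sigma_1},\dots,\mathbf{x}^{\sigma_r}$, and set $\Delta:=\{\tau:\mathbf{x}^{\tau}\notin I\}$. This $\Delta$ is a simplicial complex: if $\tau'\subseteq\tau$ and $\mathbf{x}^{\tau}\notin I$, then no $\mathbf{x}^{\sigma_i}$ divides $\mathbf{x}^{\tau}$, hence none divides its divisor $\mathbf{x}^{\tau'}$, so $\mathbf{x}^{\tau'}\notin I$ and $\tau'\in\Delta$. By the dictionary, $I_{\Delta}$ and $I$ contain exactly the same squarefree monomials; since each of these ideals is generated by the squarefree monomials it contains (for $I$, the given generators $\mathbf{x}^{\sigma_i}$ are themselves squarefree), it follows that $I_{\Delta}=I$. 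This completes the proof that $\Delta\mapsto I_{\Delta}$ is a bijection.

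For the intersection formula I would characterize membership of an arbitrary monomial $m$ in terms of its support $S=\{v_i:x_i\mid m\}$. We have $m\in\mathfrak{m}^{\overline{\sigma}}=\langle x_i:v_i\notin\sigma\rangle$ iff some $x_i$ with $v_i\notin\sigma$ divides $m$, i.e. iff $S\not\subseteq\sigma$. Hence $m\in\bigcap_{\sigma\in\Delta}\mathfrak{m}^{\overline{\sigma}}$ iff $S$ is contained in no face of $\Delta$, which by downward closure of $\Delta$ is equivalent to $S\notin\Delta$. On the other hand, arguing exactly as in the dictionary, $m\in I_{\Delta}$ iff $S$ contains some non-face, which (downward closure again) is equivalent to $S\notin\Delta$. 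So $I_{\Delta}$ and $\bigcap_{\sigma\in\Delta}\mathfrak{m}^{\overline{\sigma}}$ contain exactly the same monomials and are therefore equal; one may even restrict the intersection to the facets of $\Delta$, since $\sigma\subseteq\sigma'$ forces $\mathfrak{m}^{\overline{\sigma}}\supseteq\mathfrak{m}^{\overline{\sigma'}}$.

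None of these steps presents a genuine obstacle: this is the standard dictionary between squarefree monomial ideals and simplicial complexes, and every step is a direct unwinding of the definitions of $I_{\Delta}$ and $\mathfrak{m}^{\overline{\sigma}}$. The only point demanding a little care is the bookkeeping lemma that a monomial ideal — in particular a squarefree one — is pinned down by the monomials it contains, together with the observation that a squarefree monomial ideal is generated by the squarefree monomials lying in it; once these are in hand, the rest follows mechanically.
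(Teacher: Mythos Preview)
Your proof is correct and is the standard argument for the Stanley-Reisner correspondence. The paper does not actually prove this theorem: it is stated in the background section as a known result, with the references \cite{MS} and \cite{St} given at the start of the subsection, and is immediately followed by an illustrative example rather than a proof. What you have written is essentially the textbook argument one finds in those sources, so there is nothing to compare against in the paper itself.
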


\begin{example}\label{e;SRexample}
Let $\Delta\subseteq \{v_1,\dots,v_6\}$ be the simplicial complex with facets $\{v_3,v_4\}$, $\{v_1,v_2\}$, $\{v_1,v_3\}$, $\{v_2,v_3,v_5\}$. The geometric realization of $\Delta$ is shown in Figure \ref{fig;geomDelta}.

\begin{figure}[!h]
\begin{center}
\includegraphics[scale = 0.45]{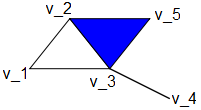}
\caption{The geometric realization of $\Delta$ in Example \ref{e;SRexample}.}
\label{fig;geomDelta}
\end{center}
\end{figure}
The Stanley-Reisner ideal of $\Delta$ is generated by the minimal non-faces of $\Delta$. Thus, \[I_\Delta = \langle x_6, x_1x_4, x_1x_5, x_2x_4, x_4x_5, x_1x_2x_3 \rangle.\]
On the other hand, given $I_\Delta$, we can decompose to obtain \[I_{\Delta} = \langle x_1,x_2,x_5,x_6\rangle\cap \langle x_3,x_4,x_5,x_6\rangle \cap \langle x_2,x_4,x_5,x_6\rangle\cap\langle x_1,x_4,x_6\rangle.\] Thus, the facets of $\Delta$ are $\{v_3,v_4\}$, $\{v_1,v_2\}$, $\{v_1,v_3\}$, and $\{v_2,v_3,v_5\}$.
\end{example}

Sometimes properties of a simplicial complex translate to properties of the associated Stanley-Reisner scheme. 

\begin{definition}
A simplicial complex $\Delta$ is \textbf{Cohen-Macaulay} if its Stanley-Reisner ring is Cohen-Macaulay.
\end{definition}

\begin{theorem}(Hochster)
A shellable simplicial complex is Cohen-Macaulay.
\end{theorem}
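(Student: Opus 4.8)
The plan is to deduce the result from \emph{Reisner's criterion} (see \cite{MS}, \cite{St}): a simplicial complex $\Delta$ on $\{v_1,\dots,v_n\}$ is Cohen--Macaulay over $k$ if and only if for every face $\sigma\in\Delta$ — including $\sigma=\emptyset$, for which $\textrm{link}_\Delta(\emptyset)=\Delta$ — one has $\widetilde{H}_i(\textrm{link}_\Delta(\sigma);k)=0$ for every $i<\dim\textrm{link}_\Delta(\sigma)$, where $\widetilde{H}$ denotes reduced simplicial homology. Since ``shellable'' is defined here to entail purity, I may assume $\Delta$ is pure of dimension $d$; then every face lies in a facet, so each $\textrm{link}_\Delta(\sigma)$ is pure of dimension $d-\#\sigma$. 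Thus it suffices to establish: \textbf{(A)} the link of any face of a pure shellable complex is again pure and shellable; and \textbf{(B)} a pure shellable complex of dimension $e$ satisfies $\widetilde{H}_i(\,\cdot\,;k)=0$ for all $i<e$. Granting (A) and (B), for any $\sigma\in\Delta$ the complex $\textrm{link}_\Delta(\sigma)$ is pure shellable, hence has vanishing reduced homology below its top dimension, and Reisner's criterion yields the theorem.

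For (A): given a shelling $\sigma_1,\dots,\sigma_s$ of $\Delta$ and a face $F$, the facets of $\textrm{link}_\Delta(F)$ are precisely the sets $\sigma_i\setminus F$ over those indices $i$ with $F\subseteq\sigma_i$, ordered by the induced order on indices. The shelling condition is inherited: intersecting $\overline{\sigma_i\setminus F}$ with the union of the earlier $\overline{\sigma_j\setminus F}$ is the ``link at $F$'' of $\overline{\sigma_i}\cap\bigl(\bigcup_{j<i,\,F\subseteq\sigma_j}\overline{\sigma_j}\bigr)$, and passing to a link carries a pure complex of a given dimension to a pure complex of the correspondingly lower dimension. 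This is the standard fact that shellability is inherited by links (see \cite{St}), and I would verify it directly at the level of facets.

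For (B): I argue by induction on the dimension $e$, and, for fixed $e$, by a secondary induction on the number $s$ of facets; the case $e=0$ is immediate (a pure $0$-dimensional complex is a nonempty finite set of points, so $\widetilde{H}_{-1}=0$) and the case $s=1$ is a single simplex, which has no reduced homology. For the inductive step write the complex as $\Delta'\cup\overline{\sigma_s}$, where $\Delta'$ is the pure shellable complex on the first $s-1$ facets, and set $\Gamma=\Delta'\cap\overline{\sigma_s}$. By the shelling condition $\Gamma$ is pure of dimension $e-1$ and nonempty, and being a subcomplex of the boundary sphere $\partial\overline{\sigma_s}$ it is a union of facets of that $(e-1)$-sphere. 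An elementary lemma — any pure $(e-1)$-dimensional subcomplex of the boundary of an $e$-simplex is shellable, since two facets of that boundary always meet in a codimension-one face, so \emph{every} ordering of its facets is a shelling — shows $\Gamma$ is shellable, whence $\widetilde{H}_i(\Gamma;k)=0$ for $i<e-1$ by the outer induction hypothesis. Then the reduced Mayer--Vietoris sequence
\[
\cdots\longrightarrow\widetilde{H}_i(\Gamma)\longrightarrow\widetilde{H}_i(\Delta')\oplus\widetilde{H}_i(\overline{\sigma_s})\longrightarrow\widetilde{H}_i(\Delta'\cup\overline{\sigma_s})\longrightarrow\widetilde{H}_{i-1}(\Gamma)\longrightarrow\cdots
\]
together with $\widetilde{H}_*(\overline{\sigma_s})=0$ (a simplex is contractible), $\widetilde{H}_i(\Delta')=0$ for $i<e$ (secondary induction), and $\widetilde{H}_{i-1}(\Gamma)=0$ for $i-1<e-1$, forces $\widetilde{H}_i(\Delta'\cup\overline{\sigma_s})=0$ for all $i<e$, completing the induction.

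The main obstacle is the homological bookkeeping in (B): interlocking the two inductions correctly, and in particular the case where $\Gamma$ is the \emph{entire} boundary sphere $\partial\overline{\sigma_s}$, so that $\widetilde{H}_{e-1}(\Gamma)\neq 0$. One must check this class does not survive into $\widetilde{H}_{e-1}(\Delta'\cup\overline{\sigma_s})$; it does not, because in the Mayer--Vietoris sequence that group sits between $\widetilde{H}_{e-1}(\Delta')\oplus\widetilde{H}_{e-1}(\overline{\sigma_s})=0$ and $\widetilde{H}_{e-2}(\Gamma)=0$, so a nonzero $\widetilde{H}_{e-1}(\Gamma)$ only influences degree $e$, which is outside the range at issue. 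The auxiliary lemma on shellability of codimension-one subcomplexes of a simplex boundary is routine but must be isolated, since it is exactly what lets the induction on dimension close. (A more commutative-algebraic alternative — using the shelling to build an explicit partition of the face poset of $\Delta$ and deducing directly that $R_\Delta$ is a free module over the subring generated by a linear system of parameters — is also viable and of comparable length.)
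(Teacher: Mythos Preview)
The paper does not actually prove this theorem: it is stated without proof in the background section on Stanley--Reisner schemes, as a known result attributed to Hochster (with general references to \cite{MS} and \cite{St} for the subsection). So there is no ``paper's own proof'' to compare against.

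Your argument is a correct and standard route. The reduction to Reisner's criterion is the right move, and your two ingredients are sound: (A) links in a pure shellable complex inherit a shelling in the induced order (your sketch is terse but the key step---that the codimension-one face $\sigma_{i_k}\setminus\{w\}$ produced by the ambient shelling must have $w\notin F$ and must lie in a facet containing $F$---goes through); and (B) the Mayer--Vietoris induction is clean, with the auxiliary observation that any pure top-dimensional subcomplex of a simplex boundary is shellable (any ordering works) allowing the dimension induction to apply to $\Gamma$. Your handling of the edge case $\Gamma=\partial\overline{\sigma_s}$ is also correct: the potentially nonzero $\widetilde{H}_{e-1}(\Gamma)$ only feeds into $\widetilde{H}_e$ of the union, which is outside the range you need to control.

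One small remark: the alternative you mention at the end---building a monomial $k$-basis of $R_\Delta$ from the shelling that exhibits it as free over a polynomial subring on a linear system of parameters---is closer in spirit to how this is often presented in the combinatorial commutative algebra literature (and avoids Reisner's criterion entirely), but your homological approach is equally valid and arguably more transparent.
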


In particular, a vertex-decomposable simplicial complex is Cohen-Macaulay. We make use of this later.

\begin{remark}
One can view the geometric realization of the simplicial complex $\Delta$ as the ``moment polytope'' of $\textrm{Proj}(R_{\Delta})$ (which needn't actually be a polytope since $R_{\Delta}$ isn't a domain).
\end{remark}


\chapter{Compatibly split subvarieties of the Hilbert scheme of points in the plane}

\section{Preliminary remarks}\label{s;premarks}

Let $k$ be an algebraically closed field of characteristic $p>2$. Fix the $T^2$-invariant Frobenius splitting of $\Hilbn$ induced from the standard splitting of $\bA^2_k$ (see Subsection \ref{splitHilb}). In this section, we begin investigating the question, ``What are all compatibly split subvarieties of $\Hilbn$?'' We note that the $n=1$ case is trivial as $\Hilb^1(\bA^2_k)\cong \bA^2_k$ with the standard splitting. Indeed, as we saw in Example \ref{e;stdsplitting}, there are four compatibly split subvarieties of $\bA^2_k$ with the standard splitting: $\bA^2_k$, the $y$-axis, the $x$-axis, and the origin. Identifying $(x,y)\in \bA^2_k$ with the location of the one point in $\Hilb^1(\bA^2_k)$, we see that the compatibly split subvarieties of $\Hilb^1(\bA^2_k)$ can be described by ``the point is in $\bA^2_k$'', ``the point is on the $y$-axis'', ``the point is on the $x$-axis'', and ``the point is at the origin''.

\begin{remark} 
Throughout the rest of the thesis, when referring to closed points of $\Hilbn$, we usually mean colength-$n$ ideals of $k[x,y]$, rather than the corresponding length-$n$ subschemes of $\bA^2_k$.
\end{remark}

\subsection{A partial description of all compatibly split subvarieties}\label{s;partDescription}

In this subsection, we gain some intuition into our problem through Proposition \ref{p;intuition}. It asserts that finding all compatibly split subvarieties of $\Hilbn$ amounts to finding all compatibly split subvarieties $Z\subseteq \Hilb^{m}(\bA^2_k)$, $m\leq n$, with the property that $Z\subseteq \Hilb^m_0(\bA^2_k)$. (Recall that $\Hilb^m_0(\bA^2_k)$, the punctual Hilbert scheme, parametrizes length-$m$ subschemes supported at the origin.)

\begin{proposition}\label{p;intuition}
For $p>n!$, $Y\subseteq \Hilbn$ is a compatibly split subvariety if and only if $Y$ is the closure, in $\Hilbn$, of the (set-theoretic) image of a map 
\[i_{r,s,t,Z}:\Hilb^r(\textrm{punctured x-axis})\times \Hilb^s(\textrm{punctured y-axis})\times \Hilb^t(\bA^2_k\setminus \{xy=0\})\times Z\rightarrow \Hilbn\]
\[(I_1,I_2,I_3,I_4)\mapsto I_1\cap I_2\cap I_3\cap I_4\] 
where $Y$ is given the reduced induced subscheme structure and where 
\begin{enumerate}
\item each of the punctured axes have been punctured at the origin, 
\item $r,s,t$ are non-negative integers with $r+s+t\leq n$, and
\item $Z\subseteq \Hilb^{n-r-s-t}(\bA^2_k)$ is a compatibly split (irreducible) subvariety contained in $\Hilb_0^{n-r-s-t}(\bA_k^2)$.
\end{enumerate} 
\end{proposition}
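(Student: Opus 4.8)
The plan is to realize the map $i_{r,s,t,Z}$ as arising from a finite surjection of a product onto its image, and then combine the product-splitting machinery (Example~\ref{l;splitprojection} and Remark~\ref{r;prodSplit}) with Speyer's theorem (Theorem~\ref{t;des}) and Proposition~\ref{p;splitmaps} to show the image is compatibly split. The key geometric input is a ``gluing'' morphism: given pairwise disjoint length-$r$, length-$s$, length-$t$, length-$u$ subschemes (with $u = n-r-s-t$) supported on the punctured $x$-axis, punctured $y$-axis, torus $\bA^2_k\setminus\{xy=0\}$, and at the origin respectively, their intersection $I_1\cap I_2\cap I_3\cap I_4$ has colength $n$ (since the supports are disjoint, the quotient is the product of the four quotients), so we get a map
\[
\mu:\ H^r(\text{x-axis}^\circ)\times H^s(\text{y-axis}^\circ)\times H^t(\text{torus})\times Z\ \longrightarrow\ \Hilbn.
\]
First I would check this is a morphism (it is: on the open locus where all supports are pairwise disjoint, intersection of ideals is flat in families), and that it is finite onto its image $Y$ — injectivity follows because from $I_1\cap\cdots\cap I_4$ one recovers each $I_j$ as the localization/primary component supported on the respective stratum, and properness is inherited from properness of the Hilbert–Chow morphism plus the fact that the source is a product of (quasi-)projective pieces over the Chow data. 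Actually, to avoid properness subtleties on the non-proper punctured axes, the cleanest route is to take closures: let $\bar Y$ be the closure of $\operatorname{im}(\mu)$; one shows $\mu$ extends to a finite surjection from (the closure of) a product of Hilbert schemes of the \emph{closed} axes and $\bA^2_k$, intersected appropriately, onto $\bar Y$.

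Second, I would establish that the source is Frobenius split compatibly with everything we need. The Hilbert scheme of the $x$-axis $\bA^1_k$ (with its standard splitting restricted from $\{xy=0\}\subset\bA^2_k$) is just $\operatorname{Sym}^r(\bA^1_k)\cong\bA^r_k$, and by Example~\ref{p;finitemaps}(1) (valid since $p>n!\geq r!$) it inherits a splitting from $(\bA^1_k)^r$; the ``punctured'' version is an open subscheme, so it inherits a splitting by Proposition~\ref{p;openSplit}. Similarly for the $y$-axis and for $\Hilb^t(\bA^2_k\setminus\{xy=0\})$, whose every point, being supported away from $\{xy=0\}$, is built from the torus factor of the standard splitting. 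By hypothesis $Z$ is compatibly split in $\Hilb^u(\bA^2_k)$, hence split with the induced splitting. By Remark~\ref{r;prodSplit}, the product of all four is Frobenius split, and moreover any product of compatibly split pieces is compatibly split. Then Proposition~\ref{p;splitmaps}(2) (or, where the gluing map is merely finite rather than everywhere-defined and we must pass through normalizations, Theorem~\ref{t;des} for $p>N$) shows the scheme-theoretic image $Y = \overline{\operatorname{im}(\mu)}$ is compatibly split in $\Hilbn$, giving the ``if'' direction. One must check the splitting on $\Hilbn$ is \emph{the} one compatible with $\mu$; this is where Proposition~\ref{p;splitmaps}(1) (uniqueness) plus compatibility of the product splitting on the torus patch with the global Kumar–Thomsen splitting is invoked — this is essentially a local computation on the open locus where all $n$ points are distinct, where $\Hilbn$, the Chow variety, and all these products agree.

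Third, for the ``only if'' direction, I would take an arbitrary compatibly split $Y\subseteq\Hilbn$ and use the Hilbert–Chow morphism $\Psi:\Hilbn\to S^n(\bA^2_k)$, which is split (it is crepant, and pushing forward the splitting; compare Subsection~\ref{splitHilb}). The image $\Psi(Y)$ is compatibly split in $S^n(\bA^2_k)$; the compatibly split subvarieties of $S^n(\bA^2_k) = (\bA^2_k)^n/S_n$ (with $p>n!$) are images of coordinate-type strata of $(\bA^2_k)^n$ under Proposition~\ref{p;finitemaps} and Example~\ref{e;stdsplitting}, i.e. they are indexed by how many of the $n$ points lie on the $x$-axis, the $y$-axis, or at the origin. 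This pins down integers $r,s,t$ and forces the generic point of $Y$ to correspond to subschemes that split off as a disjoint union: $r$ reduced points on the punctured $x$-axis, $s$ on the punctured $y$-axis, $t$ generic points off $\{xy=0\}$, and a remaining length-$u$ piece at the origin. The closure of the locus of the length-$u$ piece, projected off the product (using that projections $\pi_i$ off a product are split, Example~\ref{l;splitprojection}), is a compatibly split subvariety $Z\subseteq\Hilb^u_0(\bA^2_k)$; and $Y$ is forced to equal $\overline{\operatorname{im}(i_{r,s,t,Z})}$ because both are irreducible of the same dimension with the same generic point. The main obstacle will be making this last matching rigorous: controlling the \emph{scheme} structure (reducedness is free since everything is Frobenius split, so really only set-theoretic equality is needed) and, more seriously, verifying that the ``glued'' subscheme genuinely varies in a compatibly split family — i.e.\ that $Z$ really is all of the fibre data and is itself compatibly split, which requires knowing that the map $\Hilbn\dashrightarrow \Hilb^u(\bA^2_k)$ recording the origin-component is, on the relevant stratum, a split morphism (again via Proposition~\ref{p;splitmaps} after restricting to a torus-translate neighbourhood). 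Handling the non-properness of the punctured axes cleanly, and the bookkeeping of which auxiliary $N$ from Speyer's theorem gets absorbed into the bound $p>n!$, are the technical points that need care.
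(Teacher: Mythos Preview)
Your overall architecture matches the paper's: build the gluing map, show the source carries the product splitting, push forward for the ``if'' direction; for ``only if'', use the Hilbert--Chow morphism together with Speyer's theorem (this is exactly where the paper spends the hypothesis $p>n!$) to pin down the compatibly split subvarieties of $S^n(\bA^2_k)$ as images of coordinate subspaces, hence fix $r,s,t$. Two points deserve comment.

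First, the paper does not attempt to show that the full map $\mu$ is a split morphism all at once. Instead it proves (Lemma~\ref{l;splitmapHilb}) only that the one-point gluing map $j_{n,1}:U\to\Hilb^{n+1}(\bA^2_k)$ is split, by restricting to the locus where all points are distinct, observing that there $j_{n,1}$ is \'etale of degree $n+1<p$, and invoking Lemma~\ref{l;nonconstnonvan} to match the two candidate $(p-1)^{\mathrm{st}}$-power splittings. The ``if'' direction then goes by induction on $n$, adding one point at a time. Your all-at-once approach is not wrong in spirit, but you have not actually established that $\mu$ is split, and your parenthetical appeal to Theorem~\ref{t;des} is misplaced: Speyer's theorem controls \emph{preimages} under finite maps, not images; for images you genuinely need Proposition~\ref{p;splitmaps}(2), which requires the morphism to be split.

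Second, and more substantively, your extraction of $Z$ in the ``only if'' direction has a real gap. You want a split projection from (an open piece of) $Y$ to $\Hilb^u(\bA^2_k)$, but you have not produced one; the rational map ``record the origin-component'' is not obviously split, and the factors you would project off are non-proper. The paper's device here is different and is the key idea you are missing: embed $\Hilb^{n+1}(\bA^2_k)$ into $\Hilb^{n+1}(\bP^2_k)$, take the closure $\overline{Y}$, and intersect with the boundary component $D_3$ (the divisor where one point lies on the line at infinity). This intersection is compatibly split, and now one pulls back along the finite split map $\tilde{j}_{n,1}$ from (an open subset of) $\Hilb^n(\bP^2_k)\times\Hilb^1(\bP^2_k)$; the preimage is a product $S\times\Hilb^1(\{z=0\})$, and projecting (which \emph{is} split, Example~\ref{l;splitprojection}) yields a compatibly split $S\subseteq\Hilb^n(\bA^2_k)$. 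Induction on $n$ finishes. In short, the paper peels off one point at a time by sending it to infinity, rather than trying to isolate the entire origin-supported piece in one step.
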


\begin{lemma}\label{l;morph}
Let $U := \{(I,J)\in\Hilb^a(\bA^2_k)\times\Hilb^b(\bA^2_k)~|~I+J = k[x,y]\}$. Then $U$ is an open subscheme of $\Hilb^a(\bA^2_k)\times \Hilb^b(\bA^2_k)$, and \[j_{a,b}:U\rightarrow \Hilb^{a+b}(\bA^2_k),~~(I,J)\mapsto I\cap J\] is a well-defined morphism. Also, $j_{a,b}(U)$ is open in $\Hilb^{a+b}(\bA^2_k)$.
\end{lemma}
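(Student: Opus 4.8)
The plan is to treat the three assertions separately, in each case reducing to standard facts about the universal families $\mathcal{Z}_a\subseteq \Hilb^a(\bA^2_k)\times\bA^2_k$ and $\mathcal{Z}_b\subseteq\Hilb^b(\bA^2_k)\times\bA^2_k$, which are finite and flat over the respective Hilbert schemes. For the openness of $U$, I would work on $P:=\Hilb^a(\bA^2_k)\times\Hilb^b(\bA^2_k)\times\bA^2_k$ and pull the ideal sheaves of $\mathcal{Z}_a$ and $\mathcal{Z}_b$ back to ideal sheaves $\mathcal{I}_a,\mathcal{I}_b\subseteq\mathcal{O}_P$. The coherent sheaf $\mathcal{O}_P/(\mathcal{I}_a+\mathcal{I}_b)$ is the structure sheaf of a closed subscheme of (the pullback of) $\mathcal{Z}_a$, hence finite over $\Hilb^a(\bA^2_k)\times\Hilb^b(\bA^2_k)$; so its pushforward $\mathcal{F}$ there is coherent, and $(I,J)\mapsto\dim_k\big(\mathcal{F}\otimes k(I,J)\big)=\dim_k k[x,y]/(I+J)$ is upper semicontinuous. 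Since $I+J=k[x,y]$ precisely when this rank is $0$, the set $U$ is the complement of the closed locus where the rank is $\geq 1$, hence open.

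For the claim that $j_{a,b}$ is a well-defined morphism, I would first show $\mathcal{I}_a+\mathcal{I}_b=\mathcal{O}_{U\times\bA^2_k}$ after restriction to $U$: the closed subscheme $V(\mathcal{I}_a+\mathcal{I}_b)\subseteq\mathcal{Z}_a|_U$ is finite over $U$ and, by the very definition of $U$, has empty fibre over every point of $U$, so it is empty. Granted this, the exact sequence
\[0\longrightarrow\mathcal{O}/(\mathcal{I}_a\cap\mathcal{I}_b)\longrightarrow\mathcal{O}/\mathcal{I}_a\ \oplus\ \mathcal{O}/\mathcal{I}_b\longrightarrow\mathcal{O}/(\mathcal{I}_a+\mathcal{I}_b)\longrightarrow0\]
of sheaves on $U\times\bA^2_k$ collapses to $\mathcal{O}_{U\times\bA^2_k}/(\mathcal{I}_a\cap\mathcal{I}_b)\cong\mathcal{O}/\mathcal{I}_a\oplus\mathcal{O}/\mathcal{I}_b$. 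The right-hand side is finite and flat over $U$ of constant degree $a+b$ (it is the direct sum of the pullbacks of $\mathcal{O}_{\mathcal{Z}_a}$ and $\mathcal{O}_{\mathcal{Z}_b}$), so $\mathcal{Z}:=V(\mathcal{I}_a\cap\mathcal{I}_b)\subseteq U\times\bA^2_k$ is a flat family of length-$(a+b)$ subschemes of $\bA^2_k$; by the defining universal property of $\Hilb^{a+b}(\bA^2_k)$ it is classified by a morphism $j_{a,b}\colon U\to\Hilb^{a+b}(\bA^2_k)$. To identify $j_{a,b}(I,J)$ with $I\cap J$, note that restriction of ideals always carries $\mathcal{I}_a\cap\mathcal{I}_b$ into $I\cap J$, while forming quotients commutes with base change, so the fibre of $\mathcal{O}/(\mathcal{I}_a\cap\mathcal{I}_b)$ at $(I,J)$ is $k[x,y]/\overline{\mathcal{I}_a\cap\mathcal{I}_b}$; since this has dimension $a+b=\dim_k k[x,y]/(I\cap J)$ (the Chinese Remainder Theorem applies because $I+J=k[x,y]$) and $\overline{\mathcal{I}_a\cap\mathcal{I}_b}\subseteq I\cap J$, the two ideals agree.

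For the openness of $j_{a,b}(U)$, I would use ``miracle flatness'': $U$ is a nonempty open in the smooth irreducible variety $\Hilb^a(\bA^2_k)\times\Hilb^b(\bA^2_k)$, hence smooth (so Cohen--Macaulay) and irreducible of dimension $2(a+b)$, while $\Hilb^{a+b}(\bA^2_k)$ is smooth (so regular) of the same dimension. For $K\in\Hilb^{a+b}(\bA^2_k)$, a point of $j_{a,b}^{-1}(K)$ is a pair $(I,J)$ with $I+J=k[x,y]$ and $I\cap J=K$, equivalently a splitting $k[x,y]/K\cong k[x,y]/I\times k[x,y]/J$, i.e. an idempotent of the Artinian ring $k[x,y]/K$ cutting out a factor of dimension $b$; as that ring has only finitely many idempotents, every fibre of $j_{a,b}$ is finite, hence of dimension $0=\dim U-\dim\Hilb^{a+b}(\bA^2_k)$. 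A morphism from a Cohen--Macaulay scheme to a regular scheme all of whose fibres have the expected dimension is flat, and a flat morphism of finite type between Noetherian schemes is open; therefore $j_{a,b}(U)$ is open.

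The routine parts here are the openness of $U$ and of the image, which use only semicontinuity and the flatness criterion. The step I expect to demand the most care is the middle one: establishing $\mathcal{I}_a+\mathcal{I}_b=\mathcal{O}_{U\times\bA^2_k}$ as sheaves on all of $U\times\bA^2_k$ (not merely fibrewise, which is exactly where ordinary flat families can fail to union to a flat family), and then checking that the family $\mathcal{Z}$ so produced really has fibre $V(I\cap J)$ --- both of which are the places where the hypothesis $I+J=k[x,y]$ is genuinely used.
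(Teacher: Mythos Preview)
Your proof is correct and takes a genuinely different route from the paper's. The paper argues all three assertions through the Hilbert--Chow morphisms and symmetric products: it writes down the closed locus in $(\bA^2_k)^a\times(\bA^2_k)^b$ where a point from the first tuple collides with one from the second, passes to $S^a(\bA^2_k)\times S^b(\bA^2_k)$, and takes the preimage in $\Hilb^a\times\Hilb^b$ to exhibit the complement of $U$ as closed; similarly, it pushes this locus forward along the finite map $S^a\times S^b\to S^{a+b}$ and pulls back to $\Hilb^{a+b}$ to identify the complement of $j_{a,b}(U)$. For the morphism claim, the paper works patch-by-patch on $U_\lambda\times U_{\lambda'}$ and $U_{\lambda''}$, using $I\cap J = IJ$ to write the target coordinates as rational functions of the source coordinates, then argues regularity directly.

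Your argument, by contrast, uses the universal families and the representability of $\Hilb^{a+b}$: semicontinuity of fibre rank for openness of $U$, the Chinese Remainder isomorphism $\mathcal{O}/(\mathcal{I}_a\cap\mathcal{I}_b)\cong\mathcal{O}/\mathcal{I}_a\oplus\mathcal{O}/\mathcal{I}_b$ to produce a flat family classified by $j_{a,b}$, and miracle flatness (CM source, regular target, zero-dimensional fibres) for openness of the image. This is cleaner and more portable --- it would work verbatim for $\Hilb^n$ of any smooth surface, whereas the paper's argument leans on the explicit symmetric-product description of $\bA^2_k$. On the other hand, the paper's approach names the closed complements explicitly as preimages of incidence loci in the Chow varieties, which is in keeping with the concrete, coordinate-based style used throughout the thesis.
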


\begin{proof}
Let $R =k[x_1,y_1,\dots,x_a,y_a,u_1,v_1,\dots,u_b,v_b]$ and consider $\Spec R = (\bA^2_k)^a\times(\bA^2_k)^b$. Let 
\begin{displaymath}
I = \bigcap_{1\leq i\leq a \atop 1\leq j\leq b}\langle x_i-u_j, y_i-v_j\rangle.
\end{displaymath}
The product of symmetric groups $S_a\times S_b$ acts on $(\bA^2_k)^a \times(\bA^2_k)^b$ by 
\[(\sigma,\tau)\cdot(x_1,y_1,\dots,x_a,y_a,u_1,v_1,\dots,u_b,v_b) = (x_{\sigma(1)},y_{\sigma(1)},\dots,x_{\sigma(a)},y_{\sigma(a)},u_{\tau(1)},v_{\tau(1)},\dots,u_{\tau(b)},v_{\tau(b)}).\]
The quotient $Y = \Spec((R/I)^{S_a\times S_b})$ is a closed subvariety of $S^a(\bA^2_k)\times S^b(\bA^2_k)$. Let $Y'$ denote the preimage of $Y$ (under the product of the appropriate Hilbert-Chow morphisms) in $\Hilb^a(\bA^2_k)\times \Hilb^b(\bA^2_k)$. $U$ is the complement of $Y'$ and so is an open subscheme of $\Hilb^a(\bA^2_k)\times\Hilb^b(\bA^2_k)$. 

Next notice that, since $I$ and $J$ are coprime, $I\cap J$ is an element of $\Hilb^{a+b}(\bA^2_k)$. Therefore $j_{a,b}$ is well-defined as a map of sets. We now show that $j_{a,b}(U)$ is an open subscheme of $\Hilb^{a+b}(\bA^2_k)$. To begin, let $Y\subseteq S^a(\bA^2_k)\times S^b(\bA^2_k)$ be as above. Because $\pi: S^a(\bA^2_k)\times S^b(\bA^2_k)\rightarrow S^{a+b}(\bA^2_k)$ is a finite morphism, $\pi(Y)$ is a closed subscheme of $S^{a+b}(\bA^2_k)$. Its complement $S^{a+b}(\bA^2_k)\setminus \pi(Y)$ is open and thus the preimage of $S^{a+b}(\bA^2_k)\setminus \pi(Y)$ (under the appropriate Hilbert-Chow morphism) is open in $\Hilb^{a+b}(\bA^2_k)$. This preimage is $j_{a,b}(U)$ and so $j_{a,b}(U)$ is an open subscheme of $\Hilb^{a+b}(\bA^2_k)$. 

Finally, we show that $j_{a,b}$ is a morphism by finding an open cover of $U$ such that the restriction of $j_{a,b}$ to each set in the open cover is a morphism. 

Suppose that $I\in U_{\lambda}\subseteq \Hilb^a(\bA^2_k)$ and $J\in U_{\lambda'}\subseteq \Hilb^b(\bA^2_k)$. Then, $I$ is generated by \[\{x^ry^s-\sum_{x^hy^l\notin\lambda}c^{r,s}_{h,l}x^hy^l~|~x^ry^s\in \lambda\},\] and $J$ is generated by \[\{x^ry^s-\sum_{x^hy^l\notin\lambda'}d^{r,s}_{h,l}x^hy^l~|~x^ry^s\in\lambda'\}.\] Suppose that $I\cap J\in U_{\lambda''}\subseteq \Hilb^{a+b}(\bA^2_k)$. Then $I\cap J$ is generated by \[\{x^ry^s-\sum_{x^hy^l\notin\lambda''}e^{r,s}_{h,l}x^hy^l~|~x^ry^s\in\lambda''\}.\] Since $I$ and $J$ are coprime, $I\cap J = IJ$ and we see that each $e^{r,s}_{h,l}$ is a rational function in the various $c^{r,s}_{h,l}$ and the various $d^{r,s}_{h,l}$. Suppose $e^{r_0,s_0}_{h_0,l_0} = f/g$ for some polynomials $f$ and $g$ in the various $c^{r,s}_{h,l}$ and $d^{r,s}_{h,l}$. Then, \[gx^{r_0}y^{s_0}-fx^{h_0}y^{l_0}-\sum_{\begin{subarray}{c} x^hy^l\notin\lambda''\\(h,l)\neq(h_0,l_0)\end{subarray}}ge^{r,s}_{h,l}x^hy^l\] is an element of $IJ$. If $g$ ever vanishes, then $IJ\notin U_{\lambda''}$. Thus, $f/g$ is a regular function and $j_{a,b}$ restricted to the open set $U\cap (U_{\lambda}\times U_{\lambda'})\cap j_{a,b}^{-1}(j_{a,b}(U)\cap U_{\lambda''})$ is a morphism. Sets of this form (i.e. of the form $U\cap (U_{\lambda}\times U_{\lambda'})\cap j_{a,b}^{-1}(j_{a,b}(U)\cap U_{\lambda''})$) cover $U$ and so we see that $j_{a,b}$ is a morphism.
\end{proof}

\begin{remarks}
\begin{enumerate}
\item Lemma \ref{l;morph} implies that the set-theoretically defined map $i_{r,s,t,Z}$ in Proposition \ref{p;intuition} is a morphism of schemes. 
\item Since each of the factors in the domain of $i_{r,s,t,Z}$ are irreducible, so is the closure of the image. 
\end{enumerate}
\end{remarks}

\begin{lemma}\label{l;splitmapHilb}
Let $U = \{(I,J)\in\Hilb^n(\bA^2_k)\times\Hilb^1(\bA^2_k)~|~I+J = k[x,y]\}$ and suppose that $p>n+1$. Let $j_{n,1}:U\rightarrow \Hilb^{n+1}(\bA^2_k)$ be the morphism defined in Lemma \ref{l;morph}. Because $j_{n,1}(U)$ is an open subscheme of $\Hilb^{n+1}(\bA^2_k)$, it has an induced splitting. With respect to this induced splitting, $j_{n,1}:U\rightarrow j_{n,1}(U)$ is a split morphism. 
\end{lemma}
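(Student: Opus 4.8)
The plan is to reduce the statement to a computation on a single open patch, using Lemma~\ref{l;openSplitMorph} (the ``split morphism can be checked on an open subscheme'' lemma). First I would pick a convenient patch: let $\lambda = \langle x, y^n\rangle$ and work with $U_\lambda \subseteq \Hilb^n(\bA^2_k)$, so that $U_\lambda$ consists of ideals $I$ with $\{1,y,\dots,y^{n-1}\}$ a basis of $k[x,y]/I$ (the ``good patch'' with coordinates $c^{r,s}_{h,l}$). On the $\Hilb^1(\bA^2_k)\cong \bA^2_k$ factor I would use the standard affine chart with coordinates the location $(a,b)$ of the single point. The product $U_\lambda \times \bA^2_k$ carries the box-product splitting (Example~\ref{l;splitprojection}); since the splitting of $\Hilb^n(\bA^2_k)$ restricts to the splitting of $U_\lambda$ and the splitting of $\Hilb^1$ is standard on $\bA^2_k$, the induced splitting on the open set $U \cap (U_\lambda \times \bA^2_k)$ is this box product. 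The target $j_{n,1}(U)$ meets the corresponding good patch $U_{\mu}\subseteq \Hilb^{n+1}(\bA^2_k)$, where $\mu = \langle x, y^{n+1}\rangle$, in a dense open set, so by Lemma~\ref{l;openSplitMorph} it suffices to prove $j_{n,1}$ is split after restricting to these patches.

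Next I would write down $j_{n,1}$ explicitly in coordinates. An ideal $I\in U_\lambda$ is, by the Example in Subsection~\ref{s;opencover}, minimally generated by $y^n - g(y)$ and $x - h(y)$ for polynomials $g,h \in k[y]$ of degree $\le n-1$; an ideal $J\in \bA^2_k$ is $\langle x-a, y-b\rangle$. Coprimality of $I$ and $J$ means $b$ is not a root of $y^n - g(y)$ when $x = a$ is substituted — more precisely that $(a,b)$ does not lie on $V(I)$ — and on this open set $I\cap J = IJ$. I would compute that $IJ$, which lies in $U_\mu$ for $(a,b)$ generic, is generated by $y^{n+1} - \tilde g(y)$ and $x - \tilde h(y)$ where the coefficients of $\tilde g, \tilde h$ are explicit polynomial (in fact regular, as shown in Lemma~\ref{l;morph}) functions of the coordinates of $g$, $h$, $a$, $b$. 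This realizes $j_{n,1}$ restricted to these patches as a morphism of affine spaces (or open subsets thereof), $j^{\sharp}$ being the corresponding ring map.

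Then I would verify the split-morphism identity $j^{\sharp}\circ \phi_{\text{target}} = \phi_{\text{source}}\circ j^{\sharp}$ directly. The cleanest route: both $U_\lambda$ and $U_\mu$ are (for these particular $\lambda,\mu$, and since $n$ or $n+1\le 4$ is \emph{not} assumed, but these patches are nonetheless affine spaces — the patch $U_{\langle x, y^n\rangle}$ is isomorphic to $\bA^{2n}_k$) affine spaces, and their splittings, being the restrictions of $(p-1)^{\text{st}}$-power torus-invariant splittings, are of the form $\Tr(F^{p-1}\cdot)$ for appropriate anticanonical monomials/polynomials $F$ in the respective coordinates. I would identify the anticanonical section on each patch: on $\Hilb^n(\bA^2_k)$ the compatibly split anticanonical divisor is $\{q : \operatorname{Supp}(q)\cap\{xy=0\}\ne\emptyset\}$, and I would pull this back under $j_{n,1}$, checking that it equals the analogous divisor on $\Hilb^{n+1}$ intersected with the image, \emph{plus} possibly the locus where $j_{n,1}$ is not defined (which lies outside $U$ anyway). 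Matching the two anticanonical sections up to a $(p-1)^{\text{st}}$ power via Lemma~\ref{l;nonconstnonvan} (using that $U_\lambda\times\bA^2_k$ has no nonconstant units) then forces the splittings to be compatible, hence $j_{n,1}$ is split. Alternatively and perhaps more robustly, I would use Proposition~\ref{p;splitmaps}(1): there is \emph{at most one} splitting of the target making the map split, so it is enough to exhibit \emph{some} splitting of $j_{n,1}(U)$ compatible with the source splitting on $U$ and observe it must coincide with the induced one — and the induced splitting is a candidate because the box-product splitting on $U_\lambda\times\bA^2$, being a restriction of the genuine splitting upstairs, pushes forward along the finite-type intersection map compatibly with the Kumar--Thomsen splitting downstairs by naturality of the whole construction (the splitting of $\Hilb^m$ comes from the $S_m$-invariant standard splitting of $(\bA^2)^m$, and $I_1\cap I_2 = I_1 I_2$ on $U$ corresponds exactly to concatenating the $m=n$ and $m=1$ point configurations).

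The main obstacle I anticipate is the bookkeeping in the middle step: showing cleanly that the Kumar--Thomsen splitting on $\Hilb^{n+1}$ pulls back, under the ``union of configurations'' map $j_{n,1}$, to the box-product of the Kumar--Thomsen splitting on $\Hilb^n$ with the standard splitting on $\Hilb^1 = \bA^2_k$. The slick way is to descend everything to the Chow variety: the maps $S^n(\bA^2)\times S^1(\bA^2)\to S^{n+1}(\bA^2)$ and the product $\Hilb^n\times\Hilb^1\to\Hilb^{n+1}$ (on the coprime locus) fit into a commuting square with the Hilbert--Chow morphisms, the bottom map is visibly split for the standard splittings (it is a restriction of the finite map $(\bA^2)^{n+1}/(S_n\times S_1) \to (\bA^2)^{n+1}/S_{n+1}$, which is split when $p\nmid (n+1)$ — here $p > n+1$ — by Example~\ref{p;finitemaps}), and then Lemma~\ref{l;crep1}/crepancy plus Lemma~\ref{l;openSplitMorph} transport splitness up to the Hilbert schemes. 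Getting the crepancy and the codimension-$2$ extension arguments (as in the proof that $\Hilb^n$ is split) to interact correctly with the \emph{morphism} $j_{n,1}$, rather than just with the schemes individually, is the delicate point, but it is exactly parallel to the argument already given for the existence of the splitting of $\Hilb^n$ itself.
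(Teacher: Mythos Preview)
Your strategy—reduce via Lemma~\ref{l;openSplitMorph} to a dense open, then compare anticanonical sections via Lemma~\ref{l;nonconstnonvan}—matches the paper's. The gap is that Lemma~\ref{l;nonconstnonvan} requires \emph{both} candidate sections to be splitting sections: you know $\sigma_1^{p-1}$ is, but you never justify why the pullback $(j_{n,1})^*\sigma_2^{p-1}$ is. Your explicit-coordinate approach~(a) offers no mechanism for this, and approach~(b) defers it to the ``delicate point'' you flag without resolving.

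The paper closes this gap by choosing its open set to be $V\subseteq U$, the locus where all $n+1$ points are distinct. There $j_{n,1}|_V:V\to j_{n,1}(V)$ is finite \'etale of degree $n+1$ (finite surjective between smooth varieties, hence flat; unramified since $p>n+1$), so Lemma~\ref{l;etaleSplit} gives for free that $(j_{n,1}|_V)^*\sigma_2^{p-1}$ \emph{is} a splitting section and that equality of the two sections forces the map to be split. Then Lemma~\ref{l;nonconstnonvan} applies because $V$ has no nonconstant units (its complement in the normal variety $S^n(\bA^2_k)\times\bA^2_k$ has codimension~$\ge 2$) and the two sections vanish on the same locus. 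Your Chow-variety idea is really the same observation in disguise—on $V$ the Hilbert--Chow maps are isomorphisms, so your commuting square collapses to the finite quotient map $(\bA^2)^{n+1}/(S_n\times S_1)\to(\bA^2)^{n+1}/S_{n+1}$—but the paper's \'etaleness packaging via Lemma~\ref{l;etaleSplit} sidesteps the crepancy and codimension-$2$ extension bookkeeping entirely.
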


\begin{proof}
Let $V\subseteq U$ be the open subscheme where none of the points collide. Then, $j_{n,1}(V)$ is the open subscheme in $j_{n,1}(U)$ where none of the points collide. Give each of $V$ and $j_{n,1}(V)$ the splittings induced from $U$ and $j_{n,1}(U)$ respectively. Let $\sigma_1\in H^0(V,\omega_V^{-1})$ and $\sigma_2\in H^0(j_{n,1}(V), \omega_{j_{n,1}(V)}^{-1})$ be such that $\sigma_1^{p-1}$ and $\sigma_2^{p-1}$ determine these induced splittings. 

To see that $j_{n,1}:U\rightarrow j_{n,1}(U)$ is a split morphism, it suffices to show that $j_{n,1}|_V$ is a split morphism (see Lemma \ref{l;openSplitMorph}). To begin, note the following:
\begin{enumerate}
\item The morphism $j_{n,1}|_V:V\rightarrow j_{n,1}(V)$ is \'{e}tale. (Indeed, $j_{n,1}|_V$ is flat because it is a finite surjective morphism between smooth varieties. That $j_{n,1}|_V$ is unramified follows from the assumption that $p>n+1=\textrm{deg}(j_{n,1}|_V)$.) 
\item There are no non-constant, non-vanishing functions on $V$. (Proof: There are no non-constant, non-vanishing functions on $S^n(\bA^{2}_k)$, $S^n(\bA^2_k)$ is normal, and the complement of $V$ in $S^n(\bA^2_k)$ is codimension-$2$.)
\end{enumerate}
By 1. and Lemma \ref{l;etaleSplit}, $(j_{n,1}|_V)^*(\sigma_2^{p-1})$ is a splitting section of $V$. Furthermore, if $V$ is Frobenius split by $(j_{n,1}|_V)^*(\sigma_2^{p-1})$ and $j_{n,1}(V)$ is Frobenius split by $\sigma_2^{p-1}$, then $j_{n,1}|_V$ a split morphism. Therefore, to obtain the desired result, it suffices to show that $\sigma_1^{p-1} = (j_{n,1}|_V)^*(\sigma_2)^{p-1}$.  This equality holds because of 2. and the fact that $\sigma_1$ and $(j_{n,1}|_V)^*(\sigma_2)$ vanish in the same location (see Lemma \ref{l;nonconstnonvan}).
\end{proof}

\begin{lemma}
If $Y\subseteq \Hilbn$ is the closure of the image of $i_{r,s,t,Z}$, for some $r,s,t,Z$, then $Y$ is compatibly split. 
\end{lemma}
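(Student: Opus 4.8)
The plan is to prove the statement for all $n$ simultaneously, by induction on $r+s+t$, peeling off one reduced point at a time and pushing it forward along the split morphism $j_{n-1,1}$ of Lemma~\ref{l;splitmapHilb}. When $r+s+t=0$ the map $i_{r,s,t,Z}$ is the identity on $Z$, so $Y=Z$ is compatibly split by hypothesis; this is the base case. For the inductive step I assume $p>n$ so that Lemma~\ref{l;splitmapHilb} applies to $j_{n-1,1}\colon U\to \Hilb^n(\bA^2_k)$, where $U=\{(I',J)\in\Hilb^{n-1}(\bA^2_k)\times\Hilb^1(\bA^2_k)\colon I'+J=k[x,y]\}$. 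I will treat the case $r\geq 1$; the cases $r=0,\ s\geq 1$ and $r=s=0,\ t\geq 1$ are handled identically after replacing the $x$-axis below by the $y$-axis, respectively by all of $\bA^2_k=\Hilb^1(\bA^2_k)$. Let $Y'$ be the closure in $\Hilb^{n-1}(\bA^2_k)$ of the image of $i_{r-1,s,t,Z}$; by the inductive hypothesis $Y'$ is compatibly split.

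Next I would assemble the compatibly split data on the factors. Since $\Hilb^1(\bA^2_k)\cong\bA^2_k$ with its standard splitting (Section~\ref{s;premarks}), the $x$-axis $\{y=0\}$ is a compatibly split subvariety of $\Hilb^1(\bA^2_k)$ by Example~\ref{e;stdsplitting}; hence by Remark~\ref{r;prodSplit} the subvariety $Y'\times\{y=0\}$ is compatibly split in $\Hilb^{n-1}(\bA^2_k)\times\Hilb^1(\bA^2_k)$ with the product splitting, and by Proposition~\ref{p;openSplit} the closed subscheme $\Xi:=(Y'\times\{y=0\})\cap U$ is compatibly split in the open set $U$. By Lemma~\ref{l;morph} the image $V:=j_{n-1,1}(U)$ is open in $\Hilb^n(\bA^2_k)$, and by Lemma~\ref{l;splitmapHilb} the morphism $j_{n-1,1}\colon U\to V$ is surjective and split when $V$ is given the splitting induced from $\Hilb^n(\bA^2_k)$ (both $U$ and $V$ are integral). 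Proposition~\ref{p;splitmaps}(2) then shows that the scheme-theoretic image $S:=j_{n-1,1}(\Xi)$ is a compatibly split closed subscheme of $V$.

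It remains to identify $S$ with $Y\cap V$. Because $\Xi$ is a nonempty open subset of the irreducible variety $Y'\times\{y=0\}$, it is dense and irreducible, so $S$ is irreducible and its closure in $\Hilb^n(\bA^2_k)$ is determined by its generic point. A general point $I'$ of $Y'$ has the form $I_1'\cap I_2\cap I_3\cap I_4$ with $I_1'$ supported on $r-1$ distinct points of the punctured $x$-axis, $I_2$ (resp.\ $I_3$) supported on $s$ (resp.\ $t$) general points of the punctured $y$-axis (resp.\ off $\{xy=0\}$), and $I_4$ a general point of $Z$; intersecting with $\langle x-x_0,y\rangle$ for a general $x_0$ (automatically coprime to $I'$, with $(x_0,0)$ on the punctured $x$-axis) yields $(I_1'\cap\langle x-x_0,y\rangle)\cap I_2\cap I_3\cap I_4$, which is a general point of $Y$. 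Thus $\overline{S}=Y$, so $S=\overline{S}\cap V=Y\cap V$ as sets, and as schemes since every scheme in sight is reduced. Hence $Y\cap V$ is compatibly split in the open set $V\subseteq\Hilb^n(\bA^2_k)$ and is dense in the reduced irreducible variety $Y$, so Proposition~\ref{p;openSplit} gives that $Y$ is compatibly split in $\Hilb^n(\bA^2_k)$, completing the induction.

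The only delicate point is this final identification. One must take $V$ to be the \emph{entire} open image $j_{n-1,1}(U)$, so that the scheme-theoretic image $S$ — automatically closed in $V$ — coincides with $Y\cap V$ rather than with a proper subscheme of it; and one must check that peeling off one general reduced point of $Y$ recovers the generic point of $Y'$, which is exactly why in the case $r\geq 1$ the extra point is constrained to the $x$-axis (a general point of all of $\bA^2_k$ would instead produce the generic point of the closure of the image of $i_{r-1,s,t+1,Z}$), while in the case $r=s=0$ a general point of $\bA^2_k$ already lies off $\{xy=0\}$ and no constraint is needed. Everything else is a direct application of the split-morphism formalism (Proposition~\ref{p;splitmaps}), the product-splitting formalism (Remark~\ref{r;prodSplit}), and the open-restriction lemma (Proposition~\ref{p;openSplit}).
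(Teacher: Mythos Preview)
Your proof is correct and follows essentially the same route as the paper: peel off one reduced point, use the product splitting (Remark~\ref{r;prodSplit}) to get a compatibly split subvariety of $U$, push it forward along the split morphism $j_{n-1,1}$ (Lemma~\ref{l;splitmapHilb}, Proposition~\ref{p;splitmaps}), and then extend from the open set $V=j_{n-1,1}(U)$ to all of $\Hilb^n(\bA^2_k)$ via Proposition~\ref{p;openSplit}. The only cosmetic difference is that you index the induction by $r+s+t$ while the paper indexes by $n$; since $n-(r+s+t)$ is fixed by $Z$, these are equivalent.
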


\begin{proof}
If $r=s=t=0$, then the result is automatic. So suppose otherwise and proceed by induction on $n$. When $n=1$ the result is clear. 

Let $\phi_n$ and $\phi_1$ denote the splittings of $\Hilbn$ and $\Hilb^1(\bA^2_k)$ and suppose that $Y_1\subseteq \Hilbn$ and $Y_2\subseteq\Hilb^1(\bA^2_k)$ are each irreducible compatibly split subschemes.  Then, with respect to the splitting $\phi_n\otimes\phi_1$, $Y_1\times Y_2\subseteq \Hilbn\times \Hilb^1(\bA^2_k)$ is compatibly split (see the remark following Example \ref{l;splitprojection}). 

Now, suppose that $\textrm{dim}(Y_2)>0$ (i.e. $Y_2\cong$ x-axis, or $Y_2\cong$ y-axis, or $Y_2\cong\bA^2$). Let $U$ and $j_{n,1}$ be as in Lemma \ref{l;splitmapHilb}. Note that $(Y_1\times Y_2)\cap U\neq \emptyset$. Because $j_{n,1}$ is a split morphism, the closure of $j_{n,1}((Y_1\times Y_2)\cap U)$ in $j_{n,1}(U)$ is compatibly split in $j_{n,1}(U)$. Because $j_{n,1}(U)$ is open in $\Hilb^{n+1}(\bA^2_k)$ and has the induced splitting, the closure of $j_{n,1}((Y_1\times Y_2)\cap U)$ in $\Hilb^{n+1}(\bA^2_k)$ (with the reduced induced scheme structure) is compatibly split.

By induction, we may assume that $Y_1$ is the closure of the image of some $i_{r',s',t',Z}$. Then, the closure of $j_{n,1}((Y_1\times Y_2)\cap U)$ in $\Hilb^{n+1}(\bA^2_k)$ agrees with the closure of the image of $i_{r,s,t,Z}$, where either (i) $r=r'+1$, $s=s'$, $t=t'$, or (ii) $r=r'$, $s=s'+1$, $t=t'$, or (iii) $r=r'$, $s=s'$, $t = t'+1$. It follows that the closure of the image of $i_{r,s,t,Z}$ is compatibly split in $\Hilb^{n+1}(\bA^2_k)$. 
\end{proof}

We now complete the proof of Proposition \ref{p;intuition}. It remains to show that if $Y\subseteq \Hilbn$ is compatibly split then $Y$ must be the closure of the image of a map $i_{r,s,t,Z}$ for some $r,s,t,Z$.

\begin{proof}[Proof of Proposition \ref{p;intuition}]
Let $\pi:(\bA^2_k)^n\rightarrow S^n(\bA^2_k)$ denote the quotient map. Let $p$ be larger than $\textrm{deg}(\pi) = n!$.

By the compatibility of the splittings of $\Hilbn$ and $S^n(\bA^2_k)$, the image of any compatibly split subvariety of $\Hilbn$ is compatibly split in $S^n(\bA^2_k)$. By the compatibility of the splittings of $(\bA^2_k)^n$ and $S^n(\bA^2_k)$, and by Speyer's theorem regarding finite morphisms (see Proposition \ref{MainCase}; this is where we use $p>n!$), $Y\in S^n(\bA^2_k)$ is compatibly split if and only if $\pi^{-1}(Y)$ is compatibly split in $(\bA^2_k)^n$. 

Recall that the set of compatibly split subvarieties of $(\bA^2_k)^n$ with the standard splitting is the set of all coordinate subspaces of $(\bA^2_k)^n$. Thus, the set of compatibly split subvarieties of $S^n(\bA^2_k)$ is \[\left\{\pi(V)~ \bigg|~ V = \bigcup_{\sigma\in S_n}\sigma(S), S\textrm{ a coordinate subspace of}~ (\bA^2_k)^n\right\}\] and so any compatibly split subvariety $Y\subseteq \Hilbn$ must map by the Hilbert-Chow morphism to some $\pi(V)$. Notice that each $\pi(V)$ is the closure of the image, in $S^n(\bA^2_k)$, of a product of the form \[S^r(\textrm{punctured x-axis})\times S^s(\textrm{punctured y-axis})\times S^t(\bA^2_k\setminus \{xy=0\})\times Z'\] where $Z'$ is the cycle $(n-r-s-t)[(0,0)]$.

Now suppose that $Y\subseteq\Hilbn$ is a compatibly split subvariety. Since $Y$ maps by the Hilbert-Chow morphism to some $\pi(V)$, it follows that $Y$ is the closure of the image of the map \[i:\Hilb^r(\textrm{punctured x-axis})\times \Hilb^s(\textrm{punctured y-axis})\times \Hilb^t(\bA^2_k\setminus \{xy=0\})\times Z\rightarrow \Hilbn\] \[(I_1,I_2,I_3,I_4)\mapsto I_1\cap I_2\cap I_3\cap I_4\] for some $r,s,t\geq 0$ with $r+s+t\leq n$ and for some subvariety $Z\subseteq \Hilb^{n-r-s-t}_0(\bA_k^2)$. 

To show that $Z$ must be a \emph{compatibly split} subvariety of $\Hilb^{n-r-s-t}(\bA_k^2)$, we proceed by induction on $n$. When $n = 1$, the result is automatic.

Now suppose that $Y$ is compatibly split in $\Hilb^{n+1}(\bA^2_k)$. If $Y$ is contained in $\Hilb^{n+1}_0(\bA^2_k)$ then we are done. So suppose otherwise. Consider the inclusion $j:\Hilb^{n+1}(\bA^2_k)\hookrightarrow \Hilb^{n+1}(\bP^2_k)$ where $j$ is the map induced by the inclusion $\bA^2_k\hookrightarrow \bP^2_k$, $(x,y)\mapsto (x:y:1)$. Let $\overline{Y}$ denote the closure of $j(Y)$. 

Consider the splitting of $\Hilb^{n+1}(\bP^2_k)$ induced by the standard splitting of $\bP^2_k$ (i.e. the splitting with compatibly split anticanonical divisor $\{xyz = 0\}$). Let $D = D_1\cup D_2\cup D_3$ denote the compatibly split anticanonical divisor in $\Hilb^{n+1}(\bP^2_k)$ and let $D_3$ denote the component which has empty intersection with $\Hilb^{n+1}(\bA^2_k)$ (i.e. a stratum representative of $D_3$ has one point supported on the line $\{z=0\}$). Then $\Hilb^{n+1}(\bA^2_k)$ is an open set in $\Hilb^{n+1}(\bP^2_k)$ (via the inclusion $j$) and has the induced splitting. By \cite[Lemma 1.1.7]{BK} (see Proposition \ref{p;openSplit}), $\overline{Y}$ is compatibly split in $\Hilb^{n+1}(\bP^2_k)$. Thus, $D_3\cap \overline{Y}$ is compatibly split. Because $Y\nsubseteq \Hilb^{n+1}_0(\bA^2_k)$ by assumption, $D_3\cap \overline{Y}$ is non-empty.

Let $U\subseteq \Hilb^{n}(\bP^2_k)\times \Hilb^1(\bP^2_k)$ be defined by \[U:=\{(I,J)\in \Hilb^n(\bP^2_k)\times \Hilb^1(\bP^2_k)~|~\textrm{the schemes associated to }I+J \textrm{ and } I+\langle z\rangle \textrm{ are empty}\}\] and consider the map \[\tilde{j}_{n,1}:U\rightarrow \Hilb^{n+1}(\bP^2_k),~~(I,J)\mapsto I\cap J.\] By a similar argument to the one in Lemma \ref{l;morph}, $U$ is open in $\Hilb^{n}(\bP^2_k)\times \Hilb^1(\bP^2_k)$ and $\tilde{j}_{n,1}(U)$ is open in $\Hilb^{n+1}(\bP^2_k)$. Thus, $U$ and $\tilde{j}_{n,1}(U)$ have induced splittings. By a similar argument to the one in Lemma \ref{l;splitmapHilb}, $\tilde{j}_{n,1}:U\rightarrow \tilde{j}_{n,1}(U)$ is a split morphism.

Notice that (the reduction of) $\tilde{j}_{n,1}^{-1}(\tilde{j}_{n,1}(U)\cap(D_3\cap \overline{Y}))$ is isomorphic to $S\times \Hilb^1(\{z=0\}\subseteq \bP^2_k)$ for some subvariety $S\subseteq \Hilb^{n}(\bA^2_k)$. Furthermore, because the morphism $\tilde{j}_{n,1}$ is finite and split, we may apply Speyer's theorem to see that $S\times \Hilb^1(\{z=0\})$ is compatibly split in $\Hilbn\times \Hilb^1(\{z=0\})$. Finally, because the projection map $\pi_1: \Hilbn\times \Hilb^1(D_3)\rightarrow \Hilbn$ is a split map (see Example \ref{l;splitprojection}), $S\subseteq \Hilbn$ is compatibly split and we are done by induction. 
\end{proof}

\subsection{Stratum representatives}\label{s;stratrep}
We will often describe a compatibly split subvariety $Y$ by a ``stratum representative'' that it contains. 

\begin{definition}
Let $Y$ be a compatibly split subvariety. We say that an element $I\in Y$ is a \textbf{stratum representative} of $Y$ if $I$ is not in any compatibly split subvariety that is properly contained in $Y$.
\end{definition}

By Proposition \ref{p;intuition}, if $Y\subseteq \Hilbn$ is compatibly split, we may always choose a stratum representative $I\in Y$ such that $\Spec(k[x,y]/I)$ has the following form:

\begin{figure}[!h]
\begin{center}
\includegraphics[scale = 0.35]{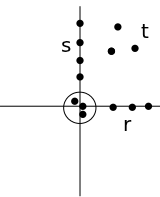}
\end{center}
\end{figure}
\noindent Note that the circled points correspond to a stratum representative of $Z\subseteq \Hilb^{n-r-s-t}_0(\bA^2_k)$. ($Z$ is as in Proposition \ref{p;intuition}.)

\section{Compatibly split subvarieties of $\Hilbn$ for $n\leq 5$}

Throughout this section, let $k$ be an algebraically closed field of (any) characteristic $p>n!$.

\subsection{Running the Knutson-Lam-Speyer algorithm in the $n=2$ case}\label{s;algorithm2pts}

The compatibly split subvarieties of $\Hilbn$ for $n=2,3,4$ can be found using Algorithm \ref{a}. In this subsection, we run the algorithm when $n=2$ and prove that the algorithm finds all compatibly split subvarieties in this case. We use the stratum representative pictures described above to denote the compatibly split subvarieties that arise. In addition we carry through the explicit computations on the open patch $U_{\langle x,y^2\rangle}$. 

Let $\sigma^{p-1}\in H^0(\Hilb^2(\bA^2_k),F_{*}(\omega_{\Hilb^2(\bA^2_k )}^{1-p}))$ determine the torus invariant splitting of $\Hilb^2(\bA^2_k)$ and let $D = V(\sigma)$. 

In Subsection \ref{s;inducedsplitting}, we show that the induced splitting of $U_{\langle x,y^2\rangle}\cong \Spec k[a_1,b_1,a_2,b_2]$ is given by $\textrm{Tr}(f_2^{p-1}\cdot)$ where $f_2 = (a_1b_1a_2-a_1^2+a_2^2b_2)(b_2)$. (So, the intersections of $U_{\langle x,y^2\rangle}$ with each of two components of $D$ appearing below are given by $\{a_1b_1a_2-a_1^2+a_2^2b_2=0\}$ and $\{b_2 = 0\}$ respectively.) To help understand the geometry, let $(x_1,y_1)$ and $(x_2,y_2)$ denote the two (unordered) locations of the points. On the open set where none of the points collide, $a_1 = \frac{x_1y_1-x_2y_1}{y_1-y_2}$, $a_2 = \frac{x_1-x_2}{y_1-y_2}$, $b_1 = y_1+y_2$, and $b_2 = y_1y_2$.

\begin{figure}[!h]
\begin{center}
\includegraphics[scale = 0.25]{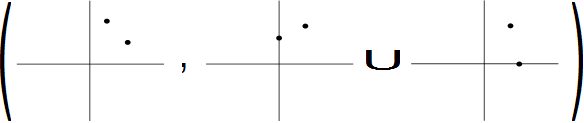}
\caption{$(\Hilb^2(\bA^2_k), D=D_1\cup D_2)$ is the input of the algorithm.}
\end{center}
\end{figure}

$\Hilb^2(\bA^2_k)$ is smooth, so we can skip step 2 of the algorithm. Apply step 3 by intersecting the two components of $D$. Notice that this intersection decomposes into two components. On $U_{\langle x,y^2\rangle}$, the two components of the intersection are given by $\langle a_1,b_2\rangle$ and $\langle a_2b_1-a_1,b_2\rangle$ respectively.

\begin{figure}[!h]
\begin{center}
\includegraphics[scale = 0.25]{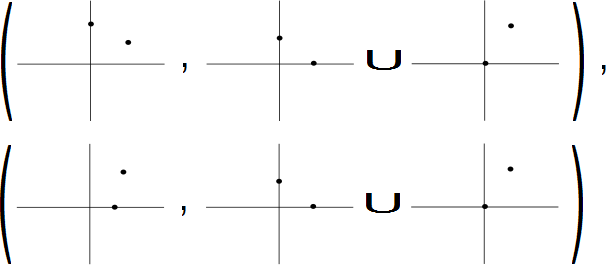}
\caption{Apply step 3 of the algorithm to obtain $(D_1,D_1\cap D_2)$ and $(D_2, D_2\cap D_1)$.}
\label{fig;step3FirstTime}
\end{center}
\end{figure}

By symmetry, we need only continue with the first of the two pairs, which we denote by $(D_1,D_1\cap D_2)$. Notice that $D_1$ is not normal. The singular locus of $D_1$ is the subvariety with a stratum representative where two points are on the y-axis. We can check this explicitly on $U_{\langle x,y^2\rangle}$; the singular locus of $\{a_1b_1a_2-a_1^2+a_2^2b_2=0\}$ is $\{a_1 = a_2 = 0\}$. 

Let $\widetilde{D_1}$ denote the normalization of $D_1$. A stratum representative for $\widetilde{D_1}$ consists of one \emph{labelled} point on the $y$-axis and one point in $\bA^2_k\setminus\{xy=0\}$. Indeed, the corresponding closed subvariety of the isospectral Hilbert scheme (the scheme of labelled points in the plane) is normal, maps finitely to $D_1$, and maps isomorphically away from the preimage of the singular locus of $D_1$. [We can see this explicitly as follows: As seen in \cite[Proposition 3.4.2]{H4}, the isospectral Hilbert scheme $X_2$ of $2$ labelled points in the plane is given by $\textrm{Proj}(k[x_1,x_2,y_1,y_2](tI_2))$ where $I_2 = \langle x_1-x_2,y_1-y_2\rangle$ and $(x_1,y_1)$ and $(x_2,y_2)$ are the (now ordered) locations of the two points. The subvariety $\{x_1=0\}$ has homogeneous coordinate ring $k[x_1,x_2,y_1,y_2,w_0,w_1]/\langle (y_1-y_2)w_0+x_2w_1\rangle$ where $x_1,x_2$, $y_1,y_2$ have degree $0$ and $w_0$, $w_1$ have degree $1$. This ring is integrally closed. So the subvariety $\{x_1=0\}$ is projectively normal. Futhermore, because $X_2/S_2 = \Hilb^2(\bA^2_k)$ (see \cite{H1}), $\{x_1=0\}$ maps finitely (via the quotient map) onto $D_1$. This map is an isomorphism away from the preimage of the (irreducible and codimension-$1$) singular locus of $D_1$. Thus, $\{x_1=0\}$ is isomorphic to $\widetilde{D_1}$, the normalization of $D_1$.] 

Working in the open patch $U_{\langle x,y^2\rangle}$, we see that the integral closure of $k[a_1,b_1,a_2,b_2]/\langle a_1b_1a_2-a_1^2+a_2^2b_2\rangle$ is $k[w,a_1,b_1,a_2,b_2]/I$ where $I = \langle wa_2-a_1, w^2-wb_1-b_2\rangle$. Computing the relevant preimages (proper transforms), we see that the subvarieties appearing below are determined by the ideals $\langle a_1, a_2, w^2-wb_1-b_2\rangle$, $\langle b_2,a_1,w\rangle$, and $\langle b_2, a_2b_1-a_1, w-b_1\rangle$ respectively.

\begin{figure}[!h]
\begin{center}
\includegraphics[scale = 0.25]{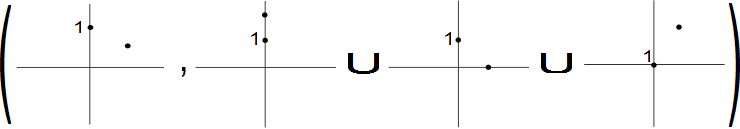}
\caption{Apply step 4 of the algorithm to $(D_1,D_1\cap D_2)$ appearing in Figure \ref{fig;step3FirstTime}.}
\label{fig;figStep3}
\end{center}
\end{figure}

Intersect each component of the divisor in Figure \ref{fig;figStep3} with each of the other two components to obtain the pairs in Figure \ref{fig;figStep3Final}. The remaining computations in the open patch $U_{\langle x,y^2\rangle}$ are straightforward so we omit them.

\begin{figure}[!h]
\begin{center}
\includegraphics[scale = 0.25]{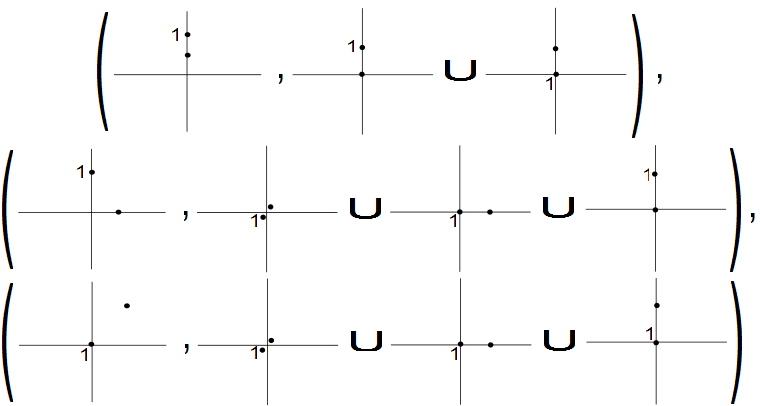}
\caption{Apply step 3 once more. Note that the picture where two points are at the origin denotes a stratum representative of $\textrm{Hilb}^2_0(\mathbb(\mathbb{A})^2_k)\cong \mathbb{P}^1_k$.}
\label{fig;figStep3Final}
\end{center}
\end{figure}

Iterating the steps of the algorithm once more obtains the preimage of the $T^2$-fixed points in the isospectral Hilbert scheme. Mapping all subvarieties forward to $\textrm{Hilb}_2(\mathbb{A}_k^2)$ yields the stratification shown in Figure \ref{fig;hilb2}. (Note that all defining ideals of the compatibly split subvarieties of $U_{\langle x,y^2\rangle}$ can be found in Figure \ref{fig;posetcomplex}.) 

 \begin{figure}[!h]
 \begin{center}
 \includegraphics[scale = 0.32]{2ptsposet2.png}
 \caption{The compatibly split subvarieties of $\Hilb^2(\bA^2_k)$}
 \label{fig;hilb2}
 \end{center}
 \end{figure}

\begin{proposition}
The Knutson-Lam-Speyer algorithm finds all compatibly split subvarieties of the Hilbert scheme of $2$ points in the plane. 
\end{proposition}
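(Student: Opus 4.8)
The plan is to verify that, for $\Hilb^2(\bA^2_k)$, the general machinery developed earlier applies, so that the Knutson-Lam-Speyer algorithm is guaranteed to produce \emph{all} compatibly split subvarieties. Concretely, I would invoke the proposition from Subsection \ref{s;kls} stating that if, for every pair $(X',D')$ arising at some stage of the algorithm with $X'$ normal, the intersection $\textrm{sing}(X')\cap D'$ has codimension $\geq 2$ in each component of $D'$, then the algorithm finds everything (for $p$ large, which is already our standing hypothesis $p>n!=2$, possibly enlarged). So the entire task reduces to inspecting the finitely many normal pairs $(X',D')$ that appear in the run of the algorithm carried out above and checking this codimension condition.

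First I would list those pairs. The run starts with $(\Hilb^2(\bA^2_k),D_1\cup D_2)$, which is smooth, so the condition is vacuous there. After steps 3 and 4 we obtain (up to the $x\leftrightarrow y$ symmetry) the normalization $\widetilde{D_1}$ together with its boundary divisor $\nu^{-1}(D_1\cap D_2)\cup\nu^{-1}((D_1)_{\textrm{non-R1}})$; as computed explicitly on $U_{\langle x,y^2\rangle}$, $\widetilde{D_1}$ is isomorphic to the (projectively normal) subvariety $\{x_1=0\}$ of the isospectral Hilbert scheme $X_2$, which is smooth there, so again the codimension condition holds trivially. Iterating once more produces pairs whose underlying varieties are (normalizations of) further strata, ending at the $T^2$-fixed points of $X_2$; each of these smaller strata is either smooth or a point, so $\textrm{sing}(X')$ is empty and the condition is automatic. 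Since every normal $X'$ occurring is in fact nonsingular, $\textrm{sing}(X')\cap D' = \emptyset$ has codimension $\geq 2$ in each component of $D'$ vacuously.

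With the codimension hypothesis verified for all the relevant pairs, the cited proposition applies and gives that the algorithm finds all compatibly split subvarieties of $\Hilb^2(\bA^2_k)$ for $p$ sufficiently large; combined with the standing assumption $p>n!$ (and noting that the list of subvarieties produced is the one displayed in Figure \ref{fig;hilb2}), this completes the proof. I expect the only genuine point needing care is confirming that each normal variety appearing in the recursion really is smooth — in particular the identification of $\widetilde{D_1}$ with a smooth locus inside the isospectral Hilbert scheme $X_2$, which was justified above via Haiman's description and the explicit integral-closure computation $k[w,a_1,b_1,a_2,b_2]/\langle wa_2-a_1,\,w^2-wb_1-b_2\rangle$ on $U_{\langle x,y^2\rangle}$ — after which the codimension bound is immediate. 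This is the main (and essentially only) obstacle; everything else is bookkeeping over the finitely many stages of the algorithm.
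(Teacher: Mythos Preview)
Your approach is correct but takes a genuinely different route from the paper. You invoke the general completeness criterion for the KLS algorithm from Subsection~\ref{s;kls} and verify its hypothesis by checking that every normal $X'$ arising in the recursion is in fact smooth, so that $\textrm{sing}(X')\cap D'=\emptyset$ and the codimension condition is vacuous. The paper instead sidesteps this criterion entirely: it uses Proposition~\ref{p;intuition} to reduce to showing that the algorithm found all compatibly split subvarieties contained in $\Hilb^2_0(\bA^2_k)\cong\bP^1_k$, then observes that the only $T^2$-invariant subvarieties there are $\Hilb^2_0$ itself and the two fixed points, each of which is the bone of an exterior face of the moment polyhedron and hence compatibly split by Proposition~\ref{p;extedge}; since these three are exactly what the algorithm produced inside $\Hilb^2_0$, the algorithm is complete. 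Your argument is more self-contained in that it avoids the structure theorem and the moment-polyhedron machinery, at the cost of tracking smoothness through every stage of the recursion; the paper's argument is shorter once that machinery is in place and also previews the method used for $n=3,4,5$. One minor gap in your write-up: smoothness of $\widetilde{D_1}$ is verified only on $U_{\langle x,y^2\rangle}$; you should also observe that on the other patch $U_{\langle x^2,y\rangle}$ the divisor $D_1$ is already a coordinate hyperplane (hence smooth and normal), so no further check is needed there.
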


\begin{proof}
By Proposition \ref{p;intuition} it suffices to check that the algorithm finds all compatibly split subvarieties contained inside of $\Hilb^2_0(\bA^2_k)$. To do so, notice that all $T^2$-invariant subvarieties of $\Hilb^2_0(\bA^2_k)$ (i.e. $\Hilb^2_0(\bA^2_k)$, $\{\langle x,y^2\rangle\}$ and $\{\langle x^2,y\rangle\}$) are the ``bones'' (see Definition \ref{d;bone}) of exterior faces of the moment polyhedron of $\Hilb^2(\bA^2_k)$. Thus, by Proposition \ref{p;extedge}, the set of torus invariant subvarieties contained in $\Hilb^2_0(\bA^2_k)$ is equal to the set of compatibly split subvarieties contained in $\Hilb^2_0(\bA^2_k)$. 
\end{proof}

\begin{remark}\label{halfSplit}
Some compatibly split subvarieties of $\Hilb^2(\bA^2_k)$ have splittings which are not $(p-1)^{st}$ powers. For example, let $D_1$ be as above (i.e. a stratum representative of $D_1$ has one point on the y-axis). Recall that $D_1$ is not normal. Let $\nu:\tilde{D_1}\rightarrow D_1$ denote the normalization, and let $Y$ denote the preimage, under $\nu$, of the (codimension-$1$ part of the) singular locus of $D_1$. Notice that $\nu|_Y$ is generically 2:1 but is ramified along the locus where the two points collide. Letting $s = y_1+y_2$ and $m = y_1y_2$ be the two coordinates on $\nu(Y)\cong\mathbb{A}^2_k/S_2$, we can check that the splitting of $\nu(Y)$ is given by the section $(s^2-4m)^{(p-1)/2}m^{p-1}$. This is not a $(p-1)^{st}$ power. Notice that $\{m=0\}$ is compatibly split. We would like to say that $\{s^2=4m\}$ (which agrees with the ramification locus of $\nu|_Y$) is ``half split'', but we don't have a general definition to give.
\end{remark}

\subsection{Compatibly split subvarieties of $\Hilbn$ for $n\leq 5$}\label{s;nleq5}

In this subsection, we describe all compatibly split subvarieties of $\Hilb^n(\bA^2_k)$ for $n\leq 4$, as well as provide a conjectural list of all compatibly split subvarieties of $\Hilb^5(\bA^2_k)$. We prove that this conjectural list is correct up to the possible inclusion of one particular one-dimensional subvariety of $\Hilb^5(\bA^2_k)$, and we show that this particular one-dimensional subvariety is not compatibly split for at least those primes $p$ satisfying $2 <p \leq 23$. 

By Proposition \ref{p;intuition}, we need only describe those compatibly split subvarieties which are contained inside the punctual Hilbert scheme, $\Hilb^n_0(\bA^2_k)$. We have already covered the cases of $n=1,2$, so we begin with $n=3$.

\begin{proposition}\label{p;3pts}
Let $\tableau{{ \ }\\{ \ }\\{ \ }}~,$ $\tableau{{ \ }\\{ \ }&{ \ }}~,$ and $\tableau{{ \ }&{ \ }&{ \ }}$ denote the moment polytopes of the $T^2$-fixed points $\langle x,y^3\rangle$, $\langle x^2,xy,y^2\rangle$, and $\langle x^3,y\rangle$. The compatibly split subvarieties of $\Hilb^3(\bA^2_k)$ which are contained inside the punctual Hilbert scheme $\Hilb^3_0(\bA^2_k)$ are precisely those $Y$ appearing in the following list.
\begin{enumerate}
\item $Y$ is the bone (see Definition \ref{d;bone}) of the edge in the moment polyhedron of $\Hilb^3(\bA^2_k)$ connecting $\tableau{{ \ }\\{ \ }\\{ \ }}$ and $\tableau{{ \ }\\{ \ }&{ \ }}~.$ That is, $Y=\{\langle y^3,xy,x^2,ay^2+bx\rangle~|~[a,b]\in \bP^1_k\}$.  
\item $Y$ is the bone of the edge in the moment polyhedron of $\Hilb^3(\bA^2_k)$ connecting $\tableau{{ \ }\\{ \ }&{ \ }}$ and $\tableau{{ \ }&{ \ }&{ \ }}~.$ That is, $Y=\{\langle y^2,xy,x^3,ax^2+by\rangle~|~[a,b]\in \bP^1_k\}$.
\item $Y$ is one of the $T^2$-fixed points (i.e. $Y = \langle x,y^3\rangle$, $Y = \langle x^2,xy,y^2\rangle$, or $Y = \langle x^3,y\rangle$).
\end{enumerate}
\end{proposition}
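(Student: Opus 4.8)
The plan is to exploit the two major tools already assembled in the excerpt: the reduction to the punctual Hilbert scheme (Proposition~\ref{p;intuition}, so that it suffices to classify compatibly split $Y\subseteq\Hilb^3_0(\bA^2_k)$), and the interplay between torus-invariant splittings and bones of faces of the moment polyhedron (Definition~\ref{d;bone}, Proposition~\ref{p;extedge}). First I would observe that, since the splitting of $\Hilb^3(\bA^2_k)$ is $T^2$-invariant, every compatibly split subvariety is $T^2$-stable; hence any compatibly split $Y\subseteq\Hilb^3_0(\bA^2_k)$ is a $T^2$-stable subvariety of $\Hilb^3_0(\bA^2_k)$. So I would first enumerate \emph{all} irreducible $T^2$-stable subvarieties of $\Hilb^3_0(\bA^2_k)$. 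The punctual Hilbert scheme $\Hilb^3_0(\bA^2_k)$ is one-dimensional (in fact a projective curve), its $T^2$-fixed points are exactly the three monomial ideals $\langle x,y^3\rangle$, $\langle x^2,xy,y^2\rangle$, $\langle x^3,y\rangle$, and the $T^2$-stable curves joining them are precisely the bones of the two edges of the moment polyhedron described in items~(1) and~(2) of the statement (one can read this off Figure~\ref{fig;hilb3polytope}, and verify it directly by writing the generic ideal in the relevant affine patches $U_{\langle x^2,xy,y^2\rangle}$, etc., and computing which one-parameter families are torus-stable). This gives the complete list of candidates: the three points plus the two $\bP^1$'s, and no others, because a one-dimensional $T^2$-stable subvariety of a curve must be a whole irreducible component.

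Next I would show each candidate on the list \emph{is} compatibly split. The three $T^2$-fixed points are handled by intersecting the two $\bP^1$'s (each a compatibly split bone by Proposition~\ref{p;extedge}(2), being the bone of an exterior face of the moment polyhedron) and decomposing, together with the fact (Example~\ref{e;stdsplitting}-style reasoning via Proposition~\ref{p;intuition}) that the $T^2$-fixed point $\langle x^2,xy,y^2\rangle$ lies on both $\bP^1$'s while $\langle x,y^3\rangle$ and $\langle x^3,y\rangle$ are the extreme points of the respective bones. For the two $\bP^1$'s, apply Proposition~\ref{p;extedge}(2) directly: each is the bone of a face of the moment polytope of $\Hilb^3(\bA^2_k)$, and the splitting is $\bN$- and $T^2$-invariant, so the bone is compatibly split. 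Conversely, since every compatibly split $Y\subseteq\Hilb^3_0(\bA^2_k)$ must appear on the list of all $T^2$-stable subvarieties (by $T^2$-invariance of the splitting), and we have just shown every element of that list is compatibly split, the two lists coincide and the classification is complete.

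The main obstacle I anticipate is the explicit identification of the $T^2$-stable subvarieties of $\Hilb^3_0(\bA^2_k)$ and the verification that the two edges in items~(1) and~(2) really are \emph{exterior} faces of the moment polyhedron (so that Proposition~\ref{p;extedge} applies to give compatible splitting without needing a moment map). This requires working in the affine patches $U_\lambda$ of $\Hilb^3(\bA^2_k)$ that meet $\Hilb^3_0(\bA^2_k)$, writing down the defining ideals of the punctual locus there, and confirming the stated parametrizations $\{\langle y^3,xy,x^2,ay^2+bx\rangle\}$ and $\{\langle y^2,xy,x^3,ax^2+by\rangle\}$ are indeed the full torus-stable curves (and that they are $\bP^1$'s, not $\bA^1$'s). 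None of this is deep, but it is the only place where genuine computation enters; everything else is bookkeeping with Proposition~\ref{p;intuition}, Proposition~\ref{p;extedge}, and $T^2$-invariance. A secondary point to be careful about is ensuring there are no ``extra'' compatibly split zero-dimensional subvarieties hiding inside $\Hilb^3_0(\bA^2_k)$ beyond the three fixed points --- but any such would be a $T^2$-stable point, hence a $T^2$-fixed point, and the only $T^2$-fixed points of $\Hilb^3_0(\bA^2_k)$ are the three monomial ideals.
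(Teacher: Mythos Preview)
Your proposal contains a genuine error that undermines the argument: you assert that $\Hilb^3_0(\bA^2_k)$ is one-dimensional (``in fact a projective curve''). It is not. The punctual Hilbert scheme $\Hilb^n_0(\bA^2_k)$ has dimension $n-1$, so for $n=3$ it is a projective \emph{surface}. Its moment polytope is the full triangle with vertices at the three monomial ideals, and consequently the list of irreducible $T^2$-stable subvarieties of $\Hilb^3_0(\bA^2_k)$ is strictly larger than what you wrote down. In particular it contains $\Hilb^3_0(\bA^2_k)$ itself and the bone of the \emph{third} edge of the triangle, the one connecting $\langle x,y^3\rangle$ directly to $\langle x^3,y\rangle$. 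Your classification-by-enumeration therefore does not close: you never explain why $\Hilb^3_0(\bA^2_k)$ and this third $\bP^1$ fail to be compatibly split.

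This is exactly the point where the paper does real work. The paper observes that the third edge is an exterior face of the moment polytope of $\Hilb^3_0(\bA^2_k)$; hence if $\Hilb^3_0(\bA^2_k)$ were compatibly split, so would be the bone of that edge (Proposition~\ref{p;extedge}). It then invokes Lemma~\ref{l;puncSplit} --- proved later via an explicit Gr\"obner computation on $U_{\langle x,y^n\rangle}$ and Theorem~\ref{t;yay} --- to show that the bone of the edge from $\langle x,y^3\rangle$ to $\langle x^3,y\rangle$ is \emph{not} compatibly split. This single exclusion kills both that $\bP^1$ and $\Hilb^3_0(\bA^2_k)$ in one stroke, and the remaining $T^2$-stable subvarieties are precisely those in the statement. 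Your outline is salvageable, but only after correcting the dimension and supplying this missing non-splitting argument; without it the proof is incomplete.
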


\begin{figure}[!h]
\begin{center}
\includegraphics[scale = 0.4]{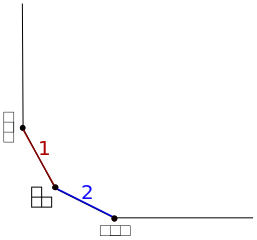}
\caption{The two diagonal edges are the moment polytopes mentioned in items 1. and 2. of Proposition \ref{p;3pts}. The three vertices are the moment polytopes of the three $T^2$-fixed points of $\Hilb^3(\bA^2_k)$.}
\end{center}
\end{figure} 

Before proving this proposition, we consider a few helpful lemmas. 

\begin{lemma}\label{l;fixedPointCover}
Suppose $S$ is a $T^2$-invariant closed subvariety of $\Hilb^n_0(\bA^2_k)$. Let $\{\lambda_1,\dots,\lambda_r\}$ be the (non-empty) set of fixed points that $S$ contains. Then $S\cap U_{\lambda_i}$, $1\leq i\leq r$, is non-empty and $S = (S\cap U_{\lambda_1})\cup\cdots\cup (S\cap U_{\lambda_k})$, an open affine cover.
\end{lemma}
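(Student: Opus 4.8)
The plan is to prove the two assertions of Lemma~\ref{l;fixedPointCover} in order: first that each intersection $S \cap U_{\lambda_i}$ is nonempty, and then that these open affine pieces cover all of $S$.

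For the nonemptiness claim, I would fix an index $i$ and argue that $\lambda_i \in S \cap U_{\lambda_i}$. Indeed, $\lambda_i \in S$ by hypothesis (it is one of the fixed points that $S$ contains), and $\lambda_i \in U_{\lambda_i}$ because the monomials outside $\lambda_i$ form a $k$-vector space basis of $k[x,y]/\lambda_i$ by definition of a monomial ideal and its standard set. So $S \cap U_{\lambda_i}$ is nonempty; moreover it is an open subscheme of the affine variety $U_{\lambda_i}$ intersected with the closed subvariety $S$, hence is itself a (locally closed, in fact affine once we check closedness) subscheme. To be careful: $S \cap U_{\lambda_i}$ is a closed subscheme of the affine variety $U_{\lambda_i}$, hence affine.

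For the covering claim, the key point is that every closed point $I \in S$ lies in some $U_{\lambda_i}$ with $\lambda_i$ a fixed point contained in $S$. Here I would use the torus action: since $S$ is $T^2$-invariant and closed, the closure $\overline{T^2 \cdot I}$ is contained in $S$, and this closure contains a $T^2$-fixed point --- concretely, for any monomial order, $\mathrm{init}_>(I) \in \overline{T^2 \cdot I} \subseteq S$, and $\mathrm{init}_>(I)$ is a colength-$n$ monomial ideal, i.e.\ a $T^2$-fixed point. Call it $\lambda$; then $\lambda$ is one of the $\lambda_1,\dots,\lambda_r$. Finally, $I \in U_\lambda$ precisely because the $n$ monomials outside $\mathrm{init}_>(I) = \lambda$ form a $k$-vector space basis of $k[x,y]/I$ (the standard fact recalled in Subsection~\ref{s;opencover}). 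Hence $I \in S \cap U_\lambda$, and since $I$ was arbitrary, $S = \bigcup_{i=1}^r (S \cap U_{\lambda_i})$. This is an open cover because each $U_{\lambda_i}$ is open in $\Hilbn$ (Proposition~\ref{p;hilbbasic}), so each $S \cap U_{\lambda_i}$ is open in $S$; and it is an affine cover by the remark above.

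I do not anticipate a serious obstacle here --- the lemma is essentially a packaging of two standard facts: that $U_\lambda$ is an open affine cover of $\Hilbn$ indexed by $T^2$-fixed points, and that every $T^2$-orbit closure contains a fixed point via the initial-ideal degeneration. The only mild subtlety is making sure that when $I \in S \subseteq \Hilb^n_0(\bA^2_k)$, the fixed point $\mathrm{init}_>(I)$ picked out really is among the $\lambda_i$ --- but this is immediate since $\mathrm{init}_>(I) \in S$ and $S$ is closed and $T^2$-invariant, so it is a fixed point of $S$, hence one of the finitely many $\lambda_i$.
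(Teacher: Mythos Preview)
Your proof is correct and in fact slightly more direct than the paper's. The paper argues by contradiction: supposing some $I\in S$ lies in no $U_{\lambda_i}$, it picks any $U_\lambda$ containing $I$ (necessarily with $\lambda\notin\{\lambda_1,\dots,\lambda_r\}$), uses the $T^2$-invariance of each $U_{\lambda_i}$ to conclude the orbit of $I$ avoids every $U_{\lambda_i}$, and then derives a contradiction from the fact that the orbit closure (inside the projective variety $S$) must contain some fixed point $\lambda_i\in S\cap U_{\lambda_i}$, so any open neighborhood of $\lambda_i$ meets the orbit. Your argument instead exploits the Gr\"obner degeneration directly: for any term order, $\lambda:=\mathrm{init}_>(I)$ is simultaneously a fixed point in $\overline{T^2\cdot I}\subseteq S$ (hence one of the $\lambda_i$) and the index of a patch containing $I$ (since $I\in U_{\mathrm{init}_>(I)}$). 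This bypasses both the contradiction setup and the invariance-of-patches step. The paper's route is marginally more abstract (it would work verbatim for any projective $T$-variety covered by $T$-invariant affines indexed by fixed points), while yours leverages the specific combinatorics of $\Hilbn$ already established in Subsection~\ref{s;opencover}.
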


\begin{proof}
Suppose that $\{\lambda_1,\dots,\lambda_r\}$ is the set of fixed points of $S$. Since $\lambda_i\in U_{\lambda_i}$, we see that $S\cap U_{\lambda_i}\neq\emptyset$. 

Next suppose that $S \supsetneq (S\cap U_{\lambda_1})\cup\cdots\cup (S\cap U_{\lambda_k})$. Then $S\cap U_\lambda\neq\emptyset$ for some $\lambda\notin \{\lambda_1,\dots,\lambda_r\}$. Let $I\in (S\cap U_\lambda)\setminus [(S\cap U_{\lambda_1})\cup\cdots\cup (S\cap U_{\lambda_k})]$. As $U_{\lambda}$ is $T^2$-invariant, the entire $T^2$-orbit of $I$ is contained in $U_{\lambda}$. As each $U_{\lambda_i}$, $1\leq i\leq r$, is $T^2$-invariant, the orbit has trivial intersection with each $U_{\lambda_i}$, $1\leq i\leq r$. Futhermore, since $S$ is $T^2$-invariant and projective, the orbit closure of $I$ contains a fixed point $\lambda_i\in S$. Thus, the orbit of $I$ has non-trivial intersection with any open set containing $\lambda_i$; in particular it has non-trivial intersection with $U_{\lambda_i}$, a contradiction. 
\end{proof}

\begin{lemma}
If $Y$ is the bone of the edge in the moment polyhedron of $\Hilb^3(\bA^2_k)$ connecting $\tableau{{ \ }\\{ \ }\\{ \ }}$ and $\tableau{{ \ }\\{ \ }&{ \ }}~,$ then $Y=\{\langle y^3,xy,x^2,ay^2+bx\rangle~|~[a,b]\in \bP^1_k\}$. If $Y$ is the bone of the edge in the moment polyhedron of $\Hilb^3(\bA^2_k)$ connecting $\tableau{{ \ }\\{ \ }&{ \ }}$ and $\tableau{{ \ }&{ \ }&{ \ }}~,$ then $Y=\{\langle y^2,xy,x^3,ax^2+by\rangle~|~[a,b]\in \bP^1_k\}$.
\end{lemma}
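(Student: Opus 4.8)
The plan is to reduce to a single edge by symmetry and then to identify the claimed family $Y$ with the bone in two moves: first show that $Y$ is a $T^2$-stable irreducible curve lying inside the appropriate torus-fixed locus and having moment polytope exactly the given edge, and then show that $Y$ exhausts a whole component of that fixed locus, so that by Proposition~\ref{p;extedge}(1) (which produces the bone over an edge as a single irreducible $\textrm{Proj}(R/I)$) it must be the bone. The automorphism of $\bA^2_k$ interchanging $x$ and $y$ induces an automorphism of $\Hilb^3(\bA^2_k)$ that preserves the torus-invariant splitting (it reflects the moment polyhedron across the diagonal), swaps the two edges in the statement, and swaps the two claimed subvarieties, so it suffices to prove the first assertion. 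Set $Y = \{I_{[a:b]} := \langle y^3, xy, x^2, ay^2+bx\rangle \mid [a:b]\in\bP^1_k\}$. First I would check that each $I_{[a:b]}$ is colength $3$ and that $[a:b]\mapsto I_{[a:b]}$ is an injective morphism $\bP^1_k \to \Hilb^3(\bA^2_k)$; since $\bP^1_k$ is complete, $Y$ is then a closed irreducible projective curve. One reads off $I_{[1:0]} = \langle x^2,xy,y^2\rangle$ and $I_{[0:1]} = \langle x,y^3\rangle$, and from $(t_1,t_2)\cdot I = \langle f(t_1^{-1}x, t_2^{-1}y)\rangle$ one computes $(t_1,t_2)\cdot I_{[a:b]} = I_{[a : bt_1^{-1}t_2^2]}$, so $Y$ is $T^2$-stable with exactly those two $T^2$-fixed points.

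Next I would assemble the polytope data. By the formula for the moment polytope of a $T^2$-fixed point, $\langle x,y^3\rangle$ and $\langle x^2,xy,y^2\rangle$ have moment polytopes $(0,3)$ and $(1,1)$, so the edge $Q$ in question is the segment $[(0,3),(1,1)]$, with primitive direction $(1,-2)$. Hence $Q^\perp$ is the kernel of the character $(t_1,t_2)\mapsto t_1 t_2^{-2}$, that is, the one-parameter subtorus $\{(s^2,s)\mid s\in\bG_m\}$. Since $(s^2,s)\cdot I_{[a:b]} = I_{[a:b]}$, we get $Y\subseteq \Hilb^3(\bA^2_k)^{Q^\perp}$. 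As $Y$ is an irreducible $T^2$-stable complete curve with precisely two $T^2$-fixed points, its moment polytope is the segment joining $(0,3)$ and $(1,1)$, namely $Q$ itself; in particular $Y$ is contained in a bone over $Q$.

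The remaining point — that $Y$ is the entire bone — is the only nontrivial one. I would compute the $T^2$-weights of $T_\lambda\Hilb^3(\bA^2_k)$ at $\lambda = \langle x^2,xy,y^2\rangle$ from the arm/leg formula: with standard set $\{(0,0),(1,0),(0,1)\}$ these weights are $(-1,2)$, $(2,-1)$, together with $(0,1)$ and $(1,0)$ each of multiplicity $2$. Restricting a weight $(c,d)$ to $Q^\perp=\{(s^2,s)\}$ yields $s^{2c+d}$, which is trivial only for $(-1,2)$, and with multiplicity $1$; since the fixed locus of a torus acting on a smooth variety is smooth, $\Hilb^3(\bA^2_k)^{Q^\perp}$ is a smooth curve near $\lambda$. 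A closed irreducible curve through $\lambda$ contained in a smooth curve near $\lambda$ must coincide with it there, so $Y$ is an entire irreducible component of $\Hilb^3(\bA^2_k)^{Q^\perp}$; its moment polytope being $Q$, $Y$ is a bone over $Q$, and since the bone over the edge $Q$ is the unique irreducible subvariety $\textrm{Proj}(R/I)$ produced by Proposition~\ref{p;extedge}(1), $Y$ equals the bone. Exchanging $x$ and $y$ handles the second edge. The main obstacle is exactly this last step: ruling out that the bone is strictly larger than the explicit $\bP^1$-family $Y$, which is what the tangent-weight count at the vertex $\langle x^2,xy,y^2\rangle$, combined with the irreducibility of the bone, is designed to do.
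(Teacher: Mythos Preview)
Your argument is correct but takes a genuinely different route from the paper's. The paper works directly on the two open patches $U_{\langle x,y^3\rangle}$ and $U_{\langle x^2,xy,y^2\rangle}$ (which cover the bone by Lemma~\ref{l;fixedPointCover}), reads off the $T^2$-weights of the patch coordinates, and finds that the $Q^\perp$-fixed locus in each patch is a single $\bA^1$ of ideals of the form $\langle y^3,xy,x-\lambda y^2\rangle$ and $\langle x^2,xy,y^2-\mu x\rangle$ respectively; gluing these two affine lines gives the claimed $\bP^1$. Your approach instead starts from the explicit family, verifies it lies in the $Q^\perp$-fixed locus with the correct moment polytope, and then uses a tangent-weight count at $\langle x^2,xy,y^2\rangle$ together with smoothness of torus-fixed loci and the irreducibility of the bone from Proposition~\ref{p;extedge}(1) (legitimate here since $Q$ is an exterior face) to conclude that $Y$ is the entire bone. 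The paper's argument is more elementary and self-contained, needing neither Proposition~\ref{p;extedge} nor the smoothness of fixed loci; yours avoids any chart-by-chart computation and compresses the ``nothing else is fixed'' step into a single weight calculation, which is the same Bia\l ynicki-Birula-style reasoning the paper itself adopts later for the harder $n=5$ case.
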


\begin{proof}
Let $Y$ denote the bone of the edge connecting $\tableau{{ \ }\\{ \ }\\{ \ }}$ and $\tableau{{ \ }\\{ \ }&{ \ }}~.$ Since the vector from $\tableau{{ \ }\\{ \ }\\{ \ }}$ to $\tableau{{ \ }\\{ \ }&{ \ }}$ is $(1,-2)$ (see Subsection \ref{s;xraysHilb}), $Y\subseteq \Hilb^3_0(\bA^2_k)$ is pointwise fixed by the subtorus $T^1 = \{(t^2,t)~|~t\in\bG_m\}$. Next, by Lemma \ref{l;fixedPointCover}, $Y = (Y\cap U_{\langle x,y^3\rangle})\cup(Y\cap U_{\langle x^2,xy,y^2\rangle})$. All elements of $Y\cap U_{\langle x,y^3\rangle}$ which are pointwise fixed by $T^1$ have the form $\langle y^3,xy, x-\lambda y^2\rangle$, $\lambda\in k$ and all elements of $Y\cap U_{\langle x^2,xy,y^2\rangle}$ which are pointwise fixed by $T^1$ have the form $\langle x^2,xy,y^2-\mu x\rangle$, $\mu\in k$. Gluing these two copies of $\bA^1_k$ yields $\bP^1_k = \{\langle y^3,xy,x^2,ay^2+bx\rangle~|~[a,b]\in \bP^1_k\}$. Similar reasoning shows that the bone of the edge connecting $\tableau{{ \ }\\{ \ }&{ \ }}$ and $\tableau{{ \ }&{ \ }&{ \ }}$ is as claimed.
\end{proof}

We now state Lemma \ref{l;puncSplit} which is proved in Section 2.3. It is useful for showing that there cannot be any compatibly split subvarieties contained in $\Hilb^3_0(\bA^2_k)$ which are not listed in Proposition \ref{p;3pts}.

\newtheorem*{thm:associativity}{Lemma \ref{l;puncSplit}}
\begin{thm:associativity}
Let $Y\subseteq \Hilbn$ be a compatibly split subvariety that has non-trivial intersection with $U_{\langle x,y^n\rangle}$. If $Y$ is contained inside the punctual Hilbert scheme $\Hilb^n_0(\bA^2_k)$, then $Y$ is either the $0$-dimensional subvariety $\{\langle x,y^n\rangle\}$ or the $1$-dimensional subvariety that is pointwise fixed by the subtorus $T^1 = \{(t^{n-1},t)~|~t\in \bG_m\}$.
\end{thm:associativity}

We are now ready to prove Proposition \ref{p;3pts}.

\begin{proof}[Proof of Proposition \ref{p;3pts}]
First notice that each of the subvarieties mentioned in the proposition is the bone of some (exterior) face of the moment polyhedron of $\Hilb^3(\bA^2_k)$ (see Figure \ref{fig;hilb3polytope}). Thus, all subvarieties listed in the statement of the proposition are compatibly split. 

Next, note that the moment polytope of each compatibly split subvariety of $\Hilb^3(\bA^2_k)$ which is contained inside of $\Hilb^3_0(\bA^2_k)$ must be a subpolytope of the moment polytope of $\Hilb^3_0(\bA^2_k)$. In Figure \ref{fig;hilb3polytope} we see that the moment polytope of $\Hilb^3_0(\bA^2_k)$ is the triangle with vertices $\tableau{{ \ }\\{ \ }\\{ \ }}~,$ $\tableau{{ \ }\\{ \ }&{ \ }}~,$ and $\tableau{{ \ }&{ \ }&{ \ }}~.$ Thus, to show that there are no additional compatibly split subvarieties contained in $\Hilb^3_0(\bA^2_k)$, it is enough to show that the bone of the edge connecting $\tableau{{ \ }\\{ \ }\\{ \ }}$ and $\tableau{{ \ }&{ \ }&{ \ }}$ is not compatibly split. Indeed, if the bone of this edge is not compatibly split, then neither is $\Hilb^3_0(\bA^2_k)$ since the bone of any (exterior) face of the moment polytope of a compatibly split subvariety is compatibly split. The bone of the edge connecting $\tableau{{ \ }\\{ \ }\\{ \ }}$ and $\tableau{{ \ }&{ \ }&{ \ }}$ is not compatibly split by Lemma \ref{l;puncSplit}.
\end{proof}

\begin{figure}[!h]
\begin{center}
\includegraphics[scale = 0.32]{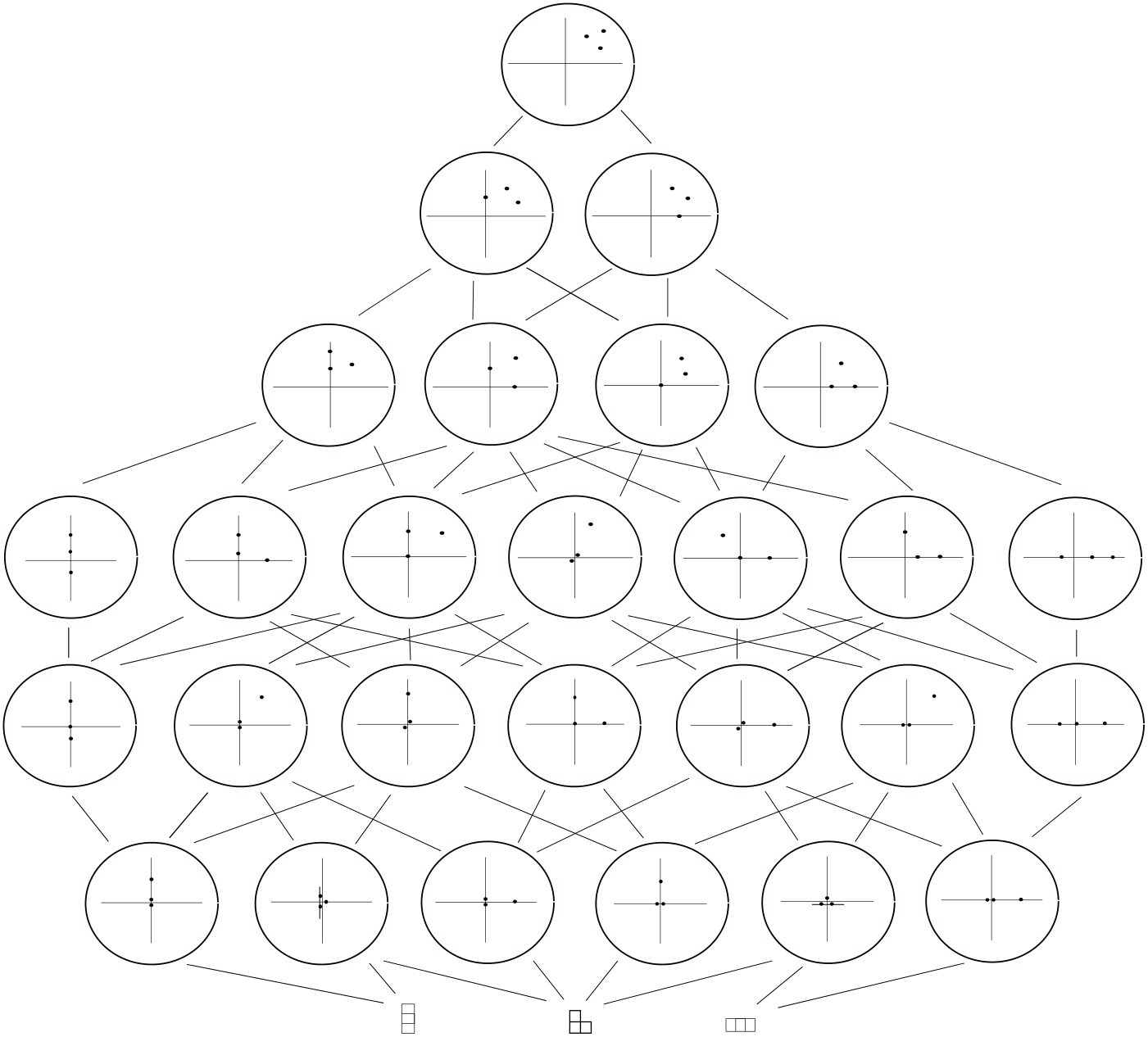}
\caption{The compatibly split subvarieties of $\Hilb^3(\bA^2_k)$ using stratum representative pictures. The lines drawn indicate containment.}
\label{fig;3pts}
\end{center}
\end{figure} 

The stratification of $\Hilb^3(\bA^2_k)$ by all compatibly split subvarieties is drawn in Figure \ref{fig;3pts}. Most of the stratum representative pictures are of the form ``stratum representative picture from the $n=2$ case with a point added on one of the two axes or in $\bA^2_k\setminus \{xy=0\}$''. So, it remains to explain the pictures representing those subvarieties $Y\subseteq \Hilb^3_0(\bA^2_k)$. The stratum representative second from the left (respectively second from the right) on the line containing dimension-$1$ compatibly split subvarieties represents the bone of the edge connecting $\tableau{{ \ }\\{ \ }\\{ \ }}$ and $\tableau{{ \ }\\{ \ }&{ \ }}$ (resp. $\tableau{{ \ }\\{ \ }&{ \ }}$ and $\tableau{{ \ }&{ \ }&{ \ }}$). The vertical line (resp. horizontal line) in the picture is to indicate that $\frac{\partial f}{\partial y} (0,0) = 0$ (resp. $\frac{\partial f}{\partial x} (0,0) = 0$) for all $f$ in any colength-$3$ ideal $I\in Y$. For each of the $0$-dimensional compatibly split subvarieties $\{\langle x,y^3\rangle\}$, $\{\langle x^2,xy,y^2\rangle\}$, and $\{\langle x^3,y\rangle\}$, we draw the standard set associated to the relevant monomial ideal. 

\begin{proposition}\label{p;4pts}
Let $\tableau{{ \ }\\{ \ }\\{ \ }\\{ \ }}~,$ $\tableau{{ \ }\\{ \ }\\{ \ }&{ \ }}~,$ $\tableau{{ \ }&{ \ }\\{ \ }&{ \ }}~,$ $\tableau{{ \ }\\{ \ }&{ \ }&{ \ }}~,$ and $\tableau{{ \ }&{ \ }&{ \ }&{ \ }}$ denote the moment polytopes of the $T^2$-fixed points $\langle x,y^4\rangle$, $\langle x^2,xy,y^3\rangle$, $\langle x^2,y^2\rangle$, $\langle x^3,xy,y^2\rangle$, $\langle x^4,y\rangle$. The compatibly split subvarieties of $\Hilb^4(\bA^2_k)$ which are contained inside the punctual Hilbert scheme $\Hilb^4_0(\bA^2_k)$ are precisely those $Y$ appearing in the following list.
\begin{enumerate}
\item $Y$ is the bone of the edge in the moment polyhedron of $\Hilb^4(\bA^2_k)$ connecting $\tableau{{ \ }\\{ \ }\\{ \ }\\{ \ }}$ and $\tableau{{ \ }\\{ \ }\\{ \ }&{ \ }}~.$ That is, $Y=\{\langle y^4,xy,x^2,ay^3+bx\rangle~|~[a,b]\in \bP^1_k\}$. 
\item $Y$ is the bone of the edge in the moment polyhedron of $\Hilb^4(\bA^2_k)$ connecting $\tableau{{ \ }\\{ \ }&{ \ }&{ \ }}$ and $\tableau{{ \ }&{ \ }&{ \ }&{ \ }}~.$ That is, $Y=\{\langle y^2,xy,x^4,ay+bx^3\rangle~|~[a,b]\in \bP^1_k\}$. 
\item $Y$ is the bone of the edge in the moment polyhedron connecting the vertices $\tableau{{ \ }\\{ \ }\\{ \ }&{ \ }}~,$ $\tableau{{ \ }&{ \ }\\{ \ }&{ \ }}~,$ and $\tableau{{ \ }\\{ \ }&{ \ }&{ \ }}~.$ That is, \[Y=\left\{\langle y^3,xy^2,x^2y,x^3,ax^2+bxy+cy^2, dx^2+exy+fy^2 \rangle~\bigg|~ \textrm{rank} \begin{pmatrix}a & b & c \\d & e & f \end{pmatrix} = 2 \right\}.\] In other words, $Y\cong \textrm{Gr}_2(\langle y^2,xy,x^2\rangle / \langle y^3,xy^2,x^2y,x^3\rangle)$, the Grassmannian of $2$-planes in the vector space $\langle y^2,xy,x^2\rangle / \langle y^3,xy^2,x^2y,x^3\rangle$. 
\item Let the subvariety in 3. be given by $\textrm{Proj}(k[w_0,w_1,w_2])$ where $w_0$, $w_1$, and $w_2$ are the Pl\"{u}cker coordinates $ae-bd$, $af-cd$, and $bf-ce$. Then the subvarieties $\{w_1 = 0\}$ and $\{w_1^2-w_0w_2 = 0\}$ are compatibly split subvarieties of $\Hilb^4(\bA^2_k)$.
\item $Y$ is one of the following four $T^2$-fixed points: $\langle x,y^4\rangle$, $\langle x^2,xy,y^3\rangle$, $\langle x^3,xy,y^2\rangle$, or $\langle x^4,y\rangle$. Note that $\langle x^2,y^2\rangle$ is not compatibly split.
\end{enumerate}
\end{proposition}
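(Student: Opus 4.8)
The plan is to split the argument into a ``lower bound'' (everything on the list is compatibly split) and an ``upper bound'' (nothing else in $\Hilb^4_0(\bA^2_k)$ is), using the $T^2$-invariance of the splitting to force every compatibly split $Y\subseteq\Hilb^4_0(\bA^2_k)$ to be $T^2$-invariant and hence to have moment polytope contained in that of $\Hilb^4_0(\bA^2_k)$. From Figure \ref{fig;hilb4polytope} the latter is the quadrilateral whose vertices are the images of $\langle x,y^4\rangle$, $\langle x^2,xy,y^3\rangle$, $\langle x^3,xy,y^2\rangle$, $\langle x^4,y\rangle$, with the image of $\langle x^2,y^2\rangle$ landing in the relative interior of the edge joining the two hooks.

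For the lower bound I would observe that the subvarieties in items 1, 2, and 3 are exactly the bones (Definition \ref{d;bone}) of the three ``inner'' edges of the moment polyhedron of $\Hilb^4(\bA^2_k)$, hence are compatibly split by Proposition \ref{p;extedge}; their explicit descriptions are obtained, as in the proof of Proposition \ref{p;3pts}, by writing down the subtorus-fixed ideals in the two relevant charts $U_\lambda$ and gluing (for items 1 and 2 one may instead quote Lemma \ref{l;puncSplit}, these being the one-dimensional bones meeting $U_{\langle x,y^4\rangle}$ and $U_{\langle x^4,y\rangle}$). The four fixed points of item 5 then occur as endpoints of the curves in items 1 and 2, so are compatibly split as components of compatibly split subvarieties. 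For item 4 I would determine the induced splitting on the bone $Y\cong\Gr_2\bigl(\langle y^2,xy,x^2\rangle/\langle y^3,xy^2,x^2y,x^3\rangle\bigr)\cong\bP^2$: it is $T^2$-invariant, the coordinates $w_0,w_1,w_2$ and the quadric $w_1^2-w_0w_2$ are $T^2$-semi-invariant (check $2\,\mathrm{wt}(w_1)=\mathrm{wt}(w_0)+\mathrm{wt}(w_2)$ from the weights of $x^2\wedge xy$, $x^2\wedge y^2$, $xy\wedge y^2$), and the vanishing locus of a splitting section is a reduced anticanonical, hence cubic, divisor; therefore this divisor is either the coordinate triangle $\{w_0w_1w_2=0\}$ or $\{w_1=0\}\cup\{w_1^2-w_0w_2=0\}$. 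To decide between them I would carry out an explicit computation of the induced (Kumar--Thomsen) splitting in the chart $U_{\langle x^2,y^2\rangle}\cong\bA^8_k$, where it has the form $\textrm{Tr}(g^{p-1}\,\cdot\,)$ for a computable degree-$8$ polynomial $g$; the computation gives $\{w_1=0\}\cup\{w_1^2-w_0w_2=0\}$, so by Proposition \ref{p;zerosSplit} the line $\{w_1=0\}$ and the conic $\{w_1^2-w_0w_2=0\}$ are compatibly split.

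For the upper bound, let $Y\subseteq\Hilb^4_0(\bA^2_k)$ be compatibly split. If $Y$ meets $U_{\langle x,y^4\rangle}$, then Lemma \ref{l;puncSplit} gives that $Y$ is $\{\langle x,y^4\rangle\}$ or the subvariety of item 1; by the $x\leftrightarrow y$ symmetry of the splitting, if $Y$ meets $U_{\langle x^4,y\rangle}$ it is $\{\langle x^4,y\rangle\}$ or the subvariety of item 2. Otherwise $Y$ contains neither $\langle x,y^4\rangle$ nor $\langle x^4,y\rangle$, so by Lemma \ref{l;fixedPointCover} its $T^2$-fixed points lie among $\langle x^2,xy,y^3\rangle$, $\langle x^2,y^2\rangle$, $\langle x^3,xy,y^2\rangle$, whence its moment polytope lies on the segment joining the two hooks; thus $Y$ is pointwise fixed by the diagonal torus and is contained in the bone $Y_0\cong\bP^2$ of item 3. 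Being compatibly split in $\Hilb^4(\bA^2_k)$ and contained in the compatibly split $Y_0$, $Y$ is compatibly split in $Y_0$ with the induced splitting (Proposition \ref{p;openSplit}); now the Kumar--Mehta Lemma (Theorem \ref{t;divisors}), the determination of the vanishing divisor above, and a short run of the Knutson--Lam--Speyer algorithm on $(\bP^2,\{w_1=0\}\cup\{w_1^2-w_0w_2=0\})$ show that $Y$ is one of $Y_0$, the line, the conic, or one of the two points $[1:0:0]$, $[0:0:1]$ in which they meet. Since $\langle x^2,y^2\rangle$ corresponds to $[0:1:0]$, which lies on neither the line nor the conic, it is not compatibly split, establishing the last assertion of item 5.

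I expect the main obstacle to be the explicit identification of the induced splitting on the $\bP^2$ of item 3 --- equivalently, ruling out the coordinate-triangle alternative, equivalently proving that $\langle x^2,y^2\rangle$ is not compatibly split. Every other step is either formal (bones, the $x\leftrightarrow y$ symmetry, Kumar--Mehta, Lemma \ref{l;fixedPointCover}) or a direct invocation of Lemma \ref{l;puncSplit}, but this one point forces an honest computation of the splitting in a chart of $\Hilb^4(\bA^2_k)$. A secondary subtlety to handle with care is that the induced splitting on a bone of codimension greater than $1$ need not be a $(p-1)^{\mathrm{st}}$ power, so the Kumar--Mehta step should be phrased in terms of the vanishing divisor of the splitting section rather than assuming it is a perfect power --- or else one should check separately that on this particular bone the splitting is a $(p-1)^{\mathrm{st}}$ power.
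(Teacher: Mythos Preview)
Your overall architecture (lower bound via bones of exterior faces and Proposition~\ref{p;extedge}, upper bound via Lemma~\ref{l;puncSplit} and the moment polytope to reduce to the $\bP^2$ of item~3, then Kumar--Mehta on that $\bP^2$) matches the paper's. The genuine difference is in how you handle item~4.

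You propose to determine the induced splitting on the $\bP^2$ by a direct chart computation in $U_{\langle x^2,y^2\rangle}$, narrowing the $T^2$-invariant anticanonical divisor to either the coordinate triangle or $\{w_1=0\}\cup\{w_1^2-w_0w_2=0\}$ and then computing to decide. The paper instead shows that the line and the conic are compatibly split \emph{without} computing the splitting on $\bP^2$: it exhibits each as an intersection $W_i\cap Y$ of two known compatibly split subvarieties of $\Hilb^4(\bA^2_k)$. Concretely, $W_1$ is the closure of the image of $i_{0,0,1,Z_1}$ with $Z_1=\{\langle x^2,xy,y^2\rangle\}\subseteq\Hilb^3$ (one point free in the plane, three at the origin as a fat point), and $W_2$ is the closure of the image of $i_{1,0,0,Z_2}$ with $Z_2$ the bone of the short diagonal edge in $\Hilb^3$ (one point on the punctured $x$-axis, three at the origin along that bone). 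Working in the chart $U_{\langle x^2,xy,y^3\rangle}$---note: a chart centered at a compatibly split fixed point, unlike your choice of $U_{\langle x^2,y^2\rangle}$---the paper writes down the ideals of $W_1$, $W_2$, and $Y$ explicitly and checks that $W_1\cap Y$ is the conic and $W_2\cap Y$ is the line.

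What each buys: your route is self-contained on the $\bP^2$ but requires an honest computation of the degree-$8$ polynomial $g$ and then extracting which cubic divides $g$; the paper's route replaces that with the conceptual input of Proposition~\ref{p;intuition} (compatibly split subvarieties built from lower $n$) plus a smaller ideal computation. Once the line and conic are known to be compatibly split, the paper observes that $w_1(w_0w_2-w_1^2)$ is an anticanonical section whose $(p-1)^{\textrm st}$ power is a splitting, applies Kumar--Mehta, and notes that all $T^2$-invariant subvarieties of this cubic are already on the list---so the upper bound closes without ever needing to argue that the induced splitting on $\bP^2$ is itself a $(p-1)^{\textrm st}$ power. Your flagged subtlety is thus real for your route but is sidestepped by the paper's.
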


\begin{figure}[!h]
\begin{center}
\includegraphics[scale = 0.4]{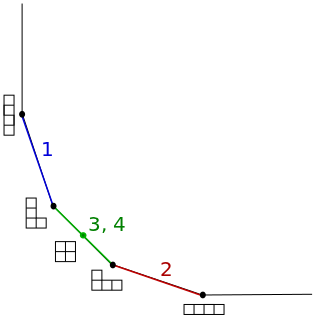}
\caption{The moment polytopes of the subvarieties in items 1. through 4. of Proposition \ref{p;4pts} are labelled in the figure. Notice that the (green) interior vertex is the moment polytope of the $T^2$-fixed point $\langle x^2, y^2\rangle$, which is not compatibly split. Note also that there should not be an interior vertex drawn in the moment polytope of the subvarieties described in item 4.}
\end{center}
\end{figure} 

\begin{lemma}
If $Y$ is the bone of the edge in the moment polyhedron of $\Hilb^4(\bA^2_k)$ connecting $\tableau{{ \ }\\{ \ }\\{ \ }\\{ \ }}$ and $\tableau{{ \ }\\{ \ }\\{ \ }&{ \ }}~,$ then $Y=\{\langle y^4,xy,x^2,ay^3+bx\rangle~|~[a,b]\in \bP^1_k\}$. If $Y$ is the bone of the edge in the moment polyhedron of $\Hilb^4(\bA^2_k)$ connecting $\tableau{{ \ }\\{ \ }&{ \ }&{ \ }}$ and $\tableau{{ \ }&{ \ }&{ \ }&{ \ }}~,$ then $Y=\{\langle y^2,xy,x^4,ay+bx^3\rangle~|~[a,b]\in \bP^1_k\}$. If $Y$ is the bone of the edge in the moment polyhedron connecting the vertices $\tableau{{ \ }\\{ \ }\\{ \ }&{ \ }}~,$ $\tableau{{ \ }&{ \ }\\{ \ }&{ \ }}~,$ and $\tableau{{ \ }\\{ \ }&{ \ }&{ \ }}~,$ then \[Y:=\left\{\langle y^3,xy^2,x^2y,x^3,ax^2+bxy+cy^2, dx^2+exy+fy^2 \rangle~\bigg|~\textrm{rank} \begin{pmatrix} a & b & c \\ d & e & f \end{pmatrix} = 2\right\}.\]
\end{lemma}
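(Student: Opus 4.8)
The plan is to mimic the proof of the corresponding $n=3$ lemma. By the proposition in Subsection~\ref{s;xraysHilb} the five $T^2$-fixed points $\langle x,y^4\rangle,\ \langle x^2,xy,y^3\rangle,\ \langle x^2,y^2\rangle,\ \langle x^3,xy,y^2\rangle,\ \langle x^4,y\rangle$ have moment points $(0,6),(1,3),(2,2),(3,1),(6,0)$ respectively, and recall that the coordinate $c^{r,s}_{h,l}$ on $U_\lambda$ has $T^2$-weight $(h-r,l-s)$. Write $Q_1$ for the edge from $(0,6)$ to $(1,3)$, $Q_2$ for the edge from $(3,1)$ to $(6,0)$, and $Q_3$ for the (degenerate) segment through $(1,3),(2,2),(3,1)$. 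Since the direction of $Q_1$ is $(1,-3)$, Definition~\ref{d;bone} shows the bone $Y_1$ over $Q_1$ is pointwise fixed by the subtorus $T^1=\{(t^3,t)\mid t\in\bG_m\}$; since the direction of $Q_3$ is $(1,-1)$, the bone $Y_3$ over $Q_3$ is pointwise fixed by $\{(t,t)\mid t\in\bG_m\}$; and $Q_2$ is the image of $Q_1$ under the automorphism of $\Hilb^4(\bA^2_k)$ induced by $(x,y)\mapsto(y,x)$, which carries bones to bones, so the second claim will follow from the first by applying that automorphism. Thus it suffices to treat $Q_1$ and $Q_3$.

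For $Q_1$ the only lattice points of $Q_1$ are its endpoints, so $Y_1$ contains exactly the fixed points $\langle x,y^4\rangle$ and $\langle x^2,xy,y^3\rangle$, and Lemma~\ref{l;fixedPointCover} gives $Y_1=(Y_1\cap U_{\langle x,y^4\rangle})\cup(Y_1\cap U_{\langle x^2,xy,y^3\rangle})$. In $U_{\langle x,y^4\rangle}$ every ideal is $\langle y^4-\sum_j c^{0,4}_{0,j}y^j,\ x-\sum_j c^{1,0}_{0,j}y^j\rangle$; the $T^1$-weight of $c^{0,4}_{0,j}$ is $j-4$, never $0$ for $j\le 3$, and that of $c^{1,0}_{0,j}$ is $j-3$, zero only for $j=3$, so the $T^1$-fixed ideals in this chart are exactly $\langle y^4,\ x-\lambda y^3\rangle=\langle y^4,xy,x^2,\ bx+ay^3\rangle$ with $[a:b]=[-\lambda:1]$, a copy of $\bA^1_k$. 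A symmetric computation in $U_{\langle x^2,xy,y^3\rangle}$ produces the complementary chart, and gluing yields $Y_1=\{\langle y^4,xy,x^2,ay^3+bx\rangle\mid [a,b]\in\bP^1_k\}$.

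For $Q_3$ one has $T^1=\{(t,t)\}$, and $c^{r,s}_{h,l}$ is $T^1$-fixed precisely when $h+l=r+s$, so the $T^1$-fixed ideals are those homogeneous for the standard grading. The lattice points of $Q_3$ are $(1,3),(2,2),(3,1)$, so $Y_3$ contains the three fixed points $\langle x^2,xy,y^3\rangle,\ \langle x^2,y^2\rangle,\ \langle x^3,xy,y^2\rangle$, and Lemma~\ref{l;fixedPointCover} covers $Y_3$ by the three corresponding patches. In $U_{\langle x^2,y^2\rangle}$ the condition $h+l=r+s$ kills every coordinate except $c^{2,0}_{1,1}$ and $c^{0,2}_{1,1}$, so $Y_3\cap U_{\langle x^2,y^2\rangle}$ is the family $\langle x^2-\alpha xy,\ y^2-\beta xy,\ x^2y,\ xy^2\rangle$, $(\alpha,\beta)\in\bA^2_k$, i.e.\ the homogeneous ideals with Hilbert function $(1,2,1)$ whose degree-$2$ part $W$ is complementary to $\langle xy\rangle$; the other two patches give the charts where $W$ is complementary to $\langle x^2\rangle$, resp.\ to $\langle y^2\rangle$. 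Identifying such an ideal with $W\subseteq\langle y^2,xy,x^2\rangle/\langle y^3,xy^2,x^2y,x^3\rangle$, these are the three standard affine charts of $\textrm{Gr}_2$ of this three-dimensional space, so $Y_3=\left\{\langle y^3,xy^2,x^2y,x^3,\,ax^2+bxy+cy^2,\,dx^2+exy+fy^2\rangle \;\middle|\; \textrm{rank}\begin{pmatrix} a&b&c\\ d&e&f\end{pmatrix}=2\right\}$.

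The main obstacle — already present in the $n=3$ analogue but fussier here — is checking that in each patch the $T^1$-fixed locus is exactly the affine space described above and nothing smaller: since the relations $J_\lambda$ cutting out $U_\lambda$ are $T^2$-homogeneous, one must verify that after setting the non-fixed coordinates to zero they impose no further relation among the surviving coordinates, equivalently that the exhibited ideal genuinely has colength $4$ for all values of the parameters (for $Q_3$ this amounts to showing $\{1,x,y,xy\}$, resp.\ $\{1,x,y,y^2\}$, remains a basis of the quotient). A minor further point is that each bone contains precisely the $T^2$-fixed points lying on $Q_i$, so that Lemma~\ref{l;fixedPointCover} applies as stated; this holds because $U_\lambda$ is an open neighbourhood of each such $\lambda$ on which the fixed locus of the subtorus reduces to the irreducible affine space just computed, whose moment polytope lies inside $Q_i$.
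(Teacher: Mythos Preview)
Your proof is correct and follows essentially the same approach as the paper: identify the relevant one-parameter subgroup $T^1$ from the edge direction, use Lemma~\ref{l;fixedPointCover} to cover the bone by the patches $U_\lambda$ at the fixed points on the edge, compute the $T^1$-fixed locus in each patch as an affine space, and glue. Your treatment is in fact slightly more careful than the paper's (which for $Q_1,Q_2$ simply says ``nearly identical to the 3-point case'' and for $Q_3$ just lists the three $\bA^2$ charts without your closing remarks on why no extra relations appear), and your use of the $x\leftrightarrow y$ involution to deduce the $Q_2$ case from $Q_1$ is a harmless shortcut.
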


\begin{proof}
The argument to verify the first two claims is nearly identical to the one given in the 3-point case. So, we just consider the case where $Y$ is the bone of the edge in the moment polyhedron connecting the vertices $\tableau{{ \ }\\{ \ }\\{ \ }&{ \ }}~,$ $\tableau{{ \ }&{ \ }\\{ \ }&{ \ }}~,$ and $\tableau{{ \ }\\{ \ }&{ \ }&{ \ }}~.$ The coordinates of $\tableau{{ \ }\\{ \ }\\{ \ }&{ \ }}~,$ $\tableau{{ \ }&{ \ }\\{ \ }&{ \ }}~,$ and $\tableau{{ \ }\\{ \ }&{ \ }&{ \ }}$ are $(1,3)$, $(2,2)$, and $(3,1)$ respectively. So, the subtorus $T^1 = \{(t,t)~|~t\in\bG_m\}$ pointwise fixes $Y$. All elements of $Y\cap U_{\langle y^3,xy,x^2\rangle}$ which are pointwise fixed by $T^1$ have the form $\langle y^3, xy^2, xy-\lambda_1y^2, x^2-\lambda_2y^2 \rangle$, $\lambda_1,\lambda_2\in k$, all elements of $Y\cap U_{\langle x^2,y^2\rangle}$ which are pointwise fixed by $T^1$ have the form $\langle y^2x, x^2y, y^2-\mu_1 xy, x^2-\mu_2 xy \rangle$, $\mu\in k$, and all elements of $Y\cap U_{\langle x^3, xy, y^2\rangle}$ which are pointwise fixed by $T^1$ have the form $\langle x^3, x^2y, xy-\nu_1 x^2, y^2-\nu_2 x^2 \rangle$, $\nu_1, \nu_2\in k$. These copies of $\bA^2_k$ glue to give \[Y:=\left\{\langle y^3,xy^2,x^2y,x^3,ax^2+bxy+cy^2, dx^2+exy+fy^2 \rangle~\bigg|~\textrm{rank} \begin{pmatrix} a & b & c \\ d & e & f \end{pmatrix} = 2\right\}.\] That is, $Y=\textrm{Proj}(k[w_0,w_1,w_2])$ where $w_0$, $w_1$, and $w_2$ are given by the Pl\"{u}cker coordinates $ae-bd$, $af-cd$, and $bf-ce$. Notice that $Y\cap U_{\langle y^3,xy,x^2\rangle}$, $Y\cap U_{\langle x^2,y^2\rangle}$, and $Y\cap U_{\langle x^2, xy, y^3\rangle}$ are the open sets $\{w_0\neq 0\}$, $\{w_1\neq 0\}$, and $\{w_2\neq 0\}$ respectively. 
\end{proof}

We now prove Proposition \ref{p;4pts}.

\begin{proof}[Proof of Proposition \ref{p;4pts}]
Each of the subvarieties mentioned in items 1., 2., 3., and 5. of the proposition is the bone of some exterior face of the moment polyhedron of $\Hilb^4(\bA^2_k)$ (see Figure \ref{fig;hilb4polytope}). Thus, they are all compatibly split. We now show that the subvarieties mentioned in item 4. are compatibly split by realizing them as intersections of known compatibly split subvarieties. 

Let $Y_1 = \{w_1^2-w_0w_2 = 0\}$ and $Y_2 = \{w_1=0\}$ be the subvarieties of $\textrm{Proj}(k[w_0,w_1,w_2])$ described in item 4. Notice that the $0$-dimensional subvariety $\{\langle x^2, xy, y^3\rangle\}$ is given by $\{w_1 = w_2 = 0\}\subseteq \textrm{Proj}(k[w_0,w_1,w_2])$ and so $\{\langle x^2, xy, y^3\rangle\}$ is a subvariety of both $Y_1$ and $Y_2$. Thus, $Y_1$ and $Y_2$ are compatibly split if and only if $Y_1\cap U_{\langle x^2, xy,y^3\rangle}$ and $Y_2\cap U_{\langle x^2, xy, y^3\rangle}$ are compatibly split in $U_{\langle x^2, xy, y^3\rangle}$ with the induced splitting. 

We can check that every element of $U_{\langle x^2,xy,y^3\rangle}$ is an ideal generated by polynomials of the form \[\begin{array}{ccccccccc}y^3&-&b_1y^2&-&b_2y&-&b_3x&-&c_1\\ xy^2&-&a_1y^2&-&c_2y&-&c_3x&-&c_4\\xy&-&a_2y^2&-&c_5y&-&b_4x&-&c_6\\x^2&-&a_3y^2&-&c_7y&-&a_4x&-&c_8\end{array}\]where each $a_i$ and $b_i$ are elements of $k$ and each $c_i$ is a polynomial (obtained via Buchberger's S-pair criterion) in $a_1,\dots,a_4$, $b_1\dots,b_4$. (See the proof of Lemma \ref{l;cij} for further explanation regarding computations of this sort.) Thus, $U_{\langle x^2, xy, y^3\rangle}\cong k[a_1,\dots,a_4,b_1,\dots,b_4]$.

Let $Z_1 = \{\langle x^2,xy,y^2\rangle\}\subseteq \Hilb^3(\bA^2_k)$ and let $W_1$ be the closure of the image of \[i_{0,0,1,Z_1}:\Hilb^1(\bA^2_k\setminus\{xy=0\})\times Z_1\rightarrow \Hilb^4(\bA^2_k)\] where $i_{0,0,1,Z_1}$ is as in Proposition \ref{p;intuition}. Then $W_1$ is compatibly split. Now, let $W_1^{o}$ denote the image of $i_{0,0,1,Z_1}$. Notice that $W_1^{o}$ is an open set of $W_1$ and that any $I\in W_1^{o}$ has the form $I = \langle x^2,xy,y^2\rangle\cap \langle x-\alpha, y-\beta\rangle$, $\alpha, \beta\in k^{*}$. Thus, $I = \langle y^3-\beta y^2, xy^2-\alpha y^2, xy-\frac{\alpha}{\beta}y^2, x^2-\frac{\alpha^2}{\beta^2}y^2\rangle$, and we see that $W_1\cap U_{\langle x^2,xy,y^3\rangle}$ is given by the ideal $\langle a_2b_1-a_1, a_2^2-a_3, a_1a_2-a_3b_1, a_3b_1^2-a_1^2, b_2, b_3, b_4, a_4\rangle$. 

Let $Z_2$ be the bone of the exterior edge in the moment polyhedron of $\Hilb^3(\bA^2_k)$ connecting $\tableau{{ \ }\\{ \ }\\{ \ }}$ and $\tableau{{ \ }\\{ \ }&{ \ }}~.$ Let $W_2$ be the closure of the image of \[i_{1,0,0,Z_2}: \Hilb^1(\textrm{punctured x-axis})\times Z_2\rightarrow \Hilb^4(\bA^2_k)\] where $i_{1,0,0,Z_2}$ is as in Proposition \ref{p;intuition}. Then $W_2$ is compatibly split. Let $W_2^{o}$ denote the image of $i_{1,0,0,Z_2}$. Let $W_3$ denote the closure of the image of $i_{1,0,0,Z_1}$ where $Z_1 = \{\langle x^2,xy,y^2\rangle\}\subseteq \Hilb^3(\bA^2_k)$ as in the previous paragraph. Then $W_3$ is a closed subvariety of $W_2$. Let $W_2' = W_2^{o}\cap (W_2\setminus W_3)$. Notice that $W_2'$ is an open set in $W_2$ and that any $I\in W_2'$ has the form $I = \langle y^3, x-\alpha y^2\rangle\cap\langle x-\beta, y\rangle$, $\alpha\in k$, $\beta\in k^{*}$. Thus, $I = \langle y^3, xy^2, xy, x^2-\alpha\beta y^2-\beta x\rangle$, and we see that $W_2\cap U_{\langle x^2, xy, y^3\rangle}$ is given by $\langle a_1, a_2, b_1, b_2, b_3, b_4\rangle$.

Let $Y = \textrm{Proj}(k[w_0, w_1,w_2])$ be as in item 3. of the statement of the proposition. Any $I\in Y\cap U_{\langle x^2, xy, y^3\rangle}$ is given by an ideal of the form $\langle y^3, xy^2, xy-\alpha y^2, x^2-\beta y^2\rangle$ for some $\alpha, \beta \in k$. So, $Y\cap U_{\langle x^2, xy, y^3\rangle}$ is given by the ideal $\langle a_1, a_4, b_1,b_2,b_3,b_4\rangle$.

The intersections $(W_1\cap U_{\langle x^2,xy, y^3\rangle})\cap (Y\cap U_{\langle x^2, xy, y^3\rangle})$ and $(W_2\cap U_{\langle x^2,xy, y^3\rangle})\cap (Y\cap U_{\langle x^2, xy, y^3\rangle})$ are compatibly split subvarieties of $U_{\langle x^2, xy, y^3\rangle}$ and are given by the ideals $\langle a_3-a_2^2, a_1, a_4, b_1,b_2,b_3,b_4\rangle$ and $\langle a_1, a_2, a_4, b_1, b_2, b_3, b_4\rangle$. 

Now, $U_{\langle x^2, xy, y^3\rangle}\cap Y$ is the open patch of $Y$ given by $\{w_0 = 1\}$. Notice that, on this patch, $w_1 = -a_2$ and $w_2 = a_3$. Thus,  $W_1\cap Y$ is the subvariety of $Y =\textrm{Proj}(k[w_0,w_1,w_2])$ given by $\{w_0w_2-w_1^2\}$ and $W_2\cap Y$ is given by $\{w_1 = 0\}$. This completes the proof that all subvarieties listed in the statement of the proposition are compatibly split.

As in the 3-point situation, we can use Figure \ref{fig;hilb4polytope} and Lemma \ref{l;puncSplit} to conclude that any compatibly split subvariety of $\Hilb^4(\bA^2_k)$ which is contained in $\Hilb^4_0(\bA^2_k)$ and which does not appear in the list given in the statement of Proposition \ref{p;4pts} must be a subvariety of $Y = \textrm{Proj}(k[w_0,w_1,w_2])$. Because $\{w_1(w_0w_2-w_1^2)=0\}$ is an anticanonical divisor which determines a splitting of $\bP^2$, we may apply \cite[Proposition 2.1]{KM} to see that all compatibly split subvarieties of $\bP^2_k$ are contained inside of $\{w_1(w_0w_2-w_1^2)=0\}$. As all $T^2$-invariant subvarieties of $\{w_1(w_0w_2-w_1^2)=0\}$ are $T^2$-fixed points already appearing in the list given in the statement of the proposition, there do not exist any compatibly split subvarieties of $\Hilb^4(\bA^2_k)$ which are both contained in $\Hilb^4_0(\bA^2_k)$ and which do not appear in the list given in the statement of Proposition \ref{p;4pts}.
\end{proof}

The stratification of $\Hilb^4(\bA^2_k)$ by all compatibly split subvarieties is drawn in Figure \ref{fig;4pts}. (See the last page of this subsection.) Most of the stratum representative pictures are of the form ``stratum representative picture from the $n=3$ case with a point added on one of the two axes or in $\bA^2_k\setminus \{xy=0\}$''. So, it remains to explain the pictures representing those compatibly split subvarieties $Y\subseteq \Hilb^4_0(\bA^2_k)$. The stratum representative second from the left (respectively second from the right) in the line containing the dimension-$1$ compatibly split subvarieties represents the bone of the edge connecting $\tableau{{ \ }\\{ \ }\\{ \ }\\{ \ }}$ and $\tableau{{ \ }\\{ \ }\\{ \ }&{ \ }}$ (resp. $\tableau{{ \ }\\{ \ }&{ \ }&{ \ }}$ and $\tableau{{ \ }&{ \ }&{ \ }&{ \ }}$). The vertical line (resp. horizontal line) in the picture is to indicate that $\frac{\partial f}{\partial y} (0,0) = 0$ and $\frac{\partial^2 f}{\partial y^2} (0,0) = 0$ (resp. $\frac{\partial f}{\partial x} (0,0) = 0$ and $\frac{\partial^2 f}{\partial x^2}(0,0) = 0$) for all $f$ in any colength-$4$ ideal $I\in Y$. Next, let $Y = \textrm{Proj}(k[w_0,w_1,w_2])$ be the subvariety from item 3. of Proposition \ref{p;4pts}. Each $I\in Y\cap U_{\langle x^2, xy, y^3\rangle}$ has the form $\langle y^3, xy^2, xy-\lambda_1y^2, x^2 - \lambda_2y^2\rangle$. Each such point is the limit, under dilation toward the origin, of an element of the form $\langle x,y\rangle\cap \langle x-a,y\rangle\cap \langle x, y-b\rangle\cap \langle x-c,y-d\rangle$, $a,b,c,d\in k^*$. That is, \[\lim_{t\rightarrow 0} (\langle x,y\rangle\cap \langle x-at,y\rangle\cap \langle x, y-bt\rangle\cap \langle x-ct,y-dt\rangle) = \langle y^3, xy^2, xy + \frac{c}{b-d}y^2, x^2-\frac{c(a-c)}{d(b-d)}\rangle.\] This explains the stratum representative picture appearing in the center of the line with the $2$-dimensional compatibly split subvarieties of $\Hilb^4(\bA^2_k)$. Let $Y_1 = \{w_1 = 0\}$ and $Y_2 = \{w_1^2-w_0w_2\}$. Elements of $Y_1\cap U_{\langle x^2, xy, y^3\rangle}$ have the form $\langle y^3, xy, x^2 - \lambda y^2\rangle$ and elements of $Y_2\cap U_{\langle x^2, xy, y^3\rangle}$ have the form $\langle y^3, xy - \lambda y^2, x^2-\lambda^2 y^2\rangle$. Notice that \[\lim_{t\rightarrow 0} (\langle x-at, y\rangle \cap \langle x-bt, y\rangle\cap \langle x,y-ct\rangle\cap \langle x, y-dt\rangle) = \langle y^3, xy, x^2+\frac{ab}{cd}y^2\rangle,\] and \[\lim_{t\rightarrow 0}(\langle x,y\rangle\cap \langle x-dt, y-cdt\rangle \cap \langle x+dt, y+cdt\rangle\cap \langle x-at, y-bt\rangle) = \langle y^3, xy - \frac{1}{c}y^2, x^2 - \frac{1}{c^2}y^2\rangle.\] This justifies the stratum representative pictures for $Y_1$ and $Y_2$ (see the two pictures in the middle of the line containing dimension-$1$ compatibly split subvarieties). Finally, for each $0$-dimensional compatibly split subvariety, we draw the standard set associated to the relevant monomial ideal.

\begin{sidewaysfigure}[!h]
\includegraphics[scale = 0.26]{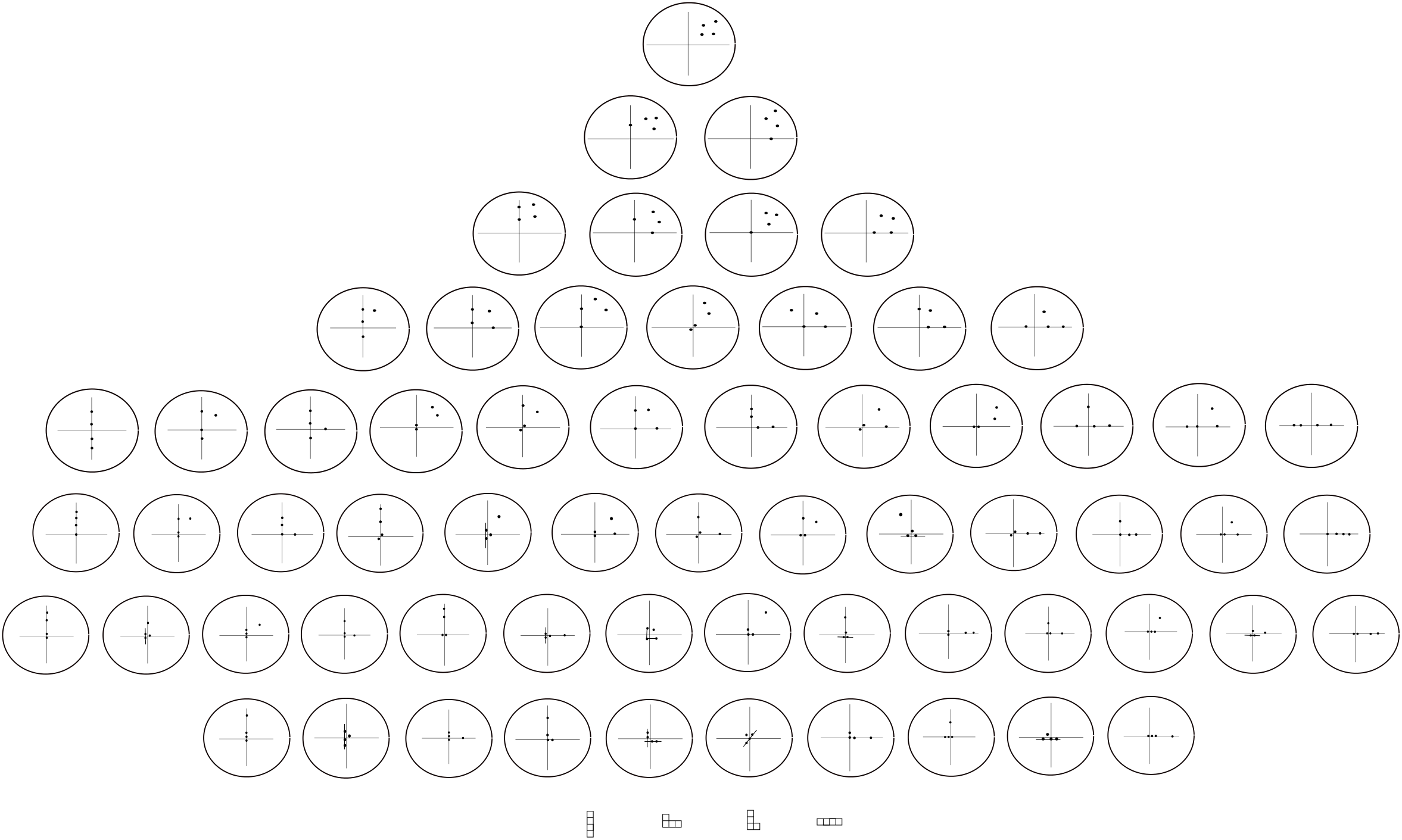}
\caption{The compatibly split subvarieties of $\Hilb^4(\bA^2_k)$}
\label{fig;4pts}
\end{sidewaysfigure}

We now turn our attention to the $n=5$ case. We note that this case uses more of the ideas discussed in Section 2.3. So, it may be preferable to read Section 2.3 before reading the material that appears in the remainder of this subsection. 

\begin{conjecture}\label{p;5pts}
Let $\tableau{{ \ }\\{ \ }\\{ \ }\\{ \ }\\{ \ }}~,$ $\tableau{{ \ }\\{ \ }\\{ \ }\\{ \ }&{ \ }}~,$ $\tableau{{ \ }\\{ \ }&{ \ }\\{ \ }&{ \ }}~,$ $\tableau{{ \ }\\{ \ }\\{ \ }&{ \ }&{ \ }}~,$ $\tableau{{ \ }&{ \ }\\{ \ }&{ \ }&{ \ }}~,$ $\tableau{{ \ }\\{ \ }&{ \ }&{ \ }&{ \ }}~,$ and $\tableau{{ \ }&{ \ }&{ \ }&{ \ }&{ \ }}$ denote the moment polytopes of the $T^2$-fixed points $\langle x,y^5\rangle$, $\langle x^2,xy,y^4\rangle$, $\langle x^2,xy^2, y^3\rangle$, $\langle x^3,xy,y^3\rangle$, $\langle x^3,x^2y,y^2\rangle$, $\langle x^4,xy,y^2\rangle$, and $\langle x^5,y\rangle$ respectively. The compatibly split subvarieties of $\Hilb^5(\bA^2_k)$ which are contained inside the punctual Hilbert scheme, $\Hilb^5_0(\bA^2_k)$, are precisely those $Y$ appearing in the following list.
\begin{enumerate}
\item $Y$ is the bone of the edge in the moment polyhedron of $\Hilb^5(\bA^2_k)$ connecting $\tableau{{ \ }\\{ \ }\\{ \ }\\{ \ }\\{ \ }}$ and $\tableau{{ \ }\\{ \ }\\{ \ }\\{ \ }&{ \ }}~.$ That is, $Y=\{\langle y^5,xy,x^2,ay^4+bx\rangle~|~[a,b]\in \bP^1_k\}$.
\item $Y$ is the bone of the edge in the moment polyhedron of $\Hilb^5(\bA^2_k)$ connecting $\tableau{{ \ }\\{ \ }&{ \ }&{ \ }&{ \ }}$ and $\tableau{{ \ }&{ \ }&{ \ }&{ \ }&{ \ }}~.$ That is, $Y=\{\langle y^2,xy,x^5,ax^4+by\rangle~|~[a,b]\in \bP^1_k\}$.
\item  $Y=\{\langle y^4,xy^2,x^2y,x^3,ax^2+bxy+cy^3, dx^2+exy+fy^3 \rangle~|~\textrm{rank} \begin{pmatrix}a & b & c \\d & e & f \end{pmatrix} = 2 \}$. Note that the moment polytope of $Y$ is the triangle with vertices $\tableau{{ \ }\\{ \ }\\{ \ }\\{ \ }&{ \ }}~,$ $\tableau{{ \ }\\{ \ }&{ \ }\\{ \ }&{ \ }}~,$ and $\tableau{{ \ }\\{ \ }\\{ \ }&{ \ }&{ \ }}~.$ 
\item $Y$ is the bone of the edge (in the moment polytope of the subvariety described in item 3.) connecting $\tableau{{ \ }\\{ \ }\\{ \ }\\{ \ }&{ \ }}$ and $\tableau{{ \ }\\{ \ }&{ \ }\\{ \ }&{ \ }}~,$ or $\tableau{{ \ }\\{ \ }&{ \ }\\{ \ }&{ \ }}$ and $\tableau{{ \ }\\{ \ }\\{ \ }&{ \ }&{ \ }}~,$ or $\tableau{{ \ }\\{ \ }\\{ \ }\\{ \ }&{ \ }}$ and $\tableau{{ \ }\\{ \ }\\{ \ }&{ \ }&{ \ }}~.$ In the first case, $Y = \{\langle y^4, xy^2, x^2, ay^3+bxy\rangle~|~[a,b]\in \bP^1_k\}$. In the second case, $Y = \{\langle y^3, xy^2, x^2y, x^3, ax^2+bxy\rangle~|~[a,b]\in \bP^1_k\}$. In the third case, $Y = \{\langle y^4, xy, x^3, ay^3+bx^2\rangle~|~[a,b]\in \bP^1_k\}$. 
\item $Y=\{\langle y^3,xy^2,x^2y,x^4,ay^2+bxy+cx^3, dy^2+exy+fx^3 \rangle~|~\textrm{rank} \begin{pmatrix}a & b & c \\d & e & f \end{pmatrix} = 2 \}$. Note that the moment polytope of $Y$ is the triangle with vertices $\tableau{{ \ }\\{ \ }\\{ \ }&{ \ }&{ \ }}~,$ $\tableau{{ \ }&{ \ }\\{ \ }&{ \ }&{ \ }}~,$ and $\tableau{{ \ }\\{ \ }&{ \ }&{ \ }&{ \ }}~.$ 
\item $Y$ is the bone of the edge (in the moment polytope of subvariety described in item 5.) connecting either $\tableau{{ \ }\\{ \ }\\{ \ }&{ \ }&{ \ }}$ and $\tableau{{ \ }&{ \ }\\{ \ }&{ \ }&{ \ }}~,$ or $\tableau{{ \ }&{ \ }\\{ \ }&{ \ }&{ \ }}$ and $\tableau{{ \ }\\{ \ }&{ \ }&{ \ }&{ \ }}~,$ or $\tableau{{ \ }\\{ \ }\\{ \ }&{ \ }&{ \ }}$ and $\tableau{{ \ }\\{ \ }&{ \ }&{ \ }&{ \ }}~.$ In the first case, $Y = \{\langle x^3, y^2x, x^2y, y^3, ay^2+bxy\rangle~|~[a,b]\in \bP^1_k\}~.$ In the second case, $Y = \{\langle x^4, x^2y, y^2, ax^3+bxy\rangle~|~[a,b]\in \bP^1_k\}$. In the third case, $Y = \{\langle x^4, xy, y^3, ax^3+by^2\rangle~|~[a,b]\in \bP^1_k\}$. 
\item $Y$ is the bone of the edge in the moment polyhedron of $\Hilb^5(\bA^2_k)$ connecting $\tableau{{ \ }\\{ \ }&{ \ }\\{ \ }&{ \ }}~,$ $\tableau{{ \ }\\{ \ }\\{ \ }&{ \ }&{ \ }}~,$ and $\tableau{{ \ }&{ \ }\\{ \ }&{ \ }&{ \ }}~.$ That is, $Y=\{\langle y^3,xy^2,x^2y,x^3,ay^2+bxy+cx^2\rangle~|~[a,b,c]\in \bP^2_k\}$. 
\item $Y$ is any $T^2$-fixed point.
\end{enumerate}
\end{conjecture}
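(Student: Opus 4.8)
The plan is to mirror the two-step strategy that worked for $n\le 4$. First, fix $p>5!=120$ so that Proposition~\ref{p;intuition} applies; it then suffices to determine the compatibly split subvarieties $Y\subseteq\Hilb^5_0(\bA^2_k)$ (everything else being built from these via the maps $i_{r,s,t,Z}$). Next, prove the two inclusions: (a) each of the varieties in items 1--8 is compatibly split, and (b) nothing else is — the second inclusion being the delicate one and the source of the ``one particular curve'' caveat in the statement.

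For inclusion (a): the two extreme edges (items 1, 2) are faces of the moment polytope of $\Hilb^5(\bA^2_k)$, so their bones are compatibly split by Proposition~\ref{p;extedge}; and since $n=5$ is combinatorially nicer than $n=4$ (all seven monomial ideals lie at genuine vertices of the convex hulls of the listed $\bP^2$'s), each of the seven $T^2$-fixed points of item 8 is the bone of an exterior vertex of one of the already-established compatibly split $\bP^1$'s or $\bP^2$'s, hence compatibly split — in contrast to $\langle x^2,y^2\rangle$ in Proposition~\ref{p;4pts}. For the three ``Grassmannian'' $\bP^2$'s (items 3, 5, 7) and the $\bP^1$'s of items 4, 6, I would argue exactly as in the proof of Proposition~\ref{p;4pts}: work in a chart $U_\lambda$ through the relevant fixed points, write each of these subvarieties (intersected with $U_\lambda$) as an intersection of closures of images of the maps $i_{r,s,t,Z}$ of Proposition~\ref{p;intuition}, or as a Lex/Revlex Gröbner degeneration of such (Subsection~\ref{s;lexRev} and the patch computations of Section~\ref{s;goodPatch}); the $\bP^1$'s of items 4 and 6 are then recovered as bones of the exterior edges of the $\bP^2$'s just built.

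For inclusion (b): since the splitting of $\Hilb^5(\bA^2_k)$ is $T^2$-invariant, every compatibly split $Y$ is $T^2$-stable, so its moment polytope is a subpolytope of the heptagon that is the moment polytope of $\Hilb^5_0(\bA^2_k)$. Lemma~\ref{l;puncSplit}, together with its $x\leftrightarrow y$ mirror, disposes of every such $Y$ meeting $U_{\langle x,y^5\rangle}$ or $U_{\langle x^5,y\rangle}$: it is a corner fixed point or one of the two extreme $\bP^1$'s. Covering any remaining candidate by the charts through the fixed points it contains (Lemma~\ref{l;fixedPointCover}), one is reduced to classifying the compatibly split subvarieties of the three $\bP^2$'s of items 3, 5, 7 with their induced splittings. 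On each such $\bP^2$ the induced splitting corresponds (via restriction and residues as in Lemma~\ref{l;adj2}, noting that — as in Remark~\ref{halfSplit} — the induced splitting need not remain a $(p-1)^{st}$ power) to an anticanonical, hence cubic, section $\sigma$; by the Kumar--Mehta Lemma (Theorem~\ref{t;divisors}) every compatibly split subvariety of the $\bP^2$ lies in $V(\sigma)$, and since $\sigma$ is a $T^2$-semiinvariant plane cubic, $V(\sigma)$ is a union of $T^2$-invariant lines and at most one $T^2$-invariant conic from an available pencil. Running ``intersect, decompose, repeat'' (as in Example~\ref{e;stdsplitting}) on this cubic yields the complete list of compatibly split subvarieties of each $\bP^2$; on the two $\bP^2$'s whose moment polytope is two-dimensional these are exactly three lines and three points (all listed), while on the $\bP^2$ whose moment polytope is a segment the cubic through its three fixed points is a priori one of a short list of $T^2$-invariant cubics, all but one of which introduce a single extra invariant curve $C$ — a specific $\bP^1$ (the diagonal joining two of the fixed points, or an invariant conic) not among items 1--8.

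The main obstacle is exactly this: the polyhedral machinery plus Kumar--Mehta cannot decide which cubic $V(\sigma)$ actually is, because $\sigma|_{\bP^2}$ depends on $p$. To settle it one passes to an explicit affine chart $U_\lambda$ of $\Hilb^5(\bA^2_k)$ through the fixed points of that $\bP^2$, writes the induced splitting there as $\Tr(f^{p-1}\cdot)$ for an explicit $f$ (obtained as in the $n=2$ computation of Subsection~\ref{s;inducedsplitting} and the patch computations of Section~\ref{s;goodPatch}), and tests directly whether the ideal of $C$ is carried into itself by $\Tr(f^{p-1}\cdot)$. Carrying this out in Macaulay~2~\cite{M2} (and Sage~\cite{sage}) for each prime $2<p\le 23$ shows it is not, which proves the conjecture for those primes up to $C$ and yields the asserted partial result. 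A uniform treatment of all $p$ would require an explicit, prime-independent description of $\sigma|_{\bP^2}$, which I do not see how to extract by these methods and which is the essential gap.
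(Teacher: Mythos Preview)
Your two-step outline is the right shape, and your treatment of the final $\bP^2$ (item~7) and the curve $C=\{b=0\}$ matches the paper's. But the exclusion step has a genuine gap.

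The sentence ``Covering any remaining candidate by the charts through the fixed points it contains (Lemma~\ref{l;fixedPointCover}), one is reduced to classifying the compatibly split subvarieties of the three $\bP^2$'s of items 3, 5, 7'' does not follow. Lemma~\ref{l;fixedPointCover} only gives an affine cover of $Y$; it does not force $Y$ into one of those $\bP^2$'s. Concretely, after Lemma~\ref{l;puncSplit} and its mirror you have ruled out candidates containing $\langle x,y^5\rangle$ or $\langle x^5,y\rangle$, but a compatibly split $Y$ could still have fixed-point set $\{\langle x^2,xy,y^4\rangle,\langle x^4,xy,y^2\rangle\}$, or all five interior fixed points, etc.; none of these are contained in any of the three $\bP^2$'s. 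The paper closes this hole with a separate, nontrivial computation (Lemma~\ref{l;badedge}): working in $U_{\langle x^2,xy,y^4\rangle}$, it exhibits the induced splitting explicitly as $\Tr((-\det M_1\det M_2)^{p-1}\,\cdot)$, finds a weighting under which this degenerates to the product splitting of $U_{\langle x,y^3\rangle}\times\bA^4$, and then applies Theorem~\ref{t;knutson} together with Lemma~\ref{l;puncSplit} (for $n=3$) to conclude that the bone of the edge from $\langle x^2,xy,y^4\rangle$ to $\langle x^4,xy,y^2\rangle$ is \emph{not} compatibly split. Only with this extra input does the moment-polytope argument (as in the $n=3,4$ cases) collapse everything down to the single $\bP^2$ of item~7; the $\bP^2$'s of items~3 and~5 never need their induced splittings analyzed.

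A smaller point on inclusion (a): your plan for items~3 and~5 (``intersection of images of $i_{r,s,t,Z}$ \ldots\ or a Gr\"obner degeneration'') is in the right spirit but not sharp enough as stated. These $\bP^2$'s are \emph{not} bones of exterior faces of the moment polyhedron of $\Hilb^5(\bA^2_k)$, so Proposition~\ref{p;extedge} does not apply. The paper instead picks two specific compatibly split subvarieties $W_1,W_2\subseteq\Hilb^5(\bA^2_k)$ (closures of images of suitable $i_{r,s,t,Z}$), exhibits a point of each that limits (under a one-parameter subgroup) to a point of the $\bP^2$ lying on no proper $T^2$-invariant subvariety, and deduces that the whole toric $\bP^2$ lies in $W_1\cap W_2$; hence it is a component of that intersection and therefore compatibly split. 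Your Gr\"obner-degeneration idea does not directly produce this conclusion.
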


\begin{figure}[H]
\begin{center}
\includegraphics[scale = 0.4]{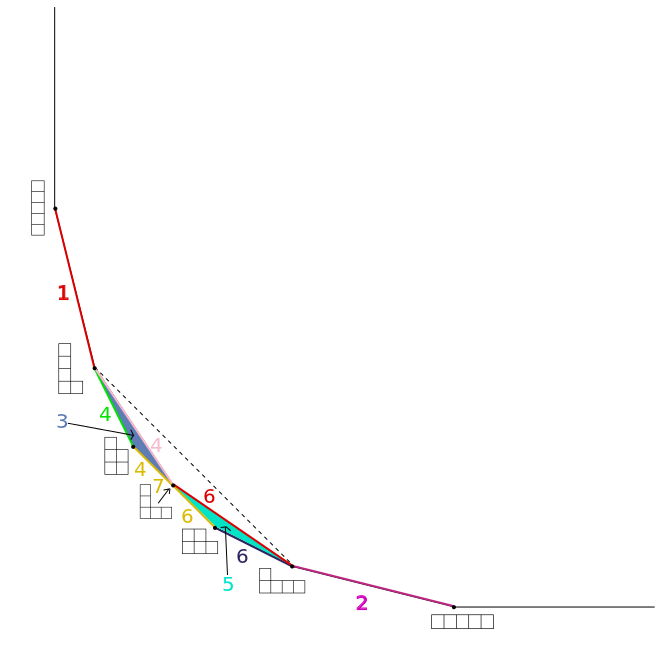}
\end{center}
\caption{The moment polyhedron of $\Hilb^5(\bA^2_k)$, with only some interior edges, is drawn above. The moment polytopes of the subvarieties in items 1. through 7. of Conjecture \ref{p;5pts} are labelled. Notice that all of the vertices, including the interior one, is the moment polytope of a compatibly split $T^2$-fixed point. Note also that there does not exist a compatibly split subvariety of $\Hilb^5(\bA^2_k)$ having the dashed edge as its moment polytope (see Lemma \ref{l;badedge}).}
\label{fig;5ptspolytope}
\end{figure}

We will give a nearly complete proof that the conjecture that Conjecture \ref{p;5pts} is true, is true. In particular, we will show that all subvarieties listed in Conjecture \ref{p;5pts} are compatibly split, and that the only other potential compatibly split subvariety of $\Hilb^5(\bA^2_k)$ is \[Y = \{\langle y^3, xy^2, x^2y, x^3, ay^2+cx^2\rangle~|~[a,c]\in\bP^1_k\}.\] In addition, we'll show that $Y$ is not compatibly split for primes $2<p\leq 23$. 

\begin{lemma}\label{l;badedge}
Let $Y$ be the bone of the edge in the x-ray of $\Hilb^5(\bA^2_k)$ connecting $\tableau{{ \ }\\{ \ }\\{ \ }\\{ \ }&{ \ }}$ and $\tableau{{ \ }\\{ \ }&{ \ }&{ \ }&{ \ }}~.$ Then $Y$ is not compatibly split.
\end{lemma}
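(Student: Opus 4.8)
The plan is to show that $Y$ fails compatibility by computing in one affine chart and invoking Knutson's degeneration theorems. First I would make $Y$ explicit. The bone over the edge joining $\langle x^2,xy,y^4\rangle$ and $\langle x^4,xy,y^2\rangle$ is fixed pointwise by the diagonal subtorus $T^1=\{(t,t)\mid t\in\bG_m\}$, so its points are homogeneous colength-$5$ ideals; a direct check on Hilbert functions shows it is the $T^2$-stable $\bP^1_k$
\[
Y=\bigl\{\,I_{[\alpha:\beta]}:=(\alpha x+\beta y)\langle x,y\rangle+\langle x,y\rangle^4\ \big|\ [\alpha:\beta]\in\bP^1_k\,\bigr\},
\]
whose two $T^2$-fixed points $\langle x^2,xy,y^4\rangle$ and $\langle x^4,xy,y^2\rangle$ are recovered at $[1:0]$ and $[0:1]$. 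Using the arm/leg description of $T_\lambda\Hilb^5(\bA^2_k)$ one sees the tangent direction of $Y$ at $\lambda=\langle x^2,xy,y^4\rangle$ has $T^2$-weight $(1,-1)$, which is \emph{not} among the tangent weights spanned by the known compatibly split subvarieties through $\lambda$ (the bone of item 1 of Conjecture \ref{p;5pts} and the Grassmannian of item 3); so no torus-weight count alone settles the question, and a genuine splitting argument is needed.

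Next I would pass to the affine chart $U_\lambda$ with $\lambda=\langle x^2,xy,y^4\rangle$: it contains the fixed point $[1:0]$ of $Y$ and is isomorphic to $\bA^{10}_k$ (the standard set of $\lambda$ is a hook). Writing the generators of $I_{[\alpha:\beta]}$ in the coordinates $c^{r,s}_{h,l}$ of Theorem \ref{p;justInsidejustOutside} and eliminating $[\alpha:\beta]$ gives, after a finite Gröbner computation, the ideal $I_{Y\cap U_\lambda}$ in $k[U_\lambda]$. In parallel I would compute the induced Frobenius splitting on $U_\lambda$ exactly as is done for $U_{\langle x,y^2\rangle}$ in Subsection \ref{s;inducedsplitting}: it has the form $\mathrm{Tr}(f_\lambda^{p-1}\cdot)$ for a polynomial $f_\lambda$ cutting out $D\cap U_\lambda$, and since the compatibly split anticanonical divisor is $T^2$-invariant the initial form of $f_\lambda$ (for the torus weighting) contains the product of all ten coordinates as a term — i.e. the hypothesis of Theorem \ref{t;knutson} holds on $U_\lambda$.

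With these two ingredients the contradiction is immediate: if $Y$ were compatibly split then so would be $Y\cap U_\lambda$ for the induced splitting, and by the corollary to \cite{K} following Example \ref{e;stdsplitting} the initial ideal $\mathrm{init}(I_{Y\cap U_\lambda})$ would be a squarefree monomial ideal. I would then exhibit a weighting of the coordinates of $U_\lambda$ for which this initial ideal fails to be squarefree — equivalently, apply Theorem \ref{t;yay}(2) by producing a \emph{known} compatibly split subvariety $Y_1$ (e.g. the Grassmannian of Conjecture \ref{p;5pts}(3), one of the bones of item (4), or a product-type subvariety from Proposition \ref{p;intuition}) with $\mathrm{init}(I_{Y_1\cap U_\lambda})\supseteq\mathrm{init}(I_{Y\cap U_\lambda})$ although $Y_1\nsupseteq Y$ (which the weight-$(1,-1)$ computation above already guarantees). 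Either route forces $Y$ not to be compatibly split. The main obstacle I anticipate is the bookkeeping in the chart: pinning down the coordinate ring of $U_\lambda$ and the relations $J_\lambda$, computing the induced splitting section there and putting $f_\lambda$ in residual-normal-crossings form, and choosing the weighting that simultaneously realizes the product of all coordinates in $\mathrm{init}(f_\lambda)$ and exposes the non-squarefree generator of $\mathrm{init}(I_{Y\cap U_\lambda})$ — once that is done, the final comparison is a routine check best carried out in Macaulay2.
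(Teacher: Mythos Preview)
Your general framework is right---work on $U_\lambda$ with $\lambda=\langle x^2,xy,y^4\rangle$ and invoke Knutson's theorems---but both of your proposed endgames break down, and the paper uses a different reduction you have not anticipated.

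First, route (a) is a dead end. On $U_\lambda$ (in the coordinates $a_1,\dots,a_5,b_1,\dots,b_5$ the paper sets up) the ideal $I_{Y\cap U_\lambda}$ is already the squarefree monomial ideal $\langle b_1,\dots,b_5,a_1,a_3,a_4,a_5\rangle$: the curve $Y\cap U_\lambda$ is literally the $a_2$-axis. So no weighting will produce a non-squarefree initial ideal. Second, your candidates for $Y_1$ in route (b) do not satisfy $\mathrm{init}\,Y_1\supseteq\mathrm{init}\,Y$. The Grassmannian of item~3 meets $U_\lambda$ in the $(a_3,a_4)$-plane, and the bones of item~4 sit inside that plane; the bone of item~1 is the $b_4$-axis. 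None of these contain the $a_2$-axis, so Theorem~\ref{t;yay}(2) does not apply with these choices. Your weight observation only confirms $Y_1\nsupseteq Y$; it says nothing about the needed inclusion of initial schemes. (Also, the assertion that $T^2$-invariance alone forces $\prod a_ib_i$ to occur in $\mathrm{init}(f_\lambda)$ is not justified---consider $f=x^2+y^2$; the paper establishes this by direct computation of $\det(M_1)\det(M_2)$.)

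What the paper actually does is an \emph{intermediate} degeneration: it exhibits a partial weighting under which the splitting polynomial $f_\lambda=-\det(M_1)\det(M_2)$ degenerates not all the way to $\prod a_ib_i$ but to $a_4a_5b_4b_5\cdot f_3$, the splitting polynomial of $U_{\langle x,y^3\rangle}\times\bA^4_{a_4,a_5,b_4,b_5}$. Under this weighting $\mathrm{init}(I)=I$, so by Theorem~\ref{t;knutson}(2) it suffices to show $I$ is not compatibly split for the \emph{product} splitting on $U_{\langle x,y^3\rangle}\times\bA^4$. But there $I$ restricts to $\langle a_1,a_3,b_1,b_2,b_3\rangle$ in the $U_{\langle x,y^3\rangle}$ factor, which is exactly the subvariety $Y_1$ of Lemma~\ref{l;notsplit} (for $n=3$, $i=1$), already shown not to be split via Theorem~\ref{t;yay} applied on $U_{\langle x,y^3\rangle}$ with $Y_1=(0,2,1,+0)$. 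The point is that the compatibly split subvarieties of $U_{\langle x,y^3\rangle}$ are completely understood (Section~2.3), so the comparison variety is available there; on $U_\lambda$ directly it is not.
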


\begin{proof}
We show that $Y\cap U_{\langle x^2, xy,y^4\rangle}$ is not compatibly split in $U_{\langle x^2,xy, y^4\rangle}$ with the induced splitting. Using Macaulay 2, we can apply Buchberger's S-pair criterion (or code by M. Lederer implementing techniques in \cite{Led}) to see that all elements of $U_{\langle x^2, xy, y^4\rangle}$ are generated by polynomials of the form \[\begin{array}{ccccccccccc}y^4&-&b_1y^3&-&b_2y^2&-&b_3y&-&b_4x&-&c_1\\xy^3&-&a_1y^3&-&c_2y^2&-&c_3y&-&c_4x&-&c_5\\xy^2&-&a_2y^3&-&c_6y^2&-&c_7y&-&c_8x&-&c_9\\xy&-&a_3y^3&-&c_{10}y^2&-&c_{11}y&-&b_5x&-&c_{12}\\x^2&-&a_4y^3&-&c_{13}y^2&-&c_{14}y&-&a_5x&-&c_{15}\end{array}\]where

\noindent$c_1~= a_1b_4-a_2b_1b_4+a_2b_4b_5-a_3b_1b_4b_5-a_3b_2b_4+a_3b_4b_5^2-a_5b_4-b_1b_5^3-b_2b_5^2-b_3b_5+b_5^4$\\
$c_2~= -a_2a_3b_4+a_2b_2-a_3^2b_4b_5+a_3b_3-a_3b_5^3+a_4b_4$\\
$c_3~=2a_1a_3b_4-2a_2a_3b_1b_4-2a_2a_3b_4b_5+a_2b_3-a_2b_5^3-a_3^3b_4^2+a_3^2b_1b_4b_5-2a_3^2b_2b_4+a_3^2b_4b_5^2-a_3a_5b_4-$\\$~~~~~~~~~~a_3b_2b_5^2-a_3b_3b_5+a_3b_5^4+a_4b_4b_5$\\
$c_4~=a_2b_4+a_3b_4b_5+b_5^3$\\
$c_5~=a_1a_2b_4-a_1a_3b_4b_5-a_1b_5^3-a_2^2b_1b_4+a_2^2b_4b_5-2a_2a_3^2b_4^2-a_2a_3b_2b_4-a_2a_3b_4b_5^2-a_2a_5b_4-a_2b_2b_5^2-$\\$~~~~~~~~~~a_2b_3b_5+a_2b_5^4+a_3^3b_1b_4^2+a_3^3b_4^2b_5+a_3^2b_1b_4b_5^2+a_3^2b_2b_4b_5+a_3^2b_4b_5^3+a_3a_4b_4^2+a_3b_2b_5^3+a_4b_4b_5^2$\\
$c_6~=a_1-a_2b_1-a_3^2b_4-a_3b_5^2$\\
$c_7~=-2a_2a_3b_4-a_2b_5^2+a_3^2b_1b_4+a_3b_1b_5^2+a_3b_3+a_4b_4$\\
$c_8~=a_3b_4+b_5^2$\\
$c_9~=a_1a_3b_4-a_1b_5^2-a_2a_3b_1b_4-a_2a_3b_4b_5+a_2b_1b_5^2-a_3^2b_2b_4+2a_3^2b_4b_5^2-a_3a_5b_4-a_3b_1b_5^3-$\\$~~~~~~~~~~a_3b_3b_5+a_3b_5^4+a_4b_4b_5$\\
$c_{10}=a_2-a_3b_1-a_3b_5$\\
$c_{11}= a_1-a_2b_1-a_2b_5-a_3^2b_4+a_3b_1b_5-a_3b_2$ \\
$c_{12}=-a_1b_5-2a_2a_3b_4+a_2b_1b_5+a_3^2b_1b_4+a_3^2b_4b_5+a_3b_2b_5+a_4b_4$\\
$c_{13}= 2a_1a_3+a_2^2-2a_2a_3b_1-2a_2a_3b_5-a_3^3b_4+a_3^2b_1b_5-a_3^2b_2-a_3a_5-a_4b_1+a_4b_5$ \\
$c_{14}=2a_1a_2-2a_1a_3b_1-2a_2^2b_1-3a_2a_3^2b_4+2a_2a_3b_1^2+2a_2a_3b_1b_5-2a_2a_3b_5^2-a_2a_5+$\\$~~~~~~~~~~2a_3^3b_1b_4-a_3^2b_1^2b_5+a_3^2b_1b_2+a_3^2b_1b_5^2+a_3^2b_3+a_3a_4b_4+a_3a_5b_1-a_4b_1b_5-a_4b_2+a_4b_5^2$\\
$c_{15}= a_1^2-2a_1a_2b_1-2a_1a_3b_2-a_1a_5-2a_2^2a_3b_4+a_2^2b_1^2+3a_2a_3^2b_1b_4-a_2a_3^2b_4b_5+2a_2a_3b_1b_2+2a_2a_3b_1b_5^2+$\\$~~~~~~~~~~~2a_2a_3b_2b_5+2a_2a_3b_3-2a_2a_3b_5^3+a_2a_4b_4+a_2a_5b_1-a_3^3b_1^2b_4+a_3^3b_4b_5^2-a_3^2b_1^2b_5^2-a_3^2b_1b_2b_5-a_3^2b_1b_3+$\\$~~~~~~~~~~a_3^2b_2^2-a_3^2b_2b_5^2-a_3^2b_3b_5+a_3^2b_5^4-a_3a_4b_1b_4+a_3a_4b_4b_5+a_3a_5b_2-a_4b_1b_5^2-a_4b_2b_5-a_4b_3+a_4b_5^3$\\
Now, consider \[M_1 = \begin{pmatrix} -b_4&-c_1\\-b_5&-c_{12}\end{pmatrix},~~\textrm{and}~~M_2 = \begin{pmatrix}-a_1&-c_2&-c_3&-c_5\\-a_2&-c_6&-c_7&-c_9\\-a_3&-c_{10}&-c_{11}&-c_{12}\\-a_4&-c_{13}&-c_{14}&-c_{15}\end{pmatrix}.\] Weight the variables $a_1,\dots,a_5$,$b_1,\dots,b_5$ by $0,0,0,\beta,\beta-\delta,0,0,0,-\alpha,-\alpha+\gamma$ where $\alpha\gg \beta\gg \gamma\gg \delta\gg 0$. Using Macaulay 2, we see that $\textrm{init}(\textrm{det}(M_1)) = a_5b_4b_5$ and $\textrm{init}(\textrm{det}(M_2)) = -a_4f_3$ where $\textrm{Tr}(f_3^{p-1}\cdot)$ is the splitting of $U_{\langle x,y^3\rangle}$ (see Subsection \ref{s;inducedsplitting}). By Proposition \ref{p;eqnRNC}, there is a weighting of the variables $>$, which refines the weighting given above, such that $\textrm{init}_{>}(-\textrm{det}(M_1)\textrm{det}(M_2)) = a_1\cdots a_5b_1\cdots b_5$. Furthermore, $\textrm{Tr}((-\textrm{det}(M_1)\textrm{det}(M_2))^{p-1}\cdot)$ is the splitting of $U_{\langle x^2,xy,y^4\rangle}$ (see Subsection \ref{s;inducedsplitting} for further explanation). 

Now, let $Y$ be the bone of the edge connecting $\tableau{{ \ }\\{ \ }\\{ \ }\\{ \ }&{ \ }}$ and $\tableau{{ \ }\\{ \ }&{ \ }&{ \ }&{ \ }}~.$ $Y\cap U_{\langle x^2, xy,y^4\rangle}$ is given by the ideal $I = \langle b_1,\dots,b_5,a_1,a_3,a_4,a_5\rangle$. By Lemma \ref{l;puncSplit}, this is not a compatibly split ideal of $U_{\langle x,y^3\rangle}\times \bA^4_{a_4,a_5,b_4,b_5}$ with the splitting given by $\textrm{Tr}(f_3^{p-1}(a_4b_4a_5b_5)^{p-1}\cdot)$. By \cite[Theorem 2]{K} (see Theorem \ref{t;knutson}), $I$ is also not a compatibly split ideal of $U_{\langle x^2,xy,y^4\rangle}$.
\end{proof}

\begin{conjecture}\label{l;b=0}
$Y = \{\langle y^3, xy^2, x^2y, x^3, ay^2+cx^2\rangle~|~[a,c]\in\bP^1_k\}$ is not compatibly split.
\end{conjecture}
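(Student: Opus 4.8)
The plan is to show that $Y\cap U_{\langle x^2,xy^2,y^3\rangle}$ is not compatibly split in the patch $U_{\langle x^2,xy^2,y^3\rangle}$ with its induced splitting, which by Proposition \ref{p;openSplit} suffices to conclude that $Y$ is not compatibly split in $\Hilb^5(\bA^2_k)$. The subvariety $Y$ is the bone of the dashed edge connecting $\tableau{{ \ }\\{ \ }\\{ \ }&{ \ }&{ \ }}$ and $\tableau{{ \ }\\{ \ }&{ \ }\\{ \ }&{ \ }}$ type vertices (the two fixed points $\langle x^3,xy,y^3\rangle$ and $\langle x^2,xy^2,y^3\rangle$, say), so first I would verify that $Y$ meets this affine patch nontrivially and write down the defining ideal of $Y\cap U_{\langle x^2,xy^2,y^3\rangle}$ explicitly in the coordinates $c^{r,s}_{h,l}$ (equivalently the $a_i,b_i$ obtained by reducing via Buchberger's S-pair criterion, exactly as in the proof of Lemma \ref{l;badedge}).

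Next I would compute the induced splitting of $U_{\langle x^2,xy^2,y^3\rangle}$. As in Subsection \ref{s;inducedsplitting} (and following the pattern of Lemma \ref{l;badedge}), the splitting of such a patch is of the form $\textrm{Tr}(g^{p-1}\cdot)$ where $g$ is a product of determinants of the ``multiplication matrices'' $M_x$ and $M_y$ (the matrices of multiplication by $x$ and by $y$ on $k[x,y]/I$ in the standard monomial basis), and one checks using the residual normal crossings machinery (Theorem \ref{t;knutson}, Theorem \ref{t;yay}, Proposition \ref{p;eqnRNC}) that there is a weighting of the variables under which $\textrm{init}(g) = \prod_i x_i \prod_j y_j$. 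Then I would pass to the initial ideal: under a suitable refinement of that weighting, the initial ideal of the ideal of $Y\cap U_{\langle x^2,xy^2,y^3\rangle}$ should fail to be compatibly split with respect to the monomial splitting $\textrm{Tr}((\prod x_i\prod y_j)^{p-1}\cdot)$, i.e. it should fail to be squarefree, or — more in the spirit of Theorem \ref{t;yay}(2) — there should be a known compatibly split subvariety $Y_1$ with $Y_1\nsupseteq Y$ but $\textrm{init}(Y_1)\supseteq \textrm{init}(Y)$, giving the contradiction. Alternatively, the cleanest route may be to exhibit, as in Lemma \ref{l;badedge}, a factorization of $g$ into determinants whose initial forms involve a smaller Hilbert-scheme splitting polynomial $f_m$, and then invoke Lemma \ref{l;puncSplit} (the statement that the only compatibly split subvarieties of $\Hilb^n_0$ meeting $U_{\langle x,y^n\rangle}$ are the point and the one curve) together with Theorem \ref{t;knutson} to transfer non-splitness back up.

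The main obstacle will be the bookkeeping in identifying the induced splitting polynomial $g$ on $U_{\langle x^2,xy^2,y^3\rangle}$ and choosing the weighting correctly: the patch has $4$-dimensional coordinate ring (well, $10$-dimensional ambient with relations, but the reduced count gives the $2n=10$ coordinates from Theorem \ref{p;justInsidejustOutside}, here with $n=5$), the multiplication matrices are not small, and I must arrange a weight order that simultaneously (i) makes $g$ have the product-of-all-variables initial term so the RNC results apply, and (ii) makes the initial ideal of the ideal of $Y$ visibly non-squarefree or visibly dominated by the initial ideal of a genuine compatibly split subvariety. This is a Macaulay 2-assisted computation of the same flavor as Lemma \ref{l;badedge}, and the conceptual content is entirely in reducing it — via Theorems \ref{t;knutson} and \ref{t;yay} and Lemma \ref{l;puncSplit} — to a statement already proved. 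If the direct initial-ideal computation is too large, a fallback is to restrict further to a coordinate chart of $Y\cong\bP^1$ and a suitable Gröbner degeneration there, at the cost of a slightly more delicate density argument via Proposition \ref{p;openSplit}.
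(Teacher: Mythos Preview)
Your setup is right and matches the paper: work on $U_{\langle x^2,xy^2,y^3\rangle}$, write the patch as $\Spec k[a_1,\dots,a_5,b_1,\dots,b_5]$, identify the induced splitting as $\textrm{Tr}((-f_1f_2)^{p-1}\cdot)$ with $f_i=\det(M_i)$, and check that under a suitable weighting $\textrm{init}(-f_1f_2)=a_1\cdots a_5b_1\cdots b_5$. The ideal of $Y\cap U_{\langle x^2,xy^2,y^3\rangle}$ is $I=\langle a_1,a_2,a_3,a_5,b_1,\dots,b_5\rangle$.

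The gap is in the endgame. None of your three proposed routes goes through. First, $I$ is already generated by variables, so $\textrm{init}(I)=I$ is squarefree and the ``fail to be squarefree'' test is vacuous. Second, the Theorem~\ref{t;yay}(2) trick needs a compatibly split $Y_1$ with $Y_1\nsupseteq Y$ but $\textrm{init}(Y_1)\supseteq Y$; since $Y$ is already a coordinate line, this asks for a genuine split subvariety whose degeneration contains the $a_4$-axis but which does not itself contain $Y$ --- no such $Y_1$ is available here (and the paper finds none). Third, the Lemma~\ref{l;badedge} mechanism worked on $U_{\langle x^2,xy,y^4\rangle}$ because $\textrm{init}(\det M_2)$ factored as $-a_4f_3$ with $f_3$ the splitting polynomial of $U_{\langle x,y^3\rangle}$, allowing a clean reduction to Lemma~\ref{l;puncSplit}. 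On $U_{\langle x^2,xy^2,y^3\rangle}$ no such factorization through a smaller $f_m$ is available; the initial forms of $f_1,f_2$ under the relevant weightings are genuinely new polynomials, not products involving $f_3$ or $f_4$.

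This is exactly why the paper does \emph{not} prove the statement in general: it is stated as a conjecture, and the paper's ``proof'' is labeled as evidence for $2<p\le 23$ only. What the paper actually does is exhibit a specific element $a_4^{(p+1)/2}a_5^{p-1}\in I$ and show, by direct coefficient computation in $(-f_1f_2)^{p-1}$ (with computer assistance), that $\textrm{Tr}((-f_1f_2)^{p-1}a_4^{(p+1)/2}a_5^{p-1})$ has a nonzero $a_4$ term and hence does not lie in $I$. The key coefficient is that of $(a_1a_2a_3)^{p-1}a_4^{3(p-1)/2}(b_1\cdots b_5)^{p-1}$ in $(-f_1f_2)^{p-1}$, and it is checked to be $1\pmod p$ for each prime in that range. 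Your proposal, as written, would be a stronger result than the paper obtains; to match the paper you should replace the structural arguments by this explicit witness-element computation.
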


\begin{proof}[Evidence (i.e. Proof when $2<p\leq 23$)]
We now provide some evidence supporting Conjecture \ref{l;b=0} by showing that the open patch $Y\cap U_{\langle x^2,xy^2,y^3\rangle}$ is not a compatibly split subvariety of $U_{\langle x^2,xy^2,y^3\rangle}$ for those primes $p$ satisfying $2<p\leq 23$.

First note that every ideal in $U_{\langle x^2,xy^2,y^3\rangle}$ is generated by polynomials of the form \[\begin{array}{ccccccccccc}y^3&-&b_1y^2&-&b_2xy&-&c_1y&-&b_4x&-&c_2\\ xy^2&-&a_1y^2&-&b_3xy&-&c_3y&-&b_5x&-&c_4\\x^2y&-&a_2y^2&-&a_3xy&-&c_5y&-&c_6x&-&c_7\\x^2&-&a_4y^2&-&a_5xy&-&c_8y&-&c_9x&-&c_{10}\end{array}\] where

\noindent$c_1 = a_1b_2-a_3b_2-a_5b_4-b_1b_3+b_3^2+b_5$\\
$c_2 = a_1b_4-a_3b_4-a_5b_2b_5+a_5b_3b_4-b_1b_5+b_3b_5$\\
$c_3 = -a_1b_3+a_2b_2+a_4b_4$\\
$c_4 =  -a_1b_5+a_2b_4+a_4b_2b_5-a_4b_3b_4$\\
$c_5 = a_1^2-a_1a_3-a_2b_1+a_2b_3+a_4b_5$\\
$c_6 =  a_4b_4+a_5b_5$\\
$c_7 =a_1a_4b_4-a_1a_5b_5+a_2a_5b_4-a_3a_4b_4-a_4b_1b_5+a_4b_3b_5$\\
$c_8 = -a_1a_5+a_2-a_4b_1$\\
$c_9 = a_3-a_4b_2-a_5b_3$\\
$c_{10} = a_1^2-a_1a_3-a_1a_4b_2+a_1a_5b_3-a_2a_5b_2-a_2b_1+a_2b_3+a_3a_4b_2+a_4b_1b_3-a_4b_3^2$.\\
Thus, $U_{\langle x^2, xy^2,y^3\rangle}\cong \Spec(k[a_1,\dots,a_5,b_1,\dots,b_5])$. Consider the matrices \[M_1 = \begin{pmatrix}-b_4&-c_2\\-b_5&-c_4\end{pmatrix}\textrm{ and } M_2 =\begin{pmatrix}-a_1&-c_3&-c_4\\-a_2&-c_5&-c_7\\-a_4&-c_8&-c_{10}\end{pmatrix}.\] Let $f_1 = \textrm{det}(M_1)$ and let $f_2 = \textrm{det}(M_2)$. Weight $a_1,a_2,a_3,a_4,a_5$, $b_1,b_2,b_3,b_4,b_5$ by \[1,10^6,10^6-10^2,10^8,10^8-10^3,-10,-10^7,-10^7+10^5,-10^9,-10^9+10^4.\] Then, $\textrm{init}(-f_1f_2) = a_1\cdots a_5b_1\cdots b_5$ and the induced splitting of $U_{\langle x^2, xy^2, y^3\rangle}$ is $\textrm{Tr}((-f_1f_2)^{p-1}\cdot)$.

Next notice that $Y\cap U_{\langle x^2, xy^2, y^3\rangle}$ is given by the ideal $I = \langle a_1,a_2,a_3,a_5,b_1,b_2,b_3,b_4,b_5\rangle$. We will show, for small primes $p$, that $(-f_1f_2)^{p-1}$ has the term $c(a_1a_2a_3)^{p-1}a_4^{3(p-1)/2}(b_1b_2b_3b_4b_5)^{p-1}$ for some non-zero constant $c\in k^*$. It will then follow that $I$ is not compatibly split (for these $p$) since $a_4^{(p+1)/2}a_5^{p-1}\in I$ but $\textrm{Tr}((-f_1f_2)^{p-1}(a_4^{(p+1)/2}a_5^{p-1}))\notin I$. That is, $\textrm{Tr}((-f_1f_2)^{p-1}(a_4^{(p+1)/2}a_5^{p-1})) = c'a_4+R$ for some $c'\in k^{*}$ and some polynomial $R$ containing no linear terms in $a_4$.

We now show, for $2<p\leq 23$, that $(-f_1f_2)^{p-1}$ has the term $c(a_1a_2a_3)^{p-1}a_4^{3(p-1)/2}(b_1b_2b_3b_4b_5)^{p-1}$ for some $c\in k^*$. To begin, re-weight the variables $a_1,a_2,a_3,a_4,a_5,b_1,b_2,b_3,b_4,b_5$ by $-10^2,-10^2,0$, $0,-10^5$, $0,0$, $0$, $-10^5$, $-10^5$ and note that if the desired term shows up in $(\textrm{init}(-f_1f_2))^{p-1}$ then it shows up in $(-f_1f_2)^{p-1}$ with the same coefficient.

Using Macaulay 2, we can see that, with respect to the given weighting, \[\textrm{init}(f_1) = -a_4b_3b_4^2+a_4b_2b_4b_5+a_3b_4b_5+b_1b_5^2-b_3b_5^2,\] and \[\textrm{init}(f_2) = (a_1^2a_3+a_1a_2b_1+a_2^2b_2-2a_1a_2b_3)(a_4)(a_3b_2+b_1b_3-b_3^2)\]
Using Sage with the FLINT library, we can check that for $2<p\leq 23$, the desired term appears in $(p-1)^{st}$ power of $\textrm{init}(f_1)\textrm{init}(f_2)$ with coefficient $1~(\textrm{mod } p)$.
\end{proof}

\begin{lemma}
The bones of the various subpolytopes mentioned Conjecture \ref{p;5pts} are as stated in Conjecture \ref{p;5pts}. In addition, \[Y_1:=\left\{\langle y^4,xy^2,x^2y,x^3,ax^2+bxy+cy^3, dx^2+exy+fy^3 \rangle~\bigg|~\textrm{rank} \begin{pmatrix}a & b & c \\d & e & f \end{pmatrix} = 2\right\}\] is the unique $T^2$-invariant subvariety of $\Hilb^5_0(\bA^2_k)$ whose moment polytope is the triangle with vertices $\tableau{{ \ }\\{ \ }\\{ \ }\\{ \ }&{ \ }}~,~\tableau{{ \ }\\{ \ }&{ \ }\\{ \ }&{ \ }}~,$ and $\tableau{{ \ }\\{ \ }\\{ \ }&{ \ }&{ \ }}~.$ Similarly, \[Y_2:=\left\{\langle y^3,xy^2,x^2y,x^4,ay^2+bxy+cx^3, dy^2+exy+fx^3 \rangle~\bigg|~\textrm{rank} \begin{pmatrix}a & b & c \\d & e & f \end{pmatrix} = 2 \right\}\] is the unique $T^2$-invariant subvariety whose moment polytope is the triangle with vertices $\tableau{{ \ }\\{ \ }\\{ \ }&{ \ }&{ \ }}~,$ $\tableau{{ \ }&{ \ }\\{ \ }&{ \ }&{ \ }}~,$ and $\tableau{{ \ }\\{ \ }&{ \ }&{ \ }&{ \ }}~.$
\end{lemma}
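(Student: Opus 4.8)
The plan is to proceed exactly as in the two lemmas preceding Propositions \ref{p;3pts} and \ref{p;4pts}, extending those computations to the seven faces appearing in Conjecture \ref{p;5pts}. The only genuinely new feature is the pair of statements about $Y_1$ and $Y_2$, whose moment polytopes are honest two-dimensional triangles: for these one cannot simply quote the bone construction of Definition \ref{d;bone} and must instead establish uniqueness by hand.

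First I would, for each face $F$ occurring in items 1, 2, 4, 6, 7, compute the moment-polytope coordinates of its vertices using the fact that a $T^2$-fixed point $\lambda$ has moment polytope the single lattice point $\sum_{(i,j)\in S_\lambda}(i,j)$. This identifies which $T^2$-fixed points lie on $F$ and exhibits $F$ as a segment of some direction $v$ (for instance the three vertices in item 7 have moment points $(2,4),(3,3),(4,2)$, all on the line $x+y=6$, so $v=(1,-1)$); hence $F^{\perp}$ is the one-parameter subtorus $\{(t^a,t^b)\mid t\in\bG_m\}$ dual to $v$, and by Definition \ref{d;bone} the bone $Y$ is the component of $(\Hilbn)^{F^{\perp}}$ meeting these fixed points whose moment polytope lies in $F$. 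By Lemma \ref{l;fixedPointCover}, $Y=\bigcup_{\lambda}(Y\cap U_{\lambda})$ over the fixed points $\lambda$ on $F$, an affine open cover. On each patch $U_{\lambda}$ I would use the coordinates $c^{r,s}_{h,l}$ of Theorem \ref{p;justInsidejustOutside}, each of $T^2$-weight $(h-r,l-s)$, and impose that $Y\cap U_{\lambda}$ be pointwise fixed by $F^{\perp}$: a coordinate vanishes unless its weight is a multiple of $v$. Since $U_{\lambda}\cong\bA^{2n}$ for every $\lambda$ that arises when $n\le 5$, what remains is an affine space of the asserted dimension ($\bA^1$ for the edges in items 1, 2, 4, 6, and $\bA^2$ for the segment of item 7). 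Gluing these charts along the transition maps between the relevant $U_{\lambda}$ then identifies $Y$ with the stated $\bP^1_k$, resp. with the $\bP^2_k$ of item 7, the displayed linear generators becoming the projective (resp. Plücker) coordinates.

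For $Y_1$ and $Y_2$ I would argue uniqueness directly. Let $Y\subseteq\Hilb^5_0(\bA^2_k)$ be any $T^2$-invariant irreducible subvariety whose moment polytope is the prescribed triangle $\Delta$. The vertices of $\Delta$ must be moment polytopes of $T^2$-fixed points contained in $Y$, so $Y$ contains precisely the three vertex monomial ideals (each moment point in question being attained by a unique colength-$5$ monomial ideal), and by Lemma \ref{l;fixedPointCover} the three corresponding patches cover $Y$. At a vertex $\lambda$, the coordinates $c^{r,s}_{h,l}$ whose weight $(h-r,l-s)$ points from $\lambda$ into $\Delta$ span a rank-$2$ cone, so $\dim(Y\cap U_{\lambda})\le 2$; since the moment polytope forces $\dim Y\ge 2$, we get $\dim Y=2$ and hence $Y\cap U_{\lambda}$ is the full coordinate $\bA^2$. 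These three planes glue in only one way, so $Y=Y_1$ (resp. $Y_2$), and the glued variety is checked against the explicit rank-$2$ family written in item 3 (resp. item 5).

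The step I expect to be the main obstacle is the gluing bookkeeping: verifying that the charts $Y\cap U_{\lambda}$ patch together to exactly the claimed projective varieties — particularly the $\bP^2_k$ of item 7 and the surfaces $Y_1,Y_2$ — requires writing out the transition functions between the patches $U_{\lambda}$ explicitly, and one must also confirm that the relations $J_{\lambda}$ cutting out $U_{\lambda}$ impose nothing further on the surviving coordinates. For $n=5$ these are finite, mechanical verifications (the non-compatibly-split assertions about the dashed edge and about the family in Conjecture \ref{l;b=0} being handled separately), but they are where an error could hide.
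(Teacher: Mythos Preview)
For the edge-bones (items 1, 2, 4, 6, 7) your plan matches the paper's: the paper says these arguments are ``nearly identical to ones seen previously,'' referring to the lemmas before Propositions~\ref{p;3pts} and~\ref{p;4pts}, which carry out exactly the patch-by-patch computation you describe.

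For the uniqueness of $Y_1$ and $Y_2$ your route diverges from the paper's. You work at the $T^2$-fixed vertices of $\Delta$ and count tangent weights of $\Hilbn$ pointing into $\Delta$. The paper instead picks a \emph{generic} point $w$ on one edge-bone (the $\bP^1$ joining two of the vertices), computes the tangent space of the \emph{punctual} Hilbert scheme $\Hilb^5_0(\bA^2_k)$ at $w$ (it is $4$-dimensional, with explicit $T^1$-weights read off from the coordinates on $U_{\langle x^2,xy,y^4\rangle}$), and applies Bia\l ynicki-Birula decomposition to $\Hilb^5_0(\bA^2_k)$: exactly one weight is negative and one is zero, so the B--B stratum with sink that $\bP^1$ is $2$-dimensional, and any irreducible $2$-dimensional $Y$ with the prescribed fixed points must be its closure.

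Your approach can be completed, but the step ``hence $Y\cap U_{\lambda}$ is the full coordinate $\bA^2$'' is not justified as written. Knowing that only two of the ten tangent weights at $\lambda$ lie in the cone of $\Delta$ gives $\dim T_\lambda Y\le 2$, hence $\dim Y=2$ and $\lambda$ is a smooth point of $Y$; but this alone does not force $Y\cap U_\lambda$ to equal the coordinate plane rather than some other $T^2$-invariant surface tangent to it there. To close the gap you must also check that none of the eight excluded weights lies in the $\bZ_{\ge 0}$-span of the two admissible weights (true here, but a separate verification at each vertex): then no monomial in the two surviving coordinates can have an excluded weight, and $T^2$-equivariance forces each excluded coordinate to vanish identically on $Y$. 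Equivalently, this is a Bia\l ynicki-Birula attracting-cell argument in the smooth ambient $\Hilbn$ for a one-parameter subgroup separating the two sets of weights. The paper's choice to run B--B on $\Hilb^5_0$ at a $T^1$-fixed point sidesteps the gluing bookkeeping you flag as the main obstacle: once the B--B stratum is seen to be $2$-dimensional, uniqueness follows without patching three charts.
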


\begin{proof}
Most of the arguments are nearly identical to ones seen previously. So we just show that $Y_1$ is the unique subvariety whose moment polytope is the triangle with vertices $\tableau{{ \ }\\{ \ }\\{ \ }\\{ \ }&{ \ }}~,~\tableau{{ \ }\\{ \ }&{ \ }\\{ \ }&{ \ }}~,$ and $\tableau{{ \ }\\{ \ }\\{ \ }&{ \ }&{ \ }}~.$ To do so we apply a theorem of Bia\l ynicki-Birula (see \cite[Section 4]{BB}).

First note that the bone of the edge connecting $\tableau{{ \ }\\{ \ }\\{ \ }\\{ \ }&{ \ }}$ and $\tableau{{ \ }\\{ \ }\\{ \ }&{ \ }&{ \ }}$ is isomorphic to $\bP^1_k$. Indeed, on the open patch $U_{\langle x^2,xy,y^4\rangle}$, the set of ideals pointwise fixed by the subtorus $T^1 = \{(t^3,t^2)~|~t\in \bG_m\}$ is $\{\langle y^4, xy, x^2-\lambda y^3\rangle~|~\lambda\in k\}$. On the open set $U_{\langle x^3,xy,y^3\rangle}$, the set of ideals pointwise fixed by $T^1$ is $\{\langle y^4, xy, y^3-\mu x^2\rangle~|~\mu\in k\}$. Thus, $\{\langle y^4, xy, ax^2+by^3\rangle~|~[a,b]\in \bP^1_k\}$ as desired.

Now, consider the point $w = \langle y^4, xy, x^2-y^3\rangle$. This is a general point of $\{\langle y^4, xy, ax^2+by^3\rangle~|~[a,b]\in \bP^1_k\}$ (i.e. it isn't one of the $T^2$-fixed points). Recall that $U_{\langle x^2, xy, y^4\rangle}$ is isomorphic to $k[a_1,\dots,a_5,b_1,\dots,b_5]$ (as explained in the proof of the previous lemma). The Zariski cotangent space $\frak{m}_w/\frak{m}_w^2$ of $\Hilb^5_0(\bA^2_k)$ at $w$ is $4$-dimensional with generators $a_2, a_3, a_4-1$, $b_4$ $(\textrm{mod } \frak{m}_w^2)$. These generators are $T^1$-weight vectors with weights $(-1,1)\cdot (3,2) = -1$, $(-1,2)\cdot (3,2) = 1$, $0$, and $(1,-4)\cdot (3,2) = -5$. Thus, the dual vectors in $T_w(\Hilb^5_0(\bA^2_k))$ have weights $1,-1,0$, and $5$. As there is 1 negative weight, and 1 zero weight, the locally closed Bia\l ynicki-Birula (B-B) stratrum (in the B-B decomposition of $\Hilb^5_0(\bA^2_k)$) with B-B sink $\{\langle y^4, xy, ax^2+by^3\rangle~|~[a,b]\in \bP^1_k\}$ is $(1+1)$-dimensional. 

Since $Y_1$ is a $2$-dimensional irreducible subvariety with fixed points whose moment polytopes are the vertices $\tableau{{ \ }\\{ \ }\\{ \ }\\{ \ }&{ \ }}~,$ $\tableau{{ \ }\\{ \ }&{ \ }\\{ \ }&{ \ }}~,$ and $\tableau{{ \ }\\{ \ }\\{ \ }&{ \ }&{ \ }}~,$ $Y_1$ must be the unique subvariety whose moment polytope is the triangle with vertices $\tableau{{ \ }\\{ \ }\\{ \ }\\{ \ }&{ \ }}~,$ $\tableau{{ \ }\\{ \ }&{ \ }\\{ \ }&{ \ }}~,$ and $\tableau{{ \ }\\{ \ }\\{ \ }&{ \ }&{ \ }}~.$ 
\end{proof}

We now prove that Conjecture \ref{p;5pts} is true under the assumption that the subvariety \[Y = \{\langle y^3, xy^2, x^2y, x^3, ay^2+cx^2\rangle~|~[a,c]\in\bP^1_k\}\] discussed in Conjecture \ref{l;b=0} is \emph{not} compatibly split.

\begin{proof}[Proof of Conjecture \ref{p;5pts} assuming Conjecture \ref{l;b=0}]
To begin, let \[Y=\{\langle y^4,xy^2,x^2y,x^3,ax^2+bxy+cy^3, dx^2+exy+fy^3 \rangle~|~\textrm{rank} \begin{pmatrix}a & b & c \\d & e & f \end{pmatrix} = 2\}.\] (Recall that the moment polytope of $Y$ is the triangle with vertices $\tableau{{ \ }\\{ \ }\\{ \ }\\{ \ }&{ \ }}~,~\tableau{{ \ }\\{ \ }&{ \ }\\{ \ }&{ \ }}~,$ and $\tableau{{ \ }\\{ \ }\\{ \ }&{ \ }&{ \ }}~.$) We will show that $Y$ is compatibly split by showing that it is a component of the intersection of two known compatibly split subvarieties.

Let $Z_1 = \{\langle x,y^3\rangle\}\subseteq \Hilb^3(\bA^2_k)$ and let $W_1$ be the closure of the image of \[i_{1,0,1,Z_1}:\Hilb^1(\textrm{punctured x-axis})\times\Hilb^1(\bA^2_k\setminus\{xy=0\})\times Z_1\rightarrow \Hilb^4(\bA^2_k)\] where $i_{1,0,1,Z_1}$ is as in Proposition \ref{p;intuition}. Then $W_1$ is compatibly split and $W_1$ contains the point $I = \langle x,y^3\rangle\cap \langle x-b^2, y-b\rangle\cap \langle x-b, y\rangle$, $b\in k^{*}$. With respect to the GRevLex term order, $I$ has a Gr\"{o}bner basis $\{x^2+(-b+1)xy-bx, y^3-xy, xy^2-bxy\}$. Thus, $W_1$ contains \[I' = \textrm{lim}_{b\rightarrow 0}I = \langle x^2+xy, y^3-xy, xy^2\rangle.\] 
Notice that $I'\in Y$ and that $I'$ is not fixed by any subtorus of $T^2$. Because, $W_1$ is closed and $T^2$-invariant, $W_1$ contains the $T^2$-orbit closure of $I'$. Thus, $W_1$ contains the toric variety $Y$. 

Let $Z_2 =\{\langle y^3,xy^2,x^2y,x^3,ax^2+bxy+cy^2, dx^2+exy+fy^2 \rangle~|~\textrm{rank} \begin{pmatrix}a & b & c \\d & e & f \end{pmatrix} = 2 \}$ and let $W_2$ be the closure of the image of \[i_{0,1,0,Z_2}:\Hilb^1(\textrm{punctured y-axis})\times Z_2\rightarrow \Hilb^5(\bA^2_k)\] where $i_{0,1,0,Z_2}$ is as in Proposition \ref{p;intuition}. Then $W_2$ is compatibly split and $W_2$ contains the point $I = \langle y^3, xy^2, x^2-by^2,xy-by^2\rangle\cap \langle x,y-b\rangle$. With respect to the GRevLex term order, $I$ has a Gr\"{o}bner basis $\{x^2-xy, y^3+xy-by^2, xy^2\rangle\}$. Thus, \[I' = \textrm{lim}_{b\rightarrow 0}I = \langle x^2-xy, y^3+xy, xy^2\rangle.\] By the same reasoning as above, $Y$ is a subvariety of $W_2$. It follows that $Y$ is a component of the intersection $W_1\cap W_2$ and thus $Y$ is compatibly split. Similarly, the unique subvariety whose moment polytope is the triangle with vertices $\tableau{{ \ }\\{ \ }\\{ \ }&{ \ }&{ \ }}~,$ $\tableau{{ \ }&{ \ }\\{ \ }&{ \ }&{ \ }}~,$ and $\tableau{{ \ }\\{ \ }&{ \ }&{ \ }&{ \ }}$ is compatibly split.

Each remaining subvariety listed in the statement of the conjecture is the bone of an exterior face of the moment polytope of a compatibly split subvariety. (See Figure \ref{fig;5ptspolytope}.) Thus, each remaining subvariety listed in the statement of the conjecture is compatibly split.

By Lemma \ref{l;puncSplit}, Lemma \ref{l;badedge}, and an argument (using the moment polyhedron of $\Hilb^5(\bA^2_k)$) which is similar to the argument given in each of the $n=3$ and $n=4$ cases, any additional compatibly split subvarieties of $\Hilb^5_0(\bA^2_k)$ must be contained inside of \[\{\langle y^3,xy^2,x^2y,x^3,ay^2+bxy+cx^2\rangle~|~[a,b,c]\in \bP^2_k\}\] (which is the bone of the edge connecting $\tableau{{ \ }\\{ \ }&{ \ }\\{ \ }&{ \ }}~,$ $\tableau{{ \ }\\{ \ }\\{ \ }&{ \ }&{ \ }}~,$ and $\tableau{{ \ }&{ \ }\\{ \ }&{ \ }&{ \ }}$ ).

We already know that $\{a=0\}$ and $\{c=0\}$ are compatibly split subvarieties of this $\bP^2_k$. Thus, any additional $1$-dimensional compatibly split subvariety must be a $T^2$-invariant degree-$1$ curve. The only one left is $\{b = 0\}$ and this is not compatibly split if we assume that the conjecture that Conjecture \ref{l;b=0} is true, is correct.    
\end{proof}

\begin{remark}
Notice that when $n=4$, only the attractive (in the sense of Bia\l ynicki-Birula) fixed points are compatibly split. However when $n=5$, all fixed points (including the non-attractive one, $\langle x^3, xy, y^3\rangle$) are compatibly split.
\end{remark}


 \section{The affine patch $U_{\langle x,y^n\rangle}$}\label{s;goodPatch}

In this section we consider the open affine patch $U_{\langle x,y^n\rangle}\subseteq \Hilbn$ with the induced Frobenius splitting. In particular, we describe all compatibly split subvarieties of $U_{\langle x,y^n\rangle}$ and their scheme-theoretic defining ideals. We find degenerations of these subvarieties to Stanley-Reisner schemes, explicitly describe the associated simplicial complexes, and use these complexes to prove that certain compatibly split subvarieties of $U_{\langle x,y^n\rangle}$ are Cohen-Macaulay.

\subsection{A choice of coordinates}

In this subsection we describe our preferred coordinates on $U_{\langle x,y^n\rangle}\subseteq \Hilbn$ and show that these coordinates are a natural choice with respect to the $T^2$-action. 

\begin{lemma}\label{l;cij}
Every colength-$n$ ideal in $U_{\langle x,y^n\rangle}$ is generated by a set of polynomials of the form
\[\begin{array}{lllllllllllll}
  f_1& := & y^n&-&b_1y^{n-1}&-&b_2y^{n-2}&-&\cdots&-&b_{n-1}y&-&b_n\\
  f_2& := & xy^{n-1}&-&a_1y^{n-1}&-&c_{12}y^{n-2}&-&\cdots&-&c_{1(n-1)}y&-&c_{1n}\\
  f_3& := & xy^{n-2}&-&a_2y^{n-1}&-&c_{22}y^{n-2}&-&\cdots&-&c_{2(n-1)}y&-&c_{2n}\\
  &&&&\vdots&&&&&&\\
  f_n& := & xy&-&a_{n-1}y^{n-1}&-&c_{(n-1)2}y^{n-2}&-&\cdots&-&c_{(n-1)(n-1)}y&-&c_{(n-1)n}\\
  f_{n+1}& := & x&-&a_n y^{n-1}&-&c_{n2}y^{n-2}&-&\cdots&-&c_{n(n-1)}y&-&c_{nn}\end{array} \]
 where each $c_{ij}$ is the following polynomial in $a_1,\dots,a_n,b_1,\dots,b_n$:
\begin{enumerate}
\item If $i<j$ then $c_{ij} = \sum_{k=1}^{n-j+1}a_{k+i}b_{k+j-1}$.
\item If $i\geq j$ then $c_{ij} = a_{i-j+1} - \sum_{k=1}^{j-1}a_{k+i-j+1}b_k$.
\end{enumerate}
Thus, $U_{\langle x,y^n\rangle}\cong \Spec k[a_1,\dots,a_n,b_1,\dots,b_n]$. 
\end{lemma}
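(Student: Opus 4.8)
The plan is to carry out, once and for all, the substitution procedure that was sketched in the discussion of open covers (the ``$xg\in I$, $yg\in I$'' relations) in the special case $\lambda=\langle x,y^n\rangle$, where the combinatorics becomes completely explicit. First I would fix the general form of a colength-$n$ ideal $I\in U_{\langle x,y^n\rangle}$: since $\{1,y,\dots,y^{n-1}\}$ is a $k$-basis of $k[x,y]/I$, every monomial $x^ry^s\in\langle x,y^n\rangle$ reduces to a unique combination of $1,y,\dots,y^{n-1}$, so $I$ contains, and is generated by, the polynomials $f_1,\dots,f_{n+1}$ displayed in the statement (these are exactly the generators indexed by the ``border'' $\{y^n,xy^{n-1},\dots,xy,x\}$ — the minimal generators of $\langle x,y^n\rangle$ together with the border monomials $xy^{n-1},\dots,xy$, which by Theorem \ref{p;justInsidejustOutside} already suffice). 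The coefficients $b_1,\dots,b_n$ (in $f_1$) and $a_1,\dots,a_n$ (the coefficients of $y^{n-1}$ in $f_2,\dots,f_{n+1}$) are genuinely free parameters; the content of the lemma is that all the remaining coefficients $c_{ij}$ are forced, and are given by the stated polynomials in the $a$'s and $b$'s.

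Next I would derive the recursion for the $c_{ij}$. The key relation is $y\cdot f_{m+1}=f_m$ in $k[x,y]/I$ for the appropriate range: more precisely, multiplying $f_{n+1}=x-a_ny^{n-1}-\sum_{j\ge 2}c_{nj}y^{n-j}$ by $y$ gives $xy-a_ny^n-\sum c_{nj}y^{n-j+1}$, and then rewriting $y^n$ via $f_1$ produces an expression of the form $xy-(\text{combination of }y^{n-1},\dots,1)$, which must coincide with $f_n$. Comparing coefficients of $y^{n-1},\dots,y,1$ yields $a_{n-1}$ and the $c_{(n-1)j}$ in terms of $a_n, c_{nj}$ and the $b_k$. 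Iterating ``multiply by $y$, reduce $y^n$ using $f_1$, compare coefficients'' climbs from $f_{n+1}$ up to $f_2$ and expresses every $c_{ij}$ recursively; the base data are $c_{nj}$, and one checks directly from $f_{n+1}\cdot y$ that $c_{n2}=a_{n-1}-a_nb_1$ etc., matching case (2) with $i=n$. Then I would simply verify that the two closed formulas in the statement — $c_{ij}=\sum_{k=1}^{n-j+1}a_{k+i}b_{k+j-1}$ for $i<j$, and $c_{ij}=a_{i-j+1}-\sum_{k=1}^{j-1}a_{k+i-j+1}b_k$ for $i\ge j$ — satisfy this recursion, by substituting them into the coefficient-comparison identities and using the relations among the $b_k$; this is a bookkeeping induction on $n-i$ (or equivalently downward induction on the row index).

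Finally, to conclude $U_{\langle x,y^n\rangle}\cong\Spec k[a_1,\dots,a_n,b_1,\dots,b_n]\cong\bA^{2n}_k$, I would argue that the $2n$ functions $a_1,\dots,a_n,b_1,\dots,b_n$ are algebraically independent coordinates with no further relations: by Proposition \ref{p;hilbbasic} the coordinate ring of $U_{\langle x,y^n\rangle}$ is a quotient of a polynomial ring in the $c^{r,s}_{h,l}$ by the ideal $J_\lambda$ of relations coming from $xg,yg\in I$, and the computation above shows every generator can be expressed as a polynomial in $a_1,\dots,a_n,b_1,\dots,b_n$ while $J_\lambda$ imposes no relation among these $2n$ specific ones (one sees this by noting that the map $k[a_i,b_j]\to\mathcal{O}(U_{\langle x,y^n\rangle})$ is surjective, and that the generic fiber — or a dimension count, using that $\Hilbn$ is $2n$-dimensional and irreducible and $U_{\langle x,y^n\rangle}$ is a dense open — forces injectivity). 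I expect the main obstacle to be purely organizational: setting up the indexing of the $c_{ij}$ so that the recursion closes cleanly, and checking the two explicit formulas against it without sign or index errors — the ``$y^n\mapsto$ combination via $f_1$'' step reshuffles indices by one each time and that is where care is needed. Nothing here is deep; it is the $\lambda=\langle x,y^n\rangle$ specialization of a standard construction, made completely explicit.
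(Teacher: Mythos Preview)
Your proposal is correct, but the paper takes a somewhat different (and shorter) route. Rather than starting from an arbitrary $I\in U_{\langle x,y^n\rangle}$ and \emph{deriving} the recursion for the $c_{ij}$ by the ``multiply $f_{m+1}$ by $y$, reduce $y^n$ via $f_1$, compare coefficients'' procedure, the paper simply takes the displayed generators $G=\{f_1,\dots,f_{n+1}\}$ with the stated closed formulas for the $c_{ij}$ and \emph{verifies} that $G$ is a Gr\"{o}bner basis for the Lex order with $x\gg y$. This is done via Buchberger's criterion: the paper writes down each S-polynomial $S(f_i,f_j)$ explicitly as a combination $\sum_k m_k g_k$ with $g_k\in G$ and strictly decreasing leading terms, so every S-pair reduces to zero. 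Once $G$ is a Gr\"{o}bner basis, $\textrm{init}\langle G\rangle=\langle x,y^n\rangle$, so $\langle G\rangle\in U_{\langle x,y^n\rangle}$ for every $(a_1,\dots,b_n)\in\bA^{2n}_k$; the same dimension/irreducibility argument you sketch then gives the isomorphism.

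The two approaches are nearly dual: your ``multiply by $y$'' step is exactly the content of the S-pair $S(f_i,f_j)$ for $1<i<j$ (the paper writes $y^{j-i}f_j-f_i=-\sum_k a_{j-k}y^{j-i-k}f_1$), and the ``multiply by $x$'' closure you would need is packaged in the S-pairs $S(f_1,f_j)$. The paper's framing buys brevity --- there is no need to set up a recursion, solve it, and then invert the change of coordinates from $(a_n,c_{n2},\dots,c_{nn})$ to $(a_1,\dots,a_n)$; one just checks the answer. Your framing buys motivation: it explains where the formulas come from rather than presenting them as a fait accompli. Either is fine here.
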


\begin{proof}
To prove the lemma, we show that the set $G = \{f_1,\dots,f_{n+1}\}$ is a Gr\"{o}bner basis under the Lex term order with $x\gg y$. Indeed, if $G$ is a Gr\"{o}bner basis then \[\textrm{init}\langle G\rangle = \langle \textrm{init}~G \rangle = \langle x,y^n\rangle\] and so $\langle G\rangle\in U_{\langle x,y^n\rangle}$. Thus, \[\Spec k[a_1,\dots,a_n,b_1,\dots,b_n]\cong \{\langle G\rangle~|~(a_1,\dots,a_n,b_1,\dots,b_n)\in \bA^{2n}_k\}\subseteq U_{\langle x,y^n\rangle}.\] As $U_{\langle x,y^n\rangle}$ is a $2n$-dimensional irreducible affine variety, it follows that \[\Spec k[a_1\dots,a_n,b_1,\dots,b_n]\cong U_{\langle x,y^n\rangle}.\]

It remains to show that $G = \{f_1,\dots,f_{n+1}\}$ is a Gr\"{o}bner basis. To begin, notice that each $S$-polynomial, $S(f_i,f_j)$ for $f_i,f_j\in G$, can be written as follows:
\begin{itemize}
\item If $1<i<j\leq n+1$ then \[S(f_i,f_j) =y^{j-i}f_j-f_i =-\sum_{k=1}^{j-i}a_{j-k}y^{j-i-k}f_1\]
\item If $i=1$ and $1<j\leq n+1$ then \[S(f_1,f_j) = y^{j-1}f_j-xf_1 = \sum_{k=1}^{j-1}b_{j-k}f_{j+1-k} -\sum_{k=1}^{j-1}a_{j-k}y^{j-1-k}f_1\]
\end{itemize}

Therefore, each $S$-polynomial can be written as a sum of the form $\sum_{k}m_kg_k$ where each $m_k$ is a monomial, each $g_k\in G$, and the initial terms of the summands are strictly decreasing (i.e. $\textrm{init}(m_ig_i) > \textrm{init}(m_jg_j)$ when $i<j$). Thus, each $S$-polynomial reduces to $0$ upon division by $G$ and so $G$ is a Gr\"{o}bner basis. 
\end{proof}

\begin{remark}
Recall that $\frak{m}_{\langle x,y^n\rangle}/ \frak{m}_{\langle x,y^n\rangle}^2$, the Zariski cotangent space of $U_{\langle x,y^n\rangle}$ at the $T^2$-fixed point $\langle x,y^n\rangle$, is generated by $a_1,\dots,a_n$, $b_1,\dots,b_n$ where each $a_i~(\textrm{mod}~\frak{m}^2_{\langle x,y^n\rangle})$ is a $T^2$-weight vector with weight $(-1,i-1)$ and each $b_i~(\textrm{mod}~\frak{m}^2_{\langle x,y^n\rangle})$ is a $T^2$-weight vector with weight $(0,-i)$ (see \cite{H2}). Thus, the coordinates $a_1,\dots,a_n,b_1,\dots,b_n$ on $U_{\langle x,y^n\rangle}$ yield generators of the $T^2$-weight spaces of the tangent space of $\Hilbn$ at $\langle x,y^n\rangle$. In this way, we see that our preferred coordinates are natural from the perspective of the torus action. See Figure \ref{fig;goodarrows} for a pictorial representation of $a_1,\dots,a_n$, $b_1,\dots,b_n$ using Haiman's arrows (see \cite{H2}).
\end{remark}

\begin{figure}[!h]
\begin{center}
\includegraphics[scale = 0.33]{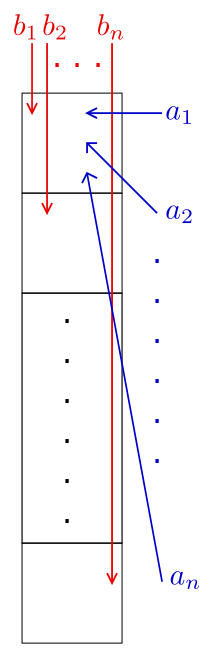}
\caption{Associating $a_1,a_2,\dots,a_n$, $b_1,b_2,\dots,b_n$ to Haiman's arrows (see \cite{H2}). Note that the vertical strip consists of $n$ boxes and is the standard set of the monomial ideal $\langle x,y^n\rangle$. The $T^2$-weights of $a_1,a_2,\dots,a_n,b_1,b_2,\dots,b_n$ are $(-1,0)$, $(-1,-1),\dots,(-1,-n+1)$, $(0,-1),(0,-2),\dots,(0,-n)$ as indicated by the arrows.}
\label{fig;goodarrows}
\end{center}
\end{figure}

\begin{definition}\label{d;mat}
Let $M_n$ denote the $(n\times n)$-matrix of coefficients $(-c_{ij})_{1\leq i,j\leq n}$ appearing in Lemma \ref{l;cij}. Set $c_{i1} := a_i$.
\end{definition}

\begin{example}
\[M_2 = \begin{pmatrix}-a_1&-a_2b_2\\-a_2&-(a_1-b_1a_2)\end{pmatrix},~~M_3 = \begin{pmatrix}-a_1&-(a_2b_2+a_3b_3)&-a_2b_3\\-a_2&-(a_1-b_1a_2)&-a_3b_3\\-a_3&-(a_2-b_1a_3)&-(a_1-b_1a_2-b_2a_3)\end{pmatrix}\]
\[M_4 = \begin{pmatrix}-a_1&-(a_2b_2+a_3b_3+a_4b_4)&-(a_2b_3+a_3b_4)&-a_2b_4\\-a_2&-(a_1-b_1a_2)&-(a_3b_3+a_4b_4)&-a_3b_4\\-a_3&-(a_2-b_1a_3)&-(a_1-b_1a_2-b_2a_3)&-a_4b_4\\-a_4&-(a_3-b_1a_4)&-(a_2-b_1a_3-b_2a_4)&-(a_1-b_1a_2-b_2a_3-b_3a_4)\end{pmatrix}\]
Notice that when $b_3=0$, $M_2$ is the upper left $2\times 2$ submatrix of $M_3$. Similarly, when $b_4=0$, $M_3$ is the upper left $3\times 3$ submatrix of $M_4$. By Lemma \ref{l;cij}, this nesting of matrices occurs in general. We make use of it in the next subsection.
\end{example}
 
\subsection{The induced splitting}\label{s;inducedsplitting}

Because $U_{\langle x,y^n\rangle}\cong \bA^{2n}$ and our splitting of $\Hilbn$ is a $(p-1)^{st}$ power, the induced splitting of $U_{\langle x,y^n\rangle}$ has the form $\textrm{Tr}(f_n^{p-1}\cdot)$, for some polynomial $f_n\in k[a_1,b_1,\dots,a_n,b_n]$. In this subsection, we compute $f_n$ and show that, under an appropriate weighting, the initial term of $f_n$ is the product of the variables $a_1b_1\cdots a_nb_n$.

To begin, consider $U_{\langle x,y^2\rangle}\cong \Spec k[a_1,b_1,a_2,b_2]$. Each closed point of $U_{\langle x,y^n\rangle}$ is an ideal $I$ generated by polynomials of the form
\[\begin{array}{lllll}
 y^2&-&b_1y&-&b_2\\
 xy&-&a_1y&-&a_2b_2\\
 x&-&a_2y&-&(a_1-b_1a_2)\end{array}\]
If $\Spec(k[a_1,b_1,a_2,b_2]/I)$ has non-trivial intersection with the $x$-axis, then $I+\langle y\rangle \neq \langle 1\rangle$. Thus, $-b_2 = 0$. Similarly, if $\Spec(k[a_1,b_1,a_2,b_2]/I)$ has non-trivial intersection with the $y$-axis, then $I+\langle x\rangle \neq \langle 1\rangle$. Thus,
\[\left|\begin{array}{ll}
 -a_1&-a_2b_2\\
 -a_2&-(a_1-b_1a_2)\end{array}\right| = -a_1b_1a_2+a_1^2-a_2^2b_2 = 0\]
Let $f_2 := a_1b_1a_2b_2-a_1^2b_2+a_2^2b_2^2$. Because there is a weighting such that the $\textrm{init}(f_2) = a_1b_1a_2b_2$, $\textrm{Tr}(f_2^{p-1}\cdot)$ is a splitting of $k[a_1,b_1,a_2,b_2]$ (see Theorem \ref{t;LMPinit}, i.e. \cite[Proposition 1.8]{LMP}, or Theorem \ref{t;knutson}, i.e. \cite[Theorem 2]{K}). 

Now, let $s_2$ denote the anticanonical section that determines the $T^2$-invariant splitting of $\Hilb^2(\bA^2_k)$ and let $D_2:=\{s_2=0\}$. The vanishing set of $f_2$ is $D_2 \cap U_{\langle x,y^2\rangle}$. As there are no non-constant, non-vanishing functions on $U_{\langle x,y^2\rangle}$, it follows that $\textrm{Tr}(f_2^{p-1}\cdot)$ must be the induced splitting of $U_{\langle x,y^2\rangle}$.  

More generally, we have the following situation.

\begin{proposition}\label{p;eqnRNC}
Let $M_n$ be as in Definition \ref{d;mat}. Let $f_n:=-b_n\textrm{det}(M_n)$.
\begin{enumerate}
\item The induced splitting of $U_{\langle x,y^n\rangle}\subseteq \Hilbn$ is given by $\textrm{Tr}(f_n^{p-1}\cdot)$.
\item For any $n\geq 1$, $f_n$ is a degree $2n$ polynomial such that $\textrm{Lex}_{a_n} \textrm{Revlex}_{b_n}(f_n) = b_na_nf_{n-1}$. Thus, under the weighting \[\textrm{Revlex}_{b_n},~\textrm{Lex}_{a_n},\dots,~\textrm{Revlex}_{b_1},~\textrm{Lex}_{a_1}\]
$\textrm{init}(f_n) = b_na_n\cdots b_1a_1$. In other words, $\{f_n = 0\}$ is a residual normal crossings divisor as defined in \cite{LMP}.
\end{enumerate}
\end{proposition}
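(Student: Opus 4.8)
The plan is to prove the two parts of Proposition \ref{p;eqnRNC} more or less in the order stated, leaning on the structure already set up for $n=2$ and on an inductive analysis of the matrix $M_n$.

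\textbf{Part 1.} First I would argue exactly as in the $n=2$ case. The scheme $U_{\langle x,y^n\rangle}\cong\bA^{2n}_k$ has no non-constant non-vanishing functions, and the ambient splitting of $\Hilbn$ is a $(p-1)^{\mathrm{st}}$ power; hence by Lemma \ref{l;nonconstnonvan} the induced splitting of $U_{\langle x,y^n\rangle}$ is $\mathrm{Tr}(g^{p-1}\cdot)$ for a polynomial $g$ whose vanishing locus equals $D\cap U_{\langle x,y^n\rangle}$, where $D=V(\sigma)$ is the compatibly split anticanonical divisor of $\Hilbn$. So it suffices to identify this vanishing locus with $\{f_n=0\}$ and to check that $f_n$ really does define a splitting (which follows from Part 2 via Theorem \ref{t;LMPinit}, and then $g^{p-1}=f_n^{p-1}$ by Lemma \ref{l;nonconstnonvan} again, noting the $c^{p-1}=1$ normalization). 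The set-theoretic description of $D$ from the introduction is: $I\in D$ iff $\mathrm{Supp}(k[x,y]/I)$ meets $\{xy=0\}$, i.e.\ iff $I+\langle x\rangle\neq k[x,y]$ or $I+\langle y\rangle\neq k[x,y]$. Working with the Gröbner-basis generators $f_1,\dots,f_{n+1}$ of Lemma \ref{l;cij}, I would compute: $I+\langle y\rangle$ is generated (mod $y$) by the constant terms $b_n$ and $c_{1n},\dots,c_{nn}$, so $I+\langle y\rangle\neq(1)$ exactly when all of $b_n, c_{1n},\dots,c_{nn}$ vanish; using the relations $c_{ij}$ of Lemma \ref{l;cij} and the nesting of the $M_k$ (the $c_{in}$ for $i\geq 2$ are, modulo the vanishing conditions, expressible in terms of the first column, so that the relevant $\bar I\subseteq k[x]$ is generated by $b_n$ and $\det$-type expressions), the locus where $I$ meets the $x$-axis works out to $\{b_n=0\}$. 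Dually, $I+\langle x\rangle$ is generated (mod $x$) by $f_1$ and the polynomials $a_iy^{n-1}+c_{i2}y^{n-2}+\cdots+c_{in}$; this ideal of $k[y]$ fails to be the unit ideal exactly when the $n\times n$ matrix $M_n$ of coefficients $(-c_{ij})$ is singular, i.e.\ $\det(M_n)=0$. Hence $D\cap U_{\langle x,y^n\rangle}=\{b_n\det(M_n)=0\}=\{f_n=0\}$, which is Part 1. The main bookkeeping obstacle here is verifying carefully that ``$I$ meets the $x$-axis'' reduces to $b_n=0$ rather than to a larger ideal; this requires unwinding the formulas for $c_{in}$, $i\geq 1$, and observing that after setting $b_n=0$ everything is consistent.

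\textbf{Part 2.} Here the heart is the identity $\mathrm{Lex}_{a_n}\mathrm{Revlex}_{b_n}(f_n)=b_n a_n f_{n-1}$. I would proceed by induction on $n$, the base case $n=2$ being the explicit computation $f_2=a_1b_1a_2b_2-a_1^2b_2+a_2^2b_2^2$, whose $\mathrm{Revlex}_{b_2}$-leading form is $b_2(a_1b_1a_2-a_1^2)=b_2a_2(a_1b_1-a_1)\cdot$\,(no, rather) $=b_2\cdot a_2\cdot(a_1 b_1 - a_1)$; more precisely one checks directly $\mathrm{Lex}_{a_2}\mathrm{Revlex}_{b_2}(f_2)=b_2a_2 a_1 b_1 = b_2 a_2 f_1$ since $f_1=a_1b_1$ in the $n=1$ case. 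For the inductive step, write $f_n=-b_n\det(M_n)$. The key structural fact, already noted in the example after Definition \ref{d;mat} and following from the formulas of Lemma \ref{l;cij}, is that $M_{n-1}$ is the upper-left $(n-1)\times(n-1)$ block of $M_n$ when $b_n=0$, and that $b_n$ appears in $M_n$ only in the last column (entries $c_{in}$ for $i<n$ each contain a factor involving $b_n$) and, via $c_{n1}=a_n$ and the $c_{nj}$, in the last row. I would expand $\det(M_n)$ carefully tracking the lowest-degree-in-$b_n$ behavior: setting $b_n=0$ kills the last column except possibly the $(n,n)$ entry, but in fact a cleaner route is to track the $\mathrm{Revlex}_{b_n}$ leading form first — i.e.\ the coefficient of the lowest power of $b_n$ — giving $\mathrm{Revlex}_{b_n}(\det M_n)=\det(M_n)|_{b_n=0}$ provided $b_n\nmid\det(M_n)$, which one checks. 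Then $\det(M_n)|_{b_n=0}$ has a block-triangular structure with $M_{n-1}$ in the corner, so it equals $\pm\det(M_{n-1})\cdot(\text{the }(n,n)\text{-type cofactor at }b_n=0)$ up to lower-order terms; taking then $\mathrm{Lex}_{a_n}$ (the leading form in $a_n$, with all other variables weighted $0$) isolates $a_n$ times $\det(M_{n-1})|_{b_n=0}$, and multiplying back by $b_n$ gives $b_n a_n f_{n-1}$. By induction $\mathrm{init}(f_{n-1})=b_{n-1}a_{n-1}\cdots b_1a_1$ under the iterated weighting, so refining that weighting by $\mathrm{Revlex}_{b_n}$ then $\mathrm{Lex}_{a_n}$ on top gives $\mathrm{init}(f_n)=b_na_nb_{n-1}a_{n-1}\cdots b_1a_1$, a squarefree product of all $2n$ variables; this is precisely the residual-normal-crossings condition of \cite{LMP}.

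\textbf{Expected main obstacle.} The genuinely delicate step is the determinant manipulation: showing that $\mathrm{Lex}_{a_n}\mathrm{Revlex}_{b_n}(\det M_n)=\pm a_n\det(M_{n-1})$, i.e.\ controlling which monomials survive under the two successive initial-form operations. One has to confirm (a) that $b_n\nmid\det(M_n)$, so that $\mathrm{Revlex}_{b_n}$ just substitutes $b_n=0$ up to the correct leading term, (b) that in $\det(M_n)|_{b_n=0}$ the monomials of top $a_n$-degree are exactly $a_n$ times the analogous top-degree part of a cofactor that reduces to $\det(M_{n-1})$ — here the explicit shape of the last row ($c_{n1}=a_n$, $c_{nj}=a_{n-j+1}-\sum_{k<j}a_{n-j+1+k}b_k$) and last column of $M_n$ from Lemma \ref{l;cij} must be used to see that no other term contributes a higher power of $a_n$ — and (c) the sign. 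Everything else (degree count: $\deg f_n = 1 + 2(n-1)\cdot\frac{?}{}$; one checks $\det M_n$ has degree $2n-1$ and $f_n=-b_n\det M_n$ has degree $2n$) and the reduction of Part 1 to set-theoretic descriptions are routine once the formulas of Lemma \ref{l;cij} are in hand.
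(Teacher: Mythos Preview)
Your approach is essentially the same as the paper's. For Part 1 the paper argues exactly as you do: $I+\langle y\rangle\neq\langle 1\rangle$ gives $b_n=0$ directly (you are overthinking this step—just set $y=0$ in the generator $f_1$), and $I+\langle x\rangle\neq\langle 1\rangle$ forces the $n$ polynomials $-a_iy^{n-1}-\cdots-c_{in}$ to have a common root, hence $\det(M_n)=0$; then the residual-normal-crossings property plus the absence of non-constant non-vanishing functions pins down the splitting. For Part 2 the paper's execution of the determinant step is slightly cleaner than your block-triangular sketch: it expands $\det(M_n)$ by cofactors along the last column and uses Lemma~\ref{l;cij} to see that $c_{in}=a_{i+1}b_n$ for $i<n$, giving immediately $\det(M_n)=c_{nn}\det(M_{n-1})+b_nR$ with $b_n\nmid c_{nn}\det(M_{n-1})$; then $\mathrm{Lex}_{a_n}(c_{nn})=-a_nb_{n-1}$ and $a_n\nmid\det(M_{n-1})$ finish it. This sidesteps your worries (a) and (b) in one stroke, but the idea is identical.
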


\begin{remark}
By the weighting $\textrm{Revlex}_{b_n},~\textrm{Lex}_{a_n},\dots,~\textrm{Revlex}_{b_1},~\textrm{Lex}_{a_1}$ we mean the following: Let $m,n\in k[a_1,b_1,\dots,a_n,b_n]$ be monomials. We first compare $m$ and $n$ using $\textrm{Revlex}_{b_n}$. If they are indistinguishable then we compare $m$ and $n$ using $\textrm{Lex}_{a_n}$. If $m$ and $n$ are still indistinguishable, we compare them using $\textrm{Revlex}_{b_{n-1}}$, and so on. 

Put differently, $\textrm{Revlex}_{b_n},~\textrm{Lex}_{a_n},\dots,~\textrm{Revlex}_{b_1},~\textrm{Lex}_{a_1}$ is equivalent to a weighting of the variables where variables $a_1,\dots,a_n$ are weighted by $A_1,\dots,A_n$, variables $b_1,\dots, b_n$ are weighted by $B_1,\dots,B_n$ and \[-B_n\gg A_n \gg -B_{n-1}\gg {A_{n-1}}\gg\cdots\gg -B_1\gg A_1 \gg 0.\]
\end{remark}

\begin{proof}[Proof of Proposition \ref{p;eqnRNC}]
The proof is nearly identical to the $n=2$ case explained above. 

Let $I\in U_{\langle x,y^n\rangle}$. If $\Spec (k[x,y]/I)$ has non-trivial intersection with the x-axis then $I+\langle y \rangle\neq \langle 1\rangle$ and so $-b_n=0$. If $\Spec (k[x,y]/I)$ has non-trivial intersection with the y-axis then $I+\langle x \rangle \neq \langle 1\rangle$ and so the polynomials
\[\begin{array}{llllllllll}
 -&a_1y^{n-1}&-&c_{12}y^{n-2}&-&\cdots&-&c_{1(n-1)}y&-&c_{1n}\\
 -&a_2y^{n-1}&-&c_{22}y^{n-2}&-&\cdots&-&c_{2(n-1)}y&-&c_{2n}\\
 &&&&\vdots&&&&&\\
 -&a_{n-1}y^{n-1}&-&c_{(n-1)2}y^{n-2}&-&\cdots&-&c_{(n-1)(n-1)}y&-&c_{(n-1)n}\\
 -&a_n y^{n-1}&-&c_{n2}y^{n-2}&-&\cdots&-&c_{n(n-1)}y&-&c_{nn}\end{array} \]
must have a common solution. Thus, $\textrm{det}(M_n) = 0$. 

Let $f_n := -b_n\textrm{det}(M_n)$. We now show that there is a weighting such that the initial term of $f_n$ is the product of the variables $a_1b_1\cdots a_nb_n$. This will prove that $\textrm{Tr}(f_n^{p-1}\cdot)$ is a splitting of $U_{\langle x,y^n\rangle}$. 

By Lemma \ref{l;cij}, if we compute $\textrm{det}(M_n)$ using cofactors along the last column and group together terms involving $b_n$, we have 
\[\textrm{det}(M_n) = c_{nn}\textrm{det}(M_{n-1})+b_nR\]
where $R$ is a polynomial in $a_1,\dots,a_n,b_1,\dots,b_n$. Notice that $b_n$ does not appear anywhere in the polynomial $c_{nn}\textrm{det}(M_{n-1})$ and that $a_n$ does not appear anywhere in $\textrm{det}(M_{n-1})$. Thus, 
\[\textrm{Lex}_{a_n}\textrm{Revlex}_{b_n}(\textrm{det}(M_n)) = \textrm{Lex}_{a_n}(c_{nn}\textrm{det}(M_{n-1})) = a_nb_{n-1}\textrm{det}(M_{n-1}) = -a_nf_{n-1}\]
and so $\textrm{Lex}_{a_n}\textrm{Revlex}_{b_n}(f_n) = b_na_nf_{n-1}$. Therefore $\textrm{init}(f_n) = b_na_n\cdots b_1a_1$, with respect to the weighting \[\textrm{Revlex}_{b_n},~\textrm{Lex}_{a_n},\dots,~\textrm{Revlex}_{b_1},~\textrm{Lex}_{a_1}.\]

Let $s_n$ denote the anticanonical section that determines the $T^2$-invariant splitting of $\Hilb^n(\bA^2_k)$ and let $D_n:=\{s_n=0\}$. By construction, $f_n$ vanishes along $D_n \cap U_{\langle x,y^n\rangle}$. As there are no non-constant, non-vanishing functions on $U_{\langle x,y^n\rangle}$, $\textrm{Tr}(f_n^{p-1}\cdot)$ must be the induced splitting of $U_{\langle x,y^n\rangle}$.  

Finally, notice that $\textrm{deg}(\textrm{det}(M_n))\leq 2n-1$ (since the first column of $M_n$ consists of linear entries and the rest of $M_n$ has quadratic entries). Therefore, $\textrm{deg}(f_n)\leq 2n$. Because $a_1b_1\cdots a_nb_n$ is a term of $f_n$, $\textrm{deg}(f_n) = 2n$.
\end{proof}

\begin{remark}
In the proof, we use that $\textrm{init}(f_n) = a_1b_1\dots a_nb_n$ in order to prove that $\textrm{Tr}(f_n^{p-1}\cdot)$ is the induced splitting of $U_{\langle x,y^n\rangle}$. Though this isn't necessary, it is a convenient replacement for showing both (i) that $f_n$ does not vanish anywhere other than $\{s_n = 0\}$, and (ii) that $\textrm{Tr}(f_n^{p-1}1) = 1$, rather than some other constant $c\in k$. 
\end{remark}

\subsection{The compatibly split subvarieties of $U_{\langle x,y^n\rangle}$}\label{splitGoodPatch}

In this subsection, we describe all compatibly split subvarieties of $U_{\langle x,y^n\rangle}$. By \cite[Lemma 1.1.7]{BK}, it suffices to find all compatibly split subvarieties of $\Hilbn$ that have non-trivial intersection with $U_{\langle x,y^n\rangle}$.

 \begin{figure}[!h]
 \begin{center}
 \includegraphics[scale = 0.33]{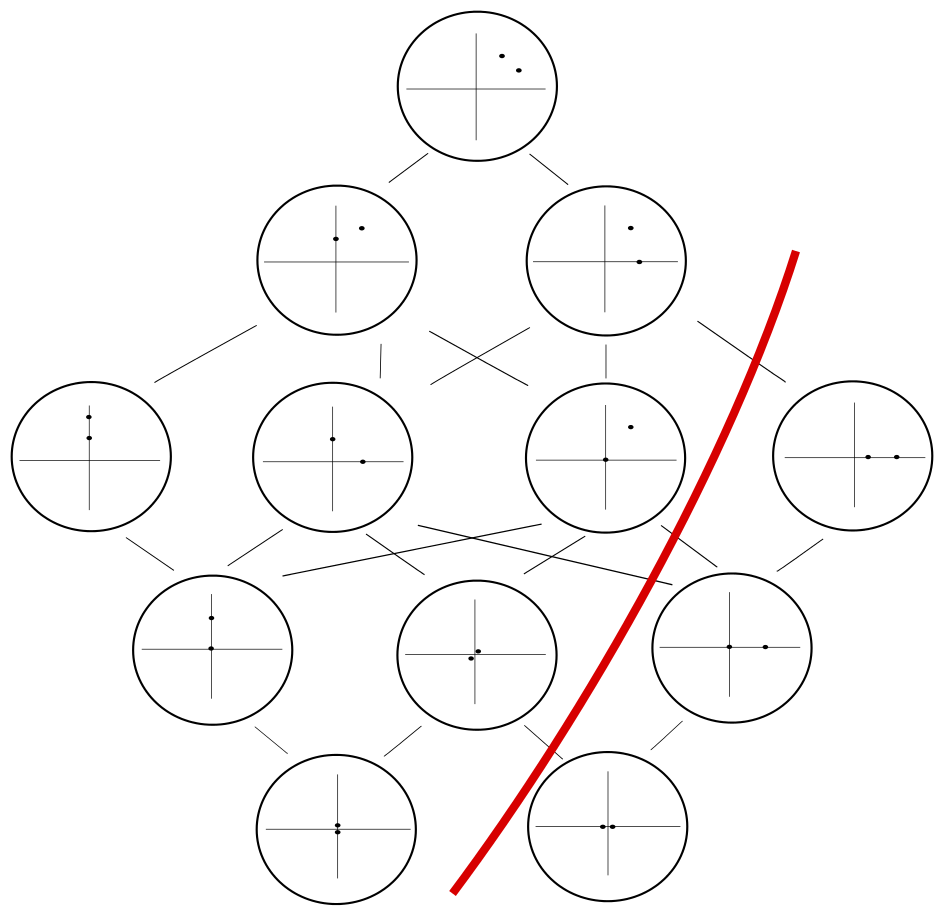}
 \caption{Intersecting the compatibly split subvarieties of $\Hilb^2(\bA^2_k)$ with $U_{\langle x,y^2\rangle}$. The subvarieties with stratum representatives appearing to the left of the curve have non-trivial intersection with $U_{\langle x,y^n\rangle}$. }
 \label{fig;linesplit}
 \end{center}
 \end{figure}

To begin, we introduce some notation which will be used to label the compatibly split subvarieties of $\Hilbn$ that have non-trivial intersection with $U_{\langle x,y^n\rangle}$.  

\begin{definition}\label{d;labels}
Consider the following two types of 4-tuples: $(s,u,t,+0)$ and $(s,u,t,+1)$, $s,u,t\in \bZ_{\geq 0}$. Label $Y\subseteq \Hilbn$ by $(s,u,t,+0)$ or by $(s,u,t,+1)$ if $Y$ is the closure, in $\Hilbn$, of the image of the map 
\[i_{s,t,W}: \Hilb^s(\textrm{y-axis}\setminus \{(0,0)\})\times \Hilb^t(\bA^2_k\setminus \{xy=0\})\times W\rightarrow \Hilbn\]
\[(I_1,I_2,I_3)\mapsto I_1\cap I_2\cap I_3\] for some $W\subseteq \Hilb^{u}(\bA^2_k)$ of the following form: 
\begin{enumerate}
\item Suppose the last index of the 4-tuple is $+0$.
\begin{enumerate}
\item If $u=0$ then $W$ is empty.
\item If $u\geq 1$ then $W$ is the $0$-dimensional subscheme consisting of the $T^2$-fixed point $\langle x,y^u\rangle\in \Hilb^u(\bA^2_k)$.
\end{enumerate}
\item Suppose the last index of the 4-tuple is $+1$.
\begin{enumerate}
\item If $u=0$ then $W = \Hilb^1(\textrm{x-axis}\setminus\{(0,0)\})$.
\item If $u\geq 2$ then $W$ is the subvariety of $\Hilb^u_0(\bA^2_k)$ that is pointwise fixed by the subtorus $T^1 = \{(t^{u-1},t)~|~t\in \bG_m\}$. That is, $W$ is the bone (see Definition \ref{d;bone}) of the exterior edge of the moment polyhedron of $\Hilb^u(\bA^2_k)$ connecting the moment polytope of $\langle x,y^u\rangle$ to the moment polytope of $\langle x^2, xy, y^{u-1}\rangle$. Working on the two open patches $U_{\langle x,y^u\rangle}$ and $U_{\langle x^2, xy, y^{u-1}\rangle}$, we see that $W\cong \bP^1_k$.
\end{enumerate}
We \emph{do not} associate any subvariety to the $4$-tuple $(s,1,t,+1)$.
\end{enumerate}
\end{definition}

\begin{remarks}
\begin{enumerate}
\item The last entry of the $4$-tuple is $+0$ when $\textrm{dim}(W)=0$ and is $+1$ when $\textrm{dim}(W)=1$. See Figure \ref{fig;stratreps} for a picture of a stratum representative of each subvariety $Y\subseteq \Hilbn$ labelled by $(s,u,t,+0)$ or by $(s,u,t,+1)$.
\item All subvarieties of $\Hilbn$ which are labelled by $(s,u,t,+0)$ or by $(s,u,t,+1)$ are irreducible. (Proof as before.) 
\item All subvarieties of $\Hilbn$ which are labelled by $(s,u,t,+0)$ or by $(s,u,t,+1)$ are compatibly split. (Proof: Apply Proposition \ref{p;intuition}.) 
\item All subvarieties $\Hilbn$ which are labelled by $(s,u,t,+0)$ or by $(s,u,t,+1)$ have non-trivial intersection with $U_{\langle x,y^n\rangle}$. Indeed, they all contain the point $\langle x,y^n\rangle$.
\end{enumerate}
\end{remarks}

\begin{figure}
\begin{center}
\includegraphics[scale = 0.4]{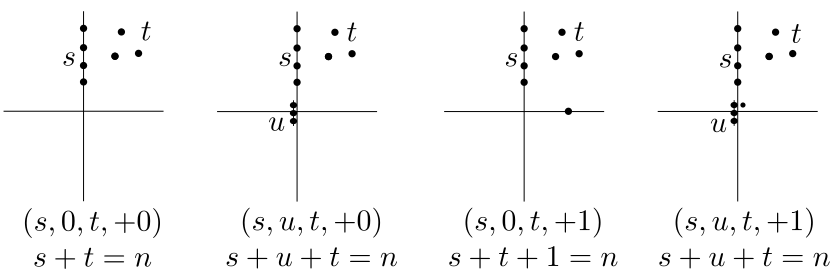}
\caption{Stratum representative pictures for subvarieties of $\Hilbn$ labelled by $(s,u,t,+0)$ or by $(s,u,t,+1)$.}
\label{fig;stratreps}
\end{center}
\end{figure}

Thus, if $Y\subseteq \Hilbn$ is of type $(s,u,t,+0)$ or $(s,u,t,+1)$, then $Y\cap U_{\langle x, y^n\rangle}$ is a (non-empty) compatibly split subvariety of $U_{\langle x,y^n\rangle}$. In fact, as the next proposition indicates, these are the only compatibly split subvarieties of $U_{\langle x,y^n\rangle}$.

\begin{proposition}\label{p;allsplit}
A subvariety $Z\subseteq U_{\langle x,y^n\rangle}$ is compatibly split if and only if $Z = Y\cap U_{\langle x,y^n\rangle}$ where $Y\subseteq\Hilbn$ is labelled by some $(s,u,t,+0)$ or some $(s,u,t,+1)$ as in Definition \ref{d;labels}.
\end{proposition}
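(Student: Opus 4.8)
\textbf{Proof proposal for Proposition \ref{p;allsplit}.}

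The plan is to reduce the statement, via Proposition \ref{p;intuition} and Lemma \ref{l;puncSplit}, to a bookkeeping exercise: every compatibly split $Y\subseteq\Hilbn$ that meets $U_{\langle x,y^n\rangle}$ is the closure of the image of some $i_{r,s,t,Z}$ with $Z\subseteq\Hilb^m_0(\bA^2_k)$ compatibly split, and I must show that exactly when $Y\cap U_{\langle x,y^n\rangle}\neq\emptyset$ the data $(r,s,t,Z)$ is forced into the shape recorded in Definition \ref{d;labels}. One direction (``if'') is already settled by the remarks following Definition \ref{d;labels}, so the real content is the ``only if'' direction. First I would observe that $Y\cap U_{\langle x,y^n\rangle}\neq\emptyset$ together with $T^2$-invariance forces $\langle x,y^n\rangle\in Y$: indeed $U_{\langle x,y^n\rangle}$ is $T^2$-invariant, so $Y\cap U_{\langle x,y^n\rangle}$ is a nonempty $T^2$-invariant open subvariety of the projective (hence complete) variety $Y$, and a $T^2$-orbit closure inside it contains a $T^2$-fixed point of $\Hilbn$ lying in $U_{\langle x,y^n\rangle}$; the only such fixed point is $\langle x,y^n\rangle$.

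Next I would analyze what $\langle x,y^n\rangle\in Y$ imposes on $(r,s,t,Z)$ where $Y=\overline{\operatorname{im}}\,i_{r,s,t,Z}$ with $Z\subseteq\Hilb^{n-r-s-t}_0(\bA^2_k)$. Since $\langle x,y^n\rangle$ is supported at the origin, the Hilbert--Chow image of $\langle x,y^n\rangle$ is $n[(0,0)]$, forcing the ``punctured $x$-axis'' and ``generic'' factors $\Hilb^r(\text{punctured }x\text{-axis})$ and $\Hilb^t(\bA^2_k\setminus\{xy=0\})$ to contribute nothing to a point of $Y$ whose support is $\{(0,0)\}$ — but more carefully, since $\langle x,y^n\rangle\in Y$ and $Y$ is irreducible with $\langle x,y^n\rangle$ in the closure of the image, I would argue (using that $\langle x,y^n\rangle$ is the flat limit under dilation toward the origin of a generic point of $Y$, together with $\langle x,y^n\rangle\in U_{\langle x,y^n\rangle}$) that $\langle x,y^n\rangle$ itself must lie in the image of $i_{r,s,t,Z}$, so there exist $I_1\in\Hilb^r$, $I_2\in\Hilb^s$, $I_3\in\Hilb^t$, $I_4\in Z$ with $I_1\cap I_2\cap I_3\cap I_4=\langle x,y^n\rangle$. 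Because $\langle x,y^n\rangle$ is supported at the origin while the $x$-axis factor and the generic factor live away from the origin, coprimality forces $r=t=0$. Thus $Y=\overline{\operatorname{im}}\,i_{0,s,0,Z}$ with $Z\subseteq\Hilb^{n-s}_0(\bA^2_k)$ compatibly split and, by the same argument applied inside the factor, $Z$ must itself contain $\langle x,y^{n-s}\rangle$, i.e.\ $Z\cap U_{\langle x,y^{n-s}\rangle}\neq\emptyset$. Setting $u=n-s-t$ with $t$ now the count of generic points — wait, with $r=t=0$ we have $u=n-s$; I then invoke Lemma \ref{l;puncSplit} applied to $Z\subseteq\Hilb^u(\bA^2_k)$: since $Z$ is compatibly split, contained in the punctual Hilbert scheme, and meets $U_{\langle x,y^u\rangle}$, Lemma \ref{l;puncSplit} says $Z$ is either the point $\{\langle x,y^u\rangle\}$ or the $1$-dimensional bone fixed by $T^1=\{(t^{u-1},t)\}$ — precisely the two options (with $+0$ resp.\ $+1$) in Definition \ref{d;labels}, and the remaining index $t$ records the generic points that were allowed before the coprimality cut; I should re-examine whether $t$ can indeed be nonzero, and the resolution is that I must not prematurely conclude $t=0$ — rather, a point of $Y\cap U_{\langle x,y^n\rangle}$ can genuinely have generic points, and the constraint is only on which $W$-factor appears, so the correct reduction keeps $s,t$ free but pins down $W$ (equivalently $Z$) to the listed forms.

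The main obstacle I anticipate is the careful verification that ``$Y$ meets $U_{\langle x,y^n\rangle}$'' is equivalent to ``the $W$-factor meets $U_{\langle x,y^u\rangle}$ and the $x$-axis count is zero.'' Concretely: if $r\geq 1$ (an $x$-axis point) then every $I\in Y$ has $\partial/\partial x$ vanishing at that $x$-axis point's location in a way incompatible with $\{1,y,\dots,y^{n-1}\}$ being a basis of $k[x,y]/I$ near the origin — I would make this precise by noting that $\langle x,y^n\rangle\in U_{\langle x,y^n\rangle}$ forces $\operatorname{init}_{\mathrm{Lex},x\gg y}(I)=\langle x,y^n\rangle$ on a dense open subset of $Y$, which is impossible if $I$ has a component on the punctured $x$-axis (that would make $y$ a zerodivisor contributing a factor forcing $y^j\in\operatorname{init}(I)$ for $j<n$, or rather force $x$ out of the initial ideal structure appropriately). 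Dually, the bone $W$ must meet $U_{\langle x,y^u\rangle}$, which by Lemma \ref{l;puncSplit} rules out the ``$\langle x^2,xy,y^{u-1}\rangle$-type'' bone and all higher-dimensional punctual strata, leaving exactly the enumerated list. Once these two equivalences are nailed down, the proposition follows by combining Proposition \ref{p;intuition} (to get the $i_{r,s,t,Z}$ description), the $T^2$-fixed-point argument above (to force $\langle x,y^n\rangle\in Y$), Lemma \ref{l;morph} and \cite[Lemma 1.1.7]{BK} (to transfer between $\Hilbn$ and the patch), and Lemma \ref{l;puncSplit} (to classify $Z$), matching the outcome term-by-term with Definition \ref{d;labels}.
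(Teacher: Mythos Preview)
Your overall architecture is correct and matches the paper's: invoke Proposition~\ref{p;intuition} to write $Y=\overline{\operatorname{im}}\,i_{r,s,t,Z}$, use the hypothesis $Y\cap U_{\langle x,y^n\rangle}\neq\emptyset$ to constrain $(r,Z)$, and finish with Lemma~\ref{l;puncSplit}. But one step is genuinely wrong, and fixing it also cleans up the muddle you noticed around $t$.

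The false step is the claim that $\langle x,y^n\rangle$ lies in the \emph{image} of $i_{r,s,t,Z}$. The domain factors are Hilbert schemes of the \emph{punctured} $x$-axis, the \emph{punctured} $y$-axis, and $\bA^2_k\setminus\{xy=0\}$, so any $I$ in the image has support meeting the complement of the origin whenever $r+s+t>0$. Thus $\langle x,y^n\rangle$ is in the image only in the degenerate case $r=s=t=0$; in general it is only in the closure, and you cannot read off $r=t=0$ from it. This is why your argument briefly derailed into ``$t=0$.''

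The repair, which is what the paper does (tersely), is to use openness rather than a specific point. The image of $i_{r,s,t,Z}$ is dense open in the irreducible variety $Y$, and $U_{\langle x,y^n\rangle}$ is open in $\Hilbn$; hence $Y\cap U_{\langle x,y^n\rangle}\neq\emptyset$ iff a \emph{generic} point of the image lies in $U_{\langle x,y^n\rangle}$. Now take such a generic $I=I_1\cap I_2\cap I_3\cap I_4$. Membership in $U_{\langle x,y^n\rangle}$ means $x-p(y)\in I$ for some $p$ of degree $<n$. Evaluating $x-p(y)$ at the support gives: at each punctured-$x$-axis point $(\alpha_i,0)$ one gets $p(0)=\alpha_i$, and at the origin (if $Z\neq\emptyset$) one gets $p(0)=0$. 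Hence $r\leq 1$, and if $r=1$ then $Z=\emptyset$. This is exactly the paper's assertion ``if $Z\neq\emptyset$ then $r=0$; if $Z=\emptyset$ then $r\leq 1$,'' and the case $r=1$, $Z=\emptyset$ is precisely the label $(s,0,t,+1)$ with $W=\Hilb^1(\text{punctured }x\text{-axis})$. The same element $x-p(y)\in I\subseteq I_4$ shows $k[x,y]/I_4$ is spanned by powers of $y$, so $I_4\in U_{\langle x,y^{u}\rangle}$, giving the paper's second assertion that $Z$ meets $U_{\langle x,y^{u}\rangle}$ and enabling Lemma~\ref{l;puncSplit}. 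With these two facts established, your final paragraph (combining Proposition~\ref{p;intuition}, Lemma~\ref{l;vertex}, and Lemma~\ref{l;puncSplit}) goes through exactly as in the paper.
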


Before proving the proposition, we consider a few lemmas.

\begin{lemma}\label{l;vertex}
A closed, $T^2$-invariant closed subvariety $Y\subseteq \Hilbn$ has non-trivial intersection with $U_{\langle x,y^n\rangle}$ if and only if $\langle x,y^n\rangle\in Y$.
\end{lemma}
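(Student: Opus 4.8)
The statement is an ``if and only if'' about when a $T^2$-invariant closed subvariety $Y\subseteq\Hilbn$ meets the chart $U_{\langle x,y^n\rangle}$, phrased in terms of whether $Y$ contains the monomial ideal $\langle x,y^n\rangle$. The plan is to reduce everything to the behaviour of $T^2$-orbits and their closures, using the standard fact (already established in the excerpt, in the discussion preceding the non-singularity proof) that for any $I\in\Hilbn$ the initial ideal $\operatorname{init}_>(I)$ with respect to a monomial order lies in $\overline{T^2\cdot I}$ and is a $T^2$-fixed point. I would also use the lemma (already stated in the excerpt, ``Each $U_\lambda$ is invariant under the $T^2$-action'') that each $U_\lambda$ is $T^2$-stable, together with the obvious fact that $\langle x,y^n\rangle\in U_{\langle x,y^n\rangle}$.

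The backward direction is immediate: if $\langle x,y^n\rangle\in Y$, then since $\langle x,y^n\rangle\in U_{\langle x,y^n\rangle}$ we have $\langle x,y^n\rangle\in Y\cap U_{\langle x,y^n\rangle}$, so the intersection is non-empty. For the forward direction, suppose $Y\cap U_{\langle x,y^n\rangle}\neq\emptyset$, and pick $I\in Y\cap U_{\langle x,y^n\rangle}$. The defining property of the chart is that the monomials outside $\langle x,y^n\rangle$, namely $\{1,y,\dots,y^{n-1}\}$, form a $k$-basis of $k[x,y]/I$; equivalently (as the Example in Subsection~\ref{s;opencover} records) $\operatorname{init}_{\mathrm{Lex}}(I)=\langle x,y^n\rangle$ for the Lex order with $x\gg y$. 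Hence $\langle x,y^n\rangle=\operatorname{init}_{\mathrm{Lex}}(I)$ lies in the closure $\overline{T^2\cdot I}$. Since $Y$ is closed and $T^2$-invariant and contains $I$, it contains $\overline{T^2\cdot I}$, hence contains $\langle x,y^n\rangle$. This completes both directions.

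The only point requiring a little care — and the closest thing to an obstacle — is justifying that $\operatorname{init}_{\mathrm{Lex}}(I)$ actually sits in the \emph{Zariski closure} of the $T^2$-orbit of $I$, rather than just being ``an initial ideal''. This is the standard Gröbner-degeneration picture: choosing a weight vector $(\lambda_1,\lambda_2)$ that refines the Lex order on the finitely many monomials occurring in a Gröbner basis of $I$, the one-parameter subgroup $t\mapsto(t^{\lambda_1},t^{\lambda_2})$ carries $I$ to a family whose flat limit as $t\to 0$ is $\operatorname{init}_{\mathrm{Lex}}(I)$, and this limit lies in $\overline{T^2\cdot I}$ because $\Hilbn$ is separated and projective over the Chow variety (so the limit is unique and lies in the orbit closure). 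I would cite the Gröbner-degeneration proposition from Subsection~\ref{s;gb} for the flat-family statement and simply remark that the flat limit of a family in a separated scheme is contained in the closure of the generic fibre. Everything else is bookkeeping.
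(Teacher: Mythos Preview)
Your proof is correct and follows essentially the same approach as the paper's: both argue that for $I\in U_{\langle x,y^n\rangle}$ the monomial ideal $\langle x,y^n\rangle$ lies in $\overline{T^2\cdot I}$, and hence in $Y$. The only difference is presentational: you invoke the general facts already recorded in the paper (that $\operatorname{init}_{\mathrm{Lex}}(I)=\langle x,y^n\rangle$ for $I\in U_{\langle x,y^n\rangle}$ and that $\operatorname{init}(I)\in\overline{T^2\cdot I}$ for any monomial order), whereas the paper writes down the explicit one-parameter subgroup $S=\{(\alpha^N,\alpha)\mid N>n\}$ realizing this degeneration; these are the same argument, since the weight $(N,1)$ with $N>n$ is precisely a weight refining Lex with $x\gg y$ on the relevant monomials.
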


\begin{proof}
The backwards direction is clear. For the forward direction, let $I\in Y \cap U_{\langle x,y^n\rangle}$. Consider the subtorus $S:=\{(\alpha^N,\alpha)~|~N>n, \alpha\in \bG_m\}\subseteq T^2$. Then, $\langle x,y^n\rangle$ lies in the $S$-orbit closure of $I$. As $Y$ is both closed and $T^2$-invariant, $\langle x,y^n\rangle\in Y$. 
\end{proof}

\begin{lemma}\label{l;notsplit}
Let $1\leq i\leq n-2$ and let $Y_i\subseteq U_{\langle x,y^n\rangle}$ be the subvariety defined by the ideal containing all of the variables except for $a_{i+1}$. That is, $J(Y_i) = \langle a_1,\dots,a_{i},a_{i+2},\dots,a_n$, $b_1,\dots,b_n\rangle\subseteq k[a_1,\dots,a_n,b_1,\dots,b_n]$. Then $Y_i$ is not compatibly split. 
\end{lemma}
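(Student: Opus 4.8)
The plan is to use the Knutson-type initial-ideal criterion (Theorem~\ref{t;yay}, part 2) to rule out $Y_i$ as compatibly split. First I would recall from Proposition~\ref{p;eqnRNC} that the induced splitting on $U_{\langle x,y^n\rangle}\cong\Spec k[a_1,\dots,a_n,b_1,\dots,b_n]$ is $\textrm{Tr}(f_n^{p-1}\cdot)$ with $f_n=-b_n\det(M_n)$, and that under the weighting $\textrm{Revlex}_{b_n},\textrm{Lex}_{a_n},\dots,\textrm{Revlex}_{b_1},\textrm{Lex}_{a_1}$ we have $\textrm{init}(f_n)=a_1b_1\cdots a_nb_n$, so the splitting degenerates to the standard (coordinate-subspace) splitting of $\bA^{2n}_k$ and Theorem~\ref{t;yay} applies.

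The key step is then to exhibit a compatibly split subvariety $Y_1'\subseteq U_{\langle x,y^n\rangle}$ and show that $\textrm{init}(J(Y_i))\supseteq\textrm{init}(J(Y_1'))$ while $J(Y_1')\not\subseteq J(Y_i)$ (equivalently $Y_i\not\subseteq Y_1'$). The natural candidate is a subvariety of type $(s,u,t,+0)$ or $(s,u,t,+1)$ from Definition~\ref{d;labels} whose intersection with $U_{\langle x,y^n\rangle}$ is cut out by an ideal whose initial ideal (under the weighting above) is a monomial ideal contained in $\langle a_1,\dots,a_i,a_{i+2},\dots,a_n,b_1,\dots,b_n\rangle=J(Y_i)$ but which does not equal it set-theoretically in the required containment direction. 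Concretely, I expect the relevant comparison subvariety to be the one pointwise fixed by the subtorus $T^1=\{(t^{n-1},t)\}$ (the bone from Lemma~\ref{l;puncSplit}), whose defining ideal in these coordinates contains $b_1,\dots,b_n$ and all but one of the $a_j$'s; comparing its initial ideal against $J(Y_i)$ shows $Y_i$ cannot be compatibly split unless $Y_i$ is contained in it, which a direct check (the single free coordinate being $a_{i+1}$ rather than $a_1$, while $a_{i+1}$ has $T^2$-weight $(-1,i)$ so it is not the coordinate surviving on the $T^1$-fixed bone) rules out. Since $Y_i$ is visibly $T^2$-invariant, one could alternatively invoke the classification: every $T^2$-invariant compatibly split subvariety meeting $U_{\langle x,y^n\rangle}$ is of the form in Definition~\ref{d;labels}, and none of those has defining ideal omitting exactly $a_{i+1}$ for $1\le i\le n-2$ — but since this lemma is presumably used \emph{toward} that classification, the self-contained route via Theorem~\ref{t;yay}\,(2) is preferable.

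The main obstacle will be writing down, or at least identifying, a compatibly split subvariety $W$ with $\textrm{init}(J(W))\subseteq J(Y_i)$ and $Y_i\not\subseteq W$, and verifying the initial-ideal containment with respect to the specific non-monomial-order weighting of Proposition~\ref{p;eqnRNC}. This requires knowing the defining ideal of $W$ in the $a_\bullet,b_\bullet$ coordinates explicitly enough to read off its initial ideal; the computation of $c_{ij}$ in Lemma~\ref{l;cij} and the matrix $M_n$ of Definition~\ref{d;mat} give the tools, but tracking which monomials survive degeneration is the delicate bookkeeping. Once that comparison subvariety and the initial-ideal containment are in hand, the conclusion is immediate from Theorem~\ref{t;yay}\,(2): if $Y_i$ were compatibly split then $Y_i\subseteq W$, contradicting the weight-vector observation above, so $Y_i$ is not compatibly split.
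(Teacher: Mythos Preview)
Your overall strategy is exactly right: apply Theorem~\ref{t;yay}\,(2) with a known compatibly split comparison subvariety $W$ satisfying $\textrm{init}(W)\supseteq\textrm{init}(Y_i)$ but $W\not\supseteq Y_i$. The gap is in your choice of $W$. The bone $(0,n,0,+1)$ pointwise fixed by $\{(t^{n-1},t)\}$ has defining ideal $\langle a_1,\dots,a_{n-1},b_1,\dots,b_n\rangle$, which is already monomial, so $\textrm{init}(W)=W$ is the $a_n$-axis. Likewise $\textrm{init}(Y_i)=Y_i$ is the $a_{i+1}$-axis. Two distinct coordinate axes have no containment in either direction, so the hypothesis $\textrm{init}(W)\supseteq\textrm{init}(Y_i)$ simply fails. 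More generally, any comparison variety that is itself a coordinate subspace cannot work here, precisely because there is no degeneration to exploit: you need $W$ whose ideal has non-monomial generators that \emph{lose} the variable $a_{i+1}$ upon taking initial forms.

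The paper's choice is $W=(0,n-1,1,+0)\cap U_{\langle x,y^n\rangle}$, the $2$-dimensional stratum with one free point off the axes and the rest at the origin. Its ideal is $J(W)=\langle b_2,\dots,b_n,\ b_1a_2-a_1,\ b_1a_3-a_2,\ \dots,\ b_1a_n-a_{n-1}\rangle$, and a Gr\"obner basis computation (with respect to the weighting of Proposition~\ref{p;eqnRNC}) gives
\[
\textrm{init}(J(W))=\langle b_2,\dots,b_n,\ b_1a_2,\dots,b_1a_n,\ \{a_ja_k: 1\le j<k-1,\ k\ge 3\}\rangle.
\]
Every generator here is divisible by some variable other than $a_{i+1}$, so $\textrm{init}(J(W))\subseteq J(Y_i)$, i.e.\ $\textrm{init}(W)\supseteq Y_i=\textrm{init}(Y_i)$. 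On the other hand $b_1a_{i+2}-a_{i+1}\in J(W)\setminus J(Y_i)$, so $Y_i\not\subseteq W$. Now Theorem~\ref{t;yay}\,(2) applies. The point is that the generators $b_1a_{j}-a_{j-1}$ degenerate to $b_1a_j$, dropping exactly the lower-index $a_{j-1}$; this is what makes the initial scheme swallow all the $a_{i+1}$-axes while the original variety does not.
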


\begin{proof}
Let $X$ be the subvariety of $\Hilbn$ labelled by $(0,n-1,1,+0)$. Let $X^{o}$ denote the image of \[\Hilb^1(\bA^2_k\setminus\{xy=0\})\times \{\langle x,y^{n-1}\rangle\}\rightarrow \Hilb^{n+1}(\bA^2_k).\] Then $X^{o}$ is open in $X$ and any $I\in X^{o}$ has the form $I = \langle x,y^{n-1}\rangle\cap \langle x-a,y-b\rangle$, $a,b\in k^{*}$. That is, $I$ is generated by the following polynomials:
\[y^n-by^{n-1},~~ xy^{n-1}-ay^{n-1},~~ xy^{n-2}-\frac{a}{b}y^{n-1}, \dots, xy^{n-1-i}-\frac{a}{b^{i}}y^{n-1}, \dots,~~ x-\frac{a}{b^{n-1}}y^{n-1}.\]

\noindent Thus the ideal defining $X$, $J(X)\subseteq k[a_1,\dots,a_n,b_1,\dots,b_n]$, is given by \[J(X) = \langle b_2,\dots,b_n, b_1a_2-a_1, b_1a_3-a_2,\dots, b_1a_n-a_{n-1}\rangle.\] Using Macaulay 2, we can compute that $J(X)$ has Gr\"{o}bner basis \[\{b_2,\dots,b_n\}\cup \{b_1a_j-a_{j-1}~|~2\leq j\leq n\}\cup \{a_ja_k-a_{j+1}a_{k-1}~|~1\leq j\leq n-2, ~3\leq k\leq n,~ j+1\leq k-1\}\] with respect to the weighting $\textrm{Revlex}_{b_n}, \textrm{Lex}_{a_n}, \cdots, \textrm{Revlex}_{b_1},\textrm{Lex}_{a_1}$. Therefore, \[\textrm{init}(J(X)) = \langle b_2,\dots,b_n, b_1a_2,\dots,b_1a_n, \{a_ja_k~|~1\leq j\leq n-2, ~3\leq k\leq n, j < k\}\rangle.\] 

Now let $Y_i$, $1\leq i\leq n-2$, be as in the statement of the proposition. Then $Y_i\nsubseteq X$. (In particular, $b_1a_{i+2}-a_{i+1}\in J(X)$ is not an element of $J(Y_i)$.) However, $\textrm{init}(Y_i)\subseteq \textrm{init}(X)$. By a theorem of Knutson (see Theorem \ref{t;yay}), $Y_i$ is not compatibly split.
\end{proof}

\begin{lemma}\label{l;puncSplit}
Let $Y\subseteq \Hilbn$ be a compatibly split subvariety that has non-trivial intersection with $U_{\langle x,y^n\rangle}$. If $Y$ is contained inside the punctual Hilbert scheme $\Hilb^n_0(\bA^2_k)$, then $Y$ is either the $0$-dimensional subvariety $\{\langle x,y^n\rangle\}$ or the $1$-dimensional subvariety that is pointwise fixed by the subtorus $T^1 = \{(t^{n-1},t)~|~t\in \bG_m\}$. In other words, $Y$ can be labelled by $(0,n,0,+0)$ or $(0,n,0,+1)$.
\end{lemma}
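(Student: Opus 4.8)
\textbf{Proof plan for Lemma \ref{l;puncSplit}.}

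The plan is to work entirely on the affine patch $U_{\langle x,y^n\rangle}\cong \Spec k[a_1,\dots,a_n,b_1,\dots,b_n]$ with the induced splitting $\textrm{Tr}(f_n^{p-1}\cdot)$, where $f_n = -b_n\det(M_n)$ is as in Proposition \ref{p;eqnRNC}. Since $Y\subseteq\Hilb^n_0(\bA^2_k)$ forces the support of every $I\in Y$ to be the origin, every $I\in Y\cap U_{\langle x,y^n\rangle}$ has $k[x,y]/I$ a local ring with maximal ideal generated by the images of $x$ and $y$; in the coordinates of Lemma \ref{l;cij} this says the generators $f_2,\dots,f_{n+1}$ have no nonzero constant or linear-in-$(x,y)$ part beyond what is forced, which translates into all the $b_i$ vanishing and all the $c_{ij}$ with $i\ge j$, $j\ge 2$ vanishing. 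Unwinding Lemma \ref{l;cij}(2), the ideal $I$ being supported at the origin is equivalent to $b_1=\cdots=b_n=0$ and $a_1=\cdots=a_{n-1}=0$ (the vanishing of $c_{ij}=a_{i-j+1}-\sum a_{k+i-j+1}b_k$ for $i\ge j\ge 2$ collapses, once the $b$'s are zero, to $a_{i-j+1}=0$ for $i>j$, i.e.\ $a_1,\dots,a_{n-1}=0$, while $a_n$ is unconstrained). So $Y\cap U_{\langle x,y^n\rangle}$ is a closed subvariety of the affine line $L:=\{a_1=\cdots=a_{n-1}=0,\ b_1=\cdots=b_n=0\}\cong \bA^1_{a_n}$. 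The first step, then, is to verify carefully that $\Hilb^n_0(\bA^2_k)\cap U_{\langle x,y^n\rangle}$ is precisely this line $L$ — this is a direct computation using the explicit Gröbner basis of Lemma \ref{l;cij} together with the fact that a colength-$n$ ideal is supported at the origin iff $x^N,y^N\in I$ for $N\gg 0$.

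Next I would observe that a closed subvariety of $\bA^1_{a_n}$ is either all of $\bA^1$, or a finite set of points. Since $Y$ is irreducible (it is a subvariety, hence irreducible by convention here) and $T^2$-stable — compatibly split subvarieties are $T^2$-stable because the splitting is $T^2$-invariant (Proposition in Subsection on multigraded splittings) — the only options for $Y\cap U_{\langle x,y^n\rangle}$ are $\{a_n=0\}=\{\langle x,y^n\rangle\}$ or the whole line $L$. The point $\langle x,y^n\rangle$ is the case $Y$ is the $0$-dimensional fixed point, labelled $(0,n,0,+0)$. In the remaining case $Y\cap U_{\langle x,y^n\rangle}=L$, I would identify $\overline{L}$ inside $\Hilb^n_0(\bA^2_k)$: the closure of $L$ is exactly the $1$-dimensional subvariety pointwise fixed by $T^1=\{(t^{n-1},t)~|~t\in\bG_m\}$ (because $a_n$ has $T^2$-weight $(-1,n-1)$, so $T^1$ acts trivially on $a_n$ and on the whole patch $L$; conversely on the other patches containing $\overline L$, a parallel local computation shows the $T^1$-fixed locus is one-dimensional and glues up to $\bP^1_k$, as in the arguments for the $n=3,4$ bone lemmas). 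This gives the label $(0,n,0,+1)$, completing the classification.

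The main obstacle I anticipate is the first step: translating "supported at the origin" into the clean vanishing conditions $b_i=0$, $a_1=\cdots=a_{n-1}=0$ in the explicit coordinates. One must show both that these conditions are necessary (an ideal supported at the origin cannot have any of these coordinates nonzero) and sufficient (they genuinely cut out the punctual locus and nothing larger), and this requires unwinding the recursion defining the $c_{ij}$ in Lemma \ref{l;cij} — in particular checking that when the $b$'s vanish, the only surviving freedom is in $a_n$. A secondary, more routine obstacle is confirming that $\overline L$ is the claimed $T^1$-fixed bone; this is handled by the same patch-gluing technique already used for the $n=3$ and $n=4$ punctual computations, so I would simply cite the pattern rather than redo it in full. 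One should also be slightly careful to invoke irreducibility and $T^2$-stability of $Y$ (not just of $Y\cap U_{\langle x,y^n\rangle}$) to rule out $Y\cap U_{\langle x,y^n\rangle}$ being a finite set of $\ge 2$ points or a non-reduced scheme, but since $Y$ is a variety and the splitting is torus-invariant, these are immediate.
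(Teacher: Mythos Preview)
Your proof has a fundamental computational error in its first step: the claim that $\Hilb^n_0(\bA^2_k)\cap U_{\langle x,y^n\rangle}$ is the line $L=\{a_1=\cdots=a_{n-1}=b_1=\cdots=b_n=0\}$ is false. An ideal $I\in U_{\langle x,y^n\rangle}$ is generated by $f_1=y^n-b_1y^{n-1}-\cdots-b_n$ and $f_{n+1}=x-a_ny^{n-1}-c_{n2}y^{n-2}-\cdots-c_{nn}$, and $I$ is supported at the origin iff both $x$ and $y$ are nilpotent in $k[x,y]/I$. Nilpotence of $y$ forces $b_1=\cdots=b_n=0$; once the $b$'s vanish, Lemma \ref{l;cij} gives $c_{nj}=a_{n-j+1}$, so $x\equiv a_ny^{n-1}+\cdots+a_2y+a_1$ and nilpotence of $x$ forces only $a_1=0$. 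Thus $\Hilb^n_0\cap U_{\langle x,y^n\rangle}=\{a_1=b_1=\cdots=b_n=0\}\cong\bA^{n-1}_{a_2,\dots,a_n}$, which is $(n-1)$-dimensional, matching $\dim\Hilb^n_0(\bA^2_k)=n-1$. Your reduction to ``$Y\cap U_{\langle x,y^n\rangle}$ is a $T^2$-stable closed subvariety of $\bA^1$'' therefore collapses: there are many $T^2$-stable irreducible subvarieties of $\bA^{n-1}$ (e.g.\ the lines $\{a_j=0\text{ for }j\neq i+1\}$ for each $i$), and nothing in your argument excludes them.

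The paper's proof takes a genuinely different route. It works with the moment polytope $P$ of $Y$ rather than cutting down to a single patch. The key input is Lemma \ref{l;notsplit}, which shows (via Knutson's Theorem \ref{t;yay}, comparing a candidate ideal against $\textrm{init}$ of the known compatibly split variety $(0,n-1,1,+0)$) that for $1\le i\le n-2$ the coordinate line $Y_i=\{a_j=b_k=0\text{ for all }j\neq i+1,\,k\}$ is \emph{not} compatibly split. Since any exterior edge of $P$ at the vertex $v$ corresponding to $\langle x,y^n\rangle$ must be the moment polytope of a compatibly split bone (Proposition \ref{p;extedge}), and the possible edge directions are $(1,-1),\dots,(1,-(n-1))$, all directions except $(1,-(n-1))$ are ruled out. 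Compactness of $P$ then forces $P$ to be either $v$ or that single edge. The substantive content you are missing is precisely Lemma \ref{l;notsplit}: one must actually prove that the other $T^2$-fixed curves through $\langle x,y^n\rangle$ fail to be compatibly split, and this requires the degeneration machinery, not just bookkeeping on the patch.
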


\begin{proof}
When $n\leq 2$, the result holds trivially. So, assume that $n\geq 3$.

$Y$ is projective because $Y$ is a closed subvariety of the punctual Hilbert scheme $\Hilb^{n}_0(\bA^2_k)$. Thus, $Y$'s moment polyhedron is a (compact) polytope, $P$. By Lemma \ref{l;vertex}, $P$ contains a vertex $v$ which is the moment polytope of the $T^2$-fixed point $\langle x,y^n\rangle$. Since $v$ is external in the moment polyhedron of $\Hilbn$, $v$ is an external vertex of $P$. 

Recall that an edge in the x-ray of the punctual Hilbert scheme $\Hilb^n_0(\bA^2_k)$ which is connected to $v$ must be in one of the following directions: \[(1,-1), (1,-2),\dots,(1,-(n-1))\] (since these are the weights of the $T^2$-action on the tangent space $T_{\langle x,y^n\rangle}\Hilb^n_0(\bA^2_k)$). Thus, any exterior edge of $P$ which is connected to $v$ must also be in one of these directions. Suppose that $Y_i\subseteq\Hilbn$ is the preimage of the edge in the direction $(1,-i)$, $1\leq i\leq n-2$. Then, $Y_i$ is pointwise fixed by the subtorus $T^1 = \{(\alpha^{i},\alpha)~|~\alpha\in \bG_m\}$. By Lemma \ref{l;cij}, $Y_i\cap U_{\langle x,y^n\rangle}$ is defined by the ideal $J_i := \langle b_1,\dots,b_n,a_1,\dots,a_{i},a_{i+2},\dots,a_n\rangle$. By Lemma \ref{l;notsplit}, $J_i$, $1\leq i\leq n-2$ is not compatibly split. Therefore, $Y_i$, $1\leq i\leq n-2$, is not compatibly split.

Next, recall that a subvariety which is the bone of any exterior face of $P$ is compatibly split (by Proposition \ref{p;extedge}). Therefore, by the above argument, the edges connected to $v$ in the directions of $(1,-1)$, $(1,-2)$, $\dots$, $(1,-(n-2))$ cannot be exterior faces of $P$. By compactness of $P$ and Lemma \ref{l;vertex}, $P$ is either (i) the point $v$ or (ii) the edge attached to $v$ in the direction $(1,-(n-1))$. 
\end{proof}

We are now ready to prove Proposition \ref{p;allsplit}.

\begin{proof}[Proof of Proposition \ref{p;allsplit}]
As discussed above, if $Y\subseteq \Hilbn$ is labelled by $(s,u,t,+0)$ or by $(s,u,t,+1)$, then $Y\cap U_{\langle x,y^n\rangle}$ is compatibly split. It remains to prove the converse of the proposition.

Suppose that $Y\subseteq \Hilbn$ is compatibly split and that $Y\cap U_{\langle x,y^n\rangle}\neq \emptyset$. We show that $Y$ can be labelled by $(s,u,t,+0)$ or by $(s,u,t,+1)$. 

By Proposition \ref{p;intuition}, $Y$ must be the closure of the image of the map 
\[i_{r,s,t,Z}:\Hilb^r(\textrm{x-axis}\setminus\{(0,0)\})\times \Hilb^s(\textrm{y-axis}\setminus\{(0,0)\})\times \Hilb^t(\bA^2_k\setminus \{xy=0\})\times Z\rightarrow \Hilbn\]
\[(I_1,I_2,I_3,I_4)\mapsto I_1\cap I_2\cap I_3\cap I_4\] 
for some integers $r,s,t\geq 0$ with $r+s+t\leq n$ and some compatibly split subvariety $Z\subseteq \Hilb^{n-r-s-t}(\bA^2_k)$ that is contained inside of $\Hilb_0^{n-r-s-t}(\bA_k^2)$. Note the following:

\begin{enumerate}
\item $Y\cap U_{\langle x,y^n\rangle}\neq \emptyset$ by assumption. Therefore, if $Z\neq\emptyset$, then $r=0$. If $Z = \emptyset$, then $r\leq 1$. 
\item If $Z\neq\emptyset$, then $Z\cap U_{\langle x,y^{n-r-s-t}\rangle}\neq \emptyset$.
\end{enumerate}

By Lemma \ref{l;puncSplit}, the only compatibly split subvarieties of $\Hilb^{n-r-s-t}(\bA^2_k)$ which are contained in $\Hilb^{n-r-s-t}_0(\bA^2_k)$ and have non-empty intersection with $U_{\langle x,y^{n-r-s-t}\rangle}$ are those be labelled by $(0,n-r-s-t,0,+0)$ or $(0,n-r-s-t,0,+1)$. This, along with 1. and 2., above gives the desired result.
\end{proof}

\begin{corollary}
The stratification of $U_{\langle x,y^n\rangle}$ by all of its compatibly split subvarieties has the shape of a square. That is, there are $d+1$ strata of dimension $d$ when $d\leq n$ and $(2n-d)+1$ strata of dimension $d$ when $d>n$. 
\end{corollary}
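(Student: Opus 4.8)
The plan is to turn the statement into a finite combinatorial count using the classification already available. By Proposition~\ref{p;allsplit}, the compatibly split subvarieties of $U_{\langle x,y^n\rangle}$ are precisely the sets $Y\cap U_{\langle x,y^n\rangle}$ for $Y\subseteq\Hilbn$ labelled $(s,u,t,+0)$ or $(s,u,t,+1)$ as in Definition~\ref{d;labels}, and distinct labels name distinct subvarieties (visible from Figure~\ref{fig;stratreps}: from the generic point of $Y$ one reads off $s$, $t$, and the ``type of the origin part'', and then $u$ is the colength of that origin part). So I would first record dimensions: since $\Hilb^s$ of a smooth curve is $s$-dimensional, $\Hilb^t(\bA^2_k\setminus\{xy=0\})$ is $2t$-dimensional, and $i_{s,t,W}$ is generically injective (as in Lemma~\ref{l;morph}), a subvariety labelled $(s,u,t,+0)$ has dimension $s+2t$, while one labelled $(s,u,t,+1)$ has dimension $s+2t+1$ (the factor $W$ being a point in the $+0$ case and one-dimensional in the $+1$ case—either $\Hilb^1$ of a punctured line when $u=0$, or the $\bP^1$-bone when $u\ge 2$).

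Next I would reorganize the labels by the pair $(s,t)$ together with $v:=n-s-t\ge 0$, the colength of the origin part ($s,t\ge 0$, $s+t\le n$). Reading Definition~\ref{d;labels}: if $v=0$ the only admissible label is the $(s,0,t,+0)$ one (empty origin part), giving one subvariety, of dimension $s+2t$; if $v\ge 1$ there are exactly two admissible labels, a $+0$ one (origin part $\langle x,y^v\rangle$, a reduced point when $v=1$) of dimension $s+2t$, and a $+1$ one (the $\bP^1$-bone, or a point on the punctured $x$-axis when $v=1$) of dimension $s+2t+1$. Hence, putting $A(d,m):=\#\{(s,t)\in\bZ_{\ge 0}^2:\ s+t\le m,\ s+2t=d\}$, the number of compatibly split subvarieties of $U_{\langle x,y^n\rangle}$ of dimension $d$ is
\[
N(d)=A(d,n)+A(d-1,\,n-1),
\]
the first term counting the $+0$ strata (all $(s,t)$ with $s+t\le n$) and the second the $+1$ strata (which need $v\ge 1$, i.e.\ $s+t\le n-1$, and dimension $s+2t=d-1$).

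Finally I would compute $A(d,m)$ outright: $s=d-2t\ge 0$ and $d-t=s+t\le m$ force $\max(0,d-m)\le t\le\lfloor d/2\rfloor$, so $A(d,m)=\lfloor d/2\rfloor-\max(0,d-m)+1$ for $0\le d\le 2m$ and $A(d,m)=0$ otherwise. Substituting and using $\lfloor d/2\rfloor+\lfloor(d-1)/2\rfloor=d-1$ for $d\ge 1$: for $1\le d\le n$ both maxima vanish, so $N(d)=(\lfloor d/2\rfloor+1)+(\lfloor(d-1)/2\rfloor+1)=d+1$, while $N(0)=A(0,n)=1$; for $n<d\le 2n-1$ one gets $N(d)=[\lfloor d/2\rfloor-(d-n)+1]+[\lfloor(d-1)/2\rfloor-(d-n)+1]=(d-1)-2(d-n)+2=2n-d+1$; and $N(2n)=A(2n,n)+A(2n-1,n-1)=1+0=1=2n-(2n)+1$. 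This yields exactly the claimed ``square'' shape (with total $\sum_d N(d)=(n+1)^2$). The only delicate points I anticipate are the $v=1$ bookkeeping—where the $+0$/$+1$ origin parts are a reduced origin point versus a point on the $x$-axis, not $\langle x,y^v\rangle$ versus a $\bP^1$—and the verification that the labelling is genuinely injective, so that the equalities above are exact rather than mere upper bounds.
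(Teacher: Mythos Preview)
Your proposal is correct and follows essentially the same approach as the paper: both use the classification from Proposition~\ref{p;allsplit} together with the dimension formulas $\dim(s,u,t,+0)=s+2t$ and $\dim(s,u,t,+1)=s+2t+1$ to reduce the corollary to a finite count. The paper's version is more concise---it observes directly that for each allowed value of $s$ there is exactly one stratum of dimension $d$ (the parity of $d-s$ determining the $+0$/$+1$ type and hence $t$)---whereas you carry out the equivalent count via the auxiliary function $A(d,m)$, but the underlying combinatorics is the same.
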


\begin{proof}
The dimension of the subvariety labelled by $(s,u,t,+0)$ (where $s+u+t=n$) is $s+2t$ and the dimension of a subvariety labelled by $(s,u,t,+1)$ (where $s+u+t = n$ if $u \geq 2$ and $s+u+t+1=n$ if $u=0$) is $s+2t+1$. So, when $d\leq n$, there is exactly one compatibly split subvariety of dimension $d$ for each value of $s$, $0\leq s\leq d$. This yields $d+1$ compatibly split subvarieties of dimension $d$. When $d>n$, there is exactly one compatibly split subvariety for each value of $s$, $0\leq s\leq 2n-d$. This yields $(2n-d)+1$ compatibly split subvarieties of dimension $d$.
\end{proof}

\subsection{A Gr\"{o}bner degeneration of the compatibly split strata}\label{s;degens}

We begin with the $n=2$ case. Recall that the induced splitting of $U_{\langle x,y^2\rangle}$ is $\textrm{Tr}(f_2^{p-1}\cdot)$ where $f_2 = a_1b_1a_2b_2-a_1^2b_2+a_2^2b_2^2$ and that, under an appropriate weighting, $\textrm{init}(f_2) = a_1b_1a_2b_2$. It follows by a theorem of Knutson (see Theorem \ref{t;knutson} or the original source \cite{K}) that every compatibly split ideal of $k[a_1,b_1,a_2,b_2]$ Gr\"{o}bner degenerates to a squarefree monomial ideal. This is shown explicitly in Figure \ref{fig;2ptsvars}. The leading terms with respect to the weighting $\textrm{Revlex}_{b_2}, \textrm{Lex}_{a_2}, \textrm{Revlex}_{b_1}, \textrm{Lex}_{a_1}$ are underlined.

\begin{figure}[H]
\begin{center}
\includegraphics[scale = 0.31]{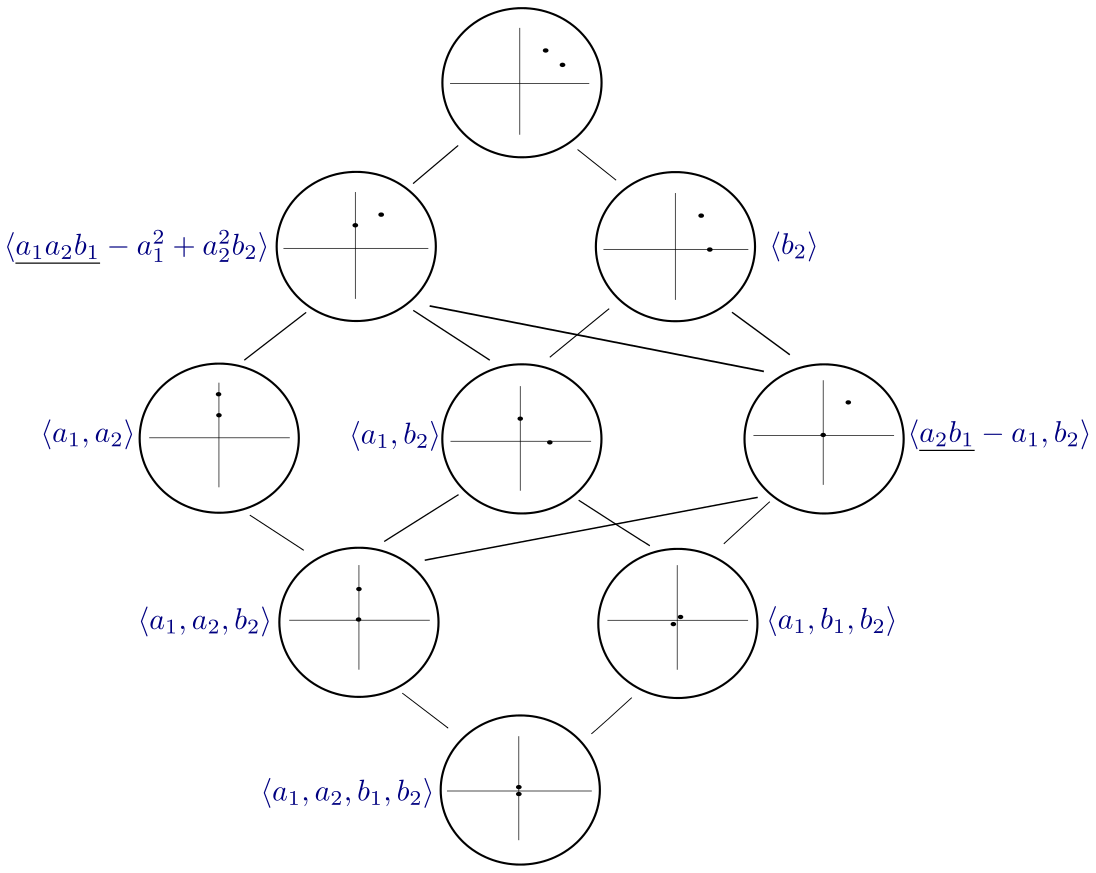}
\caption{Compatibly split ideals of $k[a_1,b_1,a_2,b_2]$ with leading terms underlined.}
\label{fig;2ptsvars}
\end{center}
\end{figure}

Let $f_n$ be as in Proposition \ref{p;eqnRNC} and fix the weighting \[\textrm{Revlex}_{b_n}, \textrm{Lex}_{a_n},\dots, \textrm{Revlex}_{b_1}, \textrm{Lex}_{a_1}.\] By Proposition \ref{p;eqnRNC}, $\textrm{Tr}(f_n^{p-1}\cdot)$ is the induced splitting of $U_{\langle x,y^n\rangle} = \Spec(k[a_1,b_1,\dots,a_n,b_n])$. Furthermore, $f_n$ is a polynomial of degree $2n$ with the property that $\textrm{init}(f_n) = a_1b_1\cdots a_nb_n$. It follows by Knutson's theorem that each compatibly split subvariety of $U_{\langle x,y^n\rangle}$ Gr\"{o}bner degenerates to a Stanley-Reisner scheme. In this subsection, we explicitly describe these degenerations. In addition, we find the defining ideal $J_Y\subseteq k[a_1,b_1,\dots,a_n,b_n]$ of each compatibly split $Y\subseteq U_{\langle x,y^n\rangle}$.

\begin{remark}
 Let $Y'\subseteq \Hilbn$ be compatibly split and labelled by $(s,u,t,+i)$, for $i=0$ or $1$. In the remainder of this subsection, we abuse notation: If $Y = Y'\cap U_{\langle x,y^n\rangle}$ then write $Y = (s,u,t,+0)$.
\end{remark}

\begin{proposition}\label{degens}
Let $Y\subseteq U_{\langle x,y^n\rangle}$ be compatibly split. The scheme-theoretic defining ideal $J_Y$ is given by the following information.
\begin{enumerate}
\item If $Y=(0,0,n-1,+1)$, then $J_Y = \langle b_n\rangle$.
\item If $Y=(s,0,n-s,+0)$, then $J_Y = \langle ((n-s+1)\times (n-s+1))\textrm{-minors of } M_n\rangle$.
\item If $s\geq 1$ and $Y = (s,0,n-s-1,+1)$, then $J_Y = \langle b_n,$ $((n-s)\times (n-s))\textrm{-minors of } M_{n-1}\rangle$.
\item If $u\geq 1$ and $Y = (s,u,n-s-u,+0)$, then $J_Y = \langle b_n,\dots,$ $b_{n-u+1},$ $(M_n)_{n,n},\dots,(M_n)_{n,n-u+1},$ $((n-s-u+1)\times (n-s-u+1))\textrm{-minors of } M_n\rangle$. 
\item If $u\geq 2$ and $Y = (s,u,n-s-u,+1)$, then $J_Y = \langle b_n,\dots$, $b_{n-u+1}$, $(M_n)_{n,n},\dots,(M_n)_{n,n-u+2},$ $((n-s-u+1)\times (n-s-u+1))\textrm{-minors of }M_{n-1}\rangle$. 
\end{enumerate}
Consider $U_{\langle x,y^{n-1}\rangle}\times \bA^1_{b_n}\times \bA^1_{a_n}$ with the splitting $Tr(f_{n-1}^{p-1}b_n^{p-1}a_n^{p-1}\cdot)$. Then, $\textrm{Lex}_{a_n}\textrm{Revlex}_{b_n}(Y)$ is a compatibly split subvariety of $U_{\langle x,y^{n-1}\rangle}\times \bA^1_{b_n}\times \bA^1_{a_n}$. Furthermore, the initial schemes $\textrm{Lex}_{a_n}\textrm{Revlex}_{b_n}(Y)$ are described with the following information: 
\begin{enumerate}
\item If $Y\subseteq \{b_n=0\}$ then $\textrm{Revlex}_{b_n}(Y) = Y\times 0_{b_n}$. (Note: $Y\subseteq \{b_n = 0\}$ if $Y=(s,u,n-s-u,+0)$, $u\geq 1$, or $Y = (s,u,n-s-u,+1)$, $u\geq 2$ or if $Y = (s,0,n-s-1,+1)$.)
\item $\textrm{Revlex}_{b_n}(0,0,n,+0) = (0,0,n-1,+1)\times \bA^1_{b_n}$.
\item $\textrm{Revlex}_{b_n}(s,0,n-s,+0) = [(s,0,n-s-1,+1)\cup (s-1,1,n-s,+0)]\times \bA^1_{b_n}$, for $1\leq s<n$.
\item $\textrm{Revlex}_{b_n}(n,0,0,+0) = (n-1,1,0,+0)\times \bA^1_{b_n}$.
\item $\textrm{Lex}_{a_n}(s,0,n-s-1,+1) = (s,0,n-s-1,+0)\times 0_{b_n}\times \bA^1_{a_n}$.
\item $\textrm{Lex}_{a_n}(s,1,n-s-1,+0) = ((s,0,n-s-1,+0)\times 0_{b_n}\times 0_{a_n})\cup ((s,0,n-s-2,+1)\times 0_{b_n}\times \bA^1_{a_n})$, for $s<n-1$.
\item $\textrm{Lex}_{a_n}(s,u,n-s-u,+0) = ((s,u-1,n-s-u,+0)\times 0_{b_n}\times 0_{a_n})\cup ((s,u,n-s-u-1,+1)\times 0_{b_n}\times \bA^1_{a_n})$, for $2\leq u<n$ and $s<n-u$.
\item $\textrm{Lex}_{a_n}(n-u,u,0,+0) = (n-u,u-1,0,+0)\times 0_{b_n}\times 0_{a_n}$, $u\geq 1$.
\item $\textrm{Lex}_{a_n}(s,u,n-s-u,+1) = (s,u-1,n-s-u,+0)\times 0_{b_n}\times \bA^1_{a_n}$, for $u\geq 2$.
\end{enumerate}
Note that the $4$-tuples appearing on the right hand side of the last five equations label subvarieties of $U_{\langle x,y^{n-1}\rangle}$, \emph{not} $U_{\langle x,y^n\rangle}$.
\end{proposition}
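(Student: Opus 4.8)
My plan is to organize the proof around the recursive structure already visible in Lemma~\ref{l;cij} and Proposition~\ref{p;eqnRNC}: the matrix $M_n$ has $M_{n-1}$ as its upper-left block when $b_n$ is set to zero, and $f_n = -b_n\det(M_n)$ degenerates (under $\textrm{Lex}_{a_n}\textrm{Revlex}_{b_n}$) to $b_n a_n f_{n-1}$. Thus the whole picture is a one-variable-pair-at-a-time induction, and the Lex/Revlex machinery of Subsection~\ref{s;lexRev} (Theorem~\cite{KMY}, Lemma~\ref{l;revlex}) together with Knutson's Theorem~\ref{t;knutson} and Theorem~\ref{t;yay} controls each step. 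First I would establish the defining ideals $J_Y$: this is a direct computation. For each label $(s,u,t,+i)$ one writes the corresponding ideal $I_1\cap I_2\cap I_3\cap I_4$ defining $Y$ inside $k[x,y]$-land, intersects with $U_{\langle x,y^n\rangle}$, and translates into the $a_i,b_i$ coordinates using the Gröbner basis $\{f_1,\dots,f_{n+1}\}$ from Lemma~\ref{l;cij}. The geometric meaning of each factor is transparent: $\Hilb^s(\text{$y$-axis}\setminus\{0\})$ forces the scheme to meet the $y$-axis with multiplicity, which (reading the generators $f_2,\dots,f_{n+1}$) imposes that the lower-right minors of $M_n$ of the appropriate size vanish; the punctual factor $W$ at $\langle x,y^u\rangle$ or its $\bP^1$-bone imposes $b_n=\cdots=b_{n-u+1}=0$ together with the stated entries $(M_n)_{n,n},\dots$ vanishing; and the $\Hilb^t(\bA^2\setminus\{xy=0\})$ factor contributes nothing new to the defining ideal on this patch (it only enlarges which component of the Chow variety one lands in). So items 1--5 of the first list amount to bookkeeping with minors, which I would do by induction on $n$ using the block structure of $M_n$ and the already-computed $n=2$ case in Figure~\ref{fig;2ptsvars}.

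Next I would handle the degeneration statements. The key point is that, by Proposition~\ref{p;eqnRNC}, $\textrm{init}(f_n)=a_1b_1\cdots a_nb_n$ with respect to the iterated weighting, and $f_n$ restricted to the $(b_n,a_n)$-last-two-steps degenerates to $b_n a_n f_{n-1}$; hence by Theorem~\ref{t;knutson}(2) every compatibly split ideal degenerates compatibly step by step, and the ambient splitting on $U_{\langle x,y^{n-1}\rangle}\times\bA^1_{b_n}\times\bA^1_{a_n}$ is exactly $\textrm{Tr}(f_{n-1}^{p-1}b_n^{p-1}a_n^{p-1}\cdot)$ as claimed. That $\textrm{Lex}_{a_n}\textrm{Revlex}_{b_n}(Y)$ is compatibly split in this product is then immediate from Theorem~\ref{t;knutson}(2). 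For the explicit identification of the initial schemes (items 1--9 of the second list), I would invoke Lemma~\ref{l;revlex} for the Revlex step and the geometric vertex decomposition theorem \cite[Theorem 2.2]{KMY} for the Lex step, applied to the integral subscheme $Y$. The Revlex step is clean: either $b_n\in J_Y$ already (the cases in item~1, giving $\textrm{Revlex}_{b_n}(Y)=Y\times 0_{b_n}$), or $b_n\notin J_Y$ and $\textrm{Revlex}_{b_n}(Y)=(Y\cap\{b_n=0\})\times\bA^1_{b_n}$, which one then recognizes as the stated union/product of lower-$n$ strata by comparing defining ideals (using $M_n|_{b_n=0}$ having block form with $M_{n-1}$, plus the fact that setting $b_n=0$ in the ideal of minors of $M_n$ produces the minors of $M_{n-1}$ together with the ideal of the vanishing of $(M_n)_{n,n}=a_1-b_1a_2-\cdots$, whence the splitting into a $+1$ stratum and a $(s-1,1,\cdot,+0)$ stratum). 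The Lex step is similar but uses the projection-to-$V/L$ plus blow-up description; here the ``$\Pi\times\{0\}$'' piece produces the $0_{a_n}$-components and the ``$\Gamma\times L$'' piece produces the $\bA^1_{a_n}$-components, matching items 5--9. Throughout, equality of schemes (not just sets) is free because everything in sight is Frobenius split, hence reduced (Proposition~\cite[Proposition 1.2.1]{BK}), as noted in the remarks following \cite[Theorem 2.2]{KMY}.

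\textbf{Main obstacle.} The genuinely delicate part is not the existence of the degenerations (that is handed to us by Knutson's theorems) but the \emph{exact combinatorial matching} of the initial ideal with the stated union of products of lower-dimensional strata, particularly in items 3 and 6--7 where a single stratum breaks into two components. This requires showing that, after setting $b_n=0$ (resp. taking the Lex-leading form in $a_n$), the ideal generated by the relevant minors of $M_n$ equals the intersection of the two ideals $J_{Y'}$ predicted on the right-hand side --- i.e. a primary-decomposition statement about determinantal-type ideals built from the highly structured matrices $M_n$. I expect this to go through by a careful induction exploiting the nesting $M_{n-1}\hookrightarrow M_n$ and the explicit formulas for $c_{ij}$ in Lemma~\ref{l;cij}, reducing each case to the $n=2$ or $n=3$ computations (checkable by hand or Macaulay~2) plus an inductive step; but verifying that no spurious embedded or extra components appear, and that the two pieces meet only along the expected smaller stratum, is where the real work lies. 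A secondary technical point is keeping the bookkeeping of which patch the right-hand $4$-tuples refer to ($U_{\langle x,y^{n-1}\rangle}$ versus $U_{\langle x,y^n\rangle}$) consistent, since the same symbol $(s,u,t,+i)$ is being reused --- but this is a matter of care rather than difficulty.
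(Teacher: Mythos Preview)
Your overall strategy---induction on $n$ via the block structure $M_{n-1}\hookrightarrow M_n|_{b_n=0}$ and the degeneration $f_n\rightsquigarrow b_na_nf_{n-1}$---is the same as the paper's. But your logical ordering differs from the paper's in a way that matters.

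You propose to establish the ideal equalities $J_Y=J(Y)$ first (``bookkeeping with minors'') and then verify the degenerations. The paper does the opposite, and for good reason: the containment $J_Y\subseteq J(Y)$ is the easy geometric half (points on the $y$-axis force rank drop of $M_n$, support at the origin forces $b_n=\cdots=b_{n-u+1}=0$ and the last-row entries to vanish), but the reverse containment $J(Y)\subseteq J_Y$ is not obviously ``bookkeeping.'' The paper never proves it directly. Instead it shows $J(Y'')\subseteq\textrm{Lex}_{a_n}J_Y\subseteq\textrm{Lex}_{a_n}J(Y)$ by explicit minor manipulation, then shows $\textrm{Lex}_{a_n}Y=Y''$ by a dimension-plus-witness-point argument, and finally invokes Lemma~\ref{l;init} ($I\subseteq J$ and $\textrm{init}\,I=\textrm{init}\,J$ imply $I=J$) to conclude $J_Y=J(Y)$. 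So the degeneration statements and the ideal descriptions are proved simultaneously, each certifying the other. If you try to prove $J_Y=J(Y)$ independently first, you will find yourself needing exactly the primary-decomposition statement you flagged as the main obstacle, with no leverage.

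On the tactical level, you plan to read off the two-component Lex degenerations from the $\Pi$ and $\Gamma$ pieces of the KMY geometric vertex decomposition. The paper instead works entirely at the level of ideals: it introduces an auxiliary matrix $\tilde{A}$ (column operations on the southwest block of $M_n$) to isolate the $a_n$-leading parts of the relevant minors, obtaining the containment $J(Y'')\subseteq\textrm{Lex}_{a_n}J_Y$; then, to show both components actually occur in $\textrm{Lex}_{a_n}Y$, it exhibits explicit witness points $q\in C_1\setminus C_2$ and $q'\in C_2\setminus C_1$ lying in the degeneration (for $q'$ this uses Lemma~\ref{l;lexinit}, which constrains the shape of the reduced Gr\"obner basis via $T^2$-homogeneity). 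Your KMY route would in principle give the same decomposition, but you would still need to identify $\Pi$ and $\Gamma$ with the stated $(n-1)$-strata, and that identification is exactly the minor computation the paper does---so you do not actually save work, and you lose the clean bootstrap via Lemma~\ref{l;init}.

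In short: your plan is sound, but reorder it to match the paper's interlocking induction (containment $J_Y\subseteq J(Y)$ geometrically, then degeneration computed on $J_Y$, then equality via Lemma~\ref{l;init}), and expect to do the minor-level work by hand rather than citing KMY abstractly.
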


\begin{remark}
To gain some intuition into the degenerations appearing in the proposition, recall the following (discussed in Subsection \ref{s;lexRev}):
\[\textrm{Revlex}_{b_n}(Y) = \left\{
        \begin{array}{ll}
            Y\times 0_{b_n}, & \quad \textrm{if}~Y\subseteq \{b_n = 0\} \\
            (Y\cap \{b_n = 0\})\times \bA^1_{b_n},& \quad \textrm{if}~Y\nsubseteq \{b_n=0\} 
        \end{array}
    \right.\]

Because both $Y\subseteq U_{\langle x,y^n\rangle}$ and $\{b_n = 0\}$ are compatibly split, so is $Y\cap \{b_n = 0\}$. Therefore, we can easily check that each $\textrm{Revlex}_{b_n}(Y)$ is as claimed in the proposition as we have already found all compatibly split subvarieties of $U_{\langle x,y^n\rangle}$ (see Proposition \ref{p;allsplit}). 

To describe $\textrm{Lex}_{a_n}\textrm{Revlex}_{b_n}(Y)$ for each compatibly split $Y\in U_{\langle x,y^n\rangle}$, it remains to describe $\textrm{Lex}_{a_n}(Y')$ for each compatibly split subvariety $Y'\subseteq (U_{\langle x,y^n\rangle}\cap\{b_n = 0\})$. See Figure \ref{fig;lexdeg} to visualize the $\textrm{Lex}_{a_n}$ degenerations. 
\end{remark}

\begin{figure}[!h]
 \begin{center}
 \includegraphics[scale = 0.32]{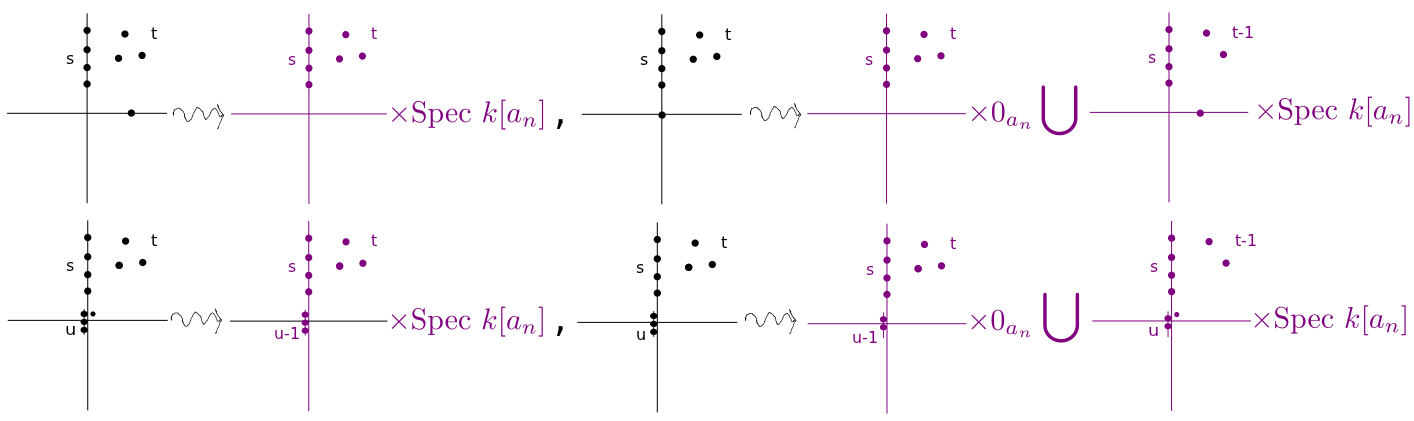}
\caption{The degenerations $\textrm{Lex}_{a_n}(Y')$ described in Proposition \ref{degens}.}
\label{fig;lexdeg}
 \end{center}
 \end{figure}

The proof of the proposition makes use of the following lemma.

\begin{lemma}\label{l;lexinit}
Let $Y\subseteq \{b_n = 0\}$ be a compatibly split subvariety of $U_{\langle x,y^n\rangle}$. Let $J(Y)$ denote the ideal associated to $Y$. Then, with respect to the weighting given in Proposition \ref{p;eqnRNC}, the reduced Gr\"{o}bner basis of $J(Y)$ consists of $T^2$-homogeneous polynomials of the form 
\[G = \{b_n,g_1,\dots,g_r,a_ng_{r+1}+h_1,\dots,a_ng_{r+l}+h_l\}\]
where none of $g_1,\dots,g_{r+l},h_1,\dots,h_l$ have any nonzero terms which are divisible by $a_n$ or $b_n$ and none of $g_1,\dots,g_r$ have any constant terms. Furthermore, if $a_n\notin J(Y)$, then none of $g_{r+1},\dots, g_{r+l}$ have any constant terms.
\end{lemma}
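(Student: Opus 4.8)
\textbf{Proof proposal for Lemma \ref{l;lexinit}.}

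The plan is to exploit the $T^2$-equivariance of everything in sight, together with Theorem \ref{t;yay} (Knutson's result on initial ideals of compatibly split subvarieties) and the explicit weight data for the coordinates $a_1,\dots,a_n,b_1,\dots,b_n$ recorded after Lemma \ref{l;cij}. First I would recall that, with respect to the weighting $\textrm{Revlex}_{b_n},\textrm{Lex}_{a_n},\dots,\textrm{Revlex}_{b_1},\textrm{Lex}_{a_1}$ of Proposition \ref{p;eqnRNC}, $\textrm{init}(f_n) = a_1b_1\cdots a_nb_n$, so by Theorem \ref{t;knutson} the ideal $J(Y)$ is compatibly split for the standard splitting of $\Spec k[a_1,b_1,\dots,a_n,b_n]$ pulled back via this weighting; moreover the splitting is $T^2$-invariant (being a $(p-1)^{\textrm{st}}$ power with torus-invariant anticanonical divisor, by the Example following the definition of multigraded splitting). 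Hence by the proposition on torus-invariant splittings, $Y$ is $T^2$-stable, so $J(Y)$ is a $T^2$-homogeneous ideal, and therefore its reduced Gröbner basis consists of $T^2$-homogeneous polynomials (the reduced Gröbner basis of a homogeneous ideal with respect to a homogeneous monomial order is homogeneous). That $b_n \in J(Y)$ is immediate from $Y \subseteq \{b_n = 0\}$ and the fact that $b_n$ is a variable (so it is its own reduced form), and $b_n$ is one of the reduced Gröbner basis elements since no other variable-term can cancel or reduce it.

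Next I would analyze the degree in $a_n$ of each reduced Gröbner basis element $g$. Because the splitting section $f_n = -b_n\det(M_n)$ is linear in $a_n$ (indeed $\textrm{Lex}_{a_n}\textrm{Revlex}_{b_n}(f_n) = b_n a_n f_{n-1}$, and $\det M_n = c_{nn}\det M_{n-1} + b_n R$ with $a_n$ appearing only in the first column of $M_n$, so $f_n$ has $a_n$-degree exactly $1$), the variable $a_n$ is a ``free'' coordinate of the form appearing in Knutson's residual-normal-crossings picture. I would argue, using that $\langle b_n, a_n\rangle$ cuts out a compatibly split coordinate subspace and that $J(Y)$ is compatibly split, that every element of the reduced Gröbner basis has $a_n$-degree at most $1$: if some reduced Gröbner basis element had a term divisible by $a_n^2$, one could intersect with the compatibly split subvariety $\{b_n=0, a_n=0\}\times\bA^{2n-2}$ and compare initial ideals via Theorem \ref{t;yay}(1) to derive a contradiction with $\textrm{init}(f_n)$ being squarefree — more concretely, $\textrm{init}(J(Y))$ is a squarefree monomial ideal by the Corollary to \cite[Theorem 2]{K}, and each reduced Gröbner basis element $g$ has $\textrm{init}(g)$ a minimal squarefree monomial generator, which already forces $a_n$-degree $\leq 1$ on the \emph{leading} term; I would then propagate squarefreeness down the tail of $g$ using that the tail of a reduced Gröbner basis element for a compatibly split ideal is, after the degeneration, a sum of squarefree monomials and that the weighting orders $a_n$ above everything below it, so any $a_n$-divisible tail term would also have to be squarefree in $a_n$. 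This gives the decomposition $G = \{b_n\} \cup \{g_i : a_n \nmid g_i\} \cup \{a_n g_{r+j} + h_j : a_n \nmid g_{r+j}, a_n \nmid h_j\}$, after collecting; $b_n$-divisibility is excluded from all tails because $b_n$ is already a Gröbner basis element and the basis is reduced.

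Then I would address the ``no constant terms'' claims via the torus weights. Since each $g$ is $T^2$-homogeneous of some weight $(w_1,w_2)$, and since $a_i$ has weight $(-1,i-1)$ while $b_i$ has weight $(0,-i)$ (Figure \ref{fig;goodarrows}), a nonzero constant term in $g$ would force $(w_1,w_2) = (0,0)$; but then $g$ is a $k$-linear combination of monomials of weight $(0,0)$, and I would check that the only weight-$(0,0)$ monomial containing no $a_n$ or $b_n$ factor and arising in this context is $1$ itself, so $g$ would be a unit — impossible for a proper ideal unless $J(Y) = (1)$, which is excluded since $\langle x,y^n\rangle \in Y$. This shows none of $g_1,\dots,g_r$ has a constant term. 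For the final claim: if $a_n \notin J(Y)$, then no reduced Gröbner basis element has initial term $a_n$, so in any element of the form $a_n g_{r+j} + h_j$ the leading term comes from $a_n g_{r+j}$ and equals $a_n \cdot \textrm{init}(g_{r+j})$; if $g_{r+j}$ had a constant term then $a_n$ itself would be a term of $a_n g_{r+j} + h_j$, and by reducedness it would not be divisible by $\textrm{init}(b_n) = b_n$, nor (by the $a_n$-degree analysis) could it be killed, forcing $a_n \in \textrm{init}(J(Y))$, hence (since $\textrm{init}(J(Y))$ is squarefree monomial and $a_n$ would be a minimal generator) $a_n \in J(Y)$ by Lemma \ref{l;init}, contradicting the hypothesis.

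\textbf{Main obstacle.} I expect the genuinely delicate step to be the control of the \emph{tails} $h_j$ and $g_i$ beyond their leading terms — establishing rigorously that no nonzero non-leading term is divisible by $a_n$ (for the $g_i$) or by $a_n^2$ or $b_n$ (for the $a_n g_{r+j} + h_j$). The leading-term statements follow cleanly from squarefreeness of $\textrm{init}(J(Y))$ and reducedness, but the tail statements require either a direct inductive computation with the explicit $c_{ij}$ from Lemma \ref{l;cij} (showing the nesting $M_{n-1} \subseteq M_n$ when $b_n = 0$ forces the $a_n$-dependence to be exactly as claimed) or a cleaner equivariance argument leveraging that the degeneration $\textrm{Lex}_{a_n}$ is itself a flat torus-equivariant family whose special fibre is reduced (being compatibly split). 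I would try the equivariance route first: homogeneity under $T^2$ already pins down the \emph{weight} of every term of $g$, and combined with the $a_n$-degree bound this should leave essentially no room for an unwanted tail term; only if that argument has a gap would I fall back on grinding through the recursion for $M_n$.
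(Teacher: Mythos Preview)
Your proposal is broadly on the right track, but you have missed a simple observation that dissolves what you flag as the ``main obstacle,'' and your argument for the final claim has a gap.

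\textbf{The tail control is immediate from the weighting.} Once $b_n$ is in the reduced Gr\"obner basis, reducedness forces every other element $q_i$ to have no term divisible by $b_n$. So on each such $q_i$, all terms tie on the $\textrm{Revlex}_{b_n}$ criterion, and the \emph{next} tiebreaker is $\textrm{Lex}_{a_n}$. Hence $\textrm{init}(q_i)$ lies in $\textrm{Lex}_{a_n}(q_i)$: the leading term already has the \emph{maximal} $a_n$-degree among all terms of $q_i$. Since $\textrm{init}(J(Y))$ is squarefree, that maximal $a_n$-degree is $\leq 1$, and therefore every term of $q_i$ has $a_n$-degree $\leq 1$. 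There is nothing further to do for the tails---no induction on $M_n$, no flat-family argument. This is exactly how the paper handles it, in one line.

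\textbf{Your final-claim argument has a gap.} You argue that if $g_{r+j}$ has a nonzero constant term then $a_n$ is a \emph{term} of $a_n g_{r+j}+h_j$, and try to conclude $a_n\in\textrm{init}(J(Y))$. But a term is not automatically the initial term, and your invocation of Lemma \ref{l;init} does not bridge that gap. The paper instead uses the $T^2$-weight directly: $a_n$ has weight $(-1,n-1)$, so if $a_n$ appears as a term then the entire $T^2$-homogeneous polynomial $a_n g_{r+j}+h_j$ has weight $(-1,n-1)$. Now check that no monomial in $a_1,\dots,a_{n-1},b_1,\dots,b_{n-1}$ has weight $(-1,n-1)$ (the first coordinate forces exactly one $a_i$ with $i\leq n-1$, giving second coordinate at most $n-2$ before any $b_j$'s, which only lower it further). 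Hence $h_j=0$ and $g_{r+j}$ is a constant, so $a_n g_{r+j}+h_j$ is a scalar multiple of $a_n$, contradicting $a_n\notin J(Y)$.

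Your argument for the absence of constant terms in $g_1,\dots,g_r$ is fine and equivalent to the paper's (which phrases it via a one-parameter subgroup $\{(t,t^\epsilon)\}$ to see every nonconstant monomial has strictly negative weight).
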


\begin{proof}
Let $G = \{q_1,\dots,q_r\}$ be the reduced Gr\"{o}bner basis for $J(Y)$. Because $Y$ is $T^2$-invariant, we may assume that each $q_i$ is $T^2$-homogeneous. Because $Y\subseteq \{b_n = 0\}$, we may assume that $q_1 = b_n$ and that each $q_i$, for $i>1$, has no terms involving $b_n$. 

By the nature of the weighting in Proposition \ref{p;eqnRNC}, $\textrm{init}(q_i)$ must be one of the terms of $\textrm{Lex}_{a_n}(q_i)$. Suppose that $\textrm{init}(q_i) = a_n^sm$ for some monomial $m$. By Theorem \ref{t;knutson} (i.e. \cite[Theorem 2]{K}), $s = 0$ or $1$ (since $\textrm{init}(I)$ is a squarefree monomial ideal and $G$ is a reduced Gr\"{o}bner basis). Thus,
\[G = \{b_n,g_1,\dots,g_r,a_ng_{r+1}+h_1,\dots,a_ng_{r+l}+h_l\}\]
where none of $g_1,\dots,g_{r+l},h_1,\dots,h_l$ have any nonzero terms which are divisible by $a_n$ or $b_n$.

Next, consider the subtorus $T^1 = \{(t,t^{\epsilon})~|~t\in \bG_m\}$ for $\epsilon>0$ very small. Then, $(t,t^{\epsilon})\cdot a_i = t^{-1+\epsilon(i-1)}a_i$, and $(t,t^{\epsilon})\cdot b_i = t^{-\epsilon i}b_i$. Since each $q_i$ is $T^1$-homogenous, every monomial in $q_i$ has a strictly negative weight. Thus, no term of $q_i$ is constant.

Now suppose that $a_n\notin J(Y)$. We show that none of $g_{r+1},\dots, g_{r+l}$ have any constant terms. So, suppose otherwise and assume that $g_{r+1}$ contains a constant term. Then every term of $a_ng_{r+1}+h_1$ must have $T^2$-weight $(-1,n-1)$ (the $T^2$-weight of $a_n$). As no positive linear combination of the $T^2$-weights of $a_1,\dots,a_{n-1},b_1,\dots,b_n$ is $(-1,n-1)$, it follows that $a_ng_{r+1}+h_1 = a_n$, a contradiction since $a_n\notin J(Y)$.
\end{proof}

We now prove the proposition.

\begin{proof}[Proof of Proposition \ref{degens}] 
We proceed by induction on $n$. The $n=1$ case is trivial. The $n=2$ case has been covered earlier. The outline of the proof is as follows: 

\begin{enumerate}
\item Let $J(Y)\subseteq k[a_1,b_1,\dots,a_n,b_n]$ denote the ideal defining $Y\subseteq U_{\langle x,y^n\rangle}$, and let $J_Y$ denote the ideal of $Y$ \emph{as claimed in the proposition}. We show that $J_Y\subseteq J(Y)$ for each compatibly split $Y\subseteq U_{\langle x,y^n\rangle}$. 
\item Let $Y\subseteq \{b_n = 0\}$ be a compatibly split subvariety of $U_{\langle x,y^n\rangle}$ and let $Y''$ denote the initial scheme $\textrm{Lex}_{a_n}(Y)$ \emph{as claimed in the proposition}. Because $Y''\subseteq U_{\langle x,y^{n-1}\rangle}\times \bA^1_{a_n}\times \bA^1_{b_n}$, we know $J(Y'')$ by induction. We show that $J(Y'') = \textrm{Lex}_{a_n}(J_Y) = \textrm{Lex}_{a_n}(J(Y))$. Because $J_Y\subseteq J(Y)$ (by 1.), it follows that $J_Y = J(Y)$ (see Lemma \ref{l;init}).
\item Let $Y\nsubseteq \{b_n = 0\}$ be a compatibly split subvariety of $U_{\langle x,y^n\rangle}$ and let $Y''$ denote $\textrm{Revlex}_{b_n}(Y)$ \emph{as claimed in the proposition}. By 2., we know $J(Y'')$. We show that $J(Y'') = \textrm{Revlex}_{b_n}J_Y = \textrm{Revlex}_{b_n}J(Y)$. Because $J_Y\subseteq J(Y)$ (by 1.), it follows that $J_Y = J(Y)$.
\end{enumerate}

We begin by showing that $J_Y\subseteq J(Y)$ for each compatibly split $Y\subseteq U_{\langle x,y^n\rangle}$. Recall that each $I\in U_{\langle x,y^n\rangle}$ is an ideal generated by polynomials
\[\begin{array}{lllllllllll}
 y^n&-&b_1y^{n-1}&-&b_2y^{n-2}&-&\cdots&-&b_{n-1}y&-&b_n\\
 xy^{n-1}&-&a_1y^{n-1}&-&c_{12}y^{n-2}&-&\cdots&-&c_{1(n-1)}y&-&c_{1n}\\
 xy^{n-2}&-&a_2y^{n-1}&-&c_{22}y^{n-2}&-&\cdots&-&c_{2(n-1)}y&-&c_{2n}\\
 &&&&\vdots&&&&&&\\
 xy&-&a_{n-1}y^{n-1}&-&c_{(n-1)2}y^{n-2}&-&\cdots&-&c_{(n-1)(n-1)}y&-&c_{(n-1)n}\\
 x&-&a_n y^{n-1}&-&c_{n2}y^{n-2}&-&\cdots&-&c_{n(n-1)}y&-&c_{nn}\end{array} \]
where each $c_{ij}$ is a polynomial in $a_1,\dots,a_n$, $b_1,\dots,b_n$ (described in Lemma \ref{l;cij}). Let $M_n$ be the $n\times n$ matrix where $(M_n)_{i,j} = -c_{ij}$, $i\neq 1$, and $(M_n)_{i,1} = -a_i$. 

\noindent \textbf{Suppose} $\mathbf{Y = (0,0,n-1,+1)}$: For each $I\in Y$, $\Spec k[x,y]/I$ has non-trivial intersection with the x-axis. Thus, $I+\langle y\rangle\neq \langle 1\rangle$ and so $b_n = 0$. Thus, $J_Y:=\langle b_n\rangle \subseteq J(Y)$.

\noindent \textbf{Suppose} $\mathbf{Y = (s,0,n-s,+0),~ s\geq 1}$: Let $U_{\langle x,y^n\rangle}\setminus\Delta$ denote the open set where all of the $n$ points are in distinct locations. For each $I\in Y\cap (U_{\langle x,y^n\rangle}\setminus\Delta)$, $\Spec k[x,y]/I$ intersects the y-axis in $s$ distinct locations. Let $p_1,\dots,p_s$ denote the $y$-coordinates of these locations. Then, the set of equations
\[\begin{array}{llllllllllll}
 -&a_1y^{n-1}&-&c_{12}y^{n-2}&-&\cdots&-&c_{1(n-1)}y&-&c_{1n}&=&0\\
 -&a_2y^{n-1}&-&c_{22}y^{n-2}&-&\cdots&-&c_{2(n-1)}y&-&c_{2n}&=&0\\
 &&&&\vdots&&&&&\\
 -&a_{n-1}y^{n-1}&-&c_{(n-1)2}y^{n-2}&-&\cdots&-&c_{(n-1)(n-1)}y&-&c_{(n-1)n}&=&0\\
 -&a_n y^{n-1}&-&c_{n2}y^{n-2}&-&\cdots&-&c_{n(n-1)}y&-&c_{nn}&=&0\end{array} \]
has (at least) $s$ solutions, $(p_1^{n-1},p_1^{n-2},\dots,p_1,1),\dots,(p_s^{n-1},p_s^{n-2},\dots,p_s,1)$. Therefore, $\textrm{rank}(M_n)\leq n-s$ and so the $((n-s+1)\times (n-s+1))\textrm{-minors of } M_n$ vanish. Thus, $J_Y:=\langle ((n-s+1)\times (n-s+1))\textrm{-minors of } M_n\rangle \subseteq J(Y)$.

\noindent \textbf{Suppose} $\mathbf{Y = (s,0,n-s-1,+1)}$: Then $Y\subseteq (0,0,n-1,+1)$ and $Y\subseteq (s,0,n-s,+0)$. Thus, $\langle b_n, ((n-s+1)\times(n-s+1))\textrm{-minors of }M_n\rangle\subseteq J(Y)$. 

Since $b_n =0$ on $Y$, $c_{in}=0$ for $1\leq i\leq n-1$ and $-c_{ij} = (M_{n-1})_{ij}$ for $1\leq i,j\leq n-1$ (see Lemma \ref{l;cij}). Consider the non-empty open set $\{I\in Y~|~I+\langle x,y\rangle = \langle 1\rangle \}$. Because each $c_{in}$ vanishes, $c_{nn}$ cannot vanish. Furthermore, since the $((n-s+1)\times(n-s+1))\textrm{-minors of }M_n$ vanish and $c_{nn}$ does not, the  $((n-s)\times (n-s))\textrm{-minors of } M_{n-1}$ must all vanish. So, $J_Y := \langle b_n, ((n-s)\times (n-s))\textrm{-minors of } M_{n-1}\rangle \subseteq J(Y)$.

\noindent \textbf{Suppose} $\mathbf{Y = (s,u,n-s-u,+0), u\geq 1}$: Then $Y\subseteq (s+u,0,n-s-u,+0)$ and so the $((n-s-u+1)\times (n-s-u+1))\textrm{-minors of } M_n$ are in $J(Y)$. Now let $I\in Y$. By construction, $I\subseteq\langle x,y^u\rangle$. Thus, for all $f\in I$, we have $\frac{\partial^i f}{\partial y^i}((0,0)) = 0$ for $0\leq i\leq u-1$. It follows that $b_n,\dots,b_{n-u+1}, (M_n)_{n,n},\dots,(M_n)_{n,n-u+1}$ must all be $0$. Thus, $J_Y := \langle b_n,\dots,b_{n-u+1},$ $(M_n)_{n,n},$ $\dots,$ $(M_n)_{n,n-u+1},$ $((n-s-u+1)\times (n-s-u+1))\textrm{-minors of } M_n\rangle\subseteq J(Y)$.

\noindent \textbf{Suppose} $\mathbf{Y = (s,u,n-s-u,+1), u\geq 2}$: Then $Y\subseteq (s+u-1,0,n-s-u+1,+1)$ and $Y\subseteq (s,u-1,n-s-u+1,+0)$. So, $\langle b_n,\dots b_{n-u+2},(M_n)_{n,n},\dots, (M_n)_{n,n-u+2}, ((n-s-u+1)\times (n-s-u+1))\textrm{-minors of } M_{n-1}\rangle \subseteq J(Y)$. Furthermore, because each $I\subseteq Y$ is the ideal of a configuration of points where at least $u$ are supported at the origin, $b_n,\dots,b_{n-u+1}$ must all be $0$. Thus, $J_Y := \langle b_n,\dots,b_{n-u+1},(M_n)_{n,n},\dots,(M_n)_{n,n-u+2},$ $(n-s-u+1)\times (n-s-u+1)-\textrm{minors of }M_{n-1}\rangle\subseteq J(Y)$.

\emph{Therefore, $J_Y\subseteq J(Y)$ for each compatibly split $Y\subseteq U_{\langle x,y^n\rangle}$. This completes step 1. in the outline of the proof.}

Next, suppose that $Y\subseteq \{b_n = 0\}$. Let $Y''$ denote $\textrm{Lex}_{a_n}(Y)$ \emph{as claimed in the statement of the proposition}. By induction, we know $J(Y'')$. We now show that $J(Y'') = \textrm{Lex}_{a_n} J_Y = \textrm{Lex}_{a_n} J(Y)$.

\noindent \textbf{Suppose} $\mathbf{Y = (0,0,n-1,+1)}$: Then $J_Y = \langle b_n\rangle$ and $Y'' = (0,0,n-1,+0)\times 0_{b_n}\times \bA^1_{a_n}$. So, $J(Y'') = \langle b_n\rangle = \textrm{Lex}_{a_n}J_Y\subseteq \textrm{Lex}_{a_n}J(Y)$ and thus, $\textrm{Lex}_{a_n}(Y)\subseteq Y''$. Since (i) $\textrm{Lex}_{a_n}(Y)$ is non-empty, (ii) $\textrm{dim}(Y) = \textrm{dim}(Y'')$, and (iii) $Y''$ is irreducible, it follows that $\textrm{Lex}_{a_n}(Y) = Y''$. Thus, $J(Y'') = \textrm{Lex}_{a_n}J_Y = \textrm{Lex}_{a_n}J(Y)$.

\noindent \textbf{Suppose} $\mathbf{Y = (s,0,n-s-1,+1), s\geq 1}$: Then, $J_Y = \langle b_n, ((n-s)\times (n-s))\textrm{-minors of } M_{n-1}\rangle$ and $Y'' = (s,0,n-s-1,+0)\times 0_{b_n}\times \bA^1_{a_n}$. By induction, $J(Y'') = \langle b_n, ((n-s)\times (n-s))\textrm{-minors of } M_{n-1}\rangle$. So, since no $a_n$ appears in the generating set of $J_Y$, we have $J(Y'') = \textrm{Lex}_{a_n}J_Y \subseteq \textrm{Lex}_{a_n}J(Y)$. As in the previous case, there is equality $J(Y'') = \textrm{Lex}_{a_n}J_Y = \textrm{Lex}_{a_n}J(Y)$ by the irreducibility of $Y''$.

\noindent \textbf{Suppose} $\mathbf{Y = (s,1,n-s-1,+0), s<n-1}$: Then $J_Y$ is the ideal $\langle b_n,(M_n)_{n,n},((n-s)\times (n-s))\textrm{-minors of } M_n\rangle$ and \[Y'' = ((s,0,n-s-1,+0)\times 0_{b_n}\times 0_{a_n})\cup ((s,0,n-s-2,+1)\times 0_{b_n}\times \bA^1_{a_n}).\] 
When $s=0$, $J_Y = \langle b_n, (M_n)_{n,n}, \textrm{det}(M_n)\rangle = \langle b_n, (M_n)_{n,n}\rangle$,
where the second equality can be seen by computing $\textrm{det}(M_n)$ using cofactors along the last column. Thus, \[J(Y'') = \langle a_n,b_n\rangle\cap \langle b_n,b_{n-1}\rangle \subseteq \textrm{Lex}_{a_n} J_Y \subseteq \textrm{Lex}_{a_n} J(Y).\]
Now suppose that $s\geq 1$.  
\[\textrm{Lex}_{a_n}J_Y\supseteq\langle b_n,a_nb_{n-1}, \{\textrm{Lex}_{a_n}m~|~m\in\{((n-s)\times (n-s))\textrm{-minors of } M_n\}\}\rangle.\]
Now, the set $S:=\{((n-s)\times (n-s))\textrm{-minors of } M_n\}$ contains the set of all $((n-s)\times (n-s))\textrm{-minors of } M_{n-1}$. In addition, if $A$ denotes the southwest $((n-1)\times (n-1))$-submatrix of $M_n$, then $S$ contains the set of $((n-s)\times (n-s))\textrm{-minors of } A$. To better understand some of the $((n-s)\times (n-s))\textrm{-minors of } A$, we consider another matrix $\tilde{A}$. 

Let $A_i$ denote the $i^{th}$ column of $A$. Define $\tilde{A}$ to be the $((n-1)\times(n-1))$-matrix whose first column is $A_1$ and whose $i^{th}$ column is $A_i+b_{i-1}A_1$ for $2\leq i\leq n-1$. Then, any $((n-s)\times (n-s))$-minor of $A$ which involves the column $A_1$ agrees with the corresponding minor of $\tilde{A}$. Furthermore, if $m$ is an $((n-s)\times (n-s))$-minor of $\tilde{A}$ involving both the first column and the last row, then we may use Lemma \ref{l;cij} to see that $m$ has the form \[m = a_nm'+a_{n}b_{n}r_1+r_2\] where $m'$ is an $(n-1-s)\times(n-1-s)$-minor of $M_{n-2}$, $r_1, r_2\in k[a_1,\dots,a_n,b_1,\dots,b_n]$, and $r_2$ has no terms involving $a_n$. Using this, we have the following:
\[\begin{array}{lll}
\textrm{Lex}_{a_n}J_Y& \supseteq &\langle b_n,a_nb_{n-1},((n-s)\times (n-s))\textrm{-minors of } M_{n-1}, \\
&&~~~\{{a_n}m'~|~m'\in \{((n-1-s)\times(n-1-s))\textrm{-minors of } M_{n-2}\}\}\rangle\\
& = &\langle b_n,a_n,((n-s)\times (n-s))\textrm{-minors of } M_{n-1}\rangle ~~\cap \\
&&\langle b_n, b_{n-1}, ((n-1-s)\times(n-1-s))\textrm{-minors of } M_{n-2},\\
&&~~~~~((n-s)\times (n-s))\textrm{-minors of } M_{n-1}\rangle\\
\end{array}\]
Let $J_2$ denote the second ideal in the intersection above. Since both $b_{n-1}\in J_2$ and the $((n-1-s)\times(n-1-s))\textrm{-minors of } M_{n-2}$ are in $J_2$, we see that (again by Lemma \ref{l;cij}) we needn't include the $((n-s)\times (n-s))\textrm{-minors of } M_{n-1}$ as part of the generating set of $J_2$. Thus,
 \[\begin{array}{lll}
\textrm{Lex}_{a_n}J_Y&\supseteq &\langle b_n,a_n,((n-s)\times (n-s))\textrm{-minors of } M_{n-1}\rangle ~~\cap \\
&&~~~\langle b_n, b_{n-1}, ((n-1-s)\times(n-1-s))\textrm{-minors of } M_{n-2}\rangle\\
\end{array}\]
and so $J(Y'')\subseteq \textrm{Lex}_{a_n}J_Y\subseteq \textrm{Lex}_{a_n}J(Y)$. Thus, $\textrm{Lex}_{a_n}(Y)\subseteq Y''$.

For any $s\geq 0$, let $C_1 = (s,0,n-s-1,+0)\times 0_{b_n}\times 0_{a_n}$ and $C_2 = (s,0,n-s-2,+1)\times 0_{b_n}\times \bA^1_{a_n}$ denote the two components of $Y''$. We now show that both $C_1\subseteq \textrm{Lex}_{a_n}Y$ and $C_2\subseteq \textrm{Lex}_{a_n}Y$, and thus that $Y'' = \textrm{Lex}_{a_n}Y$. Because $C_1$ and $C_2$ and both irreducible and of the same dimension as $Y$, it suffices to find points $q\in C_1\setminus C_2$ and $q'\in C_2\setminus C_1$ which are also in $\textrm{Lex}_{a_n}Y$.

Let $q$ be given by the coordinates $a_1 = \cdots = a_n = 0$, $b_1 = b_2 = \dots = b_{n-2} = 0$, $b_{n-1} = 1$, $b_n =0$. By induction, we know the defining ideals of $C_1$ and $C_2$. Using these ideals, we can check that $q\in C_1\setminus C_2$. The point $q$ corresponds to the colength-$n$ ideal $\langle x, y^n-y\rangle$. Thus, $q\in Y$. To see that $q\in \textrm{Lex}_{a_n}(Y)$, we construct a curve $\gamma(t)$, $0\leq t\leq 1$, in the total family of the degeneration such that $\gamma(1) = (a_1(1),\dots,a_n(1),b_1(1),\dots,b_n(1)) = q\in Y$ and $\gamma(0)\in \textrm{Lex}_{a_n}Y$. Define $\gamma(t)$ as follows: \[\gamma(t) = (a_1(1),\dots,a_{n-1}(1),ta_n(1),b_1(1),\dots,b_n(1)).\] Then $\gamma(0) = q$ and $q\in \textrm{Lex}_{a_n}Y$ as desired. Thus, $C_1\subseteq \textrm{Lex}_{a_n}Y$. 

Let $q'$ be given by the coordinates $a_1 = \cdots = a_{n-1} = 0$, $a_n = 1$, and $b_1 = \cdots = b_n = 0$. Then $q'\in C_2\setminus C_1$. The point $q'$ corresponds to the colength-$n$ ideal $\langle x-y^{n-1}, y^n\rangle$. Thus, $q'\in Y$. Notice that this means that $a_n\notin J(Y)$. Therefore, by Lemma \ref{l;lexinit}, $J(Y)$ has a Gr\"{o}bner basis of $T^2$-homogeneous polynomials of the form $\{g_1,\dots,g_r,a_ng_{r+1}+h_1,\dots,a_ng_{r+l}+h_l\}$ such that $a_n$ does not appear in any of $g_1,\dots,g_{r+l},h_1,\dots,h_l$, and $g_{r+1},\dots,g_{r+l}$ have no constant terms. Thus, $\textrm{Lex}_{a_n}J(Y) = \langle g_1,\dots,g_r,a_ng_{r+1},\dots,a_ng_{r+l}\rangle$ and we see that $q'\in \textrm{Lex}_{a_n}Y$. Therefore $C_2\subseteq \textrm{Lex}_{a_n}Y$.

\noindent \textbf{Suppose} $\mathbf{Y = (s,u,n-s-u,+0),~ 2\leq u<n,~ s<n-u}$: Then $J_Y$ is the ideal $\langle b_n,\dots$, $b_{n-u+1}$, $(M_n)_{n,n}$, $\dots,(M_n)_{n,n-u+1}$, $((n-s-u+1)\times (n-s-u+1))\textrm{-minors of } M_n\rangle$ and $Y'' = ((s,u-1,n-s-u,+0)\times 0_{b_n}\times 0_{a_n})\cup ((s,u,n-s-u-1,+1)\times 0_{b_n}\times \bA^1_{a_n}$.

Because $b_n,\dots,b_{n-u+1}\in J_Y$, we may refer to Lemma \ref{l;cij} to see that the generating set of $J_Y$ may be altered by replacing $(M_n)_{n,n},\dots,(M_n)_{n,n-u+2}$ with $(M_{n-1})_{n-1,n-1},\dots,(M_{n-1})_{n-1,n-u+1}$. Note that $a_n$ does not appear in any of $(M_{n-1})_{n-1,n-1},\dots,(M_{n-1})_{n-1,n-u+1}$. Thus,
\[\begin{array}{lll}
\textrm{Lex}_{a_n}J_Y&\supseteq&\langle b_n,\dots,b_{n-u+1},(M_{n-1})_{n-1,n-1},\dots,(M_{n-1})_{n-1,n-u+1},\textrm{Lex}_{a_n}(M_n)_{n,n-u+1}, \\
&&~~~~~~\{\textrm{Lex}_{a_n}m~|~m\in ((n-s-u+1)\times (n-s-u+1))\textrm{-minors of } M_n\}\rangle\\
&=&\langle b_n,\dots,b_{n-u+1},(M_{n-1})_{n-1,n-1},\dots,(M_{n-1})_{n-1,n-u+1},a_nb_{n-u}, \\
&&~~~~~~\{\textrm{Lex}_{a_n}m~|~m\in ((n-s-u+1)\times (n-s-u+1))\textrm{-minors of } M_n\}\rangle\\
\end{array}\]
We follow the previous case and replace the $((n-s-u+1)\times (n-s-u+1))\textrm{-minors of } M_n$ by a specific subset of them (utilizing the matrix $\tilde{A}$) to get
\[\begin{array}{lll}
\textrm{Lex}_{a_n}J_Y&\supseteq&\langle b_n,\dots,b_{n-u+1},(M_{n-1})_{n-1,n-1},\dots,(M_{n-1})_{n-1,n-u+1},a_nb_{n-u}, \\
&&~~~~~((n-s-u+1)\times (n-s-u+1))\textrm{-minors of } M_{n-1},\\
&&~~~~~\{a_nm'~|~m'\in ((n-u-s)\times(n-u-s))\textrm{-minors of } M_{n-2}\}\rangle\\
&=&\langle b_n,\dots,b_{n-u+1},(M_{n-1})_{n-1,n-1},\dots,(M_{n-1})_{n-1,n-u+1},a_n, \\
&&~~~~~((n-s-u+1)\times (n-s-u+1))\textrm{-minors of } M_{n-1}\rangle ~~\cap\\
&&\langle b_n,\dots,b_{n-u},(M_{n-1})_{n-1,n-1},\dots,(M_{n-1})_{n-1,n-u+1},\\
&&~~~~~((n-u-s)\times(n-u-s))\textrm{-minors of } M_{n-2}\rangle\\
\end{array}\]

Note that the final equality holds for the same reason as in the previous case: Let $J_2$ denote the second ideal in the intersection above. Since both $b_{n-1}\in J_2$ and the $((n-u-s)\times(n-u-s))\textrm{-minors of } M_{n-2}$ are in $J_2$, we may use Lemma \ref{l;cij} to see that the $((n-u-s+1)\times (n-u-s+1))\textrm{-minors of } M_{n-1}$ are also in $J_2$. 

Thus, $J(Y'')\subseteq \textrm{Lex}_{a_n}J_Y\subseteq \textrm{Lex}_{a_n}J(Y)$.

To show that $Y'' \subseteq \textrm{Lex}_{a_n}Y$, we copy the argument from the previous case. Let $C_1 = (s,u-1,n-s-u,+0)\times 0_{b_n}\times 0_{a_n}$ and $C_2 = (s,u,n-s-u-1,+1)\times 0_{b_n}\times \bA^1_{a_n}$. Let $q$ be given by the coordinates $a_1 = \cdots = a_n = 0$, $b_1= \cdots = b_{n-u-1} = 0$, $b_{n-u} = 1$, and $b_{n-u+1}=\cdots = b_n = 0$. Then $q\in C_1\setminus C_2$ and, by the same argument as in the previous case, $q\in \textrm{Lex}_{a_n}Y$. Let $q'$ be given by the coordinates $a_1 = \cdots = a_{n-1} = 0$, $a_n = 1$, and $b_1 = \cdots b_n = 0$. Then $q'\in C_2\setminus C_1$ and, by the same argument as in the previous case, $q'\in \textrm{Lex}_{a_n} Y$.

Thus, $J(Y'') = \textrm{Lex}_{a_n}J_Y = \textrm{Lex}_{a_n}J(Y)$.

\noindent \textbf{Suppose} $\mathbf{Y = (n-u,u,0,+0), u\geq 1}$: Then, $J_Y = \langle b_n,\dots,b_{n-u+1}, a_1,\dots,a_n\rangle$ and $Y'' = (n-u,u-1,0,+0)\times 0_{b_n}\times 0_{a_n}$. Notice that $J(Y'') = J_Y = \textrm{Lex}_{a_n}J_Y\subseteq \textrm{Lex}_{a_n}J(Y)$. By the irreducibility of $Y''$, $J(Y'') = \textrm{Lex}_{a_n}J_Y = \textrm{Lex}_{a_n}J(Y)$.  

\noindent \textbf{Suppose} $\mathbf{Y = (s,u,n-s-u,+1), u\geq 2}$: Then, $J_Y = \langle b_n,\dots,b_{n-u+1},(M_n)_{n,n},\dots,(M_n)_{n,n-u+2},$ $((n-s-u+1)\times (n-s-u+1))\textrm{-minors of }M_{n-1}\rangle$ and $Y'' = (s,u-1,n-s-u,+0)\times 0_{b_n}\times \bA^1_{a_n}$. Because $b_n,\dots,b_{n-u+1}\in J_Y$, we may refer to Lemma \ref{l;cij} to see that the generating set of $J_Y$ may be altered by replacing $(M_n)_{n,n},\dots,(M_n)_{n,n-u+2}$ with $(M_{n-1})_{n-1,n-1},\dots,(M_{n-1})_{n-1,n-u+1}$. As $a_n$ now appears nowhere in the generating set of $J_Y$, we see that $J(Y'') = J_Y =  \textrm{Lex}_{a_n} J_Y \subseteq J(Y)$. Thus, $\textrm{Lex}_{a_n}Y\subseteq Y''$. Again, since (i) $\textrm{Lex}_{a_n}(Y)\neq\emptyset$, (ii) $\textrm{dim}(Y) = \textrm{dim}(Y'')$, and (iii) $Y''$ is irreducible, we get the equality $\textrm{Lex}_{a_n}Y = Y''$. Thus, $J(Y'') = \textrm{Lex}_{a_n}J_Y = \textrm{Lex}_{a_n}J(Y)$.

\emph{So, $J(Y'') = \textrm{Lex}_{a_n}J_Y = \textrm{Lex}_{a_n}J(Y)$ for all compatibly split $Y\subseteq \{b_n = 0\}$. Furthermore, since $J_Y\subseteq J(Y)$, it follows that $J_Y = J(Y)$.}

Finally, let $Y$ be a compatibly split subvariety of $U_{\langle x,y^n\rangle}$ and let $Y''$ denote $\textrm{Revlex}_{b_n}Y$ \emph{as claimed in the statement of the proposition}. By step 2. (completed above), we know $J(Y'')$. We now show that $J(Y'') = \textrm{Revlex}_{b_n}J_Y = \textrm{Revlex}_{b_n}J(Y)$. As explained in the remark prior to this proof, we need only treat the cases where $Y\nsubseteq \{b_n=0\}$. That is, if $Y\subseteq \{b_n\}$ then $\textrm{Revlex}_{b_n}Y = Y$ and we have already shown that $J_Y = J(Y)$. 

So, suppose that $Y\nsubseteq \{b_n=0\}$. Then $\textrm{Revlex}_{b_n}Y = (Y\cap \{b_n=0\})\times \bA^1_{b_n}$. Because $Y\cap \{b_n=0\}$ is compatibly split, we can use Proposition \ref{p;allsplit} to very that 
\begin{itemize}
\item $\textrm{Revlex}_{b_n}(0,0,n,+0) = (0,0,n-1,+1)\times \bA^1_{b_n}$, 
\item $\textrm{Revlex}_{b_n}(s,0,n-s,+0) = [(s,0,n-s-1,+1)\cup (s-1,1,n-s,+0)]\times \bA^1_{b_n}$, for $s\geq 1$, and 
\item $\textrm{Revlex}_{b_n}(n,0,0,+0) = (n-1,1,0,+0)\times \Spec (k[b_n])$.
\end{itemize}
Therefore, to see that $J_Y = J(Y)$ for each $Y\nsubseteq \{b_n=0\}$, it remains to show that $J(Y'')\subseteq \textrm{Revlex}_{b_n}(J_Y)\subseteq \textrm{Revlex}_{b_n}(J(Y))$.

\noindent \textbf{Suppose} $\mathbf{Y = (0,0,n,+0)}$: Then $Y = U_{\langle x,y^n\rangle}$. Thus, $J(Y) = J_Y = \langle 0\rangle$ and the result is clear. 

\noindent \textbf{Suppose} $\mathbf{Y = (s,0,n-s,+0),~s\geq 1}$: Then $J_Y = \langle ((n-s+1)\times (n-s+1))\textrm{-minors of } M_n\rangle$ and $Y'' = [(s,0,n-s-1,+1)\cup (s-1,1,n-s,+0)]\times \bA^1_{b_n}$. 

Let $C_1 = (s,0,n-s-1,+1)$, $C_2 = (s-1,1,n-s,+0)$, and let $A$ denote the $(n\times (n-1))$-matrix with top $((n-1)\times(n-1))$-submatrix equal to $M_{n-1}$ and bottom row equal to the southwest $(1\times (n-1))$-submatrix of $M_n$. By Lemma \ref{l;cij}, it follows that
\[\begin{array}{lll}
J(C_1) &=& \langle b_n, ((n-s)\times (n-s))\textrm{-minors of}~M_{n-1}\rangle\\
J(C_2) &=& \langle b_n, (M_n)_{n,n}, ((n-s+1)\times(n-s+1))\textrm{-minors of}~A\rangle\\  
\end{array}\]
Notice that $b_n$ appears nowhere in the $((n-s)\times (n-s))\textrm{-minors of}~M_{n-1}$, $(M_n)_{n,n}$, or $((n-s+1)\times(n-s+1))\textrm{-minors of}~A$.

Furthermore,
\[\begin{array}{lll}
\textrm{Revlex}_{b_n} J_Y&\supseteq&\langle \{(M_n)_{n,n}m~|~m\in ((n-s)\times (n-s))\textrm{-minor of}~ M_{n-1}\},\\
&&~~~~~((n-s+1)\times(n-s+1))\textrm{-minors of}~ A\rangle\\
&=&\langle ((n-s)\times (n-s))\textrm{-minors of}~ M_{n-1}\rangle~~\cap\\
&&~~~~~\langle (M_n)_{n,n}, ((n-s+1)\times(n-s+1))\textrm{-minors of}~ A\rangle\\
&=&J(Y'')
\end{array}\]

\noindent \textbf{Suppose} $\mathbf{Y = (n,0,0,+0)}$: Then $J_Y = \langle a_1,\dots,a_n\rangle$ and $Y'' = (n-1,1,0,+0)\times \bA^1_{b_n}$. Notice that $J(Y'') = J_Y = \textrm{Revlex}_{b_n}J_Y\subseteq \textrm{Revlex}_{b_n}J(Y)$. 

\emph{In each of the above cases, $\textrm{Revlex}_{b_n}J_Y = \textrm{Revlex}_{b_n}J(Y)$ and so $J_Y = J(Y)$. This completes the proof.}
\end{proof}

\subsection{Some combinatorics and the geometric consequences}\label{s;combs}

Let $Y\subseteq U_{\langle x,y^n\rangle}$ be a compatibly split subvariety. Recall that, with respect to the weighting given in Proposition \ref{p;eqnRNC}, $\textrm{init}(Y)$ is a Stanley-Reisner scheme. Let $\Delta_Y$ denote the associated simplicial complex. In this subsection, we provide an explicit description of each $\Delta_Y$. We then use this description to show that for certain compatibly split $Y\subseteq U_{\langle x,y^n\rangle}$, $\Delta_Y$ is a vertex-decomposable ball. Therefore $\textrm{init}(Y)$ is Cohen-Macaulay, and by semicontinuity, so is $Y$. 

We begin with the $n=2$ case. In Figure \ref{fig;posetcomplex} we associate the compatibly split subvarieties of $U_{\langle x,y^2\rangle}$ to unions of faces of the $3$-simplex by first degenerating (with respect to the $\textrm{Revlex}_{b_2}$, $\textrm{Lex}_{a_2}$, $\textrm{Revlex}_{b_1}$, $\textrm{Lex}_{a_1}$ weighting) and then applying the Stanley-Reisner recipe. Note that the simplex in the figure has been ``unfolded'' to make it easier to label the faces.

\begin{figure}[!h]
\begin{center}
\includegraphics[scale = 0.325]{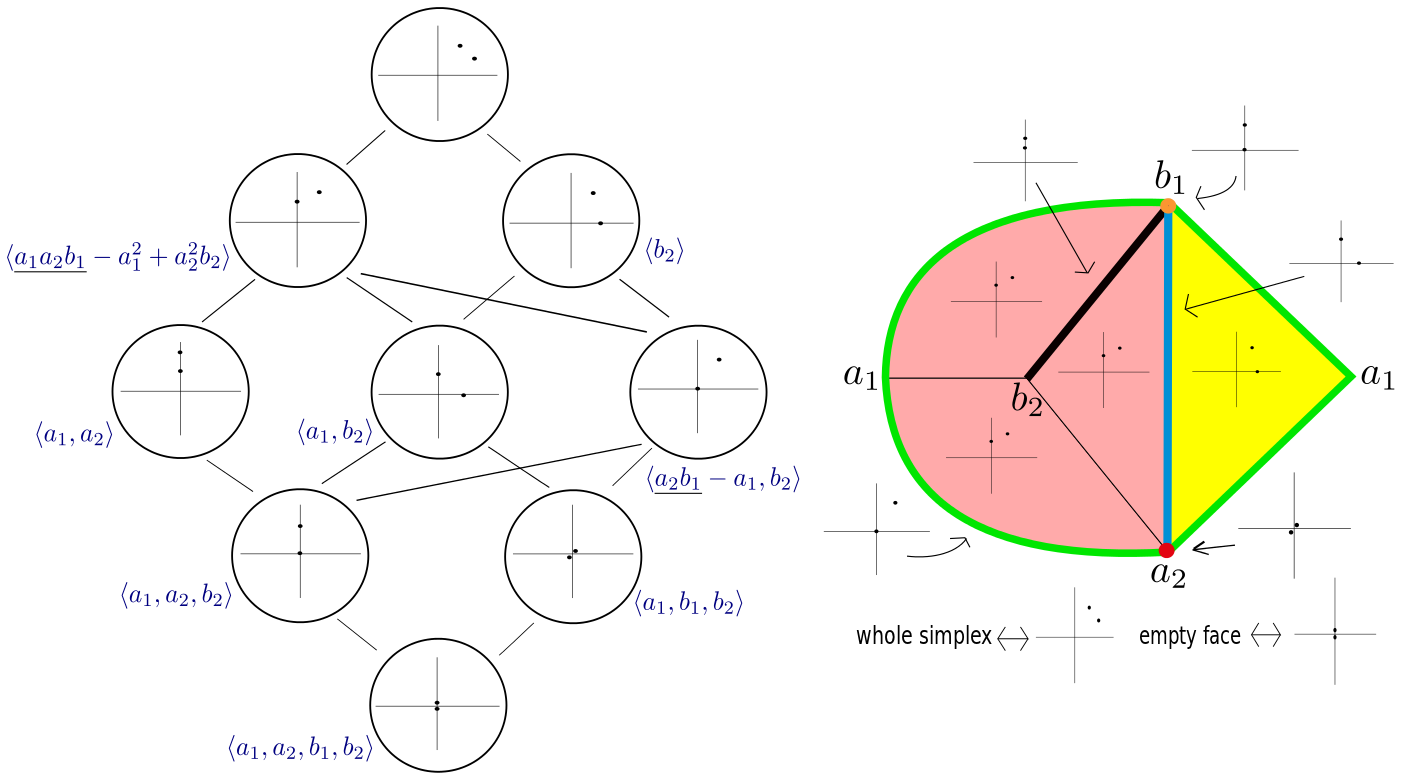}
\caption{Associating compatibly split subvarieties to simplicial complexes}
\label{fig;posetcomplex}
\end{center}
\end{figure}

We now proceed to the case when $n$ is arbitrary.

\begin{definition}\label{d;fullwords}
Fix some positive integer $n$.

Consider the following letters: $a$, $\hat{a}$, $\uparrow$. Consider the following segments of letters:
\begin{center}
$a\uparrow,~~~\hat{a}\uparrow,~~~aa\uparrow,~~~  aa,~~~  \hat{a}$
\end{center}

Define a \emph{full word} to be a word in segments $a$$\uparrow$, $\hat{a}$$\uparrow$, $aa$$\uparrow$, $aa$, $\hat{a}$ of the form
\begin{center}
(word in $a\uparrow,~\hat{a}\uparrow,~aa\uparrow$)$~~|~~$(word in $aa,~\hat{a}$)$~~|~~$($0$ or $1$ lone copy of $a$)
\end{center}
such that $\#a+\#\hat{a} = n$.

Let $Y$ be a subvariety of $U_{\langle x,y^n\rangle}$ of type $(s,u,t,+0)$ or $(s,u,t,+1)$. Define \emph{a full word associated to Y} to be a word in segments $a$$\uparrow$, $\hat{a}$$\uparrow$, $aa$$\uparrow$, $aa$, $\hat{a}$ of the form
\begin{center}
(word in $a\uparrow,~\hat{a}\uparrow,~aa\uparrow$)$~~|~~$(word in $aa,~\hat{a}$)$~~|~~$($a$ iff $Y$ is of type $(s,u,t,+1)$)
\end{center}
such that
\begin{itemize}
\item $\#a\uparrow+\#aa\uparrow+\#aa = t,~~\#\hat{a}\uparrow+\#aa\uparrow = s,~~\#aa+\#\hat{a} = u$, if $Y = (s,u,t,+0)$.
\item $\#a\uparrow+\#aa\uparrow+\#aa = t,~~\#\hat{a}\uparrow+\#aa\uparrow = s,~~\#aa+\#\hat{a} = u-1$, if $Y = (s,u,t,+1)$ and $u\geq 2$.
\item $\#a\uparrow+\#aa\uparrow+\#aa = t,~~\#\hat{a}\uparrow+\#aa\uparrow = s,~~\#aa+\#\hat{a} = 0$, if $Y = (s,0,t,+1)$.
 \end{itemize}
\end{definition}

\begin{remark}\label{r;fw}
The condition $\#\hat{a}+\#a = n$ ensures that every full word is really a full word associated to $Y$ for some compatibly split $Y\subseteq U_{\langle x,y^n\rangle}$. (Proof: Suppose that $w$ is a full word without a lone copy of $a$ at the end. Let $\#a\uparrow+\#aa\uparrow+\#aa = t,~~\#\hat{a}\uparrow+\#aa\uparrow = s,~~\#aa+\#\hat{a} = u$. Then, $n = \#\hat{a}+\#a = \#a\uparrow+\#\hat{a}\uparrow+2\cdot\#aa\uparrow+2\cdot\#aa+\#\hat{a} = s+u+t$. Thus, $w$ is a full word associated to $Y = (s,u,t,+0)$. A similar argument works when there is a lone copy of $a$ at the end of a full word.)
\end{remark}

\begin{proposition}\label{p;pipedreams}
Let $Y$ be a compatibly split subvariety of $U_{\langle x,y^n\rangle}$. The collection of full words associated to $Y$ are in one-to-one correspondence with the components of $\textrm{init}(Y)$. Thus, the full words associated to $Y$ are in one-to-one correspondence with the facets of $\Delta_Y$.

More precisely, let $FW_Y$ denote the set of full words associated to $Y$ and let $IN_Y$ denote the set of components of $\textrm{init}(Y)$. Then, there is a well-defined, bijective map \[m_Y: \{w_Y~|~w_Y\in FW_Y\}\rightarrow \{S\subseteq k[a_1,\dots,a_n,b_1,\dots,b_n]~|~\Spec(k[S])\in IN_Y\}\]
defined in the following way:
\begin{enumerate}
\item Let $w_Y$ denote a full word associated to $Y$. By construction, $\#a+\#\hat{a} = n$. From left to right, number the letters $a$ and $\hat{a}$ in $w_Y$ from $1$ to $n$. (Eg. $aa\uparrow\hat{a}\uparrow$ becomes $a_1a_2\uparrow\hat{a_3}\uparrow$.)
\item Replace each $\uparrow$ by $b_i$ such that $a_i$ or $\hat{a}_i$ (same $i$ as in $b_i$) now appears immediately to the left of $b_i$. (Eg. $a_1a_2\uparrow\hat{a_3}\uparrow$ becomes $a_1a_2b_2\hat{a_3}b_3$.)
\item Delete all $\hat{a}_i$ appearing in the new word. The set of letters in the resulting word is a subset $S\subseteq \{a_1,\dots,a_n,b_1,\dots,b_n\}$. (Eg. $a_1a_2b_2\hat{a_3}b_3$ becomes $a_1a_2b_2b_3$.) Define $m_Y(w_Y):=S$.
\end{enumerate}
\end{proposition}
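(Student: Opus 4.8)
The plan is to prove Proposition \ref{p;pipedreams} by induction on $n$, running in parallel with the structure of Proposition \ref{degens}, since that proposition already tells us exactly what $\textrm{init}(Y) = \textrm{Lex}_{a_n}\textrm{Revlex}_{b_n}(Y)$ is for each compatibly split $Y$. First I would record the base cases $n=1,2$ explicitly (for $n=2$ this is just reading off Figure \ref{fig;posetcomplex}: each full word of length determined by the data corresponds to one of the coordinate subspaces appearing in the Stanley-Reisner decomposition). Then I would set up the inductive step: assume the correspondence $m_{Y'}$ is a bijection between full words associated to $Y'$ and components of $\textrm{init}(Y')$ for all compatibly split $Y' \subseteq U_{\langle x,y^{n-1}\rangle}$, and deduce the statement for $Y \subseteq U_{\langle x,y^n\rangle}$.

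The heart of the argument is to match the two-step degeneration $\textrm{Lex}_{a_n}\textrm{Revlex}_{b_n}$ (as described component-by-component in Proposition \ref{degens}) with a two-step operation on full words: appending a letter/segment involving the ``new'' variables $a_n, b_n$. Concretely, I would observe that the segments $a\uparrow$, $\hat{a}\uparrow$, $aa\uparrow$, $aa$, $\hat{a}$, and the lone trailing $a$ are precisely the ``local moves'' one can make passing from a full word for $Y''$ (a subvariety of $U_{\langle x,y^{n-1}\rangle}$) to a full word for $Y$ (a subvariety of $U_{\langle x,y^n\rangle}$), and that under $m_Y$ these correspond exactly to the ways a component of $\textrm{init}(Y'')$ (with variables $a_1,\dots,a_{n-1},b_1,\dots,b_{n-1}$) gets extended by $a_n$ and/or $b_n$ when we compute $\textrm{Revlex}_{b_n}$ and then $\textrm{Lex}_{a_n}$. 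For instance: if $Y \subseteq \{b_n = 0\}$, then $\textrm{Revlex}_{b_n}$ does nothing and the word ends in a ``word in $aa,\hat a$'' block or a trailing $a$; the behaviour of $\textrm{Lex}_{a_n}$ on each such $Y$ is given by items (5)--(9) of the second list in Proposition \ref{degens}, and I would check case by case that splitting a component $C$ into $C_1 \cup C_2$ (as in the $Y=(s,u,n-s-u,+0)$ case) matches precisely the two ways of reading a segment $aa$ (new $a_n$ with $b_{n-1}$-type relation vs. new $a_n$ free), while the non-splitting cases match the segments $\hat a$ and the trailing $a$. The cases $Y \nsubseteq \{b_n=0\}$ are handled by items (2)--(4) of the first list, corresponding to the segments $a\uparrow$, $aa\uparrow$, $\hat a\uparrow$ which introduce $b_n$ into the component.

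I would organise the bookkeeping via the numerology: for $Y$ of type $(s,u,t,+i)$, the constraints in Definition \ref{d;fullwords} ($\#a\uparrow + \#aa\uparrow + \#aa = t$, etc.) must be shown to be exactly the constraints forced on the component data by Proposition \ref{degens}; this is a finite check comparing the dimension counts (a component indexed by a full word has dimension $\#a + \#\uparrow = $ number of non-deleted letters, and I would verify this equals $\dim Y = s+2t$ or $s+2t+1$). The well-definedness of $m_Y$ — that every letter in step 3's output word is distinct, so we genuinely get a subset and not a multiset — follows because step 2 attaches each $b_i$ to the unique adjacent $a_i$/$\hat a_i$ and the numbering in step 1 is injective; I would spell this out. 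Injectivity of $m_Y$ is clear since the word can be reconstructed from $S$ together with $Y$'s type (the $\hat a_i$'s are exactly the indices $i$ with $a_i \notin S$, and the segment structure is then forced). Surjectivity onto $IN_Y$ is where the induction does the real work.

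The main obstacle I anticipate is the combinatorial matching in the split cases: showing that the segment $aa$ corresponds to precisely the two-component output $(C_1 \times 0_{b_n} \times 0_{a_n}) \cup (C_2 \times 0_{b_n} \times \bA^1_{a_n})$ of $\textrm{Lex}_{a_n}$, and correctly tracking which of $a_n$ and $b_{n-1}$ (resp.\ $b_{n-u}$) lands in each component — this requires careful use of Lemma \ref{l;cij} to see how the minors of $M_n$ reduce, exactly as in the proof of Proposition \ref{degens}. Once the dictionary ``segment $\leftrightarrow$ degeneration move'' is pinned down, applying it recursively down to $n=1$ or $n=2$ gives the bijection, and the final sentence (full words $\leftrightarrow$ facets of $\Delta_Y$) is then immediate from the standard correspondence between minimal primes of a squarefree monomial ideal and facets of its Stanley-Reisner complex.
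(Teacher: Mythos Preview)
Your inductive strategy via Proposition \ref{degens} is exactly the paper's approach, and your plan to set up commuting squares between full-word sets and component sets is what the paper does case by case. There is one bookkeeping error in your dictionary, though: in the branching case $Y=(s,u,t,+0)$ with $t,u\geq 1$, the two components of $\textrm{Lex}_{a_n}(Y)$ do \emph{not} correspond to ``two ways of reading a segment $aa$''. Rather, full words for $Y$ end in one of two \emph{different} terminal segments, $\hat{a}$ or $aa$, and these index the two components: words $w_{Y_1}\hat{a}$ (so $a_n\notin S$) correspond to $Y_1=(s,u-1,t,+0)\times 0_{b_n}\times 0_{a_n}$, while words $w_{Y_2}a$ with $w_{Y_2}\in FW_{Y_2}$ for $Y_2=(s,u,t-1,+1)$ (so the combined tail is $aa$, and $a_n\in S$) correspond to $Y_2\times 0_{b_n}\times\bA^1_{a_n}$. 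Likewise for $Y=(s,0,t,+0)$ with $s,t>0$: the three possible terminal segments $a\uparrow$, $\hat{a}\uparrow$, $aa\uparrow$ partition $FW_Y$ into the words coming from $Y_1=(s,0,t-1,+1)$ and from $Y_2=(s-1,1,t,+0)$ respectively. Once you straighten out this segment-to-component matching, the induction runs exactly as you describe and as the paper carries out.
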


 \begin{example}
Let $n=3$ and let $Y = (1,1,1,+0)$. The full words associated to $Y$ are: $aa$$\uparrow$$\hat{a}$, $\hat{a}$$\uparrow$$aa$, $\hat{a}$$\uparrow$$a$$\uparrow$$\hat{a}$, $a$$\uparrow$$\hat{a}$$\uparrow$$\hat{a}$. The facets of $\Delta_Y$ are given by $\{a_1,a_2,b_2\}$, $\{b_1,a_2,a_3\}$, $\{b_1,a_2,b_2\}$, and $\{a_1,b_1,b_2\}$. Notice that these subsets are the images of the full words associated to $Y$ under the map $m_Y$. 
\end{example}

We now prove Proposition \ref{p;pipedreams}.

\begin{proof}[Proof of Proposition \ref{p;pipedreams}]
It suffices to show that $m_Y$ is well-defined and bijective for each compatibly split subvariety $Y\subseteq U_{\langle x,y^n\rangle}$. We proceed by induction on $n$.

Suppose $n=1$. Then $Y$ is one of $(1,0,0,+0)$, $(0,1,0,+0)$, $(0,0,1,+0)$, or $(0,0,0,+1)$. In each case, there is exactly one full word associated to $Y$ and exactly one component of $\textrm{init}(Y) = Y$.

Now let $n$ be arbitrary and suppose that $Y$ is a compatibly split subvariety of $U_{\langle x,y^n\rangle}$.

\noindent\textbf{Suppose that} $\mathbf{Y = (s,u,t,+1),~ u\geq 2:}$ By Proposition \ref{degens}, \[\textrm{Lex}_{a_n}\textrm{Revlex}_{b_n}Y = (s,u-1,t,+0)\times 0_{b_n}\times \bA^1_{a_n}.\] Let $Y' = (s,u-1,t,+0)$. Let $S'\subseteq \{a_1,\dots,a_{n-1},b_1,\dots,b_{n-1}\}$ be such that $\Spec k[S']$ is a component of $\textrm{init}(Y')$. Then, $\Spec k[S]$, $S\subseteq \{a_1,\dots,a_n,b_1,\dots,b_n\}$, is a component of $\textrm{init}(Y)$ if and only if $S = S'\cup\{a_n\}$ for some $S'$. Also, by the definition of full words, $w_Y$ is a full word associated to $Y$ if an only if $w_Y = w_{Y'}a$ for some $w_{Y'}$, a full word associated to $Y'$. Thus, we have
\begin{center}
 \begin{xy}
 (0,20)*+{\{w_Y~|~w_Y~\textrm{a full word associated to}~Y\}}="a"; (93,20)*+{\{w_{Y'}a~|~w_{Y'}~\textrm{a full word associated to } Y'\}}="b";%
 (0,0)*+{\{S~|~\Spec(S)~\textrm{a component of }\textrm{init}(Y)\}}="c"; (93,0)*+{\{S'\cup{a_n}~|~\Spec(S')~\textrm{a component of }\textrm{init}(Y')\}}="d";%
 {\ar "a";"b"}?*!/_2mm/{\simeq};
 {\ar "a";"c"};{\ar "b";"d"};%
 {\ar "d";"c"}?*!/_2mm/{\simeq};%
\end{xy}
\end{center}


\noindent where $m_{Y'}$ determines the vertical map on the right. The diagram commutes if $m_Y$ is the vertical map on the left. Because $m_{Y'}$ is well defined and bijective (by induction), so is $m_Y$.

\noindent\textbf{Suppose that }$\mathbf{Y = (s,0,t,+1):}$ By Proposition \ref{degens}, \[\textrm{Lex}_{a_n}\textrm{Revlex}_{b_n}Y = (s,0,t,+0)\times 0_{b_n}\times \bA^1_{a_n}.\] By an identical argument as above, $m_Y$ is well defined and bijective.

\noindent\textbf{Suppose that }$\mathbf{Y = (s,u,t,+0),~ t,u\geq 1:}$ Then, \[\textrm{Lex}_{a_n}\textrm{Revlex}_{b_n}Y = ((s,u-1,t,+0)\times 0_{b_n}\times 0_{a_n})\cup (s,u,t-1,+1)\times 0_{b_n}\times \bA^1_{a_n}.\] Let $Y_1  = (s,u-1,t,+0)$ and $Y_2 = (s,u,t-1,+1)$. Then, $\Spec k[S]$, $S\subseteq \{a_1,\dots,a_n,b_1,\dots,b_n\}$, is a component of $\textrm{init}(Y)$ if and only if either (i) $S = S_1$ for some $S_1\subseteq \{a_1,\dots,a_{n-1},b_1,\dots,b_{n-1}\}$ where $\Spec k[S_1]$ is a component of $\textrm{init}(Y_1)$, or (ii) $S = S_2\cup\{a_n\}$ for some $S_2\subseteq \{a_1,\dots,$ $a_{n-1},$ $b_1,\dots,b_{n-1}\}$ where $\Spec k[S_2]$ is a component of $\textrm{init}(Y_2)$. 

Now, a full word associated to $Y$, $w_Y$, either ends with $\hat{a}$ or $aa$. By the definition of full words, $w_Y$ is a full word ending in $\hat{a}$ if an only if $w_Y = w_{Y_1}\hat{a}$ for some $w_{Y_1}$, a full word associated to $Y_1$. Similarly, $w_Y$ is a full word ending in $aa$ if and only if $w_Y = w_{Y_2}a$ for some $w_{Y_2}$, a full word associated to $Y_2$.
Letting $FW_Y$, $FW_{Y_1}$, and $FW_{Y_2}$ denote the set of full words associated to $Y$, $Y_1$, and $Y_2$, and letting $IN_Y$, $IN_{Y_1}$, and $IN_{Y_2}$ denote the set of components of $\textrm{init}(Y)$, $\textrm{init}(Y_1)$, and $\textrm{init}(Y_2)$, we get a diagram
\begin{center}
 \begin{xy}
 (0,20)*+{\{w_Y~|~w_Y\in FW_Y\}}="a"; (95,20)*+{\{w_{Y_1}\hat{a}~|~w_{Y_1}\in FW_{Y_1}\}\cup\{w_{Y_2}a~|~w_{Y_2}\in FW_{Y_2}\}}="b";%
 (0,0)*+{\{S~|~\Spec(S)\in IN_Y\}}="c"; (95,0)*+{\{S_1~|~\Spec(S_1)\in IN_{Y_1}\}\cup\{S_2\cup\{a_n\}~|~\Spec(S_2)\in IN_{Y_2}\}}="d";%
 {\ar "a";"b"};
 {\ar "a";"c"};{\ar "b";"d"};%
 {\ar "d";"c"};%
\end{xy}
\end{center}
where the top and bottom maps are bijections and the right vertical map is given by $m_{Y_1}$ on the first set and $m_{Y_2}$ on the second set. If the left vertical map is given by $m_{Y_2}$, then the diagram commutes. By induction, both $m_{Y_1}$ and $m_{Y_2}$ are well defined and bijective. Thus, so is $m_Y$.

\noindent\textbf{Suppose that} $\mathbf{Y = (s,1,t,+0),~ s<n-1,}$ $\mathbf{Y = (n-u,u,0,+0),}$ \textbf{or} $\mathbf{Y = (0,n,0,+0):}$ Then $m_Y$ is well defined and bijective by a nearly identical argument to the one above.

\noindent\textbf{Suppose} $\mathbf{Y = (s,0,t,+0),}$ $\mathbf{s,t>0:}$ Then, \[\textrm{Revlex}_{b_n}Y = (s,0,t-1,+1)\times\bA^1_{b_n}\cup (s-1,1,t,+0)\times \bA^1_{b_n}.\] Let $Y_1 = (s,0,t-1,+1)$ and let $Y_2 = (s-1,1,t,+0)$. Then, $\Spec k[S]$, $S\subseteq \{a_1,\dots,a_n,b_1,\dots,b_n\}$, is a component of $\textrm{init}(Y)$ if and only if either (i) $S = S_1\cup\{b_n\}$ for some $S_1\subseteq \{a_1,\dots,a_{n-1},$ $b_1,\dots,b_{n-1}\}$ where $\Spec k[S_1]$ is a component of $\textrm{init}(Y_1)$, or (ii) $S = S_2\cup\{b_n\}$ for some $S_2\subseteq \{a_1,\dots,a_{n-1},$ $b_1,\dots,b_{n-1}\}$ where $\Spec k[S_2]$ is a component of $\textrm{init}(Y_2)$. 

Now, a full word $w_Y$ either ends with $a\uparrow$, $\hat{a}\uparrow$, or $aa\uparrow$. Notice that $w_Y$ is a full word ending in $a\uparrow$ if an only if $w_Y = w_{Y_1}\uparrow$ for some $w_{Y_1}$, a full word associated to $Y_1$. Similarly, (i) $w_Y$ is a full word ending in $\hat{a}\uparrow$ if and only if $w_Y = w_{Y_2}\uparrow$ for some $w_{Y_2}$, a full word associated to $Y_2$ ending in $\hat{a}$ and (ii) $w_Y$ is a full word ending in $aa\uparrow$ if and only if $w_Y = w_{Y_2}\uparrow$ for some $w_{Y_2}$, a full word associated to $Y_2$ ending in $aa$. As every full word of $Y_2$ ends in either $\hat{a}$ or $aa$, there is a one-to-one correspondence between full words of $Y$ ending in one of $\hat{a}\uparrow$ or $aa\uparrow$ and full words of $Y_2$.

Letting $FW_Y$, $FW_{Y_1}$, and $FW_{Y_2}$ denote the set of full words associated to $Y$, $Y_1$, and $Y_2$, and letting $IN_Y$, $IN_{Y_1}$, and $IN_{Y_2}$ denote the set of components of $\textrm{init}(Y)$, $\textrm{init}(Y_1)$, and $\textrm{init}(Y_2)$, we get a diagram 
\begin{center}
 \begin{xy}
 (0,20)*+{\{w_Y~|~w_Y\in FW_Y\}}="a"; (91,20)*+{\{w_{Y_1}\uparrow~|~w_{Y_1}\in FW_{Y_1}\}\cup\{w_{Y_2}\uparrow~|~w_{Y_2}\in FW_{Y_2}\}}="b";%
 (0,0)*+{\{S~|~\Spec(S)\in IN_Y\}}="c"; (91,0)*+{\{S_1\cup\{b_n\}~|~\Spec(S_1)\in IN_{Y_1}\}\cup\{S_2\cup\{b_n\}~|~\Spec(S_2)\in IN_{Y_2}\}}="d";%
 {\ar "a";"b"};
 {\ar "a";"c"};{\ar "b";"d"};%
 {\ar "d";"c"};%
\end{xy}
\end{center}
where the top and bottom maps are bijections and the right vertical map is given by $m_{Y_1}$ on the first set and $m_{Y_2}$ on the second set. If the left vertical map is given by $m_{Y_2}$, then the diagram commutes. By the relevant previous cases, both $m_{Y_1}$ and $m_{Y_2}$ are well defined and bijective. Thus, so is $m_Y$.

\noindent\textbf{Suppose} $\mathbf{Y}$ \textbf{is one of} $\mathbf{(n,0,0,+0)}$ \textbf{or} $\mathbf{(0,0,n,+0):}$ $m_Y$ is well defined and bijective by a similar argument to the previous case.
\end{proof}

\begin{remarks}
\begin{enumerate}
\item Let $AW$ denote the set of all strings in the letters $\hat{a}$, $a$, $\uparrow$ such that (i) $\#\hat{a}+\#a = n$ and (ii) the first letter in the string is not $\uparrow$, and (iii) given two consecutive letters, they are not both $\uparrow$. Then, we may apply steps 1, 2, and 3 in the definition of $m_Y$ to assign a unique subset $S\subseteq \{a_1,\dots,a_n,b_1,\dots,b_n\}$ to any $w\in AW$. Let $m$ denote this assignment, \[m: AW \rightarrow \{S~|~S\subseteq \{a_1,\dots,a_n,b_1,\dots,b_n\}\}.\] Then $m$ is a bijection. Indeed, there is a unique way to undo steps 3, 2, and 1 appearing in the definition of $m_Y$.
\item Let $w\in AW$. Using the map from \cite[Theorem 2]{K}), $m(w)$ maps to a unique minimal compatibly split $Y\subseteq U_{\langle x,y^n\rangle}$. If $m(w)$ maps to $Y$ and $w$ is not a full word associated to $Y$, we say that $w$ is a \emph{partial word associated to $Y$}.
\item In light of Proposition \ref{p;pipedreams}, full words may be thought of as an analogy of the reduced pipe dreams that appear in the matrix Schubert variety setting (see \cite[Chapter 15]{MS}). Partial words may be thought of as an analogy of non-reduced pipe dreams.
\end{enumerate}
\end{remarks}

Using Definition \ref{d;fullwords}, we can determine which compatibly split $Y\subseteq U_{\langle x,y^n\rangle}$ is associated to a given full word. However, it may not be obvious which $Y$ is associated to a given partial word. The next proposition shows us how to do this.

\begin{proposition}
Let $w\in AW$. Then, either $w$ does not contain any copies of $\uparrow$, or $w$ can be written in the form $w = w_1\uparrow w_2$ where $w_1$ is a string in $a$, $\hat{a}$, and $\uparrow$, and $w_2$ is a string in $a$ and $\hat{a}$. In addition, $w\in AW$ is a partial word if and only if at least one of the following holds:
\begin{enumerate}
\item $w_1$ contains at least one of the following subwords: $aaa$, $\hat{a}a$, $a\hat{a}$, or $\hat{a}\hat{a}$.
\item $w_2$ contains a subword of the form $a\cdots a\hat{a}$ where the number of copies of $a$ appearing before the $\hat{a}$ is odd.
\end{enumerate}

We may fill up a partial word associated to $Y$ (in a non-unique way) to obtain a full word associated to $Y$. This can be done using the following procedure:
\begin{enumerate}
\item Write $w = w_1\uparrow w_2$ if $w$ contains some $\uparrow$ and proceed to step 2. If $w$ does not contain any $\uparrow$, set $w=w_2$ and proceed to step 3.
\item Consider the subword $(w_1\uparrow)$. Within this subword, move from left to right inserting a copy of $\uparrow$ in the first spot from the left where $w$ fails to be a full word. Denote the resulting word by $(\tilde{w_1}\uparrow)$. Eg. Suppose $(w_1\uparrow) = a\hat{a}aaa\hat{a}\uparrow$. Move left to right inserting copies of $\uparrow$ in the following way: \[a\hat{a}aaa\hat{a}\uparrow ~\mapsto~ a\uparrow\hat{a}aaa\hat{a}\uparrow ~\mapsto~ a\uparrow\hat{a}\uparrow aaa\hat{a}\uparrow ~\mapsto~ a\uparrow\hat{a}\uparrow aa\uparrow a\hat{a}\uparrow ~\mapsto~ a\uparrow\hat{a}\uparrow aa\uparrow a\uparrow \hat{a}\uparrow\]
\item Consider the subword $w_2$. Within this subword, move from left to right changing $\hat{a}$ to $a$ in the first spot from the left where $w$ fails to be a full word. Denote the resulting word by $\tilde{w_2}$. Eg. Suppose $w_2 = a\hat{a}aa\hat{a}a\hat{a}\hat{a}$. Move left to right changing $\hat{a}$ to $a$ as follows: \[a\hat{a}aa\hat{a}a\hat{a}\hat{a}~\mapsto~aaaa\hat{a}a\hat{a}\hat{a}~\mapsto~aaaa\hat{a}aa\hat{a}\]
\item Replace $w$ by $\tilde{w} = \tilde{w_1}\uparrow\tilde{w_2}$ if $w$ contains some $\uparrow$. Replace $w$ by $\tilde{w_2}$ if $w$ does not contain any $\uparrow$. 
\end{enumerate}
\end{proposition}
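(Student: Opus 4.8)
The plan is to prove the final proposition in two parts: first the characterization of partial words, and second the correctness of the filling-up procedure. Both parts rely on the combinatorial bijection $m$ between $AW$ and subsets $S \subseteq \{a_1,\dots,a_n,b_1,\dots,b_n\}$ (noted in the remark after Proposition \ref{p;pipedreams}) together with Theorem \ref{t;yay}(2) of Knutson, which says $m(w)$ lies in a unique minimal compatibly split $Y \subseteq U_{\langle x,y^n\rangle}$, and that $w$ is a full word for $Y$ precisely when $\Spec(k[m(w)])$ is an actual component of $\textrm{init}(Y)$. So the statement ``$w$ is a partial word'' is equivalent to ``$\Spec(k[m(w)])$ is contained in some $\Spec(k[m(w')])$ with $w'$ a full word for the same $Y$'', i.e. $m(w') \subseteq m(w)$ for a strictly larger component. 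The heart of the argument is therefore to understand, combinatorially, when a subset $S$ of the variables is of the form $m(\textrm{full word})$.

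First I would establish the structural observation that any $w \in AW$ either has no $\uparrow$ (so is a word in $a,\hat a$ of total length $n$, which I call $w_2$) or has a unique decomposition $w = w_1 \uparrow w_2$ where $w_1 \uparrow$ is the maximal prefix ending in $\uparrow$ and $w_2$ is the remaining suffix — this $w_2$ contains no $\uparrow$ by maximality and the ``no two consecutive $\uparrow$'' condition. Then I would prove the forward implication of the characterization: if $w_1$ contains one of $aaa$, $\hat a a$, $a \hat a$, $\hat a \hat a$, or if $w_2$ contains $a\cdots a \hat a$ with an odd run of $a$'s, then $w$ is a partial word. For each forbidden pattern I would exhibit an explicit strictly larger full word $w'$ with $m(w') \subseteq m(w)$: intuitively, inserting a $\uparrow$ after the appropriate letter in $w_1$, or flipping the offending $\hat a$ to $a$ in $w_2$, removes the corresponding $a_i$ (or $b_i$) from the subset while keeping the rest, and one checks using Definition \ref{d;fullwords} together with the running count conditions $\#a\uparrow + \#aa\uparrow + \#aa = t$ etc. that the result is genuinely a full word associated to the \emph{same} $Y = (s,u,t,+i)$. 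For the converse, I would show that if $w_1$ avoids all four subwords and $w_2$ has no bad odd run, then $w_1$ parses uniquely as a concatenation of segments $a\uparrow, \hat a\uparrow, aa\uparrow$ and $w_2$ parses as a concatenation of $aa, \hat a$ possibly followed by a lone $a$ — i.e. $w$ is already a full word — which can be done by a short left-to-right greedy parsing argument.

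For the second part (the filling-up procedure terminating in a full word associated to $Y$), I would argue that each insertion of $\uparrow$ in step 2 and each flip $\hat a \mapsto a$ in step 3 is precisely one of the ``forbidden pattern removal'' moves from the first part, hence preserves the associated variety $Y$ and the containment $m(\tilde w) \subseteq m(w)$, and strictly decreases a well-founded complexity measure (e.g. the number of forbidden subwords, or the lexicographic position of the leftmost violation), so the procedure terminates; when it terminates the result violates no conditions of the characterization and is therefore a full word, necessarily for the same $Y$ by Theorem \ref{t;yay}(2) and minimality. I would also remark that the output is non-unique because different choices at equivalent positions (or a different order of the two phases) can lead to different full words, all associated to $Y$.

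The main obstacle I anticipate is the bookkeeping in the converse direction: verifying that ``avoids the four subwords in $w_1$'' and ``no odd $a$-run before an $\hat a$ in $w_2$'' together force the exact segment-decomposition in Definition \ref{d;fullwords}, rather than merely being necessary conditions. This requires carefully checking all the local cases at segment boundaries (what can follow an $aa\uparrow$, what can precede the $w_1 | w_2$ break, and how a trailing lone $a$ interacts with the parity condition), and making sure the count constraints relating $\#a\uparrow, \#\hat a\uparrow, \#aa\uparrow, \#aa, \#\hat a$ to the tuple $(s,u,t,+i)$ are consistent throughout. Once the bijection $m$ and the pattern-to-variety correspondence are pinned down, the rest is a finite case analysis, so this is the step I would budget the most care for.
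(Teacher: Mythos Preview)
Your overall plan is reasonable, but there is a consistent orientation error and the second half of your strategy diverges from the paper in a way that leaves a real gap.

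\textbf{The containment direction is reversed.} Under the map $m$, an $a$ in position $i$ contributes $a_i$, a $\hat a$ in position $i$ contributes nothing, and an $\uparrow$ immediately after position $i$ contributes $b_i$. So inserting an $\uparrow$ \emph{adds} some $b_i$ to $m(w)$, and flipping $\hat a\mapsto a$ \emph{adds} some $a_i$. Hence $m(w)\subseteq m(\tilde w)$, not the reverse; and when you ``exhibit a full word $w'$'' in the forward direction of the characterization, the containment you want is $m(w)\subsetneq m(w')$. This is not a typo-level issue: it flips which inclusion $Y\subseteq Y'$ versus $Y'\subseteq Y$ is automatic and which requires work.

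\textbf{On Part 1.} The paper dispatches the characterization in one line (``clear by the definition of full and partial words''): the forbidden subwords in $w_1$ and the odd-$a$-run-before-$\hat a$ condition in $w_2$ are exactly the local obstructions to parsing $w$ into the allowed segments $a\uparrow,\hat a\uparrow,aa\uparrow \mid aa,\hat a \mid a$ of Definition~\ref{d;fullwords}. Your greedy-parsing argument is a fine way to make this explicit, but you do not need to produce a full word for the same $Y$ here; you only need to show $w$ fails to be full.

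\textbf{On Part 2.} Your plan is to show that \emph{each single step} of the procedure preserves the associated $Y$, by reference to Part~1. That linkage doesn't go through: in Part~1 you would construct, from a forbidden pattern, a \emph{full} word containing $m(w)$; but a single insertion or flip in the procedure typically produces another \emph{partial} word, so Part~1 says nothing about it. The paper argues differently and more directly. It first observes that $\tilde w$ is a full word by construction (no forbidden pattern survives), hence is associated to some $Y'$; since $m(w)\subseteq m(\tilde w)$ one gets $Y\subseteq Y'$ from minimality. The point is then to rule out $Y\subsetneq Y'$ by showing that there is \emph{no} $S$ with $m(w)\subsetneq S\subsetneq m(\tilde w)$ and $m^{-1}(S)$ a full word: any such $S$ is obtained from $m(\tilde w)$ by undoing at least one of the inserted $\uparrow$'s in $\tilde w_1$ or reverting at least one of the $a$'s in $\tilde w_2$ back to $\hat a$, and either operation reintroduces one of the forbidden local patterns, so $m^{-1}(S)$ is not full. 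This ``no intermediate full word'' argument replaces your step-by-step invariance claim and avoids having to analyze the associated variety at every stage of the procedure.
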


\begin{proof}
The first part of the proposition is clear by the definition of full and partial words. We now show that the stated procedure for filling up a partial word associated to $Y$ produces a full word associated to $Y$. 

Let $w$ be a partial word associated to $Y$. By construction, $\tilde{w}$ is a full word, and thus it is necessarily associated to some compatibly split $Y'\subseteq U_{\langle x,y^n\rangle}$ (by Remark \ref{r;fw}). We show that $Y' = Y$.

By construction, $m(w)\subseteq m(\tilde{w})$ where $m$ is the map described in the remarks prior to the proposition. So, to show that $\tilde{w}$ is a full word associated to $Y$, we show that there is no $S$ such that $m(w)\subsetneq S\subsetneq m(\tilde{w})$ and $m^{-1}(S)$ is a full word. Suppose that $S$ has the property that $m(w)\subsetneq S\subsetneq m(\tilde{w})$. Then, $m^{-1}(S)$ must be equal to $\tilde{w}$ transformed by either (i) removing at least one $\uparrow$ in $\tilde{w_1}$ that had been added to $w_1$, or (ii) taking at least one $a$ in $\tilde{w_2}$ that came from the letter $\hat{a}$ in $w_2$ and changing it back to the letter $\hat{a}$. But then, $m^{-1}(S)$ is not a full word.
\end{proof}

\begin{remark}
Let $w$ denote a partial word associated to a compatibly split $Y\subseteq U_{\langle x,y^n\rangle}$. Then $m(w)$ is a face of $\Delta_Y$, the simplicial complex associated to $\textrm{init}(Y)$. Suppose that $m(w)$ is contained in $k$ facets of $\Delta_Y$, then there are $k$ different full words associated to $Y$, $w_1,\dots,w_k$, such that $m(w)\subseteq m(w_i)$, $1\leq i\leq k$.
\end{remark}

We now use the full words to study the simplicial complex $\Delta_Y$ associated to the Stanley-Reisner scheme $\textrm{init}(Y)$. In particular, we show that if a stratum representative of $Y$ can be chosen to either (i) have no points in $\bA^2_k\setminus \{(0,0)\}$ or (ii) have at most one point on the punctured $y$-axis, then $\Delta_Y$ is a vertex-decomposable ball. 

\begin{proposition}\label{balls}
Let $Y$ be a compatibly split subvariety of $U_{\langle x,y^n\rangle}$. Let $\Delta_Y$ denote the simplicial complex associated to the Stanley-Reisner scheme $\textrm{init}(Y)$.
\begin{enumerate}
\item If $Y = (s,u,0,+0)$, $u\geq 0$, or $Y = (s,u,0,+1)$, $u\geq 2$, or $Y = (n-1,0,0,+1)$, then $\Delta_Y$ is a simplex.
\item If $Y = (0,0,n,+0)$ or $Y = (0,0,n-1,+1)$, then $\Delta_Y$ is a simplex.
\item If $Y = (0,u,n-u,+0)$, $u\geq 1$, or $Y = (0,u,n-u,+1)$, $u\geq 2$, then $\Delta_Y$ is a vertex-decomposable ball. Furthermore, $\partial Y$, the union of codimension-$1$ compatibly split subvarieties of $Y$, degenerates to the Stanley-Reisner scheme of $\partial\Delta_Y$, the boundary sphere of $\Delta_Y$. 
\item If $Y = (1,u,n-u-1,+0)$, $u\geq 0$, or $Y = (1,u,n-u-1,+1)$, $u\geq 2$, or $Y = (1,0,n-2,+1)$, then $\Delta_Y$ is a vertex-decomposable ball.
\end{enumerate}
\end{proposition}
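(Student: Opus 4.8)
The proof proceeds by induction on $n$, mirroring the recursive structure of the degenerations in Proposition~\ref{degens} and the correspondence with full words in Proposition~\ref{p;pipedreams}. The strategy for each of the four items is to analyze $\Delta_Y$ directly through its facets, which are indexed by the full words associated to $Y$, and to identify a good vertex along which to decompose. For items (1) and (2), the point is that there is exactly one full word associated to $Y$: when $t=0$ and $s+u=n$ the only segments available that contribute $t=0$ are $aa$ and $\hat a$, but $\#aa$ contributes to $t$, so the first two blocks of a full word degenerate to a single unambiguous string; similarly $(0,0,n,+0)$ forces the word $a\uparrow a\uparrow\cdots a\uparrow$ and $(0,0,n-1,+1)$ forces that word followed by a lone $a$. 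A single facet means $\Delta_Y$ is a simplex, hence trivially a vertex-decomposable ball. These are base/degenerate cases and require only bookkeeping with Definition~\ref{d;fullwords}.

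For item (3), I would argue that $\Delta_Y$ with $Y=(0,u,n-u,+0)$ (or the $+1$ analogue) is vertex-decomposable by choosing the vertex $v$ corresponding to a suitable generator among $a_1,\dots,a_n,b_1,\dots,b_n$ — concretely, the vertex that records the ``split'' at position $n$ in the recursion $\textrm{Lex}_{a_n}\textrm{Revlex}_{b_n}(Y)$ from Proposition~\ref{degens}. The key computation is to identify $\mathrm{del}_{\Delta_Y}(v)$ and $\mathrm{link}_{\Delta_Y}(v)$ with the simplicial complexes $\Delta_{Y_1}$ and $\Delta_{Y_2}$ (possibly with cones added) coming from the two components appearing in the degeneration; since $Y_1$ and $Y_2$ are again compatibly split subvarieties of $U_{\langle x,y^{n-1}\rangle}$ of the same shape (with $s=0$ or $s=1$), induction applies and both pieces are vertex-decomposable balls/simplices of the correct dimensions. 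The purity and dimension conditions in the definition of vertex-decomposable follow from the fact that $\mathrm{init}(Y)$ is Cohen-Macaulay-ready — equidimensional, by semicontinuity of dimension and irreducibility of $Y$ — so $\Delta_Y$ is pure of dimension $\dim Y - 1$. The claim about $\partial Y$ degenerating to the Stanley-Reisner scheme of $\partial\Delta_Y$ I would prove by matching the codimension-$1$ compatibly split subvarieties of $Y$ (found via Proposition~\ref{p;allsplit}) with the boundary facets of $\Delta_Y$ (those facets $F$ such that some $(n-1)$-subset of $F$ lies in only one facet), using that both $\partial Y$ and its degeneration are computed componentwise and the bijection $m_Y$ is compatible with the recursion. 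Item (4) with $s=1$ is handled the same way: the extra factor $\Hilb^1(\text{punctured }y\text{-axis})$ contributes exactly one $\hat a\uparrow$ or $aa\uparrow$ segment, which in the full-word picture amounts to a single ``choice point'' whose two resolutions give a ball glued from two balls along a common facet — again vertex-decomposable by induction.

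\textbf{Main obstacle.} The hardest part will be item (3), specifically verifying that $\mathrm{link}_{\Delta_Y}(v)$ is exactly the simplicial complex of one of the smaller strata (with the right cone structure) rather than something slightly larger, and that the chosen vertex $v$ genuinely yields a \emph{pure} link of the correct dimension. This requires a careful analysis of which full words associated to $Y$ contain the letter corresponding to $v$ and of how deleting or contracting that letter transforms a full word for $Y$ into a full word for $Y_1$ or $Y_2$ — essentially checking that the bijection $m_Y$ of Proposition~\ref{p;pipedreams} intertwines the vertex-decomposition operation on $\Delta_Y$ with the degeneration operation of Proposition~\ref{degens}. Once that dictionary is set up cleanly, vertex-decomposability (and hence shellability and Cohen-Macaulayness by the cited theorems of Billera-Provan and Hochster), together with the semicontinuity statement that $Y$ itself is then Cohen-Macaulay, follows formally.
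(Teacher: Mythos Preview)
Your overall approach matches the paper's: induction on $n$, vertex-decompose $\Delta_Y$ at the vertex $a_n$ coming from the $\textrm{Lex}_{a_n}$ step, and identify $\mathrm{del}(a_n)$ and $\mathrm{link}(a_n)$ with the complexes $\Delta_{Y_1}$ and $\Delta_{Y_2}$ arising from the two components of the degeneration in Proposition~\ref{degens}. Your handling of items (1) and (2) via the uniqueness of the associated full word is correct (the paper instead just reads off that the defining ideal is generated by variables, but your argument works equally well).

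There is, however, a genuine gap in what you identify as the ``main obstacle.'' Knowing that $\mathrm{del}(a_n)$ and $\mathrm{link}(a_n)$ are vertex-decomposable balls of the correct dimensions gives you only that $\Delta_Y$ is vertex-decomposable; it does \emph{not} by itself tell you that $\Delta_Y$ is a ball. For that you need the additional topological input (cf.\ the discussion after the definition of vertex-decomposability, and \cite{KMY}): if $\mathrm{del}(v)$ is a $d$-ball and $\mathrm{link}(v)$ is a $(d-1)$-ball lying \emph{inside the boundary sphere} of $\mathrm{del}(v)$, then $\Delta$ is a $d$-ball. Proving $\mathrm{link}(a_n)\subseteq\partial\,\mathrm{del}(a_n)$ is the actual combinatorial core of the paper's argument, and it is carried out by showing that every facet of $\mathrm{link}(a_n)$ is contained in \emph{exactly one} facet of $\mathrm{del}(a_n)$. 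Concretely, given a full word $w_{lk}$ for $Y_2$, one must produce a unique full word $w_d$ for $Y_1$ with $m(w_{lk})\subseteq m(w_d)$; this requires a case analysis on the tail of $w_{lk}$ (whether or not the $\hat a$-free tail contains an $\hat a$, and where to insert an $\uparrow$ or flip an $\hat a$ to $a$). Your proposal gestures at ``checking that $m_Y$ intertwines vertex-decomposition with degeneration,'' but that intertwining is exactly the easy part; the uniqueness of the containing facet is the part that needs work, and you should flag it as such. The boundary claim in item~(3) is then proved by a separate (short) induction matching $\partial Y$ with $\partial\Delta_Y$ componentwise, as you suggest.
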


\begin{proof}
Let $J(Y)\subseteq k[a_1,\dots,a_n,b_1,\dots,b_n]$ denote the ideal of $Y\subseteq U_{\langle x,y^n\rangle}$. 

By Proposition \ref{degens}, if $Y = (s,u,0,+0)$, $u\geq 0$, then $J(Y) = \langle b_n,\dots,b_{n-u+1},a_1,\dots,a_n\rangle$. If $Y = (s,u,0,+1)$, $u\geq 2$, then $J(Y) = \langle b_n,\dots,b_{n-u+1},a_1,\dots,a_{n-1}\rangle$. If $Y = (n-1,0,0,+1)$, then $J(Y) = \langle b_n,a_1,\dots,a_{n-1}\rangle$. Thus, in each of these cases, $\textrm{init}(Y) = Y$ and $\Delta_Y$ is a simplex.

Similarly, when $Y = (0,0,n,+0)$ or $Y = (0,0,n-1,+1)$, we get that $\textrm{init}(Y) = Y$ and $\Delta_Y$ is a simplex. 

We now consider the case $Y = (0,u,n-u,+0)$, $u\geq 1$ or $Y = (0,u,n-u,+1)$, $u\geq 2$. We proceed by induction on $n$ to see that $\Delta_Y$ is a vertex-decomposable ball. When $n=1,2$, the result holds (see Figure \ref{fig;posetcomplex}). So, let $n$ be arbitrary. 

\noindent\textbf{Suppose that} $\mathbf{Y = (0,u,n-u,+0)}$ \textbf{and} $\mathbf{u=1}:$ By Proposition \ref{degens}, \[\textrm{Lex}_{a_n}(0,1,n-1,+0) = [(0,0,n-1,+0)\times 0_{b_n}\times 0_{a_n}]\cup [(0,0,n-2,+1)\times 0_{b_n}\times \bA^1_{a_n}].\] 
Let $Y_1 = (0,0,n-1,+0)$ and let $Y_2 = (0,0,n-2,+1)$. The only full word associated to $Y_1$ is $a\uparrow\cdots a\uparrow$ where the segment $(a\uparrow)$ repeats $n-1$ times, and the only full word associated to $Y_2$ is $a\uparrow\cdots a\uparrow a$ where the segment $(a\uparrow)$ repeats $n-2$ times. Using the association of words to simplices given in Proposition \ref{p;pipedreams}, we can directly see that $\Delta_Y$ is a vertex-decomposable ball. 

\noindent\textbf{Suppose that} $\mathbf{Y = (0,u,n-u,+0)}$ \textbf{and} $\mathbf{u \geq 2:}$ By Proposition \ref{degens}, \[\textrm{Lex}_{a_n}(0,u,n-u,+0) = [(0,u-1,n-u,+0)\times 0_{b_n}\times 0_{a_n})]\cup [(0,u,n-u-1,+1)\times 0_{b_n}\times \bA^1_{a_n}].\] 
Let $Y_1 = (0,u-1,n-u,+0)$ and let $Y_2 = (0,u,n-u-1,+1)$. Now vertex decompose $\Delta_Y$ with respect to $a_n$. From above, we see that $\textrm{del}(a_n) = \Delta_{Y_1}$ and $\textrm{link}(a_n) = \Delta_{Y_2}$.

By induction, each of $\Delta_{Y_1}$ and $\Delta_{Y_2}$ are vertex-decomposable balls. So, to get that $\Delta_Y$ is a vertex-decomposable ball, it suffices to show that $\textrm{link}(a_n)$ is contained inside of the boundary sphere of $\textrm{del}(a_n)$. To do this, we show that each facet of $\textrm{link}(a_n)$ is contained in exactly one facet of $\textrm{del}(a_n)$. 

\begin{itemize}
\item If $w_{lk}$ is a full word associated to a facet of $\textrm{link}(a_n)$, then $w_{lk}$ can be written as $w_{lk} = (w_{lk}')a$, where the number of segments of letters of type $(a\uparrow)$, $(\hat{a}\uparrow)$, $(aa\uparrow)$, $(aa)$, and $(\hat{a})$ in $w_l'$ satisfies the conditions $\#(\hat{a}\uparrow) = \# (aa\uparrow) = 0$, $\#(a\uparrow)+\#(aa)=n-u-1$, and $\#(aa)+\#(\hat{a}) = u-1$.
\item If $w_d$ is a full word associated to a facet of $\textrm{del}(a_n)$, then the number of segments of letters of type $(a\uparrow)$, $(\hat{a}\uparrow)$, $(aa\uparrow)$, $(aa)$, and $(\hat{a})$ in $w_d$ satisfies the conditions $\#(\hat{a}\uparrow) = \# (aa\uparrow) = 0$, $\#(a\uparrow)+\#(aa) = n-u$, and $\#(aa)+\#(\hat{a}) = u-1$.
\end{itemize}

Therefore, either (i) $w_{lk}$ has one fewer $(a\uparrow)$ and the same number of $(aa)$ and $(\hat{a})$ as $w_d$ or (ii) $w_{lk}$ has one fewer $(aa)$, one extra $(\hat{a})$, and the same number of $(a\uparrow)$ as $w_d$. 

Now, let $m$ denote the map that assigns simplices to full words and let $w_{lk}$ be given. Write $w_{lk}$ as $w_{lk} = w_{lk,1}\uparrow w_{lk,2}$. Suppose first that there are no $\hat{a}$ in $w_{lk,2}$. Then, the only way to obtain a full word $w_d$ associated to a facet in $\textrm{del}(a_n)$ such that $m(w_{lk})\subseteq m(w_d)$ is to insert a copy of $\uparrow$ after the \emph{first} occurance of the letter $a$ in $w_{lk,2}$.

Now suppose that $w_{lk,2}$ contains at least one $\hat{a}$. Changing the \emph{last} occurance of $\hat{a}$ to $a$ yields a word $w_d$ associated to a facet of $\textrm{del}(a_n)$ such that $m(w_{lk})\subseteq m(w_d)$. Furthermore, noting (i) and (ii) above, we can check that no other change to $w_{lk}$ will yield a full word associated to a facet of $\textrm{del}(a_n)$ such that $m(w_{lk})\subseteq m(w_d)$. Indeed, changing any other appearance of $\hat{a}$ to $a$ will necessarily yield a partial word since there will be a string $a\cdots a\hat{a}$ with an odd number of the letter $a$ appearing before $\hat{a}$. In addition, it is not permissible to add a copy of $\uparrow$ to $w_{lk}$ as this will again cause there to be an odd number of the letter $a$ in a string of the form $a\cdots a\hat{a}$.

Thus, $\textrm{link}(a_n)$ is contained in the boundary of $\textrm{del}(a_n)$ and it follows that $\Delta_Y$ is a vertex-decomposable ball.

\noindent\textbf{Suppose that} $\mathbf{Y = (0,u,n-u,+1),}$ $\mathbf{u\geq 2:}$ Then $\textrm{Lex}_{a_n} = (0,u-1,n-u,+0)\times 0_{b_n}\times \bA^1_{a_n}$ and so $\Delta_Y$ is a cone on $\Delta_{(0,u-1,n-u,+0)}$. As $\Delta_{(0,u-1,n-u,+0)}$ is a vertex-decomposable ball, so is $\Delta_Y$.


\noindent\textbf{Suppose that} $\mathbf{Y = (1,0,n-1,+0):}$ Then, \[\Delta_Y = \{S\cup\{b_n\}~|~S\subseteq \{a_1,b_1,\dots,a_{n-1},b_{n-1},a_n\},~|S|=2n-2\}.\]
To see that $\Delta_Y$ is a vertex-decomposable ball, we show that \[X = \{S\cup\{x_{m}\}~|~S\subseteq \{x_1,\dots,x_{m-1}\},~|S|=m-2\}\] is a vertex-decomposable ball. To do so, we proceed by induction on $m$. When $m=1$, the result is trivial. Now let $m$ be arbitrary and vertex decompose with respect to $x_{m-1}$. Then $\textrm{del}(x_n)$ is a single simplex $\{x_1,\dots,x_{m-2},x_m\}$ and so is a vertex-decomposable ball. Also, \[\textrm{link}(x_{m-1}) = \{S\cup\{x_{m}\}~|~S\subseteq \{x_1,\dots,x_{m-2}\},~|S|=m-3\}\] and so is a vertex-decomposable ball by induction. It's clear that $\textrm{link}(x_{m-1})$ is contained in the boundary sphere of $\textrm{del}(x_n)$. Thus, $X$ is a vertex-decomposable ball and so $\Delta_Y$ is too.  



Next, consider the case $Y = (1,u,n-u-1,+0)$, $u\geq 1$, or $Y = (1,u,n-u-1,+1)$, $u\geq 2$, or $Y = (1,0,n-2,+1)$. Proceed by induction on $n$. When $n=1$ or $n=2$, $\Delta_Y$ is a vertex-decomposable ball (see Figure \ref{fig;posetcomplex}).

\noindent\textbf{Suppose that} $\mathbf{Y = (1,1,n-2,+0),}$ $\mathbf{n\geq 3:}$ By Proposition \ref{degens}, \[\textrm{Lex}_{a_n}(Y) = [(1,0,n-2,+0)\times 0_{b_n}\times 0_{a_n}]\cup [(1,0,n-3,+1)\times 0_{b_n}\times \bA^1_{a_n}].\] Let $Y_1 = (1,0,n-2,+0)$ and let $Y_2 = (1,0,n-3,+1)$. Now vertex decompose $\Delta_Y$ with respect to $a_n$. From above, we see that $\textrm{del}(a_n) = \Delta_{Y_1}$ and $\textrm{link}(a_n) = \Delta_{Y_2}$. By the previous case, each of $\Delta_{Y_1}$ and $\Delta_{Y_2}$ are vertex-decomposable balls. As each facet of $F\in \textrm{link}(a_n)$ is contained in exactly one facet of $\textrm{del}(a_n)$ (i.e. $F\cup \{b_{n-1}\}$), we see that $\textrm{link}(a_n)$ is contained in the boundary of $\textrm{del}(a_n)$ and so $\Delta_Y$ is a vertex-decomposable ball.

\noindent\textbf{Suppose that} $\mathbf{Y = (1,u,n-u-1,+0),}$ $\mathbf{u\geq 2:}$ By Proposition \ref{degens}, \[\textrm{Lex}_{a_n}(Y) = [(1,u-1,n-u-1,+0)\times 0_{b_n}\times 0_{a_n}]\cup [(1,u,n-u-2,+1)\times 0_{b_n}\times \bA^1_{a_n}].\] Let $Y_1 = (1,u-1,n-u-1,+0)$ and let $Y_2 = (1,u,n-u-2,+1)$. By induction, each of $\Delta_{Y_1}$ and $\Delta_{Y_2}$ are vertex-decomposable balls. So, to get that $\Delta_Y$ is a vertex-decomposable ball, it again suffices to show that $\textrm{link}(a_n)$ is contained inside of the boundary sphere of $\textrm{del}(a_n)$. This argument is nearly identical to the one given when $Y = (0,u,n-u,+0)$, $u\geq 2$, so we do not repeat it. 

\noindent\textbf{Suppose that} $\mathbf{Y = (1,u,n-u-1,+1),}$ $\mathbf{u\geq 2},$ \textbf{or} $\mathbf{Y = (1,0,n-2,+1):}$ As before, $\Delta_Y$ is the cone on some $\Delta_{Y'}$ which is already known to be a vertex-decomposable ball. Thus, $\Delta_Y$ is a vertex-decomposable ball.

To complete the proof, it remains to show that $\partial Y$ degenerates to the Stanley-Reisner scheme of the boundary sphere of $\Delta_Y$ when $Y = (0,u,n-u,+0)$, $u\geq 1$, or $Y = (0,u,n-u,+1)$, $u\geq 2$. Proceed by induction on $n$. When $n=1,2$, the result is either trivial or seen to be true in Figure \ref{fig;posetcomplex}.

Now let $Y = (0,u,n-u,+0)$, $u\geq 2$. First note that \[\partial Y = (1,u,n-u-1,+0)\cup (0,u+1,n-u-1,+1).\] Let $C = (1,u,n-u-1,+0)$ and let $D = (0,u+1,n-u-1,+1)$. Then, 
\begin{enumerate}
\item $\textrm{Lex}_{a_n}(Y) = [(0,u-1,n-u,+0)\times 0_{b_n}\times 0_{a_n}]\cup[(0,u,n-u-1,+1)\times 0_{b_n}\times \bA^1_{a_n}]$. Let $Y_1$ and $Y_2$ denote the two components of $\textrm{Lex}_{a_n}(Y)$.
\item $\textrm{Lex}_{a_n}(C) = [(1,u-1,n-u-1,+0)\times 0_{b_n}\times 0_{a_n}]\cup [(1,u,n-u-2,+1)\times 0_{b_n}\times \bA^1_{a_n}]$. Let $C_1$ and $C_2$ denote the two components of the union. 
\item $\textrm{Lex}_{a_n}(D) = (0,u,n-u-1,+0)\times 0_{b_n}\times \bA^1_{a_n}$. 
\item By 1., 2., and 3., $(\partial Y_1) \setminus (Y_1\cap Y_2) = C_1$ and $(\partial Y_2)\setminus (Y_1\cap Y_2) = C_2\cup\textrm{Lex}_{a_n}D$.
\end{enumerate}
By induction, $(\partial Y_1) \setminus (Y_1\cap Y_2)$ and $(\partial Y_2)\setminus (Y_1\cap Y_2)$ degenerate to Stanley-Reisner schemes of subsets of the boundary spheres of $\Delta_{Y_1}$ and $\Delta_{Y_2}$. By item 4., $\textrm{Lex}_{a_n}C\cup \textrm{Lex}_{a_n}D$ degenerates to the Stanley-Reisner scheme of the boundary sphere of $\Delta_Y$.

The remaining (very similar) cases are left to the reader.
\end{proof}

\begin{example}
As an example of the remaining case, consider $Y = (2,0,1,+0)\subseteq U_{\langle x,y^3\rangle}$. The simplicial complex associated to $\textrm{init}(Y)$ has the following facets: $\{a_2,a_3,b_1,b_3\}$, $\{a_3,b_1,b_2,b_3\}$, $\{a_1,b_1,b_2,b_3\}$, $\{a_1,a_2,b_2,b_3\}$, $\{a_2,b_1,b_2,b_3\}$.  Therefore $\Delta_Y$ is not homeomorphic to a ball (see Figure \ref{fig;badcomplex}). 
\begin{figure}[!h]
\begin{center}
\includegraphics[scale = 0.6]{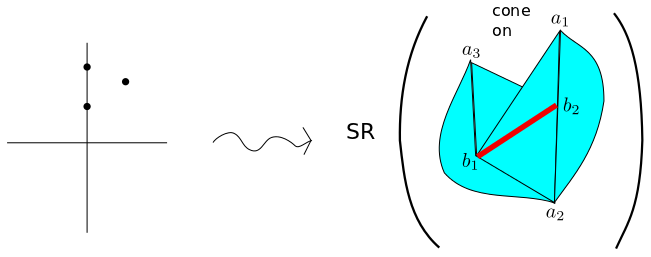}
\caption{The simplicial complex associated to $\textrm{init}(2,0,1,+0)$. Note that the red edge is the simplicial complex associated to $(3,0,0,+0)$, the non-R1 locus of $(2,0,1,+0)$.}
\label{fig;badcomplex}
\end{center}
\end{figure}
\end{example}

We obtain the following consequence of Proposition \ref{balls}.

\begin{corollary}
The compatibly split subvarieties of $U_{\langle x,y^n\rangle}$ listed in Proposition \ref{balls} are Cohen-Macaulay.
\end{corollary}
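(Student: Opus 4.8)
The plan is to deduce the corollary directly from Proposition~\ref{balls} together with the semicontinuity of Cohen-Macaulayness in Gr\"obner degenerations. First I would recall the precise list: by Proposition~\ref{balls}, for every compatibly split $Y\subseteq U_{\langle x,y^n\rangle}$ appearing in the statement (namely those $Y$ of types $(s,u,0,+0)$, $(s,u,0,+1)$ with $u\geq 2$, $(n-1,0,0,+1)$, $(0,0,n,+0)$, $(0,0,n-1,+1)$, $(0,u,n-u,+0)$ with $u\geq 1$, $(0,u,n-u,+1)$ with $u\geq 2$, $(1,u,n-u-1,+0)$ with $u\geq 0$, $(1,u,n-u-1,+1)$ with $u\geq 2$, and $(1,0,n-2,+1)$), the simplicial complex $\Delta_Y$ associated to $\textrm{init}(Y)$ is either a simplex or a vertex-decomposable ball. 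A simplex is trivially vertex-decomposable, so in every case $\Delta_Y$ is vertex-decomposable.

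Next I would invoke the chain of implications already assembled in the background chapter: vertex-decomposable $\Rightarrow$ shellable (the theorem of \cite{BP} quoted in Subsection~\ref{s;combs}), and shellable $\Rightarrow$ Cohen-Macaulay (Hochster's theorem, also quoted there). Hence the Stanley-Reisner ring $R_{\Delta_Y} = k[a_1,\dots,a_n,b_1,\dots,b_n]/I_{\Delta_Y}$ is Cohen-Macaulay. By Proposition~\ref{p;eqnRNC} the induced splitting of $U_{\langle x,y^n\rangle}$ is $\textrm{Tr}(f_n^{p-1}\cdot)$ with $\textrm{init}(f_n)=a_1b_1\cdots a_nb_n$, so Knutson's theorem (Theorem~\ref{t;knutson}, Theorem~\ref{t;yay}) guarantees that $\textrm{init}(Y)$ is exactly the Stanley-Reisner scheme $\Spec R_{\Delta_Y}$ under the weighting $\textrm{Revlex}_{b_n},\textrm{Lex}_{a_n},\dots,\textrm{Revlex}_{b_1},\textrm{Lex}_{a_1}$; this identification is precisely what is recorded in Proposition~\ref{degens} and Proposition~\ref{p;pipedreams}. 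Therefore the special fiber of the Gr\"obner degeneration of $Y$ with respect to this weighting is Cohen-Macaulay.

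Finally I would apply the standard semicontinuity statement for Gr\"obner degenerations: if $k[a_1,b_1,\dots,a_n,b_n][t]/\widetilde{J(Y)}$ is the flat family over $\mathbb{A}^1_t$ with general fiber $k[a_i,b_i]/J(Y)$ and special fiber $k[a_i,b_i]/\textrm{init}(J(Y)) \cong R_{\Delta_Y}$ (as in the proposition on Gr\"obner degenerations in Subsection~\ref{s;gb}), then the locus of $t$ where the fiber is Cohen-Macaulay is open; since it contains $t=0$ it contains a neighborhood, and because $\mathbb{G}_m$ acts transitively on the punctured line, it contains every $t\neq 0$. Hence $k[a_i,b_i]/J(Y)$, i.e.\ $Y$ itself, is Cohen-Macaulay. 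The only mild obstacle is making sure the flat family is set up with the reduced/scheme-theoretic ideal $J(Y)$ — but this is exactly what Proposition~\ref{degens} establishes, namely $J_Y = J(Y)$ and $\textrm{init}(J(Y))$ equals the squarefree monomial ideal $I_{\Delta_Y}$ — so no extra work is needed, and the corollary follows.
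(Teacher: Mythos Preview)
Your proposal is correct and follows essentially the same approach as the paper's proof: the paper simply observes that each such $Y$ Gr\"obner degenerates to the Stanley-Reisner scheme of a shellable complex, so $\textrm{init}(Y)$ is Cohen-Macaulay, and then invokes semicontinuity to conclude that $Y$ is Cohen-Macaulay. Your version is more detailed (spelling out the vertex-decomposable $\Rightarrow$ shellable $\Rightarrow$ Cohen-Macaulay chain and the $\mathbb{G}_m$-action argument for passing from the special fiber to the general fiber), but the underlying argument is the same.
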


\begin{proof}
Each $Y$ described in Proposition \ref{balls} degenerates onto the Stanley-Reisner scheme of a shellable ball. Thus, $\textrm{init}(Y)$ is Cohen-Macaulay. By semicontinuity (of local cohomology), so is $Y$. 
\end{proof}



\renewcommand\bibname{\centerline{BIBLIOGRAPHY}}
\bibliographystyle{amsalpha}	
\bibliography{jenna}
\addcontentsline{toc}{section}{{\bf Bibliography}}
\backmatter

\end{document}